\documentclass[11pt]{amsart}
\usepackage[pdftex,margin=1in,rmargin=1in]{geometry}
\usepackage{amsmath,latexsym,amssymb,amsthm}
\usepackage[final]{graphicx}
\usepackage{mathtools}
\usepackage{xfrac}
\usepackage{fix-cm}
\usepackage{color,transparent}

\usepackage{xcolor}
\usepackage{mdframed}

\newmdenv[
  backgroundcolor=yellow!20,
  linecolor=yellow!50!black
]{highlight}

\usepackage{verbatim}
\usepackage[a-3u]{pdfx}
\hypersetup{bookmarksdepth=3,bookmarksopenlevel=2,%
    colorlinks=true,%
    linkcolor=blue,%
    citecolor=blue,%
    filecolor=blue,%
    menucolor=blue,%
    urlcolor=blue}
\usepackage{doi}
\usepackage{stmaryrd}
\usepackage{caption}
\usepackage{subcaption}
\usepackage{multirow}
\usepackage{float}
\usepackage{tabularx}
\usepackage[shortlabels,inline]{enumitem}
\usepackage{pdfsync}
\usepackage{tikz}
\usepackage{tikz-cd}
\usepackage{float}
\usepackage[T1]{fontenc}
\usepackage{geometry}
\usepackage{pdflscape}
\usepackage[utf8]{inputenc}
\usepackage[textsize=small]{todonotes}
\usepackage[shortlabels,inline]{enumitem}
\newcommand{\ora}[1]{\overrightarrow{#1}}
\newcommand{\ole}[1]{\overleftarrow{#1}}

\usepackage{hyphenat}
\usepackage{booktabs}
\usepackage{pifont}
\usepackage{pdfpages}
\usepackage[graphicx]{realboxes}
\usepackage{bm} % Bold math symbols
\usepackage{svg}

\def\mathcenter#1{%
  \vcenter{\hbox{$#1$}}%
}

\def\acts{\curvearrowright}

\graphicspath{{mpdraws/}{draws/}}
\def\mathcenter#1{\vcenter{\hbox{$#1$}}}
\def\mfig#1{\mathcenter{\includegraphics{#1}}}

\def\mfigrotate#1{\mathcenter{\rotatebox[origin=c]{-90}{\includegraphics{#1}}}}

\setenumerate[1]{label=(\arabic*)}

\numberwithin{equation}{section}
\numberwithin{figure}{section}
\newtheorem{mainthm}{Theorem}
\newtheorem{maincor}[mainthm]{Corollary}
\renewcommand*{\themainthm}{\Alph{mainthm}}
\renewcommand*{\themainthm}{\Alph{mainthm}}
\newtheorem{mainthmprime}{Theorem}

\newtheorem*{theorem*}{Theorem}
\newtheorem{theorem}{Theorem}[section]
\newtheorem{lemma}[theorem]{Lemma}
\newtheorem{claim}[theorem]{Claim}
\newtheorem{corollary}[theorem]{Corollary}
\newtheorem{proposition}[theorem]{Proposition}

\newtheorem*{ex*}{Exercise}

\newtheorem{case}{Case}
\newcounter{subcase}[case]
\newenvironment{subcase}%
  {\smallskip\par\refstepcounter{subcase}\noindent\textbf{Subcase \thecase.\thesubcase.}\ \begingroup\itshape}%
  {\endgroup}

\theoremstyle{definition}
\newtheorem{remark}[theorem]{Remark}

\newtheorem{example}[theorem]{Example}
\newtheorem{definition}[theorem]{Definition}
\newtheorem{question}[theorem]{Question}
\newtheorem{conjecture}[theorem]{Conjecture}

\newtheorem*{notation*}{Notation}

\newtheorem{convention}[theorem]{Convention}

\theoremstyle{plain}
\newtheorem{taggedthmx}{Theorem}
\newenvironment{taggedthm}[1]
 {\renewcommand\thetaggedthmx{#1}\taggedthmx}
 {\endtaggedthmx}

\newcommand\bbN{\ensuremath{\mathbb{N}}}
\newcommand\bbZ{\ensuremath{\mathbb{Z}}}

\newcommand\bbR{\ensuremath{\mathbb{R}}}

\newcommand\bbH{\ensuremath{\mathbb{H}}}

\newcommand\cA{\ensuremath{\mathcal{A}}}
\newcommand\cB{\ensuremath{\mathcal{B}}}
\newcommand\cD{\ensuremath{\mathcal{D}}}
\newcommand\cF{\ensuremath{\mathcal{F}}}
\newcommand\cG{\ensuremath{\mathcal{G}}}

\newcommand\cR{\ensuremath{\mathcal{R}}}

\DeclareMathOperator{\Curr}{Curr}
\DeclareMathOperator{\CAT}{CAT}
\DeclareMathOperator{\Teich}{Teich}
\DeclareMathOperator{\Isom}{Isom}

\DeclareMathOperator{\DG}{\mathcal{D}\mathcal{G}}

\DeclareMathOperator{\sys}{sys}

\DeclareMathOperator{\bdy}{\partial}

\newcommand*{\wt}[1]{\widetilde{#1}}
\newcommand*{\wh}[1]{\widehat{#1}}

\renewcommand{\epsilon}{\varepsilon}

\DeclareMathOperator{\SL}{\mathit{SL}}
\DeclareMathOperator{\PSL}{\mathit{PSL}}

\DeclarePairedDelimiter\abs{\lvert}{\rvert}
\DeclarePairedDelimiter\norm{\lVert}{\rVert}

\newcommand{\AConvex}{\mathrm{AConvex}}
\newcommand{\Convex}{\mathrm{Convex}}
\newcommand{\QConvex}{\mathrm{QConvex}}

\newcommand{\Curves}{\mathcal{C}}

\newcommand{\wid}{\operatorname{wid}}
\def\acts{\curvearrowright}

\newcommand{\ML}{\mathcal{ML}}

\newcommand{\Cay}{\mathrm{Cay}}

\newcommand{\reducesto}{\mathrel{\searrow}}

\DeclareMathOperator{\supp}{supp}

\usetikzlibrary{arrows}
\tikzset{
    labl/.style={anchor=south, rotate=90, inner sep=.5mm}
}
\tikzstyle{every picture}=[> = to]
\tikzset{cdlabel/.style={execute at begin node=$\scriptstyle,execute at end node=$}}
\tikzset{implication/.style={double equal sign distance, -implies}}
\tikzset{biimplication/.style={double equal sign distance, implies-implies}}

\newcounter{submainthm}[mainthm]

\renewcommand{\thesubmainthm}{%
  \themainthm\textup{(\ifcase\value{submainthm}\or L\or H\fi)}%
}

\newcommand{\mainthmsublabel}[1]{%
  \refstepcounter{submainthm}\label{#1}%
}

\begin{document}
\title{The intersection dual of geodesic currents} 
\author[Martínez-Granado]{Dídac Martínez-Granado}
\address{Department of Mathematics\\University of Luxembourg\\Av. de la Fonte 6, Esch-sur-Alzette, L-4364, Luxembourg}       
\email{didac.martinezgranado@uni.lu}

\author[Thurston]{Dylan~P.~Thurston}
\address{Department of Mathematics\\
  Boston College\\
  Chestnut Hill, Mass. 02467-3806\\
  USA}
\email{thurst@bc.edu}
\date{May 5, 2026}
\begin{abstract}
Geodesic currents on closed hyperbolic surfaces are measures on the
unit tangent bundle invariant
under geodesic flow and orientation reversal.
Every geodesic current induces a dual function on curves via the
geometric intersection pairing. It is natural to ask which curve
functions are dual to geodesic currents, that is, which arise as
intersection functionals of a geodesic current.

In this paper we give a purely axiomatic and combinatorial
characterization of curve functionals dual to geodesic currents. This
yields a new definition of geodesic currents as curve functionals or,
equivalently, as functions on surface groups, without reference to
measures or flows. More precisely, we show that a function on curves
arises as the geometric intersection pairing with a geodesic current
if and only if it is additive under disjoint union and satisfies a
simple \emph{smoothing} property: it is non-increasing under
surgery of essential crossings.

As applications, we obtain new axiomatic characterizations of measured
laminations and hyperbolic length functions, and new descriptions of small
surface group actions on real trees, including a concise proof of a
classical theorem of Skora. We also provide a unified framework for
dual geodesic currents arising from metric structures and generalized
cross-ratios, including those associated with certain Anosov
representations. Our approach subsumes all previously
known constructions of dual geodesic currents and yields broad new
families of examples.
\end{abstract}
\maketitle

\section{Introduction}

Geodesic currents on a hyperbolic surface are $\pi_1(S)$-invariant
Radon measures on the
space of unoriented geodesics on~$\wt{S}$, forming the weak$^*$
closure of the set
of weighted closed geodesics. Equivalently, from a dynamical
viewpoint, they may be regarded as measures on the unit tangent bundle
that are invariant under the geodesic flow and the orientation flip.
Since their introduction by Bonahon~\cite{Bonahon86:EndsHyperbolicManifolds}
in his study of ends of hyperbolic $3$-manifolds, they have become a
central tool in low-dimensional geometry and topology. They play a
fundamental role in compactifications of spaces of geometric
structures~\cite{Bonahon88:GeodesicCurrent, DLR10:DegenerationFlatMetrics,
MZ19:PositivelyRatioed,BIPP24:PositiveCrossratios}, in curve counting
problems~\cite{RS19:CountingProblems, EPS20:CountingCurves,
ES22:GeodesicCount}, in marked length spectrum
rigidity~\cite{Otal90:SpectreMarqueNegative, CFF92:RigidityNonPosCurvedRiem,
HP97:RigidityNegCurvedCone, Con18:MarkedNonpos}, and more recently in
geometric group theory~\cite{HV24:Biautomaticity, CR25:Manhattan, CMGR26:GreenMetrics}.

The geometric intersection number of multi-curves (finite unions of closed curves on $S$) extends continuously to a bilinear function of geodesic currents $i(\cdot, \cdot)$ by a result of Bonahon~\cite[Proposition~4.5]{Bonahon86:EndsHyperbolicManifolds}. Moreover, by Otal's work~\cite[Th\'eor\`eme~2]{Otal90:SpectreMarqueNegative}, a geodesic current $\mu$ is uniquely determined by the functional $i(\mu,\cdot) \colon \mathcal{C}(S) \to \mathbb{R}$, where $\mathcal C(S)$ denotes the space of multi-curves. Thus geodesic currents embed into the space of real-valued functionals on curves via intersection.  

Many natural curve functionals arise in this way. Hyperbolic length
$\ell_\Sigma$ is given by intersection with the Liouville
current~\cite[Theorem~10]{Bonahon88:GeodesicCurrent}; the same holds
for negatively curved~\cite{Otal90:SpectreMarqueNegative}
and non-positively
curved Riemannian metrics~\cite{CFF92:RigidityNonPosCurvedRiem}.
Beyond examples arising from metrics on the
surface, word length with respect to simple generating sets admits a
dual current~\cite[Theorem~1.2]{Erlandsson:WordLength}, and length
functions associated to certain higher-rank representations of surface
groups (such as Hitchin representations) also arise from
currents~\cite{MZ19:PositivelyRatioed,BIPP24:PositiveCrossratios}.

From this perspective, geodesic currents form a distinguished subclass of curve functionals. The ambient space is vast and includes many natural examples not arising from currents (Section~\ref{subsec:examples_and_non}). We give a necessary and sufficient intrinsic axiomatic characterization of those curve functionals that arise as intersection with a geodesic current.

\begin{mainthm}
  A curve functional $f \colon \Curves(S)\to\mathbb{R}$ is dual to a
  geodesic current~$\mu_f$  in the sense that
\[
i(\mu_f,C)=f(C)
\quad\text{for all curves } C
\]
 if and only if it satisfies the
\emph{smoothing property}

\begin{equation}\label{eq:smoothing}
f\!\left(\mfig{curves-0}\right)
\ge
\max\!\left\{
f\!\left(\mfig{curves-1}\right),
f\!\left(\mfig{curves-2}\right)
\right\},
\end{equation}
at every essential crossing of a multi-curve and is \emph{additive} on connected components
(see~Section \ref{subsec:mainhypotheses}). If the dual current $\mu_f$
exists, it is unique.
\label{thm:intersect}
\end{mainthm}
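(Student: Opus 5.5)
Uniqueness is Otal's theorem cited above: a current is determined by $i(\mu,\cdot)$ on closed curves. For necessity, note that $C\mapsto i(\mu,C)$ is additive on components by linearity of Bonahon's extension. For the smoothing inequality, first take $\mu$ a weighted closed geodesic $\gamma$. If $C$ has an essential crossing and $C'$ is either resolution, then $i(\gamma,C')\le i(\gamma,C)$. To see this, realize $C'$ by the curve obtained from $C$'s geodesic representative by surgery at that crossing. Its intersections with $\gamma$ are a subset of (or the same number as) those of $C$, and geometric intersection is a minimum over representatives. Weighted closed curves are dense in currents and $i$ is continuous, so the inequality passes to all currents.

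Sufficiency is the real content, and I would construct $\mu_f$ directly on the space $\mathcal G(\wt S)$ of unoriented geodesics. Fix disjoint open intervals $I,J$ of $\partial\wt S$, so that $[I,J]$ is a box of geodesics. The measure of such a box should be the limit of how much $f$ changes when a long curve crosses the box. Concretely, I would take a closed curve $\delta$ whose lift has an axis crossing all geodesics of $[I,J]$, built from group elements $g,h$ with fixed points in appropriate arcs. I would then compare $f$ on the two smoothings of a crossing between $g^n$-type and $h^n$-type curves. Smoothing gives $f(\alpha\beta$ configuration$)\ge f(\text{resolutions})$, so the defect
\[
D = f(\text{crossing})-\max f(\text{resolutions})\ge 0
\]
defines a nonnegative quantity attached to a crossing. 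I would show these defects are finitely additive under subdividing $I$ or $J$, with additivity coming from additivity on components when a crossing is split into several. That yields a finitely additive, nonnegative, $\pi_1$-invariant set function on boxes. Carathéodory extension then gives a Radon measure; local finiteness follows from $f$ being bounded on a fixed curve.

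Finally I would check that $i(\mu_f,C)=f(C)$. Write $f(C)$ as a telescoping sum: repeatedly smooth a power $C^n$ (or a product with a filling curve) at all of its crossings with the support until it becomes a union of disjoint simple pieces. Each step contributes a defect equal to a box measure, and $f$ of the fully smoothed configuration should be controlled after dividing by $n$. Matching this with $\mu_f$ of the set of geodesics crossing a fundamental segment of $\wt C$ gives the identity.

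I expect the main obstacle to be showing that the defects are genuinely additive and independent of the auxiliary curves, i.e.\ that $\mu_f$ is well defined. This needs a homogeneity/convexity property, $f(C^n)=nf(C)$, which I would first derive from smoothing plus additivity, together with a limiting argument in which quasi-additivity errors are $O(1)$ and vanish after normalizing by $n$. I would attack this first by proving two-sided bounds $|f(\text{product})-\sum f|\le$ const, uniformly in $n$.
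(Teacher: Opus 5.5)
Your uniqueness and necessity arguments are fine. Deriving $f(C^n)=nf(C)$ from smoothing plus additivity first is also what the paper does (Proposition~\ref{prop:power_disconnected_imply_stable}).

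The gap is in the construction of $\mu_f$. The quantity $D=f(\text{crossing})-\max f(\text{resolutions})$ cannot be a box mass. If $f=i(\lambda,\cdot)$ for a nonzero measured lamination $\lambda$, then $f$ satisfies max-smoothing, so $D\equiv 0$ and your construction returns the zero current.

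For $f=i(\mu,\cdot)$, the two one-sided defects $f(a)+f(b)-f(ab)$ and $f(a)+f(b)-f(aB)$ at a crossing of $[a][b]$ are the $\mu$-masses of two different, mutually crossing families of geodesics. Each family crosses a pair of opposite sides of the parallelogram built from the two axes (Lemma~\ref{lem:measures_parallelogram}). Their minimum is not additive.

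Even with a one-sided defect and powers $a^n,b^n$, in the limit you only reach boxes such as $(a^+,b^-]\times[a^-,b^+)$, whose four corners are the fixed points of a crossing pair. This family is not closed under subdivision: cutting at $z\in(a^+,b^-)$ would require $z$ and $b^+$ to be the two fixed points of a single element, forcing $z=b^-$. So ``finite additivity under subdividing $I$ or $J$'' has nothing to act on, and you do not say which relation among values of $f$ would produce it.

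The missing idea is a formula in which additivity is automatic. The paper uses right-handed boxes whose four corners are endpoints of four \emph{different} lifts of one fixed simple curve $x$. It sets $\wh{\mu_f}(B)=\frac12\lim_n\bigl(f([bx^n])+f([cx^n])-f([ax^n])-f([dx^n])\bigr)$, an Otal-type second difference in which the linear growth cancels. Side-by-side additivity is then a telescoping identity. All the difficulty moves to showing the limit exists, is independent of the lift system, and is nonnegative.

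Your plan for that step would not work either. Box masses are exactly the bounded, order-one terms, so an error that is merely $O(1)$ and disappears after dividing by $n$ takes the measure with it. Two-sided bounds $\abs{f(\text{product})-\sum f}\le\text{const}$ are not enough; you need exact limits of bounded sequences and exact cancellations.

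The paper gets these from $r$-convexity, e.g.\ $f([ax^{n-r}])+f([ax^{n+r}])\ge 2f([ax^n])$. This comes from exact smoothing at the crossings of Proposition~\ref{prop:join-split-geom}, combined with boundedness obtained from further smoothing moves. The result is Lemmas~\ref{lem:additivity-limit} and~\ref{lem:additivity-limit-2}, and the identity $\lim_{n,m}\bigl(f([ax^nbx^m])-f([ax^n])-f([bx^m])\bigr)=0$ (Proposition~\ref{prop:join-split-alg}), which gives independence of the lift system. Remark~\ref{rmk:nonconvex} shows such limits can fail to exist under mere quasi-smoothing.

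You also invoke Carath\'eodory without countable subadditivity, which does not follow from finite additivity and positivity. The paper notes that the box function of the cross-ratio $[\cdot]^-$ of a current with atoms is finitely additive but not countably additive. It proves countable subadditivity by showing that thin ``accordion'' boxes have mass tending to zero (Proposition~\ref{prop:finalfamily}) and then using compactness.

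Finally, your recovery step (smoothing ``at all crossings with the support'') is too vague to check. In the paper it is a direct computation with boxes exhausting the fundamental domain $[z,Y\cdot z)\times(y^+,y^-)$. There the four-term formula collapses to $\frac12\bigl(f([y])+f([Y])\bigr)=f([y])$.
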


This theorem may be regarded as an analogue of Poincaré duality
(see Section~\ref{subsec:poincare_duality}), in that it identifies
the space of curve functionals arising from geodesic currents via
intersection.

An essential crossing is, informally, an intersection
that cannot be removed by homotopy (see
Definition~\ref{def:essential-crossing}). Such crossings include both self-intersections of a single component
and intersections between distinct components.

The hypotheses of smoothing and additivity are
independent and necessary (see
Section~\ref{subsec:examples_and_non}).

Theorem~\ref{thm:intersect} produces numerous new dual currents and unifies several constructions. In particular, it applies to curve lengths arising from arbitrary Riemannian metrics on $S$, and more generally from any length (pseudo-)metric on $S$, including Finsler metrics (see Corollary~\ref{cor:length_dual}).
\begin{maincor}
    Let $d$ be a length (pseudo-)metric on $S$, with induced length on curves $\ell_d$. Then there exists a geodesic current $\mu_d$ dual to $\ell_d$.
    \label{maincor:length_dual}
\end{maincor}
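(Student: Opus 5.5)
The plan is to deduce Corollary~\ref{maincor:length_dual} from Theorem~\ref{thm:intersect} by checking its two hypotheses for the curve functional $\ell_d$. Here $\ell_d(C)$ is the infimum of $d$-lengths of multi-curves freely homotopic to $C$, summed over components. First we check that $\ell_d$ is well defined and real-valued. Since $S$ is compact and $d$ is a length pseudo-metric, every free homotopy class contains rectifiable representatives: we can take a piecewise-geodesic representative through finitely many points of a fine net, assuming $d$ is compatible with the topology of $S$. Then $\ell_d(C)=\sum_j \inf_{\gamma_j\simeq C_j}\mathrm{len}_d(\gamma_j)$ is finite and nonnegative.

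\textbf{Additivity.} This holds by definition. A multi-curve is a union of components, each homotoped independently, so the infimum over representatives of the union is the sum of the infima over the components. We will state this carefully to match the convention of Section~\ref{subsec:mainhypotheses}; in particular we must treat repeated components and decide whether homotopically trivial components have length $0$, which they do, since they contract to curves of arbitrarily small length.

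\textbf{Smoothing.} Fix a multi-curve $C$ with an essential crossing, and let $C'$ be either of the two smoothings. Given $\epsilon>0$, choose a representative $\gamma$ of $C$ with $\mathrm{len}_d(\gamma)\le \ell_d(C)+\epsilon$. The main obstacle is that the chosen crossing of $C$ is only defined up to homotopy, and the near-optimal $\gamma$ need not be in general position. So we must find an actual point of $\gamma$ where the two strands corresponding to the essential crossing meet.

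For this we lift to $\wt S$. Essentiality of the crossing means the two corresponding lifts $\wt\gamma_1,\wt\gamma_2$ have linked endpoints on $\partial\wt S$, as in Definition~\ref{def:essential-crossing}. Any bi-infinite quasi-path with those endpoint pairs must intersect the other, by a Jordan-curve argument in the disk. Hence $\gamma$ has a genuine intersection point $p$ realizing this crossing, with one preimage on each strand (or two preimages on a single strand, for self-intersections).

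Resolving $\gamma$ at $p$, by reconnecting the four arcs at the single point $p$, produces a closed multi-curve $\gamma'$ with exactly the same total $d$-length, since the arcs are unchanged and only reglued. Its homotopy class is the smoothing $C'$, because smoothing is defined combinatorially at the essential crossing and this is a homotopy invariant notion, as set up earlier in the paper. Therefore
\[
\ell_d(C')\le \mathrm{len}_d(\gamma')=\mathrm{len}_d(\gamma)\le \ell_d(C)+\epsilon.
\]
Letting $\epsilon\to0$ and applying this to both smoothings gives \eqref{eq:smoothing}. Theorem~\ref{thm:intersect} then yields the current $\mu_d$, and it is unique.

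The delicate point, which we would handle first, is the identification of the smoothing at $p$ with the abstract smoothing. Different preimage choices or degenerate (non-transverse) intersections might a priori give different resolutions. We will argue via the lifts: the reglued curve has lifts joining the endpoint pairs obtained by swapping, which determines its homotopy class.
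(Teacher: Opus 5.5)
Your argument follows the paper's outline: take a near-optimal representative, locate in it a crossing of the right type, reglue without changing $d$-length, and take the infimum. The difference is in how the crossing in the near-optimal $\gamma$ is found. The paper gets it by citing Lemma~\ref{lem:essential-cross}; you reprove it with linked endpoints and a Jordan-curve argument, and that replacement has a genuine gap.

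\textbf{The gap: power crossings.} Linked endpoints come from Proposition~\ref{prop:crossings-triality}, which only covers elements that are not common powers. For $C=[a^k]$ with $k\ge 2$, take an essential self-crossing as in Proposition~\ref{prop:crossings-powers}, with oriented smoothing $[a^m][a^{k-m}]$. The two lifts through it are $\wt\gamma$ and $a^m\cdot\wt\gamma$, which share both endpoints $a^{\pm}$. So your argument produces no crossing of $\gamma$ at all.

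This case cannot be dropped. Power smoothing \eqref{eq:power_smoothing_unoriented} is part of the smoothing hypothesis, and the proof of Theorem~\ref{thm:intersect} uses it, through Proposition~\ref{prop:power_disconnected_imply_stable}, to obtain stability. For $\ell_d$ its content is $\ell_d(C^k)\ge k\,\ell_d(C)$, which is not automatic for a general length metric.

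Showing merely that the images of $\wt\gamma$ and $a^m\cdot\wt\gamma$ meet is also not enough. Such a point can split $\gamma$ into loops in the classes $[a^{m'}]$ and $[a^{k-m'}]$ with $m'\equiv m \pmod k$ but $m'\notin(0,k)$.

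\textbf{How to repair it.} The quickest fix is to cite Lemma~\ref{lem:essential-cross}, as the paper does. It is stated for arbitrary continuous representatives homotopic to a minimal one, so it applies to your $\gamma$ at every essential crossing, powers included.

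Alternatively, here is a direct argument for powers.
\begin{itemize}
\item Work in the band model centered on $\ora{a}$, viewed in $\mathbb{C}$, where $a$ acts by $z\mapsto z+1$. Choose the lift $\beta$ of $\gamma$ with $\beta(t+1)=\beta(t)+k$, and fix $0<m<k$.
\item Define $V(t,u)=\beta(t+u)-\beta(t)-m$ for $(t,u)\in(\bbR/\bbZ)\times[0,1]$. Then $V(t,0)=-m$ and $V(t,1)=k-m$.
\item Suppose $V$ never vanished. Then the total change of argument of $u\mapsto V(t,u)$ is a continuous function of $t$ with values in $\pi+2\pi\bbZ$, hence constant.
\item But $\operatorname{Im}\beta$ is $1$-periodic, so it attains a maximum and a minimum. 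At a parameter where it is maximal, $V(t,\cdot)$ stays in the closed lower half-plane, so the change is $+\pi$. Where it is minimal, $V(t,\cdot)$ stays in the closed upper half-plane, so the change is $-\pi$. This is a contradiction.
\item Hence $\beta(t+u)=a^m\cdot\beta(t)$ for some $t$ and some $0<u<1$. So $\gamma(t)=\gamma(t+u)$, and the two arcs close up to loops in $[a^m]$ and $[a^{k-m}]$.
\item Regluing there gives $\ell_d([a^m])+\ell_d([a^{k-m}])\le\ell_d([a^k])+\epsilon$, which is power smoothing.
\end{itemize}
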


This extends earlier constructions for negatively curved, non-positively curved, and locally $\CAT(0)$ or $\CAT(-1)$ metrics~\cite{Otal90:SpectreMarqueNegative, CFF92:RigidityNonPosCurvedRiem, DLR10:DegenerationFlatMetrics, HP97:RigidityNegCurvedCone,BL17:RigidityFlat,Con18:MarkedNonpos}. 

Theorem~\ref{thm:intersect} further yields dual currents (in fact, dual multi-curves) for stable
lengths associated to embedded graphs. Let $\iota \colon \mathcal{G}
\hookrightarrow S$ be a filling embedded graph, in the sense that the complementary
regions are disks. 
Endow $\mathcal{G}$ with the edge-metric $g$ (i.e., each edge has
length 1). Then any closed multi-curve $C$ on $S$ can be homotoped so
that it factors through $\mathcal{G}$.
 Let $\ell_{\mathcal{G}}(C)$ be the length of the shortest multi-curve
 $D$ on $\mathcal{G}$ so that $\iota(D)$ is homotopic to $C$.

\begin{maincor}
Let $\mathcal{G} \hookrightarrow S$ be a filling embedded graph endowed with the edge-metric. Then
the functional $\ell_{\mathcal{G}}$ is dual to a
geodesic current $\mu_{\mathcal{G}}$ which is $1/2$ of an integral
multi-curve.
\label{maincor:embedded}
\end{maincor}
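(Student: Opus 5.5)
The proof has two parts: existence, which will follow from Theorem~\ref{thm:intersect}, and the stated form of the current, which will come from a direct construction.

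\textbf{Step 1: existence via Theorem~\ref{thm:intersect}.} We check the two hypotheses for $\ell_{\mathcal{G}}$.

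Additivity is immediate. A shortest representative on $\mathcal{G}$ of a multi-curve $C_1\cup C_2$ is a union of shortest representatives of the components. Lengths of edge-paths add.

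For smoothing, take an essential crossing of $C$ and realize $C$ by a minimal-length multi-curve $D$ on $\mathcal{G}$. The plan is to locate the crossing on $D$. Then each smoothing of $C$ is homotopic to some resmoothing of $D$ that uses the same multiset of edges, so its length is at most $\ell_{\mathcal{G}}(C)$. Taking the minimum over representatives then gives $\ell_{\mathcal{G}}(\text{smoothing})\le \ell_{\mathcal{G}}(C)$.

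\textbf{Step 2: the main obstacle, locating the crossing.} The difficulty is that $D$ is an immersed edge-path on a graph, not a transverse curve. So we must say where the essential crossing of $C$ "lives" on $D$.

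To handle this, pass to the universal cover. There an essential crossing corresponds to two lifts whose endpoints on $\partial\widetilde S$ are linked. Their geodesic representatives in $\widetilde{\mathcal{G}}$ are bi-infinite edge-paths. Because the endpoints are linked, and $\widetilde{\mathcal{G}}$ is planar and filling, these paths must meet at a vertex, or share a segment and then separate on opposite sides. At the point where they diverge, we cut the two paths and reglue them in the two possible ways. This yields the two smoothings, and no edges are added. Equivariance descends the construction to $S$.

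Care is needed when the shared segment is long, or when the crossing is a self-crossing. I would handle both by doing the cut-and-reglue at a single vertex of the shared segment, in the cyclic-order sense at that vertex.

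\textbf{Step 3: identifying the current as half an integral multi-curve.} Theorem~\ref{thm:intersect} gives uniqueness of $\mu_{\mathcal{G}}$, so it suffices to exhibit a candidate. Consider the dual graph $\mathcal{G}^*$, whose vertices are the complementary disks and whose edges cross each edge of $\mathcal{G}$ once. I would guess that
\[
\mu_{\mathcal{G}}=\tfrac12\sum \gamma,
\]
where the sum runs over the family of "straight-ahead" curves on $\mathcal{G}^*$ (the central curves of the medial construction). Each edge of $\mathcal{G}$ is crossed exactly twice by this family.

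To verify the guess, we need $\tfrac12\, i(\sum\gamma, C)=\ell_{\mathcal{G}}(C)$ for every $C$. The inequality $\le$ holds for any edge-path representative, since each edge contributes $2\cdot\tfrac12$. For the reverse inequality, I would use a tightness argument: a minimal edge-path is in minimal position with each central curve. This is where Step~2's local analysis at vertices is reused.

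If this explicit identification fails, the fallback is to obtain integrality abstractly from the structure of the current produced in the proof of Theorem~\ref{thm:intersect}. The values there are given by combinatorial differences of $\ell_{\mathcal{G}}$ on quadruples of curves, and these are half-integers, which forces the current to be atomic on closed geodesics.
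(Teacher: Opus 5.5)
Your explicit identification in Step~3 is false, so that part of the plan cannot work. The inequality $\tfrac12\, i(\sum\gamma,C)\le\ell_{\mathcal G}(C)$ is fine. The reverse fails, because minimal edge-paths can form bigons with the straight-ahead curves.

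For a concrete counterexample, take the standard one-vertex rose on the genus-$2$ surface. It has edges $a_1,b_1,a_2,b_2$ and a single octagonal face $a_1b_1a_1^{-1}b_1^{-1}a_2b_2a_2^{-1}b_2^{-1}$. Follow the straight-ahead curve through the corner of the octagon between the sides $a_1,b_1$. It crosses $b_1$, passes through the corner between $a_1^{-1},b_1^{-1}$, crosses $a_1$, and closes up. In the universal cover it runs alongside the lifted edge-path $\cdots a_1b_1a_1b_1\cdots$, switching sides at each edge midpoint. So it is freely homotopic to the simple curve $[a_1b_1]$. Hence two of the four points where the edge-path $a_1b_1$ meets your family are inessential, and
\[
\tfrac12\, i\Bigl(\sum\gamma,[a_1b_1]\Bigr)\le 1<2=\ell_{\mathcal G}([a_1b_1]).
\]
The square torus shows the same thing more transparently. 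There the straight-ahead curves are the two diagonals, whose half-intersection with $(m,n)$ is $\max(|m|,|n|)$ rather than the graph length $|m|+|n|$.

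So the proof rests entirely on your fallback, and that fallback is the paper's argument: $\ell_{\mathcal G}$ is integer-valued, so Corollary~\ref{cor:int_multicurve} applies. Two points need to be supplied.
\begin{itemize}
\item The box masses are \emph{limits} of half-integers, not half-integers outright. You need the (easy) remark that a convergent sequence in $\tfrac12\mathbb Z$ is eventually constant.
\item The step you only assert carries the real content: half-integral masses on a dense family of boxes force $2\mu$ to be an integral multi-curve. The paper takes this from \cite[Proposition~4.12]{BIPP24:PositiveCrossratios}. You can also argue it directly (sketch): shrinking boxes show every point of the support is an atom of mass at least $\tfrac12$; atoms are lifts of closed geodesics; local finiteness leaves only finitely many closed geodesics; and their weights are again limits of half-integers.
\end{itemize}

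For existence, the paper does not reprove smoothing; it quotes \cite[Section~4.4]{MGT21:Smoothings} for $\ell_{\mathcal G}\in\AConvex(S)$. Your linked-lifts argument misses the power crossings $[c^{n+m}]\reducesto[c^n][c^m]$. These do not come from lifts with linked endpoints, yet they are part of the smoothing hypothesis, and the proof of Theorem~\ref{thm:intersect} needs them to obtain stability.

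The simplest fix also makes your universal-cover analysis unnecessary. Use Lemma~\ref{lem:essential-cross}, which applies to arbitrary, non-generic concrete representatives. So the minimal edge-path itself has a crossing whose smoothings are the required ones. Cutting and regluing there preserves total edge length, exactly as in the proof of Corollary~\ref{cor:length_dual}.
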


This generalizes the work of Erlandsson~\cite{Erlandsson:WordLength} (Corollary~\ref{cor:generalization_wordlength}).

All of these examples give lengths extending continuously to $\Curr(S)$
by Bonahon’s bilinear extension of intersection. A result
of Erlandsson-Parlier-Souto \cite[Theorem~1.5]{EPS20:CountingCurves}
establishes continuous
extensions for arbitrary Riemannian lengths and, more generally, for a
broad class of lengths on Gromov hyperbolic spaces. In the surface
setting, Theorem~\ref{thm:intersect} strengthens this by showing that
many such extensions arise canonically as intersection with a current.
    
    As a further application, we prove in
    Section~\ref{subsec:ineqintersection} that domination of
    intersection numbers with all closed curves implies domination of
    length spectra for any length pseudo-metric on a closed surface. In
    particular, this answers a question of
    Neumann-Coto~\cite[p.\ 368]{NC01:ShortestGeodesics}.

    For curve functionals arising in algebraic contexts---such as
    cross-ratios (Section~\ref{sec:crossratios}) or translation
    lengths of group actions on real trees
    (Section~\ref{sec:skora})---it is natural to reformulate the
    smoothing condition in purely group-theoretic language, or, equivalently, in terms of
    oriented curves.

    Let $a,b \in \pi_1(S)$ be non-trivial elements that are not common
    powers. We write $A$ for the inverse of $a \in \pi_1(S)$. Fix any
    hyperbolic structure on $S$, and denote by $\ora{a}$ and $\ora{b}$
    the corresponding axes in the universal cover. We say $(\ora{a}, \ora{b})$ are \emph{crossing} if they intersect at a point, and are \emph{parallel} if they are non-intersecting and coherently aligned. (See
Definition~\ref{def:crossing} and
    Figure~\ref{fig:cases-a-b} for a precise definition.)

\begin{mainthmprime}
Let $f \colon \pi_1(S) \to \mathbb{R}$ be a function and $\overline{f}(g) \coloneqq \frac{1}{2} \left(f(g) + f(G) \right)$ for every $g \in \pi_1(S)$.
Then there exists a dual geodesic current $\mu_{\overline{f}}$ on $S$ so that
\[
i(\mu_{\overline{f}}, [g]) = \overline{f}(g)
\quad\text{for every }g \in \pi_1(S).
\]
if and only if $f$ satisfies the following conditions for $a,b \in \pi_1(S)$:
\begin{enumerate}
\item class invariant: $f(a)=f(b)$ whenever $a$ is conjugate to $b$.
\item stability: 
$f(a^n)=nf(a) \mbox{ for all } n \geq 1$.
\item oriented smoothing:
 \begin{align}
    \label{eq:conn_oriented} f(ab) &\geq  f(a) + f(b) && \mbox{ if } (a,b)\mbox{ are } \mbox{parallel;} \\
    \label{eq:disconn_oriented} f(a) + f(b) &\geq f(ab)  && \mbox{ if }  (a,b)\mbox{ are } \mbox{crossing}.
  \end{align}
\end{enumerate}
\label{thm:intersections_group}
\end{mainthmprime}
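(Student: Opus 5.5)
The plan is to reduce Theorem~\ref{thm:intersections_group} to Theorem~\ref{thm:intersect}. Given $f\colon\pi_1(S)\to\mathbb{R}$ satisfying (1)--(3), I would define a curve functional $F$ on multi-curves. For a connected curve $\gamma$, choose $g$ with $[g]=\gamma$. If $\gamma$ is primitive, set $F(\gamma)=\overline f(g)$. This is well defined by class invariance, and it does not depend on the orientation because $\overline f$ is symmetric under $g\mapsto G$. Next, $F(\gamma^n)=nF(\gamma)$ by stability. Finally, extend $F$ additively over components, with $F(\text{trivial})=0$; stability with $n=2$ applied to the identity forces $f(1)=0$. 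Additivity then holds by construction. It remains to show that $F$ has the smoothing property \eqref{eq:smoothing} at every essential crossing if and only if $f$ has oriented smoothing.

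For sufficiency, take an essential crossing of a multi-curve and lift it to $\wt S$, where it becomes two axes crossing transversally. There are two cases. If the crossing lies on a single component, orient that component and cut it at the crossing. This writes the component as $ab$, with $a,b$ the two loops based at the crossing point. Of the two smoothings, the orientation-respecting one gives the two-component multi-curve $a\sqcup b$. The other smoothing gives a single curve represented by $aB$ (or $Ab$). If the crossing is between two components, take representatives $a,b$ based at the crossing. The oriented smoothing gives the single curve $ab$, and the other gives $aB$. In each case, \eqref{eq:smoothing} becomes a pair of inequalities among $\overline f(ab)$, $\overline f(aB)$, $\overline f(a)+\overline f(b)$. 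The geometric input is this: when the axes of $a$ and $b$ cross, one of the pairs $(a,b)$, $(a,B)$ is crossing in the sense of Definition~\ref{def:crossing}. In the self-crossing case, the two loops at an essential crossing give parallel or anti-parallel axes, and replacing $b$ by $B$ switches between these. I would verify this case analysis against Figure~\ref{fig:cases-a-b}.

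In the crossing-components case, \eqref{eq:disconn_oriented} applied to $(a,b)$ and to $(a,B)$ gives $f(a)+f(b)\ge f(ab)$ and $f(a)+f(B)\ge f(aB)$. Averaging these with their inverse versions, using class invariance and $(ab)^{-1}\sim BA$, yields exactly $F(a\cup b)\ge\max\{F(ab),F(aB)\}$. In the self-crossing case, \eqref{eq:conn_oriented} applied to the parallel pair gives $F(ab)\ge F(a)+F(b)$. The other branch needs $F(ab)\ge F(aB)$. For this I would rewrite $ab$ as a product of $aB$-type elements that are crossing, and apply \eqref{eq:disconn_oriented}. One candidate is the pair $(aB, b^2)$, since $aB\cdot b^2=ab$. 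I would then use stability $f(b^2)=2f(b)$ together with the first inequality to conclude. This step is where I expect the real difficulty: one must check that the chosen pair really is crossing, and possibly also handle the case where $a$ or $b$ is non-primitive or a power of the other. Once $F$ satisfies \eqref{eq:smoothing}, Theorem~\ref{thm:intersect} gives $\mu$ with $i(\mu,[g])=F([g])=\overline f(g)$.

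For necessity, suppose $\overline f(g)=i(\mu,[g])$. Class invariance, stability and inverse-symmetry of $\overline f$ are clear. The oriented inequalities should follow from the smoothing property of $i(\mu,\cdot)$, given by the easy direction of Theorem~\ref{thm:intersect}, via the same dictionary as above. However, the theorem asserts conditions on $f$ itself, not on $\overline f$, so I would need to read the statement as allowing $f$ to be replaced by $\overline f$. Equivalently, necessity should be understood for $f=\overline f$ symmetric, and the non-symmetric part of $f$ plays no role in the equivalence.
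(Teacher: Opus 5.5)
Your reduction to Theorem~\ref{thm:intersect} is fine, and so are the additive symmetric extension, the crossing-components case, and the branch $\overline f(ab)\ge\overline f(a)+\overline f(b)$ of the self-crossing case; these match the paper. The gap is exactly where you suspected: the unoriented connected branch $\overline f(ab)\ge\overline f(aB)$ for parallel $(\ora a,\ora b)$.

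Your candidate pair $(aB,b^2)$ is never crossing, since by Lemma~\ref{lem:aB_a_nocross} the axis $\ora{aB}$ never crosses $\ora b$. Even if it were crossing, \eqref{eq:disconn_oriented} would give $f(aB)+2f(b)\ge f(ab)$. That is an \emph{upper} bound on $f(ab)$, and combined with $\overline f(ab)\ge\overline f(a)+\overline f(b)$ it yields nothing.

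The decomposition $ab=(aB)\,b^2$ (or $ba=(bA)\,a^2$) can only help through \eqref{eq:conn_oriented}. That requires $(\ora{aB},\ora b)$ (resp.\ $(\ora a,\ora{bA})$) to be parallel, together with $\overline f\ge0$ (Proposition~\ref{prop:curve-positive}). But in the splitting configuration of Figure~\ref{fig:r3}, both of these pairs are anti-parallel, so neither rewriting gives anything. This configuration always occurs after passing to large powers (Proposition~\ref{prop:eventual_splitting}).

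The paper handles this step as follows. It proves the oriented inequalities and the unoriented disconnected one for $\overline f$ directly. It then gets the unoriented connected inequality from Corollary~\ref{cor:oriented_smoothing_unoriented}, that is, from the continuous extension to currents \cite[Theorem~A]{MGT21:Smoothings} plus the asymptotic argument of Proposition~\ref{prop:cont_conn_disc}.

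Your approach can also be completed elementarily by splitting cases via Lemma~\ref{lem:aB_bA_antiparallel_cross}.
\begin{itemize}
\item If $(\ora{aB},\ora{Ba})$ cross, Lemma~\ref{lem:a_parallel_Ba} makes $(\ora b,\ora{aB})$ or $(\ora a,\ora{bA})$ parallel. Your rewriting then works via \eqref{eq:conn_oriented} and non-negativity.
\item If $(\ora{aB},\ora{Ba})$ are anti-parallel (this includes splitting), then $(\ora{ab},\ora{ba})$ cross by Lemma~\ref{lem:righthandedfoundation}. The unoriented disconnected inequality you already derived gives $2\overline f(ab)\ge\overline f(baBA)=\overline f\bigl((aB)(Ab)\bigr)$. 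Since $(\ora{aB},\ora{Ab})$ are parallel, \eqref{eq:conn_oriented} gives $\overline f\bigl((aB)(Ab)\bigr)\ge\overline f(aB)+\overline f(Ab)=2\overline f(aB)$.
\end{itemize}

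Your caveat on necessity is correct. As literally stated, necessity fails for non-symmetric $f$: perturbing by an antisymmetric class function that violates stability leaves $\overline f$ unchanged. So the statement should be read with $f=\overline f$. The paper's proof only addresses the ``if'' direction, and necessity for $i(\mu,\cdot)$ is Appendix~\ref{app:necessary}.
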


Condition~(3) in Theorem~\ref{thm:intersections_group} is formally
weaker than Eq.~\eqref{eq:smoothing} of Theorem~\ref{thm:intersect}, as
it captures smoothing only in an oriented sense (see
Section~\ref{sec:smoothing}). Nevertheless, when combined with the
remaining hypotheses, it becomes equivalent to the full smoothing
condition for $\overline{f}$; more precisely, equations~\eqref{eq:conn_oriented}
and~\eqref{eq:disconn_oriented} together recover smoothing in the
sense of Theorem~\ref{thm:intersect}
(Corollary~\ref{cor:oriented_smoothing_unoriented}).

Inequalities~\eqref{eq:conn_oriented} and~\eqref{eq:disconn_oriented} are individually necessary and logically independent (Section~\ref{subsec:mainhypotheses}), but become equivalent for functionals that extend continuously to geodesic currents (Proposition~\ref{prop:cont_conn_disc}). Moreover,
the symmetrization $\overline f(g)=\tfrac12(f(g)+f(G))$ is
essential; without it, the conclusion fails
(Appendix~\ref{sec:asymmetric}).

Theorem~\ref{thm:intersect} may be viewed as a far-reaching extension
of Feng Luo’s criterion~\cite{Luo98:IntersectionNumbers}, which
characterizes those functionals on simple closed curves that arise as
intersection with measured laminations.

In previous work~\cite{MGT21:Smoothings}, we showed that stable
lengths associated to quasi-Fuchsian representations satisfy
\emph{quasi-smoothing}, namely smoothing up to a uniform additive
error. Recent work of
Fricker-Furman~\cite{FF22:QF} (see
also~\cite{BR24:Cubulations}) proves that this error term cannot, in
general, be eliminated unless the representation is Fuchsian. In the
Fuchsian case, the associated length functional is dual to a geodesic
current~\cite[Proposition~14]{Bonahon88:GeodesicCurrent}, and hence
satisfies exact smoothing. Theorem~\ref{thm:intersect} places these
results in a unified framework by identifying smoothing and additivity
as the precise necessary and sufficient conditions for duality with a
geodesic current.
These results identify a common axiomatic structure underlying curve functionals arising from a variety of geometric and dynamical constructions, which we explore further below.

\subsection{Characterization of measured laminations and hyperbolic metrics}

As an application of Theorem~\ref{thm:intersections_group}, we obtain intrinsic
characterizations of those curve functionals whose dual currents are
measured laminations or hyperbolic Liouville currents.
(Measured laminations are precisely those currents $\alpha$
satisfying $i(\alpha,\alpha)=0$~\cite[Proposition~17]{Bonahon88:GeodesicCurrent}.)
\begin{mainthm}
\label{thm:hyp-lam-char}
\mainthmsublabel{mainthm:char_lam}
\mainthmsublabel{mainthm:char_hyp}

Let $f \colon \pi_1(S) \to \mathbb{R}$ be a function and set
\[
\overline{f}(g) \coloneqq \frac{1}{2}\bigl(f(g)+f(g^{-1})\bigr).
\]
Then there exists a measured lamination, respectively a hyperbolic metric,
$\mu_{\overline f}$ on $S$ such that
\[
i(\mu_{\overline f},[g])=\overline f(g)
\quad\text{for every } g\in \pi_1(S),
\]
if and only if $f$ satisfies the conditions of
Theorem~\ref{thm:intersections_group}
and, when $(\ora{a},\ora{b})$ are crossing, the corresponding condition:
\[
\begin{array}{c@{\qquad}c}
\textbf{Measured lamination \textup{(L)}} &
\textbf{Hyperbolic metric \textup{(H)}} \\[4pt]
f(a)+f(b)=\max\{f(ab),f(aB)\}.
&
\lambda(a)\lambda(b)
= \lambda(ab)+\lambda(aB), \text{ where }
\lambda(g)
\coloneqq 2\cosh\!\bigl(f(g)/2\bigr).
\end{array}
\]
\end{mainthm}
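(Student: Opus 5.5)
Both halves follow one pattern. First apply Theorem~\ref{thm:intersections_group} to obtain the unique current $\mu=\mu_{\overline f}$ with $i(\mu,[g])=\overline f(g)$. Then show that the extra crossing condition holds exactly when $\mu$ is a measured lamination (respectively a Liouville current). Throughout, for a crossing pair $(\ora a,\ora b)$, the two elements $ab$ and $aB$ correspond to the two smoothings of the crossing point of the closed curves $a$ and $b$ (the oriented and the anti-oriented resolution). Moreover, after replacing $a,b$ by high powers $a^n,b^m$, the curves can be taken to have exactly one essential crossing in a suitable cover (or the crossing is isolated). This is the setting where Eq.~\eqref{eq:smoothing} is applied.

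\textbf{Case (L), necessity.} If $\mu$ is a measured lamination, the intersection with a multi-curve can be computed by train-track (or transverse-measure) weights. Near the single crossing, the lamination's leaves crossing a small square around the crossing point split into those that cross both arcs and those that cross only one. A leaf that crosses both arcs separates the square so that exactly one of the two smoothings loses it. Because leaves are disjoint, all such leaves are coherent with one of the two smoothings, so for that smoothing no intersection is lost. This gives equality $f(a)+f(b)=\max\{f(ab),f(aB)\}$, with the symmetrization handling orientation.

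\textbf{Case (L), sufficiency.} Use the criterion $i(\mu,\mu)=0$. Approximate $\mu$ by weighted closed curves and use continuity, or more directly use Luo's characterization~\cite{Luo98:IntersectionNumbers}. A cleaner route is to show that the support of $\mu$ contains no two transverse geodesics. If leaves $\gamma_1,\gamma_2$ in the support crossed, choose closed curves $a,b$ whose axes closely fellow-travel curves $\delta_1,\delta_2$ that separate near the crossing. The $\mu$-mass near both resolutions is then strictly lost, giving $f(a)+f(b)>\max\{f(ab),f(aB)\}$, which contradicts (L).

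\textbf{Case (H), necessity.} Use the trace identity $\operatorname{tr}(A)\operatorname{tr}(B)=\operatorname{tr}(AB)+\operatorname{tr}(AB^{-1})$ in $\SL_2(\bbR)$. For a Fuchsian lift with $a,b$ crossing, the traces can be taken positive with $|\operatorname{tr}|=2\cosh(\ell/2)$, and the sign conventions work out for crossing axes. This gives $\lambda(a)\lambda(b)=\lambda(ab)+\lambda(aB)$.

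\textbf{Case (H), sufficiency.} This is the main obstacle. Define $\tau(g)=\pm\lambda(g)$, with signs chosen for a lift, and show it is the character of a discrete faithful representation. The identity gives the Fricke relation on enough pairs. Using a standard generating set $a_1,b_1,\dots$ whose consecutive axes cross, the values on words are determined recursively by the trace relations. So $\lambda$ agrees with the trace function of the $\SL_2$ representation built from Fricke coordinates fitting $\lambda$ on a finite set, which is realizable because $\lambda>2$ and the crossing relations force the right inequalities. To show the resulting representation is Fuchsian (and not quasi-Fuchsian or indiscrete), use that its length function is dual to a current and hence satisfies exact smoothing, together with the Fricker--Furman rigidity~\cite{FF22:QF}. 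Alternatively, use that $\mu$ has $i(\mu,\mu)=\pi^2|\chi(S)|$ and full support, via Otal's rigidity. I expect controlling the sign choices and the realizability of the character to be the hardest part.
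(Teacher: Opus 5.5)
Your overall structure (get $\mu$ from Theorem~\ref{thm:intersections_group}, then show the crossing condition holds exactly when $\mu$ is special) is the paper's. Your intuition for (L) is also the right one: the geodesics lost by the two smoothings form two families, and each member of one crosses each member of the other. But your accounting of what is lost is wrong, and that accounting is the heart of the proof.

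The intersection lost in passing from $[a][b]$ to $[ab]$ is not detected in a small square around the crossing point $O$. In $\wt S$ the smoothed curve is the broken geodesic through $B\cdot O$, $O$, $a\cdot O$. Its tightening $\ora{ab}$ passes through the midpoints $Q,P$ of its two segments (Proposition~\ref{prop:keygeometry}). So the lost geodesics are those cutting off the corner $O$ of the triangle $QOP$, and of its partner $P'OQ'$. A leaf crossing $[Q,O]$ and $[O,P]$ near $Q$ and $P$ is lost by the $[ab]$-smoothing but never enters a small square around $O$. If no leaf meets the square, your argument would give $f(ab)=f(aB)=f(a)+f(b)$, which is then false; so ``no local loss, hence no loss'' fails.

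What you need is the exact identity. The quantities $f(a)+f(b)-f(aB)$ and $f(a)+f(b)-f(ab)$ are the $\mu$-masses of the double transversals of the two pairs of opposite sides of the parallelogram $QPQ'P'$ (Lemma~\ref{lem:otalclaim} with Proposition~\ref{prop:keygeometry}; Eqs.~\eqref{eq:Rn-meas}, \eqref{eq:Rn-perp-meas}). Given it, necessity is exactly your argument. Sufficiency further requires applying (L) to $(a^n,b^n)$, whose double transversals exhaust the boxes $B_{a,b}$ and $B^\perp_{a,b}$ (Lemma~\ref{lem:measures_parallelogram}). That is what powers are for, not isolating a crossing in a cover; the pair $(a,b)$ already pins the crossing down. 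In your contrapositive version, choose $a,b$ whose endpoints interleave those of the two crossing support geodesics, so that neighbourhoods of these lie in $B_{a,b}$ and $B^\perp_{a,b}$ respectively.

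For (H), your necessity argument via $\operatorname{tr}A\operatorname{tr}B=\operatorname{tr}AB+\operatorname{tr}AB^{-1}$ is a legitimate alternative to the parallelogram law (Lemma~\ref{lem:hyp_parallelogram}). It still needs the sign claim proved, e.g.\ by writing $A$, $B$ as products of lifted half-turns about $P,O,Q$ as in Proposition~\ref{prop:keygeometry}.

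The sufficiency, however, is not a proof. Hypothesis (H) gives the Fricke relation only for crossing pairs and only with all signs positive. Reconstructing an $\SL_2(\bbR)$-character by trace recursion needs the relation for arbitrary pairs of subwords, with signs; you never derive these, and realizability is only asserted. The Fuchsian step also fails as proposed. Fricker--Furman concerns quasi-Fuchsian representations into $\PSL_2(\mathbb{C})$ and does not exclude a non-discrete or non-faithful representation into $\PSL_2(\bbR)$. The value $i(\mu,\mu)=\pi^2|\chi(S)|$ is never derived, and even with full support it does not single out hyperbolic Liouville currents.

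The missing idea is to stay on the measure side, as the paper does:
\begin{itemize}
\item apply (H) to $(a^n,b^n)$ and substitute Eqs.~\eqref{eq:Rn-meas} and \eqref{eq:Rn-perp-meas};
\item use $\lambda(g)\sim e^{f(g)/2}$ as $f(g)\to\infty$ to get $e^{-\mu(B_{a,b})}+e^{-\mu(B^\perp_{a,b})}=1$ for every crossing pair;
\item extend this to all boxes by density of such boxes;
\item conclude with Bonahon's criterion (Theorem~\ref{thm:bonahonliouville}) that $\mu$ is a Liouville current.
\end{itemize}
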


This theorem is proved in Section~\ref{sec:special-classes}.

The $\lambda$-length functional arises naturally in the study of the
skein algebra of a surface; see, for example,
\cite[Secs.~1--2]{Pen12:Decorated} and
\cite[Sec.~3]{Thurston14:PBI}.

We highlight some features of Theorem~\ref{thm:hyp-lam-char}.

\begin{itemize}
    \item Combined with Theorem~\ref{thm:intersect}, it provides
    criteria for determining whether a curve functional arises from a
    measured lamination or from a hyperbolic metric. It suffices to verify
    a countable family of inequalities together with a countable family of
    equalities, where the equalities involve \emph{only} crossing pairs of elements in $\pi_1(S)$. In particular, in the
    hyperbolic case this avoids checking relations coming from
    self-crossings. In the setting of trace identities in the
    $\PSL(2,\mathbb{R})$ character variety, the corresponding relations for
    self-crossings are more intricate, as they depend on sign choices.
    \item It characterizes hyperbolic metrics among \emph{all}
    curve functionals. In particular, it applies to stable lengths arising
    from Green metrics associated to admissible random walks on
    $\pi_1(S)$, suggesting applications to the study of singularity
    properties of their hitting measures; see
    Section~\ref{subsec:green}.
  \item It extends Luo’s characterizations of those
functions on simple closed curves that arise as either intersection with
measured laminations~\cite[Theorem~1.1, Theorem~1.2]{Luo98:IntersectionNumbers}
or as hyperbolic length
functions~\cite{Luo98:GeodesicLength}.
\item Likewise, we recover the result of the second
  author~\cite{Thu00:GeometricIntersection}
  that measured laminations satisfy max-smoothing.
\end{itemize}

\subsection{Characterizing small actions on real trees} 

We next apply Theorem~\ref{thm:intersect} to actions of surface groups on $\mathbb{R}$-trees. We obtain new characterizations of those $\pi_1(S)$-actions that are dual to measured laminations. In particular, we give an axiomatic characterization in terms of explicit inequalities on pairs of group elements. Skora’s classical theorem then emerges as a formal consequence, yielding a conceptual and streamlined proof~\cite{Sko90:GeometricAction}.

A \emph{$\Gamma$-tree} is an isometric action of a finitely generated
group $\Gamma$ on an $\mathbb{R}$-tree $T$. For $g\in\Gamma$, its
translation length is
\[
\|g\|=\inf_{x\in T} d(x,gx).
\]
The \emph{characteristic set} $T_g$ of $g \in \Gamma$ is the set of
points that are translated the least. Concretely,
an isometry $g$ of $T$ either has a non-empty set $T_g$ of fixed
points (\emph{elliptic}) or
has no fixed points and translates points on its invariant
axis $T_g$ by a definite distance of $\|g\|$
(\emph{hyperbolic}). See \cite[\S1.3]{CM87:GroupActions}.

We say the action is \emph{irreducible} if it does not have a global
fixed point, $T$ has no ends fixed by all of
$\Gamma$, and there are no $\Gamma$-invariant lines (see Section~\ref{sec:skora}).

An \emph{$S$-tree} is a minimal $\pi_1(S)$-tree.
An $S$-tree \emph{preserves axis intersection} if whenever two elements
$a,b\in\pi_1(S)$ acting hyperbolically on the tree have intersecting hyperbolic axes in
$\widetilde{\Sigma}$, their axes in $T$ also intersect.
If the same holds for all characteristic sets (including elements
acting elliptically on~$T$),
we say the action \emph{preserves characteristic intersection}.

A fundamental example is the $S$-tree $T_\lambda$ dual to a measured
lamination $\lambda$, obtained by collapsing $\widetilde{\Sigma}$
by the pseudo-metric induced by the transverse
$\lambda$-measure.
 
If two hyperbolic isometries of $T$ have axes intersecting in a
non-degenerate arc, we say they \emph{overlap incoherently} if they
translate in opposite directions along the overlap. We prove the following equivalences.

\begin{mainthm}
Let $(S,T)$ be an $S$-tree. The following are equivalent:
\begin{enumerate}[(A)]
\item There exists a measured lamination $\lambda$ on $S$ such that
$T$ is $\pi_1(S)$-equivariantly isometric to $T_\lambda$;

\item The $S$-tree is irreducible and preserves axis intersection;

\item The $S$-tree is irreducible and preserves characteristic intersection;

\item The translation length function
$\|\cdot\|\colon \pi_1(S)\to\mathbb{R}$ satisfies:
\begin{enumerate}
\item For every pair $(a,b)$ crossing in $\wt{S}$,
\begin{equation}
\|a\|+\|b\|
=
\max\{\|ab\|,\|aB\|\};
\label{eq:small_disconn}
\end{equation}

\item For every pair $(a,b)$ parallel in $\wt{S}$,
\begin{equation}
\|ab\|
=
\max\{\|a\|+\|b\|,\|aB\|\}.
\label{eq:small_conn}
\end{equation}
\end{enumerate}

\item The $S$-tree has no pair of hyperbolic isometries that are
parallel in $\wt{S}$ and overlap incoherently in $T$.
\end{enumerate}
\label{mainthm:skora}
\end{mainthm}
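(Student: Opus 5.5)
The strategy is a cycle of implications (A)$\Rightarrow$(B)$\Rightarrow$(C)$\Rightarrow$(E)$\Rightarrow$(D)$\Rightarrow$(A). The decisive step is (D)$\Rightarrow$(A), and it goes through Theorem~\ref{thm:hyp-lam-char}\ref{mainthm:char_lam}. Since $\|g\|=\|g^{-1}\|$, the symmetrization $\overline f$ of $f=\|\cdot\|$ is just $f$. Translation length is a class function, and it satisfies $\|a^n\|=n\|a\|$, so conditions (1) and (2) of Theorem~\ref{thm:intersections_group} hold automatically.

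From (D) we then read off the remaining inequalities. Equation~\eqref{eq:small_disconn} gives $\|a\|+\|b\|\ge\|ab\|$ for crossing pairs, which is \eqref{eq:disconn_oriented}. Equation~\eqref{eq:small_conn} gives $\|ab\|\ge\|a\|+\|b\|$ for parallel pairs, which is \eqref{eq:conn_oriented}. Equation~\eqref{eq:small_disconn} is also exactly condition (L). Hence there is a measured lamination $\lambda$ with $i(\lambda,[g])=\|g\|$ for all $g$. The length functions of $T$ and $T_\lambda$ therefore agree. Both trees are minimal, and since (D) forces irreducibility (for instance a crossing pair has positive length sum, by an argument analogous to Culler--Morgan), Culler--Morgan's length-function rigidity~\cite{CM87:GroupActions} gives an equivariant isometry $T\cong T_\lambda$.

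For (A)$\Rightarrow$(B), $T_\lambda$ is irreducible because $\lambda\neq0$ and the $\lambda$-length function is not abelian. If the axes of $a,b$ cross in $\wt\Sigma$, then the images of the two geodesic axes under the collapse map $\wt\Sigma\to T_\lambda$ meet at the image of the intersection point. These images are the axes in $T$, since a $\lambda$-geodesic line projects to the characteristic set. For (B)$\Rightarrow$(C) we must also handle elliptic elements. An elliptic $a$ with a crossing partner $b$ can be replaced by $a b^n$ or by conjugates, which are hyperbolic and whose axes come close to $T_a$ for large $n$; a limiting argument then makes $T_a\cap T_b$ nonempty. This is the routine but fiddly part.

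For (C)$\Rightarrow$(E), suppose $a,b$ are parallel in $\wt S$ but overlap incoherently in $T$. We find $c$ whose axis in $\wt S$ separates the axes of $a$ and $b$, obtained by taking a suitable $c$ crossing neither of them, or by using $aB$ together with conjugates. If $T_a$ and $T_b$ overlap incoherently, then $ab$ and $aB$ have translation lengths that force a crossing pair in $\wt S$ to have disjoint characteristic sets in $T$, contradicting (C). I expect to realize this through the standard Culler--Morgan formulas $\|ab\|=\|a\|+\|b\|\pm 2\,\mathrm{overlap}$.

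For (E)$\Rightarrow$(D), I would use the Culler--Morgan case analysis for two hyperbolic isometries. If the axes are disjoint at distance $\delta$, then $\|ab\|=\|aB\|=\|a\|+\|b\|+2\delta$. If they overlap in length $\Delta$, one of $\|ab\|,\|aB\|$ equals $\|a\|+\|b\|$ and the other equals $\|a\|+\|b\|-2\Delta$ when $\Delta<\min(\|a\|,\|b\|)$, with the analogous formulas otherwise. For a crossing pair the axes must meet, because otherwise a parallel pair built from them (such as $a$ and $bab^{-1}$) would overlap incoherently. Once they meet, $\max\{\|ab\|,\|aB\|\}=\|a\|+\|b\|$, which is \eqref{eq:small_disconn}. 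For a parallel pair, (E) says that any overlap is coherent, so $\|ab\|$ is the larger of the two quantities; this gives \eqref{eq:small_conn}. Elliptic cases reduce similarly using fixed-point sets.

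The main obstacle is (E)$\Rightarrow$(D) for crossing pairs, namely showing that axes of crossing elements must intersect in $T$, together with the degenerate cases where one element is elliptic. My first attempt is to produce the parallel pair $(a, bab^{-1})$, or $(ab^n, b^{n}a)$, from disjoint axes and check that disjoint axes would force an incoherent overlap. If that fails, I would fall back on proving (C)$\Rightarrow$(D) directly and (E)$\Rightarrow$(C).
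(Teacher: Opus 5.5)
Your two ends are fine. (D)$\Rightarrow$(A) is the paper's route through the lamination part of Theorem~\ref{thm:hyp-lam-char}. The Culler--Morgan rigidity step you add is left implicit in the paper; irreducibility of $T$ is cleanest from the fact that $\|\cdot\|$ equals the length function of the irreducible tree $T_\lambda$. (A)$\Rightarrow$(B) is standard.

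The genuine gap is that you never prove the engine behind the middle implications, Proposition~\ref{prop:parallel-incoherent}: in an irreducible $S$-tree preserving axis intersection, no pair parallel in $\wt{S}$ overlaps incoherently in $T$. Your (C)$\Rightarrow$(E) sketch lacks both of its ingredients.
\begin{itemize}
\item For bounded overlap, pass to powers so that $g^m,h^{-n}$ form a good pair (Lemma~\ref{lem:powers_goodpair}). Then $T_{g^mh^n}\cap T_{h^ng^m}=\emptyset$ by Lemma~\ref{lem:oppgoodpair}, although $\ora{g^mh^n}$ and $\ora{h^ng^m}$ cross.
\item For unbounded overlap, irreducibility is needed (Lemmas~\ref{lem:if_unbounded_then_reducible} and~\ref{lem:make_coherent}) to manufacture a parallel pair with bounded incoherent overlap.
\end{itemize}
Your sketch never uses irreducibility, and it cannot avoid it. Take disjoint, homologically independent curves $g,h$ with parallel lifts, and let $\pi_1(S)$ act on a line by translations through $\rho\in H^1(S;\mathbb{R})$ with $\rho(g)=1=-\rho(h)$. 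Every characteristic set is the whole line, so characteristic intersection is preserved, yet $g,h$ overlap incoherently.

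Your (B)$\Rightarrow$(C) limiting argument also yields nothing. Suppose $a$ is elliptic, $b$ hyperbolic, and $T_a\cap T_b=\emptyset$ with bridge $[z,y]$. Then $T_{ab^n}\supset[b^{-n}z,z]$, so it passes through $T_a$ and meets $T_b$ in a segment of length $n\|b\|$. This is perfectly consistent with (B). The pair that works is $(ab,ba)$. It is parallel in $\wt{S}$ by Lemma~\ref{lem:righthandedfoundation} and overlaps incoherently in $T$ by Lemmas~\ref{lem:pau_ell_hyp} and~\ref{lem:pau_two_ell}. It gives a contradiction only once Proposition~\ref{prop:parallel-incoherent} is known.

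Likewise, in the hyperbolic crossing case of (E)$\Rightarrow$(D), use $(ab,ba)$ rather than $(a,bab^{-1})$. When $T_a\cap T_b=\emptyset$, the axes $T_a$ and $bT_a$ are disjoint, so they do not overlap at all.

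The second gap is the claim that the elliptic cases of (E)$\Rightarrow$(D) ``reduce similarly''. The crossing elliptic cases do follow from (E) via $(gh,hg)$. The problem is the case $g$ elliptic, $h$ hyperbolic, $(\ora{g},\ora{h})$ parallel.
\begin{itemize}
\item Lemma~\ref{lem:pau_ell_hyp}(2) allows $T_{ghG}$ and $T_h$ to overlap incoherently. That gives $\|gH\|=\|h\|>\|gh\|$, which violates \eqref{eq:small_conn}.
\item Since $g$ pushes $\ora{h}$ toward $g^+$ on the same side of $\ora{g}$, the lift $g\cdot\ora{h}$ either crosses $\ora{h}$ or is anti-parallel to it, never parallel. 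So (E) gives no direct contradiction.
\end{itemize}
The paper handles this subcase with a separate argument at the end of Case~2 in the proof of Theorem~\ref{thm:small}, using the both-elliptic subcases and Lemma~\ref{lem:foundation_trees}.
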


The equivalence of {\rm (A)} and {\rm (B)} recovers
Skora’s theorem~\cite{Sko90:GeometricAction}.
All remaining equivalences are new.

Skora also gave another  condition
equivalent to duality with a measured lamination~\cite{Sk96:Splittings}:
\begin{enumerate}[label=(\Alph*), start=6]
\item The $S$-tree is \emph{small}, i.e., stabilizers of non-degenerate arcs are cyclic.
\end{enumerate}

For a general finitely generated group $\Gamma$,
Culler-Morgan~\cite{CM87:GroupActions} identified axioms satisfied by
translation lengths of $\Gamma$-actions on $\mathbb{R}$-trees and asked
whether these axioms characterize such actions among conjugacy-invariant
functions $f\colon\Gamma\to\mathbb{R}$. Parry~\cite{Par91:Translation}
answered this affirmatively.
A restatement of these axioms (see Proposition~\ref{prop:max_twice})
is that a conjugacy-invariant
function $\|\cdot\|\colon\Gamma\to\mathbb{R}$ is the translation length
of a $\Gamma$-tree if and only if for every pair $a,b\in\Gamma$ the
maximum among
\[
\|a\|+\|b\|, \qquad \|ab\|, \qquad \|aB\|
\]
is attained at least twice.

When $\Gamma=\pi_1(S)$, Theorem~\ref{thm:intersect} refines Parry’s
characterization by isolating precisely which translation lengths arise
from \emph{small} actions on $\mathbb{R}$-trees, or equivalently from
actions dual to measured laminations.

\subsection{Generalized cross-ratios}

We next apply Theorem~\ref{thm:intersect} to generalized cross-ratios, thereby placing the constructions of Martone--Zhang~\cite{MZ19:PositivelyRatioed} and Burger--Iozzi--Parreau--Pozzetti~\cite{BIPP24:PositiveCrossratios}, which associate geodesic currents to generalized cross-ratios arising from certain Anosov representations, into a unified framework.

A \emph{generalized cross-ratio} is a function
\[
[\cdot,\cdot,\cdot,\cdot]\colon (\partial_\infty S)^{(4)}\to\mathbb{R}
\]
on ordered 4-tuples of distinct boundary points satisfying natural
invariance, symmetry, finite additivity, and non-negativity properties
(see Definition~\ref{def:general_crossratio}). 
For instance, if $\Sigma$ is a hyperbolic metric on $S$, the classical
hyperbolic cross-ratio $[\cdot]_\Sigma$ obtained from the upper
half-plane model provides such an example.

Every generalized cross-ratio induces an additive curve functional,
called its \emph{period}, denoted $\ell_{[\cdot]}$.
In the hyperbolic case, $\ell_{[\cdot]_\Sigma}$ coincides with the
hyperbolic length function.

The following theorem shows that periods automatically satisfy the
axioms of Theorem~\ref{thm:intersect}.

\begin{mainthm}
Let $[\cdot,\cdot,\cdot,\cdot]$ be a generalized cross-ratio with
associated period $\ell$. Then $\ell$ satisfies the hypotheses of
Theorem~\ref{thm:intersect} and therefore induces a geodesic current
$\mu_\ell$.
\label{mainthm:crossratio}
\end{mainthm}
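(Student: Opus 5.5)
The plan is to check the hypotheses of Theorem~\ref{thm:intersections_group} for $f=\ell$, using the period formula, and then apply that theorem together with Corollary~\ref{cor:oriented_smoothing_unoriented}. For a nontrivial $g\in\pi_1(S)$ with attracting and repelling fixed points $g^+,g^-\in\partial_\infty S$, the period is
\[
\ell(g)=[g^-,g^+,x,gx]
\]
for any $x\neq g^\pm$. Finite additivity and $\pi_1(S)$-invariance make this independent of~$x$. Conjugation invariance follows directly from invariance of the cross-ratio. Stability $\ell(g^n)=n\ell(g)$ follows by writing $[g^-,g^+,x,g^nx]=\sum_{k=0}^{n-1}[g^-,g^+,g^kx,g^{k+1}x]$ and using invariance. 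Additivity on components is built into the definition of the period on multi-curves. So the substance is the two oriented smoothing inequalities~\eqref{eq:conn_oriented} and~\eqref{eq:disconn_oriented}. Their symmetrized versions follow from the symmetry axiom for $[\cdot]$ ($\ell(g)=\ell(g^{-1})$, or at least after averaging).

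For a crossing pair $(a,b)$, the four endpoints $a^\pm,b^\pm$ are linked on the circle. I plan to compute $\ell(ab)$ with a base point adapted to both axes. Writing $(ab)x$ for $x$ chosen on the boundary, I use the cocycle identity
\[
[(ab)^-,(ab)^+,x,abx]=[(ab)^-,(ab)^+,x,ax]+[(ab)^-,(ab)^+,ax,abx],
\]
and compare each term with $\ell(a)=[a^-,a^+,bx,abx]$ and $\ell(b)=[b^-,b^+,x,bx]$. The comparison uses the following consequence of additivity and non-negativity: $[p,q,x,y]$ is monotone in $p$ and $q$ when they move within the arcs determined by the cyclic order. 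Concretely, $[p',q,x,y]-[p,q,x,y]$ equals a cross-ratio of a four-tuple in positive cyclic position, hence is $\ge 0$. Because $a^\pm$ and $b^\pm$ are linked, the fixed points $(ab)^\pm$ lie in specific arcs between them (ping-pong dynamics), and this fixes the sign of each difference. In the crossing case this gives $\ell(ab)\le\ell(a)+\ell(b)$. In the parallel case the endpoints are unlinked with coherent orientation, the nesting of $(ab)^\pm$ relative to $a^\pm,b^\pm$ is reversed, and monotonicity gives $\ell(ab)\ge\ell(a)+\ell(b)$.

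The main obstacle is this monotonicity and positioning step. I need to pin down exactly where $(ab)^\pm$ sit relative to $a^\pm,b^\pm$ in each configuration. I also need to choose the auxiliary point $x$, for example $x=b^-$ or a limit of points approaching it. These must be chosen so that every error term is a cross-ratio of a positively ordered quadruple, and so that the non-negativity axiom of Definition~\ref{def:general_crossratio} applies with the correct orientation convention. My first attempt is to take $x=b^-$, so that $\ell(b)=[b^-,b^+,b^-,\dots]$ degenerates. If that fails, I will use $x$ near $b^-$ and pass to the limit, using continuity of the cross-ratio if it is part of the definition; otherwise I will use only finite additivity identities with generic points.

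Once both inequalities hold, Theorem~\ref{thm:intersections_group} gives the current $\mu_\ell$ dual to $\overline{\ell}=\ell$.
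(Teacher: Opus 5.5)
Your reduction to Theorem~\ref{thm:intersections_group} is fine, and so are your checks of conjugation invariance, stability and additivity. The core step, the two oriented smoothing inequalities, rests on a monotonicity claim that cannot work.

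\textbf{Why monotonicity fails.} In the period, one factor of the box is always the full arc cut out by the fixed points; in the paper's normalization, $\ell(g)$ is the value of $[g^+,g^-)\times[x,g\cdot x)$. This factor does not depend on the base point; $x$ only moves the fundamental-domain factor.
\begin{itemize}
\item Crossing case. If $\ora{a},\ora{b}$ cross, then $(ab)^+$ lies on the arc from $a^+$ to $b^+$ avoiding $a^-,b^-$, and $(ab)^-$ lies on the arc from $a^-$ to $b^-$ avoiding $a^+,b^+$. So $[(ab)^+,(ab)^-)$ overlaps $[a^+,a^-)$ and $[b^+,b^-)$ but is nested in neither. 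Split $[x,abx)=[x,bx)\cup[bx,abx)$; it must be at $bx$, not $ax$, to match the domains you use for $\ell(b)$ and $\ell(a)$. Then $\ell(a)+\ell(b)-\ell(ab)$ is a sum of two positive and two negative box values. For instance, if $(\ora{b},\ora{a})$ R-cross, the boxes $[b^-,(ab)^-)\times[x,bx)$ and $[(ab)^+,a^+)\times[bx,abx)$ enter with a minus sign. Non-negativity and finite additivity cannot decide the sign of such a combination.
\item Parallel case. One containment goes the wrong way. For R-parallel axes, $[(ab)^+,(ab)^-)$ contains $[a^+,a^-)$ but is contained in $[b^+,b^-)$, so the $b$-term has the wrong sign.
\end{itemize}
No choice of $x$ rescues the claim that each difference is a single positively ordered cross-ratio.

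\textbf{What is missing.} You need $\pi_1(S)$-invariance to move pieces of boxes to different places, together with a limiting argument that does not use countable additivity.
\begin{itemize}
\item Crossing case. The paper takes $y=(Ab)^+$, which satisfies $a\cdot y=b\cdot y$. Then $[y,by)$ is a common fundamental-domain factor for $a$, for $b$, and for $ab$; for $ab$ this requires translating the second half of its domain by $A$, which turns it into a box over $[(ba)^+,(ba)^-)$. The signed remainder $R_1-R_2-R_3+R_4$ is handled by Lemma~\ref{lem:cr-switcheroo}, which rewrites $[R_3],[R_4]$ (resp.\ $[R_2],[R_1]$) as values of nested boxes. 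That lemma is proved by repeated translation and excision. The error terms tend to $0$ because their translates are disjoint sub-boxes of one fixed box.
\item Parallel case. By Lemma~\ref{lem:anbn_vs_bkak}, the crossing case makes $n\mapsto\ell(a^nb^n)$ subadditive, so Lemma~\ref{lem:fekete} gives $\ell(ab)\ge\lim_n\ell(a^nb^n)/n$. Then take $n$ large enough that the relevant translates stop crossing, and set $g=a^n$, $h=b^n$. Inside $F(gh)$ one finds $F(h^{-1})$ and a box $R$ with $h\cdot R\supset F(g)$. This uses both a translation and the inverse period.
\end{itemize}

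\textbf{Symmetry.} $\ell(g)=\ell(g^{-1})$ does not follow directly from the flip axiom. The flip of $[g^+,g^-)\times[x,gx)$ has its fundamental-domain factor in the opposite coordinate from $[g^-,g^+)\times[y,g^{-1}y)$. For a merely finitely additive cross-ratio, identifying the two requires the argument of Lemma~\ref{lem:crinv}. Without it you only obtain a current dual to $\overline{\ell}$. That does not show $\ell$ itself satisfies the hypotheses of Theorem~\ref{thm:intersect}.

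\textbf{Continuity.} Continuity is not part of Definition~\ref{def:general_crossratio}, so your fallback limit $x\to b^-$ is not available.
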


In particular, this result recovers~\cite[Proposition~4.8]{BIPP24:PositiveCrossratios}, and provides another avenue to construct geodesic currents associated to compactifications of certain Anosov representations, a trend that has recently garnished notable traction (see, e.g.~\cite{OT23:SO23,OT24:Blaschke,Reid25:Boundary}).

\subsection{Acknowledgments}
\label{sec:acknowledgments}

We would like to thank Viveka Erlandsson, Maxime Fortier Bourque, Meenakshy Jyothis, Giuseppe Martone, Beatrice Pozzetti, Eduardo Reyes, Jenya Sapir, Noelle Sawyer, Juan Souto and Ivan Tel\-pu\-khov\-skiy for useful conversations.

The first author was supported by the Hazel King Thompson fellowship
of the Mathematics Department of the Indiana University Bloomington,
for the initial part of this work, and by the Luxembourg National
research Fund AFR/Bilateral-ReSurface 22/17145118 and the Marie
Skłodowska-Curie Action CurrGeo grant (101154865) for the latter part.
Part of this work was carried out during the first author’s visit to
the National University of Singapore; he thanks the department for its
hospitality and excellent working conditions. The second author was
supported by the National Science
Foundation under Grant Numbers DMS-1507244, DMS-2110143, and DMS-2508603, and did
most of this work while at Indiana University Bloomington.

\subsection{Organization}
\label{sec:organization}

\begin{table}[ht!]
    \begin{tabular}{@{}ll@{}}
      \toprule
      Notation & Meaning \\
      \midrule
         $S$ &  closed, connected, orientable topological surface of genus $\geq 2$ \\
         $\Sigma$ &  hyperbolic structure on $S$ \\
         $\pi_1(S)$
         &  fundamental group of $S$ \\
         $G(S)$
         &  oriented geodesics of $\wt{S}$ \\   $\partial_{\infty}S$ &  boundary at infinity of $\wt{S}$ \\        
        $(\partial _\infty S)^{(4)}$ & 4-tuples of ccw distinct ordered points in $\partial_\infty S$ \\
         $\Curves(S)$ & unoriented multi-curves \\
         $\Curves^+(S)$ & oriented multi-curves \\
         $f$ & curve functional \\
         $\Curr(S)$ & flip-invariant geodesic currents \\
         $\Curr^+(S)$ & oriented geodesic currents \\
         $C,D$ & multi-curves on $S$\\
         $g,h,a,b,x$ & elements of $\pi_1(S)$ \\ 
                  $\ora{g}$ & axis of hyperbolic isometry of $g \in \pi_1(S)$ \\  
$\Gamma$
         & general group (sometimes Gromov hyperbolic) \\
         $g^{-1}, G$ & both denote the inverse of $g \in \Gamma$ \\ 
        $X$ & general set \\
 $\mathcal{B}$ & family of right-handed boxes \\ 
         $[g]$ & conjugacy class of $g \in \Gamma$ \\  
         $T$ & real tree \\ 
         $T_g$ & axis/fixed point set of isometry $g \in \Gamma$ acting on $T$ \\ 
        $[\cdot,\cdot,\cdot, \cdot ]$ & generalized cross-ratio \\
        $\mathcal{G}$ & graph\\ 
        $\vec{i}(\cdot, \cdot)$ & asymmetric intersection number between oriented currents \\
        $i(\cdot, \cdot)$ & intersection between oriented currents \\ \bottomrule\addlinespace
    \end{tabular}
  \caption{Notation for the most common objects in the paper.}\label{tab:object-notation}
\end{table} 

In Section~\ref{sec:functionals}, we describe the hypotheses of Theorem~\ref{thm:intersect} and Theorem~\ref{thm:intersections_group}, and place them in the broader setting of curve functionals. We give examples that satisfy these hypotheses, as well as natural ones that do not.

In Section~\ref{sec:backround}, we introduce the basic background. In particular, we recall the measure-theoretic background and the existence of limits of certain sequences used in the construction of the measure in Theorem~\ref{thm:intersect}. 

In Section~\ref{sec:smoothing}, we establish basic lemmas on essential crossings and smoothings. We review left and right actions of $\pi_1(S)$ on $\wt{S}$. We relate the topological, algebraic, and geometric notions of smoothing. We prove further results on crossings and parallelism used throughout the paper. We then derive more technical lemmas involving triples of elements of $\pi_1(S)$, which form a key step in the proof of Theorem~\ref{thm:intersect}.

In Section~\ref{sec:define_measure}, we define the dual measure associated to a curve functional by specifying its values on a dense family of right-handed boxes, which we show form a semi-ring in the measure theoretic sense. The measure is $\pi_1(S)$-invariant on this family. We prove that it is non-negative and finitely additive on the semi-ring.

In Section~\ref{sec:construct}, we finish the construction of the dual geodesic current. In particular, we show that the dual measure is countably additive and defines an oriented geodesic current. By averaging, we obtain a geodesic current. Finally, we show that this current is dual to the original curve functional. We also introduce the notion of asymmetric intersection number. This completes the proof of Theorem~\ref{thm:intersect}. We then prove Theorem~\ref{thm:intersections_group} and several variants as corollaries.

In Section~\ref{sec:smoothing-types}, we analyze the different notions
of smoothing (oriented vs.\ unoriented, connected vs.\ disconnected). We use
these results to deduce Theorem~\ref{thm:intersections_group} from
Theorem~\ref{thm:intersect}; the proof of Theorem~\ref{thm:intersect}
does not rely on this section.

In Section~\ref{sec:special-classes}, we discuss subclasses of
geodesic currents, including filling currents, multi-curves, measured
laminations, and hyperbolic lengths/Liouville currents. In particular,
we prove Theorem~\ref{thm:hyp-lam-char}. We
construct currents dual to
embedded graphs. We explain how to obtain dual currents for arbitrary
length metrics.

In Section~\ref{sec:skora}, we prove Theorem~\ref{mainthm:skora}. We recall the necessary background on real trees and group actions.
In Section~\ref{sec:crossratios}, we show that period functionals
arising from generalized cross-ratios satisfy the hypotheses of
Theorem~\ref{thm:intersections_group}. They therefore induce geodesic
currents; this proves Theorem~\ref{mainthm:crossratio}.
Finally, Section~\ref{sec:questions} discusses further questions and conjectures.

The Appendices address topics not central to the main proofs. Appendix~\ref{sec:hyp-par} gives a hyperbolic
parallelogram identity used to characterize hyperbolic lengths.
Appendix~\ref{sec:asymmetric} explains why asymmetric word metrics are
not dual to right-handed intersection numbers, highlighting the role
of symmetry in Theorem~\ref{thm:intersect}.
Appendix~\ref{app:necessary} gives an alternative proof of the
necessity of the hypotheses in Theorem~\ref{thm:intersect} and
Theorem~\ref{thm:intersections_group}, using hyperbolic geometry on $\wt{S}$.

%%% Local Variables:
%%% mode: latex
%%% TeX-master: "Intersections"
%%% End:

\section{Curve functionals: properties, general context and examples}
\label{sec:functionals}

In this section we state the main technical hypotheses that we require
of our curve functionals, including examples and non-examples for the
hypotheses of Theorem~\ref{thm:intersect}.
In particular, we relate the results of this paper to our previous
work~\cite{MGT21:Smoothings}.

\subsection{Curve functionals and their properties}
\label{subsec:mainhypotheses}

In our constructions, it will be crucial to work in the setting of oriented curves.

\begin{definition}[multi-curve]\label{def:multi-curve}  
A \emph{concrete multi-curve} $\gamma$ on a surface $S$ is a smooth
oriented 1-manifold without boundary $X(\gamma)$ together with an orientation-preserving continuous map
(also called $\gamma$) from $X(\gamma)$
into~$S$. $X(\gamma)$ is not necessarily connected.
We say that $\gamma$ is \emph{trivial} if it is
homotopic to a point.
Two concrete multi-curves $\gamma$ and~$\gamma'$ are \emph{equivalent} if
they are related by a sequence of the following moves:
\begin{itemize}
    \item \emph{homotopy} within the space of all maps from $X(\gamma)$ to~$S$;
    \item \emph{orientation-preserving reparametrization} of the 1-manifold;
      and
    \item \emph{dropping} trivial components.
\end{itemize}
The equivalence class of $\gamma$ is denoted by~$[\gamma]$, and we will
call it a \emph{multi-curve}.
If $X(\gamma)$ is connected, we will call $\gamma$ a \emph{concrete curve}.
A concrete multi-curve~$\gamma$ is \emph{simple} if $\gamma$ is injective, and a multi-curve is simple if it has a concrete representative
that is simple.
We write $\Curves^+(S)$ for the space of all multi-curves.
Most of the multi-curves we consider will be oriented. There is a corresponding unoriented notion, which we call an \emph{unoriented multi-curve}. This is defined as above, except that the 1-manifold $X(\gamma)$ are taken without orientation, and the equivalence relation is generated by reparameterizations that are not required to preserve orientation.
\end{definition}

  \begin{notation*}
  From now on, for $a, b\in \pi_1(S)$, we write $[a][b]$ to denote the multi-curve $[a] \cup [b]$.
  \end{notation*} 

A few times throught the paper we will allow multi-curves to have weights.

\begin{definition}[weighted multi-curve]\label{def:weighted-multi-curve}
  A \emph{weighted multi-curve} $C=\bigcup_i a_i C_i$ is a multi-curve in
  which each connected component is given a non-negative real
  coefficient~$a_i$. If coefficients are not specified, they are~$1$.
  We add further moves to the equivalence
  relation:
  \begin{itemize}
  \item \emph{merging} two parallel components and
    adding their weights; and
  \item \emph{nullifying}, deleting a
    component with weight~$0$.
  \end{itemize}
  For
  instance, $C \cup C$ is equivalent to $2C$.
\end{definition}

Oriented multi-curves are more natural from a group-theoretic perspective, appearing naturally in the study of cross-ratios (Section~\ref{sec:crossratios}) and group actions on real trees (Section~\ref{sec:skora}). 
Moreover, to verify smoothing for curve functionals that do not arise from natural length metrics on the surface, it is convenient to work in terms of the relative position of pairs of hyperbolic elements in $\pi_1(S)$, where orientation is essential (see Section~\ref{subsec:triality} and compare Theorem~\ref{thm:intersect} with Theorem~\ref{thm:intersections_group}).

Accordingly, we will
work with oriented geodesic currents, i.e., invariant measures on oriented geodesics of $\wt{\Sigma}$ (see
Section~\ref{sec:currents} for details). Some authors prefer to work with \emph{unoriented geodesic currents} (see Remark~\ref{rmk:unoriented_currents}), but oriented currents are more general, and appear naturally in algebraic contexts~\cite{Bonahon91:CurrentsGroups,EPS20:CountingCurves, CR25:Manhattan, CMGR26:GreenMetrics} (as alluded above) as well as higher Teichm\"uller theory and Anosov representations~\cite{BridgemanCanaryLabourieSambarino18:SimpleRoots}.
In practice, the oriented geodesic currents we will be interested in our applications (Section~\ref{sec:special-classes},~\ref{sec:crossratios},~\ref{sec:skora}) will also be flip invariant, and in this case there is a bijection~\cite[Exercise~3.1]{ES22:GeodesicCount}
\[
\{\text{unoriented geodesic currents}\}
\longleftrightarrow
\{\text{flip-invariant oriented geodesic currents}\}.
\]
 Any flip-invariant oriented geodesic current is determined by its geometric intersection number with all closed curves~\cite[Th\'eor\`eme~2]{Otal90:SpectreMarqueNegative}, a fact which fails to be true for general oriented geodesic currents.
The geometric intersection number will play a central role throughout.
By default, the intersection number $i(C,D)$ of two multi-curves is
defined, for unoriented curves, as the minimal number of intersections among transversely intersecting representatives. If $C$ and $D$ are oriented,
the
intersection number is defined by forgetting orientations. See also
Definition~\ref{def:intersection_currents}.  The reader should keep in mind the example of the curve functional $i(\mu, \cdot) \colon \Curves^+(S) \to \mathbb{R}$, for a given oriented multi-curve or, more generally, oriented geodesic current $\mu$.

As explained in Appendix~\ref{sec:asymmetric}, there are asymmetric
variants of intersection number for oriented curves which only record crossings of a given orientation. For such notions, the analogue of the main
duality theorem (Theorem~\ref{thm:intersect}) fails. This phenomenon
also appears in the proof (see Proposition~\ref{prop:recoverintersection}).

A curve functional that does not depend
on orientation may equivalently be viewed as a functional on
unoriented multi-curves. Many of our examples will be of this form.

\begin{definition}\label{def:properties}
Let $S$ be a \emph{closed} surface, i.e.\ a compact topological surface without boundary, and let
$f \colon \Curves^+(S) \to \bbR$ be a function on the space of oriented multi-curves.
We will refer to such an $f$ as a \emph{curve functional}.
(The term ``functional'' indicates that $f$ takes scalar values; it is not assumed to be linear, although in the applications of Theorem~\ref{thm:intersect} it will be additive.)
We introduce several properties that $f$ may satisfy.

\begin{itemize}

\item \textbf{Symmetry.}
If $C$ and $C'$ are oriented multi-curves with the same underlying unoriented multi-curve, then
\[
f(C)=f(C').
\]

\item \textbf{Oriented smoothing.}
Let $C$ be an oriented curve on $S$, and let $x$ be an essential crossing of $C$.
Let $C'$ denote the oriented smoothing of $C$ at $x$.
Then
\begin{equation}\label{eq:or-smoothing}
f\!\left(\mfig{curves-3}\right)
\;\ge\;
f\!\left(\mfig{curves-4}\right).
\end{equation}
(See Definition~\ref{def:essential-crossing} for the notion of
essential crossing; it is a crossing that cannot be removed by
homotopy.) If the inequality is replaced by equality, we obtain \emph{oriented max-smoothing.}

\item \textbf{Additivity.}
For oriented curves $C_1$ and $C_2$ on $S$,
\begin{equation}\label{eq:union-additive}
f(C_1 \cup C_2) = f(C_1) + f(C_2).
\end{equation}

\end{itemize}
\end{definition}

Let $[a]$ denote the conjugacy class of $a \in \pi_1(S)$, which, equivalently, can be thought of as the free homotopy class of the oriented curve represented by $a$.

The oriented smoothing condition will be further divided into three
types (See
Definition~\ref{def:smoothing_types}.)  Accordingly, we will
say $f$ satisfies a particular type of smoothing property if $f$ is
non-increasing under such type.

\begin{lemma}
  A curve functional $f \colon \Curves^+(S) \to \mathbb{R}$ satisfies
  oriented smoothing iff the following three families of
  inequalities  are satisfied.
\begin{itemize}

\item \textbf{Oriented disconnected smoothing.}
For any pair of elements $a,b \in \pi_1(S)$ whose hyperbolic axes intersect,
\begin{equation}\label{eq:or-disconn-smoothing}
f([a][b]) \ge f([ab]).
\end{equation}
\item \textbf{Oriented connected smoothing.}
For any pair of elements $a,b \in \pi_1(S)$ whose hyperbolic axes are parallel
(i.e., disjoint and coherently oriented),
\begin{equation}\label{eq:or-conn-smoothing}
f([ab]) \ge f([a][b]).
\end{equation}
\item \textbf{Power smoothing.}
For any element $a \in \pi_1(S)$ and integers $n,m \ge 1$,
\begin{equation}
f([a^{n+m}]) \ge f([a^m][a^n]).
\label{eq:power_smoothing}
\end{equation}

\end{itemize}
\label{lem:oriented_smoothing_lemma_top_alg}
\end{lemma}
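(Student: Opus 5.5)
The plan is to translate each oriented smoothing at an essential crossing into group-theoretic data, and then to show that the three listed families are exactly the possible cases. Let $C$ be an oriented multi-curve in minimal position, and let $x$ be an essential crossing between two strands of $C$. Performing the oriented smoothing at $x$ changes only the components through $x$.

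There are two configurations.
\begin{enumerate}
\item If the two strands belong to different components $[a]$ and $[b]$, choose the basepoint at $x$. Then $a,b$ are represented by loops based at $x$, and the oriented smoothing concatenates them into the single curve $[ab]$.
\item If the two strands belong to the same component, basing at $x$ splits that component into two loops $a,b$ with the component equal to $[ab]$. The smoothing then produces the two curves $[a][b]$.
\end{enumerate}
The remaining components $D$ of $C$ are unchanged. To reduce to the two-component statement I need $f(D\cup X)\ge f(D\cup Y)$ whenever $f(X)\ge f(Y)$. This holds if additivity is available. Without it, I would simply read the inequalities \eqref{eq:or-disconn-smoothing}--\eqref{eq:power_smoothing} with the other components carried along, which amounts to the same statement. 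I plan to assume that convention, or to restrict to the case $C$ consists only of the relevant components.

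Next comes the key geometric step: checking that essentiality of $x$ matches the hypotheses on the axes.

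In case (1), lift the crossing to $\wt S$. The two lifted strands through a lift of $x$ lie on lifts of the geodesic representatives, namely the axes $\ora a$ and $\ora b$ of suitable conjugates. The crossing is essential exactly when these axes intersect transversely. This is the standard bigon criterion, and I take it to be the content of Definition~\ref{def:essential-crossing}.

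In case (2), the self-crossing of $[ab]$ is essential when the lifts of the two strands cross. After the basepoint choice, this translates into the axes of $a$ and $b$ being disjoint. There are then two sub-possibilities.
\begin{enumerate}
\item Coherently oriented axes: this is the parallel case, giving \eqref{eq:or-conn-smoothing}.
\item Axes with a common endpoint: then $a,b$ are powers of a common primitive element, say $a=g^m$ and $b=g^n$. This is the power-smoothing case \eqref{eq:power_smoothing}, coming from a non-primitive curve $[g^{n+m}]$ drawn with essential self-crossings.
\end{enumerate}
I also need to exclude anti-parallel disjoint axes. There the loop $ab$ is represented by a curve in which the crossing can be removed, i.e.\ it lies on a bigon or monogon, so it is not essential.

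Conversely, every instance of the three families arises from some essential crossing. For intersecting axes, project a neighbourhood of the intersection point of $\ora a$ and $\ora b$. For parallel axes, join the two axes by a geodesic arc and push it down to $S$. This produces a representative of $[ab]$ with an essential self-crossing whose smoothing gives $[a][b]$. For powers, draw $g^{n+m}$ as a curve winding $n+m$ times with crossings between the $m$-th and $n$-th sheets. With both directions in hand, the families of inequalities coincide.

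I expect the main obstacle to be this identification of essential self-crossings with the parallel/power dichotomy, including the verification that anti-parallel splittings are inessential. I would first try to settle it via the lemmas on crossings and parallelism in Section~\ref{sec:smoothing}, rather than redoing the hyperbolic geometry.
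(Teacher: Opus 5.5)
This is essentially the paper's argument. Its proof just applies oriented smoothing at the essential crossing, then reads off the disconnected and connected cases from Proposition~\ref{prop:crossings-triality} and the power case from Proposition~\ref{prop:crossings-powers}, which is exactly the identification you plan to import from Section~\ref{sec:smoothing}. When excluding the anti-parallel case, rely on that proposition rather than on the ``lifts cross''/bigon heuristic: by Example~\ref{ex:ab_parallel_aB_crossing} (with $b$ replaced by $B$), the axes $\ora{ab}$ and $\ora{ba}$ can cross even when $\ora{a},\ora{b}$ are anti-parallel, and what fails there is only that the resulting essential crossing of $[ab]$ does not have $[a][b]$ as its oriented smoothing.
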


If the inequalities are replaced by equalities in all the items of the previous Lemma, we get \emph{oriented max-smoothing}.

\begin{lemma}
  A curve functional $f \colon \Curves^+(S) \to \mathbb{R}$ satisfies
  oriented max-smoothing iff it satisfies
  Eqs.~\eqref{eq:or-disconn-smoothing}, \eqref{eq:or-conn-smoothing},
  and~\eqref{eq:power_smoothing} with equality replacing the equalities.
\label{lem:orientedmax_smoothing_lemma_top_alg}
\end{lemma}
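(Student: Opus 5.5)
The plan is to run the proof of Lemma~\ref{lem:oriented_smoothing_lemma_top_alg} verbatim, tracking equalities instead of inequalities. Oriented max-smoothing means that Eq.~\eqref{eq:or-smoothing} holds with equality at every essential crossing of every oriented curve. The lemma asks for a two-way translation: from this topological statement to the three algebraic families \eqref{eq:or-disconn-smoothing}, \eqref{eq:or-conn-smoothing} and \eqref{eq:power_smoothing}, now read as equalities. The translation in the previous lemma is a bijection between essential crossings, together with their oriented smoothings, and the configurations $(a,b)$ appearing in the three families, so replacing $\ge$ by $=$ throughout should go through unchanged.

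The key steps, in order, are as follows.

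First, classify essential crossings $x$ of an oriented multi-curve $C$. If $x$ is a crossing between two distinct components, choose basepoint $x$. The components then give elements $a,b\in\pi_1(S)$ whose axes cross. The oriented smoothing at $x$ merges the two components into $[ab]$, and the other components are untouched. This is the configuration of \eqref{eq:or-disconn-smoothing}.

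If $x$ is a self-crossing of a single component, basing at $x$ splits the loop into two loops $a,b$ with $[ab]$ equal to the component, and the oriented smoothing yields $[a][b]$. Essentiality of $x$ forces one of two cases. Either the axes of $a$ and $b$ are parallel, which gives \eqref{eq:or-conn-smoothing}, or $a$ and $b$ are powers of a common primitive element, $a=g^m$ and $b=g^n$, which gives \eqref{eq:power_smoothing}.

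Second, the converse: every pair in each family is realized by an essential crossing of some concrete curve. For crossing or parallel axes this is the realization statement already used in the previous lemma. For powers one takes the standard immersed representative of $g^{n+m}$, which has an essential self-crossing whose smoothing gives $[g^m][g^n]$.

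Third, extra components. The smoothing in Eq.~\eqref{eq:or-smoothing} is local, and the pictured strands may sit inside a larger multi-curve $C\cup D$. I will treat this as the previous lemma did, either by the convention that the inequality is stated with the extra components included, or by invoking the same reduction. Nothing in that reduction depends on the direction of the inequality.

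The only step needing care is the self-crossing dichotomy (parallel versus common powers), and the characterization of essential crossings that excludes the "anti-parallel/nonessential" case. Since Lemma~\ref{lem:oriented_smoothing_lemma_top_alg} is assumed, I will simply cite its case analysis and note that each implication there is an identification of both sides of an inequality. The identification therefore transports equalities as well. Concretely, oriented max-smoothing is equivalent to oriented smoothing for $f$ and for the reversed inequality simultaneously. Applying the correspondence in both directions gives equality in all three families, and conversely.
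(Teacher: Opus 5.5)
Your proposal is correct and is essentially the paper's argument: apply Eq.~\eqref{eq:or-smoothing} with equality at the relevant essential crossing, using Proposition~\ref{prop:crossings-triality} for the crossing and parallel cases and Proposition~\ref{prop:crossings-powers} for power smoothing. Your observation that max-smoothing for $f$ amounts to oriented smoothing for both $f$ and $-f$ makes this a formal corollary of Lemma~\ref{lem:oriented_smoothing_lemma_top_alg}, which matches the paper proving the two lemmas together in a single proof.
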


Lemmas~\ref{lem:oriented_smoothing_lemma_top_alg}
and~\ref{lem:orientedmax_smoothing_lemma_top_alg} are proved in
Section~\ref{subsec:triality}.

\begin{definition}
  For $C$ an oriented multi-curve,  $C^n$ is the oriented multi-curve with as many components as~$C$, in which each
  component of~$C$ is covered by an $n$-fold cover. That is, if
  $g \in \pi_1(S,x)$ represents $C$, $g^n$
  represents~$C^n$.
  On the other hand, $nC$ is the oriented multi-curve
  that consists of $n$~parallel
  copies of~$C$ (so with $n$ times as many components) or,
  in the context of weighted oriented multi-curves,
  with weights multiplied by~$n$.
\end{definition}

\begin{definition}\label{def:stable}
     Let $f$ be a curve
     functional and let $n>0$ be an
     integer. We say $f$ satisfies \emph{stability} if, for an arbitrary oriented multi-curve~$C$,
       \begin{equation}
         \label{eq:stable}
         f(C^n) = f(nC).
       \end{equation}
       A related notion is that of \emph{sub-stability}
       \begin{equation}
       	\label{eq:substable}
       	f(C^{n+m}) \geq f(C^m) + f(C^n)
       \end{equation} 
       for every $m,n \geq 0$. 
\end{definition}

It is immediate that if a curve functional satisfies additivity, then stability implies sub-stability, and power smoothing and sub-stability are equivalent.

\begin{definition}\label{def:homogeneous} For an arbitrary oriented
  multi-curve~$C$, we say $f$ satisfies \emph{homogeneity} if
       \begin{equation}
         \label{eq:homogeneous}
         f(nC) = nf(C).
       \end{equation}
       Note that functionals satisfying additivity (Eq.~\eqref{eq:union-additive}) automatically satisfy homogeneity.
\end{definition}

We emphasize that oriented connected smoothing does not imply oriented
disconnected smoothing, nor vice versa.

For instance, the curve functional $f$ assigning to a curve its number
of connected components satisfies oriented disconnected smoothing but
not oriented connected smoothing. Its negative satisfies oriented
connected smoothing but not oriented disconnected smoothing.

Although this example is not stable, one can construct additive and
stable functionals exhibiting exactly one of these two smoothing
properties;
see Example~\ref{ex:linEL} and
Lemma~\ref{lem:red_not_smoothing}.
There also exist functionals that satisfy stability and smoothing but
fail additivity; see
Example~\ref{ex:linEL}.

On the other hand, Proposition~\ref{prop:power_disconnected_imply_stable} shows that
smoothing implies stability. 

Slight generalizations of both smoothing and additivity were
introduced in \cite{MGT21:Smoothings}.

\begin{definition} Let $f$ be a curve functional. We define the
  following properties.
\begin{itemize}
    \item \textbf{Oriented quasi-smoothing:} There is a constant $R\ge 0$ with the
      following property. Let $C$~be an oriented curve on~$S$, and
    let $x$~be an essential crossing of~$C$. Let $C'$ be the oriented
    smoothing of~$C$ at~$x$. Then $f(C) \ge f(C')-R$. Schematically, we
    have
    \begin{equation}\label{eq:quasismoothing}
      f\left(\mfig{curves-3}\right) \ge f\left(\mfig{curves-4}\right) -R.
    \end{equation}
    If $R=0$, we recover the oriented smoothing property. 
    
We note that the above properties correspond to what we termed
\emph{quasi-smoothing} and \emph{smoothing} in
\cite[Eqs.~(1.2), (1.3)]{MGT21:Smoothings}. In the present paper,
we distinguish these notions from their unoriented counterparts (see
below).
  \item \textbf{Convex union:} Let $C_1$ and~$C_2$ be two oriented curves
    on~$S$. Then
    \begin{equation}\label{eq:union-convex}
      f(C_1 \cup C_2) \le f(C_1) + f(C_2).
    \end{equation}
    If this inequality is strengthened to an equality, we recover additivity (Eq.~\eqref{eq:union-additive}).
\end{itemize}
\end{definition}

Theorem~\ref{thm:intersect} considered curve functions on the space of unoriented multi-curves $\Curves(S)$.

When performing a smoothing for an oriented multi-curve, one could
also choose to reglue in the incoherent orientation. In this case,
there two possible (incoherent) orientations to equip the resulting
curve with. See also Section~\ref{subsec:crossings}.

\begin{definition}[smoothing and max-smoothing]
Let $C$ be an unoriented closed curve, and $C'$ either of the two smoothings. We say $f \colon \Curves(S) \to \mathbb{R}$ satisfies \emph{smoothing} if $f(C) \geq f(C')$, Schematically, we have
     \[
      f\left(\mfig{curves-0}\right) \ge \max \left\{ f\left(\mfig{curves-1}\right), f\left(\mfig{curves-2}\right)\right\}.
   \]
If this inequality is strengthened to an equality, we get
the property called \emph{max-smoothing}.
\label{def:smoothing_and_max}
\end{definition}

Smoothing is satisfied by many natural functionals, including
intersection functionals $i(\mu,\cdot)$ for any geodesic current $\mu$
(see~\cite[Section~4.1]{MGT21:Smoothings}; cf.~Appendix~\ref{app:necessary}),
as well as curve lengths arising from a length metric structure on $S$
(see~\cite[Section~4.3]{MGT21:Smoothings}; cf. Section~\ref{subsec:ineqintersection}).
Further examples are discussed in Section~\ref{subsec:examples_and_non}.

As before, smoothing and max-smoothing admit algebraic reformulations.

\begin{definition}[smoothing for oriented curve functionals]
 Given an \emph{oriented} curve functional $f \colon \Curves^+(S) \to \mathbb{R}$, we will say it satisfies \emph{smoothing} if it satisfies the following hypotheses:
\begin{itemize}
\item symmetry;
\item (disconnected smoothing) For any pair of
  elements $a,b \in \pi_1(S)$ whose hyperbolic axes intersect, we have 
  \[ f([a] [b]) \geq \max\,\{\,f([ab]), f([aB])\,\}; \] if the inequality is
  strengthened to an equality we obtain \emph{disconnected max-smoothing}.
\item (connected smoothing) for any pair of elements $a,b \in
  \pi_1(S)$ whose hyperbolic axes are parallel, we have
  \[ f([ab]) \geq \max\,\{\,f([aB]), f([a] [b])\,\}; \]
  if the inequality is
  strengthened to an equality we get \emph{connected max-smoothing}.
 \item (power smoothing) and for any element $a \in \pi_1(S)$, and any
  integers $n,m\geq 1$, we have \begin{equation} f([a^{n+m}]) \geq f([a^{ m}]) +
  f([a^{ n}]).\label{eq:power_smoothing_unoriented}\end{equation}
\end{itemize}
\label{def:smoothing_for_oriented}
\end{definition}

Given a curve functional on unoriented multi-curves
$f \colon \Curves(S) \to \mathbb{R}$, there is a canonical extension to
oriented multi-curves defined by forgetting orientation:
\[
f^+(C) \coloneqq f(\overline{C}),
\]
where $\overline{C}$ denotes the underlying unoriented multi-curve.

Conversely, if $f \colon \Curves^+(S) \to \mathbb{R}$ is symmetric, then it induces a functional on unoriented multi-curves
\[
\overline{f} \colon \Curves(S) \to \mathbb{R}, 
\qquad
\overline{f}(\overline{C}) \coloneqq f(C),
\]
where $C$ is any oriented representative of the unoriented
multi-curve $\overline{C}$.

\begin{lemma}
The following equivalences hold.
\begin{itemize}

\item Let $f \colon \Curves(S) \to \mathbb{R}$ be an unoriented curve functional.
Then $f$ satisfies smoothing (resp.\ max-smoothing) in the sense of
Definition~\ref{def:smoothing_and_max} if and only if its extension
$f^+$ satisfies smoothing (resp.\ max-smoothing) in the sense of
Definition~\ref{def:smoothing_for_oriented}.

\item Let $f \colon \Curves^+(S)\to \mathbb{R}$ be an
oriented curve functional.
Then $f$ satisfies smoothing (resp.\ max-smoothing) in the sense of
Definition~\ref{def:smoothing_for_oriented} if and only if the induced
functional $\overline{f}$ satisfies smoothing (resp.\ max-smoothing)
in the sense of Definition~\ref{def:smoothing_and_max}.

\end{itemize}
\label{lem:smoothing_lemma_top_alg}
\end{lemma}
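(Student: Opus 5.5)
The plan is to reduce both bullets to a single dictionary between topological smoothings of an unoriented curve at an essential crossing and the algebraic configurations of Definition~\ref{def:smoothing_for_oriented}. That dictionary is the one behind Lemma~\ref{lem:oriented_smoothing_lemma_top_alg}, whose proof is deferred to Section~\ref{subsec:triality}. The second bullet follows from the first: if $f$ is symmetric, then $f=(\overline f)^+$. Symmetry is built into Definition~\ref{def:smoothing_for_oriented}, and $f^+$ is symmetric automatically. So it suffices to prove the first bullet.

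The first step is to classify essential crossings. Let $x$ be an essential crossing of an unoriented multi-curve. Choose orientations and basing at $x$, reading the two branches through $x$ as loops $a,b\in\pi_1(S,x)$.

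\emph{Crossing between two distinct components.} The two strands belong to $[a]$ and $[b]$, and essentiality means their axes intersect. The two smoothings are $[ab]$ and $[aB]$, up to orientation. Which one is the oriented smoothing depends on the chosen orientation of $b$.

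\emph{Self-crossing of one component.} The component is $[ab]$, with $a,b$ the two loops cut off at $x$. The two smoothings are the oriented one, $[a][b]$, and the unoriented one, $[aB]$. Here essentiality means either that the axes of $a,b$ are disjoint and parallel, or that $a,b$ are powers of a common element. In the latter case the crossing is a power crossing $[g^{n+m}]$, and the incoherent smoothing is $[g^{n-m}]$.

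Conversely, every crossing or parallel pair $(a,b)$ and every power pair is realized by an essential crossing on a suitable representative. I will take this realization statement from Section~\ref{subsec:triality}, as Lemma~\ref{lem:oriented_smoothing_lemma_top_alg} requires.

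With this dictionary both directions are bookkeeping. Suppose $f$ satisfies smoothing in the sense of Definition~\ref{def:smoothing_and_max}. The distinct-components case gives disconnected smoothing for $f^+$, and the parallel self-crossing case gives connected smoothing. Power smoothing comes from the power self-crossing, which yields $f([g^{n+m}])\ge f([g^m][g^n])$. The incoherent smoothing $[g^{n-m}]$ needs separate care. I would check that it either is not a genuine essential smoothing or is already dominated: additivity is not assumed, so I must confirm that Definition~\ref{def:smoothing_for_oriented} deliberately omits it. The converse reverses each case. For max-smoothing, equality with the maximum transfers in the same way, case by case.

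The main obstacle is the power case. Power smoothing in Definition~\ref{def:smoothing_for_oriented} only records the coherent smoothing, so I must reconcile that asymmetry with the unoriented definition. Concretely, I must show that at a self-crossing of $g^{n+m}$, the smoothing yielding $[g^{n-m}]$ is not required, or is handled as degenerate. Otherwise the two notions would differ, especially for max-smoothing. I would first check whether the convention in Definition~\ref{def:essential-crossing} excludes it; failing that, I would derive it from the connected-smoothing inequality via a parallel pair approximating the power.
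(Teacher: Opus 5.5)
For non-power crossings your dictionary is exactly the paper's argument (Proposition~\ref{prop:crossings-triality} plus symmetry). Reducing the second bullet to the first via $f=(\overline f)^+$ is also fine.

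The gap is the power case, which you rightly flag but leave open, and neither exit you propose works. First, the conventions do not discard the incoherent regluing. Definition~\ref{def:smoothing} permits either of the two other regluings, and Definition~\ref{def:smoothing_and_max} quantifies over \emph{either} smoothing. The crossing of $[g^{n+m}]$ separating $g^n$ from $g^m$ appears in a minimal representative, with smoothings $[g^n][g^m]$ and $[g^{n-m}]$ (Proposition~\ref{prop:crossings-powers}). Second, the needed inequality $f([g^{n+m}])\ge f([g^{\abs{n-m}}])$ cannot be extracted from Definition~\ref{def:smoothing_for_oriented} alone. Approximation would need continuity, which is unavailable here, and no other argument can work either, because that definition never relates the value of a multi-curve to the values of its components. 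For example, take the functional equal to $-1$ on every nonempty multi-curve and $0$ on the empty one. It satisfies Definition~\ref{def:smoothing_for_oriented}, but it violates Definition~\ref{def:smoothing_and_max} at the self-crossing of $[g^2]$ for $g$ primitive, whose incoherent smoothing is trivial. In the other direction, the constant functional $1$ satisfies Definition~\ref{def:smoothing_and_max} but not \eqref{eq:power_smoothing_unoriented}. That condition is written with $f([a^m])+f([a^n])$ rather than $f([a^m][a^n])$, and you silently identify the two when you say power smoothing comes from the coherent smoothing.

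The missing ingredient is additivity, which is the standing hypothesis wherever the lemma is applied. The paper's one-line proof only cites Proposition~\ref{prop:crossings-powers} for the coherent smoothing, and it tacitly relies on additivity too. With additivity, in either direction you have $f(\emptyset)=0$, $f([a^m][a^n])=f([a^m])+f([a^n])$, the coherent power inequality (that is, sub-stability), and oriented disconnected smoothing. So Proposition~\ref{prop:power_disconnected_imply_stable} gives $f([g^k])=kf([g])$. Proposition~\ref{prop:curve-positive}, whose proof uses only convex union and the two disconnected smoothings, gives $f([g])\ge 0$. At a power crossing of $[g^{n+m}]$, the coherent smoothing then has value $(n+m)f([g])=f([g^{n+m}])$, while the incoherent one has value $\abs{n-m}f([g])\le f([g^{n+m}])$. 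This yields the unoriented smoothing and max-smoothing conditions at power crossings. Conversely, it yields the power clause of Definition~\ref{def:smoothing_for_oriented} with equality, which closes the case you left open.
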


Similarly, the above equivalences can be stated for quasi-smoothing.

We refer the reader to Sections~\ref{subsec:crossings}
and~\ref{subsec:triality} for precise
definitions and proofs of
Lemma~\ref{lem:oriented_smoothing_lemma_top_alg},
\ref{lem:orientedmax_smoothing_lemma_top_alg},
and~\ref{lem:smoothing_lemma_top_alg}. See in particular
Proposition~\ref{prop:crossings-triality}.

\subsection{General context and examples}

Here we place our results in the broader framework initiated
in~\cite{MGT21:Smoothings}, explain their relation to recent work in
the literature, and provide examples of curve functionals that do and
do not satisfy the hypotheses of the main theorem.
This section is logically independent of the main arguments and is not
used in the proofs of the principal results.

\begin{definition}[Quasi-convex, convex, and additive-convex curve functionals]
In decreasing order of generality, we consider the following classes of
curve functionals.

\begin{enumerate}
\item Let $\operatorname{Cont}^+(S)$ denote the space of continuous functions on $\Curr^+(S)$, the space of oriented geodesic currents.
\item For fixed $R>0$, let $\QConvex_R^+(S)$ denote the set of curve
functionals satisfying oriented $R$-quasi-smoothing, convex union,
stability, and homogeneity. We refer to this as the space of
\emph{oriented $R$-quasi-convex functionals}. Define
\[
\QConvex^+(S) \coloneqq \bigcup_{R>0} \QConvex_R^+(S),
\]
the space of \emph{oriented quasi-convex curve functionals}.

\item For $R>0$, let $\QConvex_R(S)$ be the subset of
$\QConvex_R^+(S)$ consisting of symmetric curve functionals that also
satisfy $R$-quasi-smoothing in both the oriented and unoriented
settings. We call this the space of \emph{$R$-quasi-convex functionals}
and define
\[
\QConvex(S) \coloneqq \bigcup_{R>0} \QConvex_R(S),
\]
the space of \emph{quasi-convex curve functionals}.

\item Let
\[
\Convex(S) \coloneqq \QConvex_{0}(S)
\]
be the space of \emph{convex curve functionals}.
Under the stronger smoothing assumption of multi-smoothing, such functionals induce convex
functions in train-track coordinates (see
Section~\ref{sec:convexity_charts}).

\item Let $\AConvex(S)$ denote the space of \emph{additive convex curve
functionals}, namely the subset of $\Convex(S)$ consisting of
functionals that additionally satisfy additivity.
(The prefix ``A'' indicates additivity.)

\end{enumerate}
\end{definition}

The examples and discussion below show we have strict inclusions
\[
\AConvex(S) \subsetneq \Convex(S) \subsetneq  \QConvex(S) \subsetneq \QConvex^+(S) \subsetneq \operatorname{Cont}^+(S)
\]

\subsection{Examples and non-examples}
\label{subsec:examples_and_non}
In previous work, we proved the following result.

\begin{theorem}{\cite[Theorem~A]{MGT21:Smoothings}}\label{thm:convex}
   Let $f \in \QConvex^+(S)$. Then there is a unique continuous homogeneous function
  $\bar{f} \colon \Curr^+(S) \to \bbR_{\ge 0}$ that extends~$f$.
\end{theorem}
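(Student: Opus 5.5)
The plan follows the strategy of \cite{MGT21:Smoothings}: construct the extension on the dense cone of weighted multi-curves, show it is uniformly continuous on compact sets, and then extend by density.

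\emph{Step 1: extension to weighted multi-curves.} For a rational weighted oriented multi-curve $\sum_i (p_i/q)C_i$ we set
\[
\bar f\Bigl(\sum_i (p_i/q) C_i\Bigr) \coloneqq \tfrac1q f\Bigl(\bigcup_i p_iC_i\Bigr).
\]
Homogeneity $f(nC)=nf(C)$ makes this well defined. It is also consistent with powers, since stability gives $f(C^n)=f(nC)$, and this matches the fact that $C^n$ and $nC$ are the same current.

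\emph{Step 2: a Lipschitz-type estimate.} Fix a hyperbolic structure and a filling train track (or pants decomposition / ideal triangulation) chart. The goal is
\[
|f(C)-f(D)| \le K\,\|C-D\| + R\,(\text{number of switches}),
\]
for multi-curves $C,D$ carried by the same chart, where $\|\cdot\|$ is the $\ell^1$ norm of the weight vectors. To prove it:
\begin{enumerate}
\item Write $C$ and $D$ as carried by a common train track.
\item Cut and reglue $C\cup(D\setminus C)$ into $D$ plus extra arcs using essential crossings. Each oriented $R$-quasi-smoothing costs at most $R$.
\item Convex union bounds $f(C\cup E)\le f(C)+f(E)$.
\item $f$ of the leftover pieces is bounded linearly by their weight, since $f$ of a curve is at most a constant times its word length by the same smoothing argument.
\end{enumerate}
Applying the estimate to $nC, nD$ and dividing by $n$ kills the additive $R$ term, using stability/homogeneity. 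This yields a genuine Lipschitz bound for $\bar f$ on rational points of each chart.

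\emph{Step 3: extension and uniqueness.} Lipschitz continuity on the dense rational points of each train-track chart extends $\bar f$ continuously to the cone of currents carried by the chart. Charts cover $\Curr^+(S)$ after taking the weak$^*$ closure of multi-curves, and the extensions agree on overlaps by density, so they glue to a continuous function. Homogeneity passes to the limit, and nonnegativity should follow from $f(C^n)=nf(C)$ together with quasi-smoothing bounds. Uniqueness is immediate, because weighted multi-curves are dense in $\Curr^+(S)$ and a continuous extension is determined on a dense set.

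\emph{Main obstacle.} The hard step is Step 2, the uniform estimate. The number of crossings needed to turn $C\cup E$ into $D$ must be controlled by the weight difference, not by the total weight, and every smoothing used must be at an essential crossing so that quasi-smoothing applies. I would first try the train-track surgery of \cite{MGT21:Smoothings}: realize $C$ and $D$ on a fixed train track and perform the smoothings at the switches, where the number of smoothings is bounded by the number of switches times the weight difference.
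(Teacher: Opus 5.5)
The paper quotes this result from \cite{MGT21:Smoothings} without proof, so I am judging your sketch on its own terms. It breaks at Steps~2--3, because a fixed finite-dimensional chart cannot see the weak$^*$ topology of $\Curr^+(S)$.

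Honest train-track weights parametrize only laminations, and the same holds for Dehn--Thurston coordinates and for normal coordinates with respect to a triangulation. Laminations lie in the closed, nowhere dense set $\{\mu : i(\mu,\mu)=0\}$, and a non-simple curve lies in no such chart. So the claim in Step~3 that the charts cover $\Curr^+(S)$ is false; at best you would obtain continuity of $\bar f$ on laminations.

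If instead you let a fixed graph carry non-simple curves as edge-paths, the weight vector no longer determines the current, and the estimate of Step~2 fails. Take $a,b$ with $\ora{a},\ora{b}$ crossing, represented by petals $\alpha,\beta$ of a filling rose.
\begin{itemize}
\item The edge-paths $\alpha^n\beta^n$ and $(\alpha\beta)^n$, representing $[a^nb^n]$ and $[(ab)^n]$, have identical weights.
\item However, $\frac1n\delta_{[a^nb^n]}\to\delta_{[a]}+\delta_{[b]}$ by Lemma~\ref{lem:gnhn_to_gsumh}.
\item Hence for $f$ the hyperbolic length, which lies in $\QConvex^+(S)$, one gets $\frac1n\bigl(f([a^nb^n])-f([(ab)^n])\bigr)\to\ell(a)+\ell(b)-\ell(ab)>0$, by the strict triangle inequality (cf.\ Proposition~\ref{prop:keygeometry}).
\end{itemize}
No bound of the form $K\|C-D\|+O(1)$ survives this, even after normalizing. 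Step~2 also evaluates $f$ on ``extra arcs'', but $f$ is only defined on closed multi-curves.

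What is missing is control at arbitrarily fine resolution. The weak$^*$ topology needs infinitely many coordinates: frequencies of length-$k$ subpaths in a coding for every $k$, or measures of a dense family of boxes as in Section~\ref{sec:define_measure}. What has to be proved is, in effect, the following: for every $\epsilon>0$ there are $k$ and $\delta>0$ such that multi-curves whose length-normalized length-$k$ statistics are $\delta$-close have normalized $f$-values that are $\epsilon$-close.

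This has two consequences for how the argument must go.
\begin{itemize}
\item Cut-and-paste may only happen where two strands share a long segment, since that is exactly what makes the crossing essential (compare the thresholds $s,s'$ in Proposition~\ref{prop:join-split-geom}).
\item The $R$ lost at each surgery must be amortized over long segments. This is where stability enters: replacing a component by a high power $C^n$ lengthens it without increasing the number of components.
\end{itemize}

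Separately, the nonnegativity claim cannot come out of the hypotheses defining $\QConvex^+(S)$. Any homomorphism $\phi\colon\pi_1(S)\to\bbR$, extended additively to multi-curves, satisfies oriented smoothing with $R=0$ (oriented smoothing preserves homology), convex union, stability and homogeneity, yet takes negative values. Running the argument of Proposition~\ref{prop:curve-positive2} with oriented smoothings only yields $f([a])+f([A])\ge 0$. To get $f\ge 0$ you need unoriented quasi-smoothing as in that proposition (that is, $f\in\QConvex(S)$), or nonnegativity as an a priori hypothesis.
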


It is straightforward to verify that the properties of symmetry,
smoothing, additivity, and stability---defining the space
$\AConvex(S)$---are satisfied by every intersection functional
\[
i(\mu,\cdot) \colon \Curves^+(S) \to \mathbb{R},
\]
where $\mu$ is a geodesic current. This was established
in~\cite[Section~4]{MGT21:Smoothings} by first verifying the
properties for multi-curves and then extending to arbitrary currents
using the density of weighted curves together with the continuity and
bilinearity of the intersection pairing.
For completeness, we provide an alternative proof in
Appendix~\ref{app:necessary}, more closely aligned with the arguments
involving lifts to the universal cover.

Many natural notions of length satisfy these properties and are known
to extend continuously to geodesic currents as intersection
functionals. We list several such examples below, along with the
appropriate references.

\subsubsection{Examples}

\begin{enumerate}
   \item Intersection number with a fixed geodesic current. See \cite[Proposition~4.5]{Bonahon86:EndsHyperbolicManifolds}.
    \item Hyperbolic lengths  \cite[Proposition~14]{Bonahon88:GeodesicCurrent}.
    \item More generally, length w.r.t.\ negatively curved Riemannian metrics \cite[Th\'eor\`eme~1]{Otal90:SpectreMarqueNegative}.
    \item Even more generally, non-positively curved Riemannian metrics
\cite[Theorem~A]{CFF92:RigidityNonPosCurvedRiem}.
\item Lengths with respect to negatively curved Riemannian  metrics with conical singularities of angle $\geq 2\pi$ \cite[Theorem~A]{HP97:RigidityNegCurvedCone}.
\item Lengths with respect to singular Euclidean metrics coming from quadratic differentials \cite[Lemma~9]{DLR10:DegenerationFlatMetrics}.
\item More generally, non-positively curved Euclidean cone metrics of angle $\geq 2\pi$ \cite[Proposition~3.3]{BL17:RigidityFlat}, \cite{Con18:MarkedNonpos}.
    \item Word length with respect to simple generating sets \cite[Theorem~1.2]{Erlandsson:WordLength}.
    \item Lengths associated to Anosov representations induced by positive generalized cross-ratios \cite[Proposition~2.24]{MZ19:PositivelyRatioed}, \cite[Theorem~1.6]{BIPP24:PositiveCrossratios}. Specifically, Martone-Zhang~\cite{MZ19:PositivelyRatioed} showed that a family of representations equipped with positive H\"older generalized
cross-ratios and termed \emph{positively ratioed}, induce curve
functionals, via their \emph{periods}, dual to geodesic currents. They further established that
these functionals satisfy connected smoothing~\cite[Proposition~4.5]{MZ19:PositivelyRatioed}.
Burger-Iozzi-Parreau-Pozzetti later extended their construction to generalized cross-ratios that need not be continuous.
\end{enumerate}

Theorem~\ref{thm:intersect} subsumes all of the
preceding results. This is explained in
Section~\ref{sec:special-classes} for items~(1)--(8) and in
Section~\ref{sec:crossratios} for item~(9). In fact, by Theorem~\ref{mainthm:crossratio}, the period of any generalized cross-ratio satisfies smoothing.

In particular, Section~\ref{subsec:embedded_graphs} shows that
item~(8) admits a further generalization to stable lengths associated
to embedded graphs on the surface. Moreover,
Section~\ref{subsec:ineqintersection} discusses how any curve length
arising from a length metric structure on $S$ induces a dual geodesic
current.

\subsubsection{Non-examples}

In order to illustrate the complexity of the problem,
we now show examples of many natural functions on curves that do \emph{not} satisfy the $\AConvex$ properties (even though many of them still extend continuously to geodesic currents).

\begin{enumerate}
    \item Some curve functionals are not symmetric but nevertheless satisfy
oriented quasi-smoothing, thus providing examples in
$\QConvex^+(S)$ (see also item~(3) below). For instance, the stable
lengths of word metrics arising from asymmetric generating sets,
as discussed in~\cite[Section~4.6]{MGT21:Smoothings}, furnish such
examples.
    \item The square root of extremal length satisfies symmetry, smoothing,
stability, convex union, and homogeneity, and therefore provides an
example in $\Convex(S)$. However, it does not satisfy additivity
(see~\cite[Section~4.8]{MGT21:Smoothings}). Functionals of this type
extend to geodesic currents by~\cite[Theorem~A]{MGT21:Smoothings}.

This example illustrates the proper containment $\AConvex(S) \subsetneq \Convex(S).$
    \item Word length with respect to an arbitrary generating set of $\pi_1(S)$
satisfies additivity (by linear extension), but it need not satisfy
smoothing or stability (see~\cite[Example~4.10]{MGT21:Smoothings},
\cite[Theorem~1.5]{EPS20:CountingCurves},
\cite{Erlandsson:WordLength}). 
Moreover, its stabilized version does not necessarily satisfy smoothing
(see the same example and~\cite[Theorem~B]{MGT21:Smoothings}).

This illustrates the proper inclusion $\Convex(S) \subsetneq \QConvex_R(S)$ for $R>0$.
    \item Another example is provided by stable lengths associated to
quasi-Fuchsian representations. These satisfy quasi-smoothing by
\cite[Proposition~4.11]{MGT21:Smoothings}, and satisfy strict
smoothing if and only if the representation is Fuchsian
(see~\cite[Theorem~A]{FF22:QF} as well as~\cite[Proposition~1.8]{BR24:Cubulations}. This furnishes another instance of the
proper inclusion $\Convex(S) \subsetneq \QConvex_R(S)$ for $R>0$.
A quasi-Fuchsian representation $\rho \colon \pi_1(S) \to
\PSL(2,\mathbb{C})$ is a
conjugate of Fuchsian group by a (properly normalized) quasi-conformal
self-homeomorphism of the 2-sphere $f_{\rho}$
\cite[Definition~6.12.2]{Hub06:Vol1}. The
\emph{quasi-Fuchsian} constant $K$ of $\rho$ is the smallest quasi-conformal
constant for $f_{\rho}$.

\begin{question}
  Let $\rho$ be a
quasi-Fuchsian representation, and let $R>0$ be the \emph{quasi-smoothing
constant}, i.e., the minimal $R>0$ so that $\ell_{\rho} \in \QConvex_R(S)$. 
Does $R \to \infty$ as the quasi-Fuchsian constant of~$\rho$ goes to $\infty$?
\end{question}

\item The previous example fits into the broader framework of length
functions arising from higher-rank representations of
$\pi_1(S)$ into a connected real semisimple algebraic group of
non-compact type $G$, such as $\SL(n,\mathbb{R})$.
In fact,~\cite[Theorem~1.1]{MZ19:PositivelyRatioed} show that such representations
satisfy full smoothing if and only if they are positively ratioed (see item (9) in Examples). Hence, for non-positively ratioed ones (such as quasi-Fuchsian but non-fuchsian) we get non-examples.
        \item  Translation lengths associated to biautomatic structures. Automatic structures provide regular-language normal forms for group
elements and yield efficient solutions to the word problem. Many
geometrically defined groups, including hyperbolic groups, admit such
structures. A biautomatic structure satisfies additional
fellow-travelling properties that ensure compatibility with both left
and right multiplication and imply, in particular, solvability of the
conjugacy problem (see~\cite[Section~2.2]{HV24:Biautomaticity}).

Given a biautomatic structure $\mathcal{L}$ on a surface group
$\pi_1(S)$, one obtains an associated translation length
$\|\cdot\|_{\mathcal{L}}$ defined via its regular language.
Hughes-Valiunas~\cite[Proposition~4.2]{HV24:Biautomaticity} showed that
these translation lengths satisfy oriented quasi-smoothing, and used
this property to construct a hierarchically hyperbolic group that is
not biautomatic. 

Since this class includes ordinary word length (cf.\ the previous
item), such translation lengths do not satisfy smoothing in general.
    \item The square root of the self-intersection number satisfies stability,
homogeneity, and smoothing, but fails to satisfy additivity and even
convex union (see~\cite[Example~1.12]{MGT21:Smoothings}). 
Thus it provides an example of a curve functional lying outside
$\QConvex_R^+(S)$, despite extending to geodesic currents by
Bonahon’s work~\cite{Bonahon86:EndsHyperbolicManifolds}, so it illustrates the proper containtment $\QConvex_R^+(S) \subsetneq \operatorname{Cont}(S)$. Another example are exotic length functions on surface groups which vanish precisely on simple closed geodesics~\cite[Section~9]{CMGR26:GreenMetrics}: these extend to geodesic currents, since they satisfy the bouned-backtracking property (see~\cite{CR25:Manhattan} and~\cite[Section~7]{CMGR26:GreenMetrics}. However, they cannot satisfy $C$-quasi-smoothing for any $C>0$: indeed, we can take sequences of simple elements $a_n,b_n \in \pi_1(S)$, with $a_n, b_n$ crossing, so that $a_nb_n$ is non-simple and $\ell(a_n b_n)$ goes to infinity as $n \to \infty$. This shows they violate disconnected $C$-quasi-smoothing for every $C$.
\item
  Lengths of quotients of embedded graphs $\mathcal{G}$ by large subgraphs do not necessarily extend continuously to geodesic
  currents~\cite[Proposition~11]{Bonahon91:CurrentsGroups}. In particular, they
do not satisfy quasi-smoothing in general. By considering the
universal cover of such  a quotient, we obtain an isometric action of
$\Gamma$ on a geodesic space $X$ with a large normal subgroup acting
trivially. Its stable length then does not satisfy quasi-smoothing.
We now give a direct proof of this more general fact.
\begin{proposition}
Suppose $\Gamma=\pi_1(S)$ acts on a geodesic space $X$ by isometries, and
let $\ell$ be the stable length of the action. Suppose $\ell$ is
non-trivial and there is an infinite normal subgroup~$N$ of $\Gamma$ so that $\ell(b)=0$ for all $b \in N$. Then $\ell$ does not satisfy quasi-smoothing.
\label{prop:graph_collapse}
\end{proposition}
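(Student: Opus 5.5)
We argue by contradiction: suppose $\ell$ satisfies $R$-quasi-smoothing. We will produce crossing or parallel pairs $(a,b)$ for which smoothing gives inequalities that become unbounded. Two general facts about the stable length of an isometric action are used. First, $\ell$ is conjugacy invariant and satisfies $\ell(g^n)=n\ell(g)$. Second, $\ell$ descends to $\Gamma/N$ in a Lipschitz way. Concretely, for $b\in N$ we have $d(x,b^kx)=o(k)$ for each fixed $x$, and the plan is to exploit that $\ell(gb)$ should be close to $\ell(g)$ on average, even if not exactly.

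A cleaner route avoids comparing $\ell(gb)$ with $\ell(g)$ directly. Since $\ell$ is nontrivial, pick $a$ with $\ell(a)>0$. Since $N$ is an infinite normal subgroup of a surface group, it is not cyclic (infinite cyclic normal subgroups do not exist in $\pi_1(S)$, as that group is non-elementary hyperbolic). By standard properties of normal subgroups of non-elementary hyperbolic groups, $N$ is non-elementary and its limit set is all of $\partial_\infty S$. Hence the axes of elements of $N$ are dense in the space of geodesics. In particular we can choose $b\in N$ whose axis crosses $\ora{a}$. More strongly, we can arrange that $(a,b)$ crosses and also that $(a^n,b)$ crosses for all $n$, since the axis of $a^n$ equals the axis of $a$.

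Now apply disconnected quasi-smoothing to the crossing pair $(a^n,b)$:
\[
\ell(a^n)+\ell(b)\ \ge\ \ell(a^nb)-R, \qquad\text{so}\qquad n\ell(a)\ \ge\ \ell(a^nb)-R.
\]
This alone is not a contradiction. For that we use connected smoothing, or the reverse inequality, on a suitable pair. The idea is to choose $b\in N$ whose axis is parallel to $\ora{a}$ (disjoint and coherently oriented), which density of $N$-axes again allows. Connected quasi-smoothing then gives
\[
\ell(a^n b^m)\ \ge\ \ell(a^n)+\ell(b^m)-R\ =\ n\ell(a)-R.
\]
The contradiction must come from a pair where the two smoothings in Definition~\ref{def:smoothing_for_oriented} disagree. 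Take $b\in N$ crossing $a$. Unoriented disconnected quasi-smoothing gives $n\ell(a) \ge \max\{\ell(a^nb),\ell(a^nB)\}-R$.

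For the other direction we write $a^n = (a^nb)\cdot B$ and find a parallel pair among $a^nb$ and $b$ (or their inverses), so that connected quasi-smoothing gives an inequality of the form $\ell(a^n)\ge \ell(a^nb)+\ell(b)-R$ in one order and $\ell(\cdot)\ge \ell(a^n)-R$ in the other. Iterating with $b^m$, the key estimate to aim for is
\[
\ell(a^n b^m)\ \ge\ n\ell(a)+c\,m-R'
\]
for some $c>0$ independent of $m$. This contradicts the fact that $a^nb^m$ and $a^n$ have the same image in $\Gamma/N$. To turn that last fact into a bound, we note that the displacement of $b^m$ at a basepoint grows sublinearly, $d(x,b^mx)=o(m)$. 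Combining the two, $\ell(a^nb^m)\le n\,d(x,ax)+o(m)$, so $c\,m\le o(m)$, which is impossible.

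The main obstacle is exactly producing such a linear-in-$m$ lower bound from quasi-smoothing, since $\ell(b)=0$ for $b\in N$. My first attempt: choose $b_1,b_2\in N$ with crossing axes and set $g=b_1b_2^k$. Disconnected quasi-smoothing bounds $\ell$ on $N$ only from above, which is trivial here. Instead, use connected quasi-smoothing on $a$ together with conjugates $b^{-j}ab^j$, which have the same length $\ell(a)>0$ and whose axes can be made pairwise parallel by density. Then
\[
\ell\Bigl(\prod_{j=1}^m b^{-j}ab^{j}\Bigr)\ \ge\ m\ell(a)-mR.
\]
This product equals $(aB)^m b^{m}$, up to rearrangement, which is congruent to $a^m$ mod $N$, so no contradiction arises at this scale. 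The fix is to pick the conjugates $h_jah_j^{-1}$ with $h_j\in N$ such that the product collapses to a bounded element mod $N$. For example, use $a$ and conjugates of $A$ chosen parallel to $a$ (possible since $N$-conjugates of $\ora{A}$ accumulate on everything), with $R$ absorbed by replacing $a$ by $a^K$ with $K\ell(a)>2R$. The product then lies in $N$, hence has length $0$, while quasi-smoothing forces length at least $m(K\ell(a)-R)>0$. This is the contradiction.
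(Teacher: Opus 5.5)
The argument in your final paragraph, run with just two factors, is correct once the choice of $h$ is justified properly (see below). It takes a genuinely different route from the paper.

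The paper argues indirectly. The element $ba^nba^{-n}\in N$ has $\ell=0$, while $\frac{1}{2n}[ba^nba^{-n}]\to[a]$ in $\Curr(S)$. So $\ell$ admits no continuous extension to currents, and Theorem~A of \cite{MGT21:Smoothings} then rules out quasi-smoothing (stable lengths automatically satisfy convex union, stability and homogeneity).

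You instead exhibit a single bad smoothing. If $h\in N$ makes $(\ora{a},h\cdot\ora{A})$ parallel, the commutator $a^KhA^Kh^{-1}$ lies in $N$, so $\ell(a^KhA^Kh^{-1})=0$. But oriented connected quasi-smoothing at the crossing given by Proposition~\ref{prop:crossings-triality} forces $\ell(a^KhA^Kh^{-1})\ge \ell(a^K)+\ell(A^K)-R=2K\ell(a)-R$, which is absurd once $2K\ell(a)>R$.

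Your route needs only additivity on a two-component multicurve and $\ell(A)=\ell(a)$. It avoids both Theorem~A and the weak$^*$ computation, and it shows that oriented connected quasi-smoothing alone already fails. The paper's route buys the stronger structural statement that $\ell$ does not extend continuously to $\Curr(S)$ at all. Your factorization also supplies what the paper's ``explicit'' sentence is after: Eq.~\eqref{eq:cross-cancel} smooths the multicurve $[ba^n][ba^{-n}]$, not the connected curve $[ba^nba^{-n}]$, so by itself it does not give the displayed inequality.

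Two repairs are needed.
\begin{itemize}
\item Use $m=2$. With more factors you would have to check that partial products remain parallel to the next factor and that the images in $\Gamma/N$ cancel, and you do neither.
\item More importantly, ``$N$-conjugates of $\ora{A}$ accumulate on everything'' is false as stated, since $\Gamma\cdot\ora{a}$ is discrete in $G(S)$. Orientation is also the whole point here. By Lemma~\ref{lem:rhs}, translating $\ora{a}$ by an element whose axis crosses $\ora{a}$ gives a geodesic parallel to $\ora{a}$; then $h\cdot\ora{A}$ would be anti-parallel to $\ora{a}$ and the argument collapses. Instead, take $h_0\in N$ whose axis is disjoint from $\ora{a}$, for instance $c^jn_0c^{-j}$ for any $1\ne n_0\in N$, a suitable $c\in\Gamma$ and $j\gg0$, using normality. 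North--south dynamics of $h_0$ shows that $h_0^k\cdot\ora{a}$ is anti-parallel to $\ora{a}$ for $k\gg0$, so $h=h_0^k$ works.
\end{itemize}

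Everything before your last paragraph is unused and should be cut: the unproved ``Lipschitz descent'' to $\Gamma/N$, the products $\prod_j b^{-j}ab^j$, and the target estimate with $cm$.
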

\begin{proof} Let $a \in \Gamma$, with $\ell(a)>0$, and fix $b\in N$. Then $\frac{1}{2n} b a^n b a^{-n}$ converges to $a$ in the weak$^*$ topology, but since $a^n b a^{-n} b\in N$, we have $\ell(a^n b a^{-n} b)=0$.
Hence, $\ell$ cannot extend continuously. To see explicitly that
$\ell$ does not satisfy quasi-smoothing, find crossing
$\ora{a}$ and $\ora{b}$ as above. If quasi-smoothing were satisfied, by
Proposition~\ref{prop:join-split-geom},
Equation~\eqref{eq:cross-cancel}, there would exist a constants $R>0$
and $s'>0$ so that for all $n \geq s'$, we would have
\[
\ell(ba^n ba^{-n}) \geq \ell(a^{2n})-R.
\]
But this inequality is violated, since on the one hand, the left hand side is $0$, and, on the other hand, the right hand side is positive for large enough $n$.
\end{proof}
\item The Lorentzian length associated to an affine action of a Fuchsian
group~\cite{GM00:FlatLorentzian} does not satisfy quasi-smoothing.
Indeed, although it satisfies symmetry, stability, and additivity
\cite[p.~1053]{GML09:ProperAffine}, it takes both positive and negative
values. Proposition~\ref{prop:curve-positive2} below shows that this
precludes quasi-smoothing.

Nevertheless, it extends continuously to geodesic currents, as shown
in~\cite[Section~6.1]{GML09:ProperAffine}. Strictly speaking, that
result is stated for surfaces with boundary, but the same argument
applies to closed surfaces by~\cite{Lab01:Diffusion}.
\end{enumerate}

As an example of a curve functional that does not extend
continuously to geodesic currents---and hence does not satisfy
smoothing by~\cite[Theorem~A]{MGT21:Smoothings}---we present the following.
\begin{example}\label{ex:linEL}
Let $\sqrt{EL}$ denote the (square root of) extremal length.
In \cite[Section 4.8]{MGT21:Smoothings} the authors proved that $\sqrt{EL}$ satisfies stability, homogeneity, smoothing, and convex union. By  \cite[Theorem~A]{MGT21:Smoothings}, $\sqrt{EL}$ extends continuously to a continuous on geodesic currents.
$\sqrt{EL}$ does not satisfy additivity, so it is not dual to a
geodesic current. However, we can consider $\sqrt{EL}$ as a function
on \emph{connected} curves and extended it additively to a function
$G$ on (weighted)
multi-curves,
which is thus not equal to $\sqrt{EL}$ on multi-curves. $G$ cannot extend continuously to currents, since we know $G=\sqrt{EL}$ on (weighted) connected curves, which are dense in the space of geodesic currents.
Thus, the purported continuous extension $\overline{G}$ would have to satisfy $\overline{G}=\sqrt{EL}$ on all geodesic currents, hence on all multi-curves, a contradiction.
It follows from~\cite[Theorem~A]{MGT21:Smoothings} that $G$ cannot satisfy smoothing (in fact, it cannot satisfy quasi-smoothing), since it trivially satisfies convex union (in fact, additivity) and stability.
Specifically, $G$ does not satisfy connected smoothing.
Indeed, let $F=\sqrt{EL}$. For a crossing pair $(\ora{a},\ora{b})$, we have
\[
F(a + b) \geq F(ab).
\] 
Then, by disconnected smoothing of $F$, we have
\[
G(a+b)=G(a) + G(b)=F(a) + F(b) \geq F(a+b) \geq F(ab)=G(ab)
\]
where we used the fact that $F$ and $G$ take the same values on closed curves; hence,
$G$ satisfies disconnected smoothing.
This leads to the conclusion that $G$ cannot satisfy connected smoothing, i.e. there must be some pair $(\ora{a},\ora{b})$ parallel (and this so that $[ab]$ has a
  self-crossing, by Proposition~\ref{prop:crossings-triality}), so that
  $G(ab) < G(a) + G(b)$. In fact, it follows from~\cite[Theorem~A]{MGT21:Smoothings} that it cannot satisfy connected quasi-smoothing, i.e., this inequality does not hold either even up to additive error.
\end{example}

\subsubsection{Non-negativity}

Observe that we do not require in the hypotheses of
Theorem~\ref{thm:intersect} that the curve
functional $f \in \AConvex(S)$ is non-negative: in fact it follows
from the other conditions.
We finish this subsection by relating the hypotheses on $f$ to non-negativity. In particular, we prove that smoothing and convex union on~$f$ imply non-negativity.

\begin{proposition}
  Let $f$ be a curve functional satisfying convex union and smoothing. Then $f([a]) \geq 0$ for any $a \in \pi_1(S)$.
  \label{prop:curve-positive}
\end{proposition}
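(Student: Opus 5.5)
Fix a nontrivial $a \in \pi_1(S)$; the trivial case is immediate once we agree that $f$ of the empty multi-curve is $0$. The plan is to find a multi-curve built from copies of $[a]$ that can be smoothed at an essential crossing into something whose $f$-value we already control, and then play smoothing against convex union. The most direct configuration uses a crossing partner $b$.

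Since $S$ has genus $\ge 2$, every nontrivial $a$ admits some $b \in \pi_1(S)$ whose axis crosses $\ora{a}$. The natural choice is $b = hah^{-1}$ for a suitable $h$, so that $[b]=[a]$ and the axes $\ora{a}$ and $h\ora{a}$ intersect. Such an $h$ exists because the translates of $\ora{a}$ cannot all be pairwise disjoint: otherwise they would form a lamination-like family, which is impossible for a closed geodesic that is non-simple. The simple case needs a separate argument.

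Consider first the case where $a$ represents a curve with an essential self-crossing, or more generally where crossing translates of the axis exist. Take the multi-curve $[a][b]=[a][a]$. By disconnected smoothing,
\[
f([a][a]) \ge \max\{f([ab]), f([aB])\}.
\]
That alone does not give a sign, so I would instead use the power smoothing/sub-stability direction together with convex union. Power smoothing gives $f([a^2]) \ge f([a][a])$. The curve $[a^2]$ has an essential self-crossing, and its smoothing reproduces $[a][a]$, so $f([a^2]) \ge f([a][a])$ comes from a single smoothing. Convex union gives $f([a][a]) \le 2f([a])$. Together these bound $f([a^2])$ only from below, so the chain does not close by itself. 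The key is an inequality in the reverse direction: a sequence of smoothings that returns to a configuration containing the starting curve plus an extra copy. For crossing $a,b$, iterating disconnected and connected smoothing should produce an inequality of the shape
\[
f(C) \ge f(C \cup [a]) \le f(C) + f([a]),
\]
which is not quite enough either. The inequality I actually want is
\[
f(C) \ge f(C\cup [a^k]) \quad\text{for some } C.
\]
Combined with convex union in the form $f(C\cup[a^k]) \le f(C) + f([a^k])$, this only bounds things from above, so it is also the wrong direction.

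The correct mechanism is to use convex union as an upper bound on a union, which is itself bounded below via smoothing by a connected curve. Choose a curve $D$ such that smoothing $D$ at an essential crossing yields $D \cup [a]$ up to homotopy, with $D$ homotopic to a curve $D'$ that is then smoothed back to $D$. The chain
\[
f(D) \ge f(D\cup[a]),
\]
fed through convex union, still points the wrong way. So I would use the opposite ordering: find $D$ such that $D\cup[a]$ smooths, at a crossing between the two components, to a connected curve $E$, and $E$ further smooths to $D$. That gives
\[
f(D)+f([a]) \ge f(D\cup[a]) \ge f(E) \ge f(D),
\]
hence $f([a]) \ge 0$. A concrete candidate is $D=[b]$ with $b$ crossing $a$, $E=[ab]$, and then smoothing $[ab]$ back to $[b]$. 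The obstacle is that smoothing $[ab]$ at its essential self-crossing yields $[a][b]$, not $[b]$. I would therefore take $E=[a^{-1}\cdot ab]$-type configurations, or use $D=[b^n]$ and $E=[ab^n]$ with large $n$, where one smoothing of $[ab^n]$ at a crossing produces $[b^{n}]$ together with a trivial component. Trivial components are dropped, and convex union is used for the remaining parts.

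Verifying that such an essential crossing exists, via Proposition~\ref{prop:crossings-triality}, is the main technical step I expect to need care. The simple-$a$ case is handled the same way using a crossing $b$ that is not a conjugate of $a$.
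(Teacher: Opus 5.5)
There is a genuine gap. The chain you settle on, $f(D)+f([a])\ge f(D\cup[a])\ge f(E)\ge f(D)$, needs a connected curve $E$ that smooths back to exactly $D$ at a single essential crossing. You never produce one, and both candidates you name fail.

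First, when $\ora{a}$ and $\ora{b}$ cross, Proposition~\ref{prop:crossings-triality} puts you in case (c). There it is $[a][b]$ that reduces, not the factorization $[ab]$ (reduction of $[ab]$ is the parallel case), so no smoothing of $[ab]$ at that crossing yields $[a][b]$. Indeed, for two simple curves meeting once, $[ab]$ and $[ab^n]$ are simple and admit no smoothing at all.

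Second, a smoothing at an essential crossing never produces a trivial component. Such a crossing comes from two \emph{distinct} lifts of the geodesic representative meeting in $\wt{S}$. Each of the two loops it determines travels a positive distance along an embedded lift, so each is non-trivial. Thus ``$[ab^n]\reducesto[b^n]\cup(\text{trivial})$'' is impossible.

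Your phrase ``$[a^{-1}\cdot ab]$-type configurations'' points the right way. But cancelling $a$ out of $[ab]$ requires a crossing with a \emph{second} copy of $[a]$, which your one-copy chain does not allow.

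The fix is to spend that second copy, with a second application of convex union.
\begin{itemize}
\item Choose $x$ with $\ora{a}$ crossing $\ora{x}$ to the right, so $[a][x]\reducesto[ax]$.
\item Then $x^+\in(a^+,a^-)$ and $x^-\notin(a^+,a^-)$, so $x$ maps $(a^+,a^-)$ strictly into itself, while $a$ preserves it. Hence $ax$ maps it strictly into itself, and by Lemma~\ref{lem:NS-dynamics} $\ora{ax}$ crosses $\ora{a}$.
\item The unoriented disconnected smoothing then gives $[ax][a]\reducesto[axA]=[x]$.
\end{itemize}
Hence
\[
f([a])+f([x])\ge f([a][x])\ge f([ax]),\qquad f([ax])+f([a])\ge f([ax][a])\ge f([x]),
\]
and adding gives $2f([a])\ge 0$.

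This is exactly the paper's argument. It needs no simple/non-simple case split, and both smoothings are between distinct components, so no analysis of self-crossings is required.
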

\begin{proof}
Let $x \in \pi_1(S)$ be so that $\ora{a}$ crosses $\ora{x}$ to the
right, so
$[a][x] \reducesto [ax]$.
By Lemma~\ref{lem:rhs_prod}, we see that
$(\ora{ax},\ora{a})$ L-cross, so
(by unoriented smoothing) $[ax][a]
\reducesto [x]$. We thus have
\begin{align*}
    f([a]) + f([x]) &\geq f([a] [x]) \geq f([ax])\\
     f([ax]) + f([a]) &\geq f([ax] [a]) \geq f([x]).
\end{align*}
Adding the equations, we get $f([a]) \geq 0$.
\end{proof}

Although we will not use it in this paper, we note that a similar
argument with different assumptions also yields positivity.

\begin{proposition}
  Let $f$ be a curve functional satisfying convex union, quasi-smoothing,
  stability, and homogeneity. Then $f([a]) \geq 0$ for any $a \in
  \pi_1(S)$.
  \label{prop:curve-positive2}
\end{proposition}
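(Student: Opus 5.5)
We adapt the argument of Proposition~\ref{prop:curve-positive}, with quasi-smoothing in place of exact smoothing. The additive error $R$ will be absorbed by passing to high powers and using stability and homogeneity.

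Fix $a\in\pi_1(S)$ nontrivial, and choose $x\in\pi_1(S)$ so that $\ora{a}$ crosses $\ora{x}$ to the right. For every $n\ge1$ the axes $\ora{a^n}$ and $\ora{x^n}$ cross in the same way, since powers have the same axes. We therefore apply the two smoothings from the proof of Proposition~\ref{prop:curve-positive} to the pair $(a^n,x^n)$. First, $[a^n][x^n]\reducesto[a^nx^n]$. Second, by Lemma~\ref{lem:rhs_prod}, $(\ora{a^nx^n},\ora{a^n})$ L-cross, so $[a^nx^n][a^n]\reducesto[x^n]$. Quasi-smoothing with constant $R$ and convex union then give
\begin{align*}
f([a^n])+f([x^n]) &\ge f([a^n][x^n]) \ge f([a^nx^n])-R,\\
f([a^nx^n])+f([a^n]) &\ge f([a^nx^n][a^n]) \ge f([x^n])-R.
\end{align*}
Adding these cancels the $f([a^nx^n])$ and $f([x^n])$ terms and leaves
\[
2f([a^n])\ge -2R.
\]

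Stability and homogeneity give $f([a^n])=f(n[a])=nf([a])$. Hence $nf([a])\ge -R$ for every $n\ge 1$. Letting $n\to\infty$ yields $f([a])\ge0$. For trivial $a$, the curve $[a]$ is empty, and $f(\emptyset)=0$ by homogeneity, since $f(\emptyset)=f(2\emptyset)=2f(\emptyset)$.

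The step that needs care is the quasi-smoothing inequality applied to a multi-curve with two components, here $[a^nx^n][a^n]\reducesto[x^n]$. Quasi-smoothing is stated for an essential crossing of an oriented multi-curve. We therefore check, via the correspondence of Lemma~\ref{lem:oriented_smoothing_lemma_top_alg} and Lemma~\ref{lem:smoothing_lemma_top_alg}, that the algebraic smoothing $[g][h]\reducesto[gh]$ for crossing axes is a single topological smoothing at one essential crossing. This lets us use the same constant $R$ with no dependence on $n$. We must also use the unoriented version of quasi-smoothing for the L-crossing step, as in Proposition~\ref{prop:curve-positive}. This is allowed because $\QConvex_R(S)$ requires quasi-smoothing in both the oriented and unoriented settings.
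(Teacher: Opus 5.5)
Your proof is correct and is essentially the paper's argument. It uses the same two smoothings (oriented disconnected, then unoriented disconnected via the L-crossing of $(\ora{a^nx},\ora{a^n})$), then convex union and quasi-smoothing to get $f([a^n])\ge -R$, then stability and homogeneity to absorb $R$ as $n\to\infty$. The differences are cosmetic: the paper keeps $x$ fixed rather than passing to $x^n$ (those terms cancel either way), and your treatment of trivial $a$ is an edge case the paper leaves implicit.
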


\begin{proof}
Let $x \in \pi_1(S)$ be so that $\ora{a}$ crosses $\ora{x}$ to the
right. As in Proposition~\ref{prop:curve-positive}, we have
\begin{align*}
[a^n][x] &\reducesto [a^nx]\\
[a^nx][a^n]&\reducesto [x].
\end{align*}
Since $f$ satisfies quasi-smoothing (oriented and unoriented) as well as convex union, we get
\begin{align*}
    f([a^n]) + f([x]) &\geq f([a^n] [x]) \geq f([a^nx]) - R\\
     f([a^nx]) + f([a^n]) &\geq f([ax] [a^n]) \geq f([x]) - R.
\end{align*}
Adding the equations, we get $f([a^n]) \geq -2R$. By stability and
homogeneity, this yields $n f([a]) \geq -2R$, and taking $n$ large
enough we see that $f([a]) \geq 0$.
\end{proof}

\begin{remark}
The use of both the oriented and the unoriented smoothing hypothesis
is essential in both of the above results. In fact, there exist
natural examples of functions that satisfy the oriented
quasi-smoothing condition (but not the unoriented one), as well as all
the other necessary hypotheses mentioned above, yet still take
negative values. For instance, curve functionals arising from
homogeneous quasi-morphisms have this property; see
Section~\ref{sub:quasimorphism}.
\end{remark}

%%% Local Variables:
%%% mode: latex
%%% TeX-master: "Intersections"
%%% End:

\section{Background}
\label{sec:backround}

\subsection{General measure theory}
\label{subsec:backround}

To ultimately construct the geodesic current in Theorem~\ref{thm:intersect}, we review some measure theory.
Let $X$ be a set.

\begin{definition}[Semi-ring, ring, $\sigma$-algebra]
  A family $\mathcal{S}$ of subsets of a set $X$ is called a
  \emph{semiring} (of sets) if it contains the empty set and, for
  every $A,B \in \mathcal{S}$, we have
  $A \cap B \in \mathcal{S}$ and if $A \subset B$ the difference
  $B \backslash A$ is a disjoint union of finitely many sets
  in~$\mathcal{S}$. A family $\mathcal{R}$ of subsets of a set $X$ is
  called a \emph{ring} if it contains the empty set and for all
  $A,B \in \mathcal{R}$, the sets $A \cap B$, $A \cup B$ and
  $A \backslash B$ belong to~$\mathcal{R}$.  A ring $\mathcal{R}$ is an \emph{algebra} if $X \in \mathcal{R}$. An algebra $\cA$ is called a \emph{$\sigma$-algebra} if for any sequence of sets $A_n$ in $\cA$ one has $\bigcup_{n=1}^{\infty} A_n \in \cA$.
If $\cF$ is a family of subsets of $X$, let $\sigma(\cF)$, the \emph{$\sigma$-algebra generated by $\mathcal{F}$}, be the smallest $\sigma$-algebra containing $\mathcal{F}$.
  \label{def:semiring}
\end{definition}

\begin{lemma}
  For any semiring $\mathcal{S}$, the collections of all finite unions
  of sets in $\mathcal{S}$ forms a ring $\mathcal{R}_{\mathcal{S}}$,
  the \emph{ring generated by $\mathcal{S}$}. Every set in
  $\mathcal{R}_{\mathcal{S}}$ is a finite union of pairwise disjoint
  sets in~$\mathcal{S}$.
\end{lemma}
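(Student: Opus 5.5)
The plan is to prove the second assertion first, that every set in $\mathcal{R}_{\mathcal{S}}$ is a finite disjoint union of sets in $\mathcal{S}$, and then deduce closure of $\mathcal{R}_{\mathcal{S}}$ under the ring operations.

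\textbf{Step 1: differences of two semiring sets.} Let $A,B\in\mathcal{S}$. Since $A\cap B\in\mathcal{S}$ and $A\cap B\subset A$, the semiring axiom gives
\[
A\setminus B = A\setminus (A\cap B) = \bigsqcup_{i=1}^k C_i,
\qquad C_i\in\mathcal{S}.
\]

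\textbf{Step 2: $A\setminus(B_1\cup\dots\cup B_m)$ for $A,B_j\in\mathcal{S}$.} This is also a finite disjoint union of sets in $\mathcal{S}$. I will argue by induction on $m$, using Step 1 for $m=1$. Suppose $A\setminus(B_1\cup\dots\cup B_{m-1})=\bigsqcup_i C_i$ with $C_i\in\mathcal{S}$. Then
\[
A\setminus(B_1\cup\dots\cup B_m)=\bigsqcup_i (C_i\setminus B_m).
\]
Each term $C_i\setminus B_m$ decomposes by Step 1, and the terms are disjoint because the $C_i$ are.

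\textbf{Step 3: disjointification.} Given a union $A_1\cup\dots\cup A_n$ with $A_i\in\mathcal{S}$, write
\[
A_1\cup\dots\cup A_n = \bigsqcup_{j=1}^n \bigl(A_j\setminus(A_1\cup\dots\cup A_{j-1})\bigr).
\]
The pieces are pairwise disjoint, and by Step 2 each piece is a finite disjoint union of sets in $\mathcal{S}$. This proves the second claim.

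\textbf{Step 4: ring axioms.} We have $\emptyset\in\mathcal{S}\subset\mathcal{R}_{\mathcal{S}}$, and closure under finite unions holds by definition. For intersections, let $E=\bigcup_i A_i$ and $F=\bigcup_j B_j$. Then
\[
E\cap F=\bigcup_{i,j}(A_i\cap B_j),
\]
and each $A_i\cap B_j$ lies in $\mathcal{S}$. For differences,
\[
E\setminus F=\bigcup_i \bigl(A_i\setminus(B_1\cup\dots\cup B_m)\bigr),
\]
and each term lies in $\mathcal{R}_{\mathcal{S}}$ by Step 2. Finally, $\mathcal{R}_{\mathcal{S}}$ is the smallest ring containing $\mathcal{S}$, because any ring containing $\mathcal{S}$ contains all finite unions of its members. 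This justifies the phrase ``ring generated by $\mathcal{S}$''.

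The only point needing care is the induction in Step 2, where each $C_i$ must be re-split at every stage. None of the steps is a genuine obstacle.
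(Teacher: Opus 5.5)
Your proof is correct and complete. The paper gives no argument of its own and simply cites Lemma 1.2.14 of Bogachev's \emph{Measure Theory}; your proof is the standard textbook argument for that lemma: rewrite $A\setminus B$ as $A\setminus(A\cap B)$, induct on the number of subtracted sets, then disjointify a finite union.
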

\begin{proof}
See \cite[1.2.14~Lemma]{Bo07:MeasureTheory}.
\end{proof}

\begin{definition}
A real-valued set function $\mu$ defined on a ring $\mathcal{R}$ is called \emph{finitely additive}, resp.\ \emph{countably sub-additive}, if 
\[
\mu\Bigl(\bigcup\nolimits_{i=1}^n A_i \Bigr) = \sum_{i=1}^n \mu(A_i),
\]
resp.\
\[
\mu\Bigl(\bigcup\nolimits_{i=1}^{\infty} A_i \Bigr) \leq \sum_{i=1}^{\infty} \mu(A_i),
\]
 for all pairwise
disjoint sets $A_i \in \mathcal{R}$
such that $\bigcup_{i=1}^{n}
A_i \in \mathcal{R}$ (resp.\ $\bigcup_{i=1}^{\infty}
A_i \in \mathcal{R}$).

We say $\mu$ is \emph{countably additive} if the inequality in countable sub-additivity is
replaced by an equality. 
\end{definition}

\begin{proposition}[{\cite[1.3.9~Proposition]{Bo07:MeasureTheory}}]
Let $\mu$ be a non-negative finitely additive set function on a ring $\cR$.
If $\mu$ is countably sub-additive then $\mu$ is countably additive.
\label{prop:subadd_to_add}
\end{proposition}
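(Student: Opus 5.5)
The plan is to prove the two inequalities separately. Let $A_1, A_2, \dots \in \cR$ be pairwise disjoint with $A \coloneqq \bigcup_{i=1}^\infty A_i \in \cR$. Countable sub-additivity already gives $\mu(A) \le \sum_{i=1}^\infty \mu(A_i)$, so everything reduces to proving the reverse inequality $\mu(A) \ge \sum_{i=1}^\infty \mu(A_i)$. This will come from finite additivity together with non-negativity, through monotonicity.

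The first step is monotonicity of $\mu$ on $\cR$. If $B \subset C$ with $B, C \in \cR$, then $C \setminus B \in \cR$ because $\cR$ is a ring. Since $C = B \sqcup (C\setminus B)$ is a disjoint union of sets in $\cR$, finite additivity gives
\[
\mu(C) = \mu(B) + \mu(C \setminus B) \ge \mu(B),
\]
where the inequality uses non-negativity.

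The second step is finite super-additivity. For each $n$, the finite union $B_n \coloneqq \bigcup_{i=1}^n A_i$ lies in $\cR$ because rings are closed under finite unions, and $B_n \subset A$. Finite additivity and monotonicity then give
\[
\sum_{i=1}^n \mu(A_i) = \mu(B_n) \le \mu(A).
\]
The partial sums are non-decreasing, since each $\mu(A_i) \ge 0$, and they are bounded above by $\mu(A)$. Letting $n \to \infty$ therefore yields $\sum_{i=1}^\infty \mu(A_i) \le \mu(A)$. Combined with sub-additivity, this gives equality.

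I do not expect a real obstacle here. The only point needing care is that each auxiliary set ($C\setminus B$ and $B_n$) must be checked to lie in $\cR$ before finite additivity is applied, since $\mu$ is only defined on $\cR$; closure of rings under finite unions and differences handles this. Non-negativity is used in two places: for monotonicity, and to ensure the series has a well-defined limit that is bounded by $\mu(A)$.
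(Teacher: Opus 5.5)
Your proof is correct and is the standard argument: countable sub-additivity gives one inequality, and finite additivity plus non-negativity (via monotonicity) give $\sum_{i=1}^n \mu(A_i) \le \mu(A)$ for every $n$, hence the other. The paper gives no proof of its own and simply cites Bogachev, whose proof is this same argument.
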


\begin{proposition}[{\cite[1.3.10~Proposition]{Bo07:MeasureTheory}}]
Let $\mathcal S$ be a semiring and $\mu:\mathcal S\to [0,\infty]$ a finitely additive function. Then there exists a unique finitely additive extension
 $\hat\mu:\mathcal R_{\mathcal S} \to [0,\infty]$. If, in addition, $\mu$ is countably subadditive on $\mathcal S$, then $\hat\mu$ is countably additive on $\mathcal R_{\mathcal S}$.
\label{prop:addsemitoring}
\end{proposition}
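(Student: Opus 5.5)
This is the standard Carathéodory-type extension from a semiring to the ring it generates; I would follow that argument in two stages.

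\emph{Finitely additive extension.} Every $A\in\mathcal R_{\mathcal S}$ can be written as a disjoint union $A=\bigsqcup_{i=1}^n S_i$ with $S_i\in\mathcal S$, by the preceding lemma. Set $\hat\mu(A)\coloneqq\sum_i\mu(S_i)$.

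\begin{enumerate}
\item \textbf{Well-definedness.} Take a second decomposition $A=\bigsqcup_j T_j$. The sets $S_i\cap T_j$ lie in $\mathcal S$ (semirings are closed under intersection). Since $S_i=\bigsqcup_j (S_i\cap T_j)$, finite additivity of $\mu$ on $\mathcal S$ gives $\mu(S_i)=\sum_j\mu(S_i\cap T_j)$. Symmetrically $\mu(T_j)=\sum_i\mu(S_i\cap T_j)$, so both sums equal $\sum_{i,j}\mu(S_i\cap T_j)$.
\item \textbf{Finite additivity.} For disjoint $A,B\in\mathcal R_{\mathcal S}$, concatenating their decompositions gives a decomposition of $A\sqcup B$, so $\hat\mu(A\sqcup B)=\hat\mu(A)+\hat\mu(B)$.
\item \textbf{Uniqueness.} Any finitely additive extension must satisfy the defining formula, so it coincides with $\hat\mu$.
\end{enumerate}

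\emph{Monotonicity.} If $A\subset B$ in the ring, then $B\setminus A\in\mathcal R_{\mathcal S}$. Finite additivity and non-negativity give $\hat\mu(A)\le\hat\mu(B)$. Consequently $\hat\mu$ is also finitely subadditive on the ring: split $\bigcup A_k$ into the disjoint pieces $A_k\setminus\bigcup_{l<k}A_l$ and apply monotonicity to each.

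\emph{Countable additivity.} By Proposition~\ref{prop:subadd_to_add}, it suffices to show $\hat\mu$ is countably subadditive on $\mathcal R_{\mathcal S}$. Suppose $A=\bigcup_{k} A_k$ with all sets in the ring, and decompose $A=\bigsqcup_{i=1}^n S_i$ and $A_k=\bigsqcup_m S_{k,m}$ into sets of $\mathcal S$. The natural route is as follows.

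\begin{enumerate}
\item Each $S_i$ equals $\bigcup_{k,m}(S_i\cap S_{k,m})$, a countable union of elements of $\mathcal S$.
\item Countable subadditivity of $\mu$ on $\mathcal S$ then gives $\mu(S_i)\le\sum_{k,m}\mu(S_i\cap S_{k,m})$.
\item Sum over $i$ and exchange the order of summation; all terms are non-negative, so this is legitimate.
\item For fixed $k,m$, the sets $S_i\cap S_{k,m}$ are disjoint subsets of $S_{k,m}$, so by finite additivity and monotonicity $\sum_i\mu(S_i\cap S_{k,m})\le\mu(S_{k,m})$.
\item Summing over $m$ then gives $\hat\mu(A)\le\sum_k\hat\mu(A_k)$.
\end{enumerate}

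\emph{Main obstacle.} The delicate points are the bookkeeping in the subadditivity step, and handling values in $[0,\infty]$. With infinite values, the reduction through Proposition~\ref{prop:subadd_to_add} should be checked, or the reverse (superadditive) inequality proved directly. That inequality follows from monotonicity and finite additivity applied to finite sub-unions $\bigsqcup_{k\le N}A_k\subset A$, then letting $N\to\infty$. Together the two inequalities give countable additivity.
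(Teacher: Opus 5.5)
Your proposal is correct and is essentially the standard argument behind the cited result; the paper gives no proof of its own beyond the reference to Bogachev, 1.3.10. You define $\hat\mu$ through disjoint decompositions, check independence of the choice via the common refinement $\{S_i\cap T_j\}$, and obtain countable subadditivity by refining each $S_i$ against the pieces $S_{k,m}$. One small point needs care: the paper's definition of countable subadditivity concerns only pairwise disjoint families, so in the countable step take the $A_k$ pairwise disjoint. That is all Proposition~\ref{prop:subadd_to_add}, or your direct superadditivity argument, requires, and it makes $S_i=\bigsqcup_{k,m}(S_i\cap S_{k,m})$ a disjoint decomposition in $\mathcal S$ to which the hypothesis applies.
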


Finally, a finitely and countably additive function on a ring can be extended to the $\sigma$-algebra.
Recall that a measure $\mu$ on an algebra $\cA$ is called \emph{$\sigma$-finite} if there exists a sequence of $X_n \in \cA$ with $\mu(X_n)<\infty$ and $X=\bigcup_{n=1}^{\infty} X_n$.

\begin{theorem}[Carathéodory extension theorem, {\cite[1.11.8~Theorem]{Bo07:MeasureTheory}}]
Given a ring~$\mathcal{R}$ on a set~$X$ and an additive and countably additive non-negative function~$\mu$ on~$\mathcal{R}$.
Then $\mu$ extends to a measure on the $\sigma$-algebra
$\sigma(\mathcal{R})$ generated by $\mathcal{R}$. If, moreover, $\mu$
is $\sigma$-finite, this extension is unique.
\label{thm:caratheodory}
\end{theorem}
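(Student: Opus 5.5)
This is the Carathéodory extension theorem, quoted as background from Bogachev, so the plan is the standard outer-measure construction.

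\emph{Outer measure.} For an arbitrary $E\subset X$ set
\[
\mu^*(E)=\inf\Bigl\{\sum_{n}\mu(A_n) : A_n\in\mathcal R,\ E\subset\bigcup_n A_n\Bigr\},
\]
with $\mu^*(E)=\infty$ if no countable cover exists. Three routine facts follow.
\begin{itemize}
\item $\mu^*$ is monotone.
\item $\mu^*$ is countably subadditive, by the usual $\epsilon 2^{-n}$ argument.
\item $\mu^*$ agrees with $\mu$ on $\mathcal R$. Given a cover $A\subset\bigcup A_n$, disjointify inside the ring by setting $B_n=(A\cap A_n)\setminus\bigcup_{k<n}A_k$. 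Countable additivity of $\mu$ on $\mathcal R$ then gives $\mu(A)=\sum\mu(B_n)\le\sum\mu(A_n)$, where monotonicity of $\mu$ comes from finite additivity and nonnegativity.
\end{itemize}

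\emph{Carathéodory's criterion.} Call $E$ measurable if $\mu^*(T)=\mu^*(T\cap E)+\mu^*(T\setminus E)$ for every test set $T$. The standard argument shows the measurable sets form a $\sigma$-algebra on which $\mu^*$ is countably additive. Next check that each $A\in\mathcal R$ is measurable. For an almost-optimal cover $\{A_n\}$ of $T$, the pieces $A_n\cap A$ and $A_n\setminus A$ lie in $\mathcal R$, and additivity splits $\sum\mu(A_n)$ accordingly. Hence $\sigma(\mathcal R)$ lies inside the measurable sets, and $\mu^*|_{\sigma(\mathcal R)}$ is the desired extension.

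\emph{Uniqueness under $\sigma$-finiteness.} Let $\nu$ be another extension and take $X_n\in\mathcal R$ of finite measure covering $X$. Replacing the $X_n$ by $X_1\cup\cdots\cup X_n$, we may assume they increase. Fix $n$. The class of sets $E\in\sigma(\mathcal R)$ with $\nu(E\cap X_n)=\mu^*(E\cap X_n)$ contains the $\pi$-system $\mathcal R$. Because the measures are finite on $X_n$, this class is closed under proper differences and increasing unions, so it is a monotone class or Dynkin system. By the $\pi$–$\lambda$ theorem it contains $\sigma(\mathcal R)$. Let $n\to\infty$ and use continuity from below.

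A minor subtlety is that $X$ need not lie in $\mathcal R$. For this reason I phrase the Dynkin argument relative to $X_n$, or alternatively use the monotone class theorem on the ring $\mathcal R\cap X_n$. I expect this to be the main point requiring care; the rest is standard bookkeeping.
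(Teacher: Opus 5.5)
Your proposal is correct and follows the standard argument behind the cited Bogachev theorem, which the paper does not reprove: the outer measure $\mu^*$ built from countable covers by $\mathcal R$, Carath\'eodory's measurability criterion (both recorded in the paper right after the statement), and a $\pi$--$\lambda$ argument localized to the finite-measure sets $X_n$ for uniqueness. You also handle the two subtleties correctly: you set $\mu^*=\infty$ when no countable cover exists, and you use $X\cap X_n=X_n\in\mathcal R$ so that the Dynkin class contains $X$.
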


For any non-negative set function $\mu$ defined on a certain class $\mathcal{A}$ of subsets of $X$ containing $X$, the formula
\[
\mu^*(A) = \inf \left\{ \sum_{n=1} \mu(A_n) \mid A_n \in \mathcal{A}, A \subset \bigcup_{n=1}^{\infty} A_n \right\}
\]
defines a new set function on \emph{every} $A \subset X$, called the \emph{outer measure} of $\mu$.

Let $\mu$ a finitely additive and countable additive function on a ring $\mathcal{R}$. A set $A \subset X$ is called \emph{$\mu$-measurable} of for every $E \subset X$ one has
\[
\mu^*(E \cap A) + \mu^*(E \backslash A) = \mu^*(E).
\]

\begin{definition}[{Atom of a measure~\cite[Definition~1.12.7]{Bo07:MeasureTheory}}]
Given a topological space $X$, a nonnegative measure $\mu$ and $A\subset X$ a measurable subset, we say $A$ is an \emph{atom} of $\mu$ if the following two conditions are satisfied:
\begin{enumerate}
    \item $\mu(A) > 0$
    \item For any measurable set $B$ properly contained in~$A$, $\mu(B)=0$.
\end{enumerate}
\label{def:atommeasure}
\end{definition}

In the following definitions, let $X$ be a topological space.

\begin{definition}[Borel $\sigma$-algebra, Borel measure]
The \emph{Borel $\sigma$-algebra} is the smallest $\sigma$-algebra containing all open sets in $X$. A \emph{Borel measure} is a measure on $\mathcal{B}(X)$.
\end{definition}

\begin{definition}[Support of a Borel measure]
Let $\mu$ be a Borel measure on $X$. The \emph{support} of $\mu$ is defined as the set of all points $x \in X$ for which every open neighbourhood $N_x$ of $x$ has positive measure. We will denote it $\supp(\mu)$.
\label{def:support}
\end{definition}

If $X$ is a separable metric space, and $\mu$ is a Borel measure, then every atom of $\mu$ is singleton union a $\mu$-measurable set of measure zero~\cite[2.1.6, Theorem~2]{Kad18:MeasureTheory}.

\subsection{Radon and Borel measures}
\label{subsec:RadonvsBorel}

\begin{definition}
Let $X$ be a locally compact Hausdorff space. A Borel measure $\mu$ on $X$ is \emph{locally finite} if every point has a neighborhood $U$ with $\mu(U)< \infty$.
The measure $\mu$ is \emph{inner regular} if, for every open set $U$,
\[
  \mu(U) = \sup \{ \mu(K) : K \subset U,\ \text{$K$ compact} \}.
\]
We say that $\mu$ is \emph{outer regular} if, for every Borel set $B$,
\[
  \mu(B) = \inf \{ \mu(U) : B \subset U,\ \text{$U$ open} \}.
\]
The measure $\mu$ is \emph{Radon} if it is locally finite, inner regular, and outer regular.
\label{def:radon}
\end{definition}

\begin{definition}
  The \emph{space of measures $\mathcal{M}(X)$} on a topological
  space~$X$ is the set of all Radon measures on~$X$.
\end{definition}

In a general topological space $X$, not all locally-finite Borel
measures are Radon. However, in a locally compact Hausdorff space with
a countable basis, locally-finite Borel measures and Radon measures
are the same~\cite[Theorem~29.12]{Bau01:Measure}.

We define the following topology in the space of measures.

\begin{definition} The weak$^*$-topology on $\mathcal{M}(X)$ is the weakest topology with respect to which the map
\begin{align*}
  \mathcal{M}(X) &\to \mathbb{R}\\
  \mu &\mapsto \int \psi \mu
\end{align*}
is continuous for all $\psi \in C_c(X)$.
\end{definition}

We suppose that $\mathcal{M}(X)$ is endowed with the weak$^*$-topology.

\subsection{Geodesic currents}
\label{sec:currents}
We recall the definition of geodesic current. For a more complete
treatement, we refer the reader
to~\cite[Chapter~3]{ES22:GeodesicCount}.

\begin{definition}[Boundary at infinity]
\label{def:boundaryinfty}
Endow $S$ with a complete
hyperbolic  metric~$g$; we denote the pair $(S,g)$ by~$\Sigma$. Then
we can consider the metric
universal covering $p \colon \wt{\Sigma} \to \Sigma$, with
$\wt{\Sigma}$ isometric to the hyperbolic plane.
Two quasi-geodesic rays $c,c' \colon [0,\infty) \to \wt{\Sigma}$ are
said to be \emph{asymptotic} if the Hausdorff distance between their images is finite.
We define
$\partial_{\infty} S$, the \emph{boundary at infinity}
of~$S$, to be the set of equivalence classes of
asymptotic quasi-geodesic rays. This boundary at infinity, which is homeomorphic to $S^1$, is
independent of the hyperbolic structure on~$S$ up to canonical
homeomorphism.
\end{definition}

\begin{definition}[Space of oriented geodesics]

\label{def:spacegeodesics}
Let $G(\Sigma)$ denote the \emph{space of oriented geodesics} in
$\wt{\Sigma}$, i.e.,
\[
G(\Sigma) \coloneqq\partial_\infty \Sigma  \times\partial_\infty\Sigma - \Delta.
\]
Since this is independent of the hyperbolic structure, we will also
write $G(S)$.
\end{definition}

There is a natural involution map $\sigma \colon G(S) \to G(S)$, defined by $(a,b) \mapsto (b,a)$, i.e., reversing the orientation of the geodesic. We will sometimes refer to $\sigma$ as the \emph{flip map}.

\begin{definition}\label{def:currents}
We define $\Curr^+(S)$, the space of \emph{oriented geodesic currents} on $S$,
to be the
space of $\pi_1(S)$-invariant, Radon measures on
$G(S)$.
We define the space of 
\emph{geodesic currents} as the space of flip-invariant oriented
geodesic currents on $S$, and denote it by $\Curr(S)$. We endow both
spaces with their natural subspace topologies induced as subsets of
$\mathcal{M}(G(S))$.%

\begin{remark}
\label{rmk:unoriented_currents}
Some authors define the space of geodesic currents $\Curr^u(S)$ as invariant Radon measures on the space of \emph{unoriented geodesics of $\wt{\Sigma}$}, i.e., 
\[
G^u(\Sigma) \coloneqq G(\Sigma)/\mathbb{Z}_2
\]
where the action of $\mathbb{Z}_2$ is by the orientation reversal involution $\sigma$.
There is a natural homeomorphism (see~\cite[Exercise~3.1]{ES22:GeodesicCount})
\[
\Curr^u(S) \leftrightarrow \Curr(S).
\]
\end{remark}
\end{definition}

As a consequence of the discussion in
Section~\ref{subsec:RadonvsBorel}, we can replace ``Radon'' with
``locally-finite Borel'' in the above definitions.
The space of geodesic currents enjoys the following nice topological properties.

\begin{proposition}[{\cite[Theorem~3.3]{ES22:GeodesicCount}}]
$\Curr^+(S)$ and $\Curr(S)$  are locally compact, metrizable, second-countable, and Hausdorff spaces.
\label{prop:local_compactness}
\end{proposition}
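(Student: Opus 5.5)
The plan is to reduce the statement to standard facts about the space $\mathcal{M}(G(S))$ of Radon measures on $G(S)$ with the weak$^*$-topology, and then pass to the closed subspaces $\Curr^+(S)$ and $\Curr(S)$. Since $G(S)=\partial_\infty S\times\partial_\infty S-\Delta$ is an open subset of a torus, it is locally compact, Hausdorff and second countable, hence $\sigma$-compact. Choose an exhaustion by compact sets $K_1\subset K_2\subset\cdots$ with $K_n\subset \operatorname{int}K_{n+1}$. Also fix a countable family $\{\psi_j\}$ of functions in $C_c(G(S))$ that is dense in $C_c(G(S))$ with respect to uniform convergence with supports in a common $K_n$.

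First we prove metrizability and second countability. We define
\[
d(\mu,\nu)=\sum_j 2^{-j}\min\{1,|\textstyle\int\psi_j\,\mu-\int\psi_j\,\nu|\}.
\]
The task is to check that $d$ induces the weak$^*$-topology. Take any $\psi\in C_c$ with support in $K_n$, and a cutoff $\chi\in C_c$ with $\chi=1$ on $K_n$. The bound $|\int\psi\mu-\int\psi_j\mu|\le\|\psi-\psi_j\|_\infty\int\chi\,\mu$ approximates $\psi$ by the $\psi_j$ locally uniformly in $\mu$, provided $\chi$ is among the $\psi_j$. Hausdorffness follows from the Riesz representation theorem: a Radon measure is determined by its integrals against $C_c$. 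A metrizable space that is separable is second countable. For separability, finite rational combinations of Dirac masses at a countable dense set of points approximate any Radon measure; alternatively, second countability follows from the embedding $\mu\mapsto(\int\psi_j\mu)_j\in\mathbb{R}^{\mathbb{N}}$, a homeomorphism onto its image.

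Next we prove local compactness. For fixed constants $c_n>0$, the set $\{\mu:\int\chi_n\,\mu\le c_n\ \forall n\}$ is compact by Banach--Alaoglu applied on each $C(K_n)$ together with a diagonal argument. Here $\chi_n$ are cutoffs equal to $1$ on $K_n$. Equivalently, it is compact by Tychonoff in the product of intervals $\prod_j[-c'_j,c'_j]$, once we note that the image is closed, because a positive linear limit functional is again a Radon measure by Riesz. Around a given $\mu_0$ we take $c_n=\int\chi_n\mu_0+1$. The catch is that a neighborhood is only defined by finitely many conditions, so this set need not be a neighborhood. This is the main obstacle.

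The fix is to use the weak$^*$-open set $U=\{\mu:\int\chi_N\mu<c\}$ for a single $N$, and then use invariance. Both $\Curr^+(S)$ and $\Curr(S)$ are closed in $\mathcal{M}(G(S))$, since $\pi_1(S)$-invariance and flip-invariance are expressed by the closed conditions $\int\psi\circ g\,\mu=\int\psi\,\mu$. Moreover $\pi_1(S)$ acts cocompactly on $G(S)$: a compact set $K$ exists whose $\pi_1(S)$-translates cover $G(S)$. Choosing $K_N\supset K$, for an invariant $\mu$ a bound on $\mu(K_N)$ bounds $\mu(K_n)$ for every $n$, since $K_n$ is covered by finitely many translates of $K$. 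Hence $\overline U\cap\Curr^+(S)$ lies in a compact set of the previous type, and is therefore compact. This gives local compactness of $\Curr^+(S)$, and of its closed subset $\Curr(S)$. Metrizability, second countability and the Hausdorff property pass to subspaces.
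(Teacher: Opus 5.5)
Your proposal is correct. The paper gives no argument of its own beyond citing Erlandsson--Souto, and your route is the standard argument behind that citation: the weak$^*$ topology on Radon measures over the locally compact, second countable space $G(S)$ is metrizable and second countable, the invariance conditions are closed, and cocompactness of the $\pi_1(S)$-action produces compact neighborhoods. The one step to tighten is the finite covering. Since $\pi_1(S)$ does not act properly on $G(S)$ and $K$ is only closed, take $K\subset\operatorname{int}K_N$, so that the open sets $g\cdot\operatorname{int}K_N$ cover $G(S)$; compactness then puts each $K_n$ inside finitely many translates of $K_N$, which is what gives $\mu(K_n)\le m_n\,\mu(K_N)$.
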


The $\pi_1(S)$-invariance of geodesic currents highly constrains their atoms.
\begin{proposition}[{\cite[Lemma~8.2.8]{Martelli16:IntroGeoTop}}]
  For $l \in G(S)$, the orbit $\pi_1(S)\cdot l$ is
  discrete in $G(S)$ if and only if its projection $p(l)$ to~$S$ is a closed
  geodesic.
\end{proposition}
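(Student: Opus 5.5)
The plan is to read ``discrete'' as ``a discrete closed subset of $G(S)$'', i.e.\ every compact subset of $G(S)$ meets only finitely many points of the orbit. I fix a hyperbolic structure $\Sigma$ and use the standard fact that a set $K\subset G(S)$ is relatively compact iff the geodesics in $K$ all meet a common compact ball $B\subset\wt{\Sigma}\cong\mathbb{H}^2$: pairs of endpoints staying uniformly apart is equivalent to the geodesics passing through a bounded region.

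\emph{Closed geodesic $\Rightarrow$ discrete.} Suppose $p(l)$ is a closed geodesic $\gamma$. Then $l$ is the axis of some nontrivial $g\in\pi_1(S)$, and the orbit $\pi_1(S)\cdot l$ is exactly the set of lifts of $\gamma$ to $\wt{\Sigma}$.
\begin{enumerate}
\item Since $\gamma$ is compact, the full preimage $p^{-1}(\gamma)$ is a locally finite union of complete geodesics. Indeed, a compact ball $B$ meets only finitely many translates of a compact fundamental segment of $l$, because the action is properly discontinuous.
\item Hence only finitely many lifts meet $B$, so every compact $K\subset G(S)$ contains finitely many orbit points.
\end{enumerate}
So the orbit is discrete and closed.

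\emph{Not closed $\Rightarrow$ not discrete.} Suppose $p(l)$ is not a closed geodesic.
\begin{enumerate}
\item The stabilizer of $l$ in $\pi_1(S)$ is trivial. Any nontrivial element of the torsion-free group $\pi_1(S)$ preserving $l$ is hyperbolic with axis $l$, which would make $p(l)$ closed. The flip is not realized by any deck transformation, since a deck transformation reversing $l$ would have a fixed point on $l$.
\item Parametrize $l$ by arclength and let $v(t)\in T^1\wt{\Sigma}$ be its unit tangent vectors. Because $T^1\Sigma$ is compact, the projections of $v(n)$, $n\in\mathbb{N}$, have a convergent subsequence.
\item Lifting, there are $k_n\in\pi_1(S)$ with $k_n v(t_n)\to w$ for some $w\in T^1\wt{\Sigma}$, where $t_n\to\infty$. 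Consequently $k_n l$ converges in $G(S)$ to the geodesic $L$ through $w$.
\item The elements $k_n l$ are pairwise distinct. If $k_n l=k_m l$ with $n\neq m$, then $k_m^{-1}k_n$ stabilizes $l$, so it is trivial by step 1. But then $k_m v(t_m)$ and $k_m v(t_n)$ both converge to $w$ along the sequence, which is impossible since $|t_n-t_m|$ becomes large. Passing to a subsequence with $|t_n-t_m|\ge 1$ makes this rigorous.
\end{enumerate}
Thus infinitely many distinct orbit points lie in a compact neighborhood of $L$, so the orbit is not a discrete closed subset.

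The main obstacle is the precise meaning of ``discrete''. If it meant only that each orbit point is isolated within the orbit, the above argument gives an accumulation point $L$ that need not itself lie in the orbit. I would resolve this by using the locally-finite (discrete and closed) meaning, which is the one needed for the application to atoms. If the weaker meaning is intended, I would instead try to get $L$ close to $l$ itself. For this I would choose the subsequence so that consecutive vectors $v(t_n)$, $v(t_{n+1})$ project close together in $T^1\Sigma$. Then $h_n=k_{n+1}^{-1}k_n$ maps $v(t_n)$ near $v(t_{n+1})$, and after translating back along $l$ this gives distinct orbit points $h l$ converging to $l$. That step needs more care with recurrence, and I expect it to be the delicate point.
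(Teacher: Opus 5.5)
Your argument is correct under the closed-discrete (locally finite) reading. It is the standard proof behind the lemma the paper cites from Martelli; the paper gives no argument of its own. In one direction you use local finiteness of the lifts of a compact closed geodesic. In the other, recurrence in the compact $T^1\Sigma$ together with the trivial stabilizer of $l$ produces infinitely many distinct translates $k_n\cdot l$ accumulating at $L$.

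The one thing to correct is your last paragraph. Under the weaker reading (every orbit point isolated in the orbit) the statement is false. So the recurrence argument you sketch there cannot be completed, and the difficulty is not merely delicate. The locally finite reading is therefore forced, not just convenient. It is also what the paper needs: $\delta_C$ in \eqref{eq:delta-curve} must be Radon, and the claim about atoms relies on it.

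For a counterexample, let $p(l)$ be a non-closed geodesic spiraling at both ends onto closed geodesics $\gamma_1,\gamma_2$. Let $g_1$ be the element whose axis is the lift of $\gamma_1$ ending at $l^+$, oriented so that $g_1^+=l^+$. Then $g_1^{-n}\cdot l\to\ora{g_1}\notin\pi_1(S)\cdot l$, so the orbit is not closed.

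On the other hand, choose $l(0)$ with $p(l(0))\notin\gamma_1\cup\gamma_2$. As in your step~4, orbit points $h\cdot l\neq l$ close to $l$ correspond to times $t$ bounded away from $0$ at which $p(v(t))$ returns close to $p(v(0))$.
\begin{itemize}
\item Such returns can only occur for $|t|$ bounded, since both ends of $p(l)$ eventually stay near $\gamma_1\cup\gamma_2$.
\item For $t\neq 0$ we have $p(v(t))\neq p(v(0))$, because $p(l)$ is not closed.
\item By compactness of the remaining time range, there is some $\epsilon>0$ with no returns within $\epsilon$.
\end{itemize}
Hence $l$ is isolated in its orbit even though $p(l)$ is not closed.
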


For $g \in \pi_1(S)$, let $\ora{g}$ denote the \emph{hyperbolic axis} of $g$, i.e., the oriented geodesic left invariant by $g$ along which $g$ translates the least hyperbolic distance (see also Section~\ref{sec:actions}).

For $\gamma$ a concrete curve with lift geodesic representative  $\ora{\gamma}$ to the
universal cover,
since the orbit $\pi_1(S)\cdot \ora{\gamma}$ is discrete and countable, one can define a natural geodesic current associated to the oriented closed curve $C=[\gamma]=[g]$ by
\begin{equation}\label{eq:delta-curve}
\delta_C
  = \sum_{g \in \pi_1(S)/\operatorname{Stab}(\ora{\gamma})}
    \delta_{g\cdot \ora{\gamma}},
\end{equation}
where $\operatorname{Stab}(l)$ denotes the stabilizer of the oriented
geodesic $l$, and $\delta_{l}$ denotes the Dirac measure supported at
the oriented geodesic $l\in G(S)$. This definition extends naturally to associate a geodesic current to any weighted curve and, by linearity, to any weighted multi-curve. Moreover, geodesic currents arising from weighted multi-curves (and also those from weighted curves) are dense in the space of geodesic currents~\cite[Proposition~4.4]{Bonahon86:EndsHyperbolicManifolds}.

It is easy to see that a geodesic $l \in G(S)$ can only be an atom of a geodesic current when it is the lift of a closed geodesic~\cite[Proposition~8.2.7]{Martelli16:IntroGeoTop}.

One can also look at 1-dimensional sets.
For $x \in \partial_{\infty}\Sigma$, its
\emph{pencil} $P(x)$ is the subset of $G(S)$ of geodesics with either
endpoint at~$x$.
The only $0$-dimensional and $1$-dimensional subsets
that may have non-zero measure are those containing lifts of closed
geodesics.

\begin{proposition}
  \label{prop:spikes}
  Let $\mu$ be a geodesic current. If, for some~$x$, $\mu(P(x)) \ne
  0$, then $P(x)$ contains a unique lift $\ora{\gamma}$ of a closed
  geodesic and $\mu$ has an atom at $\ora{\gamma}$.
\end{proposition}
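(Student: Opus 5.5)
The plan is to show that the pencil $P(x)$ has positive measure only because of a single geodesic, and that this geodesic must be the lift of a closed geodesic. Write $P(x)=P^+(x)\cup P^-(x)$, where $P^+(x)$ is the set of geodesics starting at $x$ and $P^-(x)$ the set ending at $x$; the second is the flip of the first. Since $\mu(P(x))>0$, one of the two pieces has positive measure. Both pieces are closed, hence Borel.

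\textbf{Step 1: $x$ is fixed by a hyperbolic element.} Suppose the stabilizer of $x$ in $\pi_1(S)$ is trivial. Then the sets $P(gx)$, $g\in\pi_1(S)$, are distinct pencils. Choose a compact set $K\subset G(S)$ of geodesics with both endpoints in disjoint closed arcs $I,J$ of $\partial_\infty S$ such that $\mu(K\cap P(x))>0$; this is possible by inner regularity.

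Next, use the north–south dynamics of a hyperbolic element $h$ whose attracting fixed point lies near $x$ but not at $x$. After passing to a subsequence, the points $h^n x$ converge, so infinitely many translates $g_n x$ lie in a small interval of $I$. The pencils $P(g_n x)$ are pairwise distinct, so any two of them intersect in at most one geodesic. By invariance, each translate $g_n(K\cap P(x))$ has the same measure $\mu(K\cap P(x))$, up to this measure-zero overlap structure.

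Shrinking $K$ as needed, all these translates are contained in one fixed compact set $K'$ of geodesics with endpoints in compact disjoint arcs. Summing their measures then contradicts local finiteness, since $\mu(K')<\infty$ while the translates are essentially disjoint and infinitely many. The overlap of two pencils is at most the single geodesic joining the two points. If that geodesic carries an atom, it is a lift of a closed geodesic by the atom statement cited above, which gives a direct route to the conclusion anyway. Hence the stabilizer of $x$ is nontrivial, and in a surface group it is cyclic, generated by a hyperbolic element $g$ with $x$ as a fixed point. Let $\gamma$ be its axis, which is a lift of a closed geodesic.

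\textbf{Step 2: the measure concentrates on the axis.} This is the main obstacle. Consider $P^+(x)\setminus\{\gamma\}$, parametrized by the other endpoint $y\in\partial_\infty S\setminus\{x,x'\}$, where $x'$ is the other fixed point of $g$. The element $g$ acts on each of the two open arcs by translation, with a compact fundamental domain $D$. By invariance, $\mu$ of the pencil over the whole arc equals $\sum_n \mu(\text{pencil over } g^n D)=\sum_n \mu(\text{pencil over } D)$.

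The whole arc-pencil, however, is not locally finite near $\gamma$, so I would instead restrict to a compact neighborhood. Fix geodesics from $x$ with endpoint in a compact subarc $A$ away from $x$ and $x'$, with $A\supset g^n D$ for many $n$. A bound $\mu(\text{pencil over }A)<\infty$ forces $\mu(\text{pencil over } D)\cdot N\le C$ for arbitrarily large $N$ by taking $A$ large, using a nested compact family around $\gamma$ with endpoints near $x'$. The key technical check is that a single compact neighborhood of $\gamma$ contains the pencils over $g^nD$ for all large $n$: these geodesics converge to $\gamma$ as $n\to\pm\infty$ toward $x'$, and this is what the north–south dynamics provides. Hence the pencil over $D$ has measure zero, so $\mu(P^+(x)\setminus\{\gamma\})=0$; the same holds for $P^-$.

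\textbf{Step 3: conclusion.} Therefore $\mu(P(x))=\mu(\{\gamma,\sigma\gamma\})>0$, so $\mu$ has an atom at $\gamma$ (or at its flip, which is the same unoriented lift). Uniqueness follows because $x$ lies on at most one axis: the stabilizer of $x$ is cyclic with a single axis through $x$. This also settles the nontrivial-stabilizer case needed in Step 1, since Step 2 applies verbatim whenever $x$ is a fixed point.
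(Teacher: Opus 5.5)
Your Steps 2 and 3 are sound. In Step 2 only the direction in which $g^nD$ accumulates at $x'$ is useful; in the other direction $g^nD\to x$ and the geodesics collapse. One direction suffices, though: the infinitely many translates $\{x\}\times g^nD$ inside the compact set $\{x\}\times A$, with $A$ a closed arc containing $x'$ and avoiding $x$, force $\mu(\{x\}\times D)=0$.

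The gap is in Step 1, at the claim that, after shrinking $K$, the translates $h^n(K\cap P(x))$ all lie in one fixed compact set $K'$. This is false for the sequence you chose. Let $h$ be hyperbolic with $x\neq h^{\pm}$. North--south dynamics send $x$ \emph{and} the arc $J$ of other endpoints (when $h^-\notin J$) into a shrinking neighbourhood of $h^+$. So the geodesics of $h^n(K\cap P(x))$ have both endpoints converging to $h^+$. These sets escape every compact subset of $G(S)$ and accumulate on the diagonal, so local finiteness gives no contradiction. Shrinking $K$ does not rescue this, and neither does putting $h^-$ in $J$: then the part of $h^n(K\cap P(x))$ that stays in a compact set shrinks to the single geodesic joining $x$ and $h^-$. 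In fact no sequence of powers of one element can work, since a cyclic sequence collapses the whole pencil apart from at most one geodesic.

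What you need is a sequence adapted to $x$ itself, namely the fact that every point of $\partial_\infty S$ is a conical limit point of the cocompact group $\pi_1(S)$. Take a geodesic ray $\rho$ in $\wt{\Sigma}$ converging to $x$. By cocompactness, choose $t_n\to\infty$ and $g_n\in\pi_1(S)$ with $g_n^{-1}\rho(t_n)$ in a fixed compact fundamental domain $F$. The geodesics of $P(x)$ whose other endpoint lies in a compact arc $J\not\ni x$ are asymptotic to $\rho$ uniformly in that endpoint. (In the upper half-plane with $x=\infty$, the distance between them at height $e^t$ is $O(e^{-t})$.) So they pass within $\epsilon_n\to 0$ of $\rho(t_n)$. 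Hence every translate $g_n^{-1}(K\cap P(x))$ consists of geodesics meeting a fixed compact neighbourhood of $F$, and all of them lie in one compact subset of $G(S)$.

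If $\operatorname{Stab}(x)$ is trivial, the points $g_n^{-1}x$ are pairwise distinct, and two of these translates overlap in at most two oriented geodesics. Your counting argument then produces the contradiction with local finiteness, including your treatment of the case where an overlap geodesic is an atom. The paper itself gives no argument here; it cites \cite[Lemma~2.6]{dRMG22:Duals} for existence and \cite[Corollary~4.2.3]{Martelli16:IntroGeoTop} for uniqueness.
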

\begin{proof}
  For existence of $\ora{\gamma}$, see, e.g.,
  \cite[Lemma~2.6]{dRMG22:Duals}.
For uniqueness, see~\cite[Corollary~4.2.3]{Martelli16:IntroGeoTop}.
\end{proof}

Given $p,q \in \wt{S}$ we will write  $G[p,q)\subset G(S)$ for the set of
geodesics intersecting the geodesic segment $[p,q) \subset \wt{S}$ transversally.

\begin{definition}[Intersection form of geodesic currents]
Let $\mathcal{D}\mathcal{G}(S) \subset \{ (\gamma_1,\gamma_2) \mid \gamma_1 \cap \gamma_2 = \{ \mbox{pt} \} \}$ be the set of pairs of oriented geodesics of $\wt{S}$ intersecting transversely.
There is a continuous equivariant map $(\gamma_1,\gamma_2) \mapsto
\gamma_1 \cap \gamma_2$ from $\mathcal{D}\mathcal{G}(S)$ to $\wt{S}$,
so the action of $\pi_1(S)$ on $\cD\cG(S)$ is properly discontinuous and free. Let
$\mathcal{F}$ be a fundamental domain of this action. Following
Bonahon~\cite{Bonahon86:EndsHyperbolicManifolds}, for any two oriented
currents $\mu, \nu$ we define
\begin{equation}
i(\mu, \nu) \coloneqq \int_{\mathcal{F}} \mu \times \nu.
\label{eq:intersection}
\end{equation}
Bonahon defined the same intersection number in the context of unoriented geodesic currents, but the definition is the same for oriented ones. Moreover, this intersection number remains unchanged upon flipping the orientation of any of its factors.
It enjoys all the other properties of Bonahon's intersection: it is continuous and extends the geometric intersection of closed curves.
\label{def:intersection_currents}
\end{definition}

If $\nu=\delta_C$, then we will write $i(\mu, C)$ for $i(\mu, \delta_C)$.
In this case, we have
\begin{equation}
i(\mu, \delta_C)=\mu(G[q,y \cdot q))
\label{eq:intersection_curve}
\end{equation}
where $y \in \pi_1(S)$ is an element representing~$C$ and $q$ is an
arbitrary point in
$\ora{y}$.
In the setting of unoriented geodesic currents, Otal
\cite[Th\'eor\`eme~2]{Otal90:SpectreMarqueNegative} shows that two
geodesic currents are completely determined by their intersection
numbers with closed curves.
This is no longer true for general oriented geodesic currents; for example,
any oriented geodesic current $\mu$ and its orientation flip $\sigma(\mu)$ have the same intersection numbers. 
See also Definition~\ref{def:asymmetric} and Appendix~\ref{sec:asymmetric} for discussions about a version of intersection for oriented closed curves.

Finally, we highlight a special subset of geodesic currents that can be regarded as the interior of the cone of geodesic currents.

\begin{definition}
  Let $\mu \in \Curr(S)$. We say $\mu$ is \emph{filling}, and write
  $\mu \in \Curr_{\mathrm{fill}}(S)$, if for every non-zero geodesic
  current $\nu \in \Curr(S)$, we have $i(\mu, \nu)>0$.
\label{def:filling_current}
\end{definition}

 \subsection{Convex functions on the integers}
 \label{subsec:convexity}

We will need some results guaranteeing the existence of limits of sequences.
 We recall some basic results on convexity and existence of limits of
 functions on the integers, starting with an old result and then
 generalizing to two variables and to less-standard notions of
 convexity. We also recall Fekete's Lemma.
 
 Let $\lim_{n^{+}}f(n)$ denote $\lim_{n \to +\infty}f(n)$.

 We say $g \colon \mathbb{\mathbb{Z}} \to \mathbb{R}$ is \emph{convex} if $2g(n) \leq g(n+1) + g(n-1)$ for all $n \in \mathbb{Z}$. Similarly, $g \colon \mathbb{N} \to \mathbb{R}$ is convex  if $2g(n) \leq g(n+1) + g(n-1)$ for all $n \geq 2$.

 \begin{lemma}
 Let $g \colon \mathbb{\mathbb{Z}} \to \mathbb{R}$ be a convex and bounded function.
 Then $\lim_{n^{\pm}} g(n)$ exists.
 \label{lem:onevarconvexbounded}
 \end{lemma}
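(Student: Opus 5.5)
Let $h(n) \coloneqq g(n+1)-g(n)$ be the sequence of forward differences. Convexity, $2g(n) \le g(n+1)+g(n-1)$, says exactly that $h(n-1) \le h(n)$ for every $n \in \mathbb{Z}$, so $h$ is non-decreasing on $\mathbb{Z}$. The plan is to show that boundedness of $g$ pins the sign of $h$ on each half-line. Once that is done, $g$ is eventually monotone in each direction, and the limits exist.

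For the limit at $+\infty$, we first show $h(n) \le 0$ for all $n$. Suppose instead that $h(n_0) = \epsilon > 0$ for some $n_0$. Since $h$ is non-decreasing, $h(n) \ge \epsilon$ for all $n \ge n_0$. Telescoping gives
\[
g(n_0+k) \ge g(n_0) + k\epsilon ,
\]
which tends to $+\infty$, contradicting boundedness. Hence $h \le 0$ everywhere, so $g$ is non-increasing on all of $\mathbb{Z}$. A bounded non-increasing sequence converges, so $\lim_{n^+} g(n)$ exists.

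For the limit at $-\infty$, note that $g$ is non-increasing, so as $n \to -\infty$ the values $g(n)$ are non-decreasing and bounded above. Hence $\lim_{n^-} g(n)$ exists as well. Alternatively, one can apply the previous paragraph to $n \mapsto g(-n)$, which is again convex and bounded.

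This argument in fact shows more: $h$ is non-decreasing and bounded above by $0$, so $g$ is non-increasing, and the symmetric argument at $-\infty$ forces $h \ge 0$. Together these give that a bounded convex function on $\mathbb{Z}$ is constant. This is consistent with the lemma, which only asks for existence of the two limits.

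There is no real obstacle here. The only point needing care is the elementary telescoping estimate used to rule out a strictly positive difference. For the two-variable generalization the paper announces next, I would expect to argue in the same way along each coordinate line, combined with the uniform bound.
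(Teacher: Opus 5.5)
Your proof is correct and is essentially the paper's argument: the forward difference $h(n)=g(n+1)-g(n)$ is non-decreasing by convexity, boundedness forces $h\le 0$, and a bounded monotone sequence converges. Your extra observation that a bounded convex function on all of $\mathbb{Z}$ must be constant is also correct. Note, though, that for functions defined only on $\mathbb{N}$ just the $+\infty$ half of your argument survives, and that is how the lemma is actually used later (e.g.\ for $g_k(n)=g(k+nR)$ in Lemma~\ref{lem:limexistrmidconvex}).
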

 \begin{proof}
   Consider $h(n) \coloneqq g(n+1) - g(n)$. Convexity of $g$ implies
   that $h$ is monotone increasing. Thus if $h(n_0) > 0$,
   then $g(n)$ increases monotonically without bound for $n > n_0$,
   contradicting boundedness. Therefore $h(n) \le 0$ for all $n$, so $g$ is
   monotonically decreasing, so has a limit.
\end{proof}

  \begin{lemma}\label{lem:monotone-double-limit}
  Let $(a_{n,m})_{n,m \in \mathbb{N}}$ be a double sequence that is
  bounded and monotone, in the sense that if $n' \leq n$ and
  $m' \leq m$ then
  $a_{n',m'} \leq a_{n,m}$. Then the following limits
  exist and are equal:
  \[
  \lim_{n^+}\lim_{m^+} a_{n,m} = \lim_{m^+}\lim_{n^+} a_{n,m} =
    \lim_{n^+, m^+} a_{n,m} = \sup_{n,m \ge 0} a_{n,m}.
  \]
  \end{lemma}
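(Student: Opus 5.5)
Set $L \coloneqq \sup_{n,m\ge 0} a_{n,m}$. This supremum is finite because the double sequence is bounded. The plan is to prove that each of the three limits exists and equals $L$; once that is done, they are automatically equal to one another.

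\emph{Inner limits.} Fix $n$. Monotonicity in the second index gives $a_{n,m'}\le a_{n,m}$ for $m'\le m$, so $m\mapsto a_{n,m}$ is non-decreasing and bounded above. Hence
\[
b_n \coloneqq \lim_{m^+} a_{n,m} = \sup_m a_{n,m}
\]
exists. Monotonicity in the first index gives $a_{n',m}\le a_{n,m}$ for $n'\le n$. Taking suprema over $m$ yields $b_{n'}\le b_n$, so $(b_n)$ is non-decreasing and bounded. Therefore
\[
\lim_{n^+} b_n = \sup_n b_n = \sup_n\sup_m a_{n,m} = L .
\]
The same argument with the two indices interchanged shows that $\lim_{m^+}\lim_{n^+} a_{n,m} = L$.

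\emph{Joint limit.} Let $\epsilon>0$. By definition of the supremum, there exist $n_0,m_0$ with $a_{n_0,m_0} > L-\epsilon$. For all $n\ge n_0$ and $m\ge m_0$, monotonicity gives
\[
L-\epsilon < a_{n_0,m_0} \le a_{n,m} \le L .
\]
So $|a_{n,m}-L|<\epsilon$ whenever $n\ge n_0$ and $m\ge m_0$, which is exactly the statement $\lim_{n^+,m^+} a_{n,m}=L$.

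There is no real obstacle in this argument. The only point needing care is to note that the iterated supremum $\sup_n\sup_m a_{n,m}$ equals the joint supremum $\sup_{n,m} a_{n,m}$, which holds trivially. The argument uses monotonicity only from below, so the bounded-above half of the hypothesis is all that is needed.
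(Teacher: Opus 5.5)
Your proof is correct and follows essentially the same route as the paper: both set $L=\sup_{n,m}a_{n,m}$ and use a pair $(n_0,m_0)$ with $a_{n_0,m_0}>L-\epsilon$, together with monotonicity, to get the joint limit. The only cosmetic difference is that you identify the iterated limits as iterated suprema, whereas the paper bounds the inner limits below by $a_{n_0,m_0}$ for $n\ge n_0$; both steps are immediate.
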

This lemma appears in Habil \cite[Thm.\ 4.2]{Hab16:DoubleSeq}, but we
include a proof for reference.

  \begin{proof}
    Let $V = \sup_{n,m} a_{n,m}$. For any $\epsilon > 0$, let
    $A_\epsilon \coloneqq \{(n,m) \mid a_{n,m} > V - \epsilon\}$. By
    definition of $\sup$, this set is non-empty; say $(n_0,m_0) \in
    A_\epsilon$. By monotonicity, $A_\epsilon$ contains all $(n,m)$
    with $n\ge n_0, m \ge m_0$. Since $\epsilon$ is arbitrary, it
    follows that $\lim_{n^+,m^+} a_{n,m} = V$. It also follows that
    for $n \ge n_0$, $\lim_{m^+} a_{n,m} > V-\epsilon$. (This limit
    exists by single-variable monotonicity.) Again we see that the
    double limits exist and are equal to $V$.
  \end{proof}

\begin{definition}
  A function $g \colon \bbN \to \bbR$ is \emph{$R$-convex}
  if for all $r \ge R$ and $n \in \bbN$ for which the inequality makes
  sense, we have $g(n-r) + g(n+r) \geq
  2 g(n)$. Similarly a
  function $g \colon \mathbb{Z} \times \mathbb{Z} \to \mathbb{R}$ is
  \emph{$(R,S)$-axis-convex} if, for all $n,m$, $g(\cdot, m)$ is
  $R$-convex and $g(n, \cdot)$ is $S$-convex. We say that $g$ is
  \emph{axis-convex} if it is $(1,1)$-axis-convex.
\end{definition}
 
 \begin{lemma}
   Let $g \colon \mathbb{N} \times \mathbb{N} \to \mathbb{R}$ be axis-convex and bounded. Then the following limits exist and are equal:
   \[\lim_{n^+,m^+} g(n,m) = \lim_{n^+} \lim_{m^+} g(n,m) = \lim_{m^+} \lim_{n^+} g(n,m).\]
 \end{lemma}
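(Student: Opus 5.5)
We first establish the existence of the iterated limits in each order. For a fixed $m$, the function $n \mapsto g(n,m)$ is convex on $\bbN$ and bounded. The argument of Lemma~\ref{lem:onevarconvexbounded} applies verbatim on $\bbN$. The increments $h(n) = g(n+1,m) - g(n,m)$ are non-decreasing. If some increment were positive, $g(\cdot,m)$ would grow without bound. Hence every increment is $\le 0$, so $g(\cdot,m)$ is monotonically non-increasing in $n$ and converges. The same reasoning shows that $g(n,\cdot)$ is non-increasing in $m$ for each fixed $n$.

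The key observation is that $g$ is therefore monotone non-increasing in each variable separately. So whenever $n' \le n$ and $m' \le m$, we have
\[
g(n,m) \le g(n',m) \le g(n',m').
\]
Put $a_{n,m} \coloneqq -g(n,m)$. This double sequence is bounded and monotone in exactly the sense of Lemma~\ref{lem:monotone-double-limit}. Applying that lemma, the limits
\[
\lim_{n^+}\lim_{m^+} a_{n,m}, \qquad \lim_{m^+}\lim_{n^+} a_{n,m}, \qquad \lim_{n^+,m^+} a_{n,m}
\]
all exist and equal $\sup a_{n,m}$. Negating, we obtain the claimed equalities for $g$, with common value $\inf_{n,m} g(n,m)$.

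The only potential obstacle is the domain. Convexity on $\bbN$ is required only for $n \ge 2$, so the increment $h(n)$ is monotone only from $n=1$ onward, that is, on all of $\bbN$ where it is defined. This still suffices: a positive increment at some $n_0$ forces every later increment to be at least as large, hence $g \to \infty$, contradicting boundedness. Beyond this check, the proof is a direct reduction to the two earlier lemmas.
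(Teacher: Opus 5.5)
Your proof is correct and follows the paper's argument essentially verbatim. Convexity plus boundedness forces $g$ to be non-increasing in each variable separately, and then Lemma~\ref{lem:monotone-double-limit} gives existence and equality of all three limits. You apply that lemma explicitly to $-g$, whereas the paper leaves the sign flip implicit.
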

\begin{proof}
  As in Lemma~\ref{lem:onevarconvexbounded}, the function $h_1(n,m) =
  g(n+1,m) - g(n,m)$ is increasing as a function of $n$ by convexity
  and hence non-positive by boundedness of $g$. Similarly for
  $h_2(n,m) = g(n,m+1) - g(n,m)$. Thus $g$ is monotonically decreasing
  and the result follows by Lemma~\ref{lem:monotone-double-limit}.
\end{proof}

\begin{lemma}
  Let $g$ be a bounded, $R$-convex function
  $g \colon \bbN \to \bbR$. Then $\lim_{n^+} g(n)$ exists.
 \label{lem:limexistrmidconvex}
\end{lemma}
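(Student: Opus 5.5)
We reduce to the one-variable convex case, Lemma~\ref{lem:onevarconvexbounded}, by restricting $g$ to residue classes modulo a suitable step. Fix an integer $r \ge R$ and a residue class $k \in \{0,\dots,r-1\}$, and set $h_k(j) \coloneqq g(k + jr)$ for $j \in \bbN$. Taking this particular $r$ and $n = k+jr$ in the definition of $R$-convexity gives $h_k(j-1) + h_k(j+1) \ge 2h_k(j)$ whenever $j\ge 1$. So $h_k$ is convex on $\bbN$, and it is bounded because $g$ is. The argument of Lemma~\ref{lem:onevarconvexbounded} applies verbatim on $\bbN$. The increments $h_k(j+1)-h_k(j)$ are non-decreasing, so a single positive increment would force $h_k$ to grow without bound. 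Hence every increment is non-positive, $h_k$ is non-increasing, and $L_{r,k} \coloneqq \lim_{j\to\infty} h_k(j) = \inf_j h_k(j)$ exists.

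The main obstacle is showing that these limits do not depend on the residue class, since a priori the $r$ subsequences could converge to different values. I would exploit the fact that $R$-convexity holds for \emph{every} $r \ge R$, not just one step. Fix $r \ge R$ and two classes $k \neq k'$. For $j$ large, apply the inequality with the step $r' = (k'-k) + mr$, choosing $m$ large enough that $r' \ge R$. Centering at $n = k' + jr$ gives
\[
g(k + (j-m)r) + g(k' + k' - k + (j+m)r) \ge 2g(k'+jr).
\]
This single inequality involves three residue classes mod $r$, so it mixes the classes in a way that can be messy. A cleaner route avoids comparing classes altogether.

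For any two integers $p, q \ge R$, consider the subsequences along the arithmetic progressions $\{k + jp\}$ and $\{k + jq\}$. Take $r = pq$. The class-$k$ subsequence mod $pq$ is a subsequence of both the mod-$p$ and the mod-$q$ subsequences. Each of those converges by the first paragraph, so their limits coincide with $L_{pq,k}$. Now fix a residue $k$ mod $p$ and any other integer $k'$. Choose a prime $q \ge R$ larger than all relevant quantities. By the Chinese remainder theorem there is an $n_0$ with $n_0 \equiv k \pmod p$ and $n_0 \equiv k' \pmod q$.

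The progression $\{n_0 + jpq\}$ then lies inside both $\{k + jp\}$ and $\{k' + jq\}$, so $L_{p,k} = L_{q,k'}$. Applying the same reasoning with a prime $q_1 \equiv$ anything and a residue $k''$ mod $p$ shows that $L_{p,k} = L_{q,k'} = L_{p,k''}$. Here I use that the limit along $\{k'+jq\}$ is a single number independent of which sub-progression we pass to. Hence all classes mod $p$ share one common limit $L$.

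Finally, $\bbN$ is the finite union of the residue classes mod $p$, each of which converges to the same $L$. Therefore $\lim_{n\to\infty} g(n) = L$.
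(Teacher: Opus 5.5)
Your proof is correct. It differs from the paper's only in how you show that the residue-class limits agree. Both arguments first note that $g$ restricted to an arithmetic progression of step $r\ge R$ is bounded and convex, hence convergent.

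The paper then takes $r=R$ and applies $R$-convexity once more, with step $R+1$ centered at $k+nR$. This gives $g(k+nR)\le\tfrac12\bigl(g((k-1)+(n-1)R)+g((k+1)+(n+1)R)\bigr)$. In the limit, $L_0,\dots,L_{R-1}$ is a convex function on $\bbZ/R\bbZ$, hence constant by the maximum principle on a cycle. This is precisely the route you began in your second paragraph and dropped. With $k'=k+1$ and $m=1$, your inequality only involves the consecutive classes $k,k+1,k+2$, so it is not messy at all.

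Your replacement uses two coprime steps $p,q\ge R$ and the Chinese remainder theorem: every class mod $p$ shares a step-$pq$ sub-progression with one fixed class mod $q$. This makes the comparison purely soft, since subsequences of a convergent sequence share its limit. You need no inequality between limits and no wraparound bookkeeping at the end classes. You still use the hypothesis at only two step sizes; for instance $p=R$, $q=R+1$ works, so primality of $q$ is unnecessary.

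When writing it up, make two fixes. First, replace the garbled ``prime $q_1\equiv$ anything'' sentence by reusing the same $q$ and the same class $k'$ mod $q$, which gives $L_{p,k''}=L_{q,k'}$. Second, take $n_0\ge\max(k,k')$ so the step-$pq$ progression genuinely lies inside both progressions.
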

\begin{proof}
  For $0 \le k < R$, define $g_k(n) \coloneqq g(k + nR)$. Then each
  $g_k$ is convex and bounded, and thus by
  Lemma~\ref{lem:onevarconvexbounded} has a well-defined limit as
  $n \to \infty$. We must show these limits agree for different~$k$.
  By $R$-convexity,
  $g(k + nR) \le \frac{1}{2} \bigl(g((k-1) + (n-1)R) + g((k+1) +
  (n+1)R\bigr)$. Taking the limit, we see that
  $\lim_{n^+} g_k \le \frac{1}{2} (\lim_{n^+} g_{k-1} + \lim_{n^+}
  g_{k+1})$. The limits are therefore a convex function on $\bbZ/R\bbZ$ and
  thus constant.
\end{proof}
 
\begin{lemma}
  Let $g \colon \mathbb{N} \times \mathbb{N} \to \mathbb{R}$ be a
  bounded, $(R,S)$-axis-convex function. Then the
  following limits exist and are equal:
  \[
    \lim_{n^+,m^+} g(n,m) = \lim_{n^+} \lim_{m^+} g(n,m) = \lim_{m^+}
    \lim_{n^+} g(n,m).
  \]
  \label{lem:limexistrmidconvextwovar}
\end{lemma}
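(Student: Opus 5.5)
The plan is to reduce to the one-variable case (Lemma~\ref{lem:limexistrmidconvex}) and the monotone double-limit lemma (Lemma~\ref{lem:monotone-double-limit}), in the same way the $(1,1)$ case was deduced from Lemma~\ref{lem:onevarconvexbounded}.

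\textbf{Step 1: pass to residue classes.} For residues $0\le k<R$ and $0\le l<S$, set
\[
g_{k,l}(n,m) \coloneqq g(k+nR,\, l+mS).
\]
Each $g_{k,l}$ is bounded. Restricting the $R$-convexity of $g(\cdot,m)$ to the progression $k+R\bbN$ (taking $r=R$) gives $2g_{k,l}(n,m)\le g_{k,l}(n-1,m)+g_{k,l}(n+1,m)$, and similarly in the second variable with $r=S$. So each $g_{k,l}$ is axis-convex in the ordinary sense.

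\textbf{Step 2: monotonicity.} As in the proof of the preceding lemma, the first difference in each variable is increasing, and boundedness forces it to be non-positive. Hence $g_{k,l}$ is monotonically non-increasing in each variable. Lemma~\ref{lem:monotone-double-limit}, applied to $-g_{k,l}$, then gives that all three limits of $g_{k,l}$ exist and equal $L_{k,l}\coloneqq\inf g_{k,l}$.

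\textbf{Step 3: the limits $L_{k,l}$ agree across residue classes.} This is the main obstacle. I would mimic the end of the proof of Lemma~\ref{lem:limexistrmidconvex}. Apply $R$-convexity in the first variable with $r=R$ but centered off the progression:
\[
2g(k+nR,\, y)\le g\bigl(k-1+(n-1)R+1-R+R,\,y\bigr)+\cdots
\]
More cleanly, use the inequality $g(x)\le\tfrac12\bigl(g(x-R-1)+g(x+R+1)\bigr)$. This is legitimate because $R$-convexity holds for every $r\ge R$, so in particular for $r=R+1$. It links residue $k$ to residues $k\pm1$ modulo $R$, in the first variable with $y$ fixed at $l+mS$.

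Taking the double limit as $n,m\to\infty$ (which exists by Step~2 and may be computed along any cofinal path) yields
\[
L_{k,l}\le \tfrac12\bigl(L_{k-1,l}+L_{k+1,l}\bigr).
\]
So $k\mapsto L_{k,l}$ is convex on $\bbZ/R\bbZ$. A convex function on a finite cycle attains its maximum and must then be constant, so $L_{k,l}$ is independent of $k$. The symmetric argument in the second variable, using $r=S+1$, shows it is independent of $l$. Call the common value $L$.

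\textbf{Step 4: conclusion.} Every pair $(n,m)$ lies in exactly one class $(k,l)$, and on each of the finitely many classes $g$ tends to $L$ as both coordinates go to infinity. Hence $\lim_{n^+,m^+}g=L$.

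For the iterated limits, fix $n$ in class $k$. The sequence $m\mapsto g(n,m)$ is bounded and $S$-convex, so $\lim_{m^+}g(n,m)$ exists by Lemma~\ref{lem:limexistrmidconvex}. It therefore equals its limit along the subsequence $m\equiv 0 \pmod S$, namely $\lim_{m}g_{k,0}(n',m)$ with $n=k+n'R$.

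By Step~2, as $n'\to\infty$ this converges to $L_{k,0}=L$, and this holds for every class $k$. So $\lim_{n^+}\lim_{m^+}g=L$, and the other order follows symmetrically.
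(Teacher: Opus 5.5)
Your proposal is correct and follows essentially the same route as the paper. You pass to the residue-class functions $g_{k,l}(n,m)=g(k+nR,\,l+mS)$ and use their ordinary axis-convexity and boundedness to get the limits on each class; the averaging inequality with $r=R+1$ (resp.\ $S+1$) and the maximum principle on $\mathbb{Z}/R\mathbb{Z}$ then show the class limits coincide, and your version is more explicit than the paper's sketch about the iterated limits of $g$ itself.
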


\begin{proof}
  As in Lemma~\ref{lem:limexistrmidconvex}, for $0 \le k < R$ and $0
  \le \ell < R$, let $g_{k,\ell}(n,m) \coloneqq g(k+nR, \ell+mS)$.
  Then each $g_{k,\ell}$ is axis-convex and thus has a well-defined limit
  as $n,m \to \infty$ (agreeing with the iterated limits), while the
  same averaging argument shows that the limits don't depend on
  $k,\ell$ and thus the overall limits exist and are equal.
\end{proof}

The following result is straightforward.

 \begin{lemma}
 Let $f_m \colon \mathbb{N} \to \mathbb{R}$ be a sequence of convex functions converging to a  function
 $f \colon \mathbb{N} \to \mathbb{R}$. Then $f$ is also convex.
 \label{lem:limitconvex}
 \end{lemma}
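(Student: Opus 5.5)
The plan is to show directly that the pointwise limit $f$ satisfies the defining inequality of convexity at every admissible point. Fix $n \geq 2$. Because each $f_m$ is convex, we have
\[
2f_m(n) \leq f_m(n+1) + f_m(n-1) \quad \text{for every } m.
\]
This is a single inequality that involves only the three values of $f_m$ at $n-1$, $n$, and $n+1$.

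Next, let $m \to \infty$. By hypothesis $f_m \to f$ pointwise on $\mathbb{N}$, so each of the three sequences $f_m(n-1)$, $f_m(n)$, and $f_m(n+1)$ converges, to $f(n-1)$, $f(n)$, and $f(n+1)$ respectively. Limits of real sequences preserve non-strict inequalities and commute with finite sums and scalar multiples. Passing to the limit therefore gives
\[
2f(n) \leq f(n+1) + f(n-1).
\]
Since $n \geq 2$ was arbitrary, $f$ is convex in the sense defined at the start of Section~\ref{subsec:convexity}.

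No real obstacle is expected here. The only point needing care is the interpretation of ``converging'': pointwise convergence is already enough, since only finitely many points enter each inequality. The same argument works verbatim for the variant on $\mathbb{Z}$, and for $R$-convexity, since that condition is likewise a family of closed conditions each involving finitely many values.
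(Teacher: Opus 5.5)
Your argument is correct: convexity on $\mathbb{N}$ is a family of non-strict inequalities, each involving only three values, so it survives pointwise limits exactly as you describe. The paper gives no proof and simply calls the lemma straightforward; your limit argument is the intended one.
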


 \begin{remark}
  We use the term \emph{axis-convex} because the condition is not
  strong enough to deserve the term convex. For instance, the
  function $g(n,m) = -mn$ is axis-convex by the above definition, but
  $g(n+1,m+1) + g(n-1,m-1) - 2g(n,m) = -2 < 0$.
\end{remark}

Finally, we will use the following classical lemma on sub-additive sequences.
\begin{lemma}[Fekete's Lemma]\label{lem:fekete}
Let $(a_n)$ be a sequence of real numbers such that, for all $m,n\ge 0$,
\[
a_{m+n} \le a_m + a_n 
\qquad \text{(respectively, } a_{m+n} \ge a_m + a_n\text{).}
\]
Then
\[
\lim_{n\to\infty} \frac{a_n}{n}
= \inf_{n} \frac{a_n}{n} \le a_1
\qquad \text{(respectively, } 
\lim_{n\to\infty} \frac{a_n}{n}
= \sup_{n} \frac{a_n}{n} \ge a_1\text{).}
\]
\end{lemma}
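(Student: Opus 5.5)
Fekete's Lemma is classical; the plan below treats the subadditive case $a_{m+n}\le a_m+a_n$. The superadditive case then follows by applying it to $(-a_n)$, which turns $\inf$ into $\sup$ and reverses the inequalities. Throughout, $n$ ranges over positive integers in the limit and infimum. The hypothesis at $m=n=0$ gives $a_0\le 2a_0$, so $a_0\ge 0$; this is harmless, since only $n\ge1$ enters.

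Set $L=\inf_{n\ge1} a_n/n\in[-\infty,a_1]$. The bound $L\le a_1$ is immediate by taking $n=1$. Since $a_n/n\ge L$ for every $n$, we get $\liminf_n a_n/n\ge L$. It therefore suffices to prove $\limsup_n a_n/n\le L$, which we show by proving $\limsup_n a_n/n\le a_k/k$ for each fixed $k\ge1$.

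For fixed $k$, write $n=qk+r$ with $q\ge0$ and $r\in\{1,\dots,k\}$. Using the range $1,\dots,k$ rather than $0,\dots,k-1$ keeps the remainder term away from $a_0$. Iterating subadditivity gives
\[
a_n\le q\,a_k+a_r\le q\,a_k+M_k,
\qquad M_k\coloneqq\max_{1\le r\le k}|a_r|.
\]
Dividing by $n$ and letting $n\to\infty$, we have $q/n\to 1/k$ and $M_k/n\to0$. This holds regardless of the sign of $a_k$, because $q a_k/n\to a_k/k$ in either case. Hence $\limsup_n a_n/n\le a_k/k$. Taking the infimum over $k$ gives $\limsup_n a_n/n\le L$, so the limit exists and equals $L$, with $L=-\infty$ allowed.

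There is no real obstacle here. The only points needing care are the choice of remainder range $r\in\{1,\dots,k\}$, which avoids $a_0$, and the possibility $L=-\infty$, which the argument covers automatically.
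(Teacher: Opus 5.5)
Your proof is correct and complete. It is the standard division-with-remainder argument, and your choice of remainders $r\in\{1,\dots,k\}$ together with allowing $L=-\infty$ handles the only delicate points.

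The paper gives no proof of this lemma and quotes it as classical, so there is nothing to compare against. Allowing an infinite limit does matter in the paper's use: in Proposition~\ref{prop:power_disconnected_imply_stable}, the superadditive limit $F(a)$ is only shown to be finite afterwards.
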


%%% Local Variables:
%%% mode: latex
%%% TeX-master: "Intersections"
%%% End:

\section{Crossings and smoothings}
\label{sec:smoothing}

\subsection{Left vs right actions of \texorpdfstring{$\pi_1(S)$}{π1(S)}}
\label{sec:actions}

To fix conventions, we briefly recall elementary facts about actions of $\pi_1(S,\ast)$ on the universal cover~$\wt S$ of a surface (or indeed any connected length space). We
follow the convention that path composition works in the natural
order, where a product $\gamma_1 \cdot \gamma_2$ of paths in~$S$ is
defined if $\gamma_1(1) = \gamma_2(0)$.
Recall that $a \in \pi_1(S,\ast)$ is the homotopy class of the
oriented curve relative to the basepoint $*$, which we will write $a = (\gamma)$.
 We will sometimes follow the
 group theory convention of denoting inverses by a capital letter.

\begin{definition}[Right action]
For a covering map $p \colon \wt S \to S$ of~$S$ and base-point
$\ast \in S$, we have a \emph{right} action of $\pi_1(S,\ast)$ on
the fiber $p^{-1}(\ast)$, defined by, for $x \in p^{-1}(\ast)$, setting
$x\cdot (\gamma) = \wt \gamma(1)$, where $\wt\gamma$ is the
unique lift of~$\gamma$ with $\wt\gamma(0) = x$.
\end{definition}
We observe
the following.
\begin{itemize}
\item 
  This right action does not depend on a choice of basepoint
  on~$\wt S$, and works for any covering space.%
  \footnote{One succinct statement of the classification theorem for
    covering spaces is that coverings of $S$ are in bijection with
    sets with a right action of $\pi_1(S)$. If we restrict to
    connected covering spaces, they
    correspond to transitive actions of $\pi_1(S)$, which
    in turn correspond to conjugacy classes of subgroups. Thus this
    right action is just the classification of covering spaces.}
\item In the induced length metric on~$\wt S$, the distance between
  $x$ and $x\cdot (\gamma)$ is bounded by the length of~$\gamma$.
\end{itemize}

\begin{definition}[Left action]
When $\wt S$ is the universal cover of~$S$ (or, more generally, any
normal cover of~$S$), there is a \emph{left} action of $\pi_1(S,\ast)$
on all of $\wt S$ by deck transformations. Take
the standard model of $\wt S$, and lift $\ast \in S$ to a basepoint
$\wt \ast \in \wt S$, which we think about as the constant path. We
write other
points in $\wt S$ as $\wt{\ast} \cdot (\gamma)$, where $\gamma$ is a
path in~$S$ with $\gamma(0) = \ast$, considered up to homotopy fixing
the endpoint. The left action is then
defined by
$(\gamma_1) \cdot (\wt \ast \cdot (\gamma)) \coloneqq
\wt \ast \cdot (\gamma_1 \cdot \gamma)$.

\end{definition}
Here we make the following observations.
\begin{itemize}
\item This left action is an isometry of all of~$\wt S$.
\item The left action depends on a lift of
  $\ast \in S$ to a basepoint  $\wt \ast \in \wt S$.
\item The distance between $x \in \wt S$ and $(\gamma) \cdot x$
  is unbounded, although since the basement $\wt \ast$ is moved by the
  length of~$\gamma$ we do have a bound on
  the translation length.
\item This left action extends to an action on the circle at infinity $\partial_{\infty} S$.
\end{itemize}

For any non-trivial $g \in \pi_1(S,*)$, there is a corresponding
oriented closed curve $[g]$ on $S$, corresponding to the conjugacy
class of~$g$. We also need to specify different lifts of $[g]$ to
$\wt S$. This is conveniently specified by $g$ itself: $g$ determines
a sequence of points $\{\wt \ast \cdot g^k \mid k \in \bbZ\}$,
which is a
quasi-geodesic in $\wt S$. After choosing a hyperbolic structure
$\Sigma$ on $S$, this quasi-geodesic determines an oriented hyperbolic
geodesic
denoted $\ora{g}$ with endpoints $g^+ = \lim_{k^+} \wt \ast \cdot g^k$ 
and $g^- = \lim_{k^-} \wt \ast \cdot g^k$. Note that $g$
acts on the left on $\wt S$ as a hyperbolic isometry
fixing $\ora{g}$ set-wise. This action
extends continuously to $\overline{\wt S} \coloneqq \wt S \cup \partial_{\infty} S$ and has north-south (NS) dynamics, with unique
attracting point $g^+$ and repelling point $g^-$: for any point
$x \in \overline{\wt S}$ other
than $g^-$, we have $\lim _{k^+} g^k \cdot x = g^+$.

The other lifts of $[g]$ to $\wt S$ are translates $q \cdot \ora{g}$
of $\ora{g}$ by a
group element~$q$; we can either think of this as the geodesic
$\ora{qgQ}$ passing within a bounded distance of the points $(qg^kQ) \cdot \wt \ast$; 
or we can think of it as the geodesic corresponding to the coset
$q\langle g\rangle$, i.e., passing within a bounded distance of the
points $(qg^k) \cdot \wt \ast$ in the coset. Note these two sequences of
points are within a
bounded distance of each other, since the right action of $\pi_1$
moves points a bounded distance.

We will frequently draw pictures of group elements $g\in\pi_1(S,\ast)$
acting on~$\bbH^2$. In these pictures, there will be two types of
arrows (see, e.g., Figure~\ref{fig:newmodel3}): arrows
connecting two endpoints of $\partial_{\infty}S$, which
correspond to axes hyperbolic axes $\ora{g}$ of an element
$g \in \pi_1(S)$, the axis of the \emph{left} action; and arrows
representing paths connecting points on these axes that correspond to
the \emph{right} action on the fiber. As explained above, the
relevant lifts of the basepoints are a quasi-geodesic that are a
bounded distance from the axis; the pictures are clearer if we imagine
the basepoints are actually on the axis.

\begin{figure}
\centering{
\resizebox{55mm}{!}{\Huge{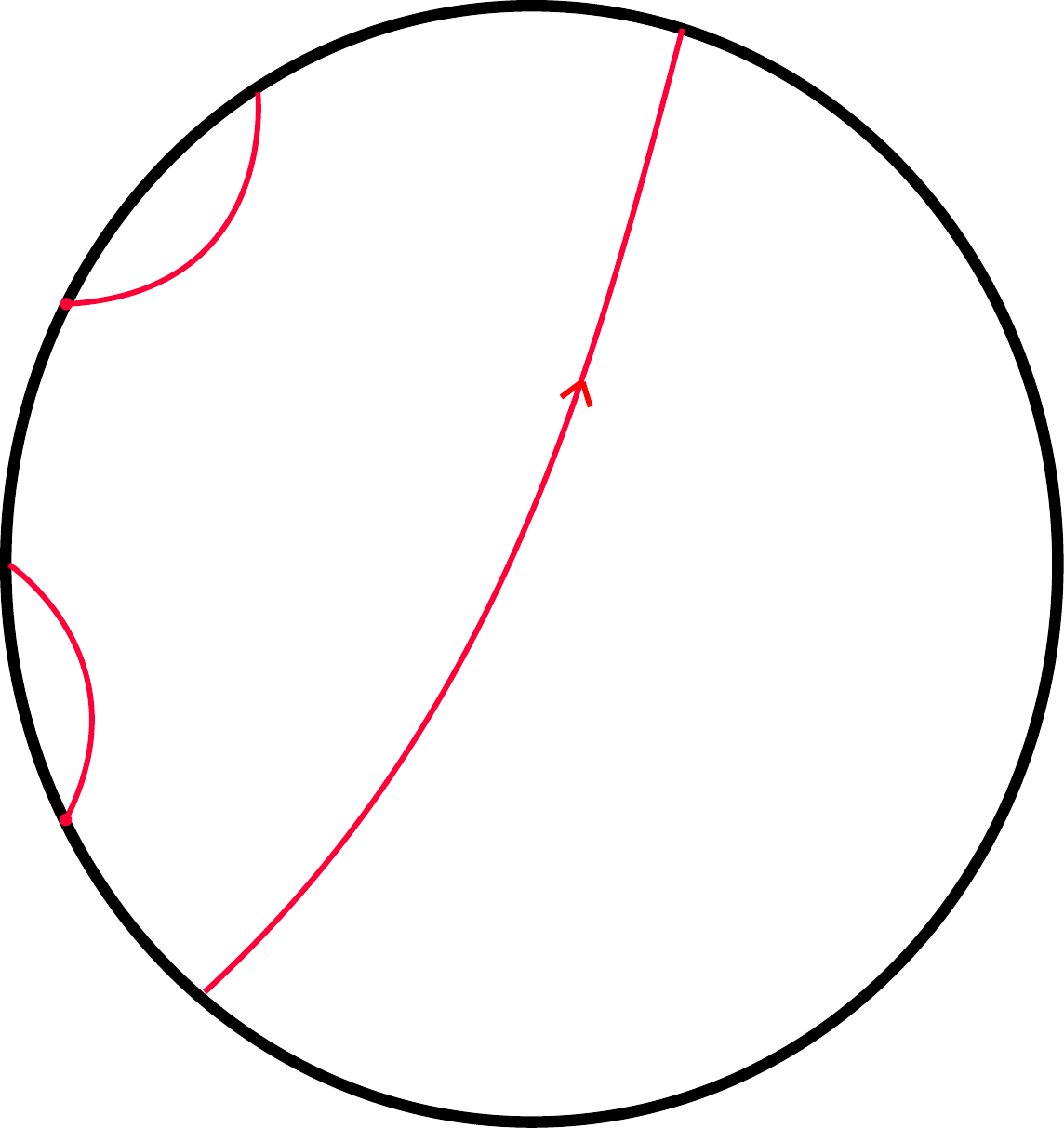}}
\caption{Two different translates of $\protect\ora{g}$ captured by the double coset $\protect\langle g \protect\rangle q_1^{-1} q_2 \protect\langle g \protect\rangle.$}.
\label{fig:doublecoset}
}
\end{figure}

With these preliminaries, we can see that two different lifts of
$[g]$ (see Figure~\ref{fig:doublecoset}) 
\begin{align*}
  \ora{\ell_1} &= q_1 \cdot \ora{g} & \ora{\ell_2} &= q_2 \cdot \ora{g},
\end{align*}
are related by the isometry $q_2 q_1^{-1}$,
\begin{equation}\label{eq:line-translate}
  \ora{\ell_2} = q_2 q_1^{-1} \cdot \ora{\ell_1},
\end{equation}
while the geometric relation between the two lifts, meaning up to the left
$\pi_1(S,*)$ action, is captured instead by the double coset
\begin{equation}\label{eq:double_coset}
  \langle g \rangle q_1^{-1} q_2 \langle g \rangle.
\end{equation}
Note that the order of the product is reversed relative to
\eqref{eq:line-translate}. This double coset is what is relevant for,
e.g., determining if $\ora{\ell_1}$ and $\ora{\ell_2}$ intersect.

We can express many of our constructions as working with either
elements of $\pi_1(S)$ (acting by isometries on the universal cover)
or with general hyperbolic elements of the group $\Isom^+(\bbH^2)$.
(Every element of $\pi_1(S)$ necessarily acts hyperbolically on
$\wt S$.) When there is a choice, we prefer to state lemmas in terms
of $\Isom^+(\bbH^2)$, since in principle it is more general, as we do
not need to assume the group elements generate a discrete subgroup.

\subsection{Crossings and smoothings}
\label{subsec:crossings}

Loosely speaking, an essential crossing is a crossing of a
multi-curve that can't be homotoped away. Recall that a multi-curve is
considered up to homotopy (Definition~\ref{def:multi-curve}).

\begin{definition}\label{def:essential-crossing}
  A \emph{crossing} of a concrete multi-curve~$\gamma$ is a pair
  $(p,q) \in X(\gamma)^2$ so that
  $\gamma(p) = \gamma(q) \in S$. We say~$\gamma$ is \emph{minimal} if
  it has only transverse crossings and a minimum number of
  them among all representatives of $C = [\gamma]$.
  A crossing that appears in some minimal representative is said
  to be an \emph{essential crossing}.
\end{definition}

Given an essential crossing of a multi-curve, we can do an operation
of ``smoothing'' in two different ways, each reducing the original
multi-curve to a different one.
We follow~\cite{MGT25:SmoothingsErratum} closely.

\begin{definition}[smoothings]
  \label{def:smoothing}
  Let $(p,q) \in X(\gamma)^2$ be an essential crossing of~$\gamma$
  on~$S$. To make a \emph{smoothing}~$\gamma'$ of $(p,q)$, cut
  $X(\gamma)$ at $p$ and~$q$ and reglue the resulting four endpoints
  in one of the two other possible ways, getting a new $1$-manifold
  $X(\gamma')$. The map $\gamma'$ agrees with~$\gamma$; this is
  well-defined since $\gamma(p) = \gamma(q)$. In pictures we will
  homotop $\gamma'$ slightly to round out the resulting
  corners, and write $\gamma \reducesto \gamma'$.
\end{definition}

It turns out that ``some'' in the definition of essential crossing can be replaced by ``every'': if there is a crossing $x$ in some minimal representative $\gamma$ of~$C$, then for any other minimal representative $\gamma'$ there is a corresponding crossing $x'$ yielding the same smoothings.

\begin{lemma}[{\cite[Lemma~2]{MGT25:SmoothingsErratum}}]\label{lem:essential-cross}
  If $\gamma$ and $\gamma'$ are homotopic concrete
  multi-curves, with $\gamma$ minimal and $(p,q) \in X(\gamma)^2$ an (essential)
  crossing of~$\gamma$,
  then there is a crossing $(p',q') \in X(\gamma')^2$ so
  that the smoothings of $(p,q)$ and of $(p',q')$ are homotopic.
\end{lemma}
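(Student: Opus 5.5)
The plan is to lift everything to the universal cover $\wt S$, where a crossing becomes a pair of intersecting lifts, and to use the compactification $\overline{\wt S}=\wt S\cup\partial_\infty S$ to transport crossings along the homotopy. Fix a hyperbolic structure $\Sigma$ and a homotopy $H\colon X(\gamma)\times[0,1]\to S$ from $\gamma$ to $\gamma'$, with $X(\gamma)=X(\gamma')$ (drop trivial components and reparametrize first). Suppose the crossing $(p,q)$ lies on components $c_1,c_2$ of $X(\gamma)$; possibly $c_1=c_2$. Choose a lift $\wt x$ of $x=\gamma(p)=\gamma(q)$. Lift the two branches of $\gamma$ through $x$ to bi-infinite curves $\wt\gamma_1,\wt\gamma_2$ through $\wt x$. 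These are invariant under the elements $a,b\in\pi_1(S,x)$ represented by $c_1$ and $c_2$ read from $p$ and from $q$, respectively. The crossing is thus encoded by the pair of lifts $(\wt\gamma_1,\wt\gamma_2)$, up to simultaneous deck transformation, or equivalently by the double coset of \eqref{eq:double_coset}.

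\textbf{Step 1: minimality gives linked lifts.} I will show that because $\gamma$ is minimal, $\wt\gamma_1$ and $\wt\gamma_2$ are distinct quasi-geodesics whose endpoint pairs $\{a^\pm\}$ and $\{b^\pm\}$ are linked on $\partial_\infty S$. If the endpoint pairs were unlinked, or shared an endpoint (common powers), the intersection at $\wt x$ could be removed. Concretely, the two lifts would cobound an innermost bigon, or a half-bigon in the self-crossing case, and pushing across it projects to a homotopy reducing the number of crossings. This is the standard Hass--Scott bigon criterion. The same argument shows that each pair of lifts of a minimal $\gamma$ meets at most once.

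\textbf{Step 2: transport the crossing.} The homotopy $H$ lifts to an equivariant homotopy from $\wt\gamma_i$ to curves $\wt\gamma'_i$ that move points a uniformly bounded distance, since $X(\gamma)$ is compact. Hence $\wt\gamma'_i$ has the same endpoints $a^\pm$, $b^\pm$. Because the endpoint pairs are linked, a Jordan curve argument in the closed disk $\overline{\wt S}$ forces $\wt\gamma'_1$ and $\wt\gamma'_2$ to intersect at some point $\wt y$. Its parameters, taken modulo the stabilizers $\langle a\rangle,\langle b\rangle$, give a crossing $(p',q')$ of $\gamma'$ on the same components $c_1,c_2$. If the lifts meet several times, I pick any one intersection point; Step 3 shows the choice does not matter.

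\textbf{Step 3: the smoothing depends only on the pair of lifts.} Consider the case $c_1\ne c_2$, with $a=(c_1 \text{ read from } p)$ and $b=(c_2 \text{ read from } q)$ as loops at $x$. The smoothing of $(p,q)$ is represented by $[ab]$ or $[aB]$ according to the regluing. For a self-crossing of a single component, the smoothing is the multi-curve $[a][b]$ formed by the two arcs of $c_1$ between $p$ and $q$, or the opposite-sense regluing; this is the ``triality'' bookkeeping the paper develops. Now let $\delta$ be the projection to $S$ of a path in $\wt S$ from $\wt x$ to $\wt y$. Conjugating by $\delta$ carries the loops of $\gamma'$ at $y$ read from $p'$ and from $q'$ to elements of $\pi_1(S,x)$. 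These elements have the same axes-at-infinity as $a$ and $b$, because they stabilize $\wt\gamma'_1$ and $\wt\gamma'_2$. They are therefore exactly $a$ and $b$, as they are the same deck transformations. Consequently the smoothings of $(p',q')$ represent the same conjugacy classes, and corresponding regluings are homotopic. Since one projected path $\delta$ serves for every intersection point of the two lifts, this also shows that the choice made in Step 2 is irrelevant.

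The main obstacle is Step 3 in the self-crossing case. There one must check carefully that the arcs of $c_1$ between $p'$ and $q'$ on $\gamma'$ correspond to the same pair of elements as the arcs between $p$ and $q$ on $\gamma$, rather than to a pair shifted by a power of $a$. My plan is to track parameters via the lifted homotopy: the deck transformation taking the branch through $p'$ to the branch through $q'$ is the same one as for $p,q$, because it is determined by the ordered pair of lifts. This pins down the arcs up to homotopy rel the crossing.
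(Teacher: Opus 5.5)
Your argument is sound when $(p,q)$ lies on two different components, but the self-crossing case has two genuine gaps.

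\textbf{First gap: Step~3 for a self-crossing.} Let $u,v\in\pi_1(S,x)$ be the classes of the arcs of $c_1$ from $p$ to $q$ and from $q$ to $p$. Then your $a,b$ are $uv$ and $vu$, and the smoothings are $[u][v]$ and $[uV]$. With $\eta=q-p\in(0,1)$, the lifts satisfy $\wt\gamma_1(\eta+\tau)=u\,\wt\gamma_2(\tau)$, and the lifted homotopy preserves this relation. The ordered pair of lifts, as subsets of $\wt S$, determines $u$ only up to powers of $a$, and this ambiguity really occurs. An intersection $\wt\gamma_1'(s)=\wt\gamma_2'(t)$ gives the crossing $(p+s,q+t)$ of $\gamma'$, whose arcs represent $a^{-m}u$ and $u^{-1}a^{m+1}$, where $m=\lfloor\eta+t-s\rfloor$. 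Now push the arc near $\gamma(p+s_0)$ by a finger move along a path whose lift runs from $\wt\gamma_1(s_0)$ to $\wt\gamma_2(t_0)$. This creates points of $\wt\gamma_1'\cap\wt\gamma_2'$ near $(s_0,t_0)$ with any prescribed $m$. For $m\neq0$ the smoothing is typically not $[u][v]$; for example, $m=-1$ gives $[au][U]$. So ``pick any intersection point'' is false: you must select one with $0<\eta+t-s<1$.

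A degree argument does this.
\begin{itemize}
\item An intersection on a line $\eta+t-s\in\mathbb{Z}$ would force $u\in\langle a\rangle$, hence $b=a$, contradicting linking. So these lines never meet the intersection set during the homotopy.
\item The intersection set stays in a fixed compact region, because the lifts stay boundedly close to quasi-geodesics with linked endpoints.
\item Hence the signed count (a local degree, so no transversality is needed) of intersections in the strip $0<\eta+t-s<1$ is a homotopy invariant. It equals $\pm1$ for the minimal curve $\gamma$.
\end{itemize}

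\textbf{Second gap: Step~1 for non-primitive components.} Step~1 is false for self-crossings of a non-primitive component, and these are exactly the essential crossings behind power smoothing (Proposition~\ref{prop:crossings-powers}). Suppose $c_1$ represents $d^n$ with $d$ primitive and $n\ge2$. Then a minimal $\gamma$ has self-crossings with $u=d^k$, $v=d^{n-k}$ for some $1\le k\le n-1$ (taking $d\in\pi_1(S,x)$ suitably). The lifts through $\wt x$ are then $\wt\gamma_1$ and $d^{-k}\wt\gamma_1$, which have the same endpoints and in fact meet infinitely often. The Hass--Scott criterion you quote concerns primitive curves.

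For such crossings the Jordan-curve argument of Step~2 gives nothing. Even knowing $\wt\gamma'\cap d^k\wt\gamma'\neq\emptyset$, for the lift $\wt\gamma'$ of this component of $\gamma'$, is not enough. An intersection $\wt\gamma'(s)=d^k\wt\gamma'(t)$ with $s-t\in(j,j+1)$ gives the split $(d^{k-jn},d^{n-k+jn})$, so you need one with $0<s-t<1$. The signed count above is $0$ here, so it cannot be reused.

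One way to get such a point:
\begin{itemize}
\item In the normalized band model centred on $\ora{d}$, write $\wt\gamma'=(\vartheta,\rho)$, with $\vartheta(\tau+1)=\vartheta(\tau)+n$ and $\rho$ $1$-periodic.
\item Work in the annulus $\{0\le s-t\le1\}$ modulo $(t,s)\mapsto(t+1,s+1)$. There the closed set $\{\rho(s)=\rho(t)\}$ contains both boundary circles.
\item Some component of this set joins the two boundary circles. Otherwise an essential closed curve avoids it, and $\rho(s)-\rho(t)$ has constant sign along that curve.
\item That is impossible. Along the lift of an essential curve both $s$ and $t$ take every real value, so at some point $s$ or $t$ (according to the sign) is a minimum point of $\rho$, contradicting the sign.
\item Along the joining component $\vartheta(s)-\vartheta(t)$ runs from $0$ to $n$. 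So it equals $k$ at an interior point, and that point is the crossing you need.
\end{itemize}
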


As a result of Lemma~\ref{lem:essential-cross}, we can speak of
essential crossings for concrete curves $\gamma$ that are not minimal:
a crossing is essential if its two smoothings also arise from a
crossing in some minimal diagram.

We distinguish between two types of smoothing.
\begin{definition}\label{def:smoothing_types}
If $\gamma$ is oriented, the \emph{oriented smoothing} is the smoothing that respects the orientation on $X(\gamma)$:
\begin{equation*}
\mfig{curves-3} \reducesto \mfig{curves-4}.
\end{equation*}
One may also perform the smoothing that does not respect the orientation; we refer to this as the \emph{unoriented smoothing}:
\begin{equation*}
\mfig{curves-3} \reducesto \mfigrotate{curves-4}.
\end{equation*}
In this case, the resulting curve does not carry a preferred orientation. Accordingly, unoriented smoothing will only be used in the context of symmetric curve functionals, for which all choices of orientation yield the same value.
  In addition, we call a smoothing \emph{connected} or \emph{disconnected} if the
  original curve is connected or disconnected.

  We thus have four types 
  of smoothings between non-trivial pairs of elements which are not
  common powers. We use group notation that we will expand on and justify
  in the next section.
  \begin{itemize}
  \item \emph{Oriented connected smoothing:} As shown in
    Figure~\ref{fig:sm_oriented_connected}, parallel oriented geodesics
    $\ora{a}$ and $\ora{b}$ (on the bottom) give a self-crossing oriented
    curve $[ab]$, with $[ab] \reducesto [a][b]$. (The result is
    disconnected.)
  \item \emph{Unoriented connected smoothing:}
    As shown in
    Figure~\ref{fig:sm_unoriented_connected}, parallel oriented geodesics
    $\ora{a}$ and $\ora{b}$ (from the previous case) give a
    self-crossing oriented curve $[ab]$ with
    $[ab] \reducesto [aB]$ (The result is
    connected.)
  \item \emph{Oriented disconnected smoothing:} As shown in
    Figure~\ref{fig:sm_oriented_disconnected}
 crossing geodesics $\ora{a}$ and $\ora{b}$ give a crossing
 multi-curve $[a][b]$, with $[a][b] \reducesto [ab]$. (The result is connected.)
  \item \emph{Unoriented disconnected smoothing:} As shown in
    Figure~\ref{fig:sm_unoriented_disconnected}, crossing geodesics
    $\ora{a}$ and $\ora{b}$ give a crossing multi-curve
    $[a][b]$, with $[a][b] \reducesto [aB]$. (The result is again connected.)
  \end{itemize}
  Finally, there is a special case of oriented connected smoothing,
  self-crossings of a non-primitive element, that will need to be treated
  specially:
  \begin{itemize}
  \item \emph{Power smoothing:}
    $[a^{n+m}] \geq [a^m][a^n]$ for every $a \in \pi_1(S)$ and every
    $m,n \geq 1$, as shown in Figure~\ref{fig:sm_power} for $m=1,n=2$.
  \end{itemize}
\end{definition}

\begin{figure}[ht]
\centering
\begin{subfigure}{0.18\textwidth}
\centering
\resizebox{\linewidth}{!}{\fontsize{26pt}{14pt}\selectfont%% Creator: Inkscape 1.3 (0e150ed6c4, 2023-07-21), www.inkscape.org
%% PDF/EPS/PS + LaTeX output extension by Johan Engelen, 2010
%% Accompanies image file 'sm_oriented_connected.pdf' (pdf, eps, ps)
%%
%% To include the image in your LaTeX document, write
%%   \input{<filename>.pdf_tex}
%%  instead of
%%   \includegraphics{<filename>.pdf}
%% To scale the image, write
%%   \def\svgwidth{<desired width>}
%%   \input{<filename>.pdf_tex}
%%  instead of
%%   \includegraphics[width=<desired width>]{<filename>.pdf}
%%
%% Images with a different path to the parent latex file can
%% be accessed with the `import' package (which may need to be
%% installed) using
%%   \usepackage{import}
%% in the preamble, and then including the image with
%%   \import{<path to file>}{<filename>.pdf_tex}
%% Alternatively, one can specify
%%   \graphicspath{{<path to file>/}}
%% 
%% For more information, please see info/svg-inkscape on CTAN:
%%   http://tug.ctan.org/tex-archive/info/svg-inkscape
%%
\begingroup%
  \makeatletter%
  \providecommand\color[2][]{%
    \errmessage{(Inkscape) Color is used for the text in Inkscape, but the package 'color.sty' is not loaded}%
    \renewcommand\color[2][]{}%
  }%
  \providecommand\transparent[1]{%
    \errmessage{(Inkscape) Transparency is used (non-zero) for the text in Inkscape, but the package 'transparent.sty' is not loaded}%
    \renewcommand\transparent[1]{}%
  }%
  \providecommand\rotatebox[2]{#2}%
  \newcommand*\fsize{\dimexpr\f@size pt\relax}%
  \newcommand*\lineheight[1]{\fontsize{\fsize}{#1\fsize}\selectfont}%
  \ifx\svgwidth\undefined%
    \setlength{\unitlength}{254.24450251bp}%
    \ifx\svgscale\undefined%
      \relax%
    \else%
      \setlength{\unitlength}{\unitlength * \real{\svgscale}}%
    \fi%
  \else%
    \setlength{\unitlength}{\svgwidth}%
  \fi%
  \global\let\svgwidth\undefined%
  \global\let\svgscale\undefined%
  \makeatother%
  \begin{picture}(1,1.03979173)%
    \lineheight{1}%
    \setlength\tabcolsep{0pt}%
    \put(0,0){\includegraphics[width=\unitlength,page=1]{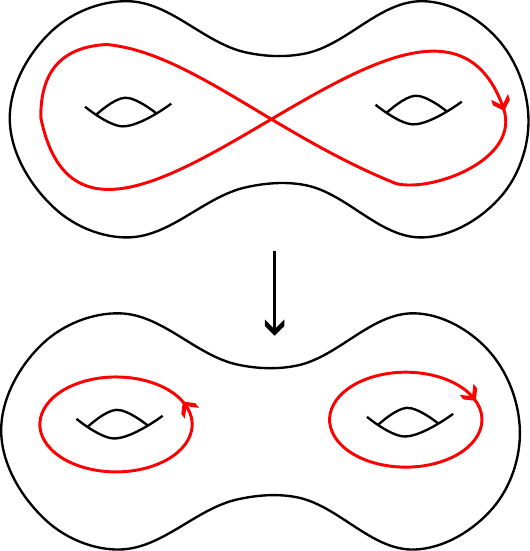}}%
    \put(0.52508517,0.59877903){\color[rgb]{1,0,0}\makebox(0,0)[t]{\lineheight{1.25}\smash{\begin{tabular}[t]{c}$[ab]$\end{tabular}}}}%
    \put(0.22211968,0.06759421){\color[rgb]{1,0,0}\makebox(0,0)[t]{\lineheight{1.25}\smash{\begin{tabular}[t]{c}$[a]$\end{tabular}}}}%
    \put(0.78997858,0.06906935){\color[rgb]{1,0,0}\makebox(0,0)[t]{\lineheight{1.25}\smash{\begin{tabular}[t]{c}$[b]$\end{tabular}}}}%
  \end{picture}%
\endgroup%
}
\caption{Oriented connected smoothing: $[ab] \reducesto [a][b]$}
\label{fig:sm_oriented_connected}
\end{subfigure}
\hfill
\begin{subfigure}{0.18\textwidth}
\centering
\resizebox{\linewidth}{!}{\fontsize{26pt}{14pt}\selectfont%% Creator: Inkscape 1.3 (0e150ed6c4, 2023-07-21), www.inkscape.org
%% PDF/EPS/PS + LaTeX output extension by Johan Engelen, 2010
%% Accompanies image file 'sm_unoriented_connected.pdf' (pdf, eps, ps)
%%
%% To include the image in your LaTeX document, write
%%   \input{<filename>.pdf_tex}
%%  instead of
%%   \includegraphics{<filename>.pdf}
%% To scale the image, write
%%   \def\svgwidth{<desired width>}
%%   \input{<filename>.pdf_tex}
%%  instead of
%%   \includegraphics[width=<desired width>]{<filename>.pdf}
%%
%% Images with a different path to the parent latex file can
%% be accessed with the `import' package (which may need to be
%% installed) using
%%   \usepackage{import}
%% in the preamble, and then including the image with
%%   \import{<path to file>}{<filename>.pdf_tex}
%% Alternatively, one can specify
%%   \graphicspath{{<path to file>/}}
%% 
%% For more information, please see info/svg-inkscape on CTAN:
%%   http://tug.ctan.org/tex-archive/info/svg-inkscape
%%
\begingroup%
  \makeatletter%
  \providecommand\color[2][]{%
    \errmessage{(Inkscape) Color is used for the text in Inkscape, but the package 'color.sty' is not loaded}%
    \renewcommand\color[2][]{}%
  }%
  \providecommand\transparent[1]{%
    \errmessage{(Inkscape) Transparency is used (non-zero) for the text in Inkscape, but the package 'transparent.sty' is not loaded}%
    \renewcommand\transparent[1]{}%
  }%
  \providecommand\rotatebox[2]{#2}%
  \newcommand*\fsize{\dimexpr\f@size pt\relax}%
  \newcommand*\lineheight[1]{\fontsize{\fsize}{#1\fsize}\selectfont}%
  \ifx\svgwidth\undefined%
    \setlength{\unitlength}{254.24450251bp}%
    \ifx\svgscale\undefined%
      \relax%
    \else%
      \setlength{\unitlength}{\unitlength * \real{\svgscale}}%
    \fi%
  \else%
    \setlength{\unitlength}{\svgwidth}%
  \fi%
  \global\let\svgwidth\undefined%
  \global\let\svgscale\undefined%
  \makeatother%
  \begin{picture}(1,1.03979173)%
    \lineheight{1}%
    \setlength\tabcolsep{0pt}%
    \put(0,0){\includegraphics[width=\unitlength,page=1]{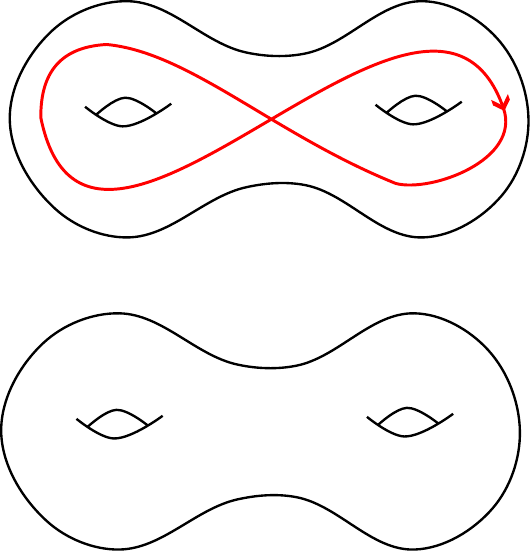}}%
    \put(0.50251992,0.20557551){\color[rgb]{1,0,0}\makebox(0,0)[t]{\lineheight{1.25}\smash{\begin{tabular}[t]{c}$[aB]$\end{tabular}}}}%
    \put(0,0){\includegraphics[width=\unitlength,page=2]{sm_unoriented_connected.pdf}}%
    \put(0.51717522,0.59687245){\color[rgb]{1,0,0}\makebox(0,0)[t]{\lineheight{1.25}\smash{\begin{tabular}[t]{c}$[ab]$\end{tabular}}}}%
  \end{picture}%
\endgroup%
}
\caption{Unoriented connected smoothing: $[ab]\reducesto[aB]$.}
\label{fig:sm_unoriented_connected}
\end{subfigure}
\hfill
\begin{subfigure}{0.18\textwidth}
\centering
\resizebox{\linewidth}{!}{\fontsize{26pt}{14pt}\selectfont%% Creator: Inkscape 1.3 (0e150ed6c4, 2023-07-21), www.inkscape.org
%% PDF/EPS/PS + LaTeX output extension by Johan Engelen, 2010
%% Accompanies image file 'sm_oriented_disconnected.pdf' (pdf, eps, ps)
%%
%% To include the image in your LaTeX document, write
%%   \input{<filename>.pdf_tex}
%%  instead of
%%   \includegraphics{<filename>.pdf}
%% To scale the image, write
%%   \def\svgwidth{<desired width>}
%%   \input{<filename>.pdf_tex}
%%  instead of
%%   \includegraphics[width=<desired width>]{<filename>.pdf}
%%
%% Images with a different path to the parent latex file can
%% be accessed with the `import' package (which may need to be
%% installed) using
%%   \usepackage{import}
%% in the preamble, and then including the image with
%%   \import{<path to file>}{<filename>.pdf_tex}
%% Alternatively, one can specify
%%   \graphicspath{{<path to file>/}}
%% 
%% For more information, please see info/svg-inkscape on CTAN:
%%   http://tug.ctan.org/tex-archive/info/svg-inkscape
%%
\begingroup%
  \makeatletter%
  \providecommand\color[2][]{%
    \errmessage{(Inkscape) Color is used for the text in Inkscape, but the package 'color.sty' is not loaded}%
    \renewcommand\color[2][]{}%
  }%
  \providecommand\transparent[1]{%
    \errmessage{(Inkscape) Transparency is used (non-zero) for the text in Inkscape, but the package 'transparent.sty' is not loaded}%
    \renewcommand\transparent[1]{}%
  }%
  \providecommand\rotatebox[2]{#2}%
  \newcommand*\fsize{\dimexpr\f@size pt\relax}%
  \newcommand*\lineheight[1]{\fontsize{\fsize}{#1\fsize}\selectfont}%
  \ifx\svgwidth\undefined%
    \setlength{\unitlength}{254.24450251bp}%
    \ifx\svgscale\undefined%
      \relax%
    \else%
      \setlength{\unitlength}{\unitlength * \real{\svgscale}}%
    \fi%
  \else%
    \setlength{\unitlength}{\svgwidth}%
  \fi%
  \global\let\svgwidth\undefined%
  \global\let\svgscale\undefined%
  \makeatother%
  \begin{picture}(1,1.03979173)%
    \lineheight{1}%
    \setlength\tabcolsep{0pt}%
    \put(0,0){\includegraphics[width=\unitlength,page=1]{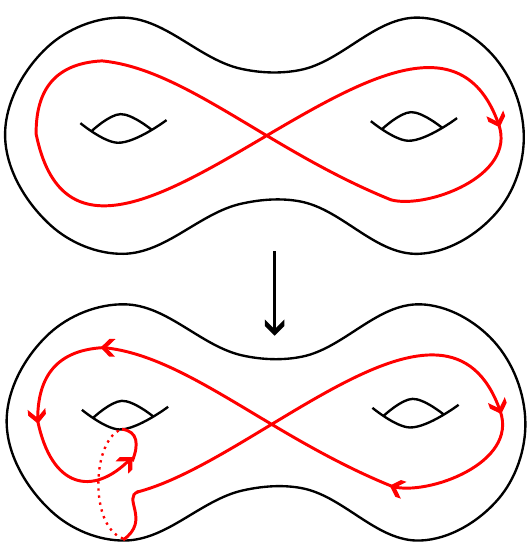}}%
    \put(0.51593594,0.58235469){\color[rgb]{1,0,0}\makebox(0,0)[t]{\lineheight{1.25}\smash{\begin{tabular}[t]{c}$[b]$\end{tabular}}}}%
    \put(0,0){\includegraphics[width=\unitlength,page=2]{sm_oriented_disconnected.pdf}}%
    \put(0.23186059,0.49238222){\color[rgb]{0,0,1}\makebox(0,0)[t]{\lineheight{1.25}\smash{\begin{tabular}[t]{c}$[a]$\end{tabular}}}}%
    \put(0.51890888,0.03662029){\color[rgb]{1,0,0}\makebox(0,0)[t]{\lineheight{1.25}\smash{\begin{tabular}[t]{c}$[ab]$\end{tabular}}}}%
    \put(0,0){\includegraphics[width=\unitlength,page=3]{sm_oriented_disconnected.pdf}}%
  \end{picture}%
\endgroup%
}
\caption{Oriented disconnected smoothing: $[a][b]\reducesto [ab]$}
\label{fig:sm_oriented_disconnected}
\end{subfigure}
\hfill
\begin{subfigure}{0.18\textwidth}
\centering
\resizebox{\linewidth}{!}{\fontsize{26pt}{14pt}\selectfont%% Creator: Inkscape 1.3 (0e150ed6c4, 2023-07-21), www.inkscape.org
%% PDF/EPS/PS + LaTeX output extension by Johan Engelen, 2010
%% Accompanies image file 'sm_unoriented_disconnected.pdf' (pdf, eps, ps)
%%
%% To include the image in your LaTeX document, write
%%   \input{<filename>.pdf_tex}
%%  instead of
%%   \includegraphics{<filename>.pdf}
%% To scale the image, write
%%   \def\svgwidth{<desired width>}
%%   \input{<filename>.pdf_tex}
%%  instead of
%%   \includegraphics[width=<desired width>]{<filename>.pdf}
%%
%% Images with a different path to the parent latex file can
%% be accessed with the `import' package (which may need to be
%% installed) using
%%   \usepackage{import}
%% in the preamble, and then including the image with
%%   \import{<path to file>}{<filename>.pdf_tex}
%% Alternatively, one can specify
%%   \graphicspath{{<path to file>/}}
%% 
%% For more information, please see info/svg-inkscape on CTAN:
%%   http://tug.ctan.org/tex-archive/info/svg-inkscape
%%
\begingroup%
  \makeatletter%
  \providecommand\color[2][]{%
    \errmessage{(Inkscape) Color is used for the text in Inkscape, but the package 'color.sty' is not loaded}%
    \renewcommand\color[2][]{}%
  }%
  \providecommand\transparent[1]{%
    \errmessage{(Inkscape) Transparency is used (non-zero) for the text in Inkscape, but the package 'transparent.sty' is not loaded}%
    \renewcommand\transparent[1]{}%
  }%
  \providecommand\rotatebox[2]{#2}%
  \newcommand*\fsize{\dimexpr\f@size pt\relax}%
  \newcommand*\lineheight[1]{\fontsize{\fsize}{#1\fsize}\selectfont}%
  \ifx\svgwidth\undefined%
    \setlength{\unitlength}{254.24450251bp}%
    \ifx\svgscale\undefined%
      \relax%
    \else%
      \setlength{\unitlength}{\unitlength * \real{\svgscale}}%
    \fi%
  \else%
    \setlength{\unitlength}{\svgwidth}%
  \fi%
  \global\let\svgwidth\undefined%
  \global\let\svgscale\undefined%
  \makeatother%
  \begin{picture}(1,1.03979173)%
    \lineheight{1}%
    \setlength\tabcolsep{0pt}%
    \put(0,0){\includegraphics[width=\unitlength,page=1]{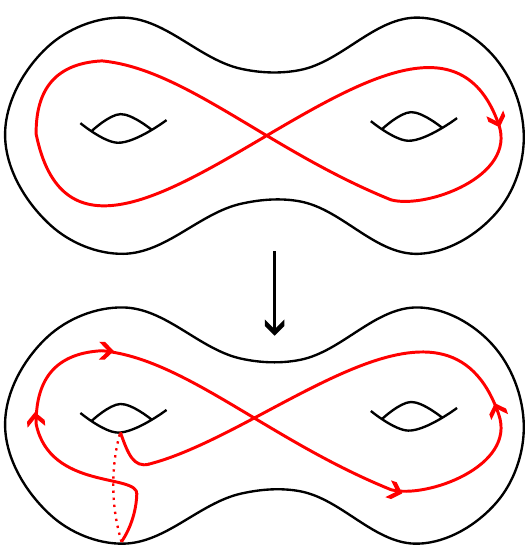}}%
    \put(0.52153619,0.58235469){\color[rgb]{1,0,0}\makebox(0,0)[t]{\lineheight{1.25}\smash{\begin{tabular}[t]{c}$[b]$\end{tabular}}}}%
    \put(0.51446106,0.02629559){\color[rgb]{1,0,0}\makebox(0,0)[t]{\lineheight{1.25}\smash{\begin{tabular}[t]{c}$[aB]$\end{tabular}}}}%
    \put(0.23421895,0.48648247){\color[rgb]{0,0,1}\makebox(0,0)[t]{\lineheight{1.25}\smash{\begin{tabular}[t]{c}$[a]$\end{tabular}}}}%
    \put(0,0){\includegraphics[width=\unitlength,page=2]{sm_unoriented_disconnected.pdf}}%
  \end{picture}%
\endgroup%
}
\caption{Unoriented disconnected smoothing: $[a][b]\reducesto [aB]$}
\label{fig:sm_unoriented_disconnected}
\end{subfigure}
\hfill
\begin{subfigure}{0.18\textwidth}
\centering
\resizebox{\linewidth}{!}{\fontsize{26pt}{14pt}\selectfont%% Creator: Inkscape 1.3 (0e150ed6c4, 2023-07-21), www.inkscape.org
%% PDF/EPS/PS + LaTeX output extension by Johan Engelen, 2010
%% Accompanies image file 'sm_power.pdf' (pdf, eps, ps)
%%
%% To include the image in your LaTeX document, write
%%   \input{<filename>.pdf_tex}
%%  instead of
%%   \includegraphics{<filename>.pdf}
%% To scale the image, write
%%   \def\svgwidth{<desired width>}
%%   \input{<filename>.pdf_tex}
%%  instead of
%%   \includegraphics[width=<desired width>]{<filename>.pdf}
%%
%% Images with a different path to the parent latex file can
%% be accessed with the `import' package (which may need to be
%% installed) using
%%   \usepackage{import}
%% in the preamble, and then including the image with
%%   \import{<path to file>}{<filename>.pdf_tex}
%% Alternatively, one can specify
%%   \graphicspath{{<path to file>/}}
%% 
%% For more information, please see info/svg-inkscape on CTAN:
%%   http://tug.ctan.org/tex-archive/info/svg-inkscape
%%
\begingroup%
  \makeatletter%
  \providecommand\color[2][]{%
    \errmessage{(Inkscape) Color is used for the text in Inkscape, but the package 'color.sty' is not loaded}%
    \renewcommand\color[2][]{}%
  }%
  \providecommand\transparent[1]{%
    \errmessage{(Inkscape) Transparency is used (non-zero) for the text in Inkscape, but the package 'transparent.sty' is not loaded}%
    \renewcommand\transparent[1]{}%
  }%
  \providecommand\rotatebox[2]{#2}%
  \newcommand*\fsize{\dimexpr\f@size pt\relax}%
  \newcommand*\lineheight[1]{\fontsize{\fsize}{#1\fsize}\selectfont}%
  \ifx\svgwidth\undefined%
    \setlength{\unitlength}{254.24450251bp}%
    \ifx\svgscale\undefined%
      \relax%
    \else%
      \setlength{\unitlength}{\unitlength * \real{\svgscale}}%
    \fi%
  \else%
    \setlength{\unitlength}{\svgwidth}%
  \fi%
  \global\let\svgwidth\undefined%
  \global\let\svgscale\undefined%
  \makeatother%
  \begin{picture}(1,1.03979173)%
    \lineheight{1}%
    \setlength\tabcolsep{0pt}%
    \put(0,0){\includegraphics[width=\unitlength,page=1]{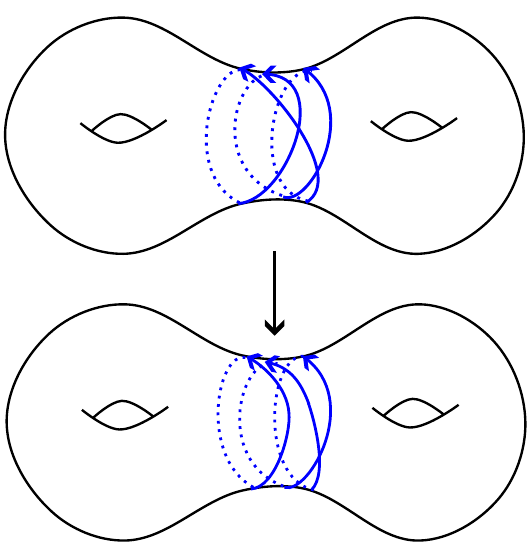}}%
    \put(0.69450304,0.68766594){\color[rgb]{0,0,1}\makebox(0,0)[t]{\lineheight{1.25}\smash{\begin{tabular}[t]{c}$[a^3]$\end{tabular}}}}%
    \put(0.7255798,0.13322098){\color[rgb]{0,0,1}\makebox(0,0)[t]{\lineheight{1.25}\smash{\begin{tabular}[t]{c}$[a^2]$\end{tabular}}}}%
    \put(0.35475661,0.13329516){\color[rgb]{0,0,1}\makebox(0,0)[t]{\lineheight{1.25}\smash{\begin{tabular}[t]{c}$[a]$\end{tabular}}}}%
  \end{picture}%
\endgroup%
}
\caption{Power smoothing: $[a^{m+n}]\reducesto[a^m][a^n]$.}
\label{fig:sm_power}
\end{subfigure}
\caption{Cases of smoothing.}
\label{fig:cases--sm}
\end{figure}

\subsection{Triality of crossings: topological, geometric and algebraic}
\label{subsec:triality}
Following~\cite{MGT25:SmoothingsErratum}, we present a group-theoretic point of view for essential crossings. For $a,b \in \pi_1(S,*)$, say that a
factorization $[ab]$ \emph{reduces} if some (equiv.\ every) minimal
representative of $[ab]$ has a crossing so that one smoothing gives
$[a][b]$ and the other smoothing gives $[aB]$. Similarly say that
$[a][b]$ \emph{reduces} if some/every minimal representative has a crossing
whose smoothings give $[ab]$ and $[aB]$. (This is an abuse of
notation, since the reduction is not invariant under individual
conjugation of $a$ and $b$.)

For $a,b \in \pi_1(S,*)$, let $a * b$ be a 4-valent graph mapped into
$S$ obtained by taking the union of the paths $a,b$ joined at the
basepoint~$*$, and let $[a * b]$
be its free homotopy class.
Free homotopy means in particular invariance under simultaneous
conjugation: $[a * b] = [caC * cbC]$. A \emph{minimal} representative of $[a * b]$
is one with a minimum number of transverse crossings.

\begin{definition}\label{def:crossing}
  Depending on the (counterclockwise) cyclic order of the endpoints,
  we say that a pair $(\ora{a},\ora{b})$ of geodesics are
  \begin{itemize}
  \item \emph{R-parallel} if they appear in order $a^+,a^-,b^-,b^+$
    (below left);
  \item \emph{R-anti-parallel} if they appear in order
    $a^+,a^-,b^+,b^-$ (below middle) and
  \item
    \emph{R-cross} or \emph{$\ora{a}$ crosses $\ora{b}$ to the right} if they appear in
    order $a^+,b^+,a^-,b^-$ (below right).
  \end{itemize}
  \[
    \underset{\text{\normalsize R-parallel}}{\mfig{curves-14}}\qquad\qquad
    \underset{\text{\normalsize R-anti-parallel}}{\mfig{curves-15}}\qquad\qquad
    \underset{\text{\normalsize R-cross}}{\mfig{curves-13}}
  \]
  In each case we say that they are
  L-parallel/L-anti-parallel/L-cross if they appear in the reverse
  cyclic order, and are parallel/anti-parallel/cross if they appear in
  either the above order or its reverse.

  We will furthermore say a triple $(\ora{a},\ora{b},\ora{c})$ is
  R-parallel if both $(\ora{a},\ora{b})$ and
  $(\ora{b},\ora{c})$ are R-parallel, and
  similarly for L-parallel and for larger tuples. The other relations
  are not transitive so we will not use the extended notation.
\end{definition}

\begin{definition}
  Non-trivial group elements $a,b\in \pi_1$ are \emph{common powers}
  if there are non-zero $k,\ell$ so that $a^k = b^\ell$. Since $\pi_1$
  has no torsion, we may assume that $k,\ell$ are relatively prime and
  then we see this is equivalent to the existence of a $c$ so that
  $a = c^\ell$ and $b = c^k$.
\end{definition}

The following justifies the power smoothing
hypothesis~\eqref{eq:power_smoothing_unoriented}.

\begin{proposition}[{\cite[Proposition~2.9a]{MGT25:SmoothingsErratum}}]
  If $a = c^k$ and $b = c^\ell$ are common powers of a primitive
  element~$c$ with $k,\ell \ge 1$, then the factorization $[ab]$
  reduces to $[a][b]$.\label{prop:crossings-powers}
\end{proposition}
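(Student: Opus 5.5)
We need to show: for $a=c^k$, $b=c^\ell$ with $c$ primitive and $k,\ell\ge 1$, some minimal representative of $[ab]=[c^{k+\ell}]$ has an essential self-crossing whose oriented smoothing gives $[c^k][c^\ell]$. (Here the unoriented smoothing would give $[aB]=[c^{k-\ell}]$, which may be trivial; the meaningful content is the oriented one.) The plan is to build an explicit minimal representative of $[c^{n}]$ with $n=k+\ell$, and then read off the smoothings of one of its crossings.

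Fix a hyperbolic structure and let $\gamma$ be the closed geodesic representing the primitive class $[c]$. If $\gamma$ is not simple, a small perturbation still gives a convenient model. The cleanest route is the standard \emph{twisted} model for powers. Take a thin annular neighborhood $A$ of $\gamma$. Inside it, draw the curve that winds $n$ times around the core, drifting monotonically across the annulus and then returning once. In annulus coordinates $\mathbb{R}/\mathbb{Z}\times[0,1]$ this is the image of $t\mapsto (nt, \phi(t))$ with a single ``jump back''. Such a curve has exactly $n-1$ transverse self-crossings, all inside $A$, plus $n^2$ copies of each self-crossing of $\gamma$ coming from crossings of $\gamma$ with itself. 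This count equals the known minimal self-intersection number of $c^n$, namely $n^2\, i(c,c)+(n-1)$. I would cite this as a classical fact (e.g.\ from the hyperbolic geometry of lifts: the axes $q\ora{c}$ for $q$ in distinct double cosets $\langle c\rangle q\langle c\rangle$ each contribute, and the $n-1$ extra crossings come from the covering $\mathbb{Z}/n$ symmetry). Hence the representative is minimal, and its $n-1$ annular crossings are essential.

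Next, identify the smoothings of these crossings. Parametrize the curve as the concatenation of $n$ strands, each going once around $A$. The $j$-th annular crossing (for $1\le j\le n-1$) is where the return strand meets the strand separating the first $j$ turns from the last $n-j$. Cutting at this crossing and regluing coherently with the orientation splits the $1$-manifold into two circles. These wind $j$ and $n-j$ times around $A$, so they represent $[c^j]$ and $[c^{n-j}]$. Choosing $j=k$ gives $[c^k][c^\ell]=[a][b]$. Algebraically, the crossing corresponds to writing the loop based at the crossing point as the product of two subloops, $c^k\cdot c^\ell$. This matches the group-theoretic meaning of ``$[ab]$ reduces'' stated before Definition~\ref{def:crossing}. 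The other smoothing yields $[c^kC^\ell]=[aB]$, as required.

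The main obstacle is minimality: rigorously justifying that the twisted model realizes the minimal number of self-crossings of $[c^n]$, so that the annular crossings really are essential. I would handle this by counting lifts in $\wt S$: essential self-crossings of $[g]$ correspond to pairs of crossing translates of $\ora{g}$ modulo $\langle g\rangle$. I would then argue that a crossing whose smoothings are $[c^k][c^\ell]$ cannot be removed, since otherwise both smoothings would arise from a bigon or monogon. By Lemma~\ref{lem:essential-cross}, it then suffices to exhibit the crossing in any representative whose crossings all survive, which the monotone twisted model provides.
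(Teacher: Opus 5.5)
The paper gives no proof of this statement; it cites \cite[Proposition~2.9a]{MGT25:SmoothingsErratum}.

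Your spiral model and your reading of its smoothings are correct. The $j$-th annular crossing cuts the curve into loops winding $j$ and $n-j$ times, giving $[c^j][c^{n-j}]$ and $[c^{2j-n}]$. If you invoke the classical formula $\mathrm{SI}(c^n)=n^2\,\mathrm{SI}(c)+n-1$ for primitive $c$ as a black box, the model is minimal, its crossings are essential by definition, and you are done. (Write $\mathrm{SI}(c)$ for the number of double points. Since $i(c,c)=2\,\mathrm{SI}(c)$, your formula with $i(c,c)$ disagrees with your own count.)

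But that citation carries the entire statement, and each justification you sketch for it fails.
\begin{itemize}
\item Counting crossing translates cannot see the annular crossings. Every lift of $[c^n]$ has axis some $q\cdot\ora{c}$, and the $n-1$ annular crossings come from pairs of lifts with the \emph{same} axis (those indexed by $\langle c^n\rangle$ and $c^j\langle c^n\rangle$). Their endpoints are not linked, so linking certifies only the $n^2\,\mathrm{SI}(c)$ other crossings. This is exactly why the paper excludes common powers from Proposition~\ref{prop:crossings-triality}, and ``the $\mathbb{Z}/n$ symmetry'' gives no lower bound.
\item Having smoothings $[c^k][c^\ell]$ does not stop a crossing from being removable. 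Pushing one strand of your spiral across its neighbour creates a bigon whose two removable corners again smooth to $[c][c^{n-1}]$. What must be shown is that a \emph{minimal} diagram contains such a crossing, and the smoothings alone cannot tell you that.
\item Lemma~\ref{lem:essential-cross} transports crossings \emph{from} a minimal representative to an arbitrary one, not conversely. ``A representative whose crossings all survive'' is just a minimal one, so that step is circular.
\end{itemize}

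A self-contained fix avoids both the model and the formula. Let $\gamma$ be \emph{any} minimal representative of $[c^n]$, with $n=k+\ell$. Let $\tilde\gamma$ be its closed lift to the annular cover $\wt{S}/\langle c\rangle\cong\mathbb{R}/\mathbb{Z}\times(0,1)$; it has winding number $n$. Lift once more to the strip, writing $\hat\gamma=(X,Y)$ with $X(t+1)=X(t)+n$ and $Y(t+1)=Y(t)$.

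For $s\in\mathbb{R}/\mathbb{Z}$ and $u\in[0,1]$, put $\Phi(s,u)=\hat\gamma(s+u)-\hat\gamma(s)$. Then:
\begin{itemize}
\item $\Phi=(0,0)$ at $u=0$ and $\Phi=(n,0)$ at $u=1$;
\item the second coordinate of $\Phi$ is $\le 0$ on the fibre over a maximum of $Y$, and $\ge 0$ on the fibre over a minimum.
\end{itemize}
On the boundary of the rectangle between these two fibres, $\Phi$ runs from $(0,0)$ to $(n,0)$ in the closed lower half-plane and back in the closed upper half-plane. So either it passes through $(k,0)$, or it winds once around $(k,0)$. In either case $\Phi$ takes the value $(k,0)$ on the rectangle, necessarily at a point with $0<u<1$. (If $Y$ is constant, apply the intermediate value theorem on one fibre.)

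This gives a crossing $\tilde\gamma(s)=\tilde\gamma(s+u)$, hence a crossing of the minimal curve $\gamma$, which is essential by definition. Its two subloops wind $k$ and $\ell$ times in the annulus, so based at the crossing they are simultaneously conjugate to $a$ and $b$. Hence its two smoothings are $[a][b]$ and $[aB]$.
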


The following result justifies the oriented/unoriented connected/disconnected smoothing variations of smoothing.
It is an oriented version of the result for smoothing in~\cite[Proposition~2.9b]{MGT25:SmoothingsErratum}, and its proof goes through verbatim.

\begin{proposition}\label{prop:crossings-triality}
  Let $a,b \in \pi_1(S,*)$ be non-trivial elements that are not
  common powers. Then in the following three trialities the same
  possibility holds in each case.
  \begin{enumerate}
  \item (Reduction) One of the following reduction happens:\\
    \begin{enumerate*}[itemjoin=\qquad]
    \item $[ab] \searrow [aB],[a][b]$;
    \item $[aB] \searrow [ab],[a][B]$; or
    \item $[a][b] \searrow [ab], [aB]$.
    \end{enumerate*}
  \item\label{item:triality-geom}
    (Geometric) For every minimal embedding of $a * b$, around the
    4-valent vertex we see in cyclic
    order, up to orientation reversal:\\
    \begin{enumerate*}[itemjoin=\qquad]
    \item $\mfig{curves-30}$;
    \item $\mfig{curves-31}$; or
    \item $\mfig{curves-32}$.
    \end{enumerate*}
  \item\label{item:triality-alg} (Algebraic) The geodesics $\ora{a}$ and $\ora{b}$ are:\\
    \begin{enumerate*}[itemjoin=\qquad]
    \item parallel;
    \item anti-parallel; or
    \item cross.
    \end{enumerate*}
  \end{enumerate}
\end{proposition}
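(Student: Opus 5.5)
The plan is to follow the unoriented argument of \cite[Proposition~2.9b]{MGT25:SmoothingsErratum}, keeping track of orientations throughout. First I will establish the equivalence of the geometric triality~\ref{item:triality-geom} and the algebraic triality~\ref{item:triality-alg}. Fix a hyperbolic structure and realize the graph $a*b$ by choosing the basepoint lift $\wt\ast$ on or near the axes. In $\wt S$ the two loops lift to arcs from $\wt\ast$ to $\wt\ast\cdot a$ and to $\wt\ast\cdot b$, together with their backward continuations toward $a^-$ and $b^-$. A minimal representative of $[a*b]$ can be taken to be the image of a configuration in which $\ora{a}$ and $\ora{b}$ are straightened to geodesics, joined at the vertex.

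Around the 4-valent vertex, the four half-edges point (up to bounded error) toward $a^+$, $a^-$, $b^+$, $b^-$. So the cyclic order of the half-edges at the vertex coincides with the cyclic order of these endpoints on $\partial_\infty S$. Consequently the three pictures in~\ref{item:triality-geom} correspond exactly to the orders $a^+,a^-,b^-,b^+$ (parallel), $a^+,a^-,b^+,b^-$ (anti-parallel) and $a^+,b^+,a^-,b^-$ (cross), each up to reversal. Because $a$ and $b$ are not common powers, the four endpoints are distinct, so exactly one case occurs. Independence of the minimal embedding follows because any two minimal embeddings are related by homotopies that do not change the vertex type; this is the analogue of Lemma~\ref{lem:essential-cross} for graphs, and I will quote it from \cite{MGT25:SmoothingsErratum}.

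Next I will pass from geometry to the reduction triality by resolving the vertex of $a*b$. There are three ways to split a 4-valent vertex into two arcs, and they produce $[ab]$, $[aB]$ and $[a][b]$, the last being the disconnected resolution. In case~(c), the $a$-strand and $b$-strand cross transversally at the vertex, so pushing them apart gives $[a][b]$ with a crossing there. Smoothing that crossing in its two ways gives the other two resolutions, $[ab]$ (the oriented smoothing) and $[aB]$. Cases~(a) and~(b) are handled the same way: the pair of strands that actually crosses transversally is $a$ paired with $b$ in $[ab]$, respectively $aB$, and its two smoothings are the remaining resolutions. Tracking orientations here is what yields the oriented statements: in case~(a) the oriented smoothing of $[ab]$ is $[a][b]$, and in case~(c) the oriented smoothing of $[a][b]$ is $[ab]$.

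The main obstacle is \emph{essentiality}. I must show that the crossing obtained by perturbing the minimal graph is an essential crossing of a minimal representative of the resulting curve, rather than a removable bigon or monogon. My first attempt will lift to $\wt S$ and use the geodesic picture. The resulting crossing corresponds to two geodesic lifts, for instance $\ora{a}$ and $\ora{b}$ in case~(c), or $\ora{ab}$ and one of its translates in the connected cases, whose endpoints are linked exactly when the cyclic order is the crossing one. Linked lifts cannot be homotoped apart, and the standard bigon criterion then makes the crossing essential. Finally, the three reduction cases are mutually exclusive, because each one determines the cyclic order through this correspondence; hence the same possibility holds in all three trialities.
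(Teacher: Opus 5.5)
Your outline (vertex type $\leftrightarrow$ cyclic order of endpoints, then resolve the vertex) follows the argument the paper defers to in \cite{MGT25:SmoothingsErratum}, but the final step fails. You claim that in the connected cases the two lifts through the perturbed vertex ($\ora{ab}$ and $A\cdot\ora{ab}=\ora{ba}$, resp.\ $\ora{aB}$ and $\ora{Ba}$) are linked \emph{exactly} when the vertex has the corresponding type. You then derive mutual exclusivity of the three reductions from this. The ``only if'' is false. Example~\ref{ex:ab_parallel_aB_crossing} gives $(\ora{a},\ora{b})$ parallel with $(\ora{aB},\ora{Ba})$ crossing. Replacing $b$ by $B$ there gives an anti-parallel pair ($a=hghgh$, $b=HGH$, $ab=hg$, $ba=gh$) with $\ora{ab},\ora{ba}$ crossing. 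In that anti-parallel example, the self-crossing of $[ab]$ coming from the linked lifts $\ora{ab},\ora{ba}$ smooths to $[h][g]$ (by the parallel case for $(h,g)$), not to $[a][b]$. So linking is not even sufficient for your forward direction in the connected cases.

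What decides the smoothing is a position condition. With $q=\ora{ab}\cap\ora{ba}$, the oriented smoothing of the closed geodesic $[ab]$ at the image of $q$ is $[a][b]$ when $a\cdot q$ lies on $\ora{ab}$ strictly between $q$ and $ab\cdot q$. Otherwise it is $[(ab)^j a]$ together with its complement, for some $j\neq 0$. Finally, a reduction only constrains the classes $[ab],[aB],[a][b]$, so the essential crossing witnessing it need not come from the vertex of a minimal $a*b$. Hence ``each reduction determines the cyclic order through this correspondence'' is circular.

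The missing ingredient is an independent proof that at most one reduction holds. One way is via hyperbolic length: realize the essential crossing on the geodesic representative (Lemma~\ref{lem:essential-cross}); smoothing it creates a corner, so length strictly drops. Then (a) forces $\ell(ab)>\ell(a)+\ell(b)$ and $\ell(ab)>\ell(aB)$. Case (b) forces $\ell(aB)>\ell(ab)$ and $\ell(aB)>\ell(a)+\ell(b)$, and (c) forces $\ell(a)+\ell(b)>\ell(ab),\ell(aB)$. These are pairwise incompatible, so together with exhaustiveness of the algebraic trichotomy, the forward implications give all equivalences.

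For those forward implications, work directly with closed geodesics. In the crossing case, use the crossing at $\ora{a}\cap\ora{b}$. In the parallel case, use the reflections of Section~\ref{subsec:antiparallel}. Since $R_1abR_1=AB$, $R_1$ swaps $\ora{ab}$ and $\ora{ba}$, so $q\in R_1$, which meets $\ora{ab}$ between $R_3$ and $R_4$. Then $a\cdot q=R_4R_1\cdot q=R_4\cdot q$ lies in $(q,ab\cdot q)$. The anti-parallel case follows by $b\mapsto B$.

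Separately, your geometric step needs minimality in an essential way. Coarse directions of the half-edges do not determine the local cyclic order at the vertex. Also, when the axes are disjoint you cannot place both loops on $\ora{a}$ and $\ora{b}$ through a common vertex.
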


We will use Proposition~\ref{prop:crossings-triality} without explicit
reference, in particular that if $\ora{a}$ crosses $\ora{b}$, then
\[
  [a][b] \reducesto [ab]\qquad\text{and}\qquad[a][b] \reducesto [aB]
\]
and if $\ora{a}$ is parallel to $\ora{b}$, then
\[
  [ab] \reducesto [a][b]\qquad\text{and}\qquad[ab] \reducesto [aB].
\] 

Now we give the proofs of the algebraic rephrasings of oriented smoothing, oriented max-smoothing, and smoothing we gave in Section~\ref{sec:functionals}.

\begin{proof}[Proofs of Lemmas~\ref{lem:oriented_smoothing_lemma_top_alg} and~\ref{lem:orientedmax_smoothing_lemma_top_alg}]
The first two items of
Lemma~\ref{lem:oriented_smoothing_lemma_top_alg} (resp. of
Lemma~\ref{lem:orientedmax_smoothing_lemma_top_alg}) follow by using
Equation~\eqref{eq:or-smoothing} (resp. Equation~\eqref{eq:or-smoothing}
with equality) at the desired essential crossing, and applying each
case of Proposition~\ref{prop:crossings-triality}.
The last item follows by Proposition~\ref{prop:crossings-powers}.
\end{proof}

\begin{proof}[Proof of Lemma~\ref{lem:smoothing_lemma_top_alg}]
The first two items of the result follow by using Equation~\eqref{eq:smoothing} at the desired essential crossing, applying each case of Proposition~\ref{prop:crossings-triality}, and using symmetry of $f$.
The last item follows by Proposition~\ref{prop:crossings-powers}.
\end{proof}

If we swap $\ora{a}$ and $\ora{b}$, the type of L/R crossing/parallelism
can change, as recorded below for future reference.

\begin{lemma} Let $a,b\in \Isom^+(\bbH^2)$ be hyperbolic elements.
\begin{enumerate}
  \item If $(\ora{a},\ora{b})$ R-cross, then $(\ora{B},\ora{A})$
    R-cross and $(\ora{b},\ora{a})$ L-cross.
  \item If $(\ora{a},\ora{b})$ are R-parallel, then
    $(\ora{B},\ora{A})$ are R-parallel, $(\ora{b},\ora{a})$ are
    L-parallel, and $(\ora{a},\ora{B})$ are
    R-anti-parallel.
  \item If $(\ora{a},\ora{b})$ are R-anti-parallel, then
    $(\ora{b},\ora{a})$ are R-anti-parallel, $(\ora{A},\ora{B})$ are
    L-anti-parallel,
    and $(\ora{a},\ora{B})$
    are R-parallel.
\end{enumerate}
\end{lemma}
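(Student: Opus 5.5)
This lemma is a statement about cyclic orders of four points on $\partial_\infty S$, so the plan is to translate each operation into a permutation of the labels $a^\pm,b^\pm$ and read off the new cyclic order. Two basic facts will be used throughout. First, swapping the pair $(\ora{a},\ora{b})\mapsto(\ora{b},\ora{a})$ keeps the same four points but exchanges the roles $a^\pm\leftrightarrow b^\pm$. Second, inverting an element reverses its axis, so $(A)^+=a^-$ and $(A)^-=a^+$, and likewise for $B$. Each relation in Definition~\ref{def:crossing} is specified by a counterclockwise cyclic order up to rotation. So in each case I will rewrite the given order of endpoints in terms of the new labels, rotate it to a standard starting point, and compare with the three template orders or their reversals.

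For (1), the hypothesis is that the counterclockwise order is $a^+,b^+,a^-,b^-$.
\begin{itemize}
\item For $(\ora{B},\ora{A})$ the new labels are $B^+=b^-$, $A^+=a^-$, $B^-=b^+$, $A^-=a^+$. The required R-cross order $B^+,A^+,B^-,A^-$ becomes $b^-,a^-,b^+,a^+$. This is a cyclic rotation of the given order, so $(\ora{B},\ora{A})$ R-cross.
\item For $(\ora{b},\ora{a})$, R-cross would require $b^+,a^+,b^-,a^-$. Rotated, this is $a^+,b^-,a^-,b^+$, which is the reversal of the given order. Hence $(\ora{b},\ora{a})$ L-cross.
\end{itemize}

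For (2), the hypothesis is the order $a^+,a^-,b^-,b^+$.
\begin{itemize}
\item For $(\ora{B},\ora{A})$, R-parallel requires $B^+,B^-,A^-,A^+$, that is $b^-,b^+,a^+,a^-$. This is a rotation of the given order.
\item For $(\ora{b},\ora{a})$, R-parallel would require $b^+,b^-,a^-,a^+$, equivalently $a^+,b^+,b^-,a^-$. That is the reverse of the given order, so the pair is L-parallel.
\item For $(\ora{a},\ora{B})$, R-anti-parallel requires $a^+,a^-,B^+,B^-=a^+,a^-,b^-,b^+$, which is exactly the hypothesis.
\end{itemize}

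For (3), the hypothesis is the order $a^+,a^-,b^+,b^-$.
\begin{itemize}
\item For $(\ora{b},\ora{a})$, R-anti-parallel requires $b^+,b^-,a^+,a^-$, a rotation of the given order.
\item For $(\ora{A},\ora{B})$, the order $A^+,A^-,B^+,B^-$ reads $a^-,a^+,b^-,b^+$. This is a rotation of the reversal $b^-,b^+,a^-,a^+\cdots$ of the given order; more carefully, the reverse of the given order is $b^-,b^+,a^-,a^+$, which rotates to $a^-,a^+,b^-,b^+$. So the pair is L-anti-parallel.
\item For $(\ora{a},\ora{B})$, R-parallel requires $a^+,a^-,B^-,B^+=a^+,a^-,b^+,b^-$, which is exactly the hypothesis.
\end{itemize}

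There is no real obstacle here. The only care needed is bookkeeping: distinguishing a cyclic rotation from a reversal, and applying the endpoint swap under inversion consistently. Nothing about discreteness is used, so the argument holds for arbitrary hyperbolic elements of $\Isom^+(\bbH^2)$, as stated.
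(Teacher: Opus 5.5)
Your method matches the paper's, whose proof is just ``immediate from the definitions.'' Parts (2), (3) and the second claim of (1) check out.

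The first claim of (1) does not. The sequence $b^-,a^-,b^+,a^+$ is \emph{not} a cyclic rotation of $a^+,b^+,a^-,b^-$. In the given order the successor of $b^-$ is $a^+$, while in $b^-,a^-,b^+,a^+$ it is $a^-$. In fact $b^-,a^-,b^+,a^+$ is literally $a^+,b^+,a^-,b^-$ read backwards, which is the very reversal you use in your next bullet for $(\ora{b},\ora{a})$. So your two bullets in (1) contradict each other, and the correct conclusion is that $(\ora{B},\ora{A})$ \emph{L}-cross.

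This is not a gap you could repair: that clause of the statement is a misprint. Handedness of a crossing is the sign of $\det(v,w)$ for the tangent directions $v,w$ of the two axes at the crossing point. Reversing both orientations preserves it, since $\det(-v,-w)=\det(v,w)$, while swapping the pair reverses it. Hence $(\ora{B},\ora{A})$ always has the same handedness as $(\ora{b},\ora{a})$, which you correctly found to be L-cross.

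What is true is that $(\ora{A},\ora{B})$ R-cross, as do $(\ora{B},\ora{a})$ and $(\ora{b},\ora{A})$. For instance, $A^+,B^+,A^-,B^-=a^-,b^-,a^+,b^+$ is a rotation of the hypothesis. These are exactly the facts the paper uses later, in the proofs of Lemma~\ref{lem:righthandedfoundation} and Theorem~\ref{thm:intersections_group}. When your bookkeeping gives two incompatible answers, you should flag that the claim fails as stated rather than assert a rotation that isn't one.
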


\begin{proof}
  Immediate from the definitions.
\end{proof}

\subsection{Parallelism and crossing of hyperbolic axes}
\label{subsec:parallel_crossing}
We now turn to some elementary lemmas about crossing.

\begin{convention}\label{conv:circle-intervals}
  Intervals on the circle are always written with endpoints in counterclockwise
  order. (Thus if $x,y \in \partial_{\infty} \Sigma$ are distinct points,
  $(x,y) \cup (y,x)$ is almost the entire circle.)
\end{convention}

\begin{lemma}\label{lem:NS-dynamics}
  Let $a \in \Isom(\bbH^2)$ and $(x,y) \subset \partial\bbH^2$. Then
  the following are equivalent:
  \begin{itemize}
  \item $a$ is hyperbolic with attracting and repelling
    endpoint $a^+ \in(x,y)$ and $a^- \notin (x,y)$; and
  \item  $a \cdot (x,y) \subsetneq (x,y)$.
  \end{itemize}
\end{lemma}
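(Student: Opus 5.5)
We argue by the classification of orientation-preserving isometries of $\bbH^2$ (elliptic, parabolic, hyperbolic), together with the fact that $a$ acts on $\partial\bbH^2$ as an orientation-preserving homeomorphism of the circle. An interval $(x,y)$ is mapped by $a$ to the interval $(a x, a y)$, still with endpoints in counterclockwise order. The containment $a\cdot(x,y)\subsetneq(x,y)$ then says exactly that $ax,ay \in [x,y]$ in counterclockwise order, with $(ax,ay)\neq(x,y)$. (If $a$ is allowed to be orientation-reversing, the image interval has its orientation reversed; this case needs a separate small check, or we restrict to $\Isom^+$ as in the intended uses.)

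\emph{First direction.} Suppose $a$ is hyperbolic with $a^+\in(x,y)$ and $a^-\notin(x,y)$. The two fixed points $a^\pm$ split the circle into two arcs, and on each arc $a$ moves points monotonically away from $a^-$ and toward $a^+$. Consider first the case $a^-\notin[x,y]$. Then $x$ lies on one arc from $a^-$ to $a^+$ and $y$ lies on the other. Consequently $ax$ is strictly closer to $a^+$ than $x$, so $ax\in(x,a^+)$; likewise $ay\in(a^+,y)$. This gives $a(x,y)=(ax,ay)\subsetneq(x,y)$. If instead $a^-=x$ (or $y$), then that endpoint is fixed while the other moves strictly inward, and the containment is still strict.

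\emph{Second direction.} Suppose $a(x,y)\subsetneq(x,y)$. Then $a$ maps the closed arc $[x,y]$ into itself, so by the intermediate value theorem on this arc, $a$ has a fixed point in $[x,y]$. Now rule out the other types.
\begin{itemize}
\item $a$ is not the identity, since the containment is strict.
\item $a$ is not elliptic: a nontrivial elliptic has no fixed points on the boundary.
\item $a$ is not parabolic. A parabolic has a unique fixed point $p$ and moves every other boundary point in a single rotational direction. If $p\notin(x,y)$, then one endpoint moves inward and the other outward, so the image is not contained in $(x,y)$. If $p\in(x,y)$, then the same happens to the two endpoints, with one of them moving out. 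Either way the containment fails.
\end{itemize}
Hence $a$ is hyperbolic. To locate its fixed points, iterate: the nested images $a^n[x,y]$ shrink and converge to a fixed point, which must be the attractor $a^+$. Since $a^n(x,y)\subset(x,y)$, we get $a^+\in[x,y]$. Moreover $a^+$ cannot be an endpoint: if, say, $a^+=x$, then points near $x$ inside $(x,y)$ flow toward $x$, and an endpoint cannot be the attractor while strict nesting holds. So $a^+\in(x,y)$. Finally, if $a^-$ were in $(x,y)$, then $a^{-1}$ would push points near $a^-$ away from it, and some points of $(x,y)$ would have preimages leaving the interval, contradicting nesting. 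Therefore $a^-\notin(x,y)$.

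\emph{Main obstacle.} The delicate part is the boundary cases, where $a^\pm$ coincides with $x$ or $y$, and the parabolic exclusion. Each is a careful but routine check on the direction of motion along arcs of the circle.
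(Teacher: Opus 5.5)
Your forward direction is correct for orientation-preserving $a$. The restriction to $\Isom^+(\bbH^2)$ is forced, not merely convenient. Work in the upper half-plane with $\partial\bbH^2=\mathbb{R}\cup\{\infty\}$ oriented by increasing $t$, which is counterclockwise. The glide reflection $t\mapsto -2t$ has $a^+=\infty\in(1,-100)$ and $a^-=0\notin(1,-100)$, yet it does not map $(1,-100)$ into itself.

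The genuine gap is in the converse, in exactly the boundary cases you set aside as routine. Two of your claims there are false, and so is the implication as literally stated.
\begin{itemize}
\item Your claim that ``$a^+$ cannot be an endpoint'' fails. If $a$ is hyperbolic with $a^+=x$ and $a^-\in(y,x)$, then $a$ fixes $x$ and pulls $y$ toward $x$. Hence $a\cdot(x,y)=(x,a\cdot y)\subsetneq(x,y)$, although $a^+\notin(x,y)$. In the same model, $z\mapsto z/2$ maps $(0,1)$ onto $(0,\tfrac12)$.
\item Your exclusion of parabolics fails. Your case ``$p\notin(x,y)$'' contains $p=x$, where the endpoint $x$ does not move at all. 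A parabolic fixing $x$ and moving all other boundary points clockwise gives $a\cdot(x,y)=(x,a\cdot y)\subsetneq(x,y)$. For instance, $z\mapsto z-1$ maps $(\infty,0)$, the negative real axis, onto $(\infty,-1)$.
\end{itemize}
So strict nesting alone yields only $a^+\in[x,y]$ and does not even force $a$ to be hyperbolic. No argument can prove ``second bullet $\Rightarrow$ first bullet'' as written.

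The lemma becomes true if you add the hypothesis that $a$ fixes neither $x$ nor $y$. Equivalently, replace the second bullet by $a\cdot[x,y]\subset(x,y)$ and the conclusion $a^-\notin(x,y)$ by $a^-\in(y,x)$. This is the only regime in which the paper uses the lemma; for example, in Lemmas~\ref{lem:rhs} and~\ref{lem:rhs_prod} the relevant elements share no endpoints with $\ora{x}$, so both endpoints of the interval move strictly inward. There, the paper's one-line appeal to north--south dynamics suffices. Under this hypothesis your outline works:
\begin{itemize}
\item The Brouwer fixed point lies in $a\cdot[x,y]\subset(x,y)$.
\item A parabolic with interior fixed point $p$ is excluded. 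It pushes one of $x,y$ away from $p$, so the image arc, which contains $p$, meets the complementary arc $(y,x)$.
\item For any $z\in[x,y]\setminus\{a^-\}$ you get $a^+=\lim_n a^n z\in a\cdot[x,y]\subset(x,y)$. Use this rather than claiming that the arcs $a^n[x,y]$ shrink to a point, which fails, e.g., when $a^-=x$.
\end{itemize}

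Separately, your argument for $a^-\notin(x,y)$ must be redone. First, $a^{-1}$ attracts toward $a^-$; it does not push points away from it. Second, preimages leaving $(x,y)$ contradict nothing, since $a\cdot(x,y)\subsetneq(x,y)$ means precisely $a^{-1}\cdot(x,y)\supsetneq(x,y)$. The correct argument runs as follows. The map $a^{-1}$ sends the complementary closed arc $[y,x]$ into itself, and $a^{-n}z\to a^-$ for every $z\in[y,x]\setminus\{a^+\}$. Hence $a^-\in[y,x]$, i.e., $a^-\notin(x,y)$.
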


\begin{proof}
  This is a statement of NS dynamics of hyperbolic elements.
\end{proof}

\begin{lemma}
  Let $a,x \in \Isom^+(\bbH^2)$ be hyperbolic elements that do not share any endpoints.
  Then $(\ora{a},\ora{x})$ R-cross (resp.\ L-cross) iff
  $(\ora{x}, a \cdot \ora{x})$ are R-parallel (resp.\ L-parallel).
 \label{lem:rhs}
\end{lemma}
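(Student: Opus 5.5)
The argument is purely about the cyclic order of the four endpoints $a^\pm, x^\pm$ on $\partial\bbH^2$ together with the north--south dynamics of $a$ (Lemma~\ref{lem:NS-dynamics}). Since $a$ and $x$ share no endpoints, the pair $(\ora{a},\ora{x})$ is exactly one of R-cross, L-cross, R-parallel, L-parallel, R-anti-parallel or L-anti-parallel. The geodesic $a\cdot\ora{x}$ has endpoints $a x^-$ and $a x^+$. The plan is to locate these two points relative to $x^\pm$ in each case.

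\emph{Crossing case.} Suppose $(\ora{a},\ora{x})$ R-cross, so the counterclockwise order is $a^+,x^+,a^-,x^-$. Then $x^+$ lies in the arc $(a^+,a^-)$ and $x^-$ lies in the complementary arc $(a^-,a^+)$. The arc $(a^-,a^+)$ contains the attracting point $a^+$ but not $a^-$, so by Lemma~\ref{lem:NS-dynamics} we have $a\cdot(a^-,a^+)\subsetneq(a^-,a^+)$. Since $a$ preserves orientation and fixes $a^\pm$, it maps each of the two arcs cut out by $a^\pm$ to itself. The sub-arc $(a^-,x^-]$ has image $(a^-,ax^-]$, which strictly contains it, because points move toward $a^+$. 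Hence $ax^-\in(x^-,a^+)$. On the other arc $(a^+,a^-)$, points likewise move away from $a^-$ and toward $a^+$ (approaching $a^+$ from the other side), so $ax^+\in(a^+,x^+)$. The resulting cyclic order is $a^+,ax^+,x^+,a^-,x^-,ax^-$. Reading off the four relevant points gives $ax^+, x^+, x^-, ax^-$, i.e.\ $x^+,x^-,ax^-,ax^+$ up to cyclic rotation. This is the R-parallel pattern $g^+,g^-,h^-,h^+$ with $g=x$ and $h=axA$, so $(\ora{x},a\cdot\ora{x})$ are R-parallel. The L-cross case follows by reflecting $\bbH^2$, which reverses the cyclic order and swaps every R with L.

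\emph{Converse.} It remains to show that if $(\ora{a},\ora{x})$ do not cross, then $(\ora{x},a\cdot\ora{x})$ are not parallel. If $\ora{a}$ and $\ora{x}$ are disjoint, then $x^+$ and $x^-$ both lie in one arc $I$ cut out by $a^\pm$. The map $a$ moves both points monotonically in the same direction along $I$, toward $a^+$. Say the order along $I$ (from $a^-$ to $a^+$) is $a^-,p,q,a^+$ with $\{p,q\}=\{x^-,x^+\}$.

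\begin{itemize}
\item If $ap$ lies beyond $q$, the images form the order $p,q,ap,aq$. Then $\ora{x}$ and $a\cdot\ora{x}$ are disjoint with endpoints unlinked, and their endpoints appear as $x^{\epsilon},x^{-\epsilon},ax^{\epsilon},ax^{-\epsilon}$. This is the anti-parallel pattern, not parallel.
\item If $ap$ lies between $p$ and $q$, the order is $p,ap,q,aq$, which is linked, so the two geodesics cross.
\end{itemize}

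In neither case are they parallel. Together with the crossing case, this gives both directions of the equivalence, including the matching of R with R and L with L.

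The main obstacle I expect is bookkeeping: keeping the counterclockwise conventions (Convention~\ref{conv:circle-intervals}) consistent with the picture in Definition~\ref{def:crossing}, and making sure the direction in which $a$ moves points on each arc is stated correctly. I would check this carefully on a single explicit example, such as $a$ a dilation in the upper half-plane, and then rely on symmetry for the remaining cases.
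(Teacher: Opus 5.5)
Your proof is correct, but it is organized differently from the paper's. The paper applies Lemma~\ref{lem:NS-dynamics} to the window $(x^-,x^+)$ of $\ora{x}$, not to the arcs cut out by $a^\pm$. By definition, $(\ora{x},a\cdot\ora{x})$ being R-parallel is exactly the statement $a\cdot(x^-,x^+)\subsetneq(x^-,x^+)$. By north--south dynamics this is equivalent to $a^+\in(x^-,x^+)$ and $a^-\notin(x^-,x^+)$, which is R-crossing. That single chain of equivalences gives both directions at once, with no case analysis, no separate converse, and no degenerate coincidences to track. Your route instead computes where $ax^\pm$ land in each configuration of $a^\pm,x^\pm$. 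It is longer and needs the exhaustive converse, but it yields more information. In the crossing case you get the full six-point order $a^+,ax^+,x^+,a^-,x^-,ax^-$. When the axes are disjoint, you get that $\ora{x}$ and $a\cdot\ora{x}$ either cross, share an endpoint, or are anti-parallel.

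Two small slips need fixing.

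First, the assertion $a\cdot(a^-,a^+)\subsetneq(a^-,a^+)$ is false. The open arc $(a^-,a^+)$ does not contain $a^+$, so Lemma~\ref{lem:NS-dynamics} does not apply, and in fact $a$ maps that arc onto itself. What you actually use is that $a$ pushes each $y\in(a^-,a^+)$ toward $a^+$. This does follow from the lemma applied to $(y,a^-)$, which contains $a^+$ but not $a^-$: it gives $ay\in(y,a^-)$, hence $ay\in(y,a^+)$. The other arc is handled the same way, using $(a^-,y)$.

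Second, in the disjoint case your dichotomy omits $ap=q$. This can happen in $\Isom^+(\bbH^2)$, for example with $a(z)=2z$ and $x^\pm\in\{1,2\}$. There the two geodesics share an endpoint, so they are trivially not parallel.
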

\begin{proof}
  The statement that $(\ora{x}, a\cdot\ora{x})$ are R-parallel is (by
  definition) equivalent to $a\cdot(x^-,x^+) \subsetneq (x^-,x^+)$,
  which is equivalent, by Lemma~\ref{lem:NS-dynamics}, to $a^+ \in (x^-,x^+)$ and
  $a^- \in (x^+,x^-)$, which is equivalent to
  $(\ora{a},\ora{x})$ R-crossing.
\end{proof}

\begin{lemma}
  Fix $a,b,x \in \Isom^+(\bbH^2)$ hyperbolic elements with distinct
  endpoints so that $(\ora{a},\ora{x})$ and $(\ora{b},\ora{x})$
  R-cross. Then:
  \begin{enumerate}[label=(\Roman*)]
  \item All four pairs $(\ora{xa},\ora{x})$, $(\ora{Xa},\ora{x})$, $(\ora{ax},\ora{x})$, and
    $(\ora{aX},\ora{x})$ R-cross.
  \item $(\ora{ab},\ora{x})$ R-cross.
  \item More generally, for any $n,k \in \bbZ$, $(\ora{a x^n},\ora{x})$
    and $(\ora{ax^nbx^k},\ora{x})$ R-cross.
  \end{enumerate}
    \label{lem:rhs_prod}
\end{lemma}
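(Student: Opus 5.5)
We reduce every claim to Lemma~\ref{lem:rhs}, or directly to Lemma~\ref{lem:NS-dynamics}, using the interval $I \coloneqq (x^-,x^+)$. By Lemma~\ref{lem:rhs}, $(\ora{g},\ora{x})$ R-cross iff $g\cdot\overline{I}\subset I$ strictly. Here we think of $g$ mapping the closed arc $\overline{I}$ into the open arc $I$, with $g^+\in I$ and $g^-\notin \overline I$. More precisely, Lemma~\ref{lem:NS-dynamics} says $g\cdot I\subsetneq I$ characterizes $g^+\in I$, $g^-\notin I$. To exclude endpoint coincidences, we will note that the characteristic condition can be taken as $g\cdot \overline I\subset I$, which forces $g^{\pm}\ne x^{\pm}$. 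The key observation is that the set of isometries $g$ with $g\cdot\overline{I}\subset I$ is a semigroup. Moreover, $x$ and $X$ preserve $I$ and $\overline I$ setwise, since $x$ fixes $x^\pm$ and preserves each complementary arc.

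For (II), the hypotheses give $a\cdot\overline I\subset I$ and $b\cdot\overline I\subset I$. Then $ab\cdot\overline I = a\cdot(b\cdot\overline I)\subset a\cdot I\subset I$, and the containment is strict. By Lemma~\ref{lem:NS-dynamics}, $ab$ is hyperbolic with $(ab)^+\in I$ and $(ab)^-\notin\overline I$, hence $(\ora{ab},\ora{x})$ R-cross. For (I), we use that $x^{\pm1}\cdot \overline I=\overline I$. Then $xa\cdot\overline I = x\cdot(a\cdot\overline I)\subset x\cdot I = I$, and $ax^{\pm1}\cdot\overline I = a\cdot\overline I\subset I$, and likewise for $Xa$. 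Each of the four products therefore maps $\overline I$ strictly into $I$ and so R-crosses $\ora{x}$. Part (III) follows by the same semigroup argument: $ax^n\cdot \overline I=a\cdot\overline I\subset I$, and $ax^nbx^k\cdot\overline I = a x^n b\cdot\overline I\subset a x^n\cdot I = a\cdot I\subset I$.

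The main point needing care is justifying the equivalence between R-crossing and the closed-arc condition $g\cdot\overline I\subset I$. This matters because "distinct endpoints" is only assumed for $a,b,x$, not for products like $ab$. We handle it as follows. If $g^+\in I$ and $g^-\in(x^+,x^-)$, then NS dynamics sends the closed arc $\overline I$, which avoids $g^-$, into the arc $(g^-,g^-)$ minus a neighborhood of $g^-$. More concretely, $\overline{I}$ avoids $g^-$, and $g$ maps the complementary arc containing $g^+$, namely $(\text{the component of }\partial\bbH^2\setminus\{g^\pm\})$, into itself. A cleaner route is the converse direction, which is the one we actually need. If $g\cdot\overline I\subset I$, then the Brouwer fixed point theorem gives a fixed point of $g$ in $I$, and $g\cdot(\partial\bbH^2\setminus I)\supset \partial\bbH^2\setminus\overline I$ gives a fixed point of $g^{-1}$ in the complement of $\overline I$. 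Hence $g$ is hyperbolic with $g^+\in I$ and $g^-\notin\overline I$, which is exactly the order $g^+,x^+,g^-,x^-$ in the R-cross convention. We should double-check this orientation convention against Definition~\ref{def:crossing} and Lemma~\ref{lem:rhs}, and swap $I$ for $(x^+,x^-)$ if needed; the argument is symmetric.
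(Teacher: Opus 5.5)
Your proof is correct and follows the paper's route: isometries mapping $(x^-,x^+)$ strictly into itself form a semigroup that is stable under pre- and post-composition with $x^{\pm1}$, and Lemma~\ref{lem:NS-dynamics} converts this back into R-crossing. Using the closed arc $\overline I$ is a worthwhile refinement, since it rules out a product sharing an endpoint with $\ora{x}$, which the paper's open-interval version leaves implicit. Note, however, that both directions of the equivalence with $g\cdot\overline I\subset I$ are used, not only the converse: the forward direction is needed to translate the hypotheses on $a$ and $b$, and it follows directly from NS dynamics because $a^\pm, b^\pm \neq x^\pm$.
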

\begin{proof}
  For the first claim, consider the action of the group
  elements on $(x^-,x^+)$: we have
  $x^{\pm1} \cdot (x^-,x^+) = (x^-,x^+)$ and
  $a \cdot (x^-,x^+) \subsetneq (x^-,x^+)$, so all four of $ax^{\pm1}$ and
  $x^{\pm1}a$ map this interval strictly inside itself. By Lemma~\ref{lem:NS-dynamics},
  all these elements are hyperbolic and R-cross~$\ora{x}$.

  For the second claim, $a$ and $b$ both map
  $(x^-,x^+)$ strictly inside itself, so their product does too.

  The last claim is an immediate consequence of the first two.
\end{proof}

\begin{lemma} 
  Let $a,b\in \Isom^+(\bbH^2)$ be hyperbolic elements. Then:
  \begin{enumerate}[label=(\Roman*)]
  \item\label{item:ab-cross} If $(\ora{a}, \ora{b})$ R-cross, then $ba$ and $bA$ are also
    hyperbolic, and the pairs

    $(\ora{ba},\ora{ab})$ and $(\ora{aB},\ora{Ba})$ are both
    R-parallel, with endpoints as in Figure~\ref{fig:cross}.
  \item\label{item:ab-parallel} If $(\ora{a},\ora{b})$ are R-parallel, then $ab$ is hyperbolic
    and $(\ora{ba},\ora{ab})$ R-cross, with endpoints as in
    Figure~\ref{fig:parallel}.
    In particular, the triples
    $(\ora{a},\ora{ab}, \ora{b})$ and $(\ora{a},\ora{ba},\ora{b})$ are R-parallel.
  \item If $(\ora{a},\ora{b})$ are R-anti-parallel, then $aB$ is
    hyperbolic and $(\ora{Ba},\ora{aB})$ R-cross, with endpoints as in Figure~\ref{fig:anti}.
  \end{enumerate}
  \label{lem:righthandedfoundation}
\end{lemma}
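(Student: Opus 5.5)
The plan is to derive all three items from the north--south dynamics criterion, Lemma~\ref{lem:NS-dynamics}, used the same way as in Lemmas~\ref{lem:rhs} and~\ref{lem:rhs_prod}. For each product we exhibit an interval of $\partial\bbH^2$ mapped strictly inside itself by the product. This proves that the product is hyperbolic and gives one endpoint in that interval. A second interval, mapped strictly inside itself by the inverse, locates the other endpoint. The relative positions of these intervals then determine the cyclic order of the four endpoints, and hence the claimed R-parallel or R-cross relation.

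\textbf{Item~\ref{item:ab-cross}.} Suppose $(\ora{a},\ora{b})$ R-cross, so the counterclockwise order is $a^+,b^+,a^-,b^-$. By Lemma~\ref{lem:NS-dynamics}:
\begin{itemize}
\item $a$ maps $(b^-,b^+)$, an arc containing $a^+$ but not $a^-$, strictly into itself;
\item $b$ maps $(a^+,a^-)$, which contains $b^+$, strictly into itself.
\end{itemize}
Consider the arc $J=(b^-,a^-)$, which contains $a^+$ and $b^+$. Applying $a$ moves the endpoint $b^-$ toward $a^+$, and applying $b$ moves $a^-$ toward $b^+$. Checking the images of the endpoints shows that both $ab$ and $ba$ map a suitable subarc strictly into itself. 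The natural choices are $(b^-,a^-)$ for one product and a closely related arc for the other.

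A cleaner route avoids this direct endpoint check. Note that $ab$ and $ba=a^{-1}(ab)a$ are conjugate, so $\ora{ba}=A\cdot\ora{ab}$. Use:
\begin{itemize}
\item Lemma~\ref{lem:rhs_prod}(II) with $x=A$ or $x=B$, as appropriate, to see that $\ora{ab}$ R-crosses a known axis;
\item Lemma~\ref{lem:rhs}, which says that translating an axis $\ora{x}$ by an element crossing it yields an R-parallel pair.
\end{itemize}
Concretely, one checks that $(\ora{A},\ora{ab})$ R-cross. Then Lemma~\ref{lem:rhs} applied to $A$ and $x=ab$ gives that $(\ora{ab},A\cdot\ora{ab})=(\ora{ab},\ora{ba})$ are parallel, with orientation fixed by the R/L type. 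The statement for $(\ora{aB},\ora{Ba})$ follows by applying the same argument to the pair $(a,B)$, after checking via the earlier swap lemma which crossing type $(\ora{a},\ora{B})$ has.

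\textbf{Item~\ref{item:ab-parallel}.} Suppose $(\ora{a},\ora{b})$ are R-parallel, so the order is $a^+,a^-,b^-,b^+$. Let $I=(b^-,a^-)$, the arc going counterclockwise from $b^-$ through $b^+$ and $a^+$ to $a^-$. Both $a$ and $b$ map $I$ strictly inside itself, since each contains its attracting point but not its repelling point. Hence $ab$ maps $I$ strictly into itself, so $ab$ is hyperbolic with $(ab)^+\in I$. Symmetrically, $BA$ maps $(a^-,b^-)$ strictly into itself, so $(ab)^-\in(a^-,b^-)$. Finer arcs such as $b\cdot I$ refine the location to $(ab)^+\in(b^+,a^+)$, and $\ora{ab}$ separates $\ora{a}$ from $\ora{b}$, which gives the R-parallel triple $(\ora{a},\ora{ab},\ora{b})$. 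The same argument applies to $ba$.

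Crossing of $\ora{ab}$ and $\ora{ba}$ then follows from Lemma~\ref{lem:rhs}. Since $\ora{ba}=A\cdot\ora{ab}$ and $A$ moves $\ora{ab}$ off itself, the pair is parallel exactly when $A$ and $ab$ cross. Here they do not cross, and the endpoint positions just computed interleave. The precise order is read off from $(ab)^\pm$ and $(ba)^\pm$, which lie in the arcs found above.

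\textbf{Item (III).} This reduces to the previous item. By the swap lemma, $(\ora{a},\ora{B})$ are R-parallel, so item~\ref{item:ab-parallel} applied to the pair $(a,B)$ gives the claim about $aB$ and $Ba$.

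\textbf{Main obstacle.} The main difficulty is the bookkeeping in the first two items. One has to pin down exactly which arc contains each endpoint of the products so that the R versus L orientation comes out right, not just parallel versus crossing. I would handle this by nesting arcs under repeated application of the generators, and by fixing one explicit configuration in the disk, using the pictures in Figures~\ref{fig:cross} and~\ref{fig:parallel}.
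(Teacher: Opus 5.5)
There is a genuine gap, and it sits exactly where the lemma has content: the R-versus-L orientations.

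\textbf{Item (I).} Your concrete claim is backwards. In the configuration $a^+,b^+,a^-,b^-$ (counterclockwise), $\ora{ab}$ runs from the arc between $a^-$ and $b^-$ to the arc between $a^+$ and $b^+$. So $(\ora{A},\ora{ab})$ \emph{L}-cross, not R-cross. The way to get this from the earlier lemmas is part (I) of Lemma~\ref{lem:rhs_prod}, not part (II). Part (II) would require $\ora{a}$ to cross $\ora{A}$, or $\ora{b}$ to cross $\ora{B}$. Since $(\ora{b},\ora{A})$ R-cross, part (I) with $x=A$ gives $(\ora{ab},\ora{A})$ R-cross. Lemma~\ref{lem:rhs} then yields $(\ora{ab},\ora{ba})$ L-parallel, i.e.\ $(\ora{ba},\ora{ab})$ R-parallel, as claimed. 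With your version, Lemma~\ref{lem:rhs} would give the opposite orientation.

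Once corrected, your route is a valid alternative to the paper's direct interval argument. You should supplement it with $(\ora{ab},\ora{b})$ R-cross (part (I) with $x=b$) to place the endpoints in the two quadrants of Figure~\ref{fig:cross}. For the second pair, note that $(\ora{B},\ora{a})$ R-cross and apply the result to the ordered pair $(B,a)$.

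\textbf{Item (II).} Your final step does not go through as written. You have $(ab)^+,(ba)^+\in(b^+,a^+)$ and $(ab)^-,(ba)^-\in(a^-,b^-)$. Lemma~\ref{lem:rhs} only excludes parallelism, because $\ora{A}$ does not cross $\ora{ab}$. Together these show that $\ora{ab}$ and $\ora{ba}$ cross. But those arcs are compatible with both the R-cross and the L-cross orders, so the orientation cannot be ``read off'' from them.

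The missing idea is the interchange $a\cdot\ora{ba}=\ora{ab}$, which is what the paper uses. Both arcs lie in $(a^-,a^+)$, where $a$ moves points counterclockwise toward $a^+$. So within each arc, $(ab)^\pm=a\cdot(ba)^\pm$ comes after $(ba)^\pm$ in the counterclockwise direction. This gives the order $(ba)^+,(ab)^+,(ba)^-,(ab)^-$, i.e.\ $(\ora{ba},\ora{ab})$ R-cross, and makes Lemma~\ref{lem:rhs} unnecessary here. The paper settles (I) the same way, using that $a$ and $b$ move in opposite directions on the quadrants.

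Also, to locate $(ab)^+$ in $(b^+,a^+)$ you do not need the vague refinement via $b\cdot I$. Both $ab$ and $ba$ map $(b^+,a^+)$ strictly into itself, so Lemma~\ref{lem:NS-dynamics} applies directly.
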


\begin{proof}
  Note that the actions of $a$ and $b$ on geodesics act by conjugation
  on the underlying word. Specifically, this means that we have
  \[
    \begin{tikzpicture}[x=1.5cm]
      \node (ba) at (0,0) {$\ora{ba}$};
      \node (ab) at (1,0) {$\ora{ab}$};
      \node (Ba) at (3,0) {$\ora{Ba}$};
      \node (aB) at (4,0) {$\ora{aB}$.};
      \draw [->,bend left=15] (ba) to node[above,cdlabel]{a} (ab);
      \draw [->,bend left=15] (ab) to node[below,cdlabel]{b} (ba);
      \draw [->,bend left=15] (Ba) to node[above,cdlabel]{a} (aB);
      \draw [->,bend right=15] (Ba) to node[below,cdlabel]{b} (aB);
    \end{tikzpicture}
  \]
  The hyperbolic elements $a$ and~$b$ have north-south dynamics. Using
  this in the three cases depending on the relation of $\ora{a}$ and
  $\ora{b}$, we have the
  following.
  \begin{enumerate}[label=(\Roman*)]
  \item If $(\ora{a},\ora{b})$ R-cross, then we claim that the
    endpoints of the various geodesics must be laid out as in
    Figure~\ref{fig:cross}.

\begin{figure}
    \begin{subfigure}[b]{0.3\textwidth}
 \resizebox{\linewidth}{!}{\fontsize{9pt}{9pt}\selectfont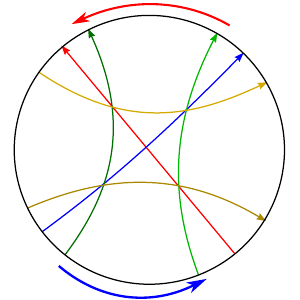}
    \caption{$\protect\ora{a}$ and $\protect\ora{b}$ R-cross}
    \label{fig:cross}
     \end{subfigure}
     \hfill
     \begin{subfigure}[b]{0.3\textwidth}
 \resizebox{\linewidth}{!}{\fontsize{9pt}{9pt}\selectfont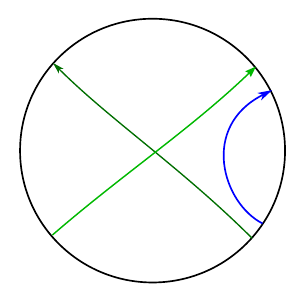}
    \caption{$\protect\ora{a}$ and $\protect\ora{b}$ are R-parallel }
    \label{fig:parallel}
\end{subfigure} 
\hfill
\begin{subfigure}[b]{0.3\textwidth}
 \resizebox{\linewidth}{!}{\fontsize{9pt}{9pt}\selectfont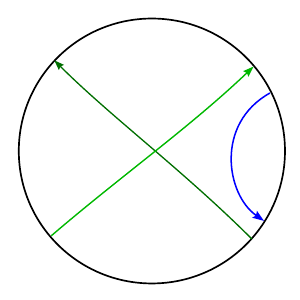}
    \caption{$\protect\ora{a}$ and $\protect\ora{b}$ are R-anti-parallel }
    \label{fig:anti}
   \end{subfigure}
   \caption{Cases for relative position 
     of $\protect\ora{a}$ with $\protect\ora{b}$, and corresponding
     endpoints of products.}
   \label{fig:cases-a-b}
 \end{figure}
 
    To see this, by the dynamics of $a$ and $b$ on the
    circle as shown, $ba$ maps the interval
    $(b^+,a^+)$ strictly inside itself, so $ba$ is
    hyperbolic with $(ba)^+\in(b^+,a^+)$;
    similarly $(ba)^- \in (b^-,a^-)$. The actions of
    $b$ and $a$ interchange $(ba)^- \leftrightarrow (ab)^-$ and
    $(ba)^+ \leftrightarrow (ab)^+$, forcing the configuration shown.
    (Note that all four endpoints $(ba)^\pm,(ab)^\pm$ are on the
    portions of the circle
    where $b$ and $a$ move in opposite directions.)

    Since $(\ora{B},\ora{a})$ also R-cross, the
    other conclusion follows.
  \item If $(\ora{a},\ora{b})$ are R-parallel, then $ab \cdot (b^+,a^+) \subsetneq (b^+,a^+)$, so $ab$ is
    hyperbolic and $(ab)^+ \in (b^+,a^+)$. The same interchange
    property then forces
    $(ab)^+, (ba)^+ \in (b^+,a^+)$, as well as
    $(ab)^-, (ba)^- \in (a^-,b^-)$. From this we see that
    $(\ora{ba},\ora{ab})$ R-cross, as in Figure~\ref{fig:parallel}.
  \item If $\ora{a}$ and $\ora{b}$ are anti-parallel, we can reduce to
    the parallel case by replacing $b$ with $B$, arranged as in
    Figure~\ref{fig:anti}.\qedhere
  \end{enumerate}
\end{proof}

\subsection{Position of axes 
\texorpdfstring{$\protect\ora{aB}$}{aB} and \texorpdfstring{$\protect\ora{Ba}$}{Ba} in parallel  case}
\label{subsec:antiparallel}
If $\ora{a}$ and $\ora{b}$ are R-parallel,
Lemma~\ref{lem:righthandedfoundation} makes no claim about the
relationship of $\ora{aB}$ and $\ora{Ba}$. Indeed this is not
determined by the hypotheses: they can be crossing or anti-parallel
(Lemma~\ref{lem:aB_bA_antiparallel_cross}).
Nevertheless, we can extract some information about composite pairs.

Fix $a,b \in \Isom^+(\bbH^2)$ so that $\ora{a}$ and $\ora{b}$ are
R-parallel. We know that $ab$ and $ba$ are
hyperbolic (with crossing axes). Suppose also that $aB$ is hyperbolic. (This will be the case, for example, if $a,b \in \pi_1(S)$.)

\begin{figure}[hb]
  \centering
  \setlength{\abovecaptionskip}{3pt}
  \setlength{\belowcaptionskip}{0pt}

  \begin{subfigure}[t]{0.42\textwidth}
    \centering
    \resizebox{\linewidth}{!}{\fontsize{9pt}{9pt}\selectfont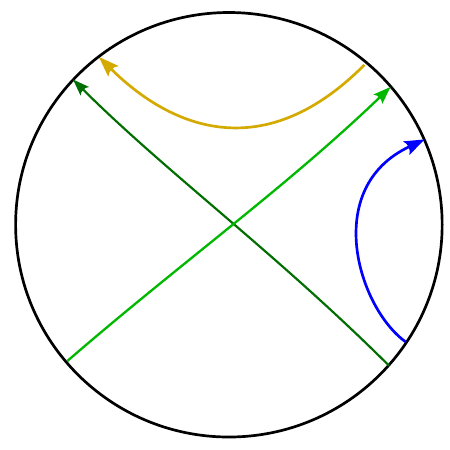}
    \caption{$\protect\ora{Ba},\protect\ora{aB}$ splitting}
    \label{fig:r3}
  \end{subfigure}
  \hfill
 \begin{subfigure}[t]{0.42\textwidth}
    \centering
    \resizebox{\linewidth}{!}{\fontsize{9pt}{9pt}\selectfont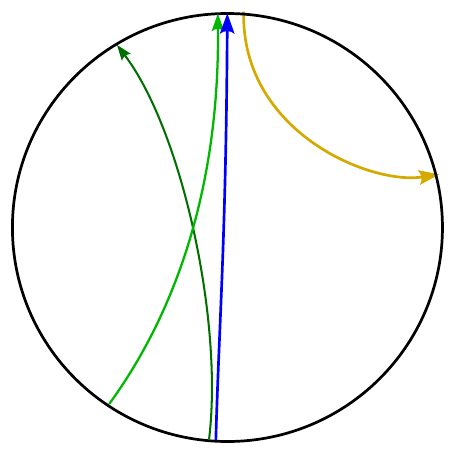}
    \caption{$\protect\ora{aB}$, $\protect\ora{Ba}$ are $b$-sided and antiparallel to each other}
    \label{fig:r1}
  \end{subfigure}
  \vspace{10pt}\par
  \begin{subfigure}[t]{0.42\textwidth}
    \centering
    \resizebox{\linewidth}{!}{\fontsize{9pt}{9pt}\selectfont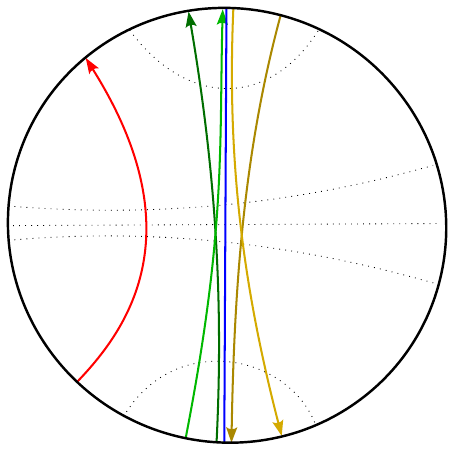}
    \caption{$\protect\ora{aB}$, $\protect\ora{Ba}$ are $b$-sided and crossing}
    \label{fig:c2}
  \end{subfigure}

  \caption{Topological cases for the location of $\protect\ora{aB}$ and
    $\protect\ora{Ba}$ when $(\protect\ora{a},\protect\ora{b})$ are parallel.}
  \label{fig:cases-conn-smoothing}
\end{figure}

\begin{lemma}
If $(\ora{a},\ora{b})$ are parallel, $\ora{aB}$ does not cross $\ora{a}$ or $\ora{b}$.
\label{lem:aB_a_nocross}
\end{lemma}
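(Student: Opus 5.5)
The plan is to argue by contradiction using the north--south dynamics encoded in Lemma~\ref{lem:rhs_prod}. That lemma says that crossing a fixed axis $\ora{x}$ to the right is preserved under multiplying by powers of $x$. The idea is to multiply $aB$ by a suitable power of $a$ (resp.\ $b$) so as to recover $B$ (resp.\ $a$), whose axis is known \emph{not} to cross $\ora{a}$ (resp.\ $\ora{b}$), since $\ora{a}$ and $\ora{b}$ are parallel and hence disjoint.

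\emph{Step 1: $\ora{aB}$ does not cross $\ora{a}$.} Suppose $(\ora{aB},\ora{a})$ cross. If $\ora{aB}$ and $\ora{a}$ share an endpoint they cannot cross, so the endpoints are distinct and Lemma~\ref{lem:rhs_prod} applies. After possibly reversing the orientation of the whole picture (which swaps R and L and preserves parallelism), we may assume $(\ora{aB},\ora{a})$ R-cross. Lemma~\ref{lem:rhs_prod}(III) with $x=a$ and $n=-1$ then shows that $(\ora{aBA},\ora{a})$ R-cross. Now $\ora{aBA}=a\cdot\ora{B}$ and $a\cdot\ora{a}=\ora{a}$. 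Applying the isometry $A$ therefore shows that $(\ora{B},\ora{a})$ R-cross. But $\ora{a}$ and $\ora{b}$ are parallel, hence disjoint, so $\ora{B}$ (the same geodesic as $\ora{b}$ with reversed orientation) does not meet $\ora{a}$, a contradiction. One can equally phrase this via the interval criterion of Lemma~\ref{lem:NS-dynamics}: $aB$ and $A$ would both map $(a^-,a^+)$ strictly inside itself, hence so would $B=A\cdot aB$.

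\emph{Step 2: $\ora{aB}$ does not cross $\ora{b}$.} The argument is symmetric. If $(\ora{aB},\ora{b})$ R-cross (again after possibly reflecting), Lemma~\ref{lem:rhs_prod}(III) with $x=b$ and $n=1$ gives that $(\ora{aBb},\ora{b})=(\ora{a},\ora{b})$ R-cross. This contradicts parallelism.

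The main obstacle I expect is bookkeeping rather than substance. First, one must check that Lemma~\ref{lem:rhs_prod} is really applicable: it requires hyperbolicity and distinct endpoints. $aB$ is hyperbolic by the standing assumption (e.g.\ $a,b\in\pi_1(S)$ not common powers). Shared endpoints are excluded because crossing axes have four distinct endpoints, and $\ora{a},\ora{b}$ are disjoint with distinct endpoints. Second, the reduction from L-crossing to R-crossing must be handled correctly. For this I would apply an orientation-reversing isometry of $\bbH^2$, or equivalently run the mirror-image version of Lemma~\ref{lem:rhs_prod} with the interval $(x^+,x^-)$ in place of $(x^-,x^+)$, whose proof is identical.
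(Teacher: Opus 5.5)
Your proof is correct and is essentially the paper's argument: the paper observes in one line that if $\ora{aB}$ crossed $\ora{a}$ then $\ora{(bA)a}=\ora{b}$ would cross $\ora{a}$ (and similarly for $\ora{b}$). This is your use of Lemma~\ref{lem:rhs_prod} to strip off a power of the axis element, except that the paper handles your R/L reduction by passing to $bA=(aB)^{-1}$ rather than by reflecting. One small slip in your alternative interval phrasing: $A$ maps $(a^-,a^+)$ onto itself, not strictly inside, but $B=A\cdot aB$ still maps it strictly inside, so the conclusion stands.
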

\begin{proof}
If $\ora{aB}$ crossed $\ora{a}$, then $\ora{(bA)a}=\ora{b}$ would
cross $\ora{a}$, a contradiction. Similarly $\ora{aB}$ cannot cross $\ora{b}$.
\end{proof}

\begin{lemma}
If $(\ora{a},\ora{b})$ are parallel, for $g \in \{ a, b \}$, the
endpoints of $\ora{aB}$ and the endpoints of $\ora{Ba}$ are in the
same connected component of $\partial_\infty S - \{ g^+,g^-\}$.
\label{lem:aB_Ba_together}
\end{lemma}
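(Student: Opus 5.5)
The plan is to combine Lemma~\ref{lem:aB_a_nocross} with the observation that $\ora{aB}$ and $\ora{Ba}$ are translates of each other by $a$, and also by $b$, since each of $a$ and $b$ conjugates $Ba$ to $aB$:
\[
  a\,(Ba)\,a^{-1} = aB, \qquad b\,(Ba)\,b^{-1} = aB .
\]
This is the same conjugation bookkeeping as in the proof of Lemma~\ref{lem:righthandedfoundation}. Hence
\[
  \ora{aB} = a\cdot\ora{Ba} = b\cdot\ora{Ba}.
\]

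Fix $g\in\{a,b\}$. First I would show that the two endpoints of $\ora{aB}$ lie in a single component of $\partial_\infty S-\{g^+,g^-\}$. By Lemma~\ref{lem:aB_a_nocross}, $\ora{aB}$ does not cross $\ora{g}$. I also need that $\ora{aB}$ has no endpoint equal to $g^\pm$.
\begin{itemize}
\item For $a,b\in\pi_1(S)$, a shared endpoint would force $aB$ and $g$ to be common powers, by discreteness (Proposition~\ref{prop:spikes}-type uniqueness of axes through a point at infinity).
\item Were $aB$ and $g$ common powers, $g$ would commute with $aB$, hence so would the other of $a,b$. Then $a$ and $b$ would commute and, being in a surface group, would be common powers, which is impossible for parallel axes with distinct endpoints.
\end{itemize}
With no shared endpoint and no crossing, the endpoints of $\ora{aB}$ lie in one component of $\partial_\infty S-\{g^\pm\}$. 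The same argument applies to $\ora{Ba}$: either repeat it with $Ba$ in place of $aB$, or apply Lemma~\ref{lem:aB_a_nocross} to the pair $(\ora{B},\ora{A})$, which is parallel by the swapping lemma. That gives that $\ora{B\cdot a}$ crosses neither $\ora{B}$ nor $\ora{A}$, i.e.\ neither $\ora{b}$ nor $\ora{a}$ as unoriented geodesics.

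Next I would show the two components coincide. The isometry $g$ fixes $g^+$ and $g^-$ and acts on the circle as an orientation-preserving homeomorphism. It therefore maps each of the two open arcs $(g^-,g^+)$ and $(g^+,g^-)$ onto itself. Since $\ora{aB}=g\cdot\ora{Ba}$, the endpoints of $\ora{aB}$ are the $g$-images of the endpoints of $\ora{Ba}$, so they lie in the same arc as those of $\ora{Ba}$. This proves the lemma for $g=a$ and for $g=b$.

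The main obstacle I expect is the non-degeneracy point in the first step. In the general $\Isom^+(\bbH^2)$ setting one has to rule out $\ora{aB}$ sharing an endpoint with $\ora{a}$ or $\ora{b}$, and there discreteness is not available. I would first try restricting to $a,b\in\pi_1(S)$, where the common-powers argument above works. Failing that, I would argue via north--south dynamics: if $(aB)^\pm=a^\pm$, then $\ora{b}=\ora{(bA)a}$ would share an endpoint with $\ora{a}$, contradicting parallelism with distinct endpoints.
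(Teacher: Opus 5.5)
Your proposal is correct and follows the paper's proof. Lemma~\ref{lem:aB_a_nocross}, applied to $(\ora{a},\ora{b})$ or to the parallel pair $(\ora{B},\ora{A})$, places the endpoints of $\ora{Ba}$ in a single component of $\partial_\infty S-\{g^\pm\}$; since $\ora{aB}=g\cdot\ora{Ba}$ and $g$ preserves each component, the endpoints of $\ora{aB}$ land in that same component. Your extra check that no endpoint coincides with $g^\pm$ is a detail the paper leaves implicit, and your north--south fallback (e.g.\ $b=(aB)^{-1}a$ would then fix $a^\pm$) settles it without any appeal to discreteness.
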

\begin{proof}
  By the previous lemma, the endpoints of $\ora{Ba}$ are in one
  connected component.
  Since $a \cdot \ora{Ba} = b \cdot \ora{Ba} = \ora{aB}$, by NS dynamics of $a$ and $b$,
the endpoints of $\ora{aB}$ are in the same connected component.
\end{proof}

When $\ora{a},\ora{b}$ are R-parallel and
\[
\big((aB)^-,(aB)^+\big) \subsetneq (b^+,a^+)
\quad\text{and}\quad
\big((Ba)^-,(Ba)^+\big) \subsetneq (a^-,b^-),
\]
as in Figure~\ref{fig:r3}, we say that
\emph{$\ora{aB}$ splits $\ora{a},\ora{b}$}.
Similarly, whenever
\[
\big((aB)^-,(aB)^+\big),
\big((Ba)^-,(Ba)^+\big)
\subsetneq (b^-,b^+),
\]
as in Figure~\ref{fig:r1},
we say that \emph{$\ora{aB}$ is on the $b$-side of $\ora{a},\ora{b}$},
and we say that it \emph{is on the $a$-side of $\ora{a},\ora{b}$} if both intervals
are contained in $(a^+,a^-)$. When $\ora{a},\ora{b}$ are L-parallel
the terminology is adjusted accordingly. We will also use this
terminology for powers $\ora{a^n B^m}$ and $\ora{B^m a^n}$, which will
have the same division of cases since $\ora{a^n}$ and $\ora{a}$ have
the same axis.

\begin{lemma}
\label{lem:trichotomy_ab}
If $(\ora{a},\ora{b})$ are parallel, then $\ora{aB},\ora{Ba}$ are
exactly one of $a$-sided, splitting, or $b$-sided.
\end{lemma}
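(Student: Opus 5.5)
By symmetry I will assume $(\ora{a},\ora{b})$ are R-parallel, so the endpoints appear in ccw order $a^+,a^-,b^-,b^+$. The complement $\partial_\infty S-\{a^\pm,b^\pm\}$ then consists of four open arcs: $(a^+,a^-)$ (the $a$-side), $(a^-,b^-)$, $(b^-,b^+)$ (the $b$-side), and $(b^+,a^+)$.

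First I locate each axis in a single arc. Lemma~\ref{lem:aB_a_nocross} says $\ora{aB}$ crosses neither $\ora{a}$ nor $\ora{b}$, and the same holds for $\ora{Ba}$, since $\ora{Ba}=b^{-1}\cdot\ora{aB}$ and crossing is invariant under the isometry $b$. Hence both endpoints of $\ora{aB}$ lie in one of the four arcs, and likewise for $\ora{Ba}$. Endpoints cannot be shared with $a^\pm$ or $b^\pm$, because elements of a discrete group with a common fixed point at infinity would be common powers. For general hyperbolic isometries I would take this as part of the standing non-degeneracy assumptions.

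Next I use Lemma~\ref{lem:aB_Ba_together} with $g=a$. The endpoints of $\ora{aB}$ and of $\ora{Ba}$ lie in the same component of $\partial_\infty S-\{a^+,a^-\}$: either $(a^+,a^-)$, or the long arc $(a^-,a^+)$ containing $b^\pm$. With $g=b$, they lie together either in $(b^-,b^+)$ or in the complementary arc $(b^+,b^-)$ containing $a^\pm$. Combining the two, exactly one of three things happens:
\begin{itemize}
\item both axes lie in $(a^+,a^-)$, the $a$-sided case;
\item both lie in $(b^-,b^+)$, the $b$-sided case;
\item both lie in the union $(a^-,b^-)\cup(b^+,a^+)$ of the two middle arcs.
\end{itemize}
The first two cannot occur together, since their arcs are disjoint.

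The main step is the middle case, where I must show the two axes sit in different middle arcs, and in the right order: $\ora{aB}$ in $(b^+,a^+)$ and $\ora{Ba}$ in $(a^-,b^-)$. For this I use $a\cdot\ora{Ba}=\ora{aB}$ together with the north–south dynamics of $a$.
\begin{itemize}
\item $a$ fixes $a^\pm$ and pushes points toward $a^+$.
\item $a$ maps $(a^-,a^+)$ to itself and carries $b^+$ to a point of $(b^+,a^+)$. This is Lemma~\ref{lem:NS-dynamics}, applied to the interval $(b^-,a^+)$ if needed.
\item The arc $(b^+,a^+)$ is adjacent to $a^+$, and $a$ maps it strictly into itself. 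So if $\ora{Ba}$ lay in $(b^+,a^+)$, then so would $\ora{aB}$.
\item Symmetrically, using $b\cdot\ora{Ba}=\ora{aB}$ and the dynamics of $b$ (which pushes toward $b^+$ and maps $(a^-,b^-)$ forward), if $\ora{aB}$ lay in $(a^-,b^-)$, then so would $\ora{Ba}$.
\end{itemize}

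So I need to rule out both axes sitting in the same middle arc. Suppose both lie in $(b^+,a^+)$. Apply $b^{-1}$, which fixes $b^\pm$ and pushes points toward $b^-$: it moves $(b^+,a^+)$ across $a^\pm$ into the long arc toward $b^-$. Because $\ora{Ba}=b^{-1}\cdot\ora{aB}$, I expect this to force $\ora{Ba}$ out of $(b^+,a^+)$. The cleaner route is a ping-pong check: the element $aB$ would map its own arc into itself in the wrong direction. This order-on-the-circle argument is where I expect the real work to be, and I would settle it by tracking where $b^{-1}$ and then $a$ send the arc $(b^+,a^+)$.

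If that goes through, the middle case is exactly the splitting configuration. The three cases are then exhaustive and mutually exclusive, which proves the trichotomy.
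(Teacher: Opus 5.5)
Your reduction to three regions is correct, and it is all the paper's own proof of this lemma does: it invokes Lemma~\ref{lem:aB_Ba_together}. The precise placement in the middle case is proved separately, in Lemma~\ref{lem:splitting_specific}. Since you set out to prove the precise splitting configuration, your argument has a genuine gap. The subcase where $\ora{aB}$ and $\ora{Ba}$ lie in the \emph{same} middle arc is never excluded, and the $b^{-1}$ route you sketch cannot exclude it by itself. Indeed, $B$ fixes $b^+$ and moves the points of $(b^+,b^-)$ counterclockwise, so $B\cdot(b^+,a^+)=(b^+,B\cdot a^+)\supsetneq(b^+,a^+)$. Knowing $\ora{aB}\subset(b^+,a^+)$ therefore gives no information about $\ora{Ba}=B\cdot\ora{aB}$ that would contradict $\ora{Ba}\subset(b^+,a^+)$. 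Using one element at a time only yields the two implications you already have.

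The missing idea is to use the two relations $\ora{aB}=a\cdot\ora{Ba}=b\cdot\ora{Ba}$ simultaneously at a single point. Both $a$ and $b$ map $(b^+,a^+)$ strictly into itself, but they move points in opposite senses: for $p\in(b^+,a^+)$ we have $a\cdot p\in(p,a^+)$, while $b\cdot p\in(b^+,p)$. So $a\cdot p\neq b\cdot p$. Since $(aB)^+=a\cdot(Ba)^+=b\cdot(Ba)^+$, no endpoint of $\ora{Ba}$ lies in $(b^+,a^+)$.

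Symmetrically, $A$ and $B$ both map $(a^-,b^-)$ strictly into itself in opposite senses: $A\cdot q\in(a^-,q)$ and $B\cdot q\in(q,b^-)$. Since $\ora{Ba}=A\cdot\ora{aB}=B\cdot\ora{aB}$, no endpoint of $\ora{aB}$ lies in $(a^-,b^-)$. In the middle case this forces $\ora{aB}\subset(b^+,a^+)$ and $\ora{Ba}\subset(a^-,b^-)$ at once, which rules out both the reversed and the same-arc configurations. This is exactly the paper's argument for Lemma~\ref{lem:splitting_specific}.

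One minor slip: $\ora{Ba}=B\cdot\ora{aB}$ only shows that $\ora{Ba}$ misses $\ora{b}$. To see that it misses $\ora{a}$, use $\ora{Ba}=A\cdot\ora{aB}$, or apply Lemma~\ref{lem:aB_a_nocross} to the parallel pair $(\ora{B},\ora{A})$.
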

\begin{proof}
This follows from Lemma~\ref{lem:aB_Ba_together}.
\end{proof}

\begin{lemma}
\label{lem:aB_bA_antiparallel_cross}
If $(\ora{a},\ora{b})$ are parallel, then $(\ora{aB},\ora{Ba})$  are anti-parallel or cross.
\end{lemma}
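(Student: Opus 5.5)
We need to show that $(\ora{aB},\ora{Ba})$ cannot be parallel, i.e.\ they are either anti-parallel or crossing. The plan is to use the identity $b\cdot\ora{Ba}=a\cdot\ora{Ba}=\ora{aB}$ from the proof of Lemma~\ref{lem:righthandedfoundation}. Equivalently, $\ora{aB} = a\cdot\ora{Ba}$, and $\ora{Ba}=\ora{A(aB)a}$ is the translate of $\ora{aB}$ by $A$. Set $c = Ba$. Then $\ora{aB} = a\cdot \ora{c}$, so the pair $(\ora{c}, a\cdot\ora{c})$ is exactly the pair we must classify. Note also that $aB$ and $Ba$ are conjugate, so their axes have the same translation length and are translates of each other.

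The key step is a version of Lemma~\ref{lem:rhs} that covers the disjoint case. Lemma~\ref{lem:rhs} says that when $\ora{a}$ and $\ora{c}$ share no endpoints, $(\ora{a},\ora{c})$ cross iff $(\ora{c},a\cdot\ora{c})$ are parallel (R or L correspondingly). By Lemma~\ref{lem:aB_a_nocross}, $\ora{aB}$ does not cross $\ora{a}$, and the same holds for its translate $\ora{Ba}=A\cdot\ora{aB}$, since $A$ preserves $\ora{a}$. Thus $\ora{a}$ and $\ora{c}$ do not cross.

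I first need to rule out shared endpoints. If $\ora{a}$ and $\ora{c}$ shared an endpoint in $\pi_1(S)$ they would be common powers, but $a$ and $Ba$ being common powers would force $b$ to be a power of the same element, contradicting parallelism of distinct axes. In the general $\Isom^+(\bbH^2)$ setting I would instead use Lemma~\ref{lem:aB_Ba_together}: the endpoints of $\ora{aB}$ lie in an open component of $\partial_\infty S-\{a^\pm\}$, so they are distinct from $a^\pm$. Once this is settled, Lemma~\ref{lem:rhs} gives that $(\ora{c},a\cdot\ora{c})=(\ora{Ba},\ora{aB})$ is not parallel. Parallelism is symmetric under swapping the pair (it switches R and L), so $(\ora{aB},\ora{Ba})$ is not parallel either.

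It remains to see that the two axes are distinct and not sharing endpoints, so that the trichotomy parallel/anti-parallel/cross applies. If $a\cdot\ora{c}=\ora{c}$, then $a$ would fix the endpoints of $\ora{c}$, so $\ora{c}=\ora{a}$ (or $c$ and $a$ are common powers). That would make $B=cA$ share the axis of $a$, contradicting that $\ora{a}$ and $\ora{b}$ are parallel and hence disjoint. If instead $a\cdot \ora{c}$ shared exactly one endpoint with $\ora{c}$, that endpoint would be fixed by $a$, which is impossible for the same reason.

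The main obstacle is this endpoint bookkeeping, making sure Lemma~\ref{lem:rhs} genuinely applies. A cleaner alternative is to argue directly with NS dynamics: if $(\ora{c},a\cdot\ora{c})$ were R-parallel, then $a$ would map $(c^-,c^+)$ strictly into itself, so by Lemma~\ref{lem:NS-dynamics} $a^+\in(c^-,c^+)$ and $a^-\notin(c^-,c^+)$, meaning $\ora{a}$ crosses $\ora{c}$. This is a contradiction, and the L-parallel case is symmetric.
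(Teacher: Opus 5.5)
Your main argument is correct, and it takes a somewhat different route from the paper. The paper first uses Lemma~\ref{lem:aB_Ba_together} to place $\ora{aB}$ and $\ora{Ba}$ on the same side of $\ora{a}$. It then notes that $a$ carries $\ora{Ba}$ to $\ora{aB}$ while pushing both endpoints toward $a^+$, so the two cannot be nested, and both bear the same relation (parallel or anti-parallel) to $\ora{a}$. Two disjoint non-nested geodesics on one side of $\ora{a}$ with the same relation to it are anti-parallel to each other; otherwise they cross.

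You instead feed $c=Ba$ directly into Lemma~\ref{lem:rhs}: $(\ora{c},a\cdot\ora{c})$ is parallel exactly when $\ora{a}$ crosses $\ora{c}$, and Lemma~\ref{lem:aB_a_nocross} (translated by $A$) excludes that. This bypasses the side/nesting analysis and Lemma~\ref{lem:aB_Ba_together} entirely, and makes the role of NS dynamics transparent. The paper's version, in exchange, records extra positional information (same side of $\ora{a}$, common orientation relative to $\ora{a}$) that is reused in Lemma~\ref{lem:a_parallel_Ba}.

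One step of your endpoint bookkeeping is wrong as written. A common endpoint of $\ora{c}$ and $a\cdot\ora{c}$ need not be fixed by $a$, because the coincidence can be $a\cdot c^+=c^-$ or $a\cdot c^-=c^+$. For $a,b\in\pi_1(S)$ this is easily repaired. Axes sharing an endpoint coincide as unoriented geodesics, so $a$ fixes or swaps $c^\pm$. Hence $a^2$ fixes $c^\pm$, so $\{c^\pm\}=\{a^\pm\}$, which you already excluded. Similarly, you do not need Lemma~\ref{lem:aB_Ba_together} to see that $Ba$ cannot fix $a^\pm$: otherwise $B=(Ba)A$ would fix it too, making it an endpoint of $\ora{b}$.

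Without discreteness, however, the degenerate case really occurs, so this appeal to $\pi_1(S)$ is essential. In the upper half-plane, let $R_1$ be the imaginary axis. Let $R_2$ and $R_3$ be the semicircles centered at $-5/2$ with endpoints $-(5\pm\sqrt{21})/2$ and $-4,-1$ respectively, and put $a=R_1R_2$, $b=R_1R_3$. Then $\ora{a}$ and $\ora{b}$ are $|z|=1$ and $|z|=2$, both oriented from left to right, hence parallel. Meanwhile $Ba=R_3R_2$ is $z\mapsto -\tfrac52+\tfrac37(z+\tfrac52)$ and $aB$ is $z\mapsto\tfrac52+\tfrac73(z-\tfrac52)$. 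So $(aB)^+=(Ba)^-=\infty$, and the pair fits none of the three cases. The paper's proof is silent on this degenerate case as well. In the surface-group setting where the lemma is used, the repair above closes it.
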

\begin{proof}
By Lemma~\ref{lem:aB_Ba_together} and the fact that $a \cdot \ora{Ba} = \ora{aB}$,
it follows that $\ora{aB},\ora{Ba}$ cannot be nested with respect to $\ora{a}$.
Then since $a \cdot \ora{Ba} = \ora{aB}$, they must both be either parallel or anti-parallel to $\ora{a}$, so cannot be parallel to each other.
\end{proof}

See Figure~\ref{fig:c2} and Example~\ref{ex:ab_parallel_aB_crossing} for examples where $(\ora{Ba},\ora{aB})$ cross.

\begin{lemma}
If $(\ora{a},\ora{b})$ are parallel and $\ora{aB},\ora{Ba}$ are splitting,
then $(aB)^{\pm} \in (b^+, a^+)$ and $(Ba)^{\pm} \in (a^-,b^-)$.
\label{lem:splitting_specific}
\end{lemma}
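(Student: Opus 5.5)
The plan is to reduce to the R-parallel case and then read the conclusion off the definition of splitting, with one extra point: none of the endpoints of $\ora{aB}$ or $\ora{Ba}$ can coincide with $a^\pm$ or $b^\pm$.

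\emph{Reduction to the R-parallel case.} If $(\ora{a},\ora{b})$ are L-parallel, conjugate everything by an orientation-reversing isometry of $\bbH^2$. This reverses the cyclic order on $\partial_\infty S$ and turns the pair into an R-parallel one. It also preserves which of the two arcs cut out by $\{a^\pm\}$ and by $\{b^\pm\}$ each point lies in. The statement for L-parallel pairs is, by the convention ``adjusted accordingly'', exactly the image of the R-parallel statement under this reflection. So assume $(\ora{a},\ora{b})$ are R-parallel, with cyclic order $a^+,a^-,b^-,b^+$.

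\emph{Endpoints avoid $a^\pm,b^\pm$.} By Lemma~\ref{lem:aB_Ba_together}, for each $g\in\{a,b\}$ the endpoints of $\ora{aB}$ and of $\ora{Ba}$ lie in a single connected component of $\partial_\infty S-\{g^+,g^-\}$. In particular none of them equals $a^\pm$ or $b^\pm$. If one wants this independently of how that lemma was phrased, I would argue as follows.
\begin{itemize}
\item Suppose $(aB)^\pm=a^\pm$ or $b^\pm$. In $\pi_1(S)$, elements with a common fixed point at infinity are common powers, so $aB$ and $a$ (or $b$) would be common powers.
\item That would force $a,b$ to be common powers, which is incompatible with $(\ora{a},\ora{b})$ parallel with distinct axes.
\item The same argument handles $\ora{Ba}=a^{-1}\cdot\ora{aB}$, since $a$ fixes $a^\pm$ and permutes the remaining endpoints.
\end{itemize}

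\emph{Conclusion.} By definition, splitting means the counterclockwise interval $\big((aB)^-,(aB)^+\big)$ is strictly contained in $(b^+,a^+)$. Hence both endpoints $(aB)^\pm$ lie in the closed arc $[b^+,a^+]$. Since neither endpoint equals $b^+$ or $a^+$, both lie in the open arc $(b^+,a^+)$. The identical argument with $\big((Ba)^-,(Ba)^+\big)\subsetneq(a^-,b^-)$ gives $(Ba)^\pm\in(a^-,b^-)$.

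The only delicate step is excluding coincident endpoints. If the paper intends the lemma for general hyperbolic elements of $\Isom^+(\bbH^2)$ rather than $\pi_1(S)$, I would replace the common-powers argument with NS dynamics. Suppose, say, $(aB)^+=a^+$. Then $\ora{Ba}=A\cdot\ora{aB}$ would also have an endpoint at $a^+$. This contradicts the fact that the intervals for $\ora{aB}$ and $\ora{Ba}$ lie in the disjoint arcs $(b^+,a^+)$ and $(a^-,b^-)$, whose closures meet $\{a^\pm\}$ in different points.
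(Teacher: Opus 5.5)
Your argument works only if ``splitting'' is read exactly as displayed before Lemma~\ref{lem:trichotomy_ab}. In that display the containments $\bigl((aB)^-,(aB)^+\bigr)\subsetneq(b^+,a^+)$ and $\bigl((Ba)^-,(Ba)^+\bigr)\subsetneq(a^-,b^-)$ are themselves the hypothesis. All that remains is to exclude endpoints at $a^\pm,b^\pm$, and you do that, with one slip: a coincidence $(Ba)^\pm=b^\pm$ should be handled by translating with $b$ (using $b\cdot\ora{Ba}=\ora{aB}$), not with $a$; likewise use $B$ rather than $A$ for the $b^\pm$ cases in your last paragraph. Under this reading, though, the lemma is essentially a restatement of its hypothesis, and it is not the version the paper needs. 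Lemma~\ref{lem:trichotomy_ab} is derived from Lemma~\ref{lem:aB_Ba_together} alone, and Proposition~\ref{prop:eventual_splitting} concludes ``splitting'' from the single fact $(a^nB^m)^-\in(b^+,a^+)$. In both places ``splitting'' means only the residual case of the trichotomy: neither $a$-sided nor $b$-sided. That is, all four endpoints of $\ora{aB}$ and $\ora{Ba}$ lie in $(b^+,a^+)\cup(a^-,b^-)$, with nothing said about which axis lies in which arc. Deciding that is the real content of the lemma, and your proposal assumes it rather than proving it.

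The missing idea is the dynamical argument in the paper's proof. Work in the R-parallel picture. Both $a$ and $b$ carry $\ora{Ba}$ to $\ora{aB}$ as oriented geodesics, so $a\cdot p=b\cdot p$ for each endpoint $p$ of $\ora{Ba}$. On each of the arcs $(b^+,a^+)$ and $(a^-,b^-)$, however, $a$ moves points counterclockwise towards $a^+$, while $b$ moves them clockwise towards $b^+$. If $p\in(b^+,a^+)$, then $a\cdot p\in(p,a^+)$ and $b\cdot p\in(b^+,p)$, and these arcs are disjoint, a contradiction. Hence $(Ba)^\pm\in(a^-,b^-)$. For $p\in(a^-,b^-)$, the arcs $(p,a^+)$ and $(b^+,p)$ meet exactly in $(b^+,a^+)$, so $(aB)^\pm=a\cdot p=b\cdot p\in(b^+,a^+)$. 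Your common-powers argument excluding shared endpoints is still a worthwhile supplement, since it is what actually justifies the ``open components'' claim in Lemma~\ref{lem:aB_Ba_together}. But it is not the step that carries this lemma.
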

\begin{proof}
This follows from $a \cdot \ora{Ba}= b \cdot \ora{Ba} =\ora{aB}$ and the fact that $a,b$ 
move in opposite directions (cw/ccw) on these intervals. The only way the $a$ and $b$ actions can end up in the same place is if they move from one interval to the other.
\end{proof}

To draw specific examples precisely, consider the common hyperbolic perpendicular of $\ora{a}$
and $\ora{b}$, which we will denote~$R_1$. We will also use the
same notation for the reflection through~$R_1$.
Let $R_2, R_3, R_4, R_5$ be the lines and corresponding reflections so
that $a=R_1 R_2 = R_4 R_1$, and $b=R_1 R_3 = R_5 R_1$. Precisely,
$R_2$ and $R_4$ are perpendicular to $\ora{a}$, and $R_3$ and $R_5$
are perpendicular to $\ora{b}$.

Since
$ab=R_4 R_3$, $ba=R_5 R_2$, $Ba=R_3 R_2$, and $aB=R_4 R_5$, and all of
these elements are hyperbolic, we see
that the $R_i$ are reflections along pairwise non-intersecting reflection
lines. There are several potential configurations, as shown
in Figure~\ref{fig:cases-conn-smoothing}.

We next show that by taking powers we can reduce to the splitting
case, starting with a general convergence lemma.

\begin{lemma}
\label{lem:convergence_action}
Let $g,x \in \PSL(2,\mathbb{R})$, with $x$ hyperbolic and
$g \cdot x^+ \neq x^-$. Then, for all sufficiently large $n$,
the element $x^n g$ is hyperbolic and
\[
\lim_{n \to \infty} (x^n g)^+ = x^+ .
\]
More generally, if $g,h,x \in \PSL(2,\bbR)$ with $x$ hyperbolic and
$hg \cdot x^+ \ne x^-$, then for all sufficiently large $n$, the
element $g x^n h$ is hyperbolic and
\begin{align*}
  \lim_{n \to \infty} (g x^n h)^+ &= g \cdot x^+, &
  \lim_{n \to \infty} (g x^n h)^- &= H \cdot x^-.
\end{align*}
If $x,g,h \in \pi_i(S)$, the hypotheses that $g \cdot x^+ \ne x^-$,
resp.\ $hg \cdot x^+ \ne x^-$, are automatic.
\end{lemma}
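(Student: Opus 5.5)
The plan is to use the north--south dynamics of $x$ on $\partial\bbH^2$, together with the criterion of Lemma~\ref{lem:NS-dynamics}: an isometry mapping a closed arc strictly into its own interior is hyperbolic, with attracting point in that arc.

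For the first statement, fix a small closed arc $I$ around $x^+$ not containing $x^-$. Since $g\cdot x^+\neq x^-$, we can choose $I$ small enough that $g\cdot I$ avoids some open neighborhood $U$ of $x^-$. By NS dynamics, $x^n$ converges to the constant map $x^+$ uniformly on compact subsets of $\partial\bbH^2\setminus\{x^-\}$, and in particular on $\partial\bbH^2\setminus U$. Hence for $n$ large, $x^n g\cdot I$ lies in the interior of an arbitrarily small arc $I'\subset I$ around $x^+$. Then Lemma~\ref{lem:NS-dynamics} (applied to $I$, or its closed-interval variant) gives that $x^n g$ is hyperbolic with $(x^ng)^+\in I'$. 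Shrinking $I'$ gives the limit $x^+$.

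For the general statement, I will conjugate rather than redo the argument. Write $g x^n h = g\,(x^n hg)\,G$. Since $hg\cdot x^+\neq x^-$, the first part applied to the pair $(hg,x)$ shows that $x^n hg$ is eventually hyperbolic with $(x^nhg)^+\to x^+$. Conjugating by $g$ preserves hyperbolicity and gives $(gx^nh)^+ = g\cdot(x^nhg)^+\to g\cdot x^+$.

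For the repelling point, I use $(gx^nh)^-=(Hx^{-n}G)^+$ and apply the same argument with $x$ replaced by $X$ (whose attracting point is $x^-$) and with the roles of $g,h$ replaced by $H,G$. This gives $H\cdot x^-$, provided $GH\cdot x^-\neq x^+$, which is equivalent to $x^-\neq hg\cdot x^+$, the given hypothesis. The one-sided version of the repelling statement follows in the same way by applying the attracting case to the inverse.

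The main point requiring care is the uniform convergence of $x^n$ on compact sets avoiding $x^-$, together with making the ``maps $I$ strictly into itself'' step rigorous for closed arcs. This is standard and can be done in the upper half-plane model, where $x$ is $z\mapsto\lambda z$.

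For the remark about $\pi_1(S)$, if $hg\cdot x^+=x^-$ with all elements in $\pi_1(S)$, then $hg\cdot\ora{x}$ and $\ora{x}$ would be lifts of closed geodesics sharing an endpoint. In a discrete torsion-free cocompact group, two axes sharing an endpoint coincide (Proposition~\ref{prop:spikes} / \cite[Corollary~4.2.3]{Martelli16:IntroGeoTop}). So $hg$ would stabilize the endpoint set $\{x^+,x^-\}$, hence lie in the stabilizer of the axis, which is cyclic and contained in the centralizer of $x$. Such an element fixes $x^+$ and cannot send it to $x^-$, a contradiction.
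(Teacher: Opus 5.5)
Your proposal is correct and follows essentially the same route as the paper. Both arguments use north--south dynamics to make $x^n g$ map a neighborhood of $x^+$ strictly into itself, then the conjugation $gx^nh = g(x^nhg)G$ and the inverse $Hx^{-n}G$ to handle the two endpoints, and finally torsion-freeness for the $\pi_1(S)$ remark. Working with a closed arc mapped into its interior is a slightly more careful version of the paper's neighborhood step, since it directly excludes parabolics, and your cyclic-stabilizer argument is equivalent to the paper's observation that an element swapping $x^\pm$ would be elliptic of order $2$.
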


\begin{proof}
This follows from NS dynamics on $\partial_\infty S$.
Indeed, choose a neighborhood $V$ of $x^+$ such that
$x^- \notin g \cdot V$. Then, by NS, there is
$n_0>0$ such that, for every $n \ge n_0$,
\[
x^n g \cdot V \subsetneq V.
\]
Thus $x^n g$ is hyperbolic, with
\[
(x^n g)^+ \in x^n g \cdot V \subsetneq V.
\]
As $n \to \infty$, $x^n g \cdot V \to x^+$ and so $(x^n g)^+ \to x^+$.

For the more general statement, apply the first part of the statement
to $x^n hg$ and to $X^n GH$. Since
$g \cdot \ora{x^n hg} = \ora{g x^n h}$ and $H \cdot \ora{X^n GH} =
\ora{H X^n G}$, we get the claimed result.

If $g,x \in \pi_1(S)$, since each point in $\bdy_\infty S$ is the
endpoint of at most one geodesic, if $g \cdot x^+ = x^-$ then also $g
\cdot x^- = x^+$. But this implies that $g$ is elliptic of order~$2$,
which is impossible in a surface group. Similarly for the more general
statement.
\end{proof}

\begin{lemma}
Let $\ora{a},\ora{b}$ be parallel. If $\ora{a^nB^m}$ splits $\ora{a},\ora{b}$, then so does
$\ora{a^NB^M}$ for every $N \geq n$ and $M \geq m$.
\label{lem:once_splits}
\end{lemma}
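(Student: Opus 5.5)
It suffices to increase one exponent at a time: show that if $\ora{a^nB^m}$ splits $\ora{a},\ora{b}$, then so do $\ora{a^{n+1}B^m}$ and $\ora{a^nB^{m+1}}$. Induction then covers all $N\ge n$, $M\ge m$. Without loss of generality take $(\ora{a},\ora{b})$ R-parallel; the L-parallel case is the mirror image. Since $\ora{a^n}$, $\ora{a}$ have the same axis (and likewise for $b$), $(\ora{a^n},\ora{b^m})$ are still R-parallel. Hence Lemmas~\ref{lem:trichotomy_ab} and~\ref{lem:splitting_specific} apply to the pair $(a^n,b^m)$. So splitting is equivalent to the single condition
\[
(a^nB^m)^{\pm}\in (b^+,a^+),
\]
since the condition on $\ora{B^ma^n}$ then follows from Lemma~\ref{lem:aB_Ba_together}. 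By the trichotomy, it is also equivalent to the axis $\ora{a^nB^m}$ having endpoints in $(b^+,a^+)$ rather than both in $(b^-,b^+)$ or both in $(a^+,a^-)$.

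The plan is to characterize splitting dynamically via Lemma~\ref{lem:NS-dynamics}. Let $I=(b^+,a^+)$, the arc between the two attracting endpoints. On $I$, $a$ pushes points toward $a^+$ (counterclockwise) and $B$ pushes points away from $b^+$, i.e.\ also counterclockwise toward $a^+$. Let $J=(a^-,b^-)$ be the opposite arc. I aim to show that splitting of $\ora{a^nB^m}$ is equivalent to a concrete interval condition:
\[
a^nB^m\cdot (b^-,\,a^+) \subsetneq (b^-,a^+) \quad\text{or an analogous nesting,}
\]
which pins one endpoint in $I$ and the repelling one in $I$ as well. The cleaner route is to use the reflection picture already set up. Write $a^n=R_1R_2^{(n)}=R_4^{(n)}R_1$ and $b^m=R_1R_3^{(m)}=R_5^{(m)}R_1$, so that $a^nB^m=R_4^{(n)}R_5^{(m)}$. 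Its axis is the common perpendicular of the lines $R_4^{(n)}$ and $R_5^{(m)}$.

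In this picture, splitting says that this common perpendicular has both endpoints in $I$. Equivalently, up to the orientation conventions, the lines $R_4^{(n)}$ and $R_5^{(m)}$ are nested in the appropriate way: one separates the other from $R_1$, or they face each other across $I$. Increasing $n$ moves $R_4^{(n)}$ monotonically along $\ora{a}$ toward $a^+$, since it sits at distance $n\ell(a)/2$ from $R_1$. Similarly, increasing $m$ moves $R_5^{(m)}$ monotonically along $\ora{b}$ toward $b^+$. The half-planes cut off by $R_4^{(n)}$ containing $a^+$, and by $R_5^{(m)}$ containing $b^+$, therefore shrink monotonically. The three configurations of Figure~\ref{fig:cases-conn-smoothing} correspond to whether $R_4$ and $R_5$ are "ultraparallel facing each other" (splitting) or one lies on the far side of the other's perpendicular. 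The point is that the splitting configuration is characterized by the disjointness of the two closed half-planes $H_4^{(n)}$ (bounded by $R_4^{(n)}$, containing $a^+$) and $H_5^{(m)}$ (bounded by $R_5^{(m)}$, containing $b^+$), together with neither containing the other's line. Both properties are preserved when the half-planes shrink.

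The main obstacle is making this last characterization rigorous. Specifically, I must show that splitting holds iff the half-planes $H_4^{(n)}$ and $H_5^{(m)}$ are disjoint, and that the $a$-sided and $b$-sided cases correspond exactly to nesting of one half-plane's complement. I would verify this by tracking the boundary arcs $\partial_\infty H_4^{(n)}\ni a^+$ and $\partial_\infty H_5^{(m)}\ni b^+$. The axis of $R_4R_5$ has its endpoints in the gap arcs between the two lines' endpoint pairs. Both endpoints lie in $I$ exactly when the two arcs are disjoint and separated by $I$ on one side and $J$ on the other. Once this is in place, monotone shrinking of both arcs in $n$ and $m$ gives the lemma immediately.
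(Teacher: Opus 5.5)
Your reduction to single increments, the factorization $a^nB^m=R_4^{(n)}R_5^{(m)}$, and the observation that $H_4^{(n)}$ and $H_5^{(m)}$ shrink as $n,m$ grow are all correct. However, the entire content of the lemma lies in the claim that $\ora{a^nB^m}$ splits $\ora{a},\ora{b}$ if and only if $H_4^{(n)}\cap H_5^{(m)}=\emptyset$. You assert this and then explicitly defer it (``the main obstacle is making this last characterization rigorous''). Monotonicity of half-planes says nothing about axes until you know which half-plane configuration corresponds to splitting, so as written the proposal is an outline rather than a proof. (The nesting condition you display and then abandon is also wrong as stated: in the splitting case $(b^-,a^+)$ contains both endpoints of $\ora{a^nB^m}$, so by Lemma~\ref{lem:NS-dynamics} it cannot be mapped strictly into itself.)

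The characterization is true and can be proved along the lines you indicate. Since $R_4^{(n)}$ and $R_5^{(m)}$ are disjoint and lie forward of $R_1$ along $\ora{a}$ and $\ora{b}$, the line $R_1$ avoids $H_4^{(n)}\cup H_5^{(m)}$. Hence the two half-planes are either disjoint or nested. The axis of $R_4^{(n)}R_5^{(m)}$ crosses both lines: its attracting endpoint lies beyond $R_4^{(n)}$ on the side away from $R_5^{(m)}$, and its repelling endpoint beyond $R_5^{(m)}$ on the side away from $R_4^{(n)}$.

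If the half-planes are disjoint, these sides are $H_4^{(n)}$ and $H_5^{(m)}$. Moreover $R_4^{(n)}$ cannot cross $\ora{b}$, since far-forward points of $\ora{b}$ would then lie in both half-planes; likewise $R_5^{(m)}$ cannot cross $\ora{a}$. So the endpoint of $R_4^{(n)}$ on the $\ora{b}$-side of $\ora{a}$, and that of $R_5^{(m)}$ on the $\ora{a}$-side of $\ora{b}$, lie in $(b^+,a^+)$. Lemma~\ref{lem:aB_a_nocross} then confines both endpoints of the axis to $(b^+,a^+)$.

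If instead $H_5^{(m)}\subset H_4^{(n)}$, then $\partial_\infty H_4^{(n)}$ is an arc containing $a^+$ and $b^+$ but not $a^-$, hence containing $(b^+,a^+)$. The attracting endpoint lies outside this arc, so there is no splitting. The other nesting is symmetric.

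For comparison, the paper avoids reflections entirely. In the splitting configuration the pairs $(\ora{a},\ora{a^nB^m})$ and $(\ora{a^nB^m},\ora{B})$ are parallel. Lemma~\ref{lem:righthandedfoundation}\ref{item:ab-parallel} then puts $(a^{n+1}B^m)^+$ in $((a^nB^m)^+,a^+)$ and $(a^nB^{m+1})^-$ in $(b^+,(a^nB^m)^-)$, both inside $(b^+,a^+)$. The trichotomy of Lemma~\ref{lem:trichotomy_ab} then forces splitting. Your route is longer, but once completed it yields an intrinsic criterion---splitting holds exactly when $R_4^{(n)}$ misses $\ora{b}$ and $R_5^{(m)}$ misses $\ora{a}$---from which Proposition~\ref{prop:eventual_splitting} also follows at once.
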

\begin{proof}
It suffices to prove the claim for $\ora{a^{n+1}B^m}$ and $\ora{a^nB^{m+1}}$.
Note that $\ora{a},\ora{a^nB^m}$ being parallel implies
$(a^{n+1}B^m)^+ \in ((a^nB^m)^+, a^+) \subset (b^+, a^+)$.
Similarly, $\ora{a^nB^m},\ora{B}$ are parallel, so $(a^nB^{m+1})^- \in (B^-, (a^nB^m)^-) \subset (b^+,a^+)$
This proves the claim.
\end{proof}

\begin{proposition}
If $\ora{a}$ and $\ora{b}$ are parallel, then there exists $s > 0$ such that
$\ora{a^n B^m}$ is splitting for every $n,m \ge s$.
\label{prop:eventual_splitting}
\end{proposition}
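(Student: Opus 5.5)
We use the convergence Lemma~\ref{lem:convergence_action} together with the monotonicity Lemma~\ref{lem:once_splits}. Assume without loss of generality that $(\ora{a},\ora{b})$ are R-parallel; the L-parallel case is the mirror image. Splitting means
\[
\big((a^nB^m)^-,(a^nB^m)^+\big) \subsetneq (b^+,a^+)
\quad\text{and}\quad
\big((B^ma^n)^-,(B^ma^n)^+\big) \subsetneq (a^-,b^-).
\]
By Lemma~\ref{lem:once_splits} it suffices to find a single pair $(n_0,m_0)$ for which $\ora{a^{n_0}B^{m_0}}$ splits, and then take $s=\max(n_0,m_0)$. That lemma is stated for $\ora{a^nB^m}$ in terms of the endpoints of $\ora{a^nB^m}$. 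The endpoints of $\ora{B^ma^n}$ are determined by these, via $a^n\cdot\ora{B^ma^n}=\ora{a^nB^m}$ and the side trichotomy. So we mainly control the endpoints of $\ora{a^nB^m}$.

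\textbf{Step 1: attracting endpoint, $m$ fixed.} Fix $m=1$ and let $n\to\infty$. Apply Lemma~\ref{lem:convergence_action} with $x=a$, $g=1$, $h=B$; the genericity hypothesis is automatic in $\pi_1(S)$. This gives $(a^nB)^+\to a^+$ and $(a^nB)^-\to b\cdot a^-$. Since $(\ora{a},\ora{b})$ are R-parallel, the point $a^-$ lies outside $(b^-,b^+)$ in the appropriate sense, and north-south dynamics of $b$ moves it into the arc $(b^+,a^+)$ or at least off $a^\pm$.

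\textbf{Step 2: repelling endpoint, diagonal $n=m$.} Step 1 alone is not enough, because $b\cdot a^-$ need not lie in $(b^+,a^+)$. So instead we take $n=m\to\infty$. Write $a^nB^n$ with the two powers going to infinity simultaneously. A ping-pong argument should then show that $a^nB^n$ maps a small neighbourhood $U$ of $a^+$ strictly inside itself. The reason is that $B^n$ sends the complement of a neighbourhood of $b^+$ into a neighbourhood of $b^-=B^+$, and $a^n$ sends the complement of a neighbourhood of $a^-$ into $U$. Symmetrically, $(a^nB^n)^{-1}=b^nA^n$ contracts toward a neighbourhood of $b^+$. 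Hence
\[
(a^nB^n)^+\to a^+ \quad\text{and}\quad (a^nB^n)^-\to b^+.
\]
The cyclic order $a^+,a^-,b^-,b^+$ for R-parallel axes gives the disjointness needed for ping-pong, namely that $b^-$ is not near $a^-$.

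\textbf{Step 3: conclude splitting.} Both endpoints of $\ora{a^nB^n}$ therefore eventually lie in small neighbourhoods of $a^+$ and $b^+$. By Lemma~\ref{lem:aB_a_nocross}, $\ora{a^nB^n}$ crosses neither axis, so its endpoints are forced into $(b^+,a^+)$. This excludes the $a$-sided and $b$-sided cases of Lemma~\ref{lem:trichotomy_ab}, since those require both endpoints in $(a^+,a^-)$ or in $(b^-,b^+)$ respectively. Hence $\ora{a^nB^n}$ splits for some $n_0$, and Lemma~\ref{lem:once_splits} finishes the proof.

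The main obstacle is Step 2. We must check that the ping-pong neighbourhoods can be chosen compatibly with the cyclic order, and that the limiting endpoints $a^+$ and $b^+$ land on the correct side rather than at the boundary points themselves. We will first try explicit interval inclusions: show $a^nB^n\cdot(b^+,a^+)\subsetneq(b^+,a^+)$ and then use Lemma~\ref{lem:NS-dynamics}.
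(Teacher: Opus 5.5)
Your proof is correct (Steps~2 and~3 already form a complete argument), but it finds the single splitting pair by a different route than the paper.

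The paper keeps one exponent fixed. It chooses $m$ large enough that north--south dynamics of $b$ has pushed $a^-$ into $(b^+,a^+)$, and then lets only $n\to\infty$. Lemma~\ref{lem:convergence_action} then applies verbatim with $g=1$, $x=a$, $h=B^m$, giving $(a^nB^m)^+\to a^+$ and $(a^nB^m)^-\to b^m\cdot a^-\in(b^+,a^+)$. (The paper's text writes $B^m\cdot a^-$, but the lemma gives $H\cdot x^-=b^m\cdot a^-$, as in your Step~1.) So your Step~1 fails only because you froze $m=1$; taking $m$ large there is exactly the paper's proof.

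Your diagonal limit $n=m\to\infty$ is not covered by Lemma~\ref{lem:convergence_action}, since $h=B^n$ varies. That is why you need the hand-made ping-pong of Step~2. It is sound, uses only $a^+\neq b^+$ and $a^-\neq b^-$, and gives a more symmetric picture: the endpoints converge to the corners $a^+$, $b^+$ of the target arc, with no auxiliary choice of $m$.

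Step~3 also makes explicit what the paper leaves implicit. By Lemma~\ref{lem:aB_a_nocross} applied to $(a^n,b^n)$, the endpoint near $b^+$ forces the other endpoint into $(a^-,a^+)$. Likewise, the endpoint near $a^+$ forces the first into $(b^+,b^-)$. This gives the ccw order $b^+,(a^nB^n)^-,(a^nB^n)^+,a^+$. Both proofs then obtain the $\ora{B^ma^n}$ half of the definition from Lemma~\ref{lem:trichotomy_ab} and finish with Lemma~\ref{lem:once_splits}.

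Do drop the plan in your last paragraph: the inclusion $a^nB^n\cdot(b^+,a^+)\subsetneq(b^+,a^+)$ is false for every $n\ge 1$. Concretely, $b^n\cdot a^+\in(b^+,a^+)$ is sent by $a^nB^n$ to $a^+$. For large $n$ this failure is forced anyway: Step~3 puts \emph{both} fixed points of $a^nB^n$ in $(b^+,a^+)$, which Lemma~\ref{lem:NS-dynamics} forbids. The working ping-pong uses separate small arcs, one around $a^+$ for $a^nB^n$ and one around $b^+$ for $b^nA^n$, exactly as Step~2 already does. The ``correct side'' worry is settled by the non-crossing argument of Step~3, not by an interval inclusion.
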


\begin{proof}
By Lemma~\ref{lem:once_splits}, it suffices to show that there exist $n,m \geq 0$
such that $\ora{a^{n} B^{m}}$ is splitting.

Suppose for definiteness that $(\ora{a},\ora{b})$ are R-parallel.
Choose $m>0$ so that
\[
B^m \cdot a^- \in (b^+, a^+).
\]
Fix such an $m$. As $n \to \infty$, Lemma~\ref{lem:convergence_action} gives
\begin{align*}
\lim_{n \to \infty} (a^n B^m)^+ &= a^+, &
\lim_{n \to \infty} (a^n B^m)^- &= B^m \cdot a^-.
\end{align*}
In particular, for $n$ sufficiently large, we have
\[
(a^n B^m)^- \in (b^+, a^+),
\]
which implies that $\ora{a^n B^m}$ is splitting.
\end{proof}

\begin{remark}
A ping-pong argument~\cite[\S 8.2.E]{Gromov1987HyperbolicGroups}
shows that, in the splitting configuration, the subgroup generated by $a$ and $b$
is free.
Accordingly, Proposition~\ref{prop:eventual_splitting} is related to the existence
of an exponent $s>0$ (depending on $a,b$) such that, for all $m,n \ge s$,
the subgroup generated by $a^m$ and $b^n$ is free.

Results of this type hold in much greater generality, for instance for
Gromov hyperbolic groups and for mapping class groups; see, e.g.,
\cite{Fujiwara2015SubgroupsPseudoAnosovII}.
\end{remark}

From the above analysis, it follows that when $(\ora{a},\ora{b})$ are
parallel, many composite pairs are parallel. We can extract a
particular result we will use later.
\begin{lemma}\label{lem:a_parallel_Ba}
Let $a,b \in \pi_1(S)$.
If $\ora{a},\ora{b}$ are parallel and $(\ora{aB}, \ora{Ba})$ cross,
let $\mathcal{A} \coloneqq \{ \ora{a},\ora{b}, \ora{ab},\ora{ba} \}$
and $\mathcal{B} \coloneqq \{ \ora{aB}, \ora{Ba} \}$. Then either
\begin{itemize}
\item $(\ora{g},\ora{h})$ is parallel for every $\ora{g} \in \mathcal{A}, \ora{h} \in \mathcal{B}$; or
\item $(\ora{g},\ora{h})$ is anti-parallel for every $\ora{g} \in \mathcal{A}, \ora{h} \in \mathcal{B}$.
\end{itemize}
\end{lemma}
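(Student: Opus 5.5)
Throughout, assume without loss of generality that $(\ora{a},\ora{b})$ are R-parallel; the L-parallel case follows by reversing the cyclic order. The plan has three steps: locate $\ora{aB},\ora{Ba}$ using the trichotomy of Lemma~\ref{lem:trichotomy_ab}, show that no axis in $\mathcal{A}$ crosses an axis in $\mathcal{B}$, and then propagate a single parallel/anti-parallel type across all pairs.

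\emph{Locating $\ora{aB},\ora{Ba}$.} Consider the three options of Lemma~\ref{lem:trichotomy_ab}. In the splitting case, Lemma~\ref{lem:splitting_specific} places both endpoints of $\ora{aB}$ in $(b^+,a^+)$ and both endpoints of $\ora{Ba}$ in $(a^-,b^-)$. These intervals are disjoint, so $\ora{aB}$ and $\ora{Ba}$ cannot cross, contradicting the hypothesis. Hence $\ora{aB},\ora{Ba}$ are $b$-sided or $a$-sided; by symmetry we treat the $b$-sided case, so all four of their endpoints lie in $(b^-,b^+)$.

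\emph{No crossings between $\mathcal{A}$ and $\mathcal{B}$.} By Lemma~\ref{lem:righthandedfoundation}\ref{item:ab-parallel}, the endpoints of $\ora{ab}$ and $\ora{ba}$ lie in $(b^+,a^+)\cup(a^-,b^-)$, which is disjoint from $(b^-,b^+)$. The endpoints of $\ora{a}$ also lie outside $[b^-,b^+]$. So no element of $\mathcal{B}$ crosses $\ora{a}$, $\ora{ab}$ or $\ora{ba}$. For $\ora{b}$, the endpoints of $\ora{aB},\ora{Ba}$ lie in the open interval $(b^-,b^+)$, so they do not cross $\ora{b}$ either (this agrees with Lemma~\ref{lem:aB_a_nocross}). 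Consequently every pair $(\ora{g},\ora{h})$ with $\ora{g}\in\mathcal{A}$ and $\ora{h}\in\mathcal{B}$ has disjoint axes, and is therefore either parallel or anti-parallel.

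\emph{Propagating one type.} Fix $\ora{h}\in\mathcal{B}$. Geometrically, the axes $\ora{a},\ora{ab},\ora{b},\ora{h}$ are nested in this order: $\ora{ab}$ lies between $\ora{a}$ and $\ora{b}$, and $\ora{h}$ lies on the far side of $\ora{b}$. The same holds with $\ora{ba}$ in place of $\ora{ab}$. We use the following elementary fact about cyclic orders on $\partial_\infty S$: if $\ora{g_1},\ora{g_2},\ora{h}$ are pairwise disjoint, $\ora{g_2}$ separates $\ora{g_1}$ from $\ora{h}$, and $(\ora{g_1},\ora{g_2})$ are parallel, then $(\ora{g_1},\ora{h})$ and $(\ora{g_2},\ora{h})$ have the same type. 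Applying it along the R-parallel triples $(\ora{a},\ora{ab},\ora{b})$ and $(\ora{a},\ora{ba},\ora{b})$ from Lemma~\ref{lem:righthandedfoundation}\ref{item:ab-parallel} shows that $\ora{a},\ora{ab},\ora{ba},\ora{b}$ all have the same type relative to $\ora{h}$.

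It remains to show that this type does not depend on the choice of $\ora{h}\in\mathcal{B}$. Since $a$ preserves $\ora{a}$ with its orientation and $a\cdot\ora{Ba}=\ora{aB}$, the type of $(\ora{a},\ora{Ba})$ equals that of $(\ora{a},\ora{aB})$. Combined with the previous paragraph, a single type holds for all eight pairs, as claimed.

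\emph{Main obstacle.} The step needing care is the cyclic-order bookkeeping in the ``propagation'' fact, together with confirming the precise nesting (which intervals contain which endpoints) in the $b$-sided picture of Figure~\ref{fig:c2}. I would verify both by listing the eight endpoints in counterclockwise order and reading off each pair's type from Definition~\ref{def:crossing}. The $a$-sided case is symmetric, with the roles of $a$ and $b$ exchanged.
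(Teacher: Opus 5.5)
Your proposal is correct and follows the paper's own argument: the crossing of $\ora{aB},\ora{Ba}$ rules out the splitting case, and in the remaining one-sided case the parallel family $\ora{a},\ora{ab},\ora{ba},\ora{b}$ all have the same type relative to each element of $\mathcal{B}$, which you propagate through the separating axis. You additionally make explicit two points the paper leaves implicit: that no axis in $\mathcal{A}$ crosses one in $\mathcal{B}$, and that $a\cdot\ora{Ba}=\ora{aB}$ forces $\ora{aB}$ and $\ora{Ba}$ to have the same type relative to $\ora{a}$.
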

\begin{proof}
Since
$(\ora{aB},\ora{Ba})$ cross, they must be either $a$-sided or
$b$-sided.
If they are $a$-sided, then they are either parallel or anti-parallel to $\ora{a}$.
If they are parallel, since $\ora{ab}$, $\ora{ba}$, and $\ora{b}$ are parallel to
$\ora{a}$ and on the opposite side of $\ora{ab}$, we are in the first
case of the statement.
Otherwise we are in the second case.

If they are $b$-sided, the analysis is similar: just swap $a \leftrightarrow B$ and $b \leftrightarrow A$, since $(\ora{B},\ora{A})$ are parallel.
\end{proof}

Suppose that $\ora{Ba},\ora{aB}$ are $b$-sided and cross, as in Figure~\ref{fig:c2}. If we replace $b$ by $B$, we see that $\ora{ab}$ and
$\ora{ba}$ can cross even though $\ora{a}$ and $\ora{b}$ are not
parallel (in fact, they are anti-parallel). Let us show a concrete
example where this situation arises. See also the discussion
in~\cite{MGT25:SmoothingsErratum}.

\begin{example}
  To see that the case $b$-sided and crossing actually arises, consider any two parallel $\ora{g}$
  and $\ora{h}$, for instance coming from an appropriately-oriented
  pair of disjoint simple closed curves sitting
  as the cuffs of an embedded pair of pants.
  Define $a \coloneqq hghgh$ and $b
  \coloneqq hgh$. We claim that $\ora{a}$ and $\ora{b}$ are
  parallel. First, by
  Lemma~\ref{lem:righthandedfoundation}\ref{item:ab-parallel}, $\ora{hg}$ and $\ora{gh}$ cross and $\ora{h}$ and $\ora{gh}$ are
  parallel. Applying the lemma again to $(\ora{h},\ora{gh})$, we see
  that $\ora{hgh}$ and $\ora{gh}$ are parallel, and applying the lemma
  once more we see $\ora{b} = \ora{hgh}$ and $\ora{a} = \ora{hghgh}$
  are parallel, as desired. See Figure~\ref{fig:crossing_aB}.
  Finally, $aB=(hghgh)(HGH)=hg$ and $Ba=(HGH)(hghgh)=gh$, which cross,
  as desired.
  \label{ex:ab_parallel_aB_crossing}
\end{example}

\begin{figure}
\centering{
\fontsize{9pt}{9pt}\selectfont%% Creator: Inkscape 1.3 (0e150ed6c4, 2023-07-21), www.inkscape.org
%% PDF/EPS/PS + LaTeX output extension by Johan Engelen, 2010
%% Accompanies image file 'crossing_aB.pdf' (pdf, eps, ps)
%%
%% To include the image in your LaTeX document, write
%%   \input{<filename>.pdf_tex}
%%  instead of
%%   \includegraphics{<filename>.pdf}
%% To scale the image, write
%%   \def\svgwidth{<desired width>}
%%   \input{<filename>.pdf_tex}
%%  instead of
%%   \includegraphics[width=<desired width>]{<filename>.pdf}
%%
%% Images with a different path to the parent latex file can
%% be accessed with the `import' package (which may need to be
%% installed) using
%%   \usepackage{import}
%% in the preamble, and then including the image with
%%   \import{<path to file>}{<filename>.pdf_tex}
%% Alternatively, one can specify
%%   \graphicspath{{<path to file>/}}
%% 
%% For more information, please see info/svg-inkscape on CTAN:
%%   http://tug.ctan.org/tex-archive/info/svg-inkscape
%%
\begingroup%
  \makeatletter%
  \providecommand\color[2][]{%
    \errmessage{(Inkscape) Color is used for the text in Inkscape, but the package 'color.sty' is not loaded}%
    \renewcommand\color[2][]{}%
  }%
  \providecommand\transparent[1]{%
    \errmessage{(Inkscape) Transparency is used (non-zero) for the text in Inkscape, but the package 'transparent.sty' is not loaded}%
    \renewcommand\transparent[1]{}%
  }%
  \providecommand\rotatebox[2]{#2}%
  \newcommand*\fsize{\dimexpr\f@size pt\relax}%
  \newcommand*\lineheight[1]{\fontsize{\fsize}{#1\fsize}\selectfont}%
  \ifx\svgwidth\undefined%
    \setlength{\unitlength}{144bp}%
    \ifx\svgscale\undefined%
      \relax%
    \else%
      \setlength{\unitlength}{\unitlength * \real{\svgscale}}%
    \fi%
  \else%
    \setlength{\unitlength}{\svgwidth}%
  \fi%
  \global\let\svgwidth\undefined%
  \global\let\svgscale\undefined%
  \makeatother%
  \begin{picture}(1,1)%
    \lineheight{1}%
    \setlength\tabcolsep{0pt}%
    \put(0,0){\includegraphics[width=\unitlength,page=1]{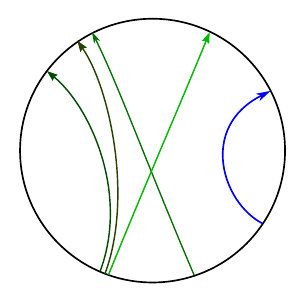}}%
    \put(0.2997907,0.91081972){\color[rgb]{0,0,0}\makebox(0,0)[t]{\lineheight{1.25}\smash{\begin{tabular}[t]{c}$hg$\end{tabular}}}}%
    \put(0.10150957,0.77005206){\color[rgb]{0,0,0}\makebox(0,0)[t]{\lineheight{1.25}\smash{\begin{tabular}[t]{c}$hgh$\end{tabular}}}}%
    \put(0.15689326,0.8603916){\color[rgb]{0,0,0}\makebox(0,0)[t]{\lineheight{1.25}\smash{\begin{tabular}[t]{c}$hghgh$\end{tabular}}}}%
    \put(0.93589672,0.70405374){\color[rgb]{0,0,0}\makebox(0,0)[t]{\lineheight{1.25}\smash{\begin{tabular}[t]{c}$g$\end{tabular}}}}%
    \put(0.72030356,0.92026102){\color[rgb]{0,0,0}\makebox(0,0)[t]{\lineheight{1.25}\smash{\begin{tabular}[t]{c}$gh$\end{tabular}}}}%
    \put(0,0){\includegraphics[width=\unitlength,page=2]{crossing_aB.pdf}}%
    \put(0.07669038,0.67827698){\color[rgb]{0,0,0}\makebox(0,0)[t]{\lineheight{1.25}\smash{\begin{tabular}[t]{c}$h$\end{tabular}}}}%
  \end{picture}%
\endgroup%

\caption{The proof that $\protect\ora{a} \coloneqq \protect\ora{hghgh}$ and
  $\protect\ora{b} \coloneqq \protect\ora{hgh}$ are parallel if
  $\protect\ora{g},\protect\ora{h}$ are parallel.}
\label{fig:crossing_aB}
}
\end{figure}

\subsection{Fundamental smoothing results}
\label{sec:fund-smoothing}

As part of our construction, we will use large powers of some fixed
elements. We collect some propositions guaranteeing essential
crossings in this context.

\begin{proposition}\label{prop:join-split-geom}
Let $a,b,x \in \pi_1(S)$ be curves with $\ora{a}$, $\ora{b}$
crossing $\ora{x}$ to the right. Then there is $s \ge 0$ (depending on $a,b,x$)
  so that, for all
 $n, m, r \in \mathbb{Z}$ with either $s < r < m-n-s$ or $m-n+s < r <
 -s$, we have
  \begin{align}
    \label{eq:cross-join} [a x^n] [b x^m] &\reducesto [ax^{n+r}bx^{m-r}]\\
    \label{eq:cross-split} [a x^nb x^m] &\reducesto [ax^{n+r}][bx^{m-r}].
  \end{align}

  There exists $s' \ge 0$ (depending on $a,x$) so that for all
  $|n-m| > s'$,
  \begin{align}
    \label{eq:cross-cancel}[ax^n][ax^m] &\reducesto [x^{|n-m|}].
  \end{align}

 If $x$ is simple and $a=b$, we may take $s=s'=0$. In particular, for
 all $n\in\bbZ$,
 \begin{align}
    [a x^{n+1}] [a x^{n-1}] &\reducesto 2[ax^{n}].\label{eq:cross-simple}
  \end{align}
\end{proposition}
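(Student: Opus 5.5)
The plan is to reduce every claimed reduction to the algebraic criterion of Proposition~\ref{prop:crossings-triality}, which says that $[g][h] \reducesto [gh]$ whenever $(\ora{g},\ora{h})$ cross, and $[gh]\reducesto[g][h]$ whenever $(\ora{g},\ora{h})$ are parallel. So in each case I will rewrite the source and target multi-curves in the form $[g][h]$ versus $[gh]$, up to conjugation. I will then check the position of the two axes using the convergence Lemma~\ref{lem:convergence_action} together with Lemma~\ref{lem:rhs_prod}.

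For \eqref{eq:cross-join}, conjugate $bx^m$ by $x^{r}$ to get $[bx^m]=[x^{-r}\,b x^{m}\,x^{r}]$. Set $g=ax^{n+r}$ and $h=x^{-r}bx^{m-r}\cdot x^{r}\cdots$; more cleanly, take $g = x^{r} a x^{n}$, conjugate to $ax^{n+r}$, and $h = bx^{m-r}\cdot$(conjugate), chosen so that $gh$ is conjugate to $ax^{n+r}bx^{m-r}$. By Lemma~\ref{lem:rhs_prod}, each of $\ora{g}$ and $\ora{h}$ R-crosses $\ora{x}$. By Lemma~\ref{lem:convergence_action}, as the exponent of $x$ flanking each element grows, the endpoints of $\ora{g}$ and $\ora{h}$ converge to points of the form $x^{k}\cdot a^{\pm}$-type images near $x^{\pm}$, specifically $g^+\to x^{r}a x^{+}$-direction and $g^-\to$ a translate near $x^-$. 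The condition $s<r<m-n-s$ (or its mirror) is exactly what makes the attracting endpoint of one factor and the repelling endpoint of the other interleave. That interleaving forces $(\ora{g},\ora{h})$ to cross, giving \eqref{eq:cross-join}. I will do this by checking cyclic order on $\partial_\infty S$ against the fixed points $x^\pm$, using NS dynamics of $x$ on the intervals $(x^-,x^+)$ and $(x^+,x^-)$. The same pair $g,h$ shown instead to be parallel, obtained by swapping which factor carries the large power, gives \eqref{eq:cross-split}. Indeed, \eqref{eq:cross-split} should follow from \eqref{eq:cross-join} by the triality: the same pair is either crossing or parallel depending on the ordering of the factors.

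For \eqref{eq:cross-cancel}, set $g=ax^n$ and $h=X^{m}A$, so that $[h]$ is $[ax^m]$ with reversed orientation. Then $gh = ax^{n-m}A$, which is conjugate to $x^{n-m}$. Since intersections ignore orientation, the relevant check is that $(\ora{ax^n},\ora{X^mA})$ cross when $|n-m|$ is large. Both axes have one endpoint near $a\cdot x^{\pm}$ and the other near $x^{\mp}$ by Lemma~\ref{lem:convergence_action}. For $|n-m|>s'$, these endpoints separate each other, which shows the crossing; the power case is handled by Proposition~\ref{prop:crossings-powers}. For simple $x$ with $a=b$, I would argue that all the relevant lifts of $x$ are pairwise disjoint. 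The crossings are then forced for every exponent, because the endpoint configuration is determined by $a\cdot x^\pm$ lying in $(x^-,x^+)$ and no translate of $\ora{x}$ separates them. This gives $s=s'=0$. Then \eqref{eq:cross-simple} is the case $m=n+2$, $r=1$ of \eqref{eq:cross-split}/\eqref{eq:cross-join} read as $[ax^{n+1}][ax^{n-1}]\reducesto[ax^{n}ax^{n}]$, combined with power smoothing $[(ax^n)^2]\reducesto 2[ax^n]$.

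The main obstacle is the bookkeeping in the first step. It requires choosing conjugates so that the product is literally $gh$, and extracting a \emph{uniform} $s$ valid for all $n,m$ and not just asymptotically. I expect to handle this by noting that only the differences $r$ and $m-n-r$ matter after conjugating by powers of $x$, which reduces the problem to a one-parameter limit to which Lemma~\ref{lem:convergence_action} applies.
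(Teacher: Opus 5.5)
Your overall strategy matches the paper's: exhibit a specific pair of lifts and invoke Proposition~\ref{prop:crossings-triality}. As written, though, the pairs themselves are wrong or missing. For \eqref{eq:cross-join} the factors must be conjugates of the source components $ax^n$ and $bx^m$. Your $g=x^rax^n$ is conjugate to $ax^{n+r}$, so a crossing of $(\ora{g},\ora{h})$ would reduce the wrong multi-curve. The correct pair is $g=ax^n$, $h=x^rbx^{m-r}$ (the conjugate of $bx^m$ by $x^r$), for which $gh=ax^{n+r}bx^{m-r}$ on the nose.

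The pair you wrote, $g=x^rax^n$ and $h=bx^{m-r}$, is instead the one needed for \eqref{eq:cross-split}. There $gh=x^rax^nbx^{m-r}$ is conjugate to $ax^nbx^m$, and one must show $(\ora{h},\ora{g})$ are \emph{parallel}. This does not follow from \eqref{eq:cross-join} ``by triality''. Reversing the order of a pair turns R-cross into L-cross and R-parallel into L-parallel, never crossing into parallel. For a fixed pair the two alternatives are mutually exclusive, so \eqref{eq:cross-split} needs its own verification for a different pair.

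The more serious gap is uniformity. Lemma~\ref{lem:convergence_action} only describes limits of endpoints as one exponent tends to infinity with the others fixed, so it cannot produce an $s$ valid for all $n,m\in\bbZ$. In \eqref{eq:cross-cancel} with $n,m\to+\infty$, it says both $(ax^n)^+$ and $(ax^m)^+$ tend to $a\cdot x^+$ and both minus endpoints tend to $x^-$. That is silent on their relative order, which is the only thing that matters.

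The missing ingredient is the exact, non-asymptotic localization of Lemma~\ref{lem:windows}, which is the paper's tool. Since $x^kax^\ell$ maps $(x^-,x^+)$ onto $x^ka\cdot(x^-,x^+)\subsetneq(x^-,x^+)$, Lemma~\ref{lem:NS-dynamics} gives $(x^kax^\ell)^+\in w(x^ka\cdot\ora{x})$, and similarly $(x^kax^\ell)^-\in w(x^{-\ell}A\cdot\ora{x})$, for \emph{all} $k,\ell$. In the normalized band model these are integer translates of the fixed bounded windows of $a\cdot\ora{x}$, $A\cdot\ora{x}$, $b\cdot\ora{x}$ and $B\cdot\ora{x}$. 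Hence once $r$ and $m-n-r$ exceed a constant set by their widths, the relevant windows are disjoint and ordered. For $(ax^n,\,x^rbx^{m-r})$ the endpoints then interleave, giving a crossing. For $(x^rax^n,\,bx^{m-r})$ both endpoints of $h$ lie on the same side of those of $g$, giving parallelism. This is where the uniform $s$ comes from.

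For \eqref{eq:cross-cancel} your pair $(ax^n,X^mA)$ is fine, but both plus endpoints lie in the same window $w(a\cdot\ora{x})$, so you must look one level deeper. One has $(ax^n)^+\in ax^n\cdot w(a\cdot\ora{x})=a\cdot w(x^na\cdot\ora{x})$. These subwindows are disjoint and ordered by $n$ once $|n-m|$ exceeds the width of $w(a\cdot\ora{x})$. The minus endpoints lie in $w(x^{-n}A\cdot\ora{x})$ and $w(x^{-m}A\cdot\ora{x})$, which are disjoint once $|n-m|$ exceeds the width of $w(A\cdot\ora{x})$.

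For simple $x$ with $a=b$, getting $s=s'=0$ requires a specific observation, and your ``no translate separates them'' does not establish it. Windows of distinct lifts of a simple curve are disjoint or nested, and the equal-width translates $w(x^ra\cdot\ora{x})$, $w(x^rA\cdot\ora{x})$ with $r\neq0$ can be neither nested nor equal. Your derivation of \eqref{eq:cross-simple} from \eqref{eq:cross-join} with $r=1$, followed by power smoothing, is correct.
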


Note that for the allowed values of $r$, Eqs.\ \eqref{eq:cross-join} and
\eqref{eq:cross-split} move the exponents $n,m$ closer to the diagonal
$m=n$.

We will make heavy use of the \emph{band model} centered on
$\ora{x}$, meaning the conformal map from $\bbH^2$ to a horizontal
strip $(-\infty,\infty) \times [-v,v]$ mapping $x^-$ to infinity along
the negative $x$-axis and $x^+$ to infinity on the positive $x$-axis.
We choose the \emph{normalized} band model, picking $v$ so that the
action of $x$ is horizontal translation by $z \mapsto z+1$.

For any geodesic $\ora{y}$ that is parallel to
$\ora{x}$, there is a \emph{window} $w(\ora{y})$ cut out by the
endpoints on the band model, the interval between $y^\pm$ that
misses~$x^\pm$. Observe that $w(x^k\cdot\ora{y})$ is the horizontal
translate of $w(\ora{y})$ by $k$. See Figure~\ref{fig:abcrossings} for
examples.

\begin{lemma}
  Given $a,b, x \in \pi_1(S)$ so that $\ora{a}$ and $\ora{b}$ cross $\ora{x}$ to
  the right and $g_1 = x^k ax^\ell$ with $k,\ell \in \bbZ$, we have
  $g_1^+ \in w(x^k a\cdot\ora{x})$ and
  $g_1^- \in w(x^{-\ell}A\cdot\ora{x})$. Similarly for
  $g_2 = x^k a x^\ell b x^n$ with $k,\ell,n\in\bbZ$, we have
  $g_2^+ \in w(x^ka \cdot \ora{x})$ and $g_2^- \in w(x^{-n} B
  \cdot\ora{x})$.
\label{lem:windows}
\end{lemma}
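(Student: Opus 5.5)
The argument runs through the interval $I \coloneqq (x^-,x^+)$ (counterclockwise convention, Convention~\ref{conv:circle-intervals}), which in the band model is one of the two boundary lines of the strip. Since $\ora{a}$ and $\ora{b}$ cross $\ora{x}$ to the right, Lemma~\ref{lem:NS-dynamics} together with the proof of Lemma~\ref{lem:rhs} gives $a\cdot I\subsetneq I$ and $b\cdot I\subsetneq I$. The geodesic $a\cdot\ora{x}$ is R-parallel to $\ora{x}$ by Lemma~\ref{lem:rhs}, and its window satisfies $w(a\cdot\ora{x}) = a\cdot I$. Likewise, $A$ and $B$ map the complementary interval $J\coloneqq(x^+,x^-)$ strictly into itself, so $w(A\cdot\ora{x}) = A\cdot J$, and similarly for $B$. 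Translating by powers of $x$ preserves $I$ and $J$ and shifts windows horizontally, which gives $w(x^k a\cdot\ora{x}) = x^k a\cdot I$.

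For the attracting endpoint of $g_1=x^kax^\ell$, note that $g_1\cdot I = x^k a x^\ell \cdot I = x^k a\cdot I \subsetneq I$ because $x^\ell$ preserves $I$. By Lemma~\ref{lem:NS-dynamics}, $g_1$ is hyperbolic and $g_1^+\in g_1\cdot I$. The attracting point must actually lie in the image $g_1\cdot I$, since it lies in $\bigcap_j g_1^j\cdot I$, and this intersection is contained in $g_1\cdot I$. Hence $g_1^+\in x^k a\cdot I = w(x^ka\cdot\ora{x})$. For the repelling endpoint, apply the same argument to $g_1^{-1} = x^{-\ell}Ax^{-k}$ acting on $J$: $g_1^{-1}\cdot J = x^{-\ell}A\cdot J\subsetneq J$, so $g_1^- = (g_1^{-1})^+\in x^{-\ell}A\cdot J = w(x^{-\ell}A\cdot\ora{x})$.

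For $g_2=x^kax^\ell bx^n$ the same scheme applies. On one side, $g_2\cdot I = x^ka x^\ell b\cdot I \subseteq x^k a x^\ell\cdot I = x^ka\cdot I$, with strict containment in $I$. On the other side, $g_2^{-1}\cdot J = x^{-n}Bx^{-\ell}A\cdot J\subseteq x^{-n}B\cdot J$.

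The only point that needs care is the window identification. One has to check that $w(x^ka\cdot\ora{x})$, defined as the arc between the endpoints of $x^k a\cdot\ora{x}$ that misses $x^\pm$, is exactly $x^ka\cdot I$ and not the complementary arc. This holds because $x^ka$ maps the arc $I$ between $x^\pm$ to the arc between its image endpoints, and that image lies inside $I$, so it avoids $x^\pm$. The same check applies to $J$ and to $A$, $B$. I expect this orientation and interval bookkeeping, rather than any genuine dynamics, to be the only real obstacle.
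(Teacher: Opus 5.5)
Your proof is correct, and it reaches the conclusion by a slightly different route than the paper. The paper converts each endpoint claim into a crossing statement. It gets $g_1^+\in w(x^ka\cdot\ora{x})$ because $\ora{g_1}$ crosses $x^ka\cdot\ora{x}$ to the right; after translating by $(x^ka)^{-1}$, this becomes the statement that $\ora{x^{k+\ell}a}$ R-crosses $\ora{x}$. The repelling endpoint is handled the same way via $\ora{ax^{k+\ell}}$, and both crossings (and those for $g_2$) are read off from Lemma~\ref{lem:rhs_prod}.

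You instead apply Lemma~\ref{lem:NS-dynamics} directly to $g_1$ on $I=(x^-,x^+)$ and to $g_1^{-1}$ on $J=(x^+,x^-)$. You then observe that the attracting fixed point lies in the image interval, which is the window. Lemma~\ref{lem:rhs_prod} is itself proved by this interval-contraction argument, so the underlying mechanism is the same.

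The paper's version reuses an established lemma and phrases its output as crossings, which is the form later window comparisons use. Yours avoids the conjugation bookkeeping and gives more for free. Since $g^+\in g^j\cdot I$ for all $j\ge 1$, you immediately get nested refinements such as $g_2^+\in x^kax^\ell b\cdot I=w(x^kax^\ell b\cdot\ora{x})$ or $(ax^n)^+\in ax^na\cdot I=w(ax^na\cdot\ora{x})$. The paper extracts these separately later: by ``looking slightly deeper in the words'' in the proof of Proposition~\ref{prop:join-split-geom}, and again in Proposition~\ref{prop:fundamental-ayx}. Your argument is also in the same style as the paper's own proof of Lemma~\ref{lem:ayx-window}.

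A minor simplification: for the step $g_1^+\in g_1\cdot I$, it is enough to note $g_1^+\in I$ and $g_1\cdot g_1^+=g_1^+$; the intersection over all $j$ is not needed.
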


\begin{proof}
  For the first part, it suffices to show that $\ora{g_1}$ crosses
  $x^ka \cdot \ora{x}$ and $x^{-\ell} A\cdot \ora{x}$ to the right,
  or equivalently (by translation) that $\ora{x^{\ell+k}a}$ and $\ora{ax^{k+\ell}}$
  cross $\ora{x}$ to the right. This follows from
  Lemma~\ref{lem:rhs_prod}.

  For the second part, we similarly use Lemma~\ref{lem:rhs_prod} to
  see that $\ora{g_2}$ crosses $x^ka \cdot \ora{x}$ and $x^{-n}B \cdot
  \ora{x}$ to the right.
\end{proof}

\begin{figure}
\centering{
\resizebox{150mm}{!}{\fontsize{14pt}{14pt}\selectfont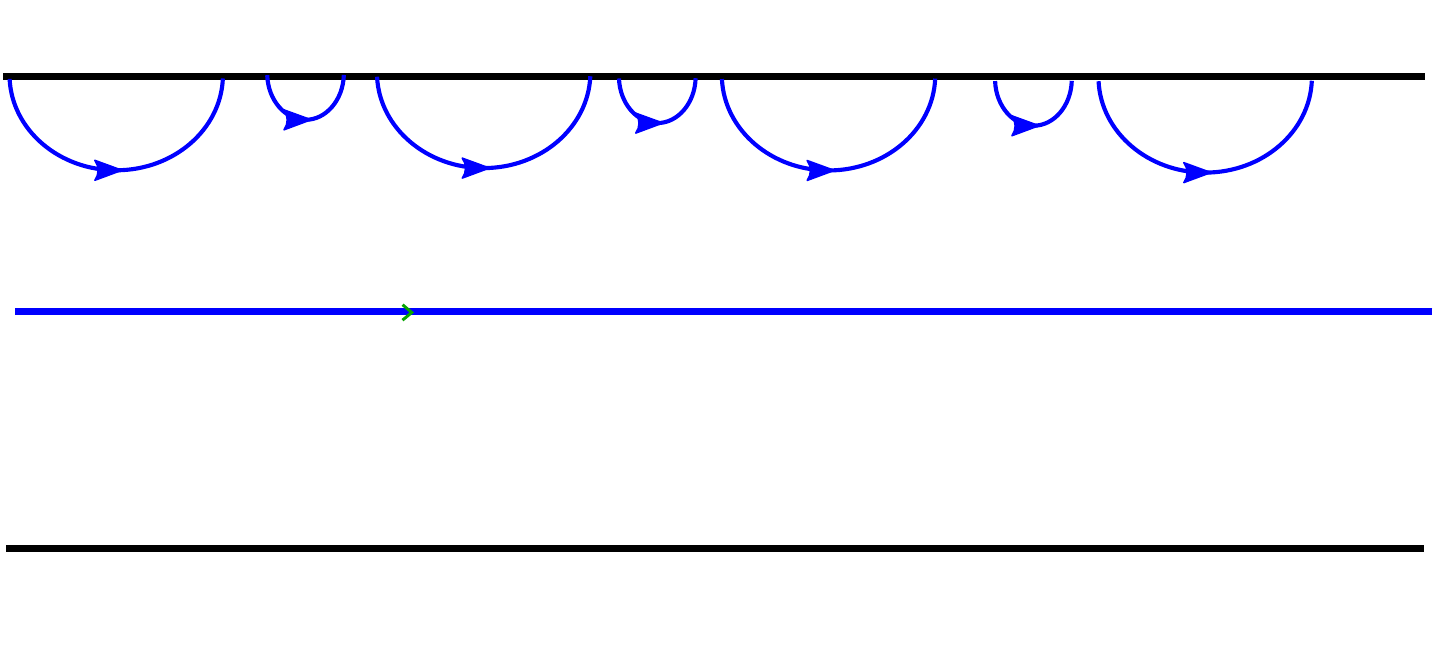}
\caption{The band model centered on $\protect\ora{x}$ used in
  Proposition~\ref{prop:join-split-geom}, in a case where $\protect [x]$ is simple.
  Some lifts of $\protect [x]$ are depicted in blue. The central blue lift
  is $\protect\ora{x}$, and we also show translates by $ \protect x^k a$, $\protect x^l
  A$, $\protect x^m b$ and $\protect x^n B$, together with their windows.
  If $\protect[x]$ is not simple, the blue lifts can intersect each other.}
\label{fig:abcrossings}
}
\end{figure}

\begin{proof}[Proof of Proposition~\ref{prop:join-split-geom}]
  In each case, we will show that we can choose lifts of the curves to
  the normalized band model for which a
  relevant crossing is essential. We start with
  Eq.~\eqref{eq:cross-join}, looking first at the windows $w(a\cdot
  \ora{x})$ and $w(x^rb\cdot\ora{x})$.
  Since each window is a
  finite size, there is a finite range $[s_1,s_2]$ so that $w(x^rb\cdot\ora{x})$
  is to the left of (resp.~to the right of) $w(a\cdot\ora{x})$ if $r < s_1$
  (resp.\ $r > s_2$).
  Similarly there
  is a range $[s_1',s_2']$ so that $w(A\cdot\ora{x})$ is disjoint from
  $w(x^rB\cdot\ora{x})$ if $r \notin [s_1',s_2']$, and the exponents determine the
  ordering of the windows. Set
  $s \coloneqq \max\{s_2,s_2',-s_1,-s_1'\}$.

  Now consider
  $g_1 = ax^n$ and $g_2 = x^rbx^{m-r}$, with $s < r < m-n-s$. We have
  \[
    r-m < -n-s \le -n+s_1',
  \]
  so, by Lemma~\ref{lem:windows}, $g_1^- \in w(x^{r-m}B\cdot\ora{x})$
  is to the left of $g_2^- \in w(x^{-n}A\cdot\ora{x})$, and
  \[
    s_2 \le s < r,
  \]
  so $g_1^+ \in w(x^rb\cdot\ora{x})$ is to the right of $g_2^+ \in
  w(a\cdot\ora{x})$. Thus
  $\ora{x^rbx^{m-r}}$ crosses $\ora{ax^n}$ to the left. Similarly if
  $m-n+s < r < -s$, then $\ora{x^rbx^{m-r}}$ crosses $\ora{ax^n}$ to the
  right. In either case,
  \[
    [ax^n][bx^m] = [ax^n][x^r bx^{m-r}] \reducesto [ax^{n+r}bx^{m-r}]
  \]
  by oriented disconnected smoothing.
  For Equation~\eqref{eq:cross-split}, write $[x^r ax^n bx^{m-r}]=[gh]$ with
  $g=x^r ax^n$ and $h=bx^{m-r}$, 
  so $g^+ \in w(x^ra\cdot\ora{x})$ and $g^- \in w(x^{-n}A \cdot \ora{x})$, and $h^+ \in w(b \cdot \ora{x})$, $h^- \in w(x^{r-m} B \cdot \ora{x})$.
  Hence, if $s < r < m-n-s$, we have $s_1 < r$
  so $w(b\cdot\ora{x})$ is to the left of $w(x^r a\cdot\ora{x})$,
  while $r-m < -n+s_1'$, so $w(x^{r-m}B \cdot\ora{x})$ is to the left
  of $w(x^{-n}A\cdot\ora{x})$, and hence $\ora{h}$ and $\ora{g}$ are R-parallel, giving Eq.~\eqref{eq:cross-split} by oriented connected smoothing.
  
  We similarly get parallelism if $m-n+s < r < -s$.

  To see Eq.~\eqref{eq:cross-cancel}, suppose without loss of generality
  that $n < m$. Let $s_0 = \lceil \wid(w(a\cdot\ora{x})) \rceil$
  (with width measured in the normalized band model),
  and
  let $s' = 2s_0 - 1$. Since $m-n \ge 2s_0$, we can pick $r$ so that
  $s_0 \le r \le m-n-s_0$. By the above argument, $\ora{ax^n}$ and
  $\ora{x^rax^{m-r}}$ R-cross (since, e.g., $(ax^n)^+ \in w(a\cdot\ora{x})$ and
  $(x^rax^{m-r})^+ \in w(x^r a\cdot\ora{x})$, which are disjoint), so we get the
  desired smoothing in Eq.~\eqref{eq:cross-cancel} by using unoriented
  disconnected smoothing.

  In the cases when $x$ is simple, two windows are disjoint, nested,
  or identical; this implies that $s_1 = s_2$ and $s_1' = s_2'$.
  If in
  addition $a=b$, we also see that $s_1=s_1'=0=s$. This gives Eqs.\
  \eqref{eq:cross-join} and \eqref{eq:cross-split}, and the special
  case Eq.\ \eqref{eq:cross-simple}.

  To see that with $x$ simple we get Eq.\ \eqref{eq:cross-cancel} with
  $s'=0$ (i.e., $[ax^n][ax^{n+1}] \reducesto [x]$), we look slightly deeper
  in the words. The analysis above shows $(ax^n)^- \in
  w(x^{-n}A\cdot\ora{x})$, which is to the right of
  $(ax^{n+1})^- \in w(x^{-n-1}A\cdot\ora{x})$. By
  looking at the band model centered on $a \cdot \ora{x}$ (or
  equivalently by conjugating by $a$), we see that
  $(ax^n)^+ \in w(ax^na\cdot\ora{x}) \subset w(a\cdot\ora{x})$. Similarly
  $(ax^{n+1})^+ \in w(ax^{n+1}a\cdot\ora{x}) \subset
  w(a\cdot\ora{x})$. We therefore have that $(ax^n)^+$ is to the left
  of $(ax^{n+1})^+$, implying that $\ora{ax^n}$ crosses $\ora{ax^{n+1}}$ as
  desired.
\end{proof}

We also need some results about smoothings where we have high powers of
two different elements. We work with three elements $a,x,y\in\pi_1(S)$
so that $(\ora{y},\ora{x},a\cdot\ora{y})$ are R-parallel; this
implies, for instance, that $(a\cdot\ora{y},a\cdot\ora{x})$ are also
R-parallel.
We again consider the normalized band model
centered on~$\ora{x}$, starting with another lemma about endpoints.

\begin{lemma}\label{lem:ayx-window}
  For any triple $a,x,y\in\pi_1(S)$ so that
  $(\ora{y},\ora{x},a\cdot\ora{y})$ are R-parallel, and for
  $\ora{g}=\ora{x^k ay^n x^\ell}$ with $k,\ell,n\in\bbZ$, we have
  $g^+ \in w(x^k a \cdot\ora{y})$ and
  $g^- \in w(x^{-\ell}\cdot\ora{y})$.
\end{lemma}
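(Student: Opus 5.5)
Following the proof of Lemma~\ref{lem:windows}, it suffices to prove two crossing statements. The first is that $\ora{g}$ crosses $x^k a\cdot\ora{y}$ to the right. The second is that $\ora{g}$ crosses $x^{-\ell}\cdot\ora{y}$ to the right, or in the orientation consistent with the window convention. A geodesic that crosses a lift $\ora{z}$ parallel to $\ora{x}$ in the right sense has one endpoint in each complementary arc of $\ora{z}$. Its attracting endpoint must then lie in the arc cut out by $z^\pm$ that misses $x^\pm$, which is exactly the window $w(\ora{z})$.

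By translating with $x^{-k}$, the first statement reduces to showing that $\ora{ay^nx^{k+\ell}}$ crosses $a\cdot\ora{y}$. Conjugating by $a$, this is equivalent to showing that $\ora{y^nx^{k+\ell}a}$ crosses $\ora{y}$. Likewise, translating by $x^{\ell}$ reduces the second statement to showing that $\ora{x^{k+\ell}ay^n}$ crosses $\ora{y}$. So in both cases we need: for all $j,n\in\bbZ$, the elements $y^n x^j a$ and $x^j a y^n$ have axes crossing $\ora{y}$, with the orientation prescribed by the parallelism hypotheses.

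I would prove this with interval dynamics, as in Lemma~\ref{lem:NS-dynamics} and Lemma~\ref{lem:rhs_prod}. Let $I=(y^-,y^+)$ be the arc on the side of $\ora{y}$ away from $\ora{x}$, and let $J$ be the arc of $\ora{y}$ containing $x^\pm$. The hypothesis that $(\ora{y},\ora{x},a\cdot\ora{y})$ is R-parallel says that $a\cdot\ora{y}$ lies on the far side of $\ora{x}$ from $\ora{y}$. Hence $a$ maps $\overline{I}$ into an arc strictly between $x^+$ and $x^-$ on the side opposite $\ora{y}$. Every power $x^j$ preserves the two arcs of $\ora{x}$, so $x^ja$ maps $\overline I$ strictly into the arc of $\ora{x}$ not containing $y^\pm$. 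That arc is contained in $J$. Moreover $y^n$ preserves $I$ and $J$.

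Consequently $y^nx^ja$ maps $\overline I$ into $J$, and its inverse $A X^j Y^n$ maps $\overline{J'}$ into $I$, where $J'$ is the arc of $x$ containing $y$'s side. This is the ping-pong picture. It shows that $y^nx^ja$ is hyperbolic, with attracting point in $J$ and repelling point in $I$, so its axis crosses $\ora{y}$. The orientation of the crossing is then read off from the cyclic order, which is the same as in the R-cross case of Lemma~\ref{lem:rhs}. The element $x^jay^n$ is conjugate to $y^nx^ja$ by $y^n$, which preserves $\ora{y}$, so its axis also crosses $\ora{y}$ in the same sense.

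The main obstacle is the precise attracting-point claim. One must check that the attracting endpoint $g^+$ lands in the specified window $w(x^ka\cdot\ora{y})$ and not merely somewhere across the line. To handle this I would use the stronger inclusion: $g=x^kay^nx^\ell$ maps the closed arc cut off by $\ora{x}$ on the side of $\ora{y}$ into $x^ka\cdot\overline{I}$. The window $w(x^ka\cdot\ora{y})$ is exactly $x^ka\cdot I$. So $g$ maps an arc containing $x^ka\cdot\overline I$ strictly into $x^ka\cdot I$, and Lemma~\ref{lem:NS-dynamics} then places $g^+$ in the window.

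Applying the same argument to $g^{-1}=X^\ell Y^n A X^k$ gives $g^-\in x^{-\ell}\cdot I=w(x^{-\ell}\cdot\ora{y})$. This step uses that $A$ maps the far side of $\ora{x}$ into $I$, which follows from the parallel hypothesis applied to $a^{-1}$. The care needed here is in getting the sides and arcs exactly right, more than in any hard estimate.
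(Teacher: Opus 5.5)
Your reductions are fine. Translating by $x^{-k}$ and conjugating by $a$, resp.\ translating by $x^{\ell}$, correctly reduces both claims to $\ora{y^nx^ja}$ crossing $\ora{y}$ to the right, and the right-handed crossing does pin down which side each endpoint lies on.

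The gap is in the side bookkeeping, which is the whole content of the lemma. With the paper's counterclockwise convention, $(y^-,y^+)$ is the arc of $\ora{y}$ that \emph{contains} $x^\pm$, while $w(\ora{y})=(y^+,y^-)$. Since $\ora{y}$ and $a\cdot\ora{y}$ are coherently oriented on opposite sides of $\ora{x}$, $a$ carries $(y^-,y^+)$ onto $w(a\cdot\ora{y})$. It carries $w(\ora{y})$ onto the arc of $a\cdot\ora{y}$ that contains $x^\pm$ and $y^\pm$. Your $I$ is therefore asked to be two different arcs:
\begin{itemize}
\item If $I=(y^-,y^+)$ literally, then $I=J$. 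In that case $a\cdot\overline I\subset(x^-,x^+)$, $y^nx^ja\cdot\overline I\subset J$ and $w(x^ka\cdot\ora y)=x^ka\cdot I$ are correct. But ``repelling point in $I$'' and $w(x^{-\ell}\cdot\ora y)=x^{-\ell}\cdot I$ are false.
\item If $I$ is the side away from $\ora{x}$, as you describe it, those first three statements are false; for example $a\cdot\overline I$ contains $x^\pm$.
\end{itemize}

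The ``stronger inclusion'' in your last two paragraphs also fails. Let $U$ be the closed arc of $\ora{x}$ on the side of $\ora{y}$. It contains $\overline{w(\ora y)}$, and $x^\ell$ preserves $U$ while $y^n$ preserves $\overline{w(\ora y)}$. Hence $g\cdot U\supset x^ka\cdot\overline{w(\ora y)}\ni x^\pm$, so $g$ does not map $U$ into the window. Likewise, it is the near side of $\ora{x}$, not the far side, that $A$ maps into $w(\ora y)$.

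The missing idea is the right invariant arc, after which a single application of Lemma~\ref{lem:NS-dynamics} does everything. Take $v=x^{-\ell}\cdot(y^-,y^+)$, the complement of $w(x^{-\ell}\cdot\ora{y})$. The factor $x^\ell$ cancels and $y^n$ preserves $(y^-,y^+)$, so $g\cdot v=x^ka\cdot(y^-,y^+)=w(x^ka\cdot\ora{y})\subset(x^-,x^+)\subsetneq v$. Lemma~\ref{lem:NS-dynamics} then gives both conclusions at once:
\begin{itemize}
\item $g^-\notin v$, so $g^-$ lies in (the closure of) $w(x^{-\ell}\cdot\ora{y})$;
\item $g^+=g\cdot g^+\in g\cdot v=w(x^ka\cdot\ora{y})$.
\end{itemize}
No separate crossing argument, ping-pong, or treatment of $g^{-1}$ is needed. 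In your reduced form this is just $y^nx^ja\cdot\overline{(y^-,y^+)}\subset(y^-,y^+)$, which gives the right-handed crossing with the correct orientation, so the ``obstacle'' you flag disappears. This is exactly the paper's proof.
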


\begin{proof}
  Let $v = x^{-\ell} \cdot (y^-,y^+)$, the \emph{complement}
  of $w(x^{-\ell} \cdot \ora{y})$.
  We have
  $g \cdot v = x^k ay^n \cdot(y^-,y^+) = x^k a\cdot (y^-,y^+) = w(x^k a \cdot
  \ora{y}) \subset (x^-,x^+) \subsetneq v$,
  proving the result by Lemma~\ref{lem:NS-dynamics}.
\end{proof}

\begin{proposition}\label{prop:fundamental-ayx}
  For any triple $a,x,y\in\pi_1(S)$ so that $(\ora{y},\ora{x},a\cdot\ora{y})$ are R-parallel, there exists
  $s \ge 0$ so that for all $r > s$ and $m,n \in \bbZ$,
  \begin{align}
    \label{eq:crossuniform_n}
       [a y^m x^{n+r}][a y^m x^{n-r}] &\reducesto [(a y^m x^n)^2]\\
    \label{eq:crossuniform_m}
       [a y^{m+r} x^n][a y^{m-r} x^n] &\reducesto [(a y^m x^n)^2].
  \end{align}
  Furthermore, for $n,m \ge s$,
  \begin{align}\label{eq:cancel-ayx}
    [a y^m x^n][A] &\reducesto [y^m][x^n]\\
    \intertext{and for $n,m > 2s$,}
    \label{eq:crossbelowy}
    [a Y^m x^n][A] &\reducesto [x^s Y^s] [A x^s a Y^s] [x^{n-2s}] [Y^{n-2s}]
  \end{align}
\end{proposition}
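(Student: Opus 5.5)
We follow the proof of Proposition~\ref{prop:join-split-geom}: work in the normalized band model centered on $\ora{x}$, and use Lemma~\ref{lem:ayx-window} to locate endpoints in windows. The crossing type then reads off from the left/right order of windows, and Proposition~\ref{prop:crossings-triality} turns that order into the claimed reductions.

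\textbf{Step 1: Eq.~\eqref{eq:crossuniform_n}.} Write the second factor as the conjugate $x^{r}(ay^mx^{n-r})X^{r}$, i.e.\ as $\ora{x^r a y^m x^{n-2r}}$ up to conjugacy, and let $g=ay^mx^{n+r}$ and $h=x^{r}ay^mx^{n-2r}$. Then $gh$ is conjugate to $(ay^mx^n)^2$, since $ay^mx^{n+r}\cdot x^r a y^m x^{n-2r}$ conjugates by $x^{r}$ to $x^{n-r}ay^mx^{n+r}\,ay^m$, which is cyclically the square. By Lemma~\ref{lem:ayx-window}:
\begin{itemize}
\item $g^+\in w(a\cdot\ora{y})$ and $h^+\in w(x^ra\cdot\ora{y})$, its translate by $r$;
\item $g^-\in w(x^{-n-r}\cdot\ora{y})$ and $h^-\in w(x^{2r-n}\cdot\ora{y})$.
\end{itemize}
Choose $s$ larger than the widths of $w(a\cdot\ora{y})$ and $w(\ora{y})$; these widths do not depend on $m,n$. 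Then for $r>s$ the relevant windows are disjoint and ordered: both $h^\pm$ lie to the right of $g^\pm$. So $\ora{g},\ora{h}$ cross, and oriented disconnected smoothing gives $[g][h]\reducesto[gh]$.

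\textbf{Step 2: Eq.~\eqref{eq:crossuniform_m}.} Use the band model centered on $\ora{y}$ instead. The cyclic conjugate $y^m x^n a$ has the same structure with the roles of $x$ and $y$ swapped: the triple $(\ora{x}, a^{-1}\ldots)$ is R-parallel after conjugating by $a$. We first verify the symmetric window lemma; its proof is identical to that of Lemma~\ref{lem:ayx-window}. The same window argument then gives the reduction.

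\textbf{Step 3: Eq.~\eqref{eq:cancel-ayx}.} Here $g=ay^mx^n$ and $A$. We pick a lift of $A$ and show that it R-crosses a lift of $g$ for $m,n\ge s$; the smoothing then yields $[gA]$ or $[gA^{-1}]$ type products. The relevant one, after cyclic reduction, is $[y^m x^n]$. To obtain the disconnected pair $[y^m][x^n]$ instead, we use the connected factorization $[y^m\cdot x^n]$ with $\ora{y},\ora{x}$ parallel, which reduces to $[y^m][x^n]$. The correct formulation is to treat the crossing of $[ay^mx^n]$ with $[A]$ as a disconnected crossing whose unoriented smoothing is $[ay^mx^na]$ and whose oriented smoothing cancels $a$. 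We must check carefully which of $[gA]$ and $[gA^{-1}]$ equals the disconnected result. Concretely, endpoints of $\ora{A}$ lie in the windows $w(A\cdot\ora{y})$-type regions, and Lemma~\ref{lem:ayx-window} places $g^\pm$ in windows separated from these once $m,n$ are large.

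\textbf{Step 4: Eq.~\eqref{eq:crossbelowy}, the main obstacle.} With $Y^m$, the triple $(\ora{Y},\ora{x},a\cdot\ora{Y})$ is no longer R-parallel, so Lemma~\ref{lem:ayx-window} does not apply directly. The plan is to do several successive smoothings:
\begin{enumerate}
\item Split $x^n=x^s x^{n-2s} x^s$ and $Y^m$ similarly.
\item Apply power/connected smoothing (Proposition~\ref{prop:join-split-geom}, Eq.~\eqref{eq:cross-split}) to peel off $[x^{n-2s}]$ and $[Y^{n-2s}]$ from the long word.
\item Use one disconnected crossing with $[A]$ to split the remainder into $[x^sY^s]$ and $[Ax^saY^s]$.
\end{enumerate}
The main difficulty is tracking which lifts cross at each stage and confirming that each intermediate crossing is essential. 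We would verify each stage with a fresh window computation, taking $s$ large enough to work for all stages simultaneously.
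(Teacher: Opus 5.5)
Your Step~1 fails as written. With $g=ay^mx^{n+r}$ and $h=x^ray^mx^{n-2r}$ you get $gh=ay^mx^{n+2r}ay^mx^{n-2r}$, which is not conjugate to $(ay^mx^n)^2$. The conjugation you wrote is miscomputed, and its output is not a cyclic form of the square either. Worse, your crossing criterion is inverted. In the band model about $\ora{x}$, the attracting endpoints lie on the $a\cdot\ora{y}$ side and the repelling ones on the $\ora{y}$ side. If \emph{both} $h^+$ and $h^-$ sit to the right of $g^+$ and $g^-$, the two axes lie side by side and are parallel, so Proposition~\ref{prop:crossings-triality} gives $[gh]\reducesto[g][h]$, the wrong direction.

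Conjugating the other way repairs both points: take $h=X^ray^mx^n\sim ay^mx^{n-r}$ (equivalently the paper's $g_1=x^nay^mx^r$, $g_2=x^{n-r}ay^m$). Then $gh=(ay^mx^n)^2$ exactly. Lemma~\ref{lem:ayx-window} puts $h^+\in w(x^{-r}a\cdot\ora{y})$, $r$ units to the \emph{left} of $g^+\in w(a\cdot\ora{y})$, and $h^-\in w(x^{-n}\cdot\ora{y})$, $r$ units to the \emph{right} of $g^-\in w(x^{-n-r}\cdot\ora{y})$. These opposite shifts are what force a crossing once $r$ exceeds both widths.

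Step~3 has a related gap. Lemma~\ref{lem:ayx-window} with all exponents zero gives $a^+\in w(a\cdot\ora{y})$, the same window that contains $(ay^mx^n)^+$ for every $m,n$, so the lemma cannot separate these endpoints. (Also, $\ora{A}$ has endpoints $a^+$ and $a^-\in w(\ora{y})$, not points in windows of $A\cdot\ora{y}$.) You need one more level. We have $(ay^mx^n)^+\in w(ay^m\cdot\ora{x})$ and $a^+\in w(a\cdot\ora{x})$, both nested in $w(a\cdot\ora{y})$. In the band model about $a\cdot\ora{y}$, the element $(ayA)^m$ carries $w(a\cdot\ora{x})$ to $w(ay^m\cdot\ora{x})$ by a translation of length $m$. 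So for large $m$, $(ay^mx^n)^+$ lies to the right of $a^+$, while $(ay^mx^n)^-\in w(x^{-n}\cdot\ora{y})$ lies to the left of $a^-$ for large $n$. This is where $m\ge s$ enters.

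The main gap is Step~4, and the order you propose cannot work. A disconnected smoothing merges two components into one. Your sequence (peel two powers, then cross with $[A]$) goes from two components to four and back to three, never reaching the four in the target. Nor can the powers be peeled off first. For large $m$, $aY^m$ maps the arc $(x^-,x^+)$ strictly into itself, so $\ora{aY^m}$ crosses $\ora{x}$, and by Lemma~\ref{lem:rhs_prod} so does $\ora{x^jaY^mx^k}$ for all $j,k$. Thus in the factorizations $[u\cdot x^{n-2s}]$ you would smooth, $\ora{u}$ crosses $\ora{x}$, and triality gives $[u][x^{n-2s}]\reducesto[ux^{n-2s}]$ rather than the reverse. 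The same holds for $Y$, since $\ora{x^na}$ crosses $\ora{y}$. Eq.~\eqref{eq:cross-split} is no help either, as it never outputs a pure power.

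The paper goes the other way. It first smooths $[x^saY^mx^{n-s}][A]\reducesto[Ax^saY^mx^{n-s}]$, after a window check that $\ora{a}$ crosses $\ora{x^saY^mx^{n-s}}$ when $n>2s$. It then performs three oriented connected smoothings, splitting off $[Ax^saY^s]$, then $[x^{n-2s}]$, then $[Y^{m-2s}]$ (the $Y^{n-2s}$ in the statement is a typo). The parallelism needed at each of those three steps comes from Proposition~\ref{prop:eventual_splitting}: for $s$ large, $\ora{Y^sx^s}$ and $\ora{x^sY^s}$ split $\ora{y},\ora{x}$, and $\ora{Ax^saY^s}$ and $\ora{Y^sAx^sa}$ split $A\cdot\ora{x},\ora{y}$. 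This eventual-splitting input is the missing idea; without it nothing certifies the intermediate parallel pairs.

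Step~2 is fine in spirit, but it should be the explicit substitution $a'=a$, $y'=x$, $x'=ayA$ in Eq.~\eqref{eq:crossuniform_n}. This uses $ay^{m\pm r}x^n\sim ax^n(ayA)^{m\pm r}$ and the fact that $(\ora{x},a\cdot\ora{y},a\cdot\ora{x})$ is R-parallel.
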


\begin{proof}
  We first prove Eq.~\eqref{eq:crossuniform_n}. We will have a number
  of windows $w(x^m \cdot \ora{y})$ on the top, all of equal width
  in the normalized band model, and $w(x^m a \cdot \ora{y})$ on the
  bottom, also of equal widths (but not necessarily equal to widths on
  the top). Take
  \[
    s \ge \max(\wid(w(\ora{y})),
    \wid(w(a\cdot\ora{y}))).
  \]
  For $r > s$, consider the elements $g_1 = x^n ay^m x^r$ and $g_2 =
  x^{n-r} a y^m$. By Lemma~\ref{lem:ayx-window}, we have
  \begin{align*}
    g_1^+ &\in w(x^n a\cdot\ora{y}) & g_1^- &\in w(x^{-r}\cdot\ora{y})\\
    g_2^+ &\in w(x^{n-r} a \cdot\ora{y}) & g_2^- &\in w(\ora{y}).
  \end{align*}
  Since $r>s$, the endpoints appear in the cyclic order
  $(x^-,g_2^+,g_1^+,x^+,g_2^-,g_1^-)$ on $\partial_\infty S$. In
  particular $[g_1][g_2] \reducesto [g_1 g_2]$ by oriented disconnected smoothing, as desired.
  
  To see Eq.~\eqref{eq:crossuniform_m}, apply
  Eq.~\eqref{eq:crossuniform_n} with the change of variables
  $a' = a$, $y' = x$, and $x' = ayA$.
  This gives the desired result, after possibly increasing~$s$.

  For Eq.~\eqref{eq:cancel-ayx}, we consider $g = a y ^m x^n$ and
  compare its endpoints to those of $a$. By
  Lemma~\ref{lem:ayx-window}, we have
  $g^- \in w(x^{-n} \cdot \ora{y})$, which for large enough $n$ is to
  the left of $a^- \in w(\ora{y})$. For $g^+$,
  we again have to look deeper at the endpoints: since
  $g \cdot (x^+,x^-) = a y^m \dot (x^+, x^-) = w(a y^m \cdot
  \ora{x})$, we have
  $g^+ \in w(a y^m \cdot \ora{x}) \subset w(a \cdot \ora{y})$.
  Similarly $a^+ \in w(a \cdot \ora{x}) \subset w(a \cdot \ora{y})$.
  Considering the bands model centered on $a \cdot \ora{y}$ shows that,
  for $m$ large enough, $g^+$ is to the right of $a^+$, and thus
  $\ora{g}$ crosses $\ora{a}$ to the right. Thus if $s$ is large
  enough and $n,m
  \ge s$ we therefore have
  \[
    [a x^n y^m][A] = [g][A] \reducesto [x^n y^m] \reducesto [x^n][y^m]
  \]
  by oriented disconnected smoothing and then (by
  Lemma~\ref{lem:righthandedfoundation}) oriented connected
  smoothing.

  Finally, we see Eq.~\eqref{eq:crossbelowy}. By
  Proposition~\ref{prop:eventual_splitting}, if $s_0$ is large enough, for every $s \geq s_0$
  $\ora{Y^s x^s}$ and $\ora{x^s Y^s}$ split $\ora{y},\ora{x}$, and
  similarly $\ora{A x^s a Y^s}$ and $\ora{Y^s A x^s a}$ split $A \cdot
  \ora{x}, \ora{y}$; see
  Figure~\ref{fig:crossing_yx_A}. Furthermore, by a similar but easier window
  analysis as for Eq.~\eqref{eq:cancel-ayx}, if $s$ is large enough
  and $n > 2s$, $\ora{a}$ crosses $\ora{x^s a Y^m x^{n-s}}$.
  For $n,m > 2s$, we
  then have
  \begin{align*}
    [a Y^m x^n] [a] &= [x^s a Y^m x^{n-s}] [a] \\
    &\reducesto [A x^s a Y^m x^{n-s}]
      &&\text{$(\ora{a}, \ora{x^s a Y^m x^{n-s}})$ cross}\\
    &\reducesto [A x^s a Y^s] [Y^{m-s} x^{n-s}]
      &&\text{$(\ora{A x^s a Y^s}, \ora{Y^{m-s} x^{n-s}})$ parallel}\\
    &\reducesto [A x^s a Y^s] [Y^{m-s} x^s] [x^{n-2s}]
      &&\text{$(\ora{Y^{m-s} x^s}, \ora{x^{n-2s}})$ parallel}\\
    &\reducesto [A x^s a Y^s] [Y^{m-2s}] [Y^s x^s] [x^{n-2s}]
      &&\text{$(\ora{Y^{m-2s}}, \ora{Y^s x^s})$ parallel.} \qedhere
  \end{align*}
\end{proof}

\begin{figure}
\centering{
\resizebox{100mm}{!}{\fontsize{14pt}{14pt}\selectfont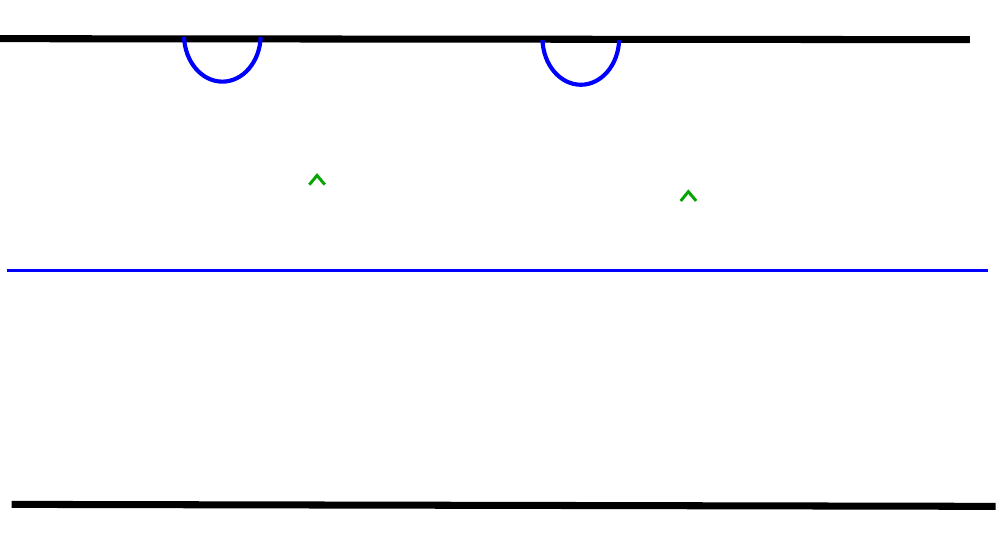}
\caption{Configuration  $\protect\ora{Y^s x^s}$ and $\protect\ora{x^s Y^s}$ splitting $\protect\ora{y},\protect\ora{x}$, and
   $\protect\ora{A x^s a Y^s}$ and $\protect\ora{Y^s A x^s a}$ splitting $A \cdot
  \protect\ora{x}, \protect\ora{y}$, used in Proposition~\ref{prop:fundamental-ayx}}
\label{fig:crossing_yx_A}
}
\end{figure}

In practice we will apply Proposition~\ref{prop:fundamental-ayx} with
four group elements.
\begin{corollary}\label{cor:fund-axby}
  Let $a,b,x,y \in \pi_1(S)$ be chosen so that $(B\cdot \ora{x},
  \ora{y}, a\cdot\ora{x})$ are R-parallel.
  Then there exists
  $s \ge 0$ so that for all $r > s$ and $m,n \in \bbZ$,
  \begin{align}
    \label{eq:axby-cross-n}
       [a x^{n+r} by^m][ax^{n-r} by^m] &\reducesto [(ax^n by^m)^2]\\
    \label{eq:axby-cross-m}
       [ax^n by^{m+r}][ax^n by^{m-r}] &\reducesto [(ax^n by^m)^2].
  \end{align}
  Furthermore, for $n,m \ge s$,
\begin{align}\label{eq:cancel-axby}
  [a x^n b y^m][ab] &\reducesto [x^n][y^m]\\
  \intertext{and for $n,m > 2s$,}
  \label{eq:crossbelowy-ab}
   [a x^n b Y^m][ab] &\reducesto [a x^s A Y^s] [B x^s b Y^s] [x^{n-2s}] [Y^{n-2s}]
\end{align}
\end{corollary}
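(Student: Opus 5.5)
This should be a direct specialization of Proposition~\ref{prop:fundamental-ayx}, obtained by conjugating so the four elements collapse to three. The natural substitution is $a' \coloneqq ba$, $x' \coloneqq x$, $y' \coloneqq b y B$, chosen so that cyclically rotating the words turns $[a' y'^m x'^n]$ into $[a x^n b y^m]$.

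First check that the substitution satisfies the hypothesis of Proposition~\ref{prop:fundamental-ayx}. Apply $b$ to the R-parallel triple $(B\cdot\ora{x},\ora{y},a\cdot\ora{x})$; the action is an orientation-preserving isometry, so it preserves R-parallelism. This gives $(\ora{x}, b\cdot\ora{y}, ba\cdot\ora{x})$ R-parallel, where $b\cdot\ora{y} = \ora{byB} = \ora{y'}$.

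The proposition, however, asks for $(\ora{y'},\ora{x'},a'\cdot\ora{y'})$ R-parallel, which has $y$ in the middle rather than $x$. So I would instead take the roles $y' \coloneqq x$ and $x' \coloneqq byB$, with $a' \coloneqq ba$. The hypothesis then reads $(\ora{x}, \ora{byB}, ba\cdot \ora{x})$ R-parallel, which is exactly what we obtained.

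Next translate the words. We have $a' y'^m x'^n = ba\, x^m\, b y^n B$, which is conjugate to $a x^m b y^n$. So the exponent names are swapped relative to the corollary; I will rename $m\leftrightarrow n$ throughout.

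Now read off each statement:
\begin{itemize}
\item Eq.~\eqref{eq:crossuniform_m} (varying the $y'$-exponent) becomes Eq.~\eqref{eq:axby-cross-n}.
\item Eq.~\eqref{eq:crossuniform_n} becomes Eq.~\eqref{eq:axby-cross-m}. Squares conjugate to squares, so $[(a'y'^mx'^n)^2]=[(ax^nby^m)^2]$.
\item Eq.~\eqref{eq:cancel-ayx} gives $[ax^nby^m][A'] \reducesto [x^n][byB^m]$. Here $[A']=[AB]$, which is $[ab]$ with orientation reversed, and $[byB^m]=[y^m]$. This matches Eq.~\eqref{eq:cancel-axby}, with the first factor $[ab]$ read as the unoriented, or reversed-orientation, curve, consistent with the unoriented reading of the crossing in the proposition's proof.
\item Eq.~\eqref{eq:crossbelowy} uses $Y'^m = bY^mB$, so $a'Y'^mx'^n$ is conjugate to $a x^n \cdot$ (rearranged) $bY^m$. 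The output curves translate as $[x'^sY'^s]=[byB^sX^s]$ and $[A'x'^sa'Y'^s]=[AB\,by^sB\,ba\,X^s]=[Ay^saX^s]$. Up to inversion and the $x\leftrightarrow y$ relabelling, these become $[ax^sAY^s]$ and $[Bx^sbY^s]$.
\end{itemize}

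The main obstacle is bookkeeping: the roles of $x,y$ and the orientations on the $[ab]$ factor and the small curves must match the displayed formulas exactly, possibly after inverting curves. If the identification in the last item fails, I would instead rerun the chain of smoothings in the proof of Eq.~\eqref{eq:crossbelowy} directly. That means replacing $\ora{a}$ by $\ora{ab}$ and using Proposition~\ref{prop:eventual_splitting} for the pairs $(\ora{B\cdot x},\ora{y})$-type splittings, enlarging $s$ as needed.
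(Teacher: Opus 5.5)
Your first three items are correct. The substitution $a'=ba$, $y'=x$, $x'=byB$ satisfies the hypothesis of Proposition~\ref{prop:fundamental-ayx} (apply $b$ to the given triple), and $[a'y'^mx'^n]=[ax^mby^n]$. After swapping $m\leftrightarrow n$ you get \eqref{eq:axby-cross-n}, \eqref{eq:axby-cross-m} and \eqref{eq:cancel-axby}. Your remark on the orientation of $[ab]$ is also fine: the factor $[(ab)^{-1}]$ appears with any substitution, and the paper uses \eqref{eq:cancel-axby} with unoriented smoothing.

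The fourth item does not go through. With your assignment $y'=x$ you have $Y'=X$, not $bYB$; the latter belonged to the assignment you discarded. So the left side of \eqref{eq:crossbelowy} becomes $[a'Y'^mx'^n][A']=[aX^mby^n][AB]$. Reverse all orientations and swap $m\leftrightarrow n$. Then \eqref{eq:crossbelowy} gives a reduction of
\[
[Bx^nAY^m][ab]
\]
to $[ax^sAY^s][Bx^sbY^s]$ together with powers of $[x]$ and $[Y]$. The output curves are the right ones, but the curve being smoothed is $[Bx^nAY^m]$, not $[ax^nbY^m]$.

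No relabelling repairs this. The hypothesis that $(B\cdot\ora{x},\ora{y},a\cdot\ora{x})$ is R-parallel is not invariant under $x\leftrightarrow y$. Swapping $a\leftrightarrow B$ reverses the triple and turns it into an L-parallel condition. Your fallback of rerunning the smoothing chain is left unspecified, and it is not needed.

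The fix is to conjugate by $a$ rather than by $b$, keeping $y$ in the role of $y'$. Take $a'=ab$, $x'=axA$, $y'=y$. Applying $ab$ to the R-parallel pair $(B\cdot\ora{x},\ora{y})$ and combining with $(\ora{y},a\cdot\ora{x})$ shows that $(\ora{y},\ora{axA},ab\cdot\ora{y})$ is R-parallel. Moreover $[a'y'^mx'^n]=[ab\,y^m\,ax^nA]=[ax^nby^m]$, with no exponent swap.

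Now \eqref{eq:crossbelowy} translates verbatim:
\begin{itemize}
\item $[a'Y'^mx'^n]=[ax^nbY^m]$;
\item $[x'^sY'^s]=[ax^sAY^s]$;
\item $[A'x'^sa'Y'^s]=[BA\,ax^sA\,ab\,Y^s]=[Bx^sbY^s]$;
\item $[x'^k]=[x^k]$.
\end{itemize}
The other three equations come out the same way. This is exactly the paper's proof.
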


\begin{proof}
  The conditions imply that
  $(\ora{y}, a \cdot \ora{x}, ab \cdot \ora{y})$ are R-parallel.
  Apply $x' = axA$, and $a' = ab$ to Eqs.~\eqref{eq:crossuniform_n},  \eqref{eq:crossuniform_m}, \eqref{eq:cancel-ayx} and~\eqref{eq:crossbelowy-ab} in order to get, respectively, Eqs.~\eqref{eq:axby-cross-n}
    \eqref{eq:axby-cross-m}
    \eqref{eq:cancel-axby} and~\eqref{eq:crossbelowy}.
\end{proof}

We now check this configuration is easy to find. The following result
is standard.

\begin{lemma}
The action of $\pi_1(S)$ on $\partial_{\infty} S$ is minimal,
i.e., for any $p \in \partial_{\infty} S$, the orbit
$\pi_1(S)\cdot p$ is dense.
\label{lem:minimal}
\end{lemma}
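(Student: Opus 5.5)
Fix a hyperbolic structure $\Sigma$ on $S$ and identify $\partial_\infty S$ with the circle. Take a point $p\in\partial_\infty S$ and a nonempty open interval $I=(u,v)\subset\partial_\infty S$; we must find $g\in\pi_1(S)$ with $g\cdot p\in I$. The plan is to use the north--south dynamics of a hyperbolic element whose attracting fixed point lies in $I$. By Lemma~\ref{lem:NS-dynamics} and Lemma~\ref{lem:convergence_action}, for such an element $h$ with $h^+\in I$ we have $h^k\cdot p\to h^+$ whenever $p\neq h^-$, so $h^k\cdot p\in I$ for large $k$. If $p=h^-$, we replace $p$ by $g_0\cdot p$ for any $g_0\in\pi_1(S)$ not fixing $h^-$. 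Such a $g_0$ exists because the stabilizer of a boundary point in a surface group is cyclic, while $\pi_1(S)$ is not. We then apply $h^k$ to $g_0\cdot p$.

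It therefore suffices to show that attracting fixed points of elements of $\pi_1(S)$ are dense in $\partial_\infty S$. First we produce at least two elements $a,b$ with four distinct endpoints. This holds because $\pi_1(S)$ is non-elementary: it is not virtually cyclic, and elements sharing one endpoint share both and are common powers. Then, given $I$, we produce an element with attracting point in $I$.

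The cleanest route is the limit set. The set $\Lambda$ of accumulation points of an orbit $\pi_1(S)\cdot o$, for $o\in\wt\Sigma$, is closed, nonempty and $\pi_1(S)$-invariant. Because $\Sigma$ is closed, the action on $\wt\Sigma$ is cocompact. Hence every geodesic ray from $o$ stays within bounded distance of the orbit, so $\Lambda=\partial_\infty S$. Now take $q\in I$ and orbit points $g_j\cdot o\to q$. For a fixed hyperbolic $a$, consider the conjugates $g_j a G_j$. Their axes are $g_j\cdot\ora{a}$, and their endpoints are $g_j\cdot a^\pm$.

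The main obstacle is showing that both endpoints $g_j\cdot a^{\pm}$ converge to $q$. The standard fact is that if $g_j\cdot o\to q$, then $g_j\cdot x\to q$ for all $x\in\partial_\infty S$ outside at most one point, namely the limit of $G_j\cdot o$ along a subsequence. This follows from convergence-group dynamics of $\PSL(2,\mathbb{R})$. Passing to a subsequence, we may choose $a$ (from among $a$ and $b$) so that neither $a^+$ nor $a^-$ is that exceptional point; this is possible since $a,b$ have four distinct endpoints. Then $(g_jaG_j)^+=g_j\cdot a^+\in I$ for large $j$, which completes the density claim and hence the proof.

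Alternatively, one could simply cite the standard result for non-elementary Fuchsian groups of the first kind. We would carry out the sketch above for completeness.
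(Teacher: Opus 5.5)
Your argument is correct. The paper gives no proof of this lemma; it simply calls it standard. It also uses the lemma in the opposite direction from you: in the proof of Lemma~\ref{lem:diagonaldense} it deduces density of axis endpoints from minimality (via $g\cdot a^+=(gaG)^+$). You instead prove density of attracting fixed points first and then deduce minimality from it by north--south dynamics.

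Your route gives a self-contained proof that uses only cocompactness, cyclicity of point stabilizers (no parabolics), and the convergence dynamics of $\PSL(2,\mathbb{R})$. It also supplies directly the density statement the paper needs next.

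It can be shortened, though. Once you have $g_j\cdot o\to q$ and the convergence property ($g_j\cdot x\to q$ for all $x$ except one point $q'$, after passing to a subsequence), apply it directly to $p$, or to $g_0\cdot p$ if $p=q'$. This removes the detour through attracting fixed points and the north--south step of your first paragraph. Even within your route, only $g_j\cdot a^+\to q$ is needed, not convergence of both endpoints. So a single hyperbolic $a$ suffices (replace $a$ by $A$ if $a^+=q'$), and the second element $b$ with four distinct endpoints is unnecessary.
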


We care about the orientations, so we need a slightly more precise result.

\begin{lemma}
Let $x \in \pi_1(S)$.
Given any $z \in \partial_\infty  S$
and any $\epsilon>0$, there exists a lift $g \cdot \ora{x}$ whose
endpoints $g\cdot x^+$ and $g\cdot x^-$ are in $(z-\epsilon,z+\epsilon)$, and are ordered counter-clockwise (or clockwise)
with respect to the $\partial_\infty S$ order.
\label{lem:diagonaldense}
\end{lemma}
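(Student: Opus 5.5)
We derive this from Lemma~\ref{lem:minimal} together with the north--south dynamics of a single well-chosen element. Fix $x \in \pi_1(S)$, $z \in \partial_\infty S$ and $\epsilon>0$, and write $I=(z-\epsilon,z+\epsilon)$.

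The first step is to find an element $h \in \pi_1(S)$ with $h^+ \in I$. Hyperbolic fixed points are dense: by Lemma~\ref{lem:minimal} the orbit $\pi_1(S)\cdot x^+$ is dense. So we can pick $q$ with $q\cdot x^+ \in I$, and then $h = qxq^{-1}$ has attracting point $h^+ = q\cdot x^+ \in I$. Its repelling point $h^- = q\cdot x^-$ differs from $h^+$. We may also shrink $I$ slightly so that $h^- \notin \overline{I}$; if $h^-$ happened to lie in $I$, we simply replace $I$ by a smaller interval around $h^+$ inside $I$ avoiding $h^-$.

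The second step is to push the axis of $x$ into $I$ by iterating $h$. Since no element of $\pi_1(S)$ shares exactly one fixed point with another (Proposition on uniqueness of lifts through an endpoint), we may, after replacing $x$'s lift by a translate if needed, assume $x^\pm \neq h^-$. In fact, if $\ora{x}$ has an endpoint at $h^-$ then $x$ and $h$ are common powers, and we instead use the translate $p\cdot\ora{x}$ for some $p$ not stabilizing that endpoint. By NS dynamics of $h$, both $h^k\cdot x^+$ and $h^k\cdot x^-$ converge to $h^+$ as $k\to\infty$, so for $k$ large the lift $g\cdot\ora{x}$ with $g=h^k$ (or $h^kp$) has both endpoints in $I$.

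The remaining issue is orientation, which is the main obstacle. An orientation-preserving homeomorphism of the circle that maps both endpoints into the small arc $I$ and keeps $h^-$ outside preserves the cyclic order of the triple $(x^+,x^-,h^-)$. Concretely, $h^k$ maps the arc of $\partial_\infty S\setminus\{h^-\}$ containing $x^\pm$ monotonically into $I$. Hence the counterclockwise order of $g\cdot x^+, g\cdot x^-$ within $I$ matches the order of $x^+,x^-$ read in $\partial_\infty S\setminus\{h^-\}$ starting just after $h^-$. To obtain the opposite order we first choose $p$ so that $p\cdot x^\pm$ lie in the other arc relative to $h^-$; for instance, we can let $p$ be a large power of an element whose attracting point lies in the complementary region. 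We then apply $h^k$ to $p\cdot\ora{x}$. Equivalently, it suffices to run the argument with a lift of $\ora{x}$ whose endpoints lie on a chosen side of $h^-$, which we can arrange by density again (Lemma~\ref{lem:minimal}). With these two choices, both the counterclockwise and the clockwise orderings are realized.
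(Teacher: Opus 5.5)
Your first two steps are fine: conjugating $x$ gives $h$ with $h^+\in I$. After your adjustment when an endpoint equals $h^-$, the lift $h^k\cdot\ora{x}$ lands in $I$ for large $k$. Its orientation there is determined by the cyclic order of $(x^+,x^-,h^-)$, i.e.\ by which side of $\ora{x}$ contains $h^-$.

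The gap is in reversing that orientation, which is the real content of the lemma. What you need is a lift $p\cdot\ora{x}$ with $h^-$ on the \emph{other side} of $p\cdot\ora{x}$. Where the endpoints $p\cdot x^{\pm}$ lie is irrelevant: a short geodesic inside a given arc can have $h^-$ on either side, depending on its orientation.

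Your recipe, $p=u^m$ with $u^+$ in the complementary region, does not control this. Assume $h^-\neq u^+$ and $m$ is large. Then $u^m\cdot\ora{x}$ is tiny near $u^+$, and its large side is $u^m$ applied to the side of $\ora{x}$ containing $u^-$. So $h^-$ lies on the same side (left/right) of $u^m\cdot\ora{x}$ as $u^-$ does of $\ora{x}$: the repelling point is what matters, not the attracting one. Concretely, suppose $u^+$ is on the side of $\ora{x}$ opposite $h^-$ but $u^-$ is on the same side as $h^-$ (so $\ora{u}$ crosses $\ora{x}$). Then $h^k u^m\cdot\ora{x}$ has the \emph{same} orientation as $h^k\cdot\ora{x}$.

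Your closing sentence does not fill the gap either. Density of $\pi_1(S)\cdot x^+$ places one endpoint and says nothing about orientation, and ``a chosen side of $h^-$'' has no meaning for a single point of the circle.

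The fix is short. Apply Lemma~\ref{lem:minimal} to the orbit of $h^-$ (equivalently, of $x^-$) to get $p$ with $p^{-1}\cdot h^-$ in the open arc of $\partial_\infty S\setminus\{x^+,x^-\}$ not containing $h^-$. Since $p$ preserves left/right, $h^-$ then lies on the opposite side of $p\cdot\ora{x}$ and is not one of its endpoints. Hence $h^kp\cdot\ora{x}$, for large $k$, lies in $I$ with the reversed orientation. Equivalently, use $p=u^m$ with $u^-$ (not $u^+$) on the side of $\ora{x}$ away from $h^-$.

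This is the same mechanism the paper uses. There, both orientations come at once from $h^{k}\cdot\ora{x}$ and $h^{-k}\cdot\ora{x}$, and both are pushed into $I$ by a single element whose attracting point lies in $I$. By the rule above, those two translates have opposite orientations exactly when the auxiliary axis crosses $\ora{x}$. The paper states the crossing condition relative to $\ora{g}$, but the orientation count needs crossing with $\ora{x}$.
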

\begin{proof}
By Lemma~\ref{lem:minimal}, endpoints of axes of elements of
$\pi_1(S)$ are dense in $\partial_\infty S$; choose any $g$ so that
$g^+ \in (z-\epsilon,z+\epsilon)$. Next choose any $h$ so that
$\ora{h}$ crosses $\ora{g}$. Then for sufficiently large $k$, $h^{\pm k}\cdot
\ora{x}$ will be in a neighborhood of $h^{\pm}$ and in particular will
not cross $\ora{g}$.
Finally, for large enough $n$, we have
that $g^n h^{\pm k} \cdot x^{\pm} \in (z-\epsilon,z+\epsilon)$.
Tracing through orientations shows that one of these translates will
be clockwise and the other will be counterclockwise.
\end{proof}

\begin{lemma}\label{lem:initial}
  Given $x, y \in \pi_1(S)$, there exist $a,b \in \pi_1(S)$ so that
  $(B \cdot \ora{x}, \ora{y}, a \cdot \ora{x})$ are R-parallel.
\end{lemma}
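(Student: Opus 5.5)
We construct $a$ and $b$ one at a time, using Lemma~\ref{lem:diagonaldense} to place translates of $\ora{x}$ with a prescribed orientation inside small arcs of $\partial_\infty S$. Recall what the statement asks. First, $(\ora{y}, a\cdot\ora{x})$ must be R-parallel: the endpoints must appear in ccw order $y^+, y^-, (a x)^-, (a x)^+$. Here $a\cdot\ora{x}$ has endpoints $a\cdot x^\pm$, and it must lie in the complementary arc $(y^-,y^+)$, oriented so that the pair is coherent. Second, $(B\cdot\ora{x},\ora{y})$ must be R-parallel: $B\cdot\ora{x}$ lies in the other complementary arc $(y^+,y^-)$, again with the coherent orientation. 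By Convention~\ref{conv:circle-intervals}, the arcs $(y^-,y^+)$ and $(y^+,y^-)$ are the two sides of $\ora{y}$.

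For $a$, pick any point $z$ in the open arc $(y^-,y^+)$ and take $\epsilon$ so small that $(z-\epsilon,z+\epsilon)\subset (y^-,y^+)$. Lemma~\ref{lem:diagonaldense} then gives $g\in\pi_1(S)$ with both endpoints of $g\cdot\ora{x}$ in this arc, in whichever cyclic order we need. Its proof produces both a clockwise and a counterclockwise translate, so we select the one whose order makes $(\ora{y},g\cdot\ora{x})$ R-parallel. Since the arc misses $y^\pm$ and the whole geodesic $g\cdot\ora{x}$ sits on one side of $\ora{y}$, the pair is disjoint and, by the choice of orientation, it is R-parallel rather than R-anti-parallel. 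Set $a\coloneqq g$.

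For $b$, run the same argument in the arc $(y^+,y^-)$ to get $h$ with $(h\cdot\ora{x},\ora{y})$ R-parallel, and set $b\coloneqq h^{-1}$, so that $B = h$ and $B\cdot\ora{x}=h\cdot\ora{x}$. Both R-parallel relations then hold, so the triple $(B\cdot\ora{x},\ora{y},a\cdot\ora{x})$ is R-parallel in the sense of Definition~\ref{def:crossing}.

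The only real point to check is the orientation bookkeeping, i.e.\ that ``some translate in a small arc has the right cyclic order'' is exactly what Lemma~\ref{lem:diagonaldense} provides. One should also dispose of degenerate cases: endpoints must be distinct, and $\ora{x}$ could share an endpoint with $\ora{y}$. Both are avoided by choosing the arcs to exclude $y^\pm$, since the translates' endpoints then differ from $y^\pm$.
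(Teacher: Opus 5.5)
Your proof is correct and is essentially the paper's argument, which just says the lemma is immediate from Lemma~\ref{lem:diagonaldense}. You place $a\cdot\ora{x}$ coherently in $(y^-,y^+)$ and $h\cdot\ora{x}$ coherently in $(y^+,y^-)$ with $b\coloneqq h^{-1}$, which spells out the orientation bookkeeping the paper leaves implicit.
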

\begin{proof}
  Immediate from Lemma~\ref{lem:diagonaldense}.
\end{proof}

%%% Local Variables:
%%% mode: latex
%%% TeX-master: "Intersections"
%%% End:

\section{Boxes and their measures}
\label{sec:define_measure}
Our main goal in the paper is to define a measure on the space of geodesics---in fact, a geodesic current---associated to a curve functional~$f$. 
Roughly speaking, our strategy is to associate a positive number, in a
countably additive way, to a collection of boxes of geodesics that is 
sufficiently rich to generate the $\sigma$-algebra of open sets of
$G(\Sigma)$.

To define the measure of our elementary boxes, we look at the
functional~$f$ applied to longer and longer curves~$C_i$. The
measure of a box is a linear combination of $f(C_i)$ chosen so that
the highest growth term vanishes, and the limit of interest picks out
the next asymptotics. This is reminiscent to the construction of Otal
\cite{Otal90:SpectreMarqueNegative}.

\subsection{Right-handed boxes}
\label{sec:boxes}

We now outline of the construction of the ``sufficiently rich family of boxes'' on which our measure will be defined.

Let $f\colon \Curves^+(S) \to \mathbb{R}$ be a
functional satisfying symmetry, additive union, smoothing and stability.
Fix
(arbitrarily) a simple closed oriented curve~$[x]$ which for convenience we take passing through the basepoint. The endpoints of the lifts of $[x]$ are dense in
$\partial_\infty S$, by minimality of the action of $\pi_1(S)$
(reviewed in Lemma~\ref{lem:minimal}).

The prototype for
the kind of boxes to which we assign measures is shown in
Figure~\ref{fig:newmodel3}: we have four geodesics, all lifts of~$[x]$, with
two lifts $\ora{\ell_1},\ora{\ell_2}$ on the left and two $\ora{r_1},\ora{r_2}$ on the right,
and consider the window
$[\ell_1^-,\ell_2^-) \times [r_2^+,r_1^+) \subset
\partial_\infty S \times \partial_\infty  S$ as a basic domain
for defining the measure.
(Recall that we give the endpoints of intervals in counterclockwise
order.)

The lifts are defined by a 4-tuple of
elements $[p_1,q_1,p_2,q_2]$ of $\pi_1(S)$ by
$\ell_i = P_i \cdot \ora{x}$ and $r_i = q_i \cdot \ora{x}$, with
conditions explained below.

\begin{figure}
\centering{
\resizebox{100mm}{!}{\Huge{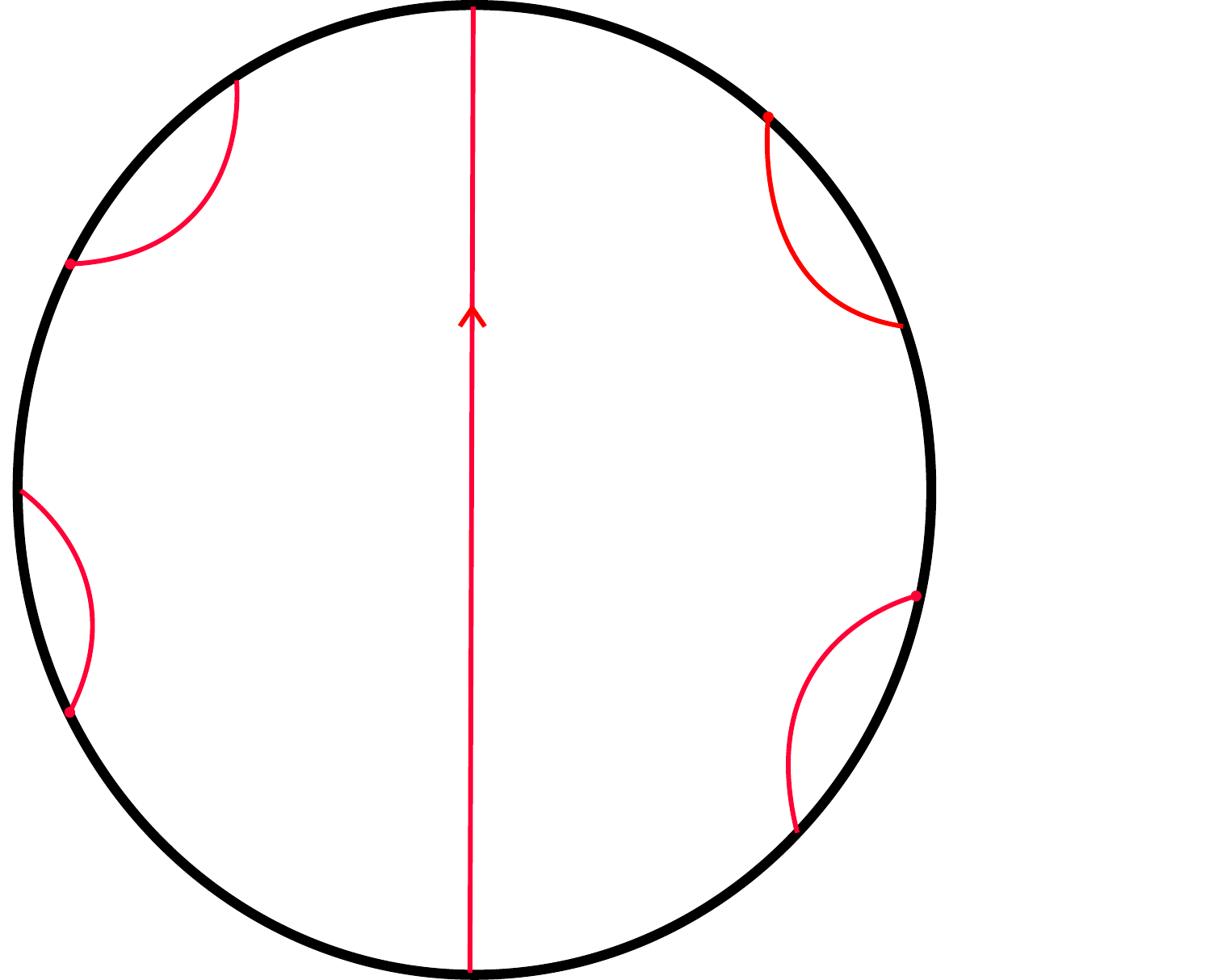}}
\caption{Schematically determining a RH box from group theory data
  with different lifts of a single curve $\protect[x]$. There are no
  constraints on how the base lift $\protect\ora{x}$ sits with respect to the
  other lifts. The other four lifts have constraints in how they
  sit; some other possibilities are shown in Figure~\ref{fig:rhbox}.
  Recall from Section~\ref{sec:actions} that the thicker arrows denote
the \emph{right} action of $\pi_1$.}
\label{fig:newmodel3}
}
\end{figure}

The measure that we ultimately assign to the associated windows will
be invariant under
\begin{subequations}\label{eq:changes}
\begin{align}
\text{(changing $p_i$ preserving $\ell_i$)} \quad & p_i \mapsto xp_i, \label{eq:changes-p}\\
\text{(changing $q_i$ preserving $r_i$)} \quad & q_i \mapsto q_ix. \label{eq:changes-q}
\end{align}
\end{subequations}
These two conditions are clearly necessary since the endpoints of $\ell_i$ and $r_i$ are invariant under such transformations.
In fact, ultimately we will take a limit of these
transformations: we look at the limit as $n\to \infty$ of $p_i \mapsto x^np_i$
and $q_i \mapsto q_i x^n$.

The measure $\mu$ will also be $\pi_1(S)$ invariant, which translates to
invariance under
(left-)translating all the lifts by $g \in \pi_1(S)$:
  \begin{equation}\label{eq:box-translate}
    (p_1,q_1,p_2,q_2) \mapsto g \cdot (p_1,q_1,p_2,q_2) \coloneqq
      (p_1 G, gq_1, p_2 G,  gq_2).
  \end{equation}
We will put off imposing this last invariance for the moment.

We now turn to the conditions on the lifts; for
instance, the window
$[\ell_1^-,\ell_2^-) \times [r_2^+,r_1^+)$ must avoid the
diagonal.

\begin{definition}\label{def:RH-lift-system}
  A \emph{right-handed lift system} (RH lift system) is a collection of elements
  $(p_1,q_1,p_2,q_2)$ in $\pi_1(S)$ so that
  \begin{enumerate}
  \item\label{item:box-nontriv-lift} $p_1P_2$ and $Q_1q_2$ are not powers of~$x$;
  \item\label{item:box-edges-cross-lift} for $i,j \in \{1,2\}$,
    $(\ora{p_iq_j},\ora{x})$ R-cross; and
  \item\label{item:box-diags-cross-lift} for sufficiently large~$n$,
    $(\ora{q_2x^np_1},\ora{q_1x^np_2})$ R-cross.
  \end{enumerate}
\end{definition}

See Figure~\ref{fig:rhbox} for examples of
RH systems, and Figure~\ref{fig:box4} for a non-example.
 We
distinguish three types of RH systems: \emph{type 1} has
no nested lifts of $[x]$; \emph{type 2} has one parallel pair of nested lifts; and
\emph{type 3} has two parallel pairs of nested lifts.
Note that since $[x]$ is simple, lifts
cannot intersect each other, and nested lifts cannot be anti-parallel
by condition \ref{item:box-edges-cross-lift}.

\begin{figure}
     \centering
\resizebox{40mm}{!}{\Huge{%% Creator: Inkscape 1.3 (0e150ed6c4, 2023-07-21), www.inkscape.org
%% PDF/EPS/PS + LaTeX output extension by Johan Engelen, 2010
%% Accompanies image file 'box1.pdf' (pdf, eps, ps)
%%
%% To include the image in your LaTeX document, write
%%   \input{<filename>.pdf_tex}
%%  instead of
%%   \includegraphics{<filename>.pdf}
%% To scale the image, write
%%   \def\svgwidth{<desired width>}
%%   \input{<filename>.pdf_tex}
%%  instead of
%%   \includegraphics[width=<desired width>]{<filename>.pdf}
%%
%% Images with a different path to the parent latex file can
%% be accessed with the `import' package (which may need to be
%% installed) using
%%   \usepackage{import}
%% in the preamble, and then including the image with
%%   \import{<path to file>}{<filename>.pdf_tex}
%% Alternatively, one can specify
%%   \graphicspath{{<path to file>/}}
%% 
%% For more information, please see info/svg-inkscape on CTAN:
%%   http://tug.ctan.org/tex-archive/info/svg-inkscape
%%
\begingroup%
  \makeatletter%
  \providecommand\color[2][]{%
    \errmessage{(Inkscape) Color is used for the text in Inkscape, but the package 'color.sty' is not loaded}%
    \renewcommand\color[2][]{}%
  }%
  \providecommand\transparent[1]{%
    \errmessage{(Inkscape) Transparency is used (non-zero) for the text in Inkscape, but the package 'transparent.sty' is not loaded}%
    \renewcommand\transparent[1]{}%
  }%
  \providecommand\rotatebox[2]{#2}%
  \newcommand*\fsize{\dimexpr\f@size pt\relax}%
  \newcommand*\lineheight[1]{\fontsize{\fsize}{#1\fsize}\selectfont}%
  \ifx\svgwidth\undefined%
    \setlength{\unitlength}{481.88976378bp}%
    \ifx\svgscale\undefined%
      \relax%
    \else%
      \setlength{\unitlength}{\unitlength * \real{\svgscale}}%
    \fi%
  \else%
    \setlength{\unitlength}{\svgwidth}%
  \fi%
  \global\let\svgwidth\undefined%
  \global\let\svgscale\undefined%
  \makeatother%
  \begin{picture}(1,1)%
    \lineheight{1}%
    \setlength\tabcolsep{0pt}%
    \put(0,0){\includegraphics[width=\unitlength,page=1]{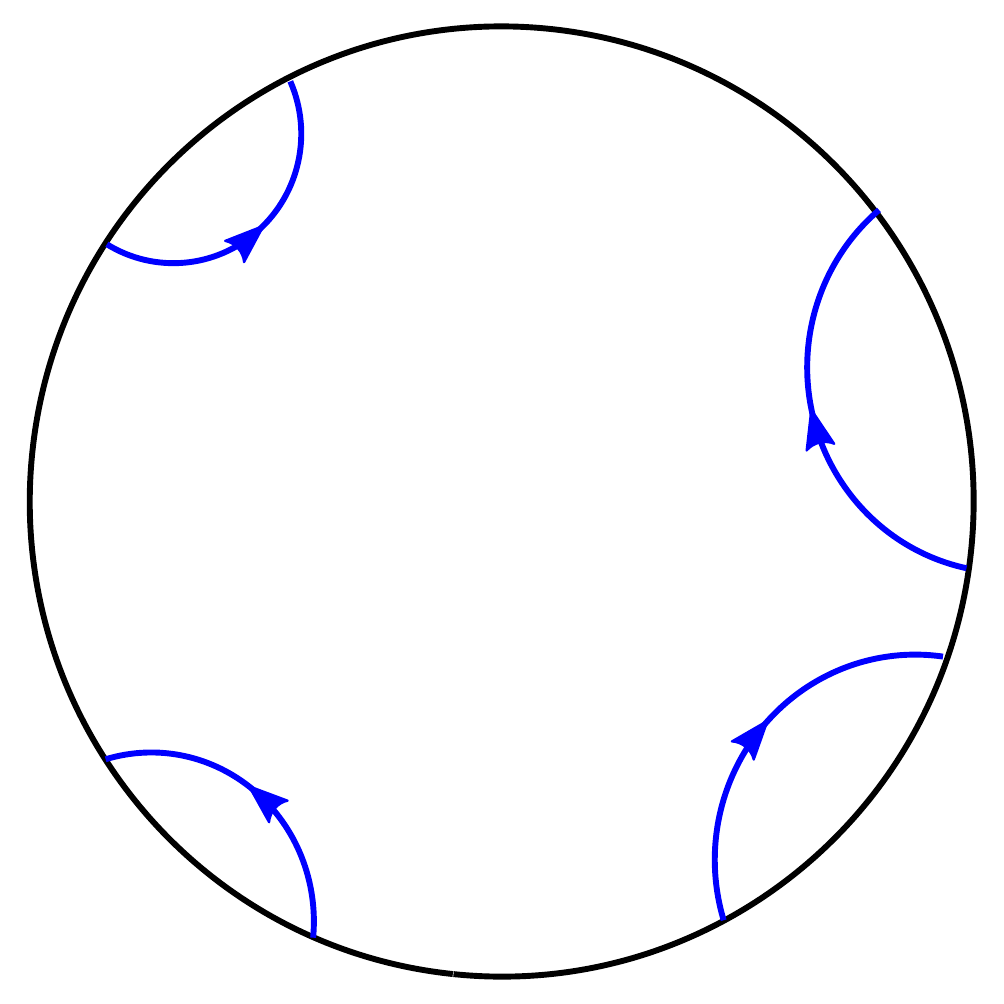}}%
    \put(0.88001856,0.80355029){\color[rgb]{0,0,0}\makebox(0,0)[lt]{\lineheight{1.25}\smash{\begin{tabular}[t]{l}$r_1^+$\end{tabular}}}}%
    \put(0.29728627,0.03719072){\color[rgb]{0,0,0}\makebox(0,0)[rt]{\lineheight{1.25}\smash{\begin{tabular}[t]{r}$\ell_2^-$\end{tabular}}}}%
    \put(0.07747119,0.75287979){\color[rgb]{0,0,0}\makebox(0,0)[rt]{\lineheight{1.25}\smash{\begin{tabular}[t]{r}$\ell_1^-$\end{tabular}}}}%
    \put(0.94091897,0.30420326){\color[rgb]{0,0,0}\makebox(0,0)[lt]{\lineheight{1.25}\smash{\begin{tabular}[t]{l}$r_2^+$\end{tabular}}}}%
  \end{picture}%
\endgroup%
}}
\quad
     \resizebox{40mm}{!}{\Huge{%% Creator: Inkscape 1.3 (0e150ed6c4, 2023-07-21), www.inkscape.org
%% PDF/EPS/PS + LaTeX output extension by Johan Engelen, 2010
%% Accompanies image file 'box.pdf' (pdf, eps, ps)
%%
%% To include the image in your LaTeX document, write
%%   \input{<filename>.pdf_tex}
%%  instead of
%%   \includegraphics{<filename>.pdf}
%% To scale the image, write
%%   \def\svgwidth{<desired width>}
%%   \input{<filename>.pdf_tex}
%%  instead of
%%   \includegraphics[width=<desired width>]{<filename>.pdf}
%%
%% Images with a different path to the parent latex file can
%% be accessed with the `import' package (which may need to be
%% installed) using
%%   \usepackage{import}
%% in the preamble, and then including the image with
%%   \import{<path to file>}{<filename>.pdf_tex}
%% Alternatively, one can specify
%%   \graphicspath{{<path to file>/}}
%% 
%% For more information, please see info/svg-inkscape on CTAN:
%%   http://tug.ctan.org/tex-archive/info/svg-inkscape
%%
\begingroup%
  \makeatletter%
  \providecommand\color[2][]{%
    \errmessage{(Inkscape) Color is used for the text in Inkscape, but the package 'color.sty' is not loaded}%
    \renewcommand\color[2][]{}%
  }%
  \providecommand\transparent[1]{%
    \errmessage{(Inkscape) Transparency is used (non-zero) for the text in Inkscape, but the package 'transparent.sty' is not loaded}%
    \renewcommand\transparent[1]{}%
  }%
  \providecommand\rotatebox[2]{#2}%
  \newcommand*\fsize{\dimexpr\f@size pt\relax}%
  \newcommand*\lineheight[1]{\fontsize{\fsize}{#1\fsize}\selectfont}%
  \ifx\svgwidth\undefined%
    \setlength{\unitlength}{481.88976378bp}%
    \ifx\svgscale\undefined%
      \relax%
    \else%
      \setlength{\unitlength}{\unitlength * \real{\svgscale}}%
    \fi%
  \else%
    \setlength{\unitlength}{\svgwidth}%
  \fi%
  \global\let\svgwidth\undefined%
  \global\let\svgscale\undefined%
  \makeatother%
  \begin{picture}(1,1)%
    \lineheight{1}%
    \setlength\tabcolsep{0pt}%
    \put(0,0){\includegraphics[width=\unitlength,page=1]{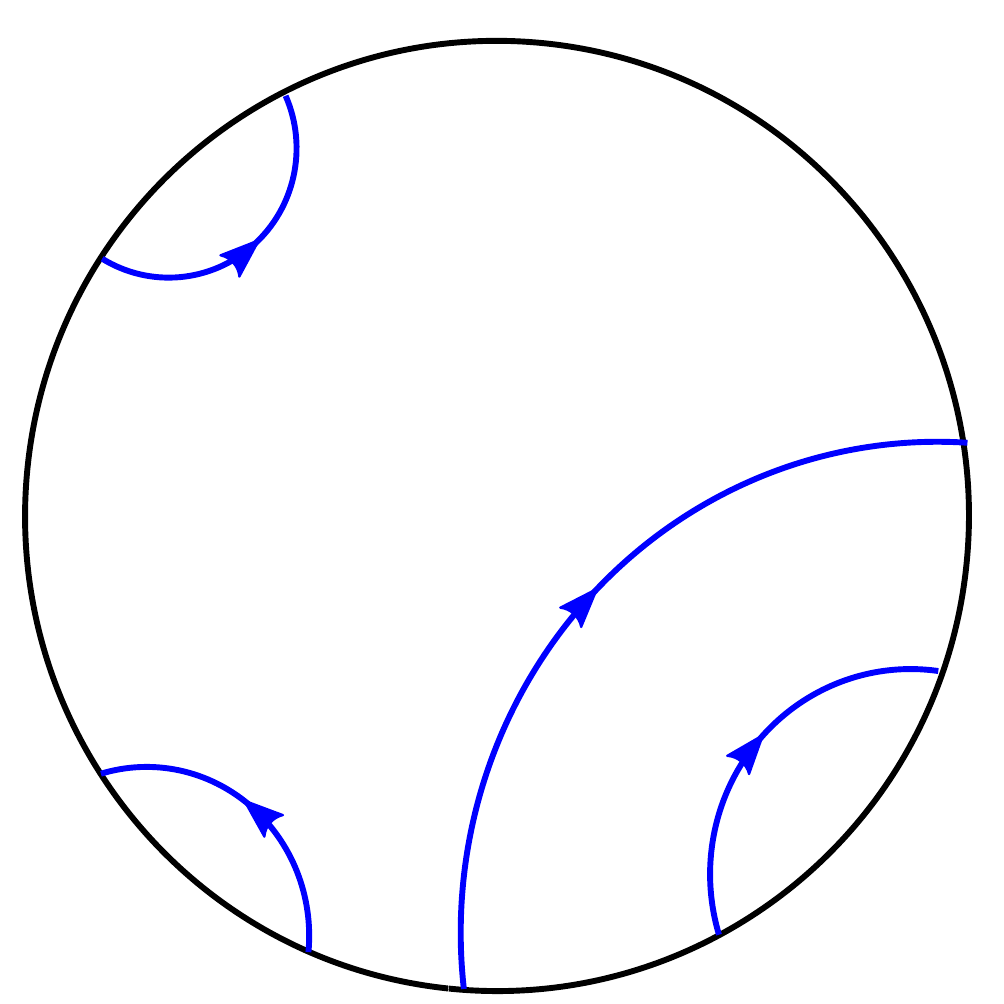}}%
    \put(0.98248639,0.54648341){\color[rgb]{0,0,0}\makebox(0,0)[lt]{\lineheight{1.25}\smash{\begin{tabular}[t]{l}$r_1^+$\end{tabular}}}}%
    \put(0.28460059,0.02678444){\color[rgb]{0,0,0}\makebox(0,0)[rt]{\lineheight{1.25}\smash{\begin{tabular}[t]{r}$\ell_2^-$\end{tabular}}}}%
    \put(0.0933848,0.74149741){\color[rgb]{0,0,0}\makebox(0,0)[rt]{\lineheight{1.25}\smash{\begin{tabular}[t]{r}$\ell_1^-$\end{tabular}}}}%
    \put(0.9579443,0.28503899){\color[rgb]{0,0,0}\makebox(0,0)[lt]{\lineheight{1.25}\smash{\begin{tabular}[t]{l}$r_2^+$\end{tabular}}}}%
  \end{picture}%
\endgroup%
}}
\quad
\resizebox{40mm}{!}{\Huge{%% Creator: Inkscape 1.3 (0e150ed6c4, 2023-07-21), www.inkscape.org
%% PDF/EPS/PS + LaTeX output extension by Johan Engelen, 2010
%% Accompanies image file 'box4.pdf' (pdf, eps, ps)
%%
%% To include the image in your LaTeX document, write
%%   \input{<filename>.pdf_tex}
%%  instead of
%%   \includegraphics{<filename>.pdf}
%% To scale the image, write
%%   \def\svgwidth{<desired width>}
%%   \input{<filename>.pdf_tex}
%%  instead of
%%   \includegraphics[width=<desired width>]{<filename>.pdf}
%%
%% Images with a different path to the parent latex file can
%% be accessed with the `import' package (which may need to be
%% installed) using
%%   \usepackage{import}
%% in the preamble, and then including the image with
%%   \import{<path to file>}{<filename>.pdf_tex}
%% Alternatively, one can specify
%%   \graphicspath{{<path to file>/}}
%% 
%% For more information, please see info/svg-inkscape on CTAN:
%%   http://tug.ctan.org/tex-archive/info/svg-inkscape
%%
\begingroup%
  \makeatletter%
  \providecommand\color[2][]{%
    \errmessage{(Inkscape) Color is used for the text in Inkscape, but the package 'color.sty' is not loaded}%
    \renewcommand\color[2][]{}%
  }%
  \providecommand\transparent[1]{%
    \errmessage{(Inkscape) Transparency is used (non-zero) for the text in Inkscape, but the package 'transparent.sty' is not loaded}%
    \renewcommand\transparent[1]{}%
  }%
  \providecommand\rotatebox[2]{#2}%
  \newcommand*\fsize{\dimexpr\f@size pt\relax}%
  \newcommand*\lineheight[1]{\fontsize{\fsize}{#1\fsize}\selectfont}%
  \ifx\svgwidth\undefined%
    \setlength{\unitlength}{481.88976378bp}%
    \ifx\svgscale\undefined%
      \relax%
    \else%
      \setlength{\unitlength}{\unitlength * \real{\svgscale}}%
    \fi%
  \else%
    \setlength{\unitlength}{\svgwidth}%
  \fi%
  \global\let\svgwidth\undefined%
  \global\let\svgscale\undefined%
  \makeatother%
  \begin{picture}(1,1)%
    \lineheight{1}%
    \setlength\tabcolsep{0pt}%
    \put(0,0){\includegraphics[width=\unitlength,page=1]{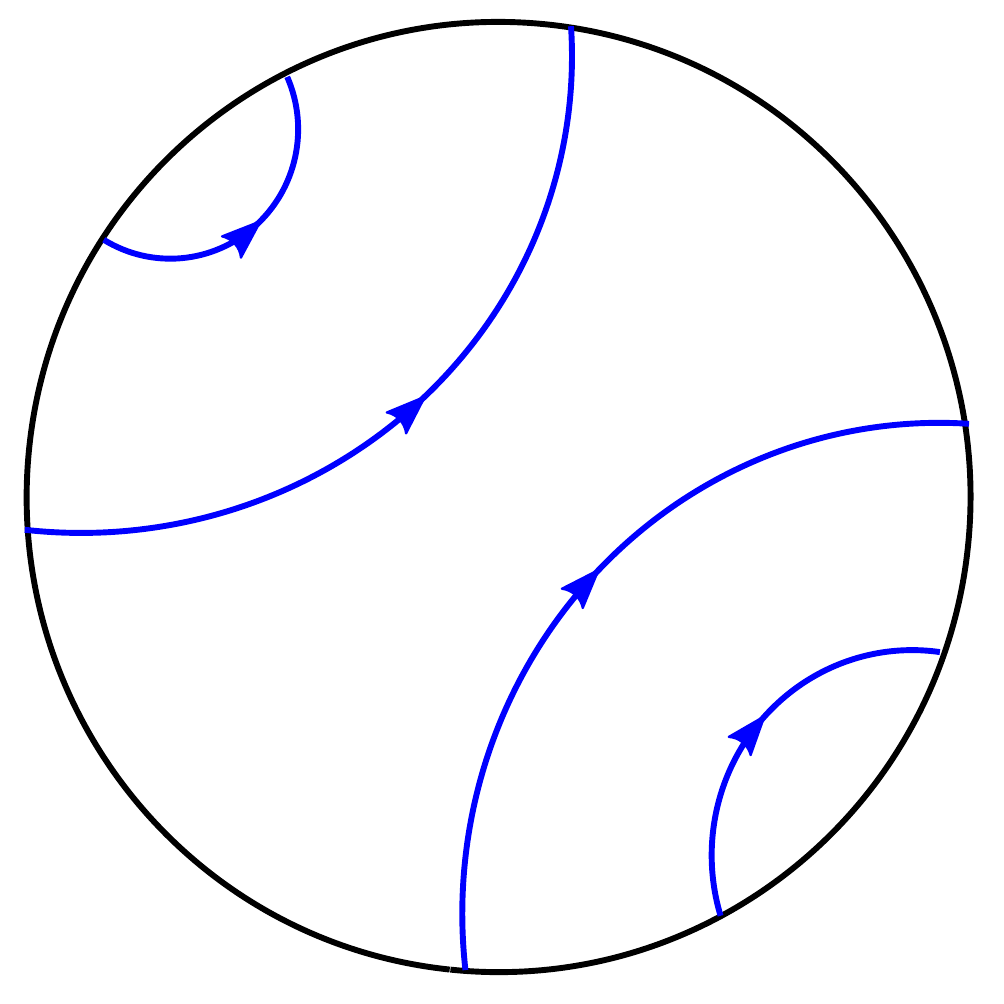}}%
    \put(0.98143101,0.56078439){\color[rgb]{0,0,0}\makebox(0,0)[lt]{\lineheight{1.25}\smash{\begin{tabular}[t]{l}$r_1^+$\end{tabular}}}}%
    \put(0.01627567,0.49701579){\color[rgb]{0,0,0}\makebox(0,0)[rt]{\lineheight{1.25}\smash{\begin{tabular}[t]{r}$\ell_2^-$\end{tabular}}}}%
    \put(0.07660767,0.75579839){\color[rgb]{0,0,0}\makebox(0,0)[rt]{\lineheight{1.25}\smash{\begin{tabular}[t]{r}$\ell_1^-$\end{tabular}}}}%
    \put(0.94116717,0.29933997){\color[rgb]{0,0,0}\makebox(0,0)[lt]{\lineheight{1.25}\smash{\begin{tabular}[t]{l}$r_2^+$\end{tabular}}}}%
  \end{picture}%
\endgroup%
}}
     \caption{Examples of right-handed boxes, of Type 1 (left), Type 2
     (middle), and Type 3 (right).}
     \label{fig:rhbox}
\end{figure}

\begin{figure}
  \centering{
\resizebox{45mm}{!}{\Huge{%% Creator: Inkscape 1.3 (0e150ed6c4, 2023-07-21), www.inkscape.org
%% PDF/EPS/PS + LaTeX output extension by Johan Engelen, 2010
%% Accompanies image file 'box3.pdf' (pdf, eps, ps)
%%
%% To include the image in your LaTeX document, write
%%   \input{<filename>.pdf_tex}
%%  instead of
%%   \includegraphics{<filename>.pdf}
%% To scale the image, write
%%   \def\svgwidth{<desired width>}
%%   \input{<filename>.pdf_tex}
%%  instead of
%%   \includegraphics[width=<desired width>]{<filename>.pdf}
%%
%% Images with a different path to the parent latex file can
%% be accessed with the `import' package (which may need to be
%% installed) using
%%   \usepackage{import}
%% in the preamble, and then including the image with
%%   \import{<path to file>}{<filename>.pdf_tex}
%% Alternatively, one can specify
%%   \graphicspath{{<path to file>/}}
%% 
%% For more information, please see info/svg-inkscape on CTAN:
%%   http://tug.ctan.org/tex-archive/info/svg-inkscape
%%
\begingroup%
  \makeatletter%
  \providecommand\color[2][]{%
    \errmessage{(Inkscape) Color is used for the text in Inkscape, but the package 'color.sty' is not loaded}%
    \renewcommand\color[2][]{}%
  }%
  \providecommand\transparent[1]{%
    \errmessage{(Inkscape) Transparency is used (non-zero) for the text in Inkscape, but the package 'transparent.sty' is not loaded}%
    \renewcommand\transparent[1]{}%
  }%
  \providecommand\rotatebox[2]{#2}%
  \newcommand*\fsize{\dimexpr\f@size pt\relax}%
  \newcommand*\lineheight[1]{\fontsize{\fsize}{#1\fsize}\selectfont}%
  \ifx\svgwidth\undefined%
    \setlength{\unitlength}{481.88976378bp}%
    \ifx\svgscale\undefined%
      \relax%
    \else%
      \setlength{\unitlength}{\unitlength * \real{\svgscale}}%
    \fi%
  \else%
    \setlength{\unitlength}{\svgwidth}%
  \fi%
  \global\let\svgwidth\undefined%
  \global\let\svgscale\undefined%
  \makeatother%
  \begin{picture}(1,1)%
    \lineheight{1}%
    \setlength\tabcolsep{0pt}%
    \put(0,0){\includegraphics[width=\unitlength,page=1]{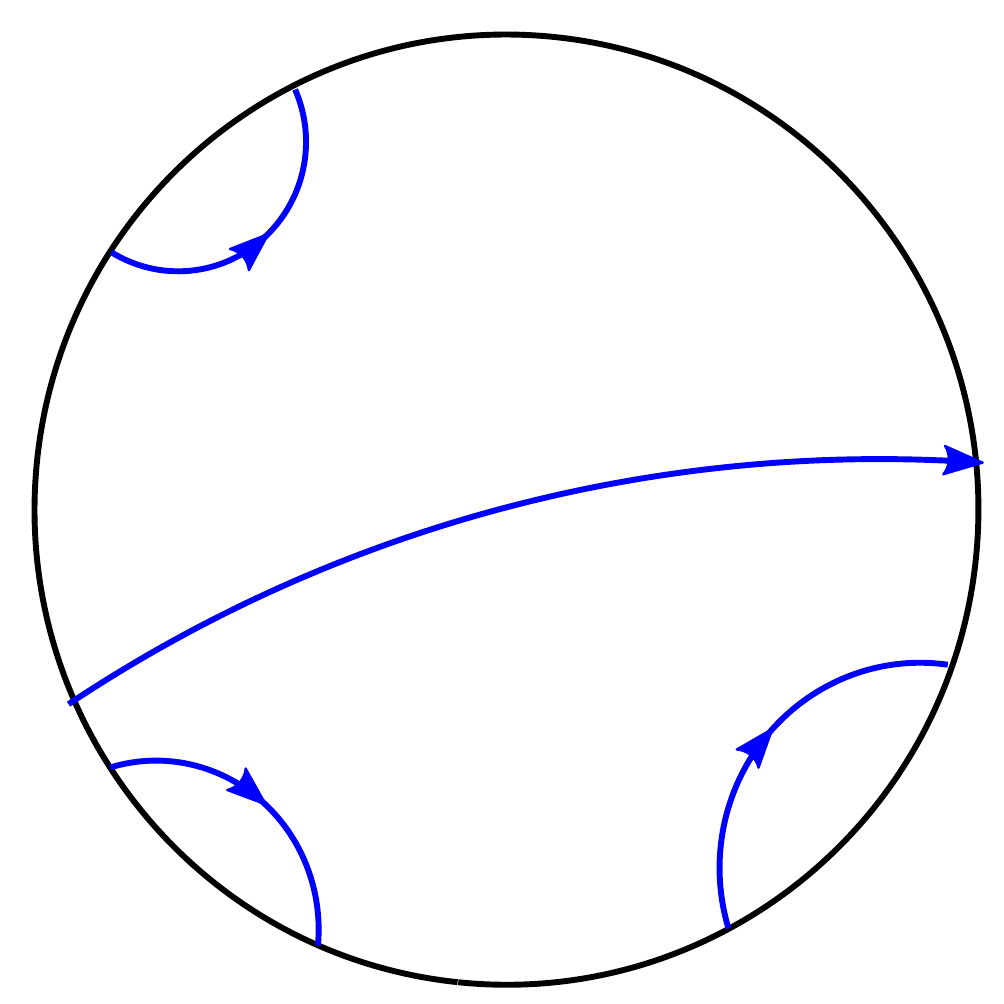}}%
    \put(1.02311016,0.53110157){\color[rgb]{0,0,0}\makebox(0,0)[lt]{\lineheight{1.25}\smash{\begin{tabular}[t]{l}$r_1^+$\end{tabular}}}}%
    \put(0.09568692,0.19691912){\color[rgb]{0,0,0}\makebox(0,0)[rt]{\lineheight{1.25}\smash{\begin{tabular}[t]{r}$\ell_2^-$\end{tabular}}}}%
    \put(0.10256505,0.74812599){\color[rgb]{0,0,0}\makebox(0,0)[rt]{\lineheight{1.25}\smash{\begin{tabular}[t]{r}$\ell_1^-$\end{tabular}}}}%
    \put(0.97341325,0.30110056){\color[rgb]{0,0,0}\makebox(0,0)[lt]{\lineheight{1.25}\smash{\begin{tabular}[t]{l}$r_2^+$\end{tabular}}}}%
  \end{picture}%
\endgroup%
}}
\caption{Non-example of right-handed box. It is a non-example because $\protect\ora{p_2q_1}$ does not cross $\protect\ora{x}$
  essentially (to the right or to the left)}
\label{fig:box4}
}
\end{figure}

\begin{definition}\label{def:RH-box}
  Every right handed lift system
  $(p_1,q_1,p_2,q_2)$ determines a
  \emph{right-handed box} of oriented geodesics in
  $\partial_{\infty} \Sigma \times \partial_{\infty} \Sigma - \Delta$, 
  given by \[
  B(p_1,q_1,p_2,q_2)=[\ell_1^-,\ell_2^-) \times [r_2^+,r_1^+),\] where $\ora{\ell_i} = P_i \cdot \ora{x}$ and $\ora{r_i} = q_i \cdot \ora{x}$.
 (On occasion, we may write $B_x(p_1,q_1,p_2,q_2)$ to make the choice of base element explicit.)
The set of all right-handed boxes is denoted by $\mathcal{B}$.
\end{definition}

\begin{remark}
\label{rmk:box_from_system}
Modifying the right-handed lift system by changing $p_i$ by
Eq.~\eqref{eq:changes-p} or changing $q_i$ by Eq.~\eqref{eq:changes-q} determines
the same right-handed box. Conversely, choices of the $p_i$/$q_i$
determining the same RH box are related in this way.
\end{remark}

The conditions in Definition~\ref{def:RH-lift-system} are chosen to
make the following proposition work.

\begin{proposition}\label{prop:rh-boxes}
  A tuple $(p_1,q_1,p_2,q_2)$ determines an RH lift system if and only if $\ora{\ell_i} =
  P_i \cdot \ora{x}$ and $\ora{r_i} = q_j \cdot \ora{x}$. Then the
  endpoints $\ell_1^-, \ell_2^-, r_2^+, r_1^+$ appear in that
  counter-clockwise order. In particular RH boxes in $\cB$ avoid the
  diagonal.
\end{proposition}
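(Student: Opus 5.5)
The plan is to read off the cyclic order of the four endpoints directly from conditions~\ref{item:box-edges-cross-lift} and~\ref{item:box-diags-cross-lift} of Definition~\ref{def:RH-lift-system}. The first condition places each left lift relative to each right lift. The second is a limiting statement that orders the two left lifts against the two right lifts. Condition~\ref{item:box-nontriv-lift} then upgrades the resulting weak inequalities to strict ones. The identification $\ora{\ell_i}=P_i\cdot\ora{x}$, $\ora{r_i}=q_i\cdot\ora{x}$ is just the definition of the lifts, and by Remark~\ref{rmk:box_from_system} nothing depends on the choice of representatives. So the real content is the ordering claim, and the diagonal claim will follow from it.

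\emph{Step 1: left lifts are R-parallel to right lifts.} By condition~\ref{item:box-edges-cross-lift}, $(\ora{p_iq_j},\ora{x})$ R-cross. By Lemma~\ref{lem:rhs}, this is equivalent to $(\ora{x},p_iq_j\cdot\ora{x})$ being R-parallel. Now apply the isometry $P_i$ and use $P_i\cdot\ora{x}=\ora{\ell_i}$ and $P_ip_iq_j\cdot\ora{x}=\ora{r_j}$. This gives that $(\ora{\ell_i},\ora{r_j})$ is R-parallel for all $i,j$, so the endpoints appear in the order $\ell_i^+,\ell_i^-,r_j^-,r_j^+$. In particular no endpoint of a left lift coincides with an endpoint of a right lift. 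Also, $\ell_1\neq\ell_2$ and $r_1\neq r_2$, because $p_1P_2$ and $Q_1q_2$ are not powers of $x$. Distinct lifts of the simple closed geodesic $[x]$ are disjoint and share no endpoints, since distinct lifts of closed geodesics never share an endpoint. Hence the four points $\ell_1^-,\ell_2^-,r_2^+,r_1^+$ are pairwise distinct.

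\emph{Step 2: ordering via the diagonal products.} Apply Lemma~\ref{lem:convergence_action} with $g=q_2$, $h=p_1$; its hypothesis is automatic in $\pi_1(S)$. This gives $(q_2x^np_1)^+\to q_2\cdot x^+=r_2^+$ and $(q_2x^np_1)^-\to P_1\cdot x^-=\ell_1^-$. Likewise $(q_1x^np_2)^+\to r_1^+$ and $(q_1x^np_2)^-\to\ell_2^-$. By condition~\ref{item:box-diags-cross-lift}, for large $n$ these two axes R-cross. Their endpoints therefore appear in the ccw order $(q_2x^np_1)^+,(q_1x^np_2)^+,(q_2x^np_1)^-,(q_1x^np_2)^-$. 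Passing to the limit, the weak cyclic order $r_2^+,r_1^+,\ell_1^-,\ell_2^-$ holds. Since the four limit points are distinct by Step~1, the order is strict. Cyclically rotated, this is exactly $\ell_1^-,\ell_2^-,r_2^+,r_1^+$.

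\emph{Step 3: the box avoids the diagonal.} It follows that the half-open arcs $[\ell_1^-,\ell_2^-)$ and $[r_2^+,r_1^+)$ are disjoint. Hence $B(p_1,q_1,p_2,q_2)$ misses $\Delta$.

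The step I expect to need the most care is the limiting argument in Step~2. Crossing for all large $n$ only yields a weak order in the limit, so the strictness genuinely depends on the distinctness in Step~1, and in particular on $[x]$ being simple. I would also double-check the orientation bookkeeping in Lemma~\ref{lem:rhs} under translation by $P_i$, since an L/R slip there would reverse the conclusion.
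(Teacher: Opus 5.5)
Your proof is correct and follows essentially the same route as the paper. Condition~\ref{item:box-nontriv-lift} gives distinct lifts, and condition~\ref{item:box-edges-cross-lift} with Lemma~\ref{lem:rhs} and translation by $P_i$ gives the R-parallelism; the endpoint limits from Lemma~\ref{lem:convergence_action}, combined with condition~\ref{item:box-diags-cross-lift}, then give the cyclic order. The differences are cosmetic: you apply the general form of Lemma~\ref{lem:convergence_action} directly instead of going through the paper's auxiliary Lemma~\ref{lem:box-endpoints}, and you spell out the distinctness of the four limit points, which the paper leaves implicit when passing from the strict order for large $n$ to the strict order in the limit.
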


\begin{proof}
  We first note that condition~\ref{item:box-nontriv-lift} guarantees that $\ora{\ell_1} \ne \ora{\ell_2}$ and $\ora{r_1} \ne \ora{r_2}$. Also, condition~\ref{item:box-edges-cross-lift} implies
  that for any $i,j \in \{1,2\}$, $\ora{x}$ and $p_iq_j\cdot \ora{x}$
  are R-parallel (Lemma~\ref{lem:rhs}) and thus (by translation by
  $P_i$) that $(\ora{\ell_i}, \ora{r_j})$ are R-parallel.

  To exploit condition~\ref{item:box-diags-cross-lift}, we prove a lemma.
  
  \begin{lemma}\label{lem:box-endpoints}
    Suppose $p,q \in \pi_1(S)$ so that $(\ora{pq},\ora{x})$ R-cross.
    Set $\ell = Pxp$, $r = qxQ$, and
    for $n \in \bbZ$ set $g_n \coloneqq q x^n p$. Then
    \begin{align*}
      \lim_{n^+} g_n^- &= \ell^- &
      \lim_{n^+} g_n^+ &= r^+.
    \end{align*}
  \end{lemma}

  \begin{proof}
  For the second statement, apply Lemma~\ref{lem:convergence_action} with $x=qxQ$ and $g=qp$.
  For the first statement, apply it with $x=PXp$ and $g=PQ$.
  \end{proof}

  Returning to the proof of Proposition~\ref{prop:rh-boxes}, set
  $g_{i,j,n} \coloneqq q_j x^n p_i$.
  Condition~\ref{item:box-diags-cross-lift} says that for sufficiently large $n$
  the endpoints $g_{1,2,n}^-, g_{2,1,n}^-, g_{1,2,n}^+, g_{2,1,n}^+$
  appear in that ccw order, which by Lemma~\ref{lem:box-endpoints} implies
  that $\ell_1^-,\ell_2^-,r_2^+,r_1^+$ appear in that order, as desired.
\end{proof}

\begin{remark}
 Condition~\ref{item:box-diags-cross-lift} is
 phrased as it is to ensure translation invariance for boxes of type~2
 and type~3: $\ora{q_2 x^n p_1}$ and $\ora{q_1 x^n p_2}$ might only cross for large
 enough~$n$.
 For boxes of type~1, the crossing is essential for all~$n$.
\label{rmk:largeenough}
\end{remark}

\begin{remark}
  The reader may wonder why we choose to make the boxes half-open in
  the specified way. It turns out we could have made a different choice and it would not matter for the resulting current, as by
  Proposition~\ref{prop:spikes}
  the weight of the
  boundary of these boxes under any geodesic current is necessarily
  zero: the only lift of closed geodesics with an endpoint starting at one of the endpoints of the box is a single lift of~$[x]$, and by definition, these lifts are not contained in an RH box. The half-open
  choice we made is
  more convenient the proof of Proposition~\ref{prop:countablyadditive}.
  \label{rmk:no_atom_boundary_box}
\end{remark}

\begin{proposition}
The family of right handed boxes~$\mathcal{B}$ is dense in the space
of rectangles in $G(S)$, in the following sense. For any
$v_1,v_2,w_2,w_1\in \bdy_\infty S$ in that cyclic order,
and any $\epsilon>0$ so the respective $\epsilon$-neighborhoods are
disjoint, there exist $p_1,p_2,q_1,q_2 \in \pi_1(S)$ forming a
type~1 RH box $B(p_1,q_1,p_2,q_2)$ so that $p_i\cdot x^- \in B_\epsilon(v_i)$
and $q_j\cdot x^+ \in B_\epsilon(w_j)$.
\label{prop:density}
\end{proposition}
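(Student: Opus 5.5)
We need lifts $\ora{\ell_i}=P_i\cdot\ora{x}$ near $v_i$ and $\ora{r_j}=q_j\cdot\ora{x}$ near $w_j$, with all four lifts small, pairwise disjoint, and correctly oriented. The plan is to build these lifts first, and then check the three conditions of Definition~\ref{def:RH-lift-system}, using the type~1 hypothesis to make the crossing conditions hold for every $n$.

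First we choose the four lifts. By Lemma~\ref{lem:diagonaldense}, applied four times, we pick $g_1,g_2,h_1,h_2\in\pi_1(S)$ such that each lift $g_i\cdot\ora{x}$ (resp.\ $h_j\cdot\ora{x}$) has both endpoints in $B_\epsilon(v_i)$ (resp.\ $B_\epsilon(w_j)$). We choose the orientation, clockwise or counterclockwise, so that the following hold. Each $\ora{\ell_i}\coloneqq g_i\cdot\ora{x}$ has $(\ora{x},g_i^{-1}h_j\cdot\ora{x})$ R-parallel; this amounts to the window of $\ora{\ell_i}$ being oriented oppositely to that of $\ora{r_j}$, as in the left picture of Figure~\ref{fig:rhbox}. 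Equivalently, $(\ora{\ell_i},\ora{r_j})$ are R-parallel for all $i,j$.

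Since the $\epsilon$-balls are disjoint and the lifts are small arcs inside them, no lift is nested in another. So this configuration is type~1. We set $P_i=g_i$, i.e.\ $p_i=g_i^{-1}$, and $q_j=h_j$. Then $p_i\cdot x^-=g_i^{-1}x^-$. Here there is a convention mismatch: the statement says $p_i\cdot x^-\in B_\epsilon(v_i)$, while the lift is $P_i\cdot\ora{x}$. I would follow the box definition, i.e.\ put $\ell_i^-=P_i\cdot x^-$ near $v_i$, and read the statement accordingly.

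Next we verify the lift-system conditions.

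Condition~\ref{item:box-nontriv-lift}: the lifts $\ora{\ell_1}\ne\ora{\ell_2}$ lie in disjoint balls, so $p_1P_2\notin\langle x\rangle$. Similarly $Q_1q_2\notin\langle x\rangle$.

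Condition~\ref{item:box-edges-cross-lift}: translating by $p_i$, the R-parallelism of $(\ora{\ell_i},\ora{r_j})$ becomes R-parallelism of $(\ora{x},p_iq_j\cdot\ora{x})$. By Lemma~\ref{lem:rhs}, $(\ora{p_iq_j},\ora{x})$ then R-cross. Here we need that $p_iq_j$ shares no endpoint with $x$, which holds since the lifts are distinct.

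Condition~\ref{item:box-diags-cross-lift} is where I expect the main work. Set $g_{i,j,n}=q_jx^np_i$. By Lemma~\ref{lem:windows} (with roles adapted), or directly from Lemma~\ref{lem:NS-dynamics}, the element $g_{i,j,n}$ maps the complement of the window of $\ora{\ell_i}$ into the window of $\ora{r_j}$. Indeed, $p_i$ sends that complement into the half-plane side of $\ora{x}$, $x^n$ preserves it, and $q_j$ sends it into the window of $\ora{r_j}$. Hence $g_{i,j,n}^+$ lies in the window of $\ora{r_j}$ and $g_{i,j,n}^-$ lies in the window of $\ora{\ell_i}$, for all $n$. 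Because the four windows are disjoint and lie in the cyclic order $v_1,v_2,w_2,w_1$, the endpoints of $g_{1,2,n}$ and $g_{2,1,n}$ interleave in the required ccw order. Thus $(\ora{q_2x^np_1},\ora{q_1x^np_2})$ R-cross for every $n$, consistent with Remark~\ref{rmk:largeenough}.

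The remaining obstacle is bookkeeping the orientations so that these interleavings come out right-handed rather than left-handed. I would settle this by checking against Proposition~\ref{prop:rh-boxes}, which forces the order $\ell_1^-,\ell_2^-,r_2^+,r_1^+$, and then choosing the orientations from Lemma~\ref{lem:diagonaldense} to match it.
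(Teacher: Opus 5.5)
Your proposal is correct and follows the paper's proof: you use Lemma~\ref{lem:diagonaldense} to pick four small lifts of $[x]$ in the four $\epsilon$-neighborhoods, oriented so that every pair $(\ora{\ell_i},\ora{r_j})$ is R-parallel, which gives a type~1 RH lift system. Your explicit checks of the three conditions fill in details the paper leaves implicit. In particular, $q_jx^np_i$ maps the complement of the window of $\ora{\ell_i}$ onto the window of $\ora{r_j}$, so the diagonal crossing holds for all $n$. Once the R-parallel orientations are fixed, that crossing is automatically right-handed, so the orientation bookkeeping you left open is already settled. You are also right that the statement should read $P_i\cdot x^-\in B_\epsilon(v_i)$, matching $\ora{\ell_i}=P_i\cdot\ora{x}$ as in the paper's own proof.
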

\begin{proof}
  By Lemma~\ref{lem:diagonaldense}, we can find translates
  $\ora{\ell_i} = P_i\cdot \ora{x}$ and $\ora{r_j} = q_j \cdot \ora{x}$ with both endpoints in
  the respective $\epsilon$-neighborhoods of the $v_i$ and $w_j$.
  Furthermore, that same lemma lets us arrange the orientations so
  that $\ora{\ell_i}$ and $\ora{r_j}$ are R-parallel, thus forming a
  RH lift system of type~$1$.
\end{proof}

\begin{figure}
\centering{
\resizebox{60mm}{!}{\Huge{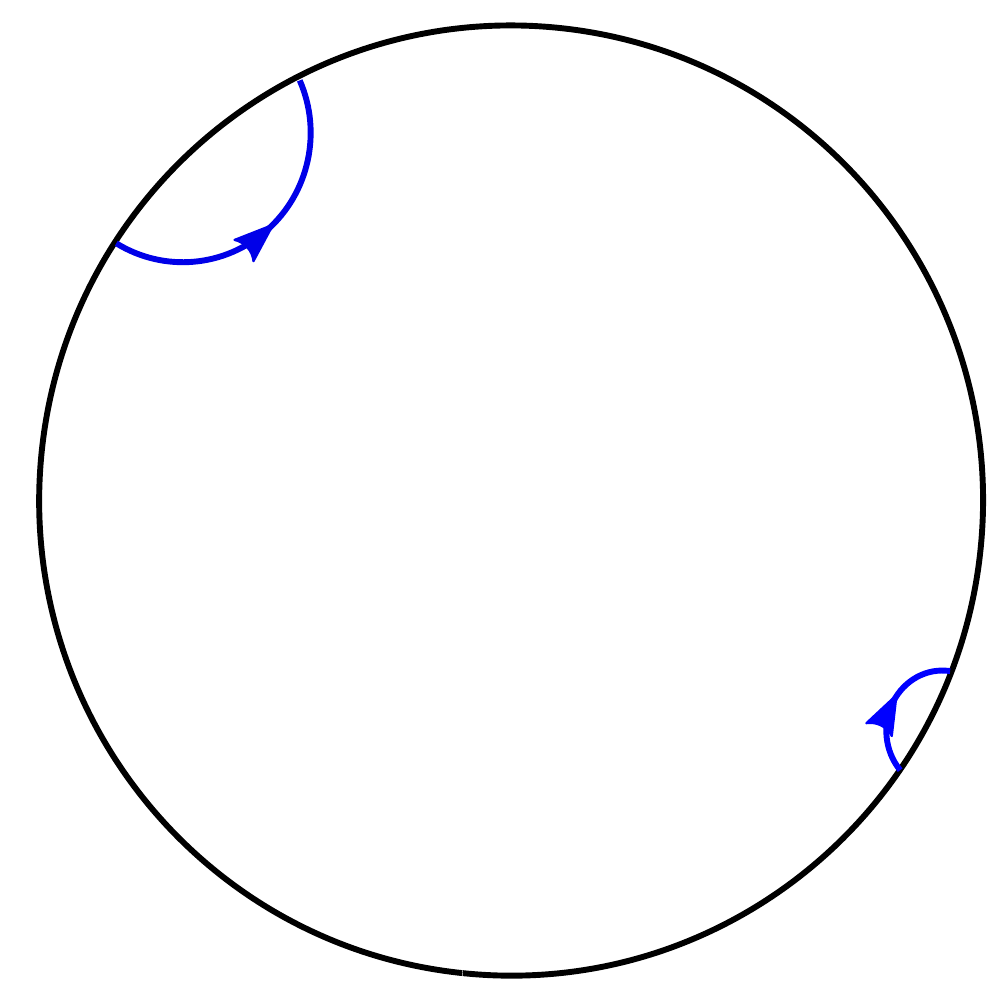}}
\caption{Approximating a boundary point of the interval of endpoints of a right-handed box}
\label{fig:approx2}
}
\end{figure}

We now turn to invariance under translation by $\pi_1(S)$, as in
Eq.~\eqref{eq:box-translate}. The $\pi_1(S)$-orbit of $(p_1,q_1,p_2,q_2)$
can be parameterized by group elements $(a,b,c,d)$ related
  to the $p_i$ and $q_i$ by
\begin{equation}
\begin{aligned}
  a &= p_1 q_1 &\qquad\qquad\qquad\qquad\qquad b &= p_1 q_2\\[2pt]
  c &= p_2 q_1 & d &= p_2 q_2.
\end{aligned}
  \label{eq:relationdatum}
\end{equation}

\begin{figure}
\centering{
\resizebox{100mm}{!}{\Huge{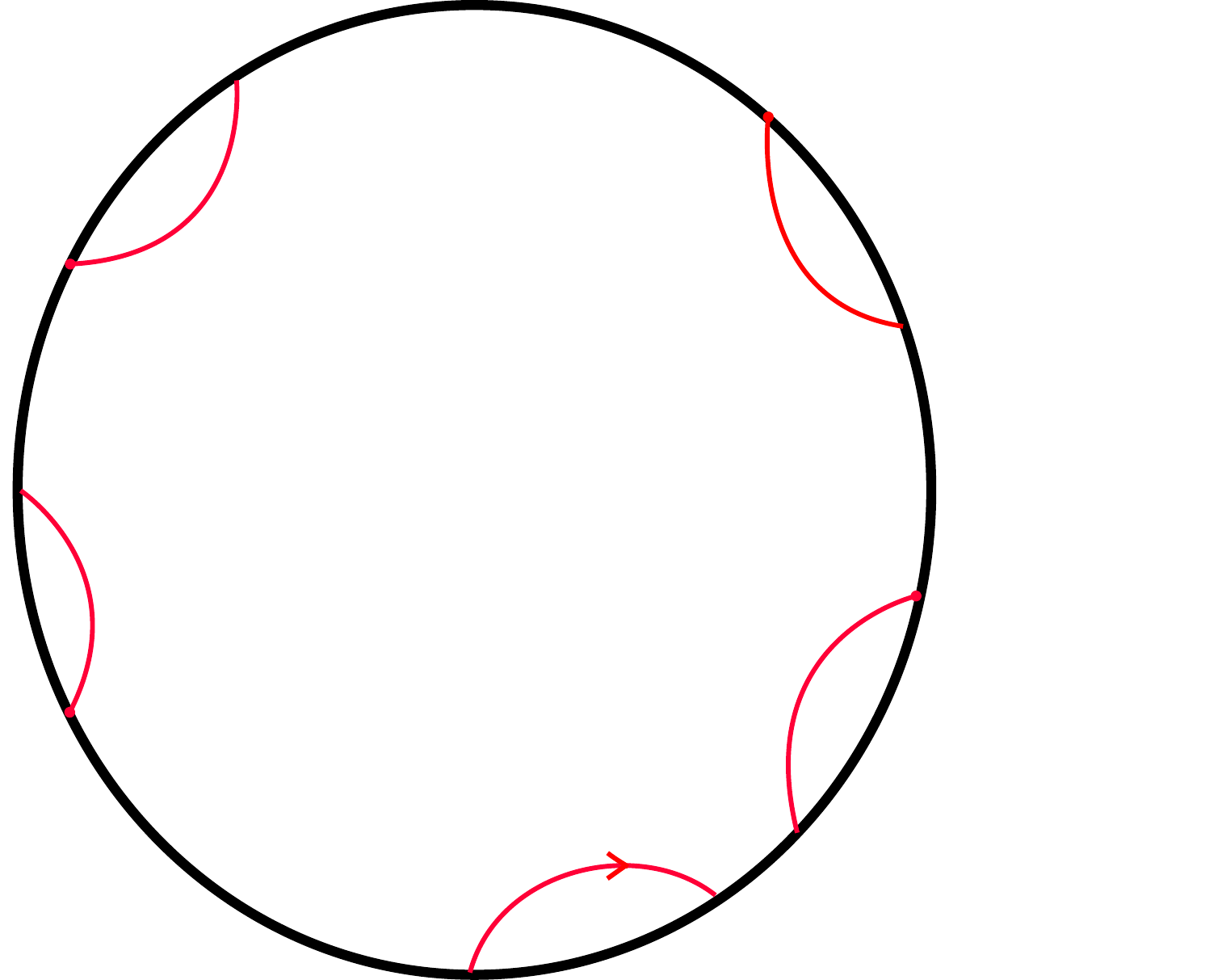}}
\caption{Parameterizing an RH orbit via elements $a,b,c,d$. The original position of $\protect\ora{x}$ is arbitrary, but the particular lifts determining the box fall into a type 1. Compare with Figure~\ref{fig:newmodel3}.}
\label{fig:newmodel3_orbit}
}
\end{figure}

Compare Figure~\ref{fig:newmodel3_orbit}. In this parameterization, we necessarily have $aCbD = 1$.
Note that, by Eq.~\eqref{eq:double_coset}, the relative position
of (say) $\ora{\ell_1}$ and $\ora{r_1}$ is given by the double coset
$\langle x \rangle a \langle x \rangle$.

Note that, for instance, the points $\wt \ast \cdot p_1^{-1}$ and $\wt \ast \cdot q_1$ are related via $a$ by the right action: $(\wt \ast \cdot p_1^{-1}) \cdot a = \wt \ast \cdot q_1$.
On the other hand, the lifts of $[x]$ give by $\ora{\ell_1}$ and
$\ora{r_1}$ are related via the element $q_1 p_1$ by the left action:
$q_1 p_1 \cdot \ora{\ell_1} = \ora{r_1}$. Furthermore, $(\ora{\ell_1},\ora{r_1})$ are R-parallel, which is equivalent (by Lemma~\ref{lem:rhs}) to $p_1 \cdot \ora{q_1 p_1} = \ora{p_1 q_1} = \ora{a}$ R-crosses $\ora{x}$.

The action of translating all lifts by $\pi_1(S)$ has been baked in to
this alternate parametrization,
but the measure corresponding to the associated window should be
invariant under the following four transformations:
\begin{subequations}\label{eq:translations}
\begin{align}
\text{(translating along $\ell_1$)} \quad & a \mapsto xa,\quad b \mapsto xb \label{eq:translations-l1}\\
\text{(translating along $\ell_2$)} \quad & c \mapsto xc,\quad d \mapsto xd \label{eq:translations-l2}\\
\text{(translating along $r_1$)} \quad & a \mapsto ax,\quad c \mapsto cx \label{eq:translations-r1}\\
\text{(translating along $r_2$)} \quad & b \mapsto bx,\quad d \mapsto dx \label{eq:translations-r2}
\end{align}
\end{subequations}
Conversely, given elements $a,b,c,d \in \pi_1(S)$ with
$aCdB = 1$, we can
construct four lifts of $[x]$ related by Equation~\eqref{eq:relationdatum}, for instance
setting $p_1 = 1$, $q_1 = a$, $q_2 = b$, and $p_2 = cA$.

\begin{definition}[RH orbit]
\label{def:RH-lift-sys}
Let $a,b,c,d \in \pi_1(S,\ast)$ be such that:
\begin{enumerate}[label=(\arabic*${}'$),start=0]
\item\label{item:box-close} $a C d B = 1$;
\item\label{item:box-nontriv} $aC(=bD)$ and $Ab(=Cd)$ are not powers of~$x$;
\item\label{item:box-edges-cross} $\ora{a},\ora{b},\ora{c},\ora{d}$ cross $\ora{x}$ to the right; and
\item\label{item:box-diags-cross} for sufficiently large~$n$,
  $\ora{dx^na C}
  (= \ora{cAbx^na C})$ crosses
  $\ora{cx^n}$ to the right.
\end{enumerate}

Then we call the datum $[a,b,c,d]$ a \emph{right-handed lift orbit} (or \emph{RH orbit}, for short).
\end{definition}

As explained above, this determines a $\pi_1(S)$-orbit of RH lift systems.
The one non-obvious point to check is that
condition~\ref{item:box-diags-cross} corresponds
to condition~\ref{item:box-diags-cross-lift}:
\begin{align*}
  cx^n &= p_2 q_1 x^n& dx^naC &= p_2 q_2 x^n p_1 P_2\\
  \intertext{which after conjugating by $P_2$ yields}
  q_1 x^n p_2 &= g_{2,1,n} & q_2 x^n p_1 &= g_{1,2,n}.
\end{align*}
Thus the two crossing conditions are equivalent.

\begin{definition}
  Let $B = B(p_1,q_1,p_2,q_2)$ be an RH box. The \emph{$\pi_1(S)$-orbit of $B$}
  \[
    [B] \;=\; \{ g \cdot B \colon g \in \pi_1(S) \},
  \]
  where we consider the left action.
  We write $[B(p_1,q_1,p_2,q_2)]$ to emphasize a chosen representative, or $[B]_{a,b,c,d}$ to emphasize the RH orbit. (On occasion we might write $[B_x]_{a,b,c,d}$ to be explicit about the base element $x$).
\end{definition}

\begin{remark}\label{rmk:flip_rh_box}
Even though in the sequel we will fix our base element to be $x$ once and for all, we observe that flipping an RH orbit with respect to $x$ gives a new RH orbit with respect to $x^{-1}$, in the following sense.
Let $[a,b,c,d]$ define an RH orbit with respect to the fixed element $x \in \pi_1(S)$.
Then the datum $[a', b', c', d']$ with $a' \coloneqq D$, $b' \coloneqq B$, $c'\coloneqq C, d'\coloneqq A$ is an RH orbit with respect to the base element $x' \coloneqq X$. Similarly, if $[p_1,q_1,p_2,q_2]$ is an RH lift system in the RH orbit $[a,b,c,d]$ with base element $x$, the datum $[p_1',q_1',p_2',q_2']$
where $p_1' \coloneqq Q_2$, $p_2' \coloneqq Q_1, q_1' \coloneqq P_2, q_2' \coloneqq P_1$
is an RH system with RH orbit $[a',b',c',d']$ and basepoint $x'$.
From this, one immediately has
\[B_x(p_1,q_1,p_2,q_2)=\sigma\bigl(B_{x'}(p_1',q_1',p_2',q_2')\bigr),\]
where $\sigma$ is the flip map.
\end{remark}

\subsection{Measures of boxes}

We will now start setting up the definition of the measure of an orbit
of right-handed box $[B]_{a,b,c,d}$. In particular, we will define a function
$\wh{\mu_f} \colon \mathcal{B} \to \mathbb{R}_{\geq 0}$ in terms of a limit
involving the $f$-value of a certain combination of words involving
$a,b,c,d,x$. (Recall $\mathcal{B}$
denotes the family of right-handed boxes.)

\begin{proposition}
\label{prop:limitexists}
Let $B(p_1,q_1,p_2,q_2)$ be a right-handed box so that $[B(p_1,q_1,p_2,q_2)]=[B]_{a,b,c,d}$.
Then following limit exists and only depends on the $\pi_1(S)$-orbit
of the box:
\begin{equation}
\wh{\mu_{f}}(B(p_1,q_1,p_2,q_2))\coloneqq\frac{1}{2}\lim_{n \to \infty}f([bx^n]) + f([cx^n])-f([ax^n]) - f([dx^n]).
\label{eq:def_measure}
\end{equation}
\end{proposition}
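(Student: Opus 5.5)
We split the claim into two parts. First, the limit in \eqref{eq:def_measure} exists. Second, its value depends only on the orbit datum $[a,b,c,d]$, and is unchanged by the moves \eqref{eq:translations} that fix the box orbit.

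\textbf{Setting up the sequence.} Write $g(n)\coloneqq f([bx^n])+f([cx^n])-f([ax^n])-f([dx^n])$. More generally, set
\[
F(n,m,k,l)\coloneqq f([bx^m])+f([cx^k])-f([ax^n])-f([dx^l]),
\]
with appropriate exponent shifts. By condition~\ref{item:box-edges-cross}, all of $\ora{a},\ora{b},\ora{c},\ora{d}$ cross $\ora{x}$ to the right. Since $[x]$ is simple, Proposition~\ref{prop:join-split-geom} applies with $s=s'=0$.

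\textbf{Convexity of each term.} Eq.~\eqref{eq:cross-simple} gives $[ax^{n+1}][ax^{n-1}]\reducesto 2[ax^n]$. By smoothing and additivity, each one-variable function $n\mapsto f([ax^n])$ is convex on $\bbZ$; likewise for $b,c,d$. Convexity alone does not control the signed combination, so the key is a \emph{mixed} inequality.

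\textbf{The mixed inequality.} Using $aCdB=1$, i.e.\ $d=cAb$, we compare $[ax^n][dx^n]$ with $[bx^n][cx^n]$. The plan is as follows.
\begin{enumerate}
\item Write $[cx^{n}][bx^{n}]$ and apply \eqref{eq:cross-join} with a suitable shift $r$. Condition~\ref{item:box-diags-cross} is exactly the statement that, for large $n$, the relevant lifts $\ora{dx^naC}$ and $\ora{cx^n}$ R-cross.
\item This yields an inequality of the form $f([bx^n])+f([cx^n])\ge f([ax^{n'}])+f([dx^{n''}])$, or its reverse via \eqref{eq:cross-split}.
\end{enumerate}
Combining these two-variable smoothings with the one-variable convexity, the aim is to show that $g(n)$ is eventually monotone and bounded. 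Lower boundedness should follow from non-negativity (Proposition~\ref{prop:curve-positive}) applied to a smoothing whose result is $g(n)$ plus nonnegative terms. Upper boundedness should follow from comparing against $f([aC])$-type terms, using \eqref{eq:cancel-axby}.

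Once $g$ is bounded and (eventually) convex or monotone, Lemma~\ref{lem:onevarconvexbounded}, or its $R$-convex version Lemma~\ref{lem:limexistrmidconvex}, gives existence of the limit. If only two-variable axis-convexity is available, we instead treat $F(n,m)$ with independent exponents on the two pairs and apply Lemma~\ref{lem:limexistrmidconvextwovar}. This also shows the diagonal limit equals the iterated limits.

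\textbf{Orbit invariance.} The moves \eqref{eq:translations-l1}--\eqref{eq:translations-r2} shift exponents. For example, $a\mapsto ax$, $c\mapsto cx$ replaces $f([ax^n])$ and $f([cx^n])$ by $f([ax^{n+1}])$ and $f([cx^{n+1}])$. Similarly, $a\mapsto xa$ gives $[xax^n]=[ax^{n+1}]$ by conjugation invariance. Hence invariance is exactly the statement that the two-variable limit is independent of the shift pattern, which the previous step supplies. The choices of representative $p_i,q_i$ within the same box (Remark~\ref{rmk:box_from_system}) differ precisely by such moves. Left translation by $\pi_1(S)$ does not change $a,b,c,d$ at all.

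\textbf{Main obstacle.} The hardest step is the mixed boundedness and monotonicity, i.e.\ verifying the essential crossings for the join and split smoothings uniformly in large $n$. This is especially delicate for type~2 and type~3 boxes, where the crossing only holds for large $n$ (Remark~\ref{rmk:largeenough}). We would first try to reduce everything to the band-model window analysis of Proposition~\ref{prop:join-split-geom} and Corollary~\ref{cor:fund-axby}, with $x$ playing both roles.
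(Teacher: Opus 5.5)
There is a genuine gap in the existence step. It comes from a wrong turn right after your correct observation that each $n\mapsto f([ax^n])$ is convex.

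The missing idea is that each of the four terms is \emph{individually} bounded once its linear growth is removed, so no interaction between $a,b,c,d$ is needed. Since $(\ora{a},\ora{x^n})$ R-cross, oriented disconnected smoothing $[a][x^n]\reducesto[ax^n]$ gives $f([ax^n])\le f([a])+nf([x])$. Eq.~\eqref{eq:cross-cancel} with $m=0$ gives $[ax^n][a]\reducesto[x^n]$, hence $f([ax^n])\ge nf([x])-f([a])$ for large $n$ (stability supplies $f([x^n])=nf([x])$). Thus $h_a(n)\coloneqq f([ax^n])-nf([x])$ is convex and bounded, hence convergent. This is Lemma~\ref{lem:additivity-limit}, from which the paper obtains the proposition at once.

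Indeed $g=h_b+h_c-h_a-h_d$, because the linear terms cancel (two of each sign). Invariance under each move in \eqref{eq:translations} is then trivial. Each move shifts the exponent by one in exactly one positive and one negative term, so the linear parts still cancel, and $h_\bullet(n+1)$ and $h_\bullet(n)$ have the same limit. The paper phrases this through a double limit, but nothing more is required. Condition~\ref{item:box-diags-cross} plays no role in this proposition.

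The route you propose instead does not work as written.
\begin{itemize}
\item The function $g$ is a difference of convex functions, so it is neither convex nor axis-convex. Lemmas~\ref{lem:onevarconvexbounded}, \ref{lem:limexistrmidconvex} and~\ref{lem:limexistrmidconvextwovar} therefore do not apply to it, nor to your $F$.
\item There is no reason for $g$ to be eventually monotone: it is a difference of eventually non-increasing sequences.
\item Your step~(2) is not what a smoothing produces. Resolving the crossing of $[bx^n][cx^n]$ supplied by condition~\ref{item:box-diags-cross} gives the \emph{connected} curve $[dx^nax^n]$, not a two-component curve $[ax^{n'}][dx^{n''}]$.
\item Splitting $f([dx^nax^n])$ asymptotically into $f([ax^n])+f([dx^n])$ is exactly Proposition~\ref{prop:join-split-alg}. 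Its proof presupposes the single-term limits you have not established.
\end{itemize}
What this mixed inequality really gives, once existence is known, is non-negativity of the limit (Proposition~\ref{prop:box-positive}), not its existence.

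Your sketched bounds via Proposition~\ref{prop:curve-positive} and Eq.~\eqref{eq:cancel-axby} are not tied to $g$ by any concrete smoothing. Your orbit-invariance step defers to ``the previous step'', so it inherits the same gap.
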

Note this function on boxes (which will eventually become a pre-measure and
then a measure) is automatically (left) $\pi_1(S)$-invariant.

We will now prove the limit in Proposition~\ref{prop:limitexists} exists using the properties
of~$f$ introduced in Section~\ref{subsec:mainhypotheses} and the
results about how certain words behave under smoothings proven in
Section~\ref{subsec:crossings}. 

We
first look at a key ingredient limit.

\begin{lemma}\label{lem:additivity-limit}
Let $f$ be a curve functional satisfying symmetry, additivity, smoothing, and
stability.
Let $a,x \in \pi_1(S)$, with $\ora{a}$ crossing $\ora{x}$ to the right. Then
  \begin{equation}\label{eq:additivity-limit}
    \lim_{n \to \pm\infty} f([a x^{n}]) - f([x^n])
  \end{equation}
  exists.
\end{lemma}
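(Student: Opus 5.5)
The plan is to show that $g(n) \coloneqq f([ax^n]) - f([x^n])$ is (eventually) convex and bounded on each half-line $n \ge 0$ and $n \le 0$. The limits then exist by Lemma~\ref{lem:onevarconvexbounded}, or by its $R$-convex variant Lemma~\ref{lem:limexistrmidconvex}.

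\emph{Step 1: reduce to convexity of $h(n) \coloneqq f([ax^n])$.} By stability and additivity, $f([x^n]) = f(n[x]) = n f([x])$ for $n \ge 1$. By symmetry, $f([x^{-n}]) = f([x^n])$. So $n \mapsto f([x^n])$ is linear on each half-line, and it suffices to prove that $h$ is convex there. In this section $[x]$ is the fixed simple curve, so Eq.~\eqref{eq:cross-simple} of Proposition~\ref{prop:join-split-geom} gives the smoothing $[ax^{n+1}][ax^{n-1}] \reducesto 2[ax^n]$ for every $n \in \bbZ$. Smoothing then gives $f([ax^{n+1}][ax^{n-1}]) \ge f(2[ax^n])$. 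Additivity turns the left side into $f([ax^{n+1}]) + f([ax^{n-1}])$ and the right side into $2h(n)$, which is exactly $h(n+1) + h(n-1) \ge 2h(n)$. If one wants the lemma for non-simple $x$, I would instead aim for $R$-convexity, i.e.\ a smoothing $[ax^{n+r}][ax^{n-r}] \reducesto 2[ax^n]$ for $r > s$. This should follow from the crossing of $\ora{ax^{n+r}}$ with a suitable translate of $\ora{ax^{n-r}}$, via the window analysis in the proof of Proposition~\ref{prop:join-split-geom}. I expect this uniformity in $n$ to be the only delicate point, and it is avoided in the simple case.

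\emph{Step 2: upper bound.} Since $\ora{a}$ crosses $\ora{x^n}$ (same axis as $\ora{x}$, up to orientation), oriented disconnected smoothing gives $[a][x^n] \reducesto [ax^n]$. With additivity this yields $f([a]) + f([x^n]) \ge f([ax^n])$, so $g(n) \le f([a])$ for all $n$.

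\emph{Step 3: lower bound.} Apply Eq.~\eqref{eq:cross-cancel} with $m=0$: for $|n| > s'$ we have $[ax^n][a] \reducesto [x^{|n|}]$. Smoothing and additivity give $f([ax^n]) + f([a]) \ge f([x^{|n|}])$. Since $f([x^{|n|}]) = f([x^n])$ by symmetry, this says $g(n) \ge -f([a])$ for $|n| > s'$. Hence $g$ is bounded on each half-line beyond $s'$ and convex there by Step 1. Lemma~\ref{lem:onevarconvexbounded}, applied on each half-line (with the obvious modification for a half-line), gives existence of both limits $n \to \pm\infty$.
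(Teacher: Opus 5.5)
Your proposal is correct and follows the paper's proof essentially step for step: $r$-convexity of $n\mapsto f([ax^n])$ from Proposition~\ref{prop:join-split-geom}, the upper bound from $[a][x^n]\reducesto[ax^n]$, the lower bound from Eq.~\eqref{eq:cross-cancel} with $m=0$, and then the convex-bounded limit lemma (your use of $|n|f([x])$ on each half-line is, if anything, more careful than the paper's $nf([x])$). For non-simple $x$, the uniform $R$-convexity you flagged as delicate is already supplied by Eq.~\eqref{eq:cross-join} with $b=a$, exponents $n-r$ and $n+r$, and shift $r$: the constraint $s<r<2r-s$ just says $r>s$, with $s$ depending only on $a,x$, and this is exactly how the paper argues. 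The smoothed curve is the connected curve $[(ax^n)^2]$ rather than literally $2[ax^n]$, so it is stability that converts it to $2f([ax^n])$; this holds in your simple case too.
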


\begin{proof}
  By Eq.~\eqref{eq:cross-join}, with $b=a$, there exists $r>0$ so
  that, for all $n \in \mathbb{Z}$,
  \[
    f([ax^{n-r}]) + f([ax^{n+r}]) \geq 2 f([ax^n]).
  \]
  (If $x$ is simple, we can take $r=1$.) Here we used oriented smoothing and
  convex union. 
  Hence, $n \mapsto f([a x^{n}])$ is $r$-convex, and so is $f([a x^{n}]) - f([x^{n}]) = f([a x^{n}]) -
  nf([x])$, using stability of~$f$.
 
  By Lemma~\ref{lem:rhs_prod}, additivity, and 
 disconnected smoothing of~$f$, we get
 \[
 f([a]) + f([x^{n}]) \geq f([ax^{n}])
 \]
 for $n \in \mathbb{Z}$.
 By Eq.~\eqref{eq:cross-cancel} with $m=0$,
additivity and unoriented disconnected
smoothing of $f$, there is $N \in \mathbb{Z}$ (depending on $a,x$) so that, for $|n| > N$,
  \[
    f([a x^{n}]) \ge -f([a]) + f([x^{ n}]).
  \] 
  This shows that for $|n| > N$, $f$ is an $r$-convex and
  bounded function on $\mathbb{Z}$, so by Lemma~\ref{lem:limexistrmidconvex}, the
  limits \eqref{eq:additivity-limit} exist.
\end{proof}

\begin{remark}
The limit in Lemma~\ref{lem:additivity-limit} need not exist if one
only assumes $f$ satisfies quasi-smoothing.
We give an example on the once-holed torus.
Let $S$ be the once-punctured torus with generating set $\{a,b,b^2\}$,
where $a$ and $b$ are the standard generators of $\pi_1(S)$, chosen so
$\ora{a}$ crosses $\ora{b}$ to the right.
Let $f$ be the stable word-length with respect to this generating set. Note that $f$ satisfies quasi-smoothing instead of smoothing (see \cite[Example~4.10]{MGT21:Smoothings}) for details).
By Lemma~\ref{lem:rhs_prod}, $\ora{ab}$ also crosses $\ora{b}$ to the
right.
The expression in the limit in \eqref{eq:additivity-limit} for the
pair $ab$ and $b$ is
\begin{equation*}
  f([ab^{n+1}])-f([b^n])=\begin{cases} 1+ \frac{n+2}{2} - \frac{n}{2}=2& \mbox{ if } n \mbox{ is even } \\
    1+ \frac{n+1}{2} - \frac{n+1}{2}=1 & \mbox{ if } n \mbox{ is odd }
  \end{cases}
\end{equation*}
which does not approach a limit.
This example can be promoted to one on the genus-two surface.
\label{rmk:nonconvex}
\end{remark}

We also need a two-variable version of
Lemma~\ref{lem:additivity-limit}. For later use, we allow two
different elements $x,y \in \pi_1(S)$.
\begin{lemma}\label{lem:additivity-limit-2}
  Let $a,b,x,y \in \pi_1(S)$ be so that
  $(B \cdot \ora{x}, \ora{y}, a \cdot \ora{x})$ are R-parallel, and
  let $f \colon \Curves^+(S) \to \bbR$ satisfy symmetry, additivity, smoothing,
  stability, and homogeneity. Then the limits
  \begin{gather*}
    \lim_{n,m \to \infty} f([ax^nby^m]) - nf([x]) - mf([y])\\
    \lim_{n,m \to \infty} f([ax^nby^{-m}]) - nf([x]) - mf([y])
  \end{gather*}
  exist.
\end{lemma}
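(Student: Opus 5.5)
We follow the pattern of Lemma~\ref{lem:additivity-limit}. Set
\[
g(n,m) \coloneqq f([ax^nby^m]) - nf([x]) - mf([y])
\]
and show it is $(r,r)$-axis-convex and bounded for $n,m$ large. The conclusion then follows from Lemma~\ref{lem:limexistrmidconvextwovar}, applied on a shifted quadrant $\{n,m\ge N\}$. We treat the second function $g^-(n,m) \coloneqq f([ax^nby^{-m}]) - nf([x]) - mf([y])$ the same way.

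\emph{Axis-convexity.} By Eqs.~\eqref{eq:axby-cross-n} and~\eqref{eq:axby-cross-m} of Corollary~\ref{cor:fund-axby}, there is $s$ such that for all $r>s$ and all $m,n$,
\[
[ax^{n+r}by^m][ax^{n-r}by^m] \reducesto [(ax^nby^m)^2].
\]
The analogous statement holds in the $m$-variable. By additivity and oriented smoothing,
\[
f([ax^{n+r}by^m]) + f([ax^{n-r}by^m]) \ge f([(ax^nby^m)^2]).
\]
Stability, $f(C^2)=f(2C)$, together with homogeneity, gives $f([(ax^nby^m)^2]) = 2f([ax^nby^m])$. Subtracting the linear terms $nf([x])+mf([y])$ does not affect convexity, so $g$ is $(s+1,s+1)$-axis-convex. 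The definition of $R$-convexity asks for every $r\ge R$, and Corollary~\ref{cor:fund-axby} supplies exactly that. For $g^-$ we replace $m$ by $-m$; the corollary is stated for all $m\in\bbZ$, so it applies.

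\emph{Upper bound.} Both $\ora{ax^n b}\ldots$-type factors cross suitably, so smoothings can split the word. Concretely, the configuration $(B\cdot\ora{x},\ora{y},a\cdot\ora{x})$ R-parallel gives, via Lemma~\ref{lem:ayx-window}-type window arguments, that $\ora{ax^nb}$ and $\ora{y^m}$ are parallel, and that $\ora{ab}$-type elements cross $\ora{x}$. I would instead use smoothing directly:
\[
[ax^nby^m] \reducesto [ax^nb][y^m] \quad\text{(connected smoothing)}, \qquad [ab][x^n] \reducesto [ax^nb] \quad\text{(disconnected smoothing)}.
\]
This gives $f([ax^nby^m]) \le f([ab]) + nf([x]) + mf([y])$. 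The first smoothing is actually in the wrong direction, since connected smoothing gives $f([ax^nby^m]) \ge f([ax^nb]) + f([y^m])$. So the cleaner route to the upper bound is to cut with a disconnected smoothing. For that I would rather use the "cancellation" Eq.~\eqref{eq:cancel-axby},
\[
[ax^nby^m][ab] \reducesto [x^n][y^m],
\]
which gives a \emph{lower} bound
\[
f([ax^nby^m]) \ge nf([x]) + mf([y]) - f([ab]).
\]
For the upper bound, I would find a multicurve that smooths to $[ax^nby^m]$: by Lemma~\ref{lem:rhs_prod}-style window arguments, $(\ora{ab}$-conjugates$)$ cross $\ora{x^n}$ and $\ora{y^m}$. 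Then
\[
[ab][x^n][y^m] \reducesto [ax^nb][y^m] \reducesto [ax^nby^m],
\]
via two oriented disconnected smoothings. The first requires $\ora{B}\cdot$-translate of $\ora{x}$ crossing $\ora{ab}$, and the second requires $\ora{ax^nb}$ crossing $\ora{y}$; both should follow from the R-parallel hypothesis by the NS-dynamics arguments of Lemma~\ref{lem:NS-dynamics}. This yields
\[
f([ax^nby^m]) \le f([ab]) + nf([x]) + mf([y]),
\]
so $g$ is bounded on $n,m\ge s$.

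For $g^-$, the lower bound comes from Eq.~\eqref{eq:crossbelowy-ab}: the right side is $nf([x]) + mf([y])$ up to constants independent of $n,m$ (note its $[Y^{n-2s}]$ should presumably read $[Y^{m-2s}]$). The upper bound comes from an analogous two-step disconnected smoothing $[ab][x^n][Y^m]\reducesto[ax^nbY^m]$, using $f([Y^m]) = mf([y])$ by symmetry and stability.

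The main obstacle is the upper bounds: verifying the crossing configurations needed for the disconnected smoothings that assemble $[ax^nby^{\pm m}]$ from $[ab]$, $[x^n]$ and $[y^{\pm m}]$. That requires a careful window analysis in the band models centered on $\ora{x}$ and $a\cdot\ora{y}$ (or $\ora{y}$), possibly only for $n,m$ beyond some threshold $N$. This is harmless, since only the tail matters for the limits.
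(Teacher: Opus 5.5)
Your final plan is correct and is exactly the paper's argument: axis-convexity of $g_\pm$ from Eqs.~\eqref{eq:axby-cross-n}--\eqref{eq:axby-cross-m}, the upper bound from $[ab][x^n][y^{\pm m}]\reducesto[ax^nb][y^{\pm m}]\reducesto[ax^nby^{\pm m}]$, the lower bounds from Eqs.~\eqref{eq:cancel-axby} and~\eqref{eq:crossbelowy-ab} (you are right that $[Y^{n-2s}]$ there should read $[Y^{m-2s}]$), and then Lemma~\ref{lem:limexistrmidconvextwovar} on a shifted quadrant. The crossings you flagged as the main obstacle are quick: translating the R-parallel pair $(B\cdot\ora{x},a\cdot\ora{x})$ by $b$ and applying Lemma~\ref{lem:rhs} gives that $(\ora{ba},\ora{x})$ R-cross; and $ax^nb$ carries $B\cdot\ora{x}$ onto $a\cdot\ora{x}$, which lie on opposite sides of $\ora{y}$, so NS dynamics puts the two endpoints of $\ora{ax^nb}$ on opposite sides of $\ora{y}$ for every $n$.
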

\begin{proof}
  For $n,m \ge 0$, define $g_
  {\pm}(n,m) \coloneqq f([ax^nby^{\pm m}]) - nf([x]) - mf([y])$. First,
  by Corollary~\ref{cor:fund-axby}, there is some $s$ so that both
  $g_+$ and $g_-$
  are $(s,s)$-axis-convex: apply $f$ to the left hand side of
  Eqs.~\eqref{eq:axby-cross-n} and~\eqref{eq:axby-cross-m}, and use
  convex union, oriented smoothing, stability, and homogeneity. (The extra
  linear terms in $g_{\pm}$ are irrelevant for convexity.)

  Next we check that $g_{\pm}$ is bounded.
  On the one hand,
  \[
    [ab][x^n][y^{\pm m}] \reducesto [ax^nb][y^{\pm m}] \reducesto [ax^nby^{\pm m}].
  \]
  For the first reduction, the hypotheses imply that
  $(\ora{x}, ba\cdot\ora{x})$ are R-parallel, and thus $(\ora{ba},\ora{x})$ R-cross by Lemma~\ref{lem:rhs}. For the second one, note
  that
  $ax^n b\cdot(B\cdot x^+, B\cdot x^-) = (a\cdot x^+, a\cdot x^-)$;
  then a look at the bands model centered on $\ora{y}$ shows that
  $\ora{a x^n b}$ crosses $\ora{y}$.   Applying $f$ and using additivity
  and oriented smoothing for $g_+$ and unoriented smoothing for $g_-$ shows that
  \[
    g_{\pm}(n,m) \le f([ab]).
  \]

  To get a lower bound for $g_+$, applying $f$ to both sides of
  Eq.~\eqref{eq:cancel-axby} and using additivity and unoriented smoothing shows
  that, for $n, m$ sufficiently large,
  \[
    g_+(n,m) \ge -f([ab]).
  \]
 To get a (more complicated) lower bound for $g_-$, similarly apply $f$ to both sides of
 Eq.~\eqref{eq:crossbelowy-ab}.

   Thus by
   Lemma~\ref{lem:limexistrmidconvextwovar} the limit exists.
\end{proof}

\begin{proposition}\label{prop:join-split-alg}
Let $a,b,x$ be arbitrary elements of $\pi_1(S)$ and
  let $f \colon \Curves^+(S) \to \bbR$ satisfy symmetry, additivity, smoothing,
  stability, and homogeneity.
  If $a$ and $b$ cross $x$ to the right, then
  \[
    \lim_{n,m\to\infty} f([a x^n b x^m]) - f([ax^n]) - f([bx^m]) = 0.
  \]
\end{proposition}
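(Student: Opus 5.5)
We prove Proposition~\ref{prop:join-split-alg} by showing that the double sequence
\[
h(n,m) \coloneqq f([ax^nbx^m]) - f([ax^n]) - f([bx^m])
\]
is squeezed between two quantities that tend to $0$. The upper and lower bounds come from the two reductions in Proposition~\ref{prop:join-split-geom}, and the limit is obtained by combining them with Lemma~\ref{lem:additivity-limit}.

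\emph{Upper bound.} Apply Eq.~\eqref{eq:cross-split} with $r$ in the admissible range $s<r<m-n-s$ when $m-n$ is large, and symmetrically with $m-n+s<r<-s$ when $n-m$ is large. Oriented connected smoothing and additivity then give
\[
f([ax^nbx^m]) \ge f([ax^{n+r}]) + f([bx^{m-r}]),
\]
which is a lower bound on $h$. Writing $\alpha(k)\coloneqq f([ax^k])-kf([x])$ and $\beta(k)\coloneqq f([bx^k])-kf([x])$, stability turns this into
\[
h(n,m) \ge \alpha(n+r)-\alpha(n)+\beta(m-r)-\beta(m).
\]
By Lemma~\ref{lem:additivity-limit}, $\alpha(k)\to\alpha_\infty$ and $\beta(k)\to\beta_\infty$ as $k\to+\infty$. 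If we choose $r$ so that both $n+r$ and $m-r$ tend to infinity (for instance $r$ about half of $|m-n|$, or a bounded $r$ when $|m-n|$ is small), the right-hand side tends to $0$. Symmetrically, Eq.~\eqref{eq:cross-join} with disconnected smoothing and additivity gives
\[
f([ax^n])+f([bx^m]) \ge f([ax^{n+r}bx^{m-r}]).
\]
This is an inequality with shifted exponents, not directly an upper bound on $h(n,m)$. To close the loop, define
\[
\gamma(n,m)\coloneqq f([ax^nbx^m])-(n+m)f([x]).
\]
The two inequalities say that $\gamma(n+r,m-r)$ is squeezed between $\alpha(n+r)+\beta(m-r)$ and $\alpha(n)+\beta(m)$, up to the shifts.

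\emph{Handling the diagonal region.} The admissible $r$ requires $|m-n|>2s$, so the main obstacle is the region where $n$ and $m$ are comparable. There we instead use the fact that $\gamma$ is bounded and $(s,s)$-axis-convex, which should follow from Eq.~\eqref{eq:cross-join} applied to $[ax^nbx^{m\pm r}]$ as in Lemma~\ref{lem:additivity-limit-2}. By Lemma~\ref{lem:limexistrmidconvextwovar}, $\gamma(n,m)$ then has a joint limit $\gamma_\infty$ as $n,m\to\infty$. For boundedness, the upper bound is $f([ab])$, from $[ab][x^n][x^m]\reducesto[ax^nbx^m]$ via Lemma~\ref{lem:rhs_prod} and disconnected smoothing. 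The lower bound comes from the split inequality above together with the boundedness of $\alpha$ and $\beta$.

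\emph{Identifying the limit.} Once $\gamma_\infty$ exists, pass to the limit in the two inequalities along sequences with $n, m, n+r, m-r$ all tending to infinity; such sequences exist, e.g.\ $m=3n$ and $r=n$, for which $r$ lies in the admissible range $s<r<m-n-s$. The split inequality gives $\gamma_\infty\ge\alpha_\infty+\beta_\infty$ and the join inequality gives $\gamma_\infty\le\alpha_\infty+\beta_\infty$. Since $h(n,m)=\gamma(n,m)-\alpha(n)-\beta(m)$, it follows that $h(n,m)\to 0$ jointly.

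The delicate step is verifying axis-convexity of $\gamma$ in each variable. This needs Eq.~\eqref{eq:cross-join} in the form $[ax^nbx^{m+r}][ax^nbx^{m-r}]\reducesto[(ax^nbx^m)^2]$, and $[ax^nbx^m]$ is not of the shape $ax^k$ treated in Proposition~\ref{prop:join-split-geom}. If this causes trouble, I would first use Lemma~\ref{lem:rhs_prod}(III) to see that $\ora{ax^nb}$ R-crosses $\ora{x}$, and then apply Proposition~\ref{prop:join-split-geom} with $a'=ax^nb$. The catch is that the constant $s$ there then depends on $n$, so I would need to control it uniformly, for instance by a band-model window argument like the one in Proposition~\ref{prop:fundamental-ayx}.
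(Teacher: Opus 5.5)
Your final argument, which establishes the joint limits of $\gamma(n,m)$ and of $\alpha(n)+\beta(m)$ and then compares them through Eqs.~\eqref{eq:cross-join} and~\eqref{eq:cross-split} along a sequence such as $m=3n$, $r=n$, is exactly the paper's proof. The step you flag as delicate is already Lemma~\ref{lem:additivity-limit-2} with $y=x$ (its hypothesis that $(B\cdot\ora{x},\ora{x},a\cdot\ora{x})$ is R-parallel follows from Lemma~\ref{lem:rhs}), whose proof obtains axis-convexity with an $n$-independent constant from Corollary~\ref{cor:fund-axby} and the lower bound from Eq.~\eqref{eq:cancel-axby}, which, unlike the split inequality, also covers the diagonal region; so you need no new window argument.
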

\begin{proof}
  We must now show that the limits as $n,m \to \infty$ of
  \begin{align*}
    g(n,m) &= f([ax^nbx^m]) - (n+m)f([x])\\
    \shortintertext{and}
    h(n,m) &= f([ax^n]) + f([bx^m]) - (n+m)f([x])
  \end{align*}
  exist and are equal.
  First, Lemmas~\ref{lem:additivity-limit}
  and~\ref{lem:additivity-limit-2} imply that the limits
  exist. 
  Equation \eqref{eq:cross-join} says that there are arbitrarily large
  $n,m$ so that $g(n',m') \le h(n,m)$ with $n'+m' = n+m$, and $n',m'$
  closer to the diagonal. This immediately implies
  $\lim_{n,m \to\infty} g(n,m) \le \lim_{n,m\to\infty} h(n,m)$.
  (Take
  $n,m$ large enough so that both $h(n,m)$ and $g(n',m')$ are within
  $\epsilon$ of their limiting value.) Conversely,
  Equation~\eqref{eq:cross-split} implies the opposite inequality.
\end{proof}

\begin{proof}[Proof of Proposition~\ref{prop:limitexists}]
The existence of the limit in Equation~\eqref{eq:def_measure} follows immediately from Lemma~\ref{lem:additivity-limit}.
Since this definition depends on a RH orbit $(a,b,c,d)$---which is invariant under the left action of $\pi_1(S)$---it suffices to show that $\wh{\mu_f}([B(p_1,q_1,p_2,q_2)])$ depends only on the box $B(p_1,q_1,p_2,q_2)$.
To this end, by Remark~\ref{rmk:box_from_system}, it is enough to prove that the limit is invariant under the transformations described in Equations~\eqref{eq:translations}.
We prove this for Equation~\eqref{eq:translations-l1}; the other cases are analogous.

The translation in the RH orbit $a \mapsto xa, b \mapsto xb$ (Eq.~\eqref{eq:translations-l1}) corresponds to a translation of the RH lift system $p_1 \mapsto xp_1$ (Eq.~\eqref{eq:changes-p}). Thus,
\begin{align*}
\wh{\mu_f}(B(xp_1,q_1,p_2,q_2))
&= \frac{1}{2}\lim_{n \to \infty}
\Big( f([bx^{n+1}]) + f([cx^n]) - f([ax^{n+1}]) - f([dx^n]) \Big)
&& \text{(Eq.~\eqref{eq:def_measure})} \\
&= \frac{1}{2}\lim_{n,m \to \infty}
\Big( f([bx^{m}]) + f([cx^n]) - f([ax^{m}]) - f([dx^n]) \Big)
&& \text{(Prop.~\ref{prop:join-split-alg})} \\
&= \frac{1}{2}\lim_{n \to \infty}
\Big( f([bx^{n}]) + f([cx^n]) - f([ax^{n}]) - f([dx^n]) \Big)
&& \text{(Eq.~\eqref{eq:def_measure})} \\
&= \wh{\mu_f}(B(p_1,q_1,p_2,q_2)).
\end{align*}
Hence, $\mu_f$ only depends on the RH box and, since the definition of $\mu_f$ is given in terms of the RH orbit, it only depends on the RH box orbit.
\end{proof}

Now we check the hypotheses to apply
 Theorem~\ref{thm:caratheodory} to construct a measure, starting with positivity.

\begin{proposition}
  \label{prop:box-positive}
  For a right-handed
  box $B\in \mathcal{B}$, we have $\wh{\mu_f}(B) \geq 0$.
\end{proposition}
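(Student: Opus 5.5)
The plan is to compare the two pairs of terms in \eqref{eq:def_measure} through one smoothing, using condition~\ref{item:box-diags-cross} of Definition~\ref{def:RH-lift-sys}, and then to use Proposition~\ref{prop:join-split-alg} to split the resulting long word back into two terms.

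Fix an RH orbit $[a,b,c,d]$ representing $B$. By condition~\ref{item:box-diags-cross}, for all sufficiently large $n$ the axis $\ora{dx^naC}$ crosses $\ora{cx^n}$ to the right. Oriented disconnected smoothing (Lemma~\ref{lem:oriented_smoothing_lemma_top_alg}) then gives
\[
  f([dx^naC]) + f([cx^n]) \;\ge\; f([dx^naC\,cx^n]) = f([dx^n a x^n]),
\]
where additivity is used on the left.

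Next we identify the first term. Conjugating by $C$, we get $[dx^naC]=[Cdx^na]$. Relation~\ref{item:box-close}, $aCdB=1$, gives $Cd=Ab$, so $[Cdx^na]=[Abx^na]=[bx^n]$. Hence for large $n$,
\[
  f([bx^n]) + f([cx^n]) \;\ge\; f([dx^nax^n]).
\]

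Now we split the right-hand side. Condition~\ref{item:box-edges-cross} says that $\ora{a}$ and $\ora{d}$ both cross $\ora{x}$ to the right. Proposition~\ref{prop:join-split-alg}, applied to the pair $(d,a)$, therefore gives
\[
  \lim_{n,m\to\infty}\Bigl( f([dx^nax^m]) - f([dx^n]) - f([ax^m])\Bigr) = 0 .
\]
The joint limit in particular controls the diagonal $m=n$. So $f([dx^nax^n]) = f([ax^n]) + f([dx^n]) + o(1)$.

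Combining the two displays, $f([bx^n])+f([cx^n])-f([ax^n])-f([dx^n]) \ge o(1)$. The limit defining $\wh{\mu_f}(B)$ exists by Proposition~\ref{prop:limitexists}, so passing to the limit gives $\wh{\mu_f}(B)\ge 0$.

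The main obstacle is bookkeeping rather than analysis. One must check that the group element produced by smoothing, $dx^naC\cdot cx^n$, has the form required by Proposition~\ref{prop:join-split-alg}: two right-crossing elements each followed by a power of $x$. One must also confirm that conjugating $dx^naC$ really returns $[bx^n]$ with the correct orientation. If the orientation conventions turned out to produce $[Bx^{-n}]$ instead, I would fall back on symmetry of $f$, which makes that term equal to $f([bx^n])$.
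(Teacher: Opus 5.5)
Your proof is correct and follows the paper's own argument: use condition~\ref{item:box-diags-cross} to smooth $[bx^n][cx^n]=[dx^naC][cx^n]\reducesto[dx^nax^n]$, then split $f([dx^nax^n])$ asymptotically by applying Proposition~\ref{prop:join-split-alg} to the pair $(d,a)$, and pass to the limit. Your identification $[dx^naC]=[Cdx^na]=[Abx^na]=[bx^n]$ via $Cd=Ab$ holds with the correct orientation, so you do not need the fallback to symmetry.
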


\begin{proof}
  Set $[B] = [B]_{a,b,c,d}$.
  By condition~\ref{item:box-diags-cross}, for sufficiently large~$n$,
  \[
    [bx^n][cx^n] = [dx^naC][cx^n] \reducesto [dx^nax^n].
  \]
  and thus
\[
f([bx^n]) + f([cx^n]) -f([dx^nax^n]) \geq 0.
\]
  Then by Proposition~\ref{prop:join-split-alg} and taking the
  limit as $n \to \infty$, we obtain
  \[
  \wh{\mu_f}(B) \geq 0. \qedhere
  \]
\end{proof}

\begin{remark} 
 The existence of the double limits in Lemma~\ref{prop:join-split-alg} (and the other limits in this section) is reminiscent of the existence of
 similar limits in the context of the eigenvalue projection function
 on reductive Lie groups (see~\cite[Lemma~7.10]{BQ16:Random}). See also Section~\ref{subsec:examples_and_non} for examples of length functions arising from generalized eigenvalues of higher rank representations.
 \end{remark}
 
\subsection{Side-by-side and finite additivity}

Finally, we will show $\mathcal{B}$ is
a semi-ring (in the sense of Definition~\ref{def:semiring}) and
$\wh{\mu_f}$ is finitely additive.

First we prove that $\wh{\mu_f}$ satisfies a notion slightly weaker than finite additivity.

\begin{proposition}
  If two RH boxes $B_1$ and $B_2$ fit together side-by-side as in
  Figure~\ref{fig:finiteadditivity} (or the symmetric version changing
  the $p_i$), then $B_1 \cup B_2$ is a RH box
  and $\mu_f(B_1 \cup B_2) = \mu_f(B_1) + \mu_f(B_2)$.
  \label{prop:sidebyside}
\end{proposition}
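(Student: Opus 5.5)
Concretely, I read Figure~\ref{fig:finiteadditivity} as two boxes sharing the left lifts $\ora{\ell_1},\ora{\ell_2}$ and sharing one right lift:
\[
B_1 = B(p_1,q_1,p_2,q_2), \qquad B_2 = B(p_1,q_2,p_2,q_3).
\]
The right lifts are $\ora{r_j} = q_j\cdot\ora{x}$, and the endpoints $\ell_1^-,\ell_2^-,r_3^+,r_2^+,r_1^+$ appear in that counterclockwise order. The symmetric case, in which a left lift is shared, is handled by the same argument with the roles of the $p_i$ and $q_j$ exchanged, or by applying the flip of Remark~\ref{rmk:flip_rh_box}.

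The proof has three steps. First, show that $(p_1,q_1,p_2,q_3)$ is again an RH lift system; its box is then $[\ell_1^-,\ell_2^-)\times[r_3^+,r_1^+)$. Second, check the set identity
\[
[r_3^+,r_1^+) = [r_3^+,r_2^+)\sqcup[r_2^+,r_1^+),
\]
which holds for half-open circle intervals given the cyclic order, so that $B_1\cup B_2 = B(p_1,q_1,p_2,q_3)$ as a disjoint union. Third, verify additivity of the measures by a telescoping computation.

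For the first step I check the conditions of Definition~\ref{def:RH-lift-system}.
\begin{itemize}
\item Condition~\ref{item:box-edges-cross-lift} requires $(\ora{p_iq_j},\ora{x})$ to R-cross for $j\in\{1,3\}$. Each such pair already occurs in $B_1$ or $B_2$, so this is inherited.
\item Condition~\ref{item:box-nontriv-lift} requires that $p_1P_2$ and $Q_1q_3$ are not powers of $x$. The first is inherited. For the second, $Q_1q_3$ being a power of $x$ would force $\ora{r_1}=\ora{r_3}$, contradicting $r_3^+\neq r_1^+$.
\item Condition~\ref{item:box-diags-cross-lift} is the only non-formal point, and I expect it to be the main (mild) obstacle. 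I would use Lemma~\ref{lem:box-endpoints} on $g_{1,3,n}=q_3x^np_1$ and $g_{2,1,n}=q_1x^np_2$. As $n\to\infty$, their repelling and attracting endpoints converge to $\ell_1^-,r_3^+$ and to $\ell_2^-,r_1^+$ respectively. These four limit points are distinct and appear in the counterclockwise order $\ell_1^-,\ell_2^-,r_3^+,r_1^+$. Since the circle order of four distinct points is an open condition, for large $n$ the endpoints $g_{1,3,n}^-,g_{2,1,n}^-,g_{1,3,n}^+,g_{2,1,n}^+$ lie in that order. This is exactly the statement that $(\ora{g_{1,3,n}},\ora{g_{2,1,n}})$ R-cross.
\end{itemize}

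For the third step, write $a_{ij} = p_iq_j$, so that each box's RH orbit is read off via Eq.~\eqref{eq:relationdatum}, and set $\phi(g) = \lim_{n\to\infty}\bigl(f([gx^n]) - nf([x])\bigr)$. Each such limit exists by Lemma~\ref{lem:additivity-limit}, since every $\ora{a_{ij}}$ R-crosses $\ora{x}$. Using stability, the $nf([x])$ terms cancel inside Eq.~\eqref{eq:def_measure}, and since all limits exist individually,
\[
2\wh{\mu_f}(B(p_1,q_j,p_2,q_k)) = \phi(a_{1k})+\phi(a_{2j})-\phi(a_{1j})-\phi(a_{2k}).
\]
Adding this identity for $(j,k)=(1,2)$ and $(j,k)=(2,3)$, the terms $\phi(a_{12})$ and $\phi(a_{22})$ cancel. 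What remains is
\[
\phi(a_{13})+\phi(a_{21})-\phi(a_{11})-\phi(a_{23}),
\]
which is exactly the formula for $(j,k)=(1,3)$, that is, for $2\wh{\mu_f}(B_1\cup B_2)$. By Proposition~\ref{prop:limitexists} the value depends only on the box, so this gives $\wh{\mu_f}(B_1\cup B_2) = \wh{\mu_f}(B_1)+\wh{\mu_f}(B_2)$.
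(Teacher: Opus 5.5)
Your proposal is correct and follows the same route as the paper. The paper also writes $B_1 = B(p_1,q_1,p_2,q_2)$ and $B_2 = B(p_1,q_2,p_2,q_3)$, uses the cyclic order $\ell_1^-,\ell_2^-,r_3^+,r_2^+,r_1^+$ to conclude $B_1\cup B_2 = B(p_1,q_1,p_2,q_3)$, and says additivity is immediate from the definition of $\wh{\mu_f}$; your check of the diagonal-crossing condition via Lemma~\ref{lem:box-endpoints} and your explicit telescoping of the four-term limits just spell out the steps the paper leaves implicit.
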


\begin{proof}
  Set WLOG $B_1 = 
  B(p_1,q_1,p_2,q_2)$ and $B_2 =
  B(p_1,q_2,p_2,q_3)$, and set $\ora{\ell_i} = P_i \cdot \ora x$ and
  $\ora{r_j} = q_j \cdot \ora{x}$. The hypotheses imply that
  $\ell_1^-,\ell_2^-,r_3^+, r_2^+, r_1^+$ occur in that cyclic order,
  so in particular $B_1 \cup B_2 = B(p_1,q_1,p_2,q_3)$ is also a
  RH box. The definition of $\wh{\mu_f}$ immediately yields the additivity.
\end{proof}

  \begin{figure}
\centering{
\resizebox{60mm}{!}{\Huge{%% Creator: Inkscape inkscape 0.92.4, www.inkscape.org
%% PDF/EPS/PS + LaTeX output extension by Johan Engelen, 2010
%% Accompanies image file 'finiteadditivity.pdf' (pdf, eps, ps)
%%
%% To include the image in your LaTeX document, write
%%   \input{<filename>.pdf_tex}
%%  instead of
%%   \includegraphics{<filename>.pdf}
%% To scale the image, write
%%   \def\svgwidth{<desired width>}
%%   \input{<filename>.pdf_tex}
%%  instead of
%%   \includegraphics[width=<desired width>]{<filename>.pdf}
%%
%% Images with a different path to the parent latex file can
%% be accessed with the `import' package (which may need to be
%% installed) using
%%   \usepackage{import}
%% in the preamble, and then including the image with
%%   \import{<path to file>}{<filename>.pdf_tex}
%% Alternatively, one can specify
%%   \graphicspath{{<path to file>/}}
%% 
%% For more information, please see info/svg-inkscape on CTAN:
%%   http://tug.ctan.org/tex-archive/info/svg-inkscape
%%
\begingroup%
  \makeatletter%
  \providecommand\color[2][]{%
    \errmessage{(Inkscape) Color is used for the text in Inkscape, but the package 'color.sty' is not loaded}%
    \renewcommand\color[2][]{}%
  }%
  \providecommand\transparent[1]{%
    \errmessage{(Inkscape) Transparency is used (non-zero) for the text in Inkscape, but the package 'transparent.sty' is not loaded}%
    \renewcommand\transparent[1]{}%
  }%
  \providecommand\rotatebox[2]{#2}%
  \newcommand*\fsize{\dimexpr\f@size pt\relax}%
  \newcommand*\lineheight[1]{\fontsize{\fsize}{#1\fsize}\selectfont}%
  \ifx\svgwidth\undefined%
    \setlength{\unitlength}{455.92301496bp}%
    \ifx\svgscale\undefined%
      \relax%
    \else%
      \setlength{\unitlength}{\unitlength * \real{\svgscale}}%
    \fi%
  \else%
    \setlength{\unitlength}{\svgwidth}%
  \fi%
  \global\let\svgwidth\undefined%
  \global\let\svgscale\undefined%
  \makeatother%
  \begin{picture}(1,1.00664704)%
    \lineheight{1}%
    \setlength\tabcolsep{0pt}%
    \put(0,0){\includegraphics[width=\unitlength,page=1]{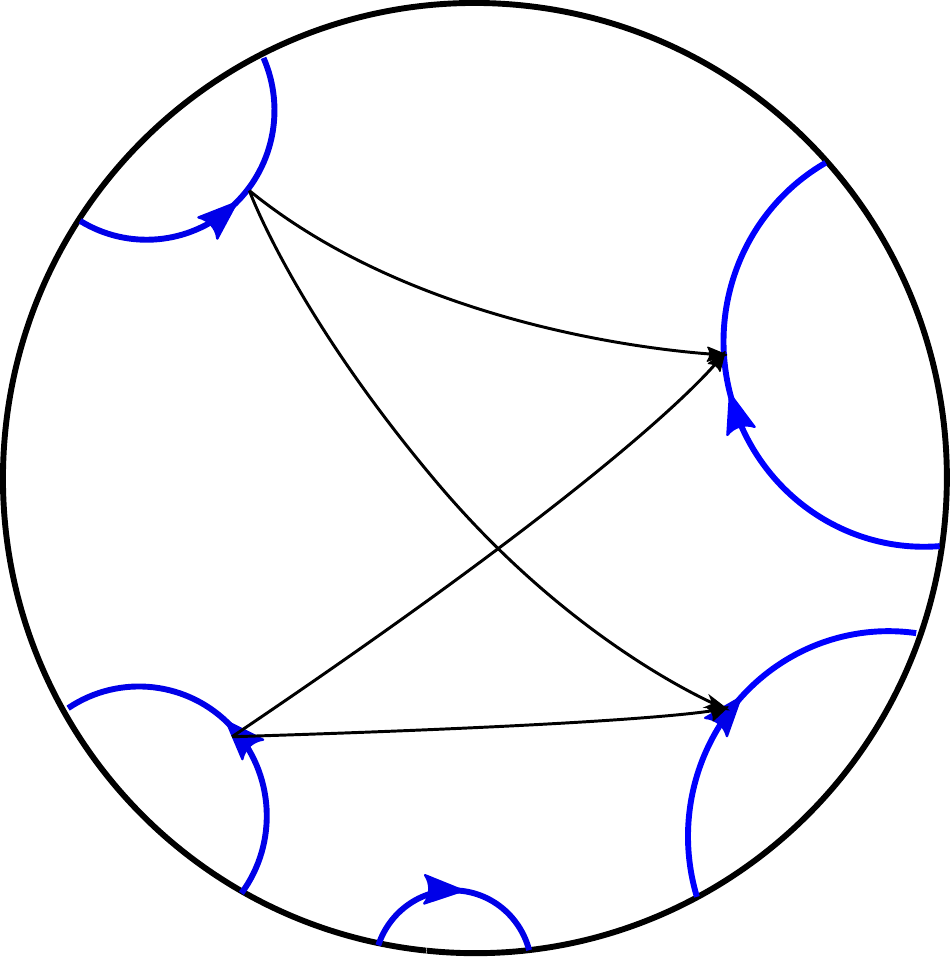}}%
    \put(0.4652804,0.7078909){\color[rgb]{0,0,0}\makebox(0,0)[lt]{\lineheight{1.25}\smash{\begin{tabular}[t]{l}$q_1p_1$\end{tabular}}}}%
    \put(0.41851952,0.5697204){\color[rgb]{0,0,0}\makebox(0,0)[lt]{\lineheight{1.25}\smash{\begin{tabular}[t]{l}$q_2p_1$\end{tabular}}}}%
    \put(0.40688751,0.31547788){\color[rgb]{0,0,0}\makebox(0,0)[lt]{\lineheight{1.25}\smash{\begin{tabular}[t]{l}$q_1p_2$\end{tabular}}}}%
    \put(0.4351367,0.20248129){\color[rgb]{0,0,0}\makebox(0,0)[lt]{\lineheight{1.25}\smash{\begin{tabular}[t]{l}$q_2p_2$\end{tabular}}}}%
    \put(0,0){\includegraphics[width=\unitlength,page=2]{finiteadditivity.pdf}}%
    \put(0.33709546,0.44675348){\color[rgb]{0,0,0}\makebox(0,0)[lt]{\lineheight{1.25}\smash{\begin{tabular}[t]{l}$q_3p_1$\end{tabular}}}}%
    \put(0.2974471,0.12629042){\color[rgb]{0,0,0}\makebox(0,0)[lt]{\lineheight{1.25}\smash{\begin{tabular}[t]{l}$q_3p_2$\end{tabular}}}}%
  \end{picture}%
\endgroup%
}}
\caption{Side-by-side additivity for boxes}
\label{fig:finiteadditivity}
}
\end{figure}

To show that this side-by-side additivity shows that $\wh{\mu_f}$ is finitely
additive, and furthermore that the allowable boxes in $\mathcal{B}$ form a semi-ring,
we examine exactly which boxes are allowed, making concrete the
constraints in Definition~\ref{def:RH-lift-sys}. Boxes
are bounded at positive or negative ends of translates of $\ora{x}$,
i.e., at $P_i \cdot x^-$ (also referred to as $\ell_i^-$) or at
$q_j \cdot x^+$ (also referred to as $r_j^+$).
Conditions~\ref{item:box-nontriv-lift} and \ref{item:box-diags-cross-lift}
together say that $\ell_2^-,\ell_1^-,r_1^+,r_2^+$ are distinct points
in that cyclic order. It remains to understand
Condition~\ref{item:box-edges-cross-lift}, which is a
condition on relative positions of the $\ora{\ell_i}$ and $\ora{r_j}$
for each pair separately. 

Consider a single pair
$(\ora{\ell},\ora{r}) = (P \cdot \ora{x}, q \cdot \ora{x})$. We claim
this pair is R-parallel iff $r^+ \in (\ell^-,\ell^+)$ and
$\ell^- \in (r^+, r^-)$. Indeed, $\ora{\ell}$ and $\ora{r}$ can not
cross (since they are both lifts of $[x]$, which is simple), and then
checking the cases shows that $r^+ \in (\ell^-,\ell^+)$ iff
$(\ora{\ell},\ora{r})$ are R-parallel or R-anti-parallel, and
$\ell^- \in (r^+,r^-)$ iff $(\ora{\ell},\ora{r})$ are R-parallel or
L-anti-parallel.

To visualize these conditions better, draw the space of oriented
geodesics 
$G(S) = \partial_\infty \Sigma \times \partial_\infty \Sigma \setminus \Delta$ (Definition~\ref{def:spacegeodesics})
in the
usual way as a square with opposite sides identified. We represent
$(r^+,r^-)$ by drawing a ``whisker'': a vertical segment at
$x$-coordinate $r^+$ from $r^+$ (on the diagonal) to $r^-$.
Similarly represent $(\ell^-,\ell^+)$ by a
horizontal segment at
$y$-coordinate $\ell^-$ from $\ell^-$ to $\ell^+$.
Then $(\ora{\ell}, \ora{r})$
are R-parallel iff these two segments intersect, as in
Figure~\ref{fig:whiskers}.
Lemma~\ref{lem:diagonaldense} says that there are dense sets of these
horizontal and vertical whiskers that extend from the diagonal almost
all the way back to the diagonal.

  \begin{figure}
\centering{
\resizebox{120mm}{!}{\fontsize{9pt}{9pt}\selectfont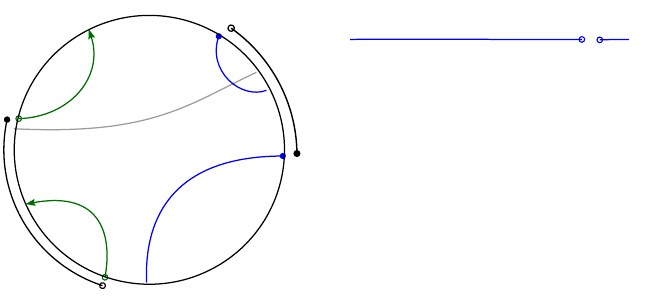}
\caption{Example of the correspondence between right-handed boxes and collections of four whiskers.}
\label{fig:whiskers}
}
\end{figure}

\begin{proposition}
  \label{prop:RHsemiring}
  The set of RH boxes $\mathcal{B}$ is a semi-ring, in the sense of
  Definition~\ref{def:semiring}.
\end{proposition}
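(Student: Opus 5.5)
The plan is to work entirely with the whisker description of RH boxes just established, and to reduce everything to one closure property of that description. By Proposition~\ref{prop:rh-boxes} and the discussion preceding Figure~\ref{fig:whiskers}, a set $[\ell_1^-,\ell_2^-)\times[r_2^+,r_1^+)$ is an RH box exactly when:
\begin{itemize}
\item the $\ora{\ell_i}$ and $\ora{r_j}$ are lifts of $[x]$;
\item $\ell_1^-,\ell_2^-,r_2^+,r_1^+$ are distinct and appear in that counterclockwise order;
\item each pair $(\ora{\ell_i},\ora{r_j})$ is R-parallel, equivalently $r_j^+\in(\ell_i^-,\ell_i^+)$ and $\ell_i^-\in(r_j^+,r_j^-)$.
\end{itemize}
The first step is to make the converse direction explicit. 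Given four such lifts, write $\ora{\ell_i}=P_i\cdot\ora{x}$ and $\ora{r_j}=q_j\cdot\ora{x}$. R-parallelism gives condition~\ref{item:box-edges-cross-lift} via Lemma~\ref{lem:rhs}. Distinctness of the lifts gives condition~\ref{item:box-nontriv-lift}. The strict cyclic order, combined with Lemma~\ref{lem:box-endpoints}, gives condition~\ref{item:box-diags-cross-lift} for large $n$, because $g_{i,j,n}^\pm$ converge to $\ell_i^-$ and $r_j^+$.

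The key lemma I would then prove is the following. Let $B=[\ell_1^-,\ell_2^-)\times[r_2^+,r_1^+)$ be an RH box. Let $\ora{\ell}$ be any lift of $[x]$ that is R-parallel to the right sides $\ora{r_1'},\ora{r_2'}$ of some box with $[r_2^+,r_1^+)\subset[r_2'^+,r_1'^+]$. Suppose also that $\ell^-\in[\ell_1^-,\ell_2^-)$. Then $\ora{\ell}$ is R-parallel to each $\ora{r_j}$ with $r_j^+$ in that range, and symmetrically with the roles of left and right sides swapped.

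The proof is the interval chase. Since $(\ora{\ell},\ora{r_1'})$ are R-parallel, $\ell^+$ lies ccw beyond $r_1'^+$, so $r^+\in(\ell^-,\ell^+)$. Since $(\ora{r},\ora{\ell_2})$ are R-parallel, $r^-$ lies ccw beyond $\ell_2^-$ starting from $r^+$. Hence $\ell^-\in[\ell_1^-,\ell_2^-)\subset(r^+,r^-)$. Non-crossing of lifts of the simple curve $[x]$ then excludes the remaining configurations.

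With this lemma the semiring axioms follow.
\begin{itemize}
\item \emph{Empty set.} We adopt the convention that $\emptyset\in\mathcal{B}$.
\item \emph{Intersections.} For $B,B'\in\mathcal{B}$, $B\cap B'$ is a product of half-open circular intervals, since both factors avoid a common arc around the diagonal. Its lower-left and upper-right bounds are drawn from $\{\ell_i,\ell_i'\}$ and $\{r_j,r_j'\}$. If the intersection is nonempty, the chosen endpoints are distinct and in the right cyclic order. Each mixed pair, such as $(\ora{\ell_1'},\ora{r_2})$, is R-parallel by the key lemma, and unmixed pairs are R-parallel by hypothesis. So $B\cap B'\in\mathcal{B}$.
\item \emph{Differences.} For $B'\subset B$, cut $[\ell_1^-,\ell_2^-)$ at $\ell_1'^-,\ell_2'^-$ and $[r_2^+,r_1^+)$ at $r_2'^+,r_1'^+$. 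This writes $B\setminus B'$ as a disjoint union of at most eight grid rectangles, discarding empty ones. Each rectangle has sides taken among the six lifts, all nested in the ranges of $B$. The key lemma, applied with $B$ as the ambient box, shows every corner pair is R-parallel, so each piece is an RH box.
\end{itemize}
One could also reach the same decomposition by iterating the side-by-side splitting of Proposition~\ref{prop:sidebyside}, which will be convenient later for finite additivity.

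The main obstacle I expect is the key lemma's bookkeeping. The interval statements must be done with the half-open, counterclockwise conventions (Convention~\ref{conv:circle-intervals}), and degenerate coincidences must be checked, such as $\ell'^-=\ell_1^-$ with distinct lifts. Such a coincidence cannot happen for distinct lifts of a simple curve sharing an endpoint, by Proposition~\ref{prop:spikes}-type uniqueness, but it must be ruled out explicitly. The other delicate point is verifying condition~\ref{item:box-diags-cross-lift} for the new systems rather than only the endpoint ordering. I would handle this by proving the "if" direction of Proposition~\ref{prop:rh-boxes} cleanly, so that condition~\ref{item:box-diags-cross-lift} becomes a consequence of the cyclic order through the convergence of $g_{i,j,n}^\pm$.
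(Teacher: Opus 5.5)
Your intersection step is fine: each new corner of $B\cap B'$ pairs a side of one box with a side of the other that lies in the first box's closed range, so the interval chase applies.

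The gap is in the difference step. Your key lemma requires the lift to be R-parallel to sides spanning \emph{at least} the ambient range. The inner sides $\ora{\ell_i'},\ora{r_j'}$ are only known to be R-parallel to each other, over smaller ranges. The interval chase does show that $(\ora{\ell_i'},\ora{r_2})$ and $(\ora{\ell_1},\ora{r_j'})$ are R-parallel. It does not show this for $(\ora{\ell_2},\ora{r_j'})$ or $(\ora{\ell_i'},\ora{r_1})$:
\begin{itemize}
\item the whisker of $\ora{r_j'}$ is only known to run counterclockwise from $r_j'^+$ as far as $\ell_2'^-$, not to $\ell_2^-$;
\item the whisker of $\ora{\ell_i'}$ is only known to run from $\ell_i'^-$ as far as $r_1'^+$, not to $r_1^+$.
\end{itemize}
These pairs are corners of your grid pieces, for example of $[\ell_2'^-,\ell_2^-)\times[r_2'^+,r_1'^+)$, and they can fail. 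Take the counterclockwise cyclic order
\[
\begin{aligned}
&\ell_1^-,\ \ell_1'^-,\ \ell_2'^-,\ r_1'^-,\ r_2'^-,\ \ell_2^+,\ \ell_2^-,\ r_2^-,\\
&r_2^+,\ r_2'^+,\ r_1'^+,\ \ell_2'^+,\ r_1^-,\ r_1^+,\ \ell_1'^+,\ \ell_1^+ .
\end{aligned}
\]
Here both lift systems are RH, $B'\subset B$, and no two of the eight geodesics cross; such nested oriented patterns are realized by lifts of $[x]$, e.g.\ for non-separating $[x]$. Nevertheless $(\ora{\ell_2},\ora{r_1'})$ and $(\ora{\ell_2},\ora{r_2'})$ are R-anti-parallel, and $(\ora{\ell_2'},\ora{r_1})$ is L-anti-parallel.

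In this example $B\setminus B'$ admits \emph{no} decomposition into boxes with sides among these eight lifts. A piece reaching up to $\ell_2^-$ has top side $\ora{\ell_2}$. Its vertical sides must then be $\ora{r_2},\ora{r_1}$, the only right sides R-parallel to $\ora{\ell_2}$. Its bottom side must be $\ora{\ell_2'}$, since any other choice gives a piece containing $B'$, and $\ora{\ell_2'}$ fails against $\ora{r_1}$.

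What is missing is the paper's device of inserting new lifts of $[x]$ (``if necessary, add a horizontal whisker''). If $\ora{\ell_2'}\neq\ora{\ell_2}$, use Lemma~\ref{lem:diagonaldense} to choose a lift $\ora{m}$ as follows:
\begin{itemize}
\item both endpoints lie in a tiny arc around a point strictly between $\ell_2'^-$ and the first of $\ell_2^-,r_1'^-$ counterclockwise after it;
\item $m^+$ lies just clockwise of $m^-$.
\end{itemize}
Then the whisker $(m^-,m^+)$ is almost the whole circle, so $\ora{m}$ is R-parallel to $\ora{r_1}$, $\ora{r_2}$ and $\ora{r_1'}$. If $\ora{\ell_2'}=\ora{\ell_2}$, take $\ora{m}=\ora{\ell_2}$. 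In either case
\[
\begin{aligned}
B\setminus B' &= [\ell_1^-,\ell_2'^-)\times[r_2^+,r_2'^+)\ \sqcup\ [\ell_1^-,\ell_1'^-)\times[r_2'^+,r_1'^+)\ \sqcup\ [\ell_2'^-,m^-)\times[r_2^+,r_1'^+)\\
&\quad\sqcup\ [\ell_1^-,m^-)\times[r_1'^+,r_1^+)\ \sqcup\ [m^-,\ell_2^-)\times[r_2^+,r_1^+),
\end{aligned}
\]
after discarding empty pieces. Every corner pair here is either a given pair, one of the two valid mixed types $(\ora{\ell_i'},\ora{r_2})$, $(\ora{\ell_1},\ora{r_j'})$, or a pair involving $\ora{m}$ that is R-parallel by construction. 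So each piece is an RH box.
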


\begin{proof}
  First, the intersection of boxes is obviously a box. Indeed, all
  sides of a box are necessarily in the respective whisker, and any
  new vertices of the intersection are thus intersections of whiskers.

  Second, we must show that if $A,B$ are boxes, then $A \backslash B$
  is a finite union of boxes. In the interesting case, $A\setminus B$
  is an L-shaped region, as on the left of
  Figure~\ref{fig:box-difference}. (This shows the hardest case of the
  orientation of the region.) If necessary, add a horizontal whisker
  slightly above the horizontal inner edge of the L, making
  sure that it intersects the vertical whisker forming the
  other inner edge of the L. The resulting whiskers allow us to
  write the L as the union of three boxes, as shown on the right of
  Figure~\ref{fig:box-difference}.
\end{proof}

\begin{proposition}
  \label{prop:finitely-additive}
  The function $\mu_f$ is finitely additive on boxes.
\end{proposition}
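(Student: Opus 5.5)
We reduce finite additivity to the side-by-side additivity of Proposition~\ref{prop:sidebyside} through a common-refinement argument, as is standard for semirings of rectangles. Suppose $B \in \mathcal{B}$ is a disjoint union $B = B_1 \sqcup \cdots \sqcup B_k$ with each $B_i \in \mathcal{B}$.

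\textbf{Building a grid.} Each $B_i$ is bounded by two horizontal whiskers (at $y$-coordinates $\ell^-$ of lifts of $[x]$) and two vertical whiskers (at $x$-coordinates $r^+$). Collect all horizontal whiskers bounding $B$ and the $B_i$, and likewise all vertical ones. Every side of $B_i$ lies inside $B$, and all whiskers bounding boxes extend across the relevant box sides. So, after possibly adding a few auxiliary whiskers, every horizontal line of the grid should meet every vertical line inside $B$, forming an R-parallel pair. Auxiliary whiskers are supplied by Lemma~\ref{lem:diagonaldense}, exactly as in the proof of Proposition~\ref{prop:RHsemiring}.

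\textbf{Cells are RH boxes.} The grid cuts $B$ into cells. Each cell is an RH box: its four bounding lifts are pairwise R-parallel across (condition~\ref{item:box-edges-cross-lift}), distinct (condition~\ref{item:box-nontriv-lift}), and in the correct cyclic order, which by Proposition~\ref{prop:rh-boxes} and Lemma~\ref{lem:box-endpoints} is equivalent to condition~\ref{item:box-diags-cross-lift}. Each $B_i$, and $B$ itself, is then a union of cells forming a sub-grid.

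\textbf{Additivity on a grid.} Applying Proposition~\ref{prop:sidebyside} repeatedly, first along rows (changing the $q_j$) and then along columns (the symmetric version changing the $p_i$), gives that $\wh{\mu_f}$ of a grid box equals the sum of $\wh{\mu_f}$ over its cells. Applying this to $B$ and to each $B_i$, and using that the cells of $B$ are partitioned among the $B_i$, yields
\[
\wh{\mu_f}(B) = \sum_i \wh{\mu_f}(B_i).
\]

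\textbf{Main obstacle.} The hard step is verifying that the grid cells are genuine RH boxes. Intermediate horizontal and vertical whiskers must actually intersect, and nesting of lifts (types~2 and~3) must not break R-parallelism or the "sufficiently large $n$" diagonal-crossing condition.

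I would first handle this through the whisker picture. A horizontal and a vertical whisker lying inside $B$ both extend past the box edges, since they bound sub-boxes whose corners lie on them, so they intersect. Where an intersection fails near a corner, I would insert a substitute whisker slightly shifted, as in the L-shaped case of Proposition~\ref{prop:RHsemiring}, and refine further. This is harmless because each additional split is again measured additively by Proposition~\ref{prop:sidebyside}.
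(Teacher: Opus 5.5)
There is a genuine gap. It sits exactly at the step you flag as the main obstacle, but it is not a corner effect that shifted whiskers can repair.

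Whiskers are segments anchored at the diagonal, not full grid lines. Inside $B$, a horizontal whisker at height $\ell^-$ enters through the side at $r_2^+$ and runs only up to $x$-coordinate $\ell^+$. If it bounds a sub-box $B_i$, it is only guaranteed to span the width of $B_i$. It can stop in the interior of $B$, and this happens exactly when the geodesic $\ell$ itself, as a point of $G(S)$, lies in $B$. Partitions with this feature are easy to build from the density of lifts in Lemma~\ref{lem:diagonaldense}. So your assertion that any horizontal and vertical whisker "lying inside $B$" meet is false: bounding a sub-box only shows that a whisker spans that sub-box.

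The obstruction is also intrinsic to product grids. Suppose $\ell^-$ is an edge of some $B_i$. Then any product grid refining the partition must use the full line at height $\ell^-$. Now take a cell with an edge on that line, lying in the column containing $\ell^+$. For it to be an RH box, $\ell$ must be R-parallel to the lift $r$ bounding that column on the far side. This is impossible, since $r^+\notin(\ell^-,\ell^+)$, and $\ell$ is the only lift of $[x]$ with endpoint $\ell^-$. So that cell is not an RH box, and $\wh{\mu_f}$ is not even defined on it. Adding whiskers at other heights cannot remove the forced line at height $\ell^-$.

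What is missing is a non-product refinement. The paper cuts only along a \emph{long} whisker, one that traverses the whole current box $B_0$. Such a whisker meets both opposite whiskers of $B_0$, and both whiskers of every $B_i$ it passes through. It therefore splits $B_0$, and each such $B_i$, side-by-side into RH boxes, so Proposition~\ref{prop:sidebyside} applies. One then inducts on the number of whiskers involved, since the cut whisker becomes a boundary whisker of the two halves. The base case is when no whisker is long: the piece containing points near the upper-right corner of $B_0$ must then be $B_0$ itself, so the partition is trivial. This guillotine structure is what accommodates whiskers that end inside the box. Your row-then-column summation is fine once all the pieces are genuine RH boxes, but the grid you sum over does not exist in general.
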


  \begin{figure}
\centering{
\resizebox{120mm}{!}{\fontsize{9pt}{9pt}\selectfont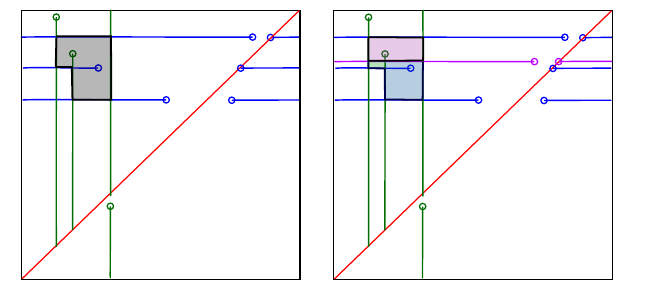}
\caption{Decomposing a difference of boxes. Left: A possible shape
    for $A \setminus B$. Right: Adding a horizontal whisker allows us to
    write the region as a union of boxes.
  }
\label{fig:box-difference}
}
\end{figure}

\begin{proof}
  Given a division $B_0 = \bigsqcup_i B_i$, we will show that the
  $B_i$ can be further divided so that the result can be joined by
  side-by-side additivity to form $B_0$. Let $W$ be the whiskers
  involved in any side of one of the~$B_i$. We proceed by induction on
  $\abs{W}$.

  Whiskers in $W$ can be ``long'', passing through
  the interior of the upper or right edge
  of~$B_0$, or not.  (Such long whiskers necessarily also pass through the bottom
  or left edge as well.)
  Suppose first that there is no
  long whiskers in~$W$. Then the partition is in fact trivial, since
  the only possible box with no long whiskers that contains a
  point near the upper-right corner of~$B_0$ is $B_0$ itself.

  Otherwise, there is at least one long whisker. Pick one such long
  whisker and use it to write $B_0 = B_1' \sqcup B_2'$. Subdivide any
  of the $B_i$ that intersect both $B_1'$ and $B_2'$, in a
  side-by-side way, getting sets of boxes $B_{i,1}$ and $B_{i,2}$,
  which are partitions of $B_1'$ and $B_2'$ respectively. These give
  subdivisions of $B_1'$ and $B_2'$ that involve fewer
  whiskers. Thus by induction,
  \[
    \wh{\mu_f}(B_0) = \wh{\mu_f}(B_1') + \wh{\mu_f}(B_2') = \sum_j \wh{\mu_f}(B_{j,1}) +
    \sum_k \wh{\mu_f}(B_{k,2}) = \sum_i \wh{\mu_f}(B_i). \qedhere
  \]
\end{proof}

 \begin{figure}
\centering{
\resizebox{120mm}{!}{\fontsize{9pt}{9pt}\selectfont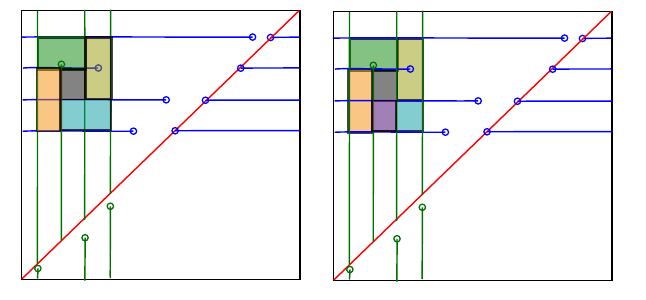}
\caption{Finite additivity of $\mu_f$. We can turn an
    arbitrary division of a large box into smaller boxes (left) into
    one that is side-by-side decomposable (right). Note that we
    subdivided the cyan box in the initial division to make this possible. }
\label{fig:finite-additivity}
}
\end{figure}

Let $\mathcal{R}$ be the ring generated by the semiring~$\mathcal{B}$. By
Proposition~\ref{prop:addsemitoring}, $\wh{\mu_f}$ is finitely additive
on~$\mathcal{R}$.

%%% Local Variables:
%%% mode: latex
%%% TeX-master: "Intersections"
%%% End:

\section{Construction of the geodesic current}
\label{sec:construct}

\subsection{Countable additivity}
\label{sec:countable-additivity}

We prove countable sub-additivity of the function $\mu_{f}$ by
proving approximation results. We first find controlled sequences of
decreasing nested RH boxes that we can show have measure going to zero.

\begin{definition}
Let the RH lift system $(p_1,q_1,p_2,q_2)$ determine a right-handed
box
$B = [s_1,s_2) \times [t_2,t_1)$, where $s_i = \ell_i^-$ and $t_i =
r_i^+$. Further assume that $\ora{r_2}$ is nested inside $\ora{r_1}$,
so the box is of type~2 or~3.

Define the \emph{vertical accordion} (Figure~\ref{fig:vertical-accordion}), $F(p_1, q_1, p_2,\vec{q_2})$ of
boxes based at $B$ to be
the sequence of boxes, indexed by $k$, with $p_1, p_2, q_1$ constant
and $q_2$ translated by powers of $r_1 = q_1 x Q_1$:
\[
  p_1^{(k)}\coloneqq p_1,\quad
  p_2^{(k)}\coloneqq p_2,\quad
  q_1^{(k)}\coloneqq q_1,\quad
  q_2^{(k)} \coloneqq q_1 x^k q_1^{-1}q_2.
\]
This defines boxes whose  height becomes smaller as $k$
grows, with the top endpoint fixed.

\begin{figure}
\centering{
\resizebox{120mm}{!}{\fontsize{9pt}{9pt}\selectfont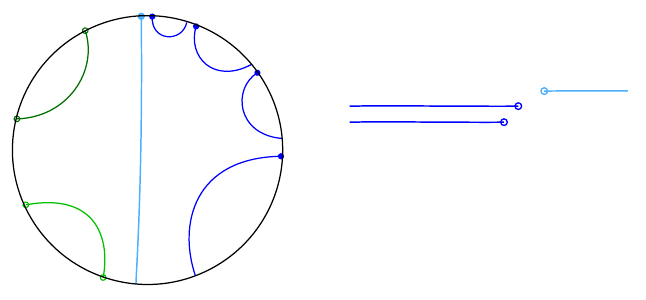}
\caption{A vertical accordion $F(p_1,q_1, p_2,\vec{q_2})$, shown in
    the disk (left) and in the whiskers picture (right)}
  \label{fig:vertical-accordion}
}
\end{figure}

Similarly, if $\ora{\ell_1}$ is nested in $\ora{\ell_2}$ (so we again have a box of type 2 or 3), define the
\emph{horizontal accordion} (Figure~\ref{fig:horizontal-accordion}) $F(\vec{p_1},q_1, p_2,q_2)$ by
\[
  p_1^{(k)}\coloneqq p_1 P_2x^k p_2,\quad
  p_2^{(k)}\coloneqq p_2,\quad
  q_1^{(k)}\coloneqq q_1,\quad
  q_2^{(k)} \coloneqq q_1.
\]
\label{def:accordion}
\end{definition}

\begin{proposition}
  Let $F = F(p_1,q_1,p_2,\vec{q_2})$ or $F(\vec{p_1},q_1, p_2,q_2)$ be a
  vertical or horizontal accordion as above. Then
  \begin{itemize}
  \item each $F(k)$ is a RH box;
  \item $F(k+1) \subset F(k)$; and
  \item $\displaystyle \lim_{k \to \infty} \wh{\mu_f}(F(k)) = 0$.
  \end{itemize}
\label{prop:finalfamily}
\end{proposition}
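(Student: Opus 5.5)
The plan is to treat the vertical accordion in detail; the horizontal accordion should follow by the same argument after flipping, as in Remark~\ref{rmk:flip_rh_box}.

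\textbf{Box parameters.} Write $\ora{r_1}=q_1\cdot\ora{x}$. Regard $r_1\coloneqq q_1xQ_1$ as a group element; its axis is $\ora{r_1}$. Then $q_2^{(k)}=r_1^k q_2$, so $\ora{r_2^{(k)}}=r_1^k\cdot\ora{r_2}$. Since $\ora{r_2}$ is nested inside $\ora{r_1}$ and coherently oriented with it, the north--south dynamics of $r_1$ (Lemma~\ref{lem:NS-dynamics}) show two things. First, $\ora{r_2^{(k)}}$ stays nested inside $\ora{r_1}$. Second, its endpoint $r_1^k\cdot r_2^+$ moves monotonically in $(r_2^+,r_1^+)$ toward $r_1^+$.

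\textbf{RH box and nesting.} For each $k$ I would check Definition~\ref{def:RH-lift-system} through the whisker criterion. Each $\ora{\ell_i}$ is R-parallel to $\ora{r_1}$ and to $\ora{r_2}$. The segment $\ora{r_2^{(k)}}$ lies between them, so the conditions $r^+\in(\ell^-,\ell^+)$ and $\ell^-\in(r^+,r^-)$ persist, giving condition~\ref{item:box-edges-cross-lift}. Condition~\ref{item:box-nontriv-lift} holds because $Q_1q_2^{(k)}=x^kQ_1q_2$ is a power of $x$ only if $Q_1q_2$ is. For condition~\ref{item:box-diags-cross-lift}, the cyclic order $\ell_1^-,\ell_2^-,r_2^{(k)+},r_1^+$ of distinct points holds. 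Lemma~\ref{lem:box-endpoints} then makes the endpoints of $\ora{q_2^{(k)}x^np_1}$ and $\ora{q_1x^np_2}$ converge to points in this order, so the axes R-cross for large $n$. Nesting $F(k+1)\subset F(k)$ is immediate, since only the lower edge $r_2^{(k)+}$ moves, and it moves toward $r_1^+$.

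\textbf{Measure tends to zero.} In the $[a,b,c,d]$ parametrization, $a=p_1q_1$ and $c=p_2q_1$ are fixed. With $e\coloneqq Q_1q_2=Ab=Cd$ we get $b_k=ax^ke$ and $d_k=cx^ke$. Hence
\[
2\wh{\mu_f}(F(k))=\lim_{n\to\infty}\Bigl(f([ax^kex^n])-f([ax^n])+f([cx^n])-f([cx^kex^n])\Bigr).
\]
The pair $(\ora{x},e\cdot\ora{x})$ is the translate by $Q_1$ of the coherently nested pair $(\ora{r_1},\ora{r_2})$, so it is parallel. By Lemma~\ref{lem:rhs}, $\ora{e}$ therefore crosses $\ora{x}$, and it does so to the right given the R-handed orientation conventions (this is the step to check most carefully). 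Since $a$ and $c$ also R-cross $\ora{x}$, Proposition~\ref{prop:join-split-alg} gives, as double limits in $(k,n)$,
\[
f([ax^kex^n])-f([ax^k])-f([ex^n])\to0,\qquad f([cx^kex^n])-f([cx^k])-f([ex^n])\to0.
\]
Substituting, the bracket equals
\[
\bigl(f([ax^k])-f([cx^k])\bigr)-\bigl(f([ax^n])-f([cx^n])\bigr)+o(1).
\]
By Lemma~\ref{lem:additivity-limit}, both $f([ax^m])-mf([x])$ and $f([cx^m])-mf([x])$ converge, so $f([ax^m])-f([cx^m])$ converges as $m\to\infty$. Thus the bracket tends to $0$ as $k,n\to\infty$ jointly. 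The sequence $k\mapsto\wh{\mu_f}(F(k))$ is nonnegative (Proposition~\ref{prop:box-positive}) and nonincreasing (by finite additivity, Proposition~\ref{prop:finitely-additive}), so its limit exists. That limit equals the iterated limit $\lim_k\lim_n$ of the bracket, and the iterated limit agrees with the joint limit by the double-limit lemmas of Section~\ref{subsec:convexity}. Hence the limit is $0$.

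\textbf{Main obstacle.} I expect the hardest step to be orientation bookkeeping: checking that $\ora{e}=\ora{Ab}$ crosses $\ora{x}$ to the \emph{right}, so that Proposition~\ref{prop:join-split-alg} applies directly. If it instead crosses to the left, I would replace $x$ by $X$ and the box by its flip (Remark~\ref{rmk:flip_rh_box}) and rerun the argument with $n\to-\infty$. A further point is justifying the exchange of the limit in $k$ with the limit in $n$. I would handle this through the existence of the joint limit, which follows from Lemma~\ref{lem:additivity-limit-2} together with monotonicity.
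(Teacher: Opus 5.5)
Your proposal is correct and follows the paper's proof. You use north--south dynamics of $r_1=q_1xQ_1$ for the first two bullets, and the parametrization $b^{(k)}=ax^ke$, $d^{(k)}=cx^ke$ with $e=Q_1q_2$, which crosses $\ora{x}$ to the right directly by Lemma~\ref{lem:rhs}, so no fallback to $X$ is needed. You then apply Proposition~\ref{prop:join-split-alg} together with Lemma~\ref{lem:additivity-limit} to send the bracket to $0$. Your splitting $f([ax^k])+f([ex^n])$ is exactly what Proposition~\ref{prop:join-split-alg} produces, while the paper's displayed pairing $f([x^na])+f([x^ke])$ agrees with it only asymptotically.

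For the horizontal accordion, your flip via Remark~\ref{rmk:flip_rh_box} works once you note that symmetry of $f$ makes the $X$-based value of the flipped box equal its $x$-based value. The paper's direct parametrization $a^{(k)}=hx^kc$, $b^{(k)}=hx^kd$ with $h=p_1P_2$ is quicker, since there the $k$-dependent terms cancel exactly.
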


\begin{proof}
  We first prove the results in the case of vertical accordions,
  starting with checking that $F(k)$ is a box. Since $\ora{r_2}$ is
  nested inside $\ora{r_1}$, the NS dynamics of the group element $r_1
  = q_1 x q_1^{-1}$ translate $\ora{r_2}$ inside $(r_1^-,r_1^+)$, as depicted
  in Figure~\ref{fig:vertical-accordion}. In particular the cyclic
  order of the endpoints doesn't change, and we also automatically
  have $F(k+1) \subset F(k)$.

  To show the measures go to zero, we first reformulate the nesting
  condition algebraically. Note that
  $q_2Q_1 \cdot \ora{r_1} = \ora{r_2}$. Thus  $\ora{r_2}$ and $\ora{r_1}$ being nested means $(\ora{r_1},\ora{r_2})$ are R-parallel which is equivalent, by Lemma~\ref{lem:rhs}, to $(\ora{q_2Q_1}, \ora{r_1})$ R-crossing. This is
  in turn equivalent to $Q_2 \cdot \ora{q_2Q_1} = \ora{Q_1q_2}$
  R-crossing $\ora{x}$; write $e = Q_1q_2$.

  In the RH orbit notation, we then have
  \begin{align*}
    a^{(k)} &= p_1 q_1 = a &
    b^{(k)} &= p_1 q_2^{(k)} = p_1 q_1 x^k q_1^{-1}q_2 = a x^k e\\
    c^{(k)} &= p_2 q_1 = c &
    d^{(k)} &= p_2 q_2^{(k)} = p_2 q_1 x^k q_1^{-1} q_2 = c x^k e.
  \end{align*}

We thus get the family of RH orbits given by
\[
\bigl([B]_{a,a x^k e,c,c x^k e}\bigr)_{k=0}^\infty.
\]

To show this family of RH orbits have measure going to zero,
by definition of $\wh{\mu_f}$, we need to evaluate the nested limit:
\begin{align*}
  \lim_{k \to \infty} \bigl(\wh{\mu_f}([B]_{a,ax^ke,c,cx^ke})\bigr)
   &= \frac{1}{2}\lim_{k \to \infty} \lim_{n\to \infty} f([ax^ke x^n]) +
     f([cx^n])-f([ax^n])-f([cx^kex^n]) \\
  &= \frac{1}{2}\lim_{k,n \to \infty} \bigl(f([x^n a]) + f([x^ke])\bigr) + f([cx^n])\\
  &\qquad\qquad-f([ax^n])-\bigl(f([x^nc]) + f([x^ke])\bigr)\\
  &= 0.
\end{align*}
Here, we apply Lemma~\ref{lem:additivity-limit-2}
(adding and subtracting suitable copies of $f([x^n])$ and $f([x^k])$)
to see the double limit
exists, and then by applying Proposition~\ref{prop:join-split-alg} twice we
can break up the limit into canceling terms as desired.

The case of the horizontal accordion is similar, but there are some
confusing inverses so we spell it out briefly.
Since $P_2 p_1 \cdot \ora{\ell_1} = \ora{\ell_2}$,
the assumption that $\ora{\ell_1}$
is nested inside $\ora{\ell_2}$ (i.e., $(\ora{\ell_1},\ora{\ell_2})$ are R-parallel) is equivalent to $\ora{P_2 p_1}$ 
crossing $\ora{\ell_2}$ to the right, which in turn is equivalent to $\ora{p_1 P_2}$
crossing $\ora{x}$ to the right. In particular we have
\[
  \ora{\ell_1^{(k)}} = \bigl(p_1^{(k)}\bigr)^{-1} \cdot \ora{x}
  = \bigl(p_2^{-1} x p_2)^{-k} \cdot \ora{\ell_1}
  = \ell_2^{-k} \cdot \ora{\ell_1},
\]
which by NS dynamics of $\ell_2$ gives the picture on the left of
Figure~\ref{fig:horizontal-accordion}. In particular all of the $F(k)$
are boxes with $F(k+1) \subset F(k)$.

For the limiting computation, we set $h = p_1 P_2$, so that
\begin{align*}
  a^{(k)} &= p_1^{(k)} q_1 = h x^k c &
  b^{(k)} &= p_1^{(k)} q_2 = h x^k d\\
  c^{(k)} &= p_2 q_1 = c &
  d^{(k)} &= p_2 q_2 = d.
\end{align*}
Lemma~\ref{lem:additivity-limit-2} and
Proposition~\ref{prop:join-split-alg} apply as before.
\end{proof}

  \begin{figure}
\centering{
\resizebox{120mm}{!}{\fontsize{9pt}{9pt}\selectfont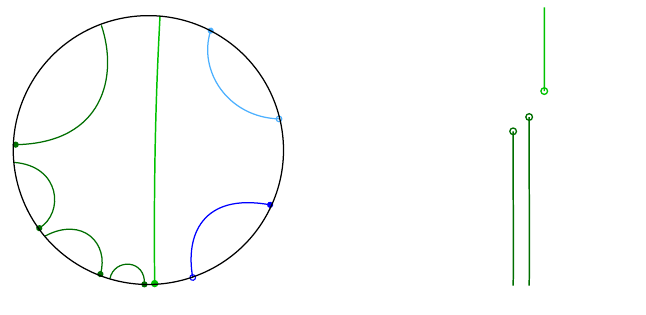}
\caption{Horizontal accordion $F(\vec{p_1},q_1,p_2,q_2)$}
  \label{fig:horizontal-accordion}
}
\end{figure}

We now use Proposition~\ref{prop:finalfamily} to prove that $\mu_f$
is countably additive.
We will need to consider more types of boxes
than the half-open boxes we have been working with so far. For $B$ a
right-handed box $[s_1,s_2) \times [t_2,t_1)$, let
$\overline{B} \coloneqq [s_1,s_2] \times [t_2,t_1]$ and
$B^\circ \coloneqq (s_1,s_2) \times (t_2,t_1)$ be the corresponding
closed and open boxes.

\begin{figure}
     \centering
     \begin{subfigure}[b]{0.47\textwidth}
        \fontsize{9pt}{9pt}\selectfont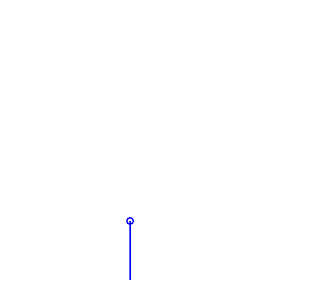
        \caption{$V(B,\epsilon)$ is a down and left extension of an RH
          box $B$ with mass less than $\epsilon$, as in Lemma~\ref{lem:extend1}}\label{fig:ca1}
     \end{subfigure}
     \hfill
     \begin{subfigure}[b]{0.47\textwidth}
        \fontsize{9pt}{9pt}\selectfont%% Creator: Inkscape 1.3 (0e150ed6c4, 2023-07-21), www.inkscape.org
%% PDF/EPS/PS + LaTeX output extension by Johan Engelen, 2010
%% Accompanies image file 'ca2.pdf' (pdf, eps, ps)
%%
%% To include the image in your LaTeX document, write
%%   \input{<filename>.pdf_tex}
%%  instead of
%%   \includegraphics{<filename>.pdf}
%% To scale the image, write
%%   \def\svgwidth{<desired width>}
%%   \input{<filename>.pdf_tex}
%%  instead of
%%   \includegraphics[width=<desired width>]{<filename>.pdf}
%%
%% Images with a different path to the parent latex file can
%% be accessed with the `import' package (which may need to be
%% installed) using
%%   \usepackage{import}
%% in the preamble, and then including the image with
%%   \import{<path to file>}{<filename>.pdf_tex}
%% Alternatively, one can specify
%%   \graphicspath{{<path to file>/}}
%% 
%% For more information, please see info/svg-inkscape on CTAN:
%%   http://tug.ctan.org/tex-archive/info/svg-inkscape
%%
\begingroup%
  \makeatletter%
  \providecommand\color[2][]{%
    \errmessage{(Inkscape) Color is used for the text in Inkscape, but the package 'color.sty' is not loaded}%
    \renewcommand\color[2][]{}%
  }%
  \providecommand\transparent[1]{%
    \errmessage{(Inkscape) Transparency is used (non-zero) for the text in Inkscape, but the package 'transparent.sty' is not loaded}%
    \renewcommand\transparent[1]{}%
  }%
  \providecommand\rotatebox[2]{#2}%
  \newcommand*\fsize{\dimexpr\f@size pt\relax}%
  \newcommand*\lineheight[1]{\fontsize{\fsize}{#1\fsize}\selectfont}%
  \ifx\svgwidth\undefined%
    \setlength{\unitlength}{157.65636635bp}%
    \ifx\svgscale\undefined%
      \relax%
    \else%
      \setlength{\unitlength}{\unitlength * \real{\svgscale}}%
    \fi%
  \else%
    \setlength{\unitlength}{\svgwidth}%
  \fi%
  \global\let\svgwidth\undefined%
  \global\let\svgscale\undefined%
  \makeatother%
  \begin{picture}(1,0.89095587)%
    \lineheight{1}%
    \setlength\tabcolsep{0pt}%
    \put(0,0){\includegraphics[width=\unitlength,page=1]{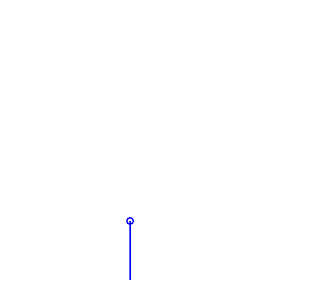}}%
    \put(0.13644516,0.84507922){\color[rgb]{0,0.43137255,0}\makebox(0,0)[t]{\lineheight{1.25}\smash{\begin{tabular}[t]{c}$t_1$\end{tabular}}}}%
    \put(0.1371107,0.78083072){\color[rgb]{0,0.43137255,0}\makebox(0,0)[t]{\lineheight{1.25}\smash{\begin{tabular}[t]{c}$t_1'$\end{tabular}}}}%
    \put(0.1345081,0.68547496){\color[rgb]{0,0.43137255,0}\makebox(0,0)[t]{\lineheight{1.25}\smash{\begin{tabular}[t]{c}$t_2$\end{tabular}}}}%
    \put(0.35103923,0.00818697){\color[rgb]{0,0,1}\makebox(0,0)[t]{\lineheight{1.25}\smash{\begin{tabular}[t]{c}$s_1$\end{tabular}}}}%
    \put(0.57062173,0.00644422){\color[rgb]{0,0,1}\makebox(0,0)[t]{\lineheight{1.25}\smash{\begin{tabular}[t]{c}$s_2$\end{tabular}}}}%
    \put(0.61157638,0.76114404){\color[rgb]{0,0,0}\transparent{0.78059101}\makebox(0,0)[t]{\lineheight{1.25}\smash{\begin{tabular}[t]{c}$W$\end{tabular}}}}%
    \put(0.480019,0.00793038){\color[rgb]{0,0,1}\makebox(0,0)[t]{\lineheight{1.25}\smash{\begin{tabular}[t]{c}$s_2'$\end{tabular}}}}%
    \put(0,0){\includegraphics[width=\unitlength,page=2]{ca2.pdf}}%
  \end{picture}%
\endgroup%

        \caption{$W(B,\epsilon)$ is an $\epsilon$ neighborhood of the upper and right edges of an RH box $B$, as in Lemma~\ref{lem:extend2}}\label{fig:ca2}
     \end{subfigure}
     \caption{Extensions used in the proof of countable sub-additivity}
     \label{fig:cases}
\end{figure}

\begin{lemma}\label{lem:extend1}
  For $B = [s_1,s_2)\times [t_2,t_1) \in \cB$ a right-handed box and
  any $\epsilon > 0$, there is a right-handed box $V(B,\epsilon) =
  [s_1',s_2) \times [t_2',t_1) \in \cB$ so that
  $s_1' < s_1$ and $t_2' < t_2$ and
  $\wh{\mu_f}(V(B,\epsilon) \setminus B) < \epsilon$.
\end{lemma}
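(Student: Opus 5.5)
We first enlarge $B$ to the left, then enlarge the result downward, and in each step control the added mass by an accordion from Proposition~\ref{prop:finalfamily}. The mass added is $\wh{\mu_f}(V\setminus B)$. After the two steps, $V\setminus B$ splits into a left strip $[s_1',s_1)\times[t_2,t_1)$ and a bottom strip $[s_1',s_2)\times[t_2',t_2)$. By Proposition~\ref{prop:sidebyside} and finite additivity (Proposition~\ref{prop:finitely-additive}), it suffices to make each strip have measure less than $\epsilon/2$. Both strips are themselves RH boxes, sharing whiskers with $B$.

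\textbf{Left strip.} We want a lift $\ora{\ell}$ of $[x]$ with two properties:
\begin{itemize}
\item[(i)] $\ell^-$ lies slightly clockwise of $s_1$ and $\ora{\ell}$ is nested inside $\ora{\ell_1}$, so that $[\ell^-,s_1)\times[t_2,t_1)$ is an RH box of type 2 or 3 with a horizontal nesting;
\item[(ii)] $(\ora{\ell},\ora{r_j})$ are R-parallel for $j=1,2$.
\end{itemize}
To arrange this, first use Lemma~\ref{lem:diagonaldense} and the whisker picture to choose an auxiliary lift $\ora{\ell_0}=P_0\cdot\ora{x}$ with $\ell_0^-$ just before $s_1$ and whose whisker is long enough to meet the whiskers of $\ora{r_1},\ora{r_2}$. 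Next place $\ora{\ell_1}$ inside $\ora{\ell_0}$ in the appropriate orientation; swapping roles if necessary, we get a nested pair. Then apply the horizontal accordion $F(\vec{p},q_1,p_1,q_2)$, which moves the left endpoint by powers of $\ell_0$-translation toward $s_1$. Its boxes $F(k)=[\ell^{(k)-},s_1)\times[t_2,t_1)$ shrink, and by Proposition~\ref{prop:finalfamily} $\wh{\mu_f}(F(k))\to0$. Choose $k$ with measure less than $\epsilon/2$ and set $s_1'=\ell^{(k)-}$. Here $s_1'<s_1$ strictly, because the accordion endpoints converge to $\ell_0^{\pm}$ only in the limit.

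\textbf{Bottom strip.} The same argument with a vertical accordion based on a lift $\ora{r}$ nested suitably relative to $\ora{r_2}$ gives $t_2'<t_2$ with $\wh{\mu_f}([s_1',s_2)\times[t_2',t_2))<\epsilon/2$. The width must now be the already extended $[s_1',s_2)$, so this step must be done second. The RH conditions for the new box require the new whisker to still meet the whiskers at $s_1'$ and $s_2$; this can be ensured because the accordion endpoints converge to those of a fixed long whisker chosen beforehand.

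\textbf{Main obstacle.} The hardest step is verifying that the accordion boxes actually sit adjacent to $B$ and satisfy all RH conditions, in particular condition~\ref{item:box-diags-cross-lift}. Concretely, one must choose the nesting lift so that the accordion degenerates precisely onto the edge $\{s_1\}\times[t_2,t_1)$ (resp.\ $[s_1',s_2)\times\{t_2\}$) rather than onto some other whisker. I would handle this by choosing the accordion's fixed whisker to be the edge whisker of $B$ itself. The shrinking family then converges to that edge from the outside, and NS dynamics of $\ell_1$ (resp.\ $r_2$) guarantee that the endpoints approach $s_1$ (resp.\ $t_2$) from the correct side.
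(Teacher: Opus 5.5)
Your proposal follows the paper's proof, with the two steps done in the opposite order. The paper first attaches $F_1(k_1)$ below $B$, where $F_1=F(p_1,q_2,p_2,\vec{q_3})$ is a vertical accordion with $q_3\cdot\ora{x}$ nested inside $q_2\cdot\ora{x}$, and then attaches a horizontal accordion box $F_2(k_2)$ to the left of $B\cup F_1(k_1)$, each of mass $<\epsilon/2$ by Proposition~\ref{prop:finalfamily}.

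One correction to your left-strip paragraph: $F(\vec{p_0},q_1,p_1,q_2)$ moves $\ora{\ell_0}$ by powers of $\ell_1^{-1}$, not of $\ell_0$, so its left endpoints converge to $s_1=\ell_1^-$, as your final paragraph correctly says. Also, the RH conditions you flag as the main obstacle hold automatically once the new lift lies on the side of $\ora{\ell_1}$ (resp.\ $\ora{r_2}$) away from the opposite lifts; this follows by comparing cyclic orders and applying Lemma~\ref{lem:box-endpoints}.
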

See Figure~\ref{fig:ca1}. That is, $V(B,\epsilon)$ extends $B$ slightly down and to the left,
without increasing the mass much.

\begin{proof}
  Let $B = B(p_1,q_1,p_2,q_2)$.
  By Proposition~\ref{prop:density}, there is some $q_3$ so that $q_3
  \cdot \ora{x}$ is nested inside $q_2 \cdot \ora{x}$; in particular
  $F_1 = F(p_1,q_2,p_2,\vec{q_3})$ is a vertical accordion, and for
  each $k$, $B \cup F_1(k)$ is a RH box. 
  By Proposition~\ref{prop:finalfamily}, there is $k_1$ large enough
  so that
  $\wh{\mu_f}(F_1(k_1))<\epsilon/2$.
  Repeat the procedure to find a horizontal
  accordion $F_2$ abutting on $B \cup F_1(k_1)$. Again there is $k_2$
  large enough so that $\wh{\mu_f}(F_2(k_2)) < \epsilon/2$, and set $V(B,\epsilon) \coloneqq B \cup F_1(k_1) \cup F_2(k_2)$.
\end{proof}

\begin{lemma}\label{lem:extend2}
  For $B = [s_1,s_2) \times [t_2,t_1) \in \cB$ a right-handed box and
  any $\epsilon > 0$, there is some $s_2',t_1'$ with
  $s_1 < s_2' < s_2$ and $t_2 < t_1' < t_1$ so that the region
  $W(B,\epsilon) = \bigl([s_1,s_2) \times [t_1',t_1)\bigr) \cup
  \bigl([s_2',s_2) \times [t_2,t_1)\bigr)$ is in $\mathcal{R}$ and satisfies
  $\wh{\mu_f}(W(B,\epsilon)) < \epsilon$.
\end{lemma}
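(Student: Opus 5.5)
The plan mirrors Lemma~\ref{lem:extend1}, but uses accordions that shrink \emph{into} $B$ from its upper and right edges instead of growing outward. The region $W(B,\epsilon)$ is the union of a thin horizontal strip along the top edge and a thin vertical strip along the right edge. So it suffices to find RH boxes $[s_1,s_2)\times[t_1',t_1)$ and $[s_2',s_2)\times[t_2,t_1)$, each of $\wh{\mu_f}$-mass less than $\epsilon/2$. Their union then lies in $\mathcal{R}$, and finite subadditivity on $\mathcal{R}$ (which follows from finite additivity, Proposition~\ref{prop:addsemitoring}, together with non-negativity, Proposition~\ref{prop:box-positive}) gives $\wh{\mu_f}(W(B,\epsilon))<\epsilon$.

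For the top strip, I would write $B=B(p_1,q_1,p_2,q_2)$ with $\ora{r_i}=q_i\cdot\ora{x}$, so that $t_1=r_1^+$.

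1. Using Lemma~\ref{lem:diagonaldense} or Proposition~\ref{prop:density}, pick $q_3$ so that $\ora{r_3}=q_3\cdot\ora{x}$ is nested inside $\ora{r_1}$, with $r_3^+\in(t_2,t_1)$, and so that $(p_1,q_1,p_2,q_3)$ is an RH lift system.
   - For this I choose $r_3$ with both endpoints in a small neighbourhood of a point just below $t_1$ in $(t_2,t_1)$, oriented so that it is R-parallel to $\ora{r_1}$.
   - Condition~\ref{item:box-edges-cross-lift} for the pairs $(\ell_i,r_3)$ is then inherited, by the whisker description: the horizontal whiskers of $\ell_1,\ell_2$ pass over all of $[t_2,t_1)$, and $r_3$'s vertical whisker is nested.
2. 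The vertical accordion $F(p_1,q_1,p_2,\vec{q_3})$ then consists of RH boxes $[s_1,s_2)\times[r_3^{(k)+},t_1)$ contained in $B$, with top edge fixed at $t_1$.
3. By Proposition~\ref{prop:finalfamily}, $\wh{\mu_f}$ of these boxes tends to $0$. Choose $k$ with mass $<\epsilon/2$ and set $t_1'\coloneqq r_3^{(k)+}$.
   - Since the $r_3^{(k)}$ are translates by powers of $r_1$, their endpoints converge to $r_1^+$. Hence $t_2<t_1'<t_1$ automatically.

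The right strip is handled symmetrically.

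1. Choose $p_3$ with $\ora{\ell_3}=P_3\cdot\ora{x}$ nested inside $\ora{\ell_2}$ and $\ell_3^-\in(s_1,s_2)$.
2. Form the horizontal accordion built on $\ell_2$, translating $\ell_3$ by powers of $\ell_2=P_2xp_2$. This gives boxes $[\ell_3^{(k)-},s_2)\times[t_2,t_1)$, with endpoint converging to $s_2$.
3. Apply Proposition~\ref{prop:finalfamily} to pick $k$ with mass $<\epsilon/2$, and set $s_2'$ accordingly.

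Definition~\ref{def:accordion} is phrased with the roles of $\ell_1,\ell_2$ fixed (there $\ell_1$ is nested in $\ell_2$). So here I would relabel, or invoke the symmetric version already allowed in Proposition~\ref{prop:sidebyside} and in the proof of Proposition~\ref{prop:finalfamily}; the limit computation is the same.

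The main obstacle is bookkeeping of orientation, not analysis. Step~1 of each construction must produce a nested lift that is oriented compatibly: R-parallel to $\ora{r_1}$, respectively to $\ora{\ell_2}$, so that the accordion contracts toward the correct corner $t_1$, respectively $s_2$. It must also keep all four crossing conditions of Definition~\ref{def:RH-lift-system} valid, including the eventual diagonal crossing in condition~\ref{item:box-diags-cross-lift}.

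To check this I would verify, through Proposition~\ref{prop:rh-boxes}, that the required cyclic order of the endpoints holds. Then I would argue via the whisker picture of Figure~\ref{fig:whiskers}: a nested whisker lying inside the box's range meets the same opposite whiskers, and since the four corners appear in the right cyclic order, the resulting tuple is an RH lift system.
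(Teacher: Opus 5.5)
Your proposal is correct and follows the paper's proof essentially verbatim. The paper also uses a vertical accordion $F(p_1,q_1,p_2,\vec{q_3})$, with $q_3\cdot\ora{x}$ nested in $q_1\cdot\ora{x}$, for the top strip, and a horizontal accordion $F(\vec{p_3},q_1,p_2,q_2)$, with $P_3\cdot\ora{x}$ nested in $P_2\cdot\ora{x}$, for the right strip, each of mass less than $\epsilon/2$ by Proposition~\ref{prop:finalfamily}. Your orientation and RH-lift-system checks are details the paper leaves implicit, and no relabeling is actually needed, since $p_3$ simply occupies the $p_1$ slot of Definition~\ref{def:accordion}.
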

See Figure~\ref{fig:ca2}. That is, $W(B,\epsilon)$ is a neighborhood of the upper and right
edges of~$B$ with small mass.

\begin{proof}
  Find $q_3$ so that $q_3 \cdot \ora{x}$ nested inside
  $q_1 \cdot \ora{x}$ and so that $F_1 = F(p_1,q_1,p_2,\vec{q_3})$ is a
  vertical accordion. Similarly find $p_3$ so that $P_3 \cdot \ora{x}$
  is nested inside $P_2 \cdot \ora{x}$ and
  $F_2 = F(\vec{p_3},q_1,p_2,q_2)$ is a horizontal accordion. Then
  for sufficiently large $k$, we have $\wh{\mu_f}(F_1(k)) < \epsilon/2$ and
  $\wh{\mu_f}(F_2(k)) < \epsilon/2$. Take
  $W(B,\epsilon) = F_1(k) \cup F_2(k)$.
\end{proof}
  
In the setting of Lemma~\ref{lem:extend2}, also set
\[
  W^\circ(B,\epsilon) \coloneqq \bigl([s_1,s_2] \times (t_1',t_1]\bigr) \cup
    \bigl((s_2',s_2] \times [t_1,t_2]\bigr);
\]
this is an open neighborhood of the upper and right edges of
$\overline{B}$ in the subspace topology of $\overline{B}$. Note that
neither $W(B,\epsilon)$ nor $W^\circ(B,\epsilon)$ is a subset of the
other.

\begin{proposition}\label{prop:countablyadditive}
  The finitely additive measure $\wh{\mu_f}$ is countably sub-additive on $\cB$.
\end{proposition}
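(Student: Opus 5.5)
We follow the standard compactness argument from the construction of Lebesgue measure, using the extensions of Lemmas~\ref{lem:extend1} and~\ref{lem:extend2} to trade half-open boxes for open and closed ones. Suppose $B = \bigcup_{i=1}^\infty B_i$ with $B, B_i \in \cB$; we want $\wh{\mu_f}(B) \le \sum_i \wh{\mu_f}(B_i)$. Fix $\epsilon > 0$.

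First, we enlarge each $B_i$ slightly. By Lemma~\ref{lem:extend1}, choose $V_i \coloneqq V(B_i, \epsilon 2^{-i})$. Writing $V_i = [s_1',s_2)\times[t_2',t_1)$, the set $V_i^\circ \cup B_i$ contains an open neighborhood of the lower-left part of $B_i$. Only the upper and right edges of $B_i$ remain uncovered, because the boxes are half-open at exactly those sides.

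Second, we shrink $B$ to a compact set. By Lemma~\ref{lem:extend2}, choose $W = W(B,\epsilon)$ and set $K \coloneqq \overline{B} \setminus W^\circ(B,\epsilon)$. Since $W^\circ$ is relatively open in $\overline{B}$, the set $K$ is a closed subset of the compact set $\overline{B}$, and hence compact. Moreover $K$ is contained in $B$ (it is essentially $[s_1,s_2']\times[t_2,t_1']$), and $B \setminus K \subset W$ up to the boundary conventions.

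The hard step is showing that $K$ is covered by the open sets $\mathrm{int}(V_i)$, where the interior is taken in $G(S)$. A point $z \in K \subset B$ lies in some $B_i$. If $z$ is not on the upper or right edge of $B_i$, it lies in the interior of $V_i$, since $V_i$ extends $B_i$ down and to the left. If $z$ is on the upper or right edge of $B_i$, this fails. To handle that case, we would run the same scheme with enlargements in all four directions: apply Lemma~\ref{lem:extend1} to $B_i$, and also apply the analogue of Lemma~\ref{lem:extend2} outward, i.e.\ adjoin small accordions beyond the upper and right edges. The accordions of Proposition~\ref{prop:finalfamily} are built on nested whiskers, and dense whiskers (Proposition~\ref{prop:density}) let us place such an accordion just outside each edge. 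This yields an RH box $\widetilde V_i \supset \overline{B_i}$ with $\wh{\mu_f}(\widetilde V_i \setminus B_i) < \epsilon 2^{-i}$, so the interiors of the $\widetilde V_i$ form an open cover of $K$.

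By compactness, finitely many of these interiors, say those with $i \le N$, cover $K$. Then $B \subset \widetilde V_1 \cup \dots \cup \widetilde V_N \cup W$, and all of these sets lie in $\mathcal{R}$. Finite additivity, and hence monotonicity and finite subadditivity on $\mathcal{R}$ (Proposition~\ref{prop:addsemitoring} together with the nonnegativity from Proposition~\ref{prop:box-positive}), give
\[
\wh{\mu_f}(B) \le \sum_{i\le N} \wh{\mu_f}(\widetilde V_i) + \wh{\mu_f}(W) \le \sum_i \wh{\mu_f}(B_i) + \epsilon + \epsilon.
\]
Letting $\epsilon \to 0$ gives countable subadditivity. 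The main obstacle is verifying that the outward-enlarged sets are genuine RH boxes. This requires the added whiskers to cross the existing ones in the R-parallel configuration, which we would arrange using Lemma~\ref{lem:diagonaldense} exactly as in the proofs of Lemmas~\ref{lem:extend1} and~\ref{lem:extend2}.
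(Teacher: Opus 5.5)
Your scheme is the paper's argument: enlarge each $B_i$ down and to the left via Lemma~\ref{lem:extend1}, remove the small neighborhood $W^\circ(B,\epsilon)$ of the top/right edges to get a compact $K\subset B$, extract a finite subcover, and conclude by monotonicity and finite subadditivity on $\mathcal{R}$. The paper covers $\overline{B_0}$ by the $V_i^\circ$ together with $W^\circ(B_0,\epsilon)$, which is equivalent to your covering of $K$. However, the step you call the hard one is a phantom. A point $z\in K$ lies in some $B_i=[s_1,s_2)\times[t_2,t_1)$, and $B_i$ is half-open exactly on its upper and right sides, so $z$ never lies on the upper or right edge of $B_i$. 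Since $V_i=[s_1',s_2)\times[t_2',t_1)$ with $s_1'<s_1$ and $t_2'<t_2$, you get $B_i\subset V_i^\circ=(s_1',s_2)\times(t_2',t_1)$ outright. Hence $\{V_i^\circ\}_{i\ge1}$ is already an open cover of $K$. This is precisely why the boxes were chosen half-open (Remark~\ref{rmk:no_atom_boundary_box}).

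The replacement $\widetilde V_i\supset\overline{B_i}$ you propose is not justified by the available tools, so as written your final inequality rests on an unproved construction. The accordions of Proposition~\ref{prop:finalfamily} keep the top (resp.\ right) edge fixed and shrink toward it from inside. They therefore give small-mass strips just inside a top/right edge (Lemma~\ref{lem:extend2}) or just outside a bottom/left edge (Lemma~\ref{lem:extend1}). They never give a small-mass strip $[s_1,s_2)\times[t_1,t)$ just beyond the top edge. Adjoining an accordion above $B_i$ only controls a thin strip below some new top $t_0$ beyond $t_1$, and leaves the strip starting at $t_1$ uncontrolled. Proving $\wh{\mu_f}\bigl([s_1,s_2)\times[t_1,t)\bigr)\to 0$ as $t$ approaches $t_1$ from above is a continuity statement for the pre-measure that would need its own asymptotic argument. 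The real difficulty there is this mass estimate, not the R-parallel combinatorics you flag. Simply use $V_i$ in place of $\widetilde V_i$. Then $B\subset V_1\cup\dots\cup V_N\cup W(B,\epsilon)$, and your estimate $\wh{\mu_f}(B)\le\sum_i\wh{\mu_f}(B_i)+2\epsilon$ goes through verbatim.
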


\begin{proof}
  Let $B_0$ be a right-handed box, and suppose we are given a
  countable covering of it: $B_0 \subset \bigcup_{i=1}^\infty B_i$. We
  must show that
  \[
    \wh{\mu_f}(B_0) \le \sum_{i=1}^\infty \wh{\mu_f}(B_i).
  \]
  Fix $\epsilon > 0$. For $i \ge 1$, set
  $V_i \coloneqq V(B_i, \epsilon/2^i)$. Also add one more set by setting
  $V_0 \coloneqq W(B_0,\epsilon)$ and $V_0^\circ \coloneqq W^\circ(B_0,\epsilon)$.

  Then $\{V_i^\circ\}_{i=0}^\infty$ cover $\overline{B_0}$, since
  the $B_i$, for $i \geq 1$, cover $B_0$, $B_i \subset V_i^\circ$, and
  $\overline{B_0}\setminus B_0 \subset V_0^\circ$. Since
  $\overline{B_0}$ is compact and the $V_i^\circ \cap \overline{B_0}$ are open in the
  subspace topology, we have a finite subcover; let $I \subset
  \mathbb{N}$ be the set of
  indices included in the subcover, which necessarily contains $V_0^\circ$.

  By finite additivity and since $V_0^\circ \cap B_0 \subset V_0 \cap B_0$, we then have
  \begin{align*}
    \wh{\mu_f}(B_0) &\le \sum_{i \in I} \wh{\mu_f}(V_i)\\
      &\le \sum_{i \in I, i \ne 0} \wh{\mu_f}(B_i) + \sum_{i \in I, i \ne 0}
        \wh{\mu_f}(V_i \setminus B_i) + \wh{\mu_f}(V_0)\\
      &< \sum_{i=1}^\infty \wh{\mu_f}(B_i) + 2\epsilon.
  \end{align*}
  Since $\epsilon$ was arbitrary, we are done.
\end{proof}

We now put everything together to obtain an oriented geodesic current.

\begin{proposition}
Let $f \colon \Curves^+(S) \to \mathbb{R}$ be a functional on curves satisfying symmetry, additivity, smoothing and stability.
Then $\wh{\mu_f}$ extends to an oriented geodesic current $\mu_f$.
\end{proposition}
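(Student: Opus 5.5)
The plan is to assemble the measure-theoretic pieces already established and then check the two defining properties of an oriented current: local finiteness (Radon) and $\pi_1(S)$-invariance.

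First, extend from boxes to a measure. By Proposition~\ref{prop:RHsemiring}, $\mathcal{B}$ is a semiring. By Propositions~\ref{prop:box-positive} and~\ref{prop:finitely-additive}, $\wh{\mu_f}$ is non-negative and finitely additive on it. By Proposition~\ref{prop:countablyadditive}, it is countably sub-additive on $\mathcal{B}$. Proposition~\ref{prop:addsemitoring} then gives a unique countably additive extension to the ring $\mathcal{R}$. Carathéodory (Theorem~\ref{thm:caratheodory}) extends this to a measure $\mu_f$ on $\sigma(\mathcal{R})$.

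Second, identify $\sigma(\mathcal{R})$ with the Borel $\sigma$-algebra of $G(S)$. Every RH box is a product of half-open intervals, hence Borel, so $\sigma(\mathcal{R})$ is contained in the Borel sets. For the converse, every open subset of $G(S)=\partial_\infty S\times\partial_\infty S-\Delta$ is a countable union of products of open intervals avoiding the diagonal, ordered as in Proposition~\ref{prop:density}. By the density in Proposition~\ref{prop:density}, each such rectangle is a countable union of type~1 RH boxes. The box endpoints are lifts of $[x]$, which form a countable dense set, so only countably many boxes are needed. Hence every open set lies in $\sigma(\mathcal{R})$.

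Third, check $\sigma$-finiteness and local finiteness. Each point of $G(S)$ has a neighbourhood contained in some RH box, again by Proposition~\ref{prop:density}, and each box has finite measure by definition. $G(S)$ is $\sigma$-compact, so countably many boxes cover it. This gives $\sigma$-finiteness, hence uniqueness of the Carathéodory extension, and local finiteness. By the discussion in Section~\ref{subsec:RadonvsBorel} ($G(S)$ is locally compact, Hausdorff and second countable), $\mu_f$ is therefore Radon.

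Fourth, prove invariance. For $g\in\pi_1(S)$, the translated box $g\cdot B(p_1,q_1,p_2,q_2)=B(p_1G,gq_1,p_2G,gq_2)$ is again an RH box, and Proposition~\ref{prop:limitexists} shows $\wh{\mu_f}$ depends only on its orbit. So $\mu_f$ and $g_*\mu_f$ agree on $\mathcal{B}$. By uniqueness of the $\sigma$-finite extension they agree on all Borel sets.

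The main obstacle I expect is the second step: making sure that RH boxes built from lifts of the single simple curve $[x]$ generate the full Borel $\sigma$-algebra, including near the diagonal, where boxes must shrink. I would handle this by covering each compact subset of $G(S)$ at positive distance from $\Delta$ by finitely many type~1 boxes with endpoints in the dense countable set of lift endpoints. Half-open boundary issues are harmless, since the boundaries are themselves countable intersections of boxes.
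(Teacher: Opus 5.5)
Your proposal is correct and follows the paper's proof essentially step for step: semiring structure, finite additivity and countable sub-additivity on $\mathcal{B}$, extension to the ring and then Carath\'eodory, identification of $\sigma(\mathcal{R})$ with the Borel $\sigma$-algebra via the density of type~1 boxes (Proposition~\ref{prop:density}), and $\sigma$-finiteness and local finiteness from covering by boxes of finite measure. Your invariance argument compares $g_*\mu_f$ with $\mu_f$ on $\mathcal{B}$ and then invokes uniqueness of the $\sigma$-finite Carath\'eodory extension; this usefully spells out what the paper asserts as following ``by definition of $\wh{\mu_f}$''.
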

\begin{proof}
By Propositions \ref{prop:limitexists}, \ref{prop:RHsemiring},
\ref{prop:finitely-additive} and~\ref{prop:countablyadditive}, $\wh{\mu_f}$
is a finitely
additive and countably sub-additive pre-measure on $\mathcal{B}$, and by Proposition~\ref{prop:addsemitoring} has a unique finitely additive and countably sub-aditive extension to $\mathcal{R}=\mathcal{R}_{\mathcal{B}}$, which we also denote by $\wh{\mu_f}$. By covering $G(S)$ by a countable exhaustion consisting of a union of RH boxes, we see $\mu_f$ is $\sigma$-finite.
Therefore, by Carathéodory extension Theorem~\ref{thm:caratheodory},
$\wh{\mu_f}$ can be extend to a unique measure on
$\mathcal{A}_{\mathcal{R}}$, the $\sigma$-algebra generated
by~$\mathcal{R}$; we denote this extension by $\overline{\mu_f}$.
Let $\cA$ be the Borel $\sigma$-algebra of $G(S)$.
Since $\cR \subset \cA$, we have $\cA_{\cR} \subset \cA$.
On the other hand, by Proposition~\ref{prop:density}, the set of elements in
$\mathcal{R}$ is dense in the subspace of half-open rectangles of
$G(S)$. By the standard argument that half-rectangles with rational endpoints generate the Borel $\sigma$-algebra (see, e.g.,~\cite[Lemma~1.2.11]{Bo07:MeasureTheory}), it follows that $\cA \subset \cA_{\cR}$, hence $\overline{\mu_f}$ is a Borel measure.

Note that $\overline{\mu_f}$ is also locally finite, since any compact subset
of $G(S)$ can be covered by a finite union of the boxes in
$\mathcal{R}$, where the measure takes a finite value.
By definition of $\wh{\mu_f}$, the extension $\overline{\mu_f}$ is
$\pi_1(S)$-invariant. Finally, $\overline{\mu_f}$ is non-negative by
Proposition~\ref{prop:box-positive}.
\end{proof}

It is not \emph{a priori} clear from the definition of $\overline{\mu_f}$ that it is a flip-invariant geodesic current (recall
Definition~\ref{def:currents}).
In order to obtain a flip-invariant current, we can take
\[
\mu_f \coloneqq \overline{\mu_f} + \sigma_*(\overline{\mu_f})
\]
where $\sigma \colon G(S) \to G(S)$ is the flip map, and $\sigma_*$
the induced push-forward map on oriented geodesic currents.

\subsection{
Recovering \texorpdfstring{$f$}{f} on curves}
\label{sec:recovery}

It remains to show that $\mu_f$ recovers $f$ as an intersection number on curves.

If $y$ is a hyperbolic element representing an oriented closed curve
$C$ in the surface $S$, choose
$z \in \partial_{\infty} S \setminus \{ y_-, y_+\}$ so that
$y^+,y^-,\, z,\, y \cdot z$ are in cyclic order.
Then
$(y^+,y^-) \times [z,y \cdot z)$ is a Borel fundamental domain for the
  action of $\langle y \rangle$ on the set of geodesics that intersect
  $\ora{y}$ transversally.

Before proceeding, we discuss an asymmetric version of intersection number between oriented geodesic currents.

\begin{definition} The \emph{asymmetric intersection} number between oriented geodesic currents is defined as 
\begin{equation}
\vec{i}(\mu,\nu) \coloneqq \int_{\overline{\mathcal{F}}} \mathbf{1}_R \, d(\mu \otimes \nu),
\label{eq:asymmetric}
\end{equation}
where $\overline{\mathcal{F}}$ is a fundamental domain of the proper and free diagonal action of $\pi_1(S)$ on $\DG(S)$, obtained by a similar argument as in Definition~\ref{def:intersection_currents};
the function $\mathbf{1}_R \colon \DG(S) \to \mathbb{R}$ is defined as
\[
\mathbf{1}_R(l,m) \coloneqq
\begin{cases}
1 & \text{if } (l,m)\text{ are R-crossing},\\
0 & \text{otherwise}.
\end{cases}
\]
\label{def:asymmetric}
\end{definition}
From the definition, we have the following relation to the standard intersection number
\begin{equation}
i(\mu, \nu) = \vec{i}(\mu, \nu) + \vec{i}(\mu, \sigma(\nu)).
\label{eq:relation_intersections}
\end{equation}

For $p,q \in \wt{S}$, one can write  $\ora{G[p,q)}\subset G(S)$ for the \emph{right-oriented} transversal, i.e., the subset of
oriented geodesics in $G[p,q)$ which right cross the bi-infinite $\ora{l}$ oriented geodesic obtained by extending $[p,q) \subset \wt{S}$. Similarly, we will use $\ole{G[p,q)}\subset G(S)$ for the \emph{left-oriented} transversal, i.e., the subset of
oriented geodesics in $G[p,q)$ intersecting the inverted geodesic $\ora{L}$ to the right. Clearly, $\sigma(\ora{G[p,q)})= \ole{G[p,q)}$.
We have the following partition
\begin{equation}\label{eq:partition_oriented}
\begin{aligned}
G[p,q) &= \ora{G[p,q)} \cup \ole{G[p,q)} 
\end{aligned}
\end{equation}

We can compute intersections with oriented curves as follows.

\begin{lemma}The intersection and asymmetric intersection
of an oriented closed curve $C=[y]$ with an oriented geodesic current $\mu$ can be computed as 
\[
i(\mu, C) = \mu([z,y \cdot z) \times (y^+,y^-)) + \mu(\sigma[z,y \cdot z) \times (y^+,y^-))
\]
\[
\vec{i}(\mu, C) = \mu([z,y \cdot z) \times (y^+,y^-)).
\]
\end{lemma}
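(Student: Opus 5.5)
The plan is to unfold the definition of the intersection pairing against $\delta_C$, reducing it to a measure computation on geodesics crossing a single axis. Then I would cut that set into its two orientation classes and choose an explicit Borel fundamental domain for the $\langle y\rangle$-action on each class.

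\emph{Unfolding.} Start from Definition~\ref{def:intersection_currents}, with $\nu=\delta_C$ given by Eq.~\eqref{eq:delta-curve}. Since $\delta_C$ is a sum of Dirac masses on the translates $g\cdot\ora{y}$, integrating $\mu\times\delta_C$ over a fundamental domain $\mathcal F$ of $\DG(S)$ gives the following. Using $\pi_1(S)$-invariance to move each translate back to $\ora{y}$, the integral becomes the $\mu$-mass of the geodesics transverse to $\ora{y}$, taken modulo $\operatorname{Stab}(\ora{y})$. Here I will assume $y$ is primitive, so the stabilizer is $\langle y\rangle$. For $y=c^k$ one instead gets a fundamental domain for $\langle c\rangle$, and a $[z,y\cdot z)$ domain is exactly $k$ copies of it. This matches the convention that $\delta_{[c^k]}$ carries weight $k$, consistent with Eq.~\eqref{eq:intersection_curve}. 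The same unfolding applied to $\vec i$, with the indicator $\mathbf 1_R$ carried along, gives the $\mu$-mass of those transverse geodesics $l$ for which $(l,\ora{y})$ R-cross, again taken modulo $\langle y\rangle$.

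\emph{Fundamental domains.} A geodesic $l$ is transverse to $\ora{y}$ iff its two endpoints lie in different components of $\partial_\infty S\setminus\{y^\pm\}$, namely $(y^-,y^+)$ and $(y^+,y^-)$. This splits the transverse geodesics into two disjoint $\langle y\rangle$-invariant sets. In the first, the backward endpoint lies in $(y^-,y^+)$ and the forward endpoint in $(y^+,y^-)$; the second is the image of the first under the flip $\sigma$. The cyclic order $y^+,y^-,z,y\cdot z$ places $z$ in $(y^-,y^+)$. On this open arc $y$ acts freely, moving points toward $y^+$, with fundamental domain the half-open interval $[z,y\cdot z)$. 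Hence $[z,y\cdot z)\times(y^+,y^-)$ is an exact Borel fundamental domain for the first set, and its image under $\sigma$ is one for the second. No boundary or atom issue arises, because the domain is exact and not just a.e. Adding the two masses gives the formula for $i(\mu,C)$; this is the oriented refinement of Eq.~\eqref{eq:partition_oriented} and Eq.~\eqref{eq:intersection_curve}.

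\emph{Asymmetric version and main obstacle.} For $\vec i$ it remains to check that exactly one of the two sets consists of the pairs counted by $\mathbf 1_R$, and that it is the first set. This is a cyclic-order check against Definition~\ref{def:crossing}. For $l$ in the first set, the ccw order is $y^+,l^+,y^-,l^-$; one must read this off in the order of arguments used in $\mathbf 1_R(l,m)$, with $l$ from $\mu$ and $m=\ora{y}$. I expect this orientation bookkeeping to be the main place where care is needed, along with the non-primitive multiplicity. If the convention came out reversed, the statement would simply swap to the $\sigma$-image, and I would fix this by choosing $z$ on the other arc.
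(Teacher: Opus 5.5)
Your argument is essentially the paper's fundamental-domain argument, reached by a more direct route. The paper starts from the transversal formula \eqref{eq:intersection_curve}. It notes that $\ora{G[q,y\cdot q)}$ and $[z,y\cdot z)\times(y^+,y^-)$ are two Borel fundamental domains for the same $\langle y\rangle$-action, so they have equal $\mu$-mass by invariance (citing \cite[Lemma~4.8]{HP97:RigidityNegCurvedCone}). It then gets the formula for $i$ from \eqref{eq:relation_intersections}. You instead unfold Bonahon's integral straight to ``$\mu$-mass of transverse geodesics modulo $\langle y\rangle$'' and take the box as your fundamental domain. This bypasses the segment transversal and makes the non-primitive multiplicity explicit. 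The equality of masses of two fundamental domains is exactly the cut-and-paste fact the paper outsources. Your derivation of the formula for $i(\mu,C)$ is complete.

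The step you left open does not come out the way you hope under the conventions your own unfolding uses. Coordinatize $G(S)$ as (backward, forward) endpoint, as in the RH boxes $[\ell_1^-,\ell_2^-)\times[r_2^+,r_1^+)$, and evaluate $\mathbf 1_R(l,m)$ with $l$ drawn from $\mu$. Your order $y^+,l^+,y^-,l^-$ is precisely the R-crossing order for the pair $(\ora{y},l)$ in Definition~\ref{def:crossing}. So $(l,\ora{y})$ L-cross, and $\mathbf 1_R(l,\ora{y})=0$ on all of $[z,y\cdot z)\times(y^+,y^-)$. Read literally, you get $\vec i(\mu,C)=\mu\bigl(\sigma([z,y\cdot z)\times(y^+,y^-))\bigr)$. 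The stated formula requires the opposite reading of ``crossing $\ora{y}$ to the right'', which is what the paper's proof tacitly adopts.

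The paper's later use in Theorem~\ref{prop:recoverintersection} matches the literal reading, not the statement. The boxes $B_n$ there increase to $[z,Y\cdot z)\times(y^-,y^+)$ with $z=(P\cdot\ora{x})^-\in(y^+,y^-)$, which is the $\sigma$-class.

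Your proposed repair, moving $z$ to the other arc, is not available. First, $z$ is pinned by the cyclic order $y^+,y^-,z,y\cdot z$. Second, for $z\in(y^+,y^-)$ the point $y\cdot z$ lies clockwise of $z$. Then $[z,y\cdot z)$ is the long arc through $y^\pm$, and its product with $(y^+,y^-)$ is not even a set of transverse geodesics. The right remedy is to declare the convention explicitly, or to replace the right-hand side of the $\vec i$ formula by its $\sigma$-image. The formula for $i(\mu,C)$ is unaffected, since it sums both classes.
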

\begin{proof}
The second claim follows from the observation that, for any $q \in \ora{y}$, $\ora{G[q, y\cdot q)}$ and $[z, y\cdot z) \times (y^+, y^-)$ are two fundamental domains for the action of $\langle y \rangle$ on the set of geodesics that intersect $\ora{y}$ to the right.
Specifically, if $y \in \pi_1(S)$ denotes the element representing the oriented curve $C$, then 
\[
\mu(\ora{G[q, y \cdot q)})= \mu([z,y \cdot z) \times (y^+,y^-))
\]
for every $q \in \ora{y}$ and for every $z \notin \{ y^-, y^+\}$ so that $z, y\cdot z, y^+, y^-$ are in the cylic order. See~\cite[Lemma~4.8]{HP97:RigidityNegCurvedCone}): the proof only uses
the $\pi_1(S)$-invariance of $\mu$. 
The same claim follows for the intersection number using Equation~\ref{eq:relation_intersections}.
\end{proof}

We finish by proving Theorem~\ref{thm:intersect}, first in the following
form.

\begin{theorem}
Let $f \colon \Curves^+(S) \to \mathbb{R}$ be a curve functional satisfying symmetry, smoothing, additivity and stability, and $\mu_f, \overline{\mu_f}$ as above.
For $C$ a closed curve, we have
\[
i(\mu_f,C)=\vec{i}(\overline{\mu_f},C)=f(C).
\]
In particular, $\mu_f$ is unique among flip-invariant geodesic currents.
\label{prop:recoverintersection}
\end{theorem}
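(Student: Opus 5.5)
To prove the theorem, we will compute $\vec{i}(\overline{\mu_f},C)$ directly from the box measures, tile the fundamental domain by RH boxes, and then use flip-invariance to pass to $i(\mu_f,C)$.

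Let $C=[y]$. By the preceding lemma, $\vec{i}(\overline{\mu_f},C)=\overline{\mu_f}([z,y\cdot z)\times(y^+,y^-))$ for any admissible $z$. The plan is to realize this fundamental domain, up to sets of measure zero, as a single RH box, or as a finite union of RH boxes. Then we evaluate it with the defining formula~\eqref{eq:def_measure}.

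\textbf{Step 1: choose the box.} Choose the box with the left side $[\ell_1^-,\ell_2^-)$, where $\ell_2=y\cdot\ell_1$ and $\ell_1$ is a lift of $[x]$ whose endpoint $\ell_1^-$ plays the role of $z$. By Proposition~\ref{prop:density} and Lemma~\ref{lem:diagonaldense}, such a lift exists with $\ell_1^-$ in the interval $(y^-,y^+)$ on the correct side. For the right side, use lifts $r_j$ of $[x]$ with $r_1^+$ close to $y^-$ and $r_2^+$ close to $y^+$. The exact fundamental domain has right side $(y^+,y^-)$, which is not bounded by lifts of $x$. We will therefore approximate it from inside by boxes whose right edges are lifts $q\cdot\ora{x}$ pushed toward $y^\pm$ by powers of $y$, that is, $q_2=y^k q$ and $q_1=y^{-k}q'$, and then let $k\to\infty$.

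\textbf{Step 2: compute the measure of a box.} In RH-orbit coordinates, with $p_2=p_1Y$, a box of this shape has $c=p_1Yq_1$ and $d=p_1Yq_2$. So the orbit data become words of the form $a$, $b$, $Y a$-type conjugates, and the measure is
\[
\tfrac12\lim_n \bigl(f([bx^n])+f([cx^n])-f([ax^n])-f([dx^n])\bigr).
\]
Substituting $q_j$ with large powers of $y$ turns these into words like $u\,y^{\pm k}v\,x^n$. The two-variable limit, Lemma~\ref{lem:additivity-limit-2} with $(x,y)$ playing their roles there, lets us replace each term by $n f([x])\pm k f([y])$ plus a constant depending only on the fixed group elements.

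Proposition~\ref{prop:join-split-alg} should then make the bounded constants cancel pairwise, exactly as in the proof of Proposition~\ref{prop:finalfamily}. The $nf([x])$ terms cancel. The $kf([y])$ terms combine as $\tfrac12\bigl(k+k-(k-1)-(k-1)\bigr)f([y])$, or an analogous combination depending on the bookkeeping, and yield exactly $f([y])$. Additivity across the finitely many boxes tiling the domain, together with stability, should then give $\vec{i}(\overline{\mu_f},C)=f(C)$, with the $y$-translation invariance of $\overline{\mu_f}$ absorbing the choice of $z$. Finally, monotone convergence as the inner boxes exhaust $[z,y\cdot z)\times(y^+,y^-)$ removes the approximation. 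The boundary pieces have measure zero by Proposition~\ref{prop:spikes} and Remark~\ref{rmk:no_atom_boundary_box}, except possibly atoms on the axis $\ora{y}$ itself, which does not lie in the domain.

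\textbf{Step 3: pass to the flip-invariant current.} From $\mu_f=\overline{\mu_f}+\sigma_*\overline{\mu_f}$ and Eq.~\eqref{eq:relation_intersections}, we get
\[
i(\mu_f,C)=\vec{i}(\overline{\mu_f},C)+\vec{i}(\overline{\mu_f},\sigma C)+(\text{flipped terms}).
\]
Using symmetry of $f$ (so that $f([y])=f([Y])$) and the flip correspondence of RH orbits in Remark~\ref{rmk:flip_rh_box}, each of these should reduce to $\vec{i}(\overline{\mu_f},\cdot)$ evaluated on $C$ or on $C$ reversed. This requires matching the normalization $\tfrac12$ in~\eqref{eq:def_measure} with the four terms, and that matching is where the factor $\tfrac12$ is designed to make $i(\mu_f,C)=f(C)$. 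Extension to multi-curves is immediate by additivity. Uniqueness among flip-invariant currents follows from Otal's theorem~\cite[Th\'eor\`eme~2]{Otal90:SpectreMarqueNegative}.

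\textbf{Main obstacle.} The delicate step is Step 2: arranging boxes bounded by lifts of $x$ so that they exactly tile a fundamental domain for $\langle y\rangle$, and checking the sign and crossing hypotheses (RH-parallel configurations) needed to invoke Lemma~\ref{lem:additivity-limit-2} and Proposition~\ref{prop:join-split-alg} for the mixed words in $x$ and $y$. I would first try the simplest configuration, one type-1 box with $p_2=p_1Y$ and $q_2=yq_1$ on the right. I would then verify by direct window analysis in the band model centered on $\ora{y}$ that the limit telescopes to $f([y])$, and fall back to a finite subdivision if the right side cannot be taken as a single box.
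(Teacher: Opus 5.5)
Your Steps~1--2 follow the paper's argument. The paper fixes $p,q$ with $(P\cdot\ora{x},\ora{y},q\cdot\ora{x})$ R-parallel (Lemma~\ref{lem:initial}) and uses the single family $p_1=p$, $p_2=py$, $q_1=y^nq$, $q_2=y^{-(n+1)}q$. It needs only Lemma~\ref{lem:additivity-limit-2}, which turns $\lim_n\lim_m$ into a double limit and makes the constants cancel between the $y^{n+1}$ and $y^{n}$ terms and between the $y^{-(n+1)}$ and $y^{-n}$ terms. This gives $\tfrac12\bigl((n+1)+(n+1)-n-n\bigr)f([y])=f([y])$. One box per $n$ suffices, the boxes increase to a fundamental domain with open right side, and Proposition~\ref{prop:join-split-alg} is not needed. (The lemma subtracts $\abs{k}f([y])$ for either sign of the exponent, by symmetry, not $\pm kf([y])$; your final combination is nevertheless right.)

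The genuine problem is Step~3. The factor $\tfrac12$ in \eqref{eq:def_measure} is already spent in Step~2, where it gives exactly $\vec i(\overline{\mu_f},C)=f(C)$; it cannot be spent again. If you carry out your four-term expansion, each term equals $f(C)$. This uses Step~2 applied to both $[y]$ and $[Y]$, symmetry of $f$, and $\vec i(\sigma_*\mu,\nu)=\vec i(\mu,\sigma_*\nu)$. So
\[
i\bigl(\overline{\mu_f}+\sigma_*\overline{\mu_f},C\bigr)=2\,i(\overline{\mu_f},C)=2\bigl(\vec i(\overline{\mu_f},[y])+\vec i(\overline{\mu_f},[Y])\bigr)=4f(C).
\]
With $\mu_f=\overline{\mu_f}+\sigma_*\overline{\mu_f}$, the first equality of the statement therefore fails as written. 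The flip-invariant current dual to $f$ is $\tfrac14\bigl(\overline{\mu_f}+\sigma_*\overline{\mu_f}\bigr)$, and Otal's theorem gives uniqueness for that current.

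The paper's own proof dispatches this step in one sentence (``persists by the definition of $\mu_f$'') and carries the same normalization slip. What both arguments actually establish is $\vec i(\overline{\mu_f},C)=f(C)$, together with existence and uniqueness of the rescaled flip-invariant dual.
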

\begin{proof}
We first prove that $\vec{i}(\overline{\mu_f},C)=f(C)$. Given a representative $y \in
\pi_1(S)$ of the conjugacy class~$C$, we will find a sequence of
RH boxes to approximating the fundamental domain for $\langle
y\rangle$ above, and then show that computing the measure of that
fundamental domain yields $f([y])$.

Choose $p,q \in \pi_1(S)$ so that $(P\cdot \ora{x}, \ora{y},
q\cdot\ora{x})$ are R-parallel, as in Figure~\ref{fig:recover-f}.
The NS dynamics of the action of $y$ imply that, for $m_1
> m_2$ and $n_1 > n_2$, the box
$B(py^{-m_1},py^{-m_2},y^{n_1}q,y^{n_2}q)$ form a RH box. We will look
at the following particular family for $n \ge 0$:
\begin{align*}
  p_1^{(n)} &\coloneqq p & q_1^{(n)}&\coloneqq y^n q\\
  p_2^{(n)} &\coloneqq p y &q_2^{(n)} &\coloneqq y^{-(n+1)} q.
\end{align*}

\begin{figure}
\centering{
\fontsize{9pt}{9pt}\selectfont%% Creator: Inkscape 1.3 (0e150ed6c4, 2023-07-21), www.inkscape.org
%% PDF/EPS/PS + LaTeX output extension by Johan Engelen, 2010
%% Accompanies image file 'recover-f.pdf' (pdf, eps, ps)
%%
%% To include the image in your LaTeX document, write
%%   \input{<filename>.pdf_tex}
%%  instead of
%%   \includegraphics{<filename>.pdf}
%% To scale the image, write
%%   \def\svgwidth{<desired width>}
%%   \input{<filename>.pdf_tex}
%%  instead of
%%   \includegraphics[width=<desired width>]{<filename>.pdf}
%%
%% Images with a different path to the parent latex file can
%% be accessed with the `import' package (which may need to be
%% installed) using
%%   \usepackage{import}
%% in the preamble, and then including the image with
%%   \import{<path to file>}{<filename>.pdf_tex}
%% Alternatively, one can specify
%%   \graphicspath{{<path to file>/}}
%% 
%% For more information, please see info/svg-inkscape on CTAN:
%%   http://tug.ctan.org/tex-archive/info/svg-inkscape
%%
\begingroup%
  \makeatletter%
  \providecommand\color[2][]{%
    \errmessage{(Inkscape) Color is used for the text in Inkscape, but the package 'color.sty' is not loaded}%
    \renewcommand\color[2][]{}%
  }%
  \providecommand\transparent[1]{%
    \errmessage{(Inkscape) Transparency is used (non-zero) for the text in Inkscape, but the package 'transparent.sty' is not loaded}%
    \renewcommand\transparent[1]{}%
  }%
  \providecommand\rotatebox[2]{#2}%
  \newcommand*\fsize{\dimexpr\f@size pt\relax}%
  \newcommand*\lineheight[1]{\fontsize{\fsize}{#1\fsize}\selectfont}%
  \ifx\svgwidth\undefined%
    \setlength{\unitlength}{144bp}%
    \ifx\svgscale\undefined%
      \relax%
    \else%
      \setlength{\unitlength}{\unitlength * \real{\svgscale}}%
    \fi%
  \else%
    \setlength{\unitlength}{\svgwidth}%
  \fi%
  \global\let\svgwidth\undefined%
  \global\let\svgscale\undefined%
  \makeatother%
  \begin{picture}(1,1)%
    \lineheight{1}%
    \setlength\tabcolsep{0pt}%
    \put(0,0){\includegraphics[width=\unitlength,page=1]{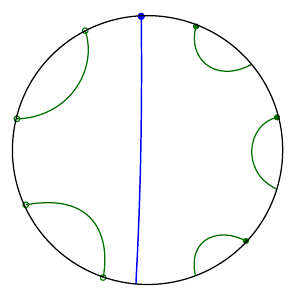}}%
    \put(0.42427753,0.53970789){\color[rgb]{0,0,1}\makebox(0,0)[t]{\lineheight{1.25}\smash{\begin{tabular}[t]{c}$\ora{y}$\end{tabular}}}}%
    \put(0.88727147,0.08248818){\color[rgb]{0,0.43137255,0}\makebox(0,0)[t]{\lineheight{1.25}\smash{\begin{tabular}[t]{c}$y^{-1-n}q \cdot \ora{x}$\end{tabular}}}}%
    \put(1.05210363,0.50138343){\color[rgb]{0,0.43137255,0}\makebox(0,0)[t]{\lineheight{1.25}\smash{\begin{tabular}[t]{c}$q \cdot \ora{x}$\end{tabular}}}}%
    \put(0.94721477,0.86987304){\color[rgb]{0,0.43137255,0}\makebox(0,0)[t]{\lineheight{1.25}\smash{\begin{tabular}[t]{c}$y^n q \cdot \ora{x}$\end{tabular}}}}%
    \put(0.4030187,0.34490451){\color[rgb]{0,0.43137255,0}\makebox(0,0)[t]{\lineheight{1.25}\smash{\begin{tabular}[t]{c}$y^{-1}P \cdot \ora{x}$\end{tabular}}}}%
    \put(0.34416577,0.64972841){\color[rgb]{0,0.43137255,0}\makebox(0,0)[t]{\lineheight{1.25}\smash{\begin{tabular}[t]{c}$P \cdot \ora{x}$\end{tabular}}}}%
    \put(0,0){\includegraphics[width=\unitlength,page=2]{recover-f.pdf}}%
  \end{picture}%
\endgroup%
}
\caption{The boxes used in proving that $\mu_f$ recovers $f$.}
  \label{fig:recover-f}
\end{figure}

In the RH orbit notation, we have
\begin{align*}
  a^{(n)} &= p y^{n}   q &  b^{(n)} &= p y^{-(n+1)} q \\
  c^{(n)} &= p y^{n+1} q &  d^{(n)} &= p y^{-n} q.
\end{align*}

If we let $z=(P \cdot \ora{x})^-$, and $B_n \coloneqq B_{a^{(n)}, b^{(n)}, c^{(n)}, d^{(n)}}$ then we have
    \begin{multline*}
      \vec{i}(\mu, [y])=\overline{\mu_f}([z,Y \cdot z))\times (y^+,y^-)) \\
      \begin{aligned}
    &= \lim_{n\to\infty} \overline{\mu_f}(B_n)  \\
    &= \frac{1}{2}\lim_{n\to\infty}\lim_{m\to\infty}f([py^{n+1}qx^{m}]) + f([py^{-(n+1)}qx^{m}])-f([py^{n}qx^{m}]) - f([py^{-n}qx^{m}]) \\
    &= \frac{1}{2}\bigl(f([y]) + f([y^{-1}])\bigr) =f([y]).
      \end{aligned}
    \end{multline*}
    The first equality follows by the fact that the boxes defined
    above converge from below to the  $[z,Y
      \cdot z) \times (y^+,y^-)$ (the fundamental domain for the
  action of $\langle Y \rangle$) as $n \to \infty$, since $B_{n-1} \subset B_n$.
    At the second equality, we use definition of $\wh{\mu_f}$ on RH boxes.
    For the third, Lemma~\ref{lem:additivity-limit-2} lets us pass
    from the iterated limit to a double limit, and gives a
    number of canceling terms of $mf([x])$ and $nf([y^{\pm 1}])$; only
    two terms survive the cancellation. Then symmetry of $f$ gives the result.

      The property $i(\mu_f, C)=f(C)$ for any closed curve $C$ persists by the definition of $\mu_f$ and the fact that $\overline{\mu_f}$ is finitely additive. Therefore, $\mu_f$ is a geodesic current with the same intersection numbers as the oriented geodesic current $\overline{\mu_f}$.

 Finally, the uniqueness of the geodesic current $\mu_f$ is
 a result of Otal, geodesic currents are determined by their marked
 length spectrum \cite[Th\'eor\`eme~2]{Otal90:SpectreMarqueNegative}.
\end{proof}

\begin{remark}
In \cite[Proposition~4.3]{BIPP24:PositiveCrossratios} a geodesic current is induced
from a cross-ratio.
Our construction is more general (in particular, we do not rely on a cross-ratio).
We discuss here why our construction does not follow from their arguments.
Given a dense subset $X \subset \partial_{\infty} S$, they consider the set of $\mathcal{R}(X)$ of all proper rectangles of geodesics with endpoints on $X$.
They start from a finitely additive function $r \colon \mathcal{R}(X) \to \mathbb{R}$ (defined suitably from a cross-ratio)
that is (1) side-by-side additive: if $R,R_1,R_2 \in \mathcal{R}(X)$
with $R=R_1 \cup R_2$, with $R_1,R_2$ disjoint, $r(R)=r(R_1)+r(R_2)$;
and (2) monotonic: if $R,R' \in \mathcal{R}(X)$ with $R \subset R'$, we have $r(R) \leq r(R')$.

Our function $\mu_f$ on RH boxes satisfies these properties by Proposition~\ref{prop:sidebyside}
and Proposition~\ref{prop:finitely-additive}, however, if we take $X$ to
be the set of
endpoints of translates of~$x$, then $\mu_f$ is only
defined in a proper subset of $\mathcal{R}(X)$
Therefore, we cannot directly apply their result.

Moreover, we need the extra control on the family of RH
boxes to recover the curve functional~$f$ from the geodesic current.
In Section~\ref{sec:crossratios}, we prove that every cross-ratio induces a curve functional (i.e., period) satisfying the hypotheses of our Theorem~\ref{thm:intersect}, and thus our result generalizes \cite[Proposition~4.3]{BIPP24:PositiveCrossratios}.
\end{remark}

\subsection{Proof of  Main Theorems~\ref{thm:intersect}{A} and~\texorpdfstring{\ref{thm:intersections_group}}{A'}}

Now we prove the two versions of the main theorem as stated in the introduction. The topological version (Theorem~\ref{thm:intersect}) is stated in terms of functionals on unoriented multi-curves. The algebraic version of the
theorem, namely, Theorem~\ref{thm:intersections_group}, is phrased in terms of functions on the surface group.

\begin{proof}[Proof of Theorem~\ref{thm:intersect}]

By Definition~\ref{def:smoothing} and Proposition~\ref{prop:power_disconnected_imply_stable}, the induced additive and symmetric functional on oriented multi-curves satisfies stability. Hence, by Theorem~\ref{prop:recoverintersection}, the result follows.
\end{proof}

\begin{proof}[Proof of Theorem~\ref{thm:intersections_group}]
As in the statement, let $\overline{f}(a) \coloneqq
\frac{1}{2}\left(f(a) + f(A) \right)$, and extend it linearly to be an
additive
function on multi-curves.

To prove the theorem, we will show that (this extension of) $\overline{f}$ satisfies the hypothesis of Theorem~\ref{thm:intersect}, and then apply that Theorem to conclude.

We will first verify smoothing of $\overline{f}$.
We start
with oriented disconnected smoothing, between crossing elements
$(\ora{a},\ora{b})$, which we can assume R-cross by switching $a,b$ if
necessary. We have
\begin{align*}
  \overline{f}(a) + \overline{f}(b)
    &= \frac{1}{2}\bigl(f(a) + f(A) + f(b) + f(B)\bigr)\\[2pt]
    &\ge \frac{1}{2}\bigl(f(ab) + f(AB)\bigr) = \overline{f}(ab).
\end{align*}
In the inequality, we used the assumption on $f$ and the fact that
$(\ora{A},\ora{B})$ also
R-cross, as well as conjugation invariance for $f(AB) = f((ba)^{-1})$.
By grouping the four terms differently, and using the fact that
$(\ora{b},\ora{A})$ R-cross, we also get $\overline{f}(a) +
\overline{f}(b) \ge \overline{f}(bA)$.

Next we turn to oriented connected smoothing of a curve at an
essential self-crossing.
Using Proposition~\ref{prop:crossings-triality}, assume the essential
self-crossing is realized by the self-intersection of a curve
$[\gamma]=[ab]$, where $(\ora{a},\ora{b})$ are parallel; by switching
them if necessary, we can assume they are R-parallel, and thus so are
$(\ora{B},\ora{A})$. Then
\begin{align*}
  \overline{f}(ab)
    &= \frac{1}{2}\bigl(f(ab) + f(BA)\bigr) \\[2pt]
    &\ge \frac12\bigl(f(a) + f(b) + f(B) + f(A)\bigr)
     = \overline{f}(a) + \overline{f}(b).
\end{align*}

We have thus deduced $\overline{f}$ satisfies oriented connected and disconnected smoothing. Therefore, by Corollary~\ref{cor:oriented_smoothing_unoriented} below,
$\overline{f}$~satisfies unoriented connected and disconnected smoothing as well. 
Finally, stability implies oriented power-smoothing, i.e., $f(a^{n+m}) \geq f(a^n) + f(b^n)$ for every $n, m \geq 1$. This establishes smoothing of $\overline{f}$. Hence, smoothing, stability and additivity are satisfied for the function on unoriented curves $\overline{f}$.
We are thus under the
hypotheses of Theorem~\ref{thm:intersect}.
\end{proof}

\begin{remark}
In fact,  we can replace the following hypotheses in~Theorem~\ref{thm:intersections_group} by the following slightly weaker ones:
\begin{itemize}
 \item $f$ can be replaced by power smoothing (by Proposition~\ref{prop:power_disconnected_imply_stable});
\item oriented smoothing can be replaced by R-oriented smoothing:
 \begin{align}
f(ab) &\ge f(a) + f(b)
&& \text{if } (\ora{a},\ora{b}) \text{ are R-parallel}, \\
f(a) + f(b) &\ge f(ab)
&& \text{if } (\ora{a},\ora{b}) \text{ R-cross}.
\end{align}
\end{itemize}
\end{remark}

%%% Local Variables:
%%% mode: latex
%%% TeX-master: "Intersections"
%%% End:

\section{Relations between types of smoothings}
\label{sec:smoothing-types}

In this section we examine the relationships between different types of smoothing. In particular, we will consider the distinctions between oriented and unoriented smoothing, as well as between connected and disconnected smoothing, and explore how these two attributes interact.

A priori the hypothesis of oriented smoothing on $f$ is weaker than
smoothing, since it does not include the hypotheses on unoriented smoothings.
However, it follows from the other hypotheses; the key ingredient is
our previous work in~\cite{MGT21:Smoothings}. We prove the following,
which was crucial to prove the simpler algebraic statement in
Theorem~\ref{thm:intersections_group}; it is not necessary for the
proof of Theorem~\ref{thm:intersect}.

\begin{proposition}
If $f$ satisfies oriented smoothing
(connected and disconnected), convex union,
stability, homogeneity, and symmetry, then $f$ satisfies smoothing.
\label{prop:oriented_smoothing_unoriented}
\end{proposition}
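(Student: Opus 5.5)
The power-smoothing part of smoothing is immediate. Stability gives $f([a^{n+m}])=f((n+m)[a])$, and homogeneity (a consequence of convex union together with stability and homogeneity as listed) turns this into $(n+m)f([a])=f([a^n][a^m])$. By Lemma~\ref{lem:smoothing_lemma_top_alg} and Proposition~\ref{prop:crossings-triality}, what remains is the two unoriented families of Definition~\ref{def:smoothing_for_oriented}: $f([a][b])\ge f([aB])$ when $(\ora a,\ora b)$ cross, and $f([ab])\ge f([aB])$ when $(\ora a,\ora b)$ are parallel.

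The disconnected case should be formal. If $(\ora a,\ora b)$ cross, then by the definition of the cyclic orders $(\ora a,\ora B)$ also cross. Oriented disconnected smoothing then gives $f([a][B])\ge f([aB])$. Since $[a][B]$ and $[a][b]$ have the same underlying unoriented multi-curve, symmetry gives $f([a][b])=f([a][B])\ge f([aB])$.

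The connected case is the real content, since $(\ora a,\ora B)$ are anti-parallel and no oriented smoothing applies directly. I would use a doubling trick. Assume WLOG that $(\ora a,\ora b)$ are R-parallel. By homogeneity and symmetry, $2f([ab])=f([ab][ab])=f([ab][g])$ for any $g$ whose conjugacy class is that of $(ab)^{\pm1}$. The aim is to choose such a $g$, with orientation reversed, so that $(\ora{ab},\ora g)$ cross and the oriented disconnected smoothing $[ab][g]\reducesto[ab\,g]$ produces a curve conjugate to $(aB)^2$. Stability would then give $2f([ab])\ge f([(aB)^2])=2f([aB])$.

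The candidates for $\ora g$ come from Lemma~\ref{lem:righthandedfoundation}\ref{item:ab-parallel}, which makes $(\ora{ba},\ora{ab})$ R-cross and so supplies translates of $\ora{ba}^{\pm}$ crossing $\ora{ab}$. Lemmas~\ref{lem:aB_Ba_together}--\ref{lem:a_parallel_Ba} describe where $\ora{aB}$ and $\ora{Ba}$ sit relative to these axes, and should tell us which lift to use.

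I expect the main obstacle to be the bookkeeping of choosing the conjugate so that the product really is $(aB)^2$ up to conjugacy, and not, say, a commutator such as $abAB$. If no clean single-step identity exists, I would fall back on a limiting argument. By Proposition~\ref{prop:eventual_splitting} we may replace $a,b$ by $a^n,b^m$ in the splitting configuration, where the window picture is explicit. There one can chain oriented smoothings, in the style of the proof of Eq.~\eqref{eq:crossbelowy}, to obtain $f([a^nb^m])+C\ge f([a^nB^m])$ with $C$ independent of $n,m$. One then has to pass from large powers back to $a,b$. Here I would use the continuous homogeneous extension of $f$ to $\Curr^+(S)$ from Theorem~\ref{thm:convex} (valid since $f\in\QConvex^+(S)$), dividing by $n+m$ and using the density arguments of \cite{MGT21:Smoothings}. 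This last reduction, which recovers the exact inequality for $a,b$ themselves rather than an asymptotic one, is the step I am least sure of, and is where the earlier work is genuinely needed.
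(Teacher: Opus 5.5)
The gap is in the unoriented connected case. This is the whole content of the proposition, and neither of your two routes closes it. The rest is fine:
\begin{itemize}
\item Your disconnected case is exactly the paper's argument: pass to $(\ora a,\ora B)$, apply oriented disconnected smoothing, then use symmetry.
\item The power-smoothing remark is harmless, though it is already part of the oriented hypothesis (Lemma~\ref{lem:oriented_smoothing_lemma_top_alg}). Also, convex union only gives $f([a^n][a^m])\le (n+m)f([a])$, not equality.
\end{itemize}

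The doubling trick cannot work at all. Write $h(g)\in H_1(S;\mathbb{Z})$ for the homology class of $g$. For any $g$ conjugate to $(ab)^{\pm1}$, the class of $ab\,g$ is $2h(a)+2h(b)$ or $0$, while the class of $(aB)^2$ is $2h(a)-2h(b)$. Since $H_1$ is torsion-free, no such $g$ makes $ab\,g$ conjugate to $(aB)^2$ unless $h(b)=0$ or $h(a)=h(b)$. What a single smoothing honestly gives uses the crossing pair $(\ora{ba},\ora{BA})$, which crosses because $(\ora{ba},\ora{ab})$ do:
\[
2f([ab]) = f([ba][BA]) \ge f([baBA]) = f([(aB)(Ab)]).
\]
This is a product of $aB$ with a conjugate of its inverse, never $aB$ by itself.

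Your fallback loses exactly the information you need. An estimate $f([a^nb^m])+C\ge f([a^nB^m])$, normalized and pushed to currents, converges to $f(\delta_{[a]}+\delta_{[b]})\ge f(\delta_{[a]}+\delta_{[B]})$. That is trivial by symmetry. Nothing about $[ab]$ versus $[aB]$ survives, and there is no way back from powers of $a,b$ to these two curves.

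The missing idea is to use the continuous extension (Theorem~\ref{thm:convex}) to approximate the target multi-curve $[aB][Ab]$ by normalized connected curves, rather than approximating $a,b$ by powers. This is the paper's Proposition~\ref{prop:cont_conn_disc}. By Lemma~\ref{lem:aB_bA_antiparallel_cross}, $(\ora{aB},\ora{Ba})$ are either anti-parallel or crossing.
\begin{itemize}
\item \emph{Anti-parallel case.} Then $(\ora{aB},\ora{Ab})$ are parallel, so Lemma~\ref{lem:anbn_vs_bkak} makes $[(aB)^{n-1}(Ab)^{n-1}][(Ab)(aB)]\reducesto[(aB)^n(Ab)^n]$ a disconnected smoothing. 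Iterating the displayed one-step inequality with convex union gives $2nf([ab])\ge f([(aB)^n(Ab)^n])$. Divide by $2n$ and use $\frac1n\delta_{[(aB)^n(Ab)^n]}\to\delta_{[aB]}+\delta_{[Ab]}$ (Lemma~\ref{lem:gnhn_to_gsumh}). Symmetry and homogeneity then give $f([ab])\ge\frac12 f([aB][Ab])=f([aB])$.
\item \emph{Crossing case.} Lemma~\ref{lem:a_parallel_Ba} shows that $(\ora b,\ora{aB})$ or $(\ora a,\ora{bA})$ is parallel. So $[ab]=[(aB)b^2]$ or $[ba]=[(bA)a^2]$ has an oriented connected smoothing splitting off $[b^2]$ or $[a^2]$. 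Discarding that component gives $f([ab])\ge f([aB])$. This step is immediate under additivity, since $f\ge0$ by Proposition~\ref{prop:curve-positive}.
\end{itemize}
So the earlier work enters through continuity applied to the sequence $(aB)^n(Ab)^n$, not through undoing a passage to powers.
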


The proof uses the following result, which can be seen as a weak
converse of the main result in our previous paper.
\begin{proposition}
Suppose $f$ is an symmetric curve functional satisfying
convex union and homogeneity that extends continuously to geodesic currents.
If $f$ satisfies disconnected smoothing (resp.\ connected smoothing) then $f$ satisfies connected smoothing (resp.\ disconnected smoothing).
\label{prop:cont_conn_disc}
\end{proposition}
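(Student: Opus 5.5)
The approach is to run a doubling argument on the family $a^nb^n$, using the continuous extension to pass to the limit. The basic convergence fact is that for non-commuting $a,b\in\pi_1(S)$ the normalized curves $\frac1n[a^nb^n]$ converge in $\Curr(S)$ to $\delta_{[a]}+\delta_{[b]}$. Geometrically, most of a long closed geodesic in the class $a^nb^n$ fellow-travels $\ora{a}$ for about $n$ periods and then $\ora{b}$ for about $n$ periods, with a bounded transition. I would check this by testing against compactly supported functions near the lifts. Since $f$ extends continuously and is homogeneous, and the extension agrees with $f$ on multi-curves (including the disconnected multi-curve $[a][b]$), this gives
\[
\lim_{n\to\infty}\tfrac1n f([a^nb^n]) = f([a][b]).
\]
Throughout, $a^n$ and $b^n$ have the same axes as $a$ and $b$, so the relative position of $(\ora{a^n},\ora{b^n})$ (cross, parallel or anti-parallel) equals that of $(\ora{a},\ora{b})$.

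\emph{Connected $\Rightarrow$ disconnected.} Suppose $(\ora{a},\ora{b})$ R-cross. By Lemma~\ref{lem:righthandedfoundation}\ref{item:ab-cross} the pair $(\ora{ba},\ora{ab})$ is R-parallel, and the same holds with $a,b$ replaced by $a^n,b^n$. Connected smoothing applied to the product $(a^nb^n)(b^na^n)$, which is conjugate to $a^{2n}b^{2n}$, gives
\[
f([a^{2n}b^{2n}]) \ge f([a^nb^n][b^na^n]) = 2f([a^nb^n]),
\]
using convex union in the form needed (or, if only convex union is available, additivity on this particular union, which I would obtain by the same continuity argument). So $g(n)\coloneqq \frac1n f([a^nb^n])$ satisfies $g(2n)\ge g(n)$. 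Letting $n=2^k\to\infty$ gives $f([a][b])=\lim g(2^k)\ge g(1)=f([ab])$. Since $(\ora{a},\ora{B})$ also cross, the same argument gives $f([a][b])\ge f([aB])$, which is full disconnected smoothing.

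\emph{Disconnected $\Rightarrow$ connected.} Suppose $(\ora{a},\ora{b})$ R-parallel. By Lemma~\ref{lem:righthandedfoundation}\ref{item:ab-parallel} the pair $(\ora{b^na^n},\ora{a^nb^n})$ crosses. Disconnected smoothing then gives $2f([a^nb^n])\ge f([a^{2n}b^{2n}])$, so $g(2n)\le g(n)$, and in the limit $f([ab])\ge f([a][b])$.

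The main obstacle is the remaining inequality $f([ab])\ge f([aB])$ for parallel $(\ora{a},\ora{b})$. Here $(\ora{a},\ora{B})$ are anti-parallel, so the doubling trick does not apply to $aB$ directly. My first attempt is to use Proposition~\ref{prop:eventual_splitting} to pass to large powers where $\ora{a^nB^n}$ is splitting, and then apply disconnected smoothing to a suitable crossing pair built from $a^nb^n$ and its translates. The aim is a chain $k f([a^nb^n])\ge f([(a^nB^n)^{k}])-O(1)$ with a bounded error, so that after normalizing and taking limits the two sides compare, either via the continuous extension or via Lemma~\ref{lem:a_parallel_Ba}. If that fails, I would fall back on the classification in Lemma~\ref{lem:trichotomy_ab} and handle the $a$-sided, $b$-sided and splitting configurations separately.
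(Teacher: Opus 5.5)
Your doubling arguments along $n=2^k$ are correct, and they slightly streamline the paper's proof. The paper proves $2nf([ab])\ge f([a^{2n}b^{2n}])$ and its reverse by an induction $n-1\to n$ using Lemmas~\ref{lem:anbn_vs_bkak} and~\ref{lem:anbn_vs_bkak_parallel}; you only need Lemma~\ref{lem:righthandedfoundation} for the pair $(a^n,b^n)$. The identity $f([a^nb^n][b^na^n])=2f([a^nb^n])$ is just homogeneity, since $[b^na^n]=[a^nb^n]$, so no convex union or extra continuity argument is needed. This is actually an advantage: in the connected $\Rightarrow$ disconnected direction, the paper's induction splits $f$ of a multi-curve with two distinct components as a sum, which convex union alone does not give. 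So connected $\Rightarrow$ disconnected smoothing is complete, as is the oriented half $f([ab])\ge f([a][b])$ of the converse.

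The gap is the unoriented half $f([ab])\ge f([aB])$ for parallel $(\ora{a},\ora{b})$. You give only a tentative plan, and as sketched it cannot work. Passing to large powers turns the target into a comparison of $[a^nb^n]$ with $[a^nB^n]$, which says nothing about $[ab]$ versus $[aB]$. Moreover, after normalizing, $\frac1n[a^nb^n]$ and $\frac1n[a^nB^n]$ converge to $[a]+[b]$ and $[a]+[B]$, on which $f$ agrees by symmetry.

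The missing idea is the dichotomy of Lemma~\ref{lem:aB_bA_antiparallel_cross}: $(\ora{aB},\ora{Ba})$ are either anti-parallel or crossing.
\begin{itemize}
\item In the anti-parallel case your own tools finish the job. Unoriented disconnected smoothing at the crossing of $(\ora{ba},\ora{ab})$ gives $2f([ab])=f([ba][ab])\ge f([(aB)(Ab)])$. Since $(\ora{aB},\ora{Ab})$ are parallel, the oriented connected smoothing you proved gives $f([(aB)(Ab)])\ge f([aB][Ab])=2f([aB])$ by symmetry and homogeneity. (The paper instead runs an induction on $(aB)^n(Ab)^n$ and takes a limit.)
\item In the crossing case, Lemma~\ref{lem:a_parallel_Ba} shows that $(\ora{b},\ora{aB})$ or $(\ora{a},\ora{bA})$ is parallel. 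The paper writes $ab=(aB)b^2$ (resp.\ $ba=(bA)a^2$) and applies oriented connected smoothing. It then discards the extra component using $f([b^2])\ge 0$ (resp.\ $f([a^2])\ge 0$) from Proposition~\ref{prop:curve-positive}. Be aware that this last step splits $f([aB][b^2])$ as $f([aB])+f([b^2])$, so it uses additivity rather than mere convex union.
\end{itemize}
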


\begin{proof}[Proof of
  Proposition~\ref{prop:oriented_smoothing_unoriented}, assuming Proposition~\ref{prop:cont_conn_disc}]
  Since $f$ satisfies disconnected oriented smoothing and is
  symmetric, it also
  satisfies disconnected unoriented smoothing. Indeed, let
  $(\ora{a},\ora{b})$ R-cross. Then $(\ora{B},\ora{a})$ R-cross, so by
  disconnected oriented smoothing
\[
f([B][a]) \geq f([Ba]).
\]
Since $f$ is symmetric, we have $f([b][a])=f([B][a]) \geq
f([Ba])$, as desired.

Since $f$ satisfies oriented smoothing, convex union, stability and
homogeneity, it extends continuously and homogeneously
to oriented geodesic currents, by~\cite[Theorem~A]{MGT21:Smoothings}. Hence by
Proposition~\ref{prop:cont_conn_disc}, it satisfies connected
smoothing, and thus all cases of smoothing.
\end{proof}

As an immediate corollary, we have the following.

\begin{corollary}
If $f$ is symmetric and satisfies oriented smoothing, additivity, and
stability, then $f$ satisfies smoothing.
\label{cor:oriented_smoothing_unoriented}
\end{corollary}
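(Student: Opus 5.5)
The plan is to reduce the corollary directly to Proposition~\ref{prop:oriented_smoothing_unoriented}, checking that its hypotheses are all implied by the assumptions on $f$. That proposition requires oriented smoothing (connected and disconnected), convex union, stability, homogeneity and symmetry.

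Symmetry, oriented smoothing and stability are assumed outright. Oriented smoothing in the sense of Definition~\ref{def:properties} contains both the connected and disconnected oriented inequalities, by Lemma~\ref{lem:oriented_smoothing_lemma_top_alg}. The remaining two hypotheses come from additivity. First, additivity \eqref{eq:union-additive} is the equality case of \eqref{eq:union-convex}, so convex union holds. Second, homogeneity holds because iterating additivity on $nC = C\cup\cdots\cup C$ gives $f(nC)=nf(C)$, as already noted after Definition~\ref{def:homogeneous}.

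With every hypothesis verified, Proposition~\ref{prop:oriented_smoothing_unoriented} yields smoothing in the sense of Definition~\ref{def:smoothing_for_oriented}: the unoriented disconnected and connected inequalities together with power smoothing. Power smoothing also follows separately from stability plus additivity, via $f([a^{n+m}])=(n+m)f([a])=f([a^n])+f([a^m])$.

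There is no real obstacle at the level of the corollary itself; all the substance sits in Proposition~\ref{prop:cont_conn_disc} and in the continuous extension from \cite[Theorem~A]{MGT21:Smoothings}, both of which are taken as given.
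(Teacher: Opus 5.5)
Your proposal is correct and is exactly the paper's argument: the corollary is stated as an immediate consequence of Proposition~\ref{prop:oriented_smoothing_unoriented}, with additivity supplying both convex union and homogeneity. Your separate derivation of power smoothing from stability and additivity is a harmless redundancy, since power smoothing is already one of the cases of oriented smoothing by Lemma~\ref{lem:oriented_smoothing_lemma_top_alg}.
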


We now turn to the proof of Proposition~\ref{prop:cont_conn_disc},
starting with some lemmas. The first lets us approximate
disconnected curves by connected curves and vice versa.
\begin{lemma}
  Given any $g,h \in \pi_1(S)$, we have
\[
\lim_{n\to\infty} \frac{\delta_{[g^n h^n]}}{n} = \delta_{[g]} + \delta_{[h]}
\]
in the weak$^*$-topology of geodesic currents.
\label{lem:gnhn_to_gsumh}
\end{lemma}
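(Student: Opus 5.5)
The plan is to prove the convergence by testing against intersection numbers, and then to use that weighted closed curves are dense and that the intersection form detects currents. Concretely, the weak$^*$ convergence of currents can be checked against compactly supported continuous functions on $G(S)$. I would reduce this to local mass computations: show that for every box $U=[s_1,s_2)\times[t_2,t_1)$ whose boundary carries no mass for $\delta_{[g]}+\delta_{[h]}$, the quantity $\frac1n\delta_{[g^nh^n]}(U)$ converges to $(\delta_{[g]}+\delta_{[h]})(U)$. Boundaries of generic boxes have zero mass by Proposition~\ref{prop:spikes}, since only finitely many lifts of $[g],[h]$ meet any compact set. Such boxes generate the topology, so together with a uniform local mass bound this gives weak$^*$ convergence.

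The core geometric step uses the structure of the axis of $g^nh^n$ and its translates. I may assume $g,h$ are not common powers; in the common-power case the identity reduces to stability, $\delta_{[c^{k}]}=k\delta_{[c]}$, up to bounded error. By Lemma~\ref{lem:convergence_action}, the axis of $g^nh^n$ has endpoints converging to $g^+$ and $H\cdot h^-$. More precisely, the closed geodesic $[g^nh^n]$ fellow-travels $n$ times around $[g]$ and then $n$ times around $[h]$. So the lifts of $[g^nh^n]$ split into about $n$ lifts that are close, on compact sets, to translates of $\ora{g}$, and about $n$ that are close to translates of $\ora{h}$. There are also $O(1)$ transitional lifts per period, near the junctions where $g^n$ switches to $h^n$.

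I would make this precise via conjugates. The translates $g^{k}\cdot\ora{g^nh^n}$ for $0\le k<n$ are the axes of $g^k(g^nh^n)g^{-k}=g^{k}h^ng^{n-k}$ (up to cyclic rotation). For $k$ and $n-k$ both large, Lemma~\ref{lem:convergence_action} puts the endpoints near $g^{\pm}$. The same holds for the $h$ block. Counting within a fundamental domain then shows that the number of lifts hitting $U$ is $n\,\delta_{[g]}(U)+n\,\delta_{[h]}(U)+o(n)$, since only boundedly many $k$ near $0$ or $n$ fail to be close.

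The main obstacle is making this count uniform: I need to control the $o(n)$ error, coming from the $k$ near the ends, independently of $n$. I also need to ensure that no other lifts enter $U$. For the latter I would use that $U$ is compact in $G(S)$ and bounded away from the diagonal, so any geodesic in $U$ passes through a fixed compact set $K\subset\wt S$. The intersection of the closed geodesic $[g^nh^n]$ with the projection of $K$ has length $n\ell(g)+n\ell(h)+O(1)$, and each lift crossing $K$ is accounted for by a segment of the quasi-geodesic word. An alternative route, if this counting becomes messy, is to use continuity of Bonahon's intersection form together with Otal's theorem. By Proposition~\ref{prop:local_compactness}, the family $\frac1n\delta_{[g^nh^n]}$ is precompact, because intersection with a filling current is bounded by $\ell(g)+\ell(h)+O(1/n)$. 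Any limit $\nu$ then satisfies $i(\nu,C)=i(g,C)+i(h,C)$ for all curves $C$, by the same fellow-travelling count applied to $i([g^nh^n],C)=n\,i(g,C)+n\,i(h,C)+O(1)$. Otal's uniqueness would then force $\nu=\delta_{[g]}+\delta_{[h]}$, at least after symmetrization.
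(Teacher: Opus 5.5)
Your main route is sound. You test $\frac1n\delta_{[g^nh^n]}$ on small boxes that are continuity sets for $\delta_{[g]}+\delta_{[h]}$, and count the lifts of $[g^nh^n]$ in such a box via the fellow-travelling of $\ora{g^nh^n}$ with translates of $\ora{g}$ and $\ora{h}$, leaving only $O(1)$ transitional lifts. This is the kind of direct computation the paper itself only points to: it cites \cite[Proposition~5.1]{Sas22:Currents} without details.

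There are two small slips in that part:
\begin{itemize}
\item The translates that fellow-travel $\ora{g}$ are $g^{-k}\cdot\ora{g^nh^n}$, the axes of $g^{n-k}h^ng^{k}$. The translate $g^{k}\cdot\ora{g^nh^n}$ has both endpoints near $g^+$ once $k$ is large.
\item Lemma~\ref{lem:convergence_action} has only one varying exponent, so you must redo its NS-dynamics argument uniformly in $k$ and $n$. This is routine.
\end{itemize}

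The step that actually fails is your treatment of the common-power case. Stability only covers exponents of the same sign. For $g=c^k$, $h=c^{-l}$ with $k,l\ge1$ you get $g^nh^n=c^{n(k-l)}$. Then $\frac1n\delta_{[g^nh^n]}$ is $|k-l|$ times $\delta_{[c]}$ or $\delta_{[c^{-1}]}$, not $k\delta_{[c]}+l\delta_{[c^{-1}]}$, and this fails even after forgetting orientations. Most simply, $h=g^{-1}$ makes the left-hand side $0$.

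So the lemma as stated for arbitrary $g,h$ is false, and no argument can reduce that case to stability. The fix is to add the hypothesis that $g,h$ are not common powers, or only same-sign ones. This is harmless, since the paper applies the lemma only to pairs such as parallel or crossing $(a,b)$ and $(aB,Ab)$. It is also exactly what your counting needs: it guarantees that $g^\pm,h^\pm$ are four distinct points, so the NS dynamics at the junctions work. The case where $[h]$ is $[g]$ with reversed orientation, as for $(aB,Ab)$, then causes no trouble.

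Your fallback route has three problems:
\begin{itemize}
\item Intersection numbers are blind to orientation, so combined with Otal's theorem it only gives convergence of the flip-symmetrizations $\frac1n\bigl(\delta_{[g^nh^n]}+\sigma_*\delta_{[g^nh^n]}\bigr)$.
\item Precompactness needs Bonahon's compactness of $\{\mu : i(\mu,\mu_0)\le C\}$ for a filling $\mu_0$, not Proposition~\ref{prop:local_compactness}.
\item The estimate $i([g^nh^n],C)=n\,i(g,C)+n\,i(h,C)+O(1)$ is the same fellow-travelling count, so this route saves no work.
\end{itemize}
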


Recall that here $\delta_{[g]}$ denotes the geodesic current representing the curve $[g]$ (Equation~\eqref{eq:delta-curve}).

\begin{proof}
  This is a straightforward fact of the weak$^*$-topology convergence
  of geodesic currents and follows from computations similar
  to~\cite[Proposition~5.1]{Sas22:Currents}.
\end{proof}

Next we give some extensions of Lemma~\ref{lem:righthandedfoundation}
to powers.
\begin{lemma}
  Let  $a,b \in \pi_1(S)$ so that $\bigl(\ora{a},\ora{b}\bigr)$ are
  R-parallel. Then for all
  $n \ge 1$, the triple $(\ora{a},\ora{a^n b^n}, \ora{b})$ is R-parallel.
  \label{lem:powers_parallel}
\end{lemma}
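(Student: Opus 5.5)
The plan is to argue purely with north--south dynamics on $\partial_\infty S$, as in Lemma~\ref{lem:righthandedfoundation}\ref{item:ab-parallel}. Since $\ora{a^n}=\ora{a}$ and $\ora{b^n}=\ora{b}$ as oriented geodesics, the pair $(\ora{a^n},\ora{b^n})$ is R-parallel. Applying Lemma~\ref{lem:righthandedfoundation}\ref{item:ab-parallel} to the hyperbolic elements $a^n$ and $b^n$ then shows that $a^nb^n$ is hyperbolic and that the triple $(\ora{a^n},\ora{a^nb^n},\ora{b^n})$ is R-parallel. Replacing $\ora{a^n},\ora{b^n}$ by $\ora{a},\ora{b}$ gives exactly the claim.

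To guard against any subtlety in that lemma's proof for general hyperbolic elements, I will also recheck the endpoint locations directly. Using Convention~\ref{conv:circle-intervals}, R-parallelism of $(\ora{a},\ora{b})$ places the endpoints in ccw order $a^+,a^-,b^-,b^+$. Write $I=(b^+,a^+)$ and $J=(a^-,b^-)$. By NS dynamics, $b^n$ maps the arc $(b^+,a^-)$, which contains $I$ and misses $b^-$, into a smaller arc near $b^+$. The element $a^n$ fixes $a^\pm$ and pushes everything off $a^-$ toward $a^+$. One then checks that $a^nb^n\cdot\overline{I}\subsetneq I$, so Lemma~\ref{lem:NS-dynamics} gives $(a^nb^n)^+\in I$. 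The symmetric statement for the inverse, $B^nA^n\cdot\overline{J}\subsetneq J$, gives $(a^nb^n)^-\in J$.

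It remains to read off the order. With $(a^nb^n)^+\in(b^+,a^+)$ and $(a^nb^n)^-\in(a^-,b^-)$, the cyclic order is $a^+,a^-,(a^nb^n)^-,(a^nb^n)^+$, so $(\ora{a},\ora{a^nb^n})$ is R-parallel. Similarly the order $(a^nb^n)^+,(a^nb^n)^-,b^-,b^+$ shows $(\ora{a^nb^n},\ora{b})$ is R-parallel.

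The only delicate step is the strict inclusion of the closed interval, which needs the boundary points $b^+$ and $a^+$ handled correctly. The point $b^+$ is fixed by $b^n$ and moved by $a^n$ strictly into $(b^+,a^+)$. The point $a^+$ is sent by $b^n$ into $(b^+,a^+)$ and then stays there under $a^n$. Both facts are immediate from NS dynamics, so I expect no real obstacle.
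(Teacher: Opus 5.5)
Your proof is correct, and it is more direct than the paper's. The paper inducts on $n$. Starting from $(\ora{a},\ora{a^{n-1}b^{n-1}},\ora{b})$ R-parallel, it applies Lemma~\ref{lem:righthandedfoundation}\ref{item:ab-parallel} first to $(\ora{a},\ora{a^{n-1}b^{n-1}})$, inserting $\ora{a^nb^{n-1}}$. It then applies the same lemma to $(\ora{a^nb^{n-1}},\ora{b})$, inserting $\ora{a^nb^n}$, and uses transitivity of R-parallelism along the chain.

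You instead note that for $n\ge 1$ the elements $a^n$ and $b^n$ are hyperbolic with the same oriented axes as $a$ and $b$. So Lemma~\ref{lem:righthandedfoundation}\ref{item:ab-parallel} applies verbatim to the pair $(a^n,b^n)$ and gives the conclusion in one step, with no induction and no bookkeeping of intermediate words.

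The paper's induction does yield the chain $(\ora{a},\ora{a^nb^{n-1}},\ora{a^nb^n},\ora{b})$, which locates the intermediate element $a^nb^{n-1}$ as well. That is in the style of the inductions in the next two lemmas, but the statement here does not need it.

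Your second paragraph, checking $a^nb^n\cdot\overline{I}\subsetneq I$ and $B^nA^n\cdot\overline{J}\subsetneq J$ by hand, is just the proof of Lemma~\ref{lem:righthandedfoundation}\ref{item:ab-parallel} with $a^n,b^n$ in place of $a,b$. It is redundant but correct. Your use of the closed arc is also the right hypothesis if one wants hyperbolicity for general elements of $\Isom^+(\mathbb{H}^2)$: a strict inclusion of open arcs, as in Lemma~\ref{lem:NS-dynamics}, would not exclude a parabolic fixing an endpoint. In $\pi_1(S)$ this issue does not arise.
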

\begin{proof}
The proof is by induction on~$n$, repeatedly applying
Lemma~\ref{lem:righthandedfoundation}\ref{item:ab-parallel}. The case
$n=1$ is directly that lemma.
For the inductive step, we have:
\begin{align*}
  &\bigl(\ora{a},\ora{a^{n-1}b^{n-1}},\ora{b}\bigr)\text{ R-parallel}
    &&\text{(Induction)}\\
  &\quad\Rightarrow
    \bigl(\ora{a},\ora{a^n b^{n-1}},\ora{a^{n-1}b^{n-1}},\ora{b}\bigr)\text{ R-parallel}
    &&\text{(Lemma~\ref{lem:righthandedfoundation}\ref{item:ab-parallel} on
$\bigl(\ora{a},\ora{a^{n-1}b^{n-1}}\bigr)$)}\\
  &\quad\Rightarrow
    \bigl(\ora{a},\ora{a^n b^{n-1}},\ora{a^n b^n},\ora{b}\bigr)\text{ R-parallel}
    &&\text{(Lemma~\ref{lem:righthandedfoundation}\ref{item:ab-parallel} on
$\bigl(\ora{a^n b^{n-1}},\ora{b}\bigr)$)}.
\qedhere
\end{align*}
\end{proof}

\begin{lemma}
  Let  $a,b \in \pi_1(S)$ so that $\bigl(\ora{a},\ora{b}\bigr)$ are
  R-parallel. Then for all
  $n \ge 1$, $m \geq 1$ the pair $(\ora{b^ma^m},\ora{a^n
  b^n})$ R-cross.
\label{lem:anbn_vs_bkak}
\end{lemma}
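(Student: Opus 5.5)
The plan is to reduce the statement to the configuration already described in Lemma~\ref{lem:righthandedfoundation}\ref{item:ab-parallel}, which is exactly the diagonal case. Since $\ora{a^k}$ and $\ora{a}$ have the same oriented axis, as do $\ora{b^k}$ and $\ora{b}$, the pair $(\ora{a^k},\ora{b^k})$ is R-parallel for every $k\ge 1$. Applying Lemma~\ref{lem:righthandedfoundation}\ref{item:ab-parallel} to $(a^n,b^n)$ gives that $(\ora{b^na^n},\ora{a^nb^n})$ R-cross. This settles the case $m=n$. It also locates the endpoints: $(a^nb^n)^+,(b^na^n)^+\in(b^+,a^+)$ and $(a^nb^n)^-,(b^na^n)^-\in(a^-,b^-)$.

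For general $m,n$, I would pin down the endpoints more precisely by north-south dynamics (Lemma~\ref{lem:NS-dynamics}). On the arc $(b^+,a^+)$, $a$ pushes points counterclockwise towards $a^+$, while $b$ pushes them clockwise towards $b^+$. On $(a^-,b^-)$ the directions are reversed.
\begin{itemize}
\item Tracking images of $(b^+,a^+)$ shows that $(a^nb^n)^+$ lies in $a^n\cdot(b^+,a^+)=(a^n\cdot b^+,a^+)$. Iterating once more places it in $a^nb^n\cdot(b^+,a^+)$.
\item Symmetrically, $(b^ma^m)^+$ lies in $(b^+,b^m\cdot a^+)$, and similarly on the repelling side.
\item The target is the cyclic order $b^+,(b^ma^m)^+,(a^nb^n)^+,a^+$ together with the mirrored order $a^-,(a^nb^n)^-,(b^ma^m)^-,b^-$ on the other arc, which is precisely R-crossing.
\end{itemize}

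If the direct interval estimates turn out not to separate the endpoints when $m\ne n$, I would instead argue by induction on $m+n$, in the style of the proof of Lemma~\ref{lem:powers_parallel}. The inductive step passes from $(m,n)$ to $(m,n+1)$ or $(m+1,n)$. It uses two facts.
\begin{itemize}
\item By Lemma~\ref{lem:powers_parallel}, each of $\ora{a^nb^n}$ and $\ora{b^ma^m}$ separates $\ora{a}$ from $\ora{b}$ in an R-parallel triple. Hence the two axes either cross or are nested between $\ora{a}$ and $\ora{b}$, and they cannot be anti-parallel.
\item The conjugation identity $a^m\cdot\ora{b^ma^m}=\ora{a^mb^m}$ is available, and $\ora{a^mb^m}$ and $\ora{b^ma^m}$ already R-cross by the diagonal case.
\end{itemize}
Under this approach, the nested alternative would be ruled out as follows. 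Suppose $\ora{b^ma^m}$ is nested strictly on the $a$-side of $\ora{a^nb^n}$. Applying $b^n$ then pushes the $a$-side of $\ora{a^nb^n}$ deep towards $b^+$. This is incompatible with the fixed endpoint $(b^ma^m)^+$ lying in $(b^+,b^m\cdot a^+)$, so nesting on the $a$-side is impossible; the $b$-side case is symmetric with the roles of $a$ and $b$ swapped.

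The main obstacle I expect is the off-diagonal comparison when $m\ne n$. The simple interval bounds locate $(a^nb^n)^+$ near $a^+$ and $(b^ma^m)^+$ near $b^+$. But the intervals $(a^n\cdot b^+,a^+)$ and $(b^+,b^m\cdot a^+)$ may a priori overlap. Ruling out that overlap requires the finer ping-pong step, applying $a^n$, $b^n$, $a^m$ and $b^m$ to nested intervals. That step, or equivalently the exclusion of nesting in the inductive version, is where the real work lies.
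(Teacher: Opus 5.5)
The case $m=n$ is correct: $(\ora{a^n},\ora{b^n})$ is R-parallel, so Lemma~\ref{lem:righthandedfoundation}\ref{item:ab-parallel} applies. But the off-diagonal case is the actual content of the lemma, and, as you acknowledge, neither of your two routes proves it.

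Your route through the windows $(a^n\cdot b^+,a^+)$ and $(b^+,b^m\cdot a^+)$ cannot succeed using north-south dynamics of the individual words alone. Already for $m=n=1$, fix the axes and let both translation lengths tend to $0$. Then $a\cdot b^+\to b^+$ and $b\cdot a^+\to a^+$, so these windows overlap for general hyperbolic isometries with R-parallel axes. Iterating further only shrinks the windows toward the fixed points, whose relative position is exactly what has to be proved.

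The nesting-exclusion step of your inductive fallback is circular. The ``incompatibility'' it invokes is precisely the separation of $(b^ma^m)^+$ from the $a$-side of $\ora{a^nb^n}$ that is in question. The identity $a^m\cdot\ora{b^ma^m}=\ora{a^mb^m}$ does not help either, since it only relates equal exponents.

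Your stated target is also wrong on the repelling arc. The ccw orders $b^+,(b^ma^m)^+,(a^nb^n)^+,a^+$ and $a^-,(a^nb^n)^-,(b^ma^m)^-,b^-$ together make $(\ora{a^nb^n},\ora{b^ma^m})$ R-parallel, i.e.\ nested, not crossing. The correct order is $a^-,(b^ma^m)^-,(a^nb^n)^-,b^-$. This matches the repelling-side estimates $(b^ma^m)^-\in(a^-,A^m\cdot b^-)$ and $(a^nb^n)^-\in(B^n\cdot a^-,b^-)$.

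The missing idea is to compare both families with the \emph{fixed} crossing pair $\ora{ba},\ora{ab}$, not with the exponent-dependent points $a^n\cdot b^+$ and $b^m\cdot a^+$. The endpoints of this pair lie in ccw order $b^+,(ba)^+,(ab)^+,a^+$ and $a^-,(ba)^-,(ab)^-,b^-$.
\begin{enumerate}
\item Prove by induction on $n$ that $(\ora{ba},\ora{a^nb^n})$ R-cross. Assume this for $n-1$. The L-crossing version of Lemma~\ref{lem:rhs_prod} with $x=ba$ gives that $(\ora{ba},\ora{a^{n-1}b^{n-1}\cdot ba})$ R-cross.
\item Translate by $a$. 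This sends $\ora{ba}$ to $\ora{ab}$ and $\ora{a^{n-1}b^na}$ to $\ora{a^nb^n}$, so $(\ora{ab},\ora{a^nb^n})$ R-cross.
\item Combined with Lemma~\ref{lem:powers_parallel}, this traps $(a^nb^n)^+\in[(ab)^+,a^+)$ and $(a^nb^n)^-\in[(ab)^-,b^-)$, strictly inside for $n\ge2$. This yields the R-crossing with $\ora{ba}$ and closes the induction.
\item The mirror argument ($a\leftrightarrow b$, R$\leftrightarrow$L) gives $(b^ma^m)^+\in(b^+,(ba)^+]$ and $(b^ma^m)^-\in(a^-,(ba)^-]$.
\end{enumerate}
These windows are disjoint and correctly ordered uniformly in $m,n$. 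The R-crossing therefore follows for all $m,n\ge1$ without ever comparing $m$ with $n$. The argument still uses only north-south dynamics; what makes it work is that the separators $(ab)^\pm,(ba)^\pm$ do not depend on the exponents.
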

\begin{proof}
  Assume first $m=1$.
  We proceed it by induction on~ $n$.
  The base case
 is clear by
 Lemma~\ref{lem:righthandedfoundation}\ref{item:ab-parallel}. In
 general we have
 \begin{align*}
   &\bigl(\ora{ba},\ora{a^{n-1}b^{n-1}}\bigr)\text{ R-cross}
     &&\text{(Induction)}\\
   &\quad\Rightarrow
     \bigl(\ora{ba},\ora{a^{n-1}b^n a}\bigr)\text{ R-cross}
     &&\text{(Lemma~\ref{lem:rhs_prod})}\\
   &\quad\Rightarrow
     \bigl(\ora{ab}, \ora{a^nb^n}\bigr)\text{ R-cross}
     &&\text{(Conjugate by~$a$)}.
 \end{align*}
Finally,
Lemma~\ref{lem:powers_parallel} and the R-crossing of 
$\bigl(\ora{ab}, \ora{a^nb^n}\bigr)$ restrict the
endpoints of $\ora{a^n b^n}$ to the indicated intervals in
Figure~\ref{fig:parallel_lemma}, which implies that $\ora{a^n b^n}$ crosses
$\ora{ba}$.

For $m \geq 1$, we apply the symmetric argument to show that 
$\bigl(\ora{b^ma^m}, \ora{ba}\bigr)$ R-cross with the endpoints of $\ora{b^ma^m}$ restricted to the intervals in Figure~\ref{fig:parallel_lemma}.
\end{proof}
\begin{figure}
     \begin{subfigure}[t]{0.48\textwidth}
\centerline{\fontsize{9pt}{9pt}\selectfont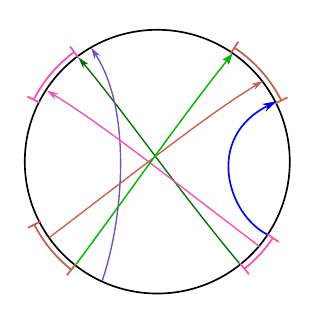}
    \caption{$\protect\ora{a}$ and $\protect\ora{b}$ are R-parallel
      implies $(\protect\ora{b^ma^m}, \protect\ora{a^n b^n})$ R-cross
      and $(\ora{a},\ora{a^n b^n}, \ora{b})$ are R-parallel. Here the endpoints of the geodesic $\protect\ora{a^nb^n}$ satisfy $(a^nb^n)^+ \in (ab^+, a^+)$ and $(a^nb^n)^- \in (ab^-, b^-)$, and the endpoints of the geodesic $\protect\ora{b^ma^m}$ satisfy $(b^ma^m)^+ \in (b^+, ba^+)$ and $(b^ma^m)^- \in (a^-, ba^-)$. }
    \label{fig:parallel_lemma}
  \end{subfigure}
  \hfill
    \begin{subfigure}[t]{0.48\textwidth}
 \centerline{\fontsize{9pt}{9pt}\selectfont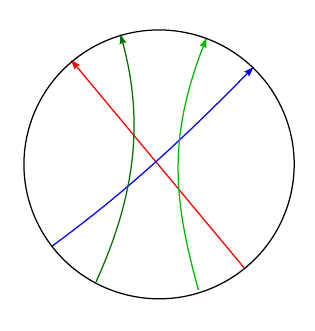}
    \caption{$\protect\ora{b}$ and $\protect\ora{a}$ R-cross implies $\protect\ora{a^n b^n}$ is R-parallel to $\protect\ora{ba}$. Note that the geodesic $\protect\ora{a^{n-1}b^na}$ and $\ora{ba}$ are parallel, since $\protect\ora{a^{n-1}b^{n-1}}$ and $\protect\ora{ba}$ are parallel by induction. After conjugation by $a$ the result follows.}
    \label{fig:cross_lemma}
     \end{subfigure}
  \caption{Crossings of powers}
\end{figure}

\begin{lemma} 
  Let $a,b \in \pi_1(S)$ so that $(\ora{b},\ora{a})$ R-cross. Then for all
  $n \ge 1$, $(\ora{a^n
  b^n},\ora{ba})$ are R-parallel, and for every $n \geq 2$, the pair $(\ora{a^n
  b^n},\ora{ab})$ are R-parallel.
\label{lem:anbn_vs_bkak_parallel}
\end{lemma}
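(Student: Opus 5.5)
We argue by induction on $n$, closely mirroring the proof of Lemma~\ref{lem:anbn_vs_bkak}, but now in the crossing configuration. Since $(\ora{b},\ora{a})$ R-cross, Lemma~\ref{lem:righthandedfoundation}\ref{item:ab-cross} (applied to the pair $(\ora{b},\ora{a})$) tells us that $(\ora{ab},\ora{ba})$ are R-parallel, with endpoints placed as in Figure~\ref{fig:cross}. The same lemma also gives the positions of $(ab)^\pm$ and $(ba)^\pm$ relative to $a^\pm,b^\pm$: they lie on the arcs where $a$ and $b$ move in opposite directions. The base case $n=1$ of the first claim is therefore that $(\ora{ab},\ora{ba})$ are R-parallel, which we have just noted.

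For the inductive step of the first claim, assume $(\ora{a^{n-1}b^{n-1}},\ora{ba})$ are R-parallel. We want to pass to $\ora{a^{n-1}b^{n}a}$, which is the conjugate of $\ora{a^nb^n}$ by $A$; that is, $A\cdot\ora{a^nb^n}=\ora{a^{n-1}b^na}$. The tool is Lemma~\ref{lem:rhs}. We translate ``$(\ora{g},\ora{ba})$ R-parallel'' into the statement that $ba$ maps a suitable interval $(g^-,g^+)$, or its complement, strictly inside itself. Then multiplying by $b$ on the left and by $a$ on the right should preserve the containment, by the same interval-mapping argument as in Lemma~\ref{lem:rhs_prod}. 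Concretely, we will show that both $a$ and $b$ map the interval $I=((ba)^+,(ba)^-)$ (the side of $\ora{ba}$ away from $\ora{ab}$) into the appropriate half, and deduce that $\ora{a^{n-1}b^na}$ is again R-parallel to $\ora{ba}$, as in Figure~\ref{fig:cross_lemma}. Finally we conjugate by $a$, which sends $\ora{ba}$ to $\ora{ab}$ and $\ora{a^{n-1}b^na}$ to $\ora{a^nb^n}$. To close the induction we must return to $\ora{ba}$. We do this by combining the two facts: the endpoints of $\ora{a^nb^n}$ are trapped in the intervals cut out by $\ora{ab}$, together with the known cyclic order of $a^\pm,b^\pm,(ab)^\pm,(ba)^\pm$ from Figure~\ref{fig:cross}. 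This should force $(\ora{a^nb^n},\ora{ba})$ to be R-parallel, in the same way the endpoint-restriction step concluded Lemma~\ref{lem:anbn_vs_bkak}.

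For the second claim with $n\ge2$, we use the endpoint localization obtained along the way. By NS dynamics, $(a^nb^n)^+=a^nb^n\cdot(a^nb^n)^+$ lies near $a^+$, and $(a^nb^n)^-$ lies near $b^-$. More precisely, $a^nb^n$ maps the arc around $a^+$ bounded by $(ab)^+$-side points strictly into itself. We then check that for $n\ge2$ both endpoints of $\ora{a^nb^n}$ lie strictly beyond those of $\ora{ab}$, on the same side, and are coherently oriented with it. For $n=1$ the two geodesics coincide, which is why $n\ge2$ is required. This check would use $a^2b^2=a(ab)b$ together with Lemma~\ref{lem:NS-dynamics} applied to the interval $((ab)^-,(ab)^+)$ or its complement.

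The main obstacle is the orientation bookkeeping in the inductive step: we must verify that left and right multiplication by $a$ and $b$ keep the relevant interval mapped strictly inside itself in the crossing configuration, where $a$ and $b$ move in opposite directions on some arcs. We would settle this by drawing all eight endpoints as in Figure~\ref{fig:cross} and checking each inclusion directly.
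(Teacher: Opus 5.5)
Your skeleton is the paper's own chain. The base case comes from Lemma~\ref{lem:righthandedfoundation}\ref{item:ab-cross}. The inductive step then runs: $(\ora{a^{n-1}b^{n-1}},\ora{ba})$ R-parallel $\Rightarrow$ $(\ora{a^{n-1}b^na},\ora{ba})$ R-parallel $\Rightarrow$ (translate by $a$) $(\ora{a^nb^n},\ora{ab})$ R-parallel $\Rightarrow$ $(\ora{a^nb^n},\ora{ba})$ R-parallel. The last implication is just transitivity of R-parallelism combined with the base case, which is what your ``endpoint trapping'' amounts to.

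There is a genuine gap in the first implication, which you rightly flag as the crux. The mechanism you propose for it cannot work.
\begin{itemize}
\item You want to read ``$(\ora{g},\ora{ba})$ R-parallel'' as ``$ba$ maps $(g^-,g^+)$ or its complement strictly into itself.'' By Lemma~\ref{lem:NS-dynamics}, that would require exactly one endpoint of $\ora{ba}$ in $(g^-,g^+)$, i.e.\ $\ora{ba}$ crossing $\ora{g}$. For a parallel pair both $(ba)^\pm$ lie on the same side of $\ora{g}$, so neither arc is contracted. The interval-contraction arguments of Lemmas~\ref{lem:rhs} and~\ref{lem:rhs_prod} are crossing criteria and do not transfer to parallel pairs.
\item Tracking $a$ and $b$ separately also fails. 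The side of $\ora{ba}$ away from $\ora{ab}$ has boundary arc containing $b^+$ and $a^-$. On it, $b$ contracts but $a$ expands: $a$ maps it onto the complement of the corresponding side of $\ora{ab}$.
\item $a^{n-1}b^na$ is obtained from $a^{n-1}b^{n-1}$ by right multiplication by $ba$, not by multiplying by $b$ on the left and $a$ on the right.
\end{itemize}

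The missing idea is to treat $h=ba$ as a single element. Apply Lemma~\ref{lem:righthandedfoundation}\ref{item:ab-parallel} to the R-parallel pair $(g,h)=(a^{n-1}b^{n-1},ba)$. It yields the R-parallel triple $(\ora{g},\ora{gh},\ora{h})$, and in particular $(\ora{a^{n-1}b^na},\ora{ba})$ is R-parallel. The relevant contracting arc is the one from $h^+$ to $g^+$ containing no other endpoint, and it is mapped into itself by the product $gh$, not by $h$ alone.

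With this step in place, your second claim needs no separate NS-dynamics check. For $n\ge 2$, $(\ora{a^nb^n},\ora{ab})$ R-parallel is exactly the intermediate statement of the inductive step after translating by $a$.
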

\begin{proof}
We proceed it by induction on $n$. The base case is clear by
Lemma~\ref{lem:righthandedfoundation}\ref{item:ab-cross}.
In general we have
\begin{align*}
  \bigl(\ora{a^{n-1}b^{n-1}},\ora{ba}\bigr)\text{ R-parallel}
  &\Rightarrow
    \bigl(\ora{a^{n-1}b^n a}, \ora{ba}\bigr)\text{ R-parallel}\\
  &\Rightarrow
    \bigl(\ora{a^nb^n}, \ora{ab}\bigr)\text{ R-parallel}\\
  &\Rightarrow
    \bigl(\ora{a^nb^n}, \ora{ba}\bigr)\text{ R-parallel}
\end{align*}
where we use the inductive hypothesis,
Lemma~\ref{lem:righthandedfoundation}\ref{item:ab-parallel},
conjugation, and transitivity of R\hyp parallelism. See
Figure~\ref{fig:cross_lemma}.
\end{proof}

\begin{proof}[Proof of Direction 1 of Proposition~\ref{prop:cont_conn_disc}]
		
Suppose first $f \colon \Curves^+(S) \to \mathbb{R}$ satisfies convex union, disconnected smoothing, homogeneity and it extends continuously to geodesic currents.

			Given a pair $a, b \in \pi_1(S)$ with $(\ora{a},\ora{b})$ parallel (so
			$[ab] \reducesto [a][b]$ and $[ab]\reducesto[aB]$), we show first $f$
			satisfies oriented connected
			smoothing, i.e., $f([ab]) \geq f([a][b])$.
			
			We will start by showing, by induction on $n$, that
			\begin{equation}\label{eq:power_ineq}
				2nf([ab]) \geq f([a^{2n}b^{2n}]).
			\end{equation}
			The base case $n=1$ follows from the fact that if $\ora{a},\ora{b}$
			are parallel, $\ora{ab}$ and $\ora{ba}$ cross, and hence
			\[
			2[ab]=[ab][ba] \reducesto [ab^2a]=[a^2b^2].
			\]
			Hence, by homogeneity and disconnected smoothing, the base case follows.
			Now, we prove the induction step.
			\begin{align*}
				2nf([ab]) &= 2(n-1)f([ab]) + 2f([ab])\\
				&\geq f([a^{2(n-1)}b^{2(n-1)}]) + f([b^2a^2])
				&&\text{(Induction)}\\
				&\geq f([a^{2(n-1)}b^{2(n-1)}][b^2a^2])
				&&\text{(Convex union)}\\
				&\geq f([a^{2(n-1)}b^{2(n-1)} b^2a^2]) = f([a^{2n}b^{2n}])
				&&\text{(Lemma~\ref{lem:anbn_vs_bkak} and disconnected smoothing)}
			\end{align*}
			Divide Eq.~\eqref{eq:power_ineq} by $2n$, let $n$ go to infinity, and
			use Lemma~\ref{lem:gnhn_to_gsumh} to obtain
			\[
			f([ab]) \geq f([a] [b])
			\]
			which proves oriented connected smoothing.
			
			Second, we prove unoriented connected smoothing: with $a,b$ as above,
			$f([ab]) \ge f([aB])$.
			We split into two cases, depending on the relative position of $\ora{aB}$
			and $\ora{Ba}$. By
			Lemma~\ref{lem:aB_bA_antiparallel_cross}, $(\ora{aB},\ora{Ba})$ are either anti-parallel
			or cross.
				We first consider the case when $(\ora{aB},\ora{Ba})$ are anti-parallel. We will prove by induction that
				\begin{equation}
					2nf([ab]) \geq f([(aB)^n (Ab)^n]).
					\label{eq:antiparallel_induction}
				\end{equation}
				The base case, $n=1$, follows since $(\ora{ab},\ora{ba})$ cross, hence
				\[
				[ba][ab] \reducesto [ba(ab)^{-1}] = [(aB)(Ab)].
				\]
				By homogeneity and disconnected smoothing, we get the base case.
				Now, we prove the induction step:
\begin{align*}
2nf([ab])
&= 2(n-1)f([ab]) + 2f([ab]) \\
&\geq f([(aB)^{n-1} (Ab)^{n-1}]) + f([(Ab)(aB)]) && \text{(Induction)} \\
&\geq f([(aB)^{n-1} (Ab)^{n-1} (Ab)(aB)])
&& \text{(Convex union and disconnected smoothing)} \\
&= f([(aB)^n (Ab)^n]).
\end{align*}
				where the disconnected smoothing is applied to
				\[[(aB)^{n-1}(Ab)^{n-1}][(Ab)(aB)] \reducesto [(aB)^{n-1} (Ab)^{n-1}
				(Ab)(aB)].
				\]
				 For this reduction, note that $\ora{aB}$ and $\ora{Ba}$ are
				anti-parallel, so $\ora{aB}$ and $\ora{Ab}$ are parallel. Hence, we
				can apply Lemma~\ref{lem:anbn_vs_bkak} to conclude that
				$\ora{(aB)^{n-1} (Ab)^{n-1}}$ and $\ora{(Ab)(aB)}$ cross.
				
				Divide Eq.~\eqref{eq:antiparallel_induction} by $2n$, let $n$ go to
				infinity, and use Lemma~\ref{lem:gnhn_to_gsumh} to get
				\[
				f([ab]) \geq \frac{f([aB] [Ab])}{2} = f([aB])
				\]
				where we used symmetry and homogeneity in the last equality.
				
				Now, still in the unoriented connected smoothing case, suppose
				$(\ora{aB},\ora{Ba})$ cross.
				If $(\ora{a},\ora{b})$ are parallel it follows by Lemma~\ref{lem:a_parallel_Ba} that either $(\ora{a},\ora{bA})$ are parallel or $(\ora{b}, \ora{aB})$ are parallel.
				In the first case, we have 
				\[
				f([ba]) = f([bA)(a^2)]) \geq f([bA]) + f(a^2) \geq f([bA])
				\]
				where in the first inequality we used oriented connected smoothing, and in the last inequality the fact that $f(a^2) \geq 0$, by Proposition~\ref{prop:curve-positive}.
				In the second case,
				\[
				f([ab]) = f([aB)(b^2)]) \geq f([aB]) + f(b^2) \geq f([aB])
				\]
				as before. This proves unoriented connected smoothing and thus finishes the proof that disconnected smoothing implies connected smoothing.
\end{proof}	
		
\begin{proof}[Proof of Direction 2 of Proposition~\ref{prop:cont_conn_disc}]
		
		We now prove the second part, namely, if $f \colon \Curves^+(S) \to \mathbb{R}$ satisfies symmetry, connected smoothing, homogeneity and continuity, then $f$ satisfies disconnected smoothing. The proof is very similar to the first part.
			
			 Namely, given $a, b \in \pi_1(S)$ with $(\ora{a},\ora{b})$ crossing, we prove, by induction on $n$, the following inequality
			\begin{equation}
				f([a^{2n}b^{2n}]) \geq 2nf([ab]).
				\label{eq:inequality_smoothing}
			\end{equation}
			
			The base case follows by noting that if $(\ora{a},\ora{b})$ R-cross, then $(\ora{ba},\ora{ab})$ are R-parallel, and then by connected smoothing
			\[
			f([a^2b^2])=f([ab^2a]) \geq 2f([ab]).
			\]
			For the induction step, note that
			\begin{align*}
				f([a^{2n}b^{2n}]) &= f([a^2(a^{2(n-1)}b^{2(n-1)})b^2])\\
				&= f([b^2a^2(a^{2(n-1)}b^{2(n-1)})]) \\
				&\geq f([b^2a^2]) + f([a^{2(n-1)}b^{2(n-1)})]) && \text{(Oriented connected smoothing)} \\
				&\geq 2f([ab]) + 2(n-1)f([ab]) && \text{(Induction)} \\
                &= 2nf([ab]).
			\end{align*}
			To apply oriented connected smoothing, we used that $b^2a^2$ and $a^{2(n-1)}b^{2(n-1)}$ are parallel, which follows from the fact that $a$ and $b$ are crossing, and Lemma~\ref{lem:anbn_vs_bkak_parallel}. 
			Finally, using Lemma~\ref{lem:gnhn_to_gsumh} as above, we get
			\[
			f([a] [b]) \geq f([ab])
			\]
			which proves oriented disconnected smoothing.
	
			Unoriented disconnected smoothing is proved in the same way, replacing
			$b$ by~$B$.
\end{proof}

Finally, we show that power smoothing and oriented disconnected smoothing imply stability.

\begin{proposition}
Let $f \colon \Curves^+(S) \to \mathbb{R}$ be a curve functional satisfying:
\begin{enumerate}
\item additivity
\item sub-stability
\item oriented disconnected smoothing.
\end{enumerate}
Then $f$ satisfies stability: $f(C^n) = nf(C)$.
\label{prop:power_disconnected_imply_stable}
\end{proposition}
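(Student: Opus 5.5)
The plan is to reduce to a single connected curve. Then the inequality $f([a^n])\ge nf([a])$ comes from sub-stability, and the reverse inequality up to a bounded error comes from disconnected smoothing. Sub-stability then removes the error. By additivity, and since $C^n$ has one component per component of $C$, it suffices to treat $C=[a]$ with $a\in\pi_1(S)$ nontrivial. Iterating sub-stability \eqref{eq:substable} with $m=1$ gives $f([a^n])\ge nf([a])$, and more generally $f([a^{nk}])\ge k\,f([a^n])$ for all $n,k\ge1$. So only the upper bound needs work.

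For the upper bound, I fix $y\in\pi_1(S)$ such that $(\ora{y},\ora{a})$ R-cross; such $y$ exists, for instance by Lemma~\ref{lem:diagonaldense}. By Lemma~\ref{lem:rhs_prod} applied with $a$ in the role of $x$, the pair $(\ora{a^ky},\ora{a})$ R-crosses for every $k\ge0$. Oriented disconnected smoothing together with additivity then gives
\[
  f([a])+f([a^ky])\ge f([a^{k+1}y]),
\]
after swapping the order of the pair and conjugating if necessary, so that the product is $a\cdot a^ky$. Iterating yields
\[
  nf([a])+f([y])\ge f([a^ny]).
\]

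Next I need a lower bound $f([a^ny])\ge f([a^n])-f([Y])$, at least for large $n$. By Lemma~\ref{lem:convergence_action}, $(a^ny)^+\to a^+$ and $(a^ny)^-\to Y\cdot a^-$. Since $\ora{a}$ crosses $\ora{y}$, the points $a^\pm$ lie on opposite sides of $\ora{y}$. The element $Y$ preserves the two sides of $\ora{y}$, so $Y\cdot a^-$ lies on the opposite side from $a^+$. Hence $\ora{a^ny}$ crosses $\ora{Y}$ for $n$ large. Choosing the order of the pair so that it R-crosses, oriented disconnected smoothing gives
\[
  f([a^ny])+f([Y])\ge f([a^nyY])=f([a^n]),
\]
where in the other order the product is $Ya^ny$, which is conjugate to $a^n$. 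Combining the two bounds, with $c\coloneqq f([y])+f([Y])$,
\[
  f([a^n])\le nf([a])+c \qquad\text{for all large } n.
\]

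To conclude, fix $n$ and let $k\to\infty$. Chaining the estimates gives
\[
  k\,f([a^n])\le f([a^{nk}])\le nk\,f([a])+c .
\]
Dividing by $k$ and letting $k\to\infty$ gives $f([a^n])\le nf([a])$, hence equality. The step I expect to need the most care is the crossing argument for $\ora{a^ny}$ versus $\ora{Y}$. It requires checking that the R/L orientation can always be arranged so that the oriented smoothing produces a conjugate of $a^n$, and that ``large $n$'' is harmless because the final limit only uses $nk$ with $k\to\infty$.
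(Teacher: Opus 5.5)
Your proof is correct and is essentially the paper's argument. It uses the same two inequalities, $nf([a])+f([y])\ge f([a^ny])$ (iterating oriented disconnected smoothing via Lemma~\ref{lem:rhs_prod}) and $f([a^ny])+f([Y])\ge f([a^n])$, combined with sub-stability; the one difference is that you absorb the additive constant through $k\,f([a^n])\le f([a^{nk}])$ rather than Fekete's lemma, which also makes the paper's extra bound $f([a^n])+f([b])\ge f([a^nb])$ unnecessary.

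The points you flag as delicate are harmless. Oriented disconnected smoothing applies to any crossing pair, whatever the R/L order. And since $a$ maps the arc $(y^+,y^-)$ strictly into itself while $y$ preserves it, Lemma~\ref{lem:NS-dynamics} shows $\ora{a^ny}$ crosses $\ora{y}$ for every $n\ge 1$, not just for large $n$.
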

\begin{proof}
  Let $a \in \pi_1(S)$ be an arbitary element.
By hypothesis~(2)
(sub-stable), we have $f(a^{n+m}) \geq f(a^n) + f(a^m)$.
By Fekete's Lemma~\ref{lem:fekete}, the limit
\[
F(a)\coloneqq\lim_{n^+} \frac{f(a^n)}{n}
\]
exists, and $f(a) \leq F(a)$. We must show $F(a) \leq f(a)$.
  
  Find $b \in
\pi_1(S)$ so that $(\ora{a},\ora{b})$ R-cross, which we can find by
density of endpoints of axes of hyperbolic elements in $G(S)$.
Then, applying oriented disconnected smoothing to the R-crossing pair $(\ora{a^n}, \ora{b})$, we get
\begin{equation}
f(a^n) + f(b) \geq f(a^nb).
\label{eq:cross1_eq}
\end{equation}

We next show that
\begin{equation}
nf(a) + f(b) \geq f(a^nb)
\label{eq:cross2_eq}
\end{equation}
for $n\geq 1$, by induction on $n$.
Indeed, the base case is true by oriented connected smoothing, and the
induction follows from
\[
nf(a) + f(b) \geq f(a^{n-1}b) + f(a) \geq f(a^nb)
\]
where in the first step we used the induction step and in the second we used the fact that the pair $(\ora{a},\ora{a^{n-1}b})$ is R-crossing by Lemma~\ref{lem:rhs_prod}.

Finally, by Lemma~\ref{lem:rhs_prod}, the pair $(\ora{B},\ora{a^nb})$
is R-crossing for all $n \geq 1$, and hence, by oriented disconnected
smoothing,
\begin{equation}
f(B) + f(a^nb) \geq f(a^n).
\label{eq:cross3_eq}
\end{equation}

Combining Eqs.~\eqref{eq:cross1_eq}, and~\eqref{eq:cross3_eq},
dividing by $n$ and taking the limit when $n$ goes to infinity, we get
$F(a)=\lim_{n^+} \frac{f(a^nb)}{n}$. Using Eq.~\eqref{eq:cross2_eq}
we get $f(a) \geq F(a)$. Thus, $F(a)=f(a)$, and hence $f$ satisfies stability since $F$ clearly does.
\end{proof}

\begin{remark}
If we modify the statements of
Propositions~\ref{prop:oriented_smoothing_unoriented}
and~\ref{prop:cont_conn_disc} to refer to $C$-quasi-smoothing (for fixed
$C>0$), then the proofs \emph{do not} go through. For example, in the
induction from connected to disconnected
oriented smoothing, inequality Eq.~\eqref{eq:inequality_smoothing} would
have to be replaced by
\[
f([a^{2n}b^{2n}]) \geq 2nf([ab]) - C n.
\]
Following the same chain of inequalities that prove the induction does
not yield the desired inequality, since one accumulates one extra $-C$
factor. We suspect the modified statement for quasi-smoothing is not
true.
\end{remark}

%%% Local Variables:
%%% mode: LaTeX
%%% TeX-master: "Intersections"
%%% End:

\section{Special classes of geodesic currents}
\label{sec:special-classes}

In this section we characterize special classes of geodesic currents as curve functionals, such as filling currents, hyperbolic metrics, measured laminations, integral multi-curves, or currents dual to embedded graphs.

\subsection{Filling currents}
\label{subsec:filling}
Using a result of Burger, Iozzi, Parreau, and Pozzetti, we can give a
characterization of filling geodesic currents.

\begin{corollary}
  Let $S$ a closed surface,
  $\mu$ a geodesic current and $f$ the functional on curves associated
  to $\mu$, i.e., so that $\mu_f=\mu$. The following subsets are in
  bijection.

\begin{enumerate}
    \item The set of filling currents (Definition~\ref{def:filling_current}).
    \item The set of functionals $f \colon \Curves^+(S) \to
      \mathbb{R}$ which satisfy symmetry, oriented smoothing,
      additivity, and stability, and so that their continuous extension to geodesic currents is positive.
    \item The set of curve functionals $f \colon \Curves^+(S) \to
      \mathbb{R}$ satisfying symmetry, oriented smoothing, additivity,
      and stability for which their systole \[\sys(f)\coloneqq \inf \{
        f(C) : C \in \Curves^+(S),\ C \mbox{ non-trivial} \}\] is
      positive.
\end{enumerate}
\end{corollary}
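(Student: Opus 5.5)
The bijection will be the duality map $\mu\mapsto f=i(\mu,\cdot)$, whose inverse is $f\mapsto\mu_f$. I first check that this map sends each of the three sets into the right place, and then show the three conditions on the current side match up. By Theorem~\ref{prop:recoverintersection}, together with Corollary~\ref{cor:oriented_smoothing_unoriented} (which upgrades oriented smoothing to full smoothing), every functional satisfying symmetry, oriented smoothing, additivity and stability is $i(\mu_f,\cdot)$ for a unique flip-invariant current $\mu_f$. Conversely, $i(\mu,\cdot)$ satisfies these axioms for every current $\mu$ (Appendix~\ref{app:necessary}). Uniqueness is Otal's theorem. So the duality is a bijection between $\Curr(S)$ and the set of functionals with these axioms. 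Moreover, the continuous extension of $f=i(\mu_f,\cdot)$ to $\Curr(S)$ is $\nu\mapsto i(\mu_f,\nu)$, by bilinearity and continuity of Bonahon's intersection pairing together with density of weighted multi-curves. The extension is unique by Theorem~\ref{thm:convex}.

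Given this, it remains to show that for a current $\mu$ the following are equivalent:
\begin{enumerate}
\item $\mu$ is filling;
\item $i(\mu,\nu)>0$ for all nonzero $\nu\in\Curr(S)$;
\item $\inf_C i(\mu,C)>0$ over nontrivial closed multi-curves $C$.
\end{enumerate}
Conditions (1) and (2) agree by Definition~\ref{def:filling_current}, since "positive extension" means positivity away from $0$.

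For (2)$\Rightarrow$(3), the standard argument uses projectivization. I fix a filling reference current, e.g.\ the Liouville current $L$ of a hyperbolic structure. The slice $K=\{\nu: i(L,\nu)=1\}$ is compact (Bonahon), and $\nu\mapsto i(\mu,\nu)$ is continuous and positive on $K$, so it has a minimum $c>0$ there. Hence $i(\mu,C)\ge c\,\ell_L(C)$ for every multi-curve $C$. Hyperbolic lengths of nontrivial closed curves are bounded below by the systole of the hyperbolic structure, so $\sys(f)\ge c\cdot\sys(L)>0$. For this step I would cite the compactness of $K$ (Bonahon) as a black box.

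The direction (3)$\Rightarrow$(2) is the main obstacle, and this is where I expect to invoke the result of Burger--Iozzi--Parreau--Pozzetti. Their theorem says that a current has positive systole if and only if it is filling. A direct argument would go as follows. Suppose $i(\mu,\nu)=0$ for some $\nu\neq 0$. Then, in the BIPP analysis, either $\nu$ has a component that is a measured lamination disjoint from the support of $\mu$, or the support of $\mu$ misses an essential subsurface. In either case one produces closed curves $C_n$ with $i(\mu,C_n)/\ell_L(C_n)\to 0$. Curves of this kind alone do not contradict a positive systole. What is needed is to sharpen this to curves $C_n$ with $i(\mu,C_n)\to0$, e.g.\ simple closed curves inside a complementary region, or curves carried near the lamination.

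I would not reprove that dichotomy. Instead I would quote the BIPP statement, that positive systole characterizes filling currents, and combine it with the duality above to conclude the bijection.
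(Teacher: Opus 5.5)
Your proposal is correct and takes the same route as the paper, whose proof simply combines the duality of Theorem~\ref{thm:intersections_group} with the Burger--Iozzi--Parreau--Pozzetti theorem that a current is filling if and only if its systole is positive; your use of Theorem~\ref{prop:recoverintersection} together with Corollary~\ref{cor:oriented_smoothing_unoriented} is exactly how that duality is proved. The only difference is that you give a self-contained compactness argument (via the projective slice $\{\nu : i(L,\nu)=1\}$) for the easy implication, filling $\Rightarrow$ positive systole, while still citing that theorem for the converse, as the paper does.
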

\begin{proof}
These equivalences follow from Theorem~\ref{thm:intersections_group} and \cite[Theorem~1.3]{BIPP21:Systoles}.
\end{proof}

\subsection{Multi-curves}

We say that a weighted multi-curve $M$ is \emph{integral}
 if its a weighted multi-curve whose weights are integers. We say that $M$ is a \emph{half-integral} multi-curve if its weights are in $\frac{1}{2}\mathbb{Z}$. Note that, by definition, all integral multi-curves are also half-integral.

We have the following criterion for half-integral multi-curves.

\begin{corollary}
Let $f$ be a curve functional satisfying the hypotheses of Theorem~\ref{thm:intersect}  such that $f(C) \in \mathbb{Z}$ for every $C \in \Curves^+(S)$. Then $f$ is dual to a half-integral multi-curve $\mu=\frac{1}{2}M$.
Moreover, $[\vec{M}] = 0$ in $\mathbb{Z}/2\mathbb{Z}$-homology.
\label{cor:int_multicurve}
\end{corollary}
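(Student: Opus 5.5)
The plan is to run the construction of Theorem~\ref{prop:recoverintersection} and track integrality through it. By that theorem, $f = i(\mu_f,\cdot)$ with $\mu_f = \overline{\mu_f} + \sigma_*\overline{\mu_f}$, and $\overline{\mu_f}$ is the Carathéodory extension of $\wh{\mu_f}$ on right-handed boxes. By Eq.~\eqref{eq:def_measure}, $2\wh{\mu_f}(B)$ is a limit of the integers $f([bx^n]) + f([cx^n]) - f([ax^n]) - f([dx^n])$. A convergent sequence of integers is eventually constant, so $\wh{\mu_f}(B) \in \tfrac12\bbZ_{\ge 0}$ for every $B\in\cB$. The same holds on the ring $\mathcal{R}$, since its elements are finite disjoint unions of boxes.

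\textbf{Step 1: $\overline{\mu_f}$ is atomic.} Proposition~\ref{prop:density} gives arbitrarily small boxes around any point of $G(S)$. The half-integer measures of a nested, shrinking sequence of boxes are non-increasing in $\tfrac12\bbZ_{\ge0}$, hence eventually constant. Take a point $\ell$ in the support that is not an atom. Then boxes shrinking to $\ell$ have measure tending to $0$, so they are eventually $0$, contradicting $\ell \in \supp$. Hence $\supp \overline{\mu_f}$ consists of atoms, each of mass at least $\tfrac12$. The atoms form a discrete set: a compact box has finite half-integral mass and so contains only finitely many of them. By the remark after Proposition~\ref{prop:spikes}, each atom is a lift of a closed geodesic, and $\pi_1(S)$-invariance makes the whole orbit of lifts carry the same weight.

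\textbf{Step 2: the form of $\mu_f$.} From Step~1, $\overline{\mu_f} = \sum_j w_j \delta_{C_j}$ for finitely many oriented closed geodesics $C_j$ with $w_j \in \tfrac12\bbZ_{>0}$. Finiteness follows from local finiteness together with the compactness of a fundamental region, for instance a box containing one lift of each $C_j$ crossing a fixed transversal. Adding the flip gives $\mu_f = \tfrac12 M$ with $M$ an integral multi-curve, where each $C_j$ contributes $2w_j$ to the weight of its unoriented class. Let $\vec M$ be the oriented multi-curve $2\overline{\mu_f}$. The step I expect to be the main obstacle is Step~1, specifically the justification that boxes shrinking to a non-atom have measure tending to zero. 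I would argue this by continuity from above of the Borel measure, since every geodesic other than $\ell$ is eventually excluded.

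\textbf{Step 3: mod 2 homology.} Since $f(C) = \tfrac12 i(M,C)$ is an integer for every curve $C$, the number $i(M,C)$ is even. The algebraic intersection of $\vec M$ with $C$ is congruent mod $2$ to the geometric intersection $i(M,C)$. Hence $[\vec M]\cdot[C] \equiv 0 \pmod 2$ for all closed curves $C$. Because the mod-$2$ intersection form on $H_1(S;\bbZ/2)$ is nondegenerate and closed curves span $H_1$, this gives $[\vec M]=0$ in $\bbZ/2\bbZ$-homology.
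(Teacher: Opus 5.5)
Your Steps 1 and 3 are sound, and Step 1 takes a genuinely different route from the paper's. The paper applies Otal's reconstruction $\mu(R)=\tfrac12\lim\bigl(i(\mu,B_n)+i(\mu,C_n)-i(\mu,A_n)-i(\mu,D_n)\bigr)$ directly to the dual current $\mu$, which makes $\mu$ half-integral on a dense family of rectangles. It then cites \cite[Proposition~4.12]{BIPP24:PositiveCrossratios} to conclude that $2\mu$ is an integral multi-curve. You instead read half-integrality off Eq.~\eqref{eq:def_measure} and prove the atomic structure by hand. Continuity from above together with discreteness of $\tfrac12\bbZ$ gives pure atomicity, and local finiteness plus invariance put the atoms on lifts of closed geodesics. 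This is a correct, self-contained replacement for the citation. For finiteness of the orbits, use the compact set of geodesics meeting a compact fundamental domain of $\wt{S}$, rather than ``a box containing one lift of each $C_j$''. Step 3 is the paper's mod-$2$ argument, phrased via nondegeneracy of the mod-$2$ intersection form.

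The gap is in Step 2, and it concerns exactly the constant the corollary is about. You read the weights off $\mu_f=\overline{\mu_f}+\sigma_*\overline{\mu_f}$, counting $\delta_C+\delta_{\bar C}$ as a weight-one unoriented curve. In that normalization, an unoriented current $\nu$ and its flip-invariant lift $\tilde\nu$ satisfy $i(\nu,C)=\tilde\nu(A_C)$, where $A_C=[z,y\cdot z)\times(y^+,y^-)$ is the one-directional fundamental domain.

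The recovery computation in Theorem~\ref{prop:recoverintersection} gives $\overline{\mu_f}(A_C)=f(C)$. Applying it to $\bar C$ and using symmetry of $f$ gives $\overline{\mu_f}(\sigma A_C)=f(C)$ as well, since $\sigma A_C$ is a fundamental domain for crossings in the opposite direction. So the sum $\overline{\mu_f}+\sigma_*\overline{\mu_f}$ is dual to $2f$, not $f$. By Otal's uniqueness, the unoriented current $\nu$ with $i(\nu,\cdot)=f$ has lift $\tilde\nu=\tfrac12(\overline{\mu_f}+\sigma_*\overline{\mu_f})$.

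The weight of $\nu$ on $\gamma$ is therefore $\tfrac12(w_{\ora{\gamma}}+w_{\ole{\gamma}})$, where $w_{\ora{\gamma}},w_{\ole{\gamma}}\in\tfrac12\bbZ$ are the masses of $\overline{\mu_f}$ on the two orientations. Your argument thus only gives weights in $\tfrac14\bbZ$. What is missing is $w_{\ora{\gamma}}=w_{\ole{\gamma}}$, i.e.\ flip-invariance of $\overline{\mu_f}$, which the paper itself notes is not clear a priori. The construction does not supply it for two reasons:
\begin{itemize}
\item the flip of an RH box for $x$ is an RH box for $X$, not for $x$ (Remark~\ref{rmk:flip_rh_box});
\item the equality $\vec i(\overline{\mu_f},C)=\vec i(\overline{\mu_f},\bar C)$ only forces $\sum_\gamma(w_{\ora{\gamma}}-w_{\ole{\gamma}})[\ora{\gamma}]=0$ in $H_1(S;\bbR)$ (compare Appendix~\ref{sec:asymmetric}).
\end{itemize}

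To close the gap, you need to identify box values with the dual current. For $f=i(\nu,\cdot)$, the limit of the combination in Eq.~\eqref{eq:def_measure} is twice the $\nu$-mass of the unoriented geodesics linking the box. This is an Otal-type double-transversal computation in the spirit of Lemma~\ref{lem:otalclaim}. It yields $\overline{\mu_f}=\tilde\nu$, hence flip-invariance, and it is precisely the input the paper's proof uses (applied there directly to $\mu$). With it, your Step~1 argument runs verbatim on $\tilde\nu$ and gives weights in $\tfrac12\bbZ$. Step~3 then finishes with $M=2\nu$.
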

\begin{proof}
By Theorem~\ref{thm:intersect} there exists a geodesic current $\mu$ so that
\[
i(\mu, C) = f(C)
\]
for every closed curve $C$.
We must show $\mu$ is a half-integral multi-curve. By the same argument in the proof of~\cite[Th\'eor\`eme~1]{Otal90:SpectreMarqueNegative}, there exists a family of rectangles of geodesics $\mathcal{R}$ with dense endpoints in $\partial_{\infty} S$ whose measure can be approximated by a sequence of one-half of the difference of sums of intersection numbers. That is, for every $R \in \mathcal{R}$, there exist sequences $A_n, B_n, C_n, D_n \in \Curves^+(S)$ so that
\[
\mu(R) = \frac{1}{2}\lim_{n^+} i(\mu, B_n) + i(\mu, C_n) - i(\mu, A_n) - i(\mu, D_n).
\]
Since the expression in the limit is one-half of an integer number for every $n$, for $n$ large enough the sequence must be constant (by discreteness of $\mathbb{Z}$). 
Hence, $\mu(R)$ is one-half of an integer for every $R \in \mathcal{R}$. (For more details about this last argument, see~\cite{Jyothis2025IvanovMeta}.)
By~\cite[Proposition~4.12]{BIPP24:PositiveCrossratios}, it follows that $2\mu$ must be an integral multi-curve $M$, and hence $\mu=\frac{1}{2} M$ is a half-integral multi-curve, for some integral multi-curve $M$.
We furthermore claim that $[\vec{M}]=0$ in $\mathbb{Z}/2\mathbb{Z}$-homology. 
Indeed, suppose there exists a simple closed curve $C$ so that its $\mathbb{Z}_2$-homology class
$[C] \in H^1(S,\mathbb{Z}/2\mathbb{Z})$ satisfies $\langle \vec M,\vec C\rangle=1 \mod 2$, where $\langle \cdot, \cdot \rangle$ denotes the algebraic intersection number. Since algebraic and geometric intersection are equal modulo 2, we have $i(\mu, C) \notin \mathbb{Z}$, a contradiction.
\end{proof}

\begin{remark}
  There are examples of half-integral multi-curves which are not integral yet have integral intersection numbers.
For example, let $\mu$ be the geodesic current supported on a
  separating simple closed curve. Then $\mu/2$ has an integral
  length spectrum but is not an integral multi-curve. 
  For more involved examples, there are certain word-lengths (whose marked length spectrum is integral) that are
  dual to a half-integral multi-curve~\cite[Theorem~1.1]{Erlandsson:WordLength}. See also Section~\ref{subsec:embedded_graphs}.
\end{remark}

It is easy to see that if $M = \sum_{i=1}^n \lambda_i C_i$, where $\lambda_i \geq 0$ are non-negative real numbers, then for every closed curve $C$, there exist integer numbers $n_i$ so that
\begin{equation}
i(M, C) = \sum_{i=1}^n \lambda_i n_i.
\label{eq:weighted_multicurve}
\end{equation}
i.e., $\{i(\mu,C)\}_{\Curves(S)}$ is contained in a finite rank subgroup of $\mathbb{R}$. In this case, we say $\mu$ has \emph{finite rank spectrum}.
This property does not characterize weighted multi-curves, since certain negatively curved metrics that satisfy this condition can be constructed (see~\cite{Agol25:Spectrum}).
Hence, is natural to ask the following two questions.

\begin{question}
    What is the subspace of geodesic currents with finite rank spectrum?
\end{question}

This last question came up in discussions between Meenakshy Jyothis and the first author.

\begin{question}
    What conditions on the (marked) length spectrum characterize weighted multi-curves?
\end{question}

The last question was originally posed by Viveka Erlandsson to the first author.

\subsection{Embedded graphs}
\label{subsec:embedded_graphs}
Related to multi-curves, we also obtain geodesic currents dual to stable lengths associated to embedded graphs in surfaces.

 Let $\iota \colon \mathcal{G} \to S$ be an \emph{embedded graph}, i.e., an embedding of a finite graph in $S$ that is filling, in the sense that the complementary regions are disks, or equivalently $i_*$ is surjective on $\pi_1$.
 Endow $\mathcal{G}$ with a length metric $g$. Then any closed multi-curve $C$ on $S$ can be homotoped so that it factors through $\mathcal{G}$, in many different ways. Let $\ell_{\mathcal{G}}(C)$ be the length of the smallest
 multi-curve $D$ on $\mathcal{G} $ so that $\iota(D)$ is homotopic to $C$. It is easy to see that this length is realized and is positive. 

\begin{theorem}
Let $\iota \colon \mathcal{G} \to S$ be an embedded graph. Then $\ell_{\mathcal{G}}$ is dual to a geodesic current $\mu_{\mathcal{G} }$.
\label{thm:dual_embeddedgraph}
\end{theorem}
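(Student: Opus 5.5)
The plan is to verify that $\ell_{\mathcal{G}}$, viewed as a functional on oriented multi-curves, satisfies the hypotheses of Theorem~\ref{thm:intersect}: symmetry, additivity, and smoothing. Stability then comes for free from Proposition~\ref{prop:power_disconnected_imply_stable}, and Theorem~\ref{prop:recoverintersection} produces $\mu_{\mathcal{G}}$.

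Symmetry is immediate, since reversing the orientation of a graph loop does not change its length. Additivity follows from the definition. A minimal representative $D$ of $C_1\cup C_2$ in $\mathcal{G}$ has components homotopic to those of $C_1$ and $C_2$ separately, so $\ell_{\mathcal{G}}(C_1\cup C_2)=\ell_{\mathcal{G}}(C_1)+\ell_{\mathcal{G}}(C_2)$. This uses only that each component is minimized independently. Trivial components are dropped and have length $0$.

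The main step is smoothing, and I would follow the length-metric argument of \cite[Section~4.3]{MGT21:Smoothings}. Take a multi-curve $C$ with an essential crossing, and let $D$ be a shortest graph representative. Then $\iota\circ D$ is a concrete (non-transverse) representative of $C$ in $S$. The essential crossing must then appear in $\iota\circ D$ as a pair of parameters $(p,q)$ with $\iota D(p)=\iota D(q)$, at a vertex or along a shared edge, whose smoothing is homotopic to the desired smoothing $C'$. Cutting and regluing at $(p,q)$ gives a graph multi-curve $D'$ of the same total length representing $C'$. Hence
\[
\ell_{\mathcal{G}}(C')\le \ell(D')=\ell(D)=\ell_{\mathcal{G}}(C),
\]
and this works for both smoothings, oriented and unoriented.

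The obstacle is the claim that the essential crossing is realized in the non-generic curve $\iota\circ D$. I would handle it by perturbing $\iota\circ D$ slightly into a transverse curve inside a regular neighborhood of $\mathcal{G}$, with all crossings located near coincidences of $D$. By Lemma~\ref{lem:essential-cross}, every homotopic representative, in particular this perturbation, contains a crossing whose smoothings agree up to homotopy with those of the essential crossing. That crossing sits near a coincidence point $(p,q)$ of $D$, and smoothing it corresponds to cutting $D$ at $p,q$ and regluing, possibly after a short backtrack that does not increase length. Alternatively, one can phrase everything group-theoretically via Proposition~\ref{prop:crossings-triality} and Lemma~\ref{lem:smoothing_lemma_top_alg}. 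For crossing $(\ora{a},\ora{b})$, concatenating cyclically reduced graph loops for $[a]$ and $[b]$ at a common vertex, after conjugating them to pass through one, gives a graph loop for $[ab]$ and for $[aB]$ of length at most $\ell_{\mathcal{G}}([a])+\ell_{\mathcal{G}}([b])$. The parallel case is handled by splitting a shortest loop for $[ab]$ at a repeated vertex, located using the same band-model argument as in Proposition~\ref{prop:join-split-geom}.

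Finally, Theorem~\ref{thm:intersect} gives the dual current. For the refined edge-metric statement (Corollary~\ref{maincor:embedded}), the values are integers, so Corollary~\ref{cor:int_multicurve} gives half-integrality.
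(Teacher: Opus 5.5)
Your proposal is correct and follows the paper's route. The paper cites \cite[4.4]{MGT21:Smoothings} for the fact that $\ell_{\mathcal{G}}$ lies in $\AConvex(S)$ and then applies Theorem~\ref{thm:intersect}; you instead re-derive that membership yourself, using the same cut-and-reglue argument the paper gives for length metrics in Corollary~\ref{cor:length_dual}.

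The perturbation detour is unnecessary, since Lemma~\ref{lem:essential-cross} applies directly to any homotopic concrete representative, including the non-transverse curve $\iota\circ D$, and injectivity of $\iota$ then gives an honest coincidence $D(p')=D(q')$ in $\mathcal{G}$. You can also drop the group-theoretic alternative: concatenating shortest loops at some common vertex does not by itself control which conjugates of $a$ and $b$ are joined.
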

\begin{proof}
In~\cite[4.4]{MGT21:Smoothings} we show that such lengths are in $\AConvex(S)$, hence by Theorem~\ref{thm:intersect} they have a dual geodesic current. 
\end{proof}

If furthermore $\mathcal{G}$ is an embedded graph with the edge-metric on $\pi_1(S)$ (i.e., each edge has weight 1), $\mu_{\mathcal{G}}$ must be $1/2$ of an integral multi-curve. This generalizes the main result in~\cite{Erlandsson:WordLength}, where Erlandsson finds that word-metric arising from \emph{simple generating set} are dual to 1/2 of an integral multi-curve dual. Such generating sets correspond to choosing $\mathcal{G}$ to be a rose graph with only one vertex and edges of length 1.

\begin{corollary}
Let $\mathcal{G}$ be an embedded graph with the edge-metric on $\pi_1(S)$ (i.e., each edge has weight 1). Then $\ell_{\mathcal{G}}$ is dual to a geodesic current $\mu_{\mathcal{G}}$ which is $1/2$ of an integral multi-curve.
\label{cor:generalization_wordlength}
\end{corollary}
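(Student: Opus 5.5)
The plan is to combine Theorem~\ref{thm:dual_embeddedgraph} with Corollary~\ref{cor:int_multicurve}. First, Theorem~\ref{thm:dual_embeddedgraph} already gives a geodesic current $\mu_{\mathcal{G}}$ with $i(\mu_{\mathcal{G}},C)=\ell_{\mathcal{G}}(C)$ for every closed curve $C$. To apply it, we use that $\ell_{\mathcal{G}}$ lies in $\AConvex(S)$ by \cite[4.4]{MGT21:Smoothings}. In particular $\ell_{\mathcal{G}}$ satisfies additivity and smoothing, so it satisfies the hypotheses of Theorem~\ref{thm:intersect}.

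Second, we check integrality of $\ell_{\mathcal{G}}$ on all (multi-)curves when every edge has length~$1$. A closed multi-curve $D$ in $\mathcal{G}$ realizing the minimum can be taken to be a union of closed edge paths; we may tighten backtracking, which only shortens $D$. Its length is then the number of edges traversed, counted with multiplicity, which is a non-negative integer. Since the minimum of $\ell_{\mathcal{G}}$ is realized (as noted before Theorem~\ref{thm:dual_embeddedgraph}), $\ell_{\mathcal{G}}(C)\in\mathbb{Z}$ for every $C\in\Curves^+(S)$. The one point to watch is that the minimizer really is an edge path, rather than a path stopping partway along an edge. This holds because any closed curve in a graph is homotopic, within the graph and without increasing length, to a reduced cyclic edge path. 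For multi-curves, additivity reduces the claim to connected components.

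Third, we invoke Corollary~\ref{cor:int_multicurve}. It says that $\mu_{\mathcal{G}}=\frac12 M$ for an integral multi-curve $M$, and moreover that $[\vec M]=0$ in $\mathbb{Z}/2\mathbb{Z}$-homology. This gives the corollary. The only real content beyond earlier results is the integrality check, and that is routine. The main dependency, and thus the point I would double-check, is that the smoothing and additivity verification in \cite{MGT21:Smoothings} applies to arbitrary filling embedded graphs and not only to roses. If needed, I would re-verify smoothing directly: resolving an essential crossing of a minimal graph representative by cut-and-reglue at a shared vertex or edge produces a representative of the smoothed curve of no greater length.
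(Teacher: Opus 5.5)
Your proposal is correct and is essentially the paper's proof. The paper likewise notes that the edge-metric makes $\ell_{\mathcal{G}}$ integer-valued and then applies Corollary~\ref{cor:int_multicurve}, with the hypotheses of Theorem~\ref{thm:intersect} supplied by Theorem~\ref{thm:dual_embeddedgraph} via \cite[4.4]{MGT21:Smoothings}; your reduction to reduced cyclic edge paths simply spells out the integrality step that the paper states without comment.
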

\begin{proof}
Since $\mathcal{G}$ is equipped with the edge-metric, $\ell_{\mathcal{G}}(C) \in \mathbb{Z}$.
Hence, Corollary~\ref{cor:int_multicurve} implies the result.
\end{proof}

\begin{figure}
\centering{
\fontsize{9pt}{9pt}\selectfont%% Creator: Inkscape 1.3 (0e150ed6c4, 2023-07-21), www.inkscape.org
%% PDF/EPS/PS + LaTeX output extension by Johan Engelen, 2010
%% Accompanies image file 'embedded.pdf' (pdf, eps, ps)
%%
%% To include the image in your LaTeX document, write
%%   \input{<filename>.pdf_tex}
%%  instead of
%%   \includegraphics{<filename>.pdf}
%% To scale the image, write
%%   \def\svgwidth{<desired width>}
%%   \input{<filename>.pdf_tex}
%%  instead of
%%   \includegraphics[width=<desired width>]{<filename>.pdf}
%%
%% Images with a different path to the parent latex file can
%% be accessed with the `import' package (which may need to be
%% installed) using
%%   \usepackage{import}
%% in the preamble, and then including the image with
%%   \import{<path to file>}{<filename>.pdf_tex}
%% Alternatively, one can specify
%%   \graphicspath{{<path to file>/}}
%% 
%% For more information, please see info/svg-inkscape on CTAN:
%%   http://tug.ctan.org/tex-archive/info/svg-inkscape
%%
\begingroup%
  \makeatletter%
  \providecommand\color[2][]{%
    \errmessage{(Inkscape) Color is used for the text in Inkscape, but the package 'color.sty' is not loaded}%
    \renewcommand\color[2][]{}%
  }%
  \providecommand\transparent[1]{%
    \errmessage{(Inkscape) Transparency is used (non-zero) for the text in Inkscape, but the package 'transparent.sty' is not loaded}%
    \renewcommand\transparent[1]{}%
  }%
  \providecommand\rotatebox[2]{#2}%
  \newcommand*\fsize{\dimexpr\f@size pt\relax}%
  \newcommand*\lineheight[1]{\fontsize{\fsize}{#1\fsize}\selectfont}%
  \ifx\svgwidth\undefined%
    \setlength{\unitlength}{144.63555043bp}%
    \ifx\svgscale\undefined%
      \relax%
    \else%
      \setlength{\unitlength}{\unitlength * \real{\svgscale}}%
    \fi%
  \else%
    \setlength{\unitlength}{\svgwidth}%
  \fi%
  \global\let\svgwidth\undefined%
  \global\let\svgscale\undefined%
  \makeatother%
  \begin{picture}(1,0.98614068)%
    \lineheight{1}%
    \setlength\tabcolsep{0pt}%
    \put(0,0){\includegraphics[width=\unitlength,page=1]{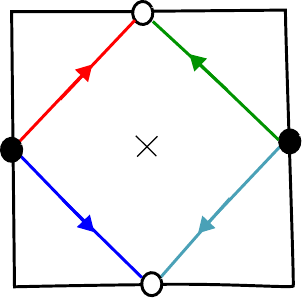}}%
    \put(0.32787682,0.60468678){\color[rgb]{1,0,0}\makebox(0,0)[t]{\lineheight{1.25}\smash{\begin{tabular}[t]{c}$r$\end{tabular}}}}%
    \put(0.65325921,0.60412696){\color[rgb]{0,0.57647059,0}\makebox(0,0)[t]{\lineheight{1.25}\smash{\begin{tabular}[t]{c}$g$\end{tabular}}}}%
    \put(0.66746065,0.34186161){\color[rgb]{0.29019608,0.63137255,0.71764706}\makebox(0,0)[t]{\lineheight{1.25}\smash{\begin{tabular}[t]{c}$c$\end{tabular}}}}%
    \put(0.30779456,0.32607775){\color[rgb]{0,0,1}\makebox(0,0)[t]{\lineheight{1.25}\smash{\begin{tabular}[t]{c}$b$\end{tabular}}}}%
  \end{picture}%
\endgroup%
}
\caption{Example of embedded graph on the torus which does not arise as a simple generating set.}
  \label{fig:embedded-graph}
\end{figure}

\begin{example}[Embedded graph but not simple generating set]
As an example of an embedded graph which does not arise as a simple generating set, consider a square punctured torus $S'$ with an embedded two-vertex graph with four edges, $r, g, c, b$ (for red, green, cyan, blue), see Figure~\ref{fig:embedded-graph}. Since it has more than one vertex, it does not arise from a simple generating set in the sense of Erlandsson. 
Indeed, we have
\[
[x]=[rB]=[gC] , \qquad [y]=[rG]=[bC]
\]
and thus
\[
\ell([x])=2, \qquad 
\ell([y])=2.
\]
Moreover,
\[
\ell([xy])=\ell([(rB)(bC)])=\ell([rC])=2,
\qquad
\ell([xY])=\ell([(gC)(cB)])=\ell([gB])=2.
\]

This phenomenon does not occur for any of the simple generating sets of $\pi_1(S') \simeq F_2$, namely
\[
\{ x^{\pm1},y^{\pm1} \}, \quad
\{ x^{\pm1},(xy)^{\pm1} \}, \quad
\{ x^{\pm1},(xY)^{\pm1} \}, \quad
\{ y^{\pm1},(xY)^{\pm1} \}, \quad
\{ y^{\pm1},(xY)^{\pm1} \},
\]
nor for the analogous generating sets obtained by acting diagonally by the mapping class group. 
Indeed, in none of these cases are $x$, $y$, $xy$, $xY$ the shortest
elements in $\pi_1$ with equal length.

Even though this example corresponds to a punctured surface and hence does not fall in the scope of our theorem, one can produce similar examples in genus 2 from this one.
\end{example}

We note that our result says nothing concrete about the dual multi-curve, whereas Erlandsson's result explicitly constructs it in the specific case of simple generating sets. It would be interesting to describe these multi-curves in general.

\subsection{Length metrics and intersection domination}
\label{subsec:ineqintersection}

 Let $\mu_1,\mu_2 \in \Curr(S)$. Say $\mu_1 \leq \mu_2$, or $\mu_2$ \emph{dominates} $\mu_1$, if $i(\mu_1,C) \leq i(\mu_2,C)$ for all $C \in \Curves^+(S)$.

 As an immediate application of Theorem~\ref{thm:intersect}, we investigate how inequalities between intersection numbers of curves constrain the dual currents associated with metrics on the surface.

For example, we answer positively the following question at the end of \cite{NC01:ShortestGeodesics}.

\begin{question}[Neumann-Coto, 2001]
Let $C_1,C_2$ be two multi-curves. If $C_2$ dominates $C_1$, does it follows that $\ell_g(C_1) \leq \ell_g(C_2)$ for all Riemannian metrics $g$ on $S$?
\end{question}

In fact, give a much stronger result (see Corollary~\ref{cor:domination} below).

First, we show that any length induced by a metric (in fact, by a pseudometric) on $S$ has a dual geodesic current.

We first recall the definition of a length induced by a pseudometric on $S$. See~\cite[Chapter~I.1]{BH11:NonPosCurvature} for a definition in the context of metrics. A \emph{pseudometric space} is a pair $(X,d)$, where $X$ is a set and $d \colon X \times X \to \mathbb{R}_{\ge 0}$ is a function satisfying, for all $x,y,z \in X$,
\[
d(x,x)=0,\quad d(x,y)=d(y,x),\quad d(x,z)\le d(x,y)+d(y,z).
\]
 The \emph{$d$-length} $\ell(c)$ of a continuous path $c \colon [a,b] \to X$
is defined as
\begin{equation}
\ell(c) \coloneqq \sup_{a=t_0 \leq t_1 \leq \cdots \leq t_n=b} \sum_{i=0}^{n-1} d(c(t_i),c(t_{i+1})),
\label{eq:length_of_path}
\end{equation}
where the supremum is taken over all possible partitions of $[a,b]$ (no bound on $n$), with
$a=t_0 \leq t_1 \leq \cdots t_n = b$.
The length of a path $c$ is either a non-negative number or it is infinite.

Let $\mathcal{P} = (t_0, \dots, t_n)$ with $t_0 \leq \cdots \leq t_n$ be an arbitrary partition of $I$, and let $\mathcal{P}' = (t_0', \dots, t_m')$ be a refinement of the partition $\mathcal{P}$. By the triangle inequality, for every such partition we have
\[
\sum_{i=0}^{n-1} d(\gamma(t_i), \gamma(t_{i+1}))
\;\leq\;
\sum_{j=0}^{m-1} d(\gamma(t_j'), \gamma(t_{j+1}')).
\]

We say
a concrete curve $c$ is \emph{$d$-rectifiable} if its $d$-length is finite.
Consider now a pseudometric $(X,d)$ on $S$.
For a curve $C$ on $S$, we define the \emph{$d$-length} of $C$ as
\[
\ell_d(C) \coloneqq \inf_{\gamma \in C} \ell_d(\gamma)
\]
where $\gamma$ runs over all the concrete curves in the equivalence class of $C$.
We extend this definition of multi-curves linearly.
We call this the \emph{length induced by $d$}.
If there exists at least a $d$-rectifiable concrete curve representative in curve $C$, then $\ell_d$ induces a curve functional. This is the case, for example, when  $(X,d)$ is a \emph{length pseudometric} on $S$, i.e., 
$d$ is given by the infimum of lengths of paths~\cite[Chapter~I.2, Definition~3.1]{BH11:NonPosCurvature}. Precisely, this means
\[
d(x,y) = \inf_{\gamma} \{ \ell_d(\gamma) : \gamma(0)=x, \gamma(1)=y, \gamma : [0,1] \to X \}.
\]
After these preliminaries, we can state and prove the following corollary of Theorem~\ref{thm:intersect}.
\begin{corollary}
    Let $d$ be a pseudometric on $S$, with induced length on curves $\ell_d$ inducing a curve functional. Then there exists a geodesic current $\mu_d$ dual to $\ell_d$.
    \label{cor:length_dual}
\end{corollary}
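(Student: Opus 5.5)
The plan is to check that $\ell_d$ satisfies the hypotheses of Theorem~\ref{thm:intersect}, namely additivity and smoothing, and then apply the theorem. Additivity holds by construction, since $\ell_d$ is defined on multi-curves by linear extension of its values on connected curves. So the substance is smoothing: for an unoriented multi-curve $C$ with an essential crossing and either smoothing $C'$, we need $\ell_d(C)\ge \ell_d(C')$.

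For smoothing, fix $\epsilon>0$ and a concrete representative $\gamma$ of $C$ with $\ell_d(\gamma)<\ell_d(C)+\epsilon$. The main step is to find a representative $\gamma$ that is nearly $d$-length minimizing and \emph{also} has a crossing realizing the given essential crossing. By Lemma~\ref{lem:essential-cross}, the smoothings of an essential crossing are determined by homotopy data. Equivalently, by Proposition~\ref{prop:crossings-triality}, they are determined by the group-theoretic configuration of a pair $a,b$ with $C=[ab]$ or $C=[a][b]$. I would argue as follows. Choose lifts to $\wt S$ (with the lifted pseudometric) of the relevant components of $\gamma$, corresponding to the two branches at the crossing, i.e.\ to the pair of axes $\ora{a},\ora{b}$. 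These lifts are quasi-geodesic, so they have the same endpoints at infinity as $\ora{a}$ and $\ora{b}$. Because those endpoints are linked in the crossing case, a planar (Jordan curve) argument shows the two lifted paths must meet at some point $p\in\wt S$. In the self-crossing (parallel) case, use the linked lifts $\ora{ab}$ and $\ora{ba}$ instead. Projecting $p$ gives a point of $S$ where $\gamma$ passes twice, at parameters $(s,t)$. Cutting $X(\gamma)$ at $s,t$ and regluing in either of the two ways yields a concrete multi-curve $\gamma'$ with exactly the same image pieces, so $\ell_d(\gamma')=\ell_d(\gamma)$, because the length \eqref{eq:length_of_path} is additive under concatenation of paths. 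One then checks, again from the lift picture and the triality, that $\gamma'$ lies in the homotopy class of the prescribed smoothing $C'$ (one regluing gives $[ab]$ or $[a][b]$, the other gives $[aB]$). This yields $\ell_d(C')\le \ell_d(\gamma')<\ell_d(C)+\epsilon$, and letting $\epsilon\to0$ proves smoothing.

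The main obstacle is this identification step. The intersection point found on the lifts must correspond to the \emph{given} essential crossing, and the resulting regluing must give the right homotopy classes. The representative $\gamma$ need not be transverse or minimal, so Lemma~\ref{lem:essential-cross} does not apply verbatim. I would handle this by working entirely with lifts and the double-coset description \eqref{eq:double_coset}: the pair of lifts is chosen according to the crossing, and the regluing at $p$ is read off in $\pi_1$ directly as $[ab]$, $[aB]$ or $[a][b]$, so minimality of $\gamma$ is never needed. Power self-crossings (Proposition~\ref{prop:crossings-powers}) are treated the same way, using lifts $\ora{a}$ and $a^k$-translates of the path. Degenerate cases are harmless: $d$ is only a pseudometric, so some curves may have length zero, but nothing above requires positivity. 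Finally, the standing hypothesis that $\ell_d$ is a curve functional (finite on every class) ensures that near-minimizers exist, and Theorem~\ref{thm:intersect} then gives $\mu_d$.
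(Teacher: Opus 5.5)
Your outline is the paper's: additivity by construction, then smoothing via a near-minimizer $\gamma$ carrying a crossing of the right type, regluing without changing $d$-length, letting $\epsilon\to 0$, and applying Theorem~\ref{thm:intersect}. The gap is in your substitute for the step that puts the right crossing on $\gamma$.

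For a crossing between two \emph{different} components your lift argument is fine. If the chosen lifts have stabilizers generated by $a$ and $b$, every intersection point of them reglues to $[ab]$ and $[aB]$. For a self-crossing this fails, because the pair of lifts does not determine the smoothing. Let $\wt{\gamma}$ be the $ab$-equivariant lift, of period $T$, and let $p=\wt{\gamma}(s)=A\cdot\wt{\gamma}(t_0)$ be any point of $\wt{\gamma}\cap A\cdot\wt{\gamma}$. Writing $t=t_0+kT\in(s,s+T)$ gives $\wt{\gamma}(t)=(ab)^k a\cdot\wt{\gamma}(s)$, with $k\in\bbZ$ depending on $p$. Regluing at $p$ then gives $[(ab)^k a]\,[A(ab)^{1-k}]$, which is $[a][b]$ only for $k=0$; for $k=1$ it is $[aba][A]$. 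Since $((ab)^k a)^{-1}\cdot\wt{\gamma}=A\cdot\wt{\gamma}$ as sets for every $k$, the lift and double-coset data \eqref{eq:double_coset} cannot single out $k=0$.

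Points with $k\neq 0$ do occur on these two lifts for non-minimal representatives; a suitable finger move on the geodesic representative creates them. For a degenerate pseudometric (e.g.\ $d\equiv 0$) such a curve can be your near-minimizer. So the point supplied by the Jordan curve argument need not give the prescribed $C'$. Power crossings are worse: $\wt{\gamma}$ and $a^j\cdot\wt{\gamma}$ have the same endpoints $a^\pm$, so linking yields no intersection at all. Yet power smoothing is needed, since it feeds stability in the proof of Theorem~\ref{thm:intersect}.

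The fix is the lemma you set aside. Lemma~\ref{lem:essential-cross} requires minimality only of the reference curve. The other curve may be an arbitrary homotopic concrete multi-curve, since a crossing is just a pair of parameters with equal image and no transversality is needed. Apply it to a minimal representative containing $x$ and to your near-minimizer. It gives a crossing of the near-minimizer whose two smoothings are exactly those of $x$, power crossings included. This is precisely the step the paper's proof takes when it asserts that a near-minimizer with a crossing of the same homotopy type as $x$ exists. With it, the rest of your argument (equal $d$-length after regluing, $\epsilon\to 0$, Theorem~\ref{thm:intersect}) is the paper's proof.

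If you prefer to keep the lift picture, you must prove that a point with $k=0$ exists. One way is to show that the signed count of such points is invariant under generic homotopy; it equals $\pm1$ for the geodesic. You would then handle arbitrary continuous $\gamma$ by $C^0$-approximation and compactness. The limiting cases $t-s\in\{0,T\}$ are excluded because deck transformations act freely.
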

\begin{proof}
Additivity is satisfied by definition of $\ell_d$ on multi-curves. It is clear that $\ell_d$ is a symmetric curve functional, since $d$ is symmetric.
The key point is to check that $\ell_d$ satisfies smoothing. 
Let $C$ be a multicurve with an essential crossing $x$, and let $C'$ be obtained by smoothing $C$ at $x$.

Fix $\epsilon>0$. Since $\ell_d$ is a curve functional, $\ell_d(C)<\infty$ for every multi-curve $C$. By definition of $\ell_d(C)$, there exists a concrete representative $\gamma$ of $C$ with an essential crossing $x'$ of the same homotopy type as $x$ such that
\[
\ell_d(\gamma) \le \ell_d(C) + \epsilon.
\]
In particular, $\gamma$ is $d$-rectifiable.
Let $\gamma' \in C'$ be the concrete multi-curve obtained by smoothing $\gamma$ at $x'$.

By abuse of notation, let $\gamma \colon I \to X$ be a continuous $d$-rectifiable concrete curve parameterizing the concrete multi-curve $\gamma$ where $I$ is a finite union of intervals. Let $t' \neq t''$ be points in (possibly different components of) $I$ such that $\gamma(t') = \gamma(t'') = x'$ correspond to the essential crossing. Let $\gamma' \colon I' \to X$ be a continuous parametrization of the $d$-rectifiable concrete curve $\gamma'$ such that
\[
\gamma(I) = \gamma'(I')
\]
and $I'$ is another finite union of intervals.

Let $\mathcal{P}'$ be the refinement of $\mathcal{P}$ obtained by inserting $t'$ and $t''$. 

Taking the supremum over all partitions, we obtain
\begin{align*}
\ell_d(\gamma)= \ell_d(\gamma').
\end{align*}

By definition of $\ell_d(C')$ as an infimum over representatives of $C'$, we have
\[
\ell_d(C') \le \ell_d(\gamma') = \ell_d(\gamma) \le \ell_d(C) + \epsilon.
\]
Since $\epsilon>0$ was arbitrary, it follows that
\[
\ell_d(C') \le \ell_d(C),
\]
so $\ell_d$ satisfies smoothing.
Thus, by Theorem~\ref{thm:intersect}, there exists a geodesic current $\mu_d$ dual to $\ell_d$.
\end{proof}

\begin{corollary}
Let $\mu_1, \mu_2$ be two geodesic currents. If $\mu_2$ dominates $\mu_1$, then
    $\ell_d(\mu_1) \leq \ell_d(\mu_2)$ for all length pseudometrics $d$ on $S$.
    \label{cor:domination}
\end{corollary}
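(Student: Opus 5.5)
By Corollary~\ref{cor:length_dual}, the length functional $\ell_d$ of any length pseudometric $d$ is dual to a geodesic current $\mu_d$. We interpret $\ell_d(\mu)$ for a general current $\mu$ as $i(\mu_d,\mu)$, the continuous homogeneous extension of $\ell_d$ from multi-curves to $\Curr(S)$ given by Bonahon's bilinear intersection form. This extension is unique: multi-curves are dense in $\Curr(S)$, and a continuous extension in the sense of Theorem~\ref{thm:convex} agrees with $i(\mu_d,\cdot)$ on multi-curves. With this interpretation the claim to prove is
\[
i(\mu_d,\mu_1)\le i(\mu_d,\mu_2).
\]

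The proof rests on symmetry of the intersection pairing. By hypothesis, $i(\mu_1,C)\le i(\mu_2,C)$ for every closed curve $C$, and hence, by linearity, for every weighted multi-curve. Weighted multi-curves are dense in $\Curr(S)$ \cite[Proposition~4.4]{Bonahon86:EndsHyperbolicManifolds}, so we choose weighted multi-curves $C_n\to\mu_d$ in the weak$^*$ topology. Continuity of $i$ in each variable gives
\[
i(\mu_1,\mu_d)=\lim_n i(\mu_1,C_n)\le \lim_n i(\mu_2,C_n)=i(\mu_2,\mu_d).
\]
Symmetry of $i$ then yields $i(\mu_d,\mu_1)\le i(\mu_d,\mu_2)$, that is, $\ell_d(\mu_1)\le\ell_d(\mu_2)$.

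Only one point needs care: the identification of "$\ell_d(\mu)$" with $i(\mu_d,\mu)$. This must be stated explicitly, and we must check that it matches any other natural extension, for example the one provided by \cite[Theorem~A]{MGT21:Smoothings}. Both extensions are continuous and agree on the dense set of weighted multi-curves, so they coincide. Everything else is a direct consequence of Theorem~\ref{thm:intersect} via Corollary~\ref{cor:length_dual}, together with bilinearity, continuity and symmetry of $i$. As a special case, taking $\mu_1,\mu_2$ to be multi-curves $C_1,C_2$ answers Neumann-Coto's question.
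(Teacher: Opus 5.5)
Your proposal is correct and follows the paper's proof. Both obtain $\mu_d$ from Corollary~\ref{cor:length_dual}, approximate it by weighted multi-curves, and pass the domination inequality to the limit using continuity (and symmetry) of the intersection pairing; your explicit identification of $\ell_d(\mu)$ with $i(\mu_d,\mu)$ is a useful clarification that the paper leaves implicit.
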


\begin{proof}
By Corollary~\ref{cor:length_dual}, for every length pseudometric $d$ on $S$, there exist a dual geodesic current $\mu_d$ for the induced curve functional $\ell_d$.
Approximating the current $\mu_d$ by weighted multi-curves and using continuity of intersection number on currents the result follows.
\end{proof}

We continue with some remarks.

\begin{remark}\label{rmk:pseudo_metrics}
Given a geodesic current $\mu$ and a choice of hyperbolic metric $\Sigma$ on $S$, there is an induced invariant pseudometric on $\wt{\Sigma}$ defined via
\[
d_\mu^\Sigma(x,y) = \frac{1}{2}\left(\mu(G[x,y)) + \mu(G(y,x])\right).
\]
See~\cite[Section~4]{BIPP21:Systoles} and~\cite{dRMG22:Duals}. This pseudometric is \emph{straight} (\cite[Proposition~4.1]{BIPP21:Systoles}) in the sense that if $x,y,z \in l$ for any $l \in G(\Sigma)$, and $x<y<z$ in the natural order, then
\[
d_\mu^\Sigma(x,z)=d_\mu^\Sigma(x,y)+d_\mu^\Sigma(y,z).
\]
(The $d_\mu^\Sigma$ pseudometric need not be continuous with respect to the topology of $\Sigma$.)
Hence, Corollary~\ref{cor:length_dual} shows that for every invariant
pseudometric $d$ on $\wt{\Sigma}$ inducing a curve functional, there
exists a straight invariant pseudometric $d_\mu^\Sigma$ on
$\wt{\Sigma}$ so that $\ell_{d_\mu^\Sigma}(C)=\ell_{d}(C)$ for every
closed curve $C$.
\end{remark}
\begin{remark}
Suppose $(X,d)$ is a length pseudometric structure on $S$. Let $\wt{d}$ be the lift of the metric $d$ to the universal cover $\wt{X}$ of $X$, where $\pi_1(X)$ acts by isometries. Then we have the equality~\cite[8.3.13]{BBI01:MetricGeometry}  \[\ell_d(C)=\inf_{x \in \wt{X}} \wt{d}(x, g x) \eqqcolon \tau_{\wt{d}}(g),\]
where $[g]=C$. The quantity $\tau_{\wt{d}}(g)$ is the \emph{$\wt{d}$-translation length} of~$g$.
From the fact that $\ell_d$ satisfies smoothing (Proposition~\ref{prop:power_disconnected_imply_stable}), it follows that $\ell_d(C^n)=n\ell_d(C)$, and hence,
\[
\ell_d(C)=\lim \frac{1}{n} \ell_d(C^n) = \lim \frac{1}{n} \inf_{x \in \wt{X}} \wt{d}(x, g^n x) = \lim \frac{1}{n} \wt{d}(x,g^n x) \eqqcolon L_{\wt{d}}(g)
\]
for every $x \in \wt{X}$, and every $g \in \pi_1(S)$ so that $[g]=C$.
The quantity $L_{\wt{d}}(g)$ is the \emph{$\wt{d}$-stable length} of~$g$.
The equality between translation length and stable length does not hold in general even in the setting of geodesic Gromov hyperbolic spaces~\cite[Proposition~6.4]{CDP90:Lectures}.
\end{remark}
\subsection{Measured laminations}

We will now characterize measured laminations among curve functionals.

We will need a relation about the measure of sets intersecting pairs of geodesic segments.

We introduce the following notation. Given two half-closed half-open
geodesics segments $[x,w)$ and $[y,z)$, let $\mu(G([x,w), [y,z)))$ be the set of bi-infinite geodesics in $G(S)$ intersecting $[x,w)$ and $[y,z)$ transversely. We will refer to sets of this type as \emph{double transversals}. Given a geodesic segment $[x,y)$, a geodesic $\gamma$ intersects $x$ transversely if $x\in\gamma$ and $[x,y) \nsubseteq \gamma$.

Let $x,y,z,w \in \wt{\Sigma}$ appear counter-clockwise as vertices of an
embedded convex geodesic quadrilateral on $\wt{\Sigma}$.

\begin{lemma}
\label{lem:otalclaim}
Given the setup described above, we have 
\begin{multline*}
\mu(G([x,w), [y,z))) + \mu(G((x,w], (y,z])) = \mu(G[x,z)) + \mu(G[w,y))- \mu(G([w,z)) - \mu(G([x,y)).
\end{multline*}
\end{lemma}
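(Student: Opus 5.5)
This is an inclusion–exclusion count over geodesics crossing the quadrilateral. The plan is to classify each geodesic that meets the quadrilateral $Q$ with vertices $x,y,z,w$ (counterclockwise) by which sides it crosses transversely, and then compare the two sides of the identity as weighted counts of these classes. The sides of $Q$ are $[x,y)$, $[y,z)$, $[z,w)$, $[w,x)$, with the half-open conventions as written. The diagonals are $[x,z)$ and $[w,y)$.

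First step: $Q$ is convex and embedded, so a geodesic not contained in a side line and meeting the interior of $Q$ crosses $\partial Q$ in exactly two points, on two distinct sides (or through vertices). It also crosses exactly those diagonals separating its two exit points. Up to the vertex bookkeeping, the pairs of opposite sides are $\{[x,w),[y,z)\}$ and $\{[x,y),[w,z)\}$, and the pairs of adjacent sides are the four corners. For each class, record its membership in the six sets $G[x,z)$, $G[w,y)$, $G[w,z)$, $G[x,y)$, $G([x,w),[y,z))$, $G((x,w],(y,z])$, and verify that the signed indicator (right minus left) vanishes.

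Second step, the interior check:
\begin{itemize}
\item A geodesic crossing $[x,w)$ and $[y,z)$ crosses both diagonals and neither of $[w,z)$, $[x,y)$. Its right-hand contribution is $1+1=2$. Its left-hand contribution is also $2$, since it lies in both double-transversal sets (generically the open/closed ends do not matter).
\item A geodesic crossing $[x,y)$ and $[w,z)$ also crosses both diagonals, giving right-hand contribution $1+1-1-1=0$, and the left-hand contribution is $0$.
\item A geodesic cutting off a corner, say vertex $y$ (crossing $[x,y)$ and $[y,z)$), crosses only the diagonal $[w,y)$. Its right-hand contribution is $1-1=0$, and the left-hand contribution is $0$.
\end{itemize}
The other corners are analogous. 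Integrating these indicator identities against $\mu$ gives the equality up to the measure of geodesics passing through vertices.

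Main obstacle: the geodesics through the vertices $x,y,z,w$, where the half-open conventions matter. The two double-transversal terms with opposite endpoint conventions, $[x,w),[y,z)$ versus $(x,w],(y,z]$, are presumably designed exactly to make these degenerate classes balance. I would enumerate the geodesics through each single vertex, and through pairs of vertices such as the diagonals themselves, and check each against the conventions, using that "transverse at an endpoint" means $p\in\gamma$ and the segment is not contained in $\gamma$.

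If this bookkeeping becomes unwieldy, the fallback is to note that pencils through a point of $\wt\Sigma$ have measure zero unless they contain an atom (cf.\ Proposition~\ref{prop:spikes}). That handles the general case, but for atomic currents the explicit case check is unavoidable, and I would do it vertex by vertex.
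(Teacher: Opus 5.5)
Your proposal is correct. The paper does not prove this lemma itself: its proof is just the citation \cite[Lemma~6.2]{dRMG22:Duals}. Your inclusion--exclusion is self-contained, and it in fact proves more, namely the pointwise identity
\[
\mathbf{1}_{G([x,w),[y,z))}+\mathbf{1}_{G((x,w],(y,z])}=\mathbf{1}_{G[x,z)}+\mathbf{1}_{G[w,y)}-\mathbf{1}_{G[w,z)}-\mathbf{1}_{G[x,y)}
\]
on $G(S)$. Membership in all six sets is independent of orientation, so it suffices to check unoriented lines. Consequently the lemma holds for every Borel measure that is finite on these relatively compact sets. It uses neither $\pi_1(S)$-invariance nor any information about atoms.

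The degenerate cases you postpone do close up. This uses the paper's convention that a geodesic meets a half-open segment transversely at its closed endpoint exactly when it passes through that endpoint without containing the segment, and does not meet it at the open endpoint. The side lines work as follows:
\begin{itemize}
\item the line through $x$ and $y$ lies in $G([x,w),[y,z))$ and $G[x,z)$, and in none of the other four sets;
\item the line through $z$ and $w$ lies only in $G((x,w],(y,z])$ and $G[w,y)$;
\item the line through $w$ and $x$ lies in $G[x,z)$, $G[w,y)$, $G[w,z)$, $G[x,y)$, and in neither double transversal;
\item the line through $y$ and $z$ lies in none of the six sets.
\end{itemize}
The two diagonal lines check the same way. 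So do the lines through a single vertex, whether they enter the interior toward either non-adjacent side or meet $Q$ only at that vertex. For example, a line through $z$ and an interior point of $[x,w)$ lies exactly in $G((x,w],(y,z])$ and $G[w,y)$. A line through $y$ and an interior point of $[x,w)$ lies exactly in $G([x,w),[y,z))$ and $G[x,z)$. This confirms your guess that the two opposite endpoint conventions in the double transversals are what balance the side lines through $x,y$ and through $z,w$.

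So simply carry out this finite check and drop the fallback. Proposition~\ref{prop:spikes} concerns pencils $P(x)$ of points $x\in\partial_\infty S$, not the set of geodesics through a point of $\wt\Sigma$, so it does not apply as cited. In any case, no measure-zero statement is needed once the identity holds pointwise.
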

\begin{proof}
This is~\cite[Lemma~6.2]{dRMG22:Duals}.
\end{proof}
We also need the following hyperbolic geometry lemma, about \emph{hyperbolic parallelograms} (Appendix~\ref{sec:hyp-par}). It can be seen as a refinement of~\cite[Lemma~5.6]{NCS23:Property}.

Consider Figure~\ref{fig:smoothing_1}.

\begin{proposition}
Consider two elements $a, b \in \pi_1(S)$ with hyperbolic axes $\ora{a}$ and $\ora{b}$ that intersect at a point $O$, and translation lengths $\ell(a),\ell(b)$.
We define the following four points:

\begin{itemize} \item $P$ is the point on $\ora{a}$ obtained by moving forward from $O$ by distance $\ell(a)/2$;
\item $Q$ is the point on $\ora{b}$ obtained by moving backward from $O$ by distance $\ell(b)/2$;
\item $Q'$ is the point on $\ora{b}$ obtained by moving forward from $O$ by distance $\ell(b)/2$;
\item $P'$ is the point on $\ora{a}$ obtained by moving backward from $O$ by distance $\ell(a)/2$.
\end{itemize}
Then:
\begin{enumerate}
\item The segment $(Q,P]$ is on the line $\ora{ab}$ and has length $\ell(ba)/2$.
\item The segment $(P',Q']$ is on the line $\ora{ba}$ and has length $\ell(ba)/2$.
\item The segment $(Q',P]$ is on the line $\ora{bA}$ and has length $\ell(bA)/2$.
\item The segment $(P',Q]$ is on the line $\ora{Ab}$ and has length $\ell(Ab)/2$.
\end{enumerate}
Furthermore, $G((Q,P])\cup a\cdot
G((P',Q'])$ is a Borel fundamental domain for the
action of $\langle ab \rangle$ on $G(\ora{ab})$.
Similarly, $G((P,Q']) \cup b \cdot G((Q,P'])$ is a Borel fundamental domain for
 the action of $\langle bA\rangle$ on $G(\ora{bA})$.
\label{prop:keygeometry}
\end{proposition}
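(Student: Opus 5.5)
The plan is to write every element involved as a product of two half-turns in $\Isom^+(\bbH^2)$. For a point $X \in \wt\Sigma$, let $\sigma_X$ denote the rotation by $\pi$ about $X$. The basic fact I will use is standard: if $X \neq Y$, then $\sigma_Y\sigma_X$ is the hyperbolic translation along the line through $X$ and $Y$, in the direction from $X$ to $Y$, with translation length $2d(X,Y)$.

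The first step is to express $a$ and $b$ in this form. Since $d(O,P)=\ell(a)/2$ and $P$ lies forward of $O$ on $\ora{a}$, we get $a=\sigma_P\sigma_O$. Because $d(P',O)=\ell(a)/2$ as well, also $a=\sigma_O\sigma_{P'}$, and hence $A=\sigma_O\sigma_P=\sigma_{P'}\sigma_O$. In the same way, $b=\sigma_O\sigma_Q=\sigma_{Q'}\sigma_O$.

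Next I multiply, choosing the factorization in each case so that the two copies of $\sigma_O$ cancel, using $\sigma_O^2=1$:
\[
ab=\sigma_P\sigma_O\,\sigma_O\sigma_Q=\sigma_P\sigma_Q,\qquad
ba=\sigma_{Q'}\sigma_O\,\sigma_O\sigma_{P'}=\sigma_{Q'}\sigma_{P'},
\]
\[
bA=\sigma_{Q'}\sigma_O\,\sigma_O\sigma_P=\sigma_{Q'}\sigma_P,\qquad
Ab=\sigma_{P'}\sigma_O\,\sigma_O\sigma_Q=\sigma_{P'}\sigma_Q.
\]
The basic fact then gives items (1)--(4) at once. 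For example, $\ora{ab}$ is the line through $Q$ and $P$, oriented from $Q$ to $P$, and $\ell(ab)=2d(Q,P)$. Since $ab$ and $ba$ are conjugate, $\ell(ab)=\ell(ba)$, which matches the length written in item (1). The points $Q\neq P$, $P'\neq Q'$ and so on are distinct because $\ora{a}$ and $\ora{b}$ are distinct lines meeting only at $O$.

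For the fundamental domain of $\langle ab\rangle$, I compute images under $a$. We have $a\cdot P'=P$. Also $a\cdot Q'=\sigma_P\sigma_O Q'=\sigma_P Q$, because $\sigma_O$ swaps $Q$ and $Q'$. On the other hand $ab\cdot Q=\sigma_P\sigma_Q Q=\sigma_P Q$. Hence $a\cdot(P',Q']=(P,\,ab\cdot Q]$. This lies on $\ora{ab}$, since it is the reflection of $(Q,P]$ through $P$, and the reflected segment stays on the line through $Q$ and $P$. Therefore
\[
(Q,P]\cup a\cdot(P',Q'] = (Q,\,ab\cdot Q],
\]
which is a half-open segment of length exactly $\ell(ab)$ along $\ora{ab}$.

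The bA case is identical. Here $b\cdot Q=\sigma_{Q'}\sigma_O Q=Q'$ and $b\cdot P'=\sigma_{Q'}\sigma_O P'=\sigma_{Q'}P=bA\cdot P$. This gives $(P,Q']\cup b\cdot(Q,P']=(P,\,bA\cdot P]$.

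The last step is the measure-theoretic bookkeeping, and I expect this to be the only delicate point. I need that $G(I\cup J)=G(I)\sqcup G(J)$ when $I$ and $J$ are adjacent half-open segments on the same line. A geodesic through the common point, or one containing the segment, must be counted exactly once. This holds with the convention that transversality is to the half-open segment, as in the definition of $G[p,q)$. I also need that for any $q\in\ora{g}$, the set $G((q,g\cdot q])$ is a Borel fundamental domain for $\langle g\rangle$ acting on the geodesics transverse to $\ora{g}$. This is the same observation used earlier for $G[q,y\cdot q)$, with the half-openness placed on the other end. Combining these with the identities above yields both fundamental-domain claims.
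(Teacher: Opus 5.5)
Your proof is correct and is essentially the paper's argument. Both factor $a=\sigma_P\sigma_O$ and $b=\sigma_O\sigma_Q$ (and the analogous factorizations) so that the half-turns about $O$ cancel, and both get the fundamental domain from $a\cdot(P',Q']=(P,R]$ with $R=a\cdot Q'$, using that $(Q,R]$ is a fundamental domain for $ab$ on its axis; you are more explicit than the paper, checking $a\cdot Q'=\sigma_P Q=ab\cdot Q$ so that $(Q,R]$ really has length $\ell(ab)$ (one tiny slip: a geodesic containing the segment is not transverse, so it is counted zero times on both sides, not once).
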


  \begin{figure}
\centering{
\fontsize{9pt}{9pt}\selectfont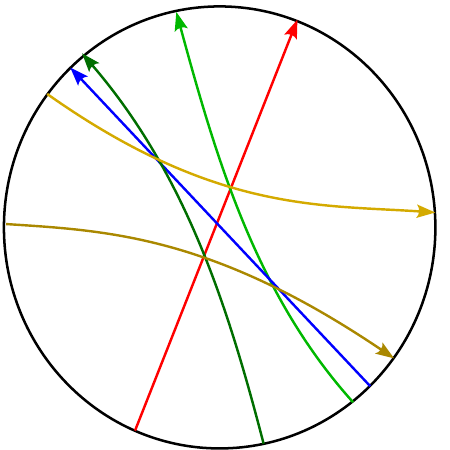}
\caption{Hyperbolic geometry of the smoothing between two distinct curves.}
\label{fig:smoothing_1}
\end{figure}

\begin{proof}
  We prove the first case.
  For $z$ any of the five points in the statement, let $r_z$ be
  the hyperbolic rotation by angle $\pi$ fixing~$z$. Then $a$, as
  acting on the hyperbolic plane, is $r_P r_O$, while $b = r_O r_Q$.
  (Recall that a composition $r_zr_w$ of two rotations by $\pi$ is a
  translation on the line $\overleftrightarrow{zw}$ by twice the
  distance between $z$ and~$w$.)
  It follows that $ab = r_P (r_O)^2 r_Q =
  r_Pr_Q$, from which the statement about $(Q,P]$
  follows. The other cases are similar.

For the assertion on fundamental domains, we note that the segment
$(P',Q']$ on the axis of $\ora{ba}$, is sent by $a$ to a segment
$(P,R]$, where $R=a(Q')$ appears forward of $P$ in according to the
orientation of $\ora{ab}$, and the segment $(Q,R]$ is a fundamental
domain for the action of $ab$ on its axis $\ora{ab}$. Thus $\{
\ora{ab} \} \times G((Q,R])= \{ \ora{ab} \} \times  (G((Q,P]) \cup a
\cdot G((P',Q']))$ is a fundamental domain for the action of $ab$ on
$\DG(\Sigma)$, as desired.
The other claim is similar.
\end{proof}

Using it, we characterize measured laminations among curve
functionals.
We start with a  lemma, that first requires defining some sets of interest.

Given elements $a, b \in \pi_1(S)$ which R-cross,
consider the hyperbolic parallelogram~$H$ as in Proposition~\ref{prop:keygeometry}, bounded by the vertices $Q,P,P',Q'$.
Consider the sets
\begin{itemize} \item $R_{a,b}^+ \coloneqq G([Q,P), [P', Q'))$, and $(R_{a,b}^{\perp})^+ \coloneqq G([P',Q) \times [Q', P))$, 
\item $R_{a,b}^- \coloneqq G((Q,P], (P', Q'])$, and $(R_{a,b}^{\perp})^- \coloneqq G((P',Q] \times (Q', P])$
\end{itemize}
consisting, respectively, of geodesics crossing from the segment of $H$ contained in $\ora{ab}$ to the open segment of $H$ contained in $\ora{ba}$, and from the segment of $H$ contained in $\ora{a^{-1}b}$ to the one in $\ora{ba^{-1}}$.
Consider also the boxes of geodesics:
\begin{itemize} \item $B_{a,b}^+ = (a^+, b^-] \times [a^-, b^+)$ and  $(B_{a,b}^{\perp})^+ = (b^-, a^-] \times [b^+, a^+)$, 
\item $B_{a,b}^- = [a^+, b^-) \times (a^-, b^+]$ and  $(B_{g,h}^{\perp})^- = [b^-, a^-) \times (b^+, a^+]$.
\end{itemize}
We have $R_{a,b}^{\pm} \subset B_{a,b}^{\pm}$ and $(R_{a,b}^{\perp})^{\pm}  \subset (B_{a,b}^{\perp})^{\pm} $.

We will resort to this setup several times in the proofs below. 
The following lemma  says that when taking large powers of $a$ and $b$, the corresponding rectangles approximate boxes of geodesics.

  \begin{figure}
\centering{
\fontsize{9pt}{9pt}\selectfont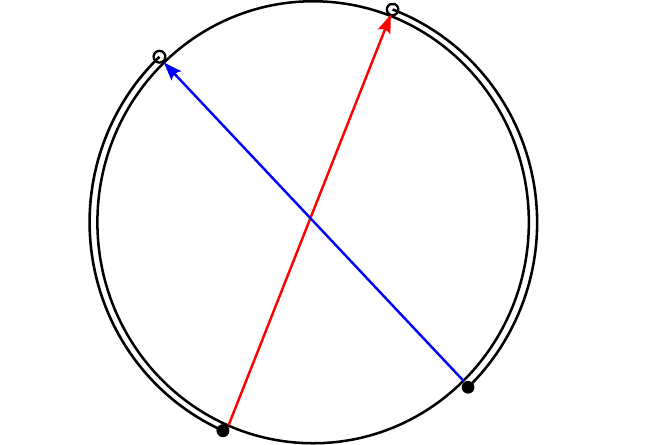}
\caption{Rectangle $R_3^+$ and box $B_{a,b}^+$.}
\label{fig:rectangles}
\end{figure}

\begin{lemma} 
Given elements $a, b \in \pi_1(S)$ which R-cross,
let $B_{a,b}^{\perp},(B_{a,b}^{\pm})^{\perp}, R_{a,b}^{\pm}, (R_{a,b}^{\pm})^\perp$ as above. Suppose $\mu$ is an arbitrary geodesic current.

For every $n \in \mathbb{N}$, let $R_n^{\pm}  \coloneqq (R_{a^n,b^n})^{\pm} $ and $(R_n^{\perp})^{\pm}  \coloneqq (R_{a^n,b^n}^{\perp})^{\pm}$.
Then:
\begin{enumerate} 
\item $R_n^{\pm}  \subset R_{n+1}^{\pm}$, and $\bigcup_{n \geq 1} R_n^{\pm}  = B_{a,b}^{\pm}$ and $(R_n^{\perp})^{\pm}  \subset (R_{n+1}^{\perp})^{\pm}$ and $\bigcup_{n \geq 1} (R_n^{\perp})^{\pm} = (B_{a,b}^{\perp})^{\pm}$.
\item We have $\lim_{n} \mu(R_n^{\pm}) \to \mu(B_{a,b}^{\pm})$ and $\lim_n \mu((R_n^{\perp})^{\pm})=\mu((B_{g,h}^{\perp})^{\pm})$.
\item  If we let $c_n=a^n b^{-n}$ and $d_n=a^n b^n$, we have 
\begin{equation}\label{eq:Rn-meas}
\mu(R_n^+)+\mu(R_n^-)=f(a^n) + f(b^n)-f(c_n),
\end{equation}
and
\begin{equation}\label{eq:Rn-perp-meas}
\mu((R_n^\perp)^{+}) + \mu((R_n^\perp)^{-})=f(a^n) + f(b^n) -f(d_n).
\end{equation}
\item We have $\mu(\partial B_{a,b})^{\pm})=0$ and $\mu(\partial (B_{a,b}^{\perp})^{\pm})=0$.
\end{enumerate}
\label{lem:measures_parallelogram}
\end{lemma}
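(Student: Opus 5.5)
I would prove the four items in order, treating each as a geometric statement about the parallelogram of Proposition~\ref{prop:keygeometry} applied to the R-crossing pair $(a^n,b^n)$, whose axes are those of $a$ and $b$ and meet at the same point $O$. Let $P_n, Q_n, P'_n, Q'_n$ be its vertices: they sit at distances $n\ell(a)/2$ and $n\ell(b)/2$ from $O$ along the two axes. So $P_n\to a^+$, $P'_n\to a^-$, $Q'_n\to b^+$, $Q_n\to b^-$ monotonically.

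For item (1), I would check nesting directly. A geodesic crossing both $[Q_n,P_n)$ and $[P'_n,Q'_n)$ separates $\{Q_n,P'_n\}$ from $\{P_n,Q'_n\}$ in the appropriate way. The parallelograms $H_n$ are nested and share the center $O$. So such a geodesic must also separate the corresponding vertices of $H_{n+1}$, and hence it crosses the corresponding sides of $H_{n+1}$. The main subtlety is the half-open endpoint conventions, which I would match with the half-open sides of $B^\pm_{a,b}$.

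For the union, I would argue as follows. A geodesic with endpoints in $(a^+,b^-]\times[a^-,b^+)$ separates $a^+$ and $b^+$ from $b^-$ and $a^-$, in the required cyclic pattern. Since the vertices converge to these ideal points, for large $n$ it separates the vertices of $H_n$ in the same way. It therefore crosses the two sides lying on $\ora{a^nb^n}$ and $\ora{b^na^n}$.

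Item (2) then follows from item (1) by continuity of measure from below. Item (4) follows from Proposition~\ref{prop:spikes}: the boundary of each box lies in the pencils $P(a^\pm)$ and $P(b^\pm)$. The only closed-geodesic lifts in these pencils are the axes $\ora{a}$ and $\ora{b}$ themselves, and these are not in the boundary sets in question, since each boundary face excludes the other endpoint. So the measure is zero.

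Item (3) is the computational heart, and I expect it to be the main obstacle. I would apply Lemma~\ref{lem:otalclaim} to the quadrilateral $H_n$, taking $x,y,z,w$ to be $Q_n,P'_n,Q'_n,P_n$ in counterclockwise order. This gives $\mu(R_n^+)+\mu(R_n^-)$ as a signed combination of the transversal measures of the two diagonals and two opposite sides. The diagonals are $[P'_n,P_n]\subset\ora{a}$ of length $n\ell(a)$ and $[Q_n,Q'_n]\subset\ora{b}$. Each is a fundamental segment for $a^n$, respectively $b^n$, so by Eq.~\eqref{eq:intersection_curve} their measures are $f(a^n)$ and $f(b^n)$.

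The two remaining sides lie on $\ora{A^nb^n}$ and $\ora{b^nA^n}$ (conjugate axes). By the fundamental-domain statement in Proposition~\ref{prop:keygeometry}, applied with $b^nA^n$, one side together with the $b$-translate of the other forms a fundamental domain for the axis of $c_n$. Using invariance of $\mu$, the subtracted terms therefore sum to $i(\mu,[c_n])=f(c_n)$. The same computation on the perpendicular pair of opposite sides, which lie on the axes of $a^nb^n$ and $b^na^n$, gives Eq.~\eqref{eq:Rn-perp-meas} with $d_n$.

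The delicate point is getting the orientation and labelling of the quadrilateral consistent with Lemma~\ref{lem:otalclaim}, that is, deciding which pair of sides is added and which is subtracted. I would fix this by comparing with Figure~\ref{fig:smoothing_1}, and, if needed, by swapping $b\leftrightarrow B$ using symmetry of $f$.
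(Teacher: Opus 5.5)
Your proposal is correct and follows the paper's own, much terser, proof item by item: nesting of the parallelograms along the axes for (1), continuity from below for (2), Lemma~\ref{lem:otalclaim} applied to $H_n$ together with the two fundamental-domain statements of Proposition~\ref{prop:keygeometry} for (3), and Proposition~\ref{prop:spikes} for (4). Two small corrections follow.

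\begin{itemize}
\item For an R-crossing pair the order $Q_n,P'_n,Q'_n,P_n$ is clockwise. Use $P'_n,Q_n,P_n,Q'_n$ instead, which puts exactly $R_n^+$ and $R_n^-$ on the left-hand side of Lemma~\ref{lem:otalclaim}. The signs are then forced, so no swap $b\leftrightarrow B$ is needed.
\item The half-open conventions in (1) cannot be matched exactly. A geodesic with an endpoint at $a^\pm$ or $b^\pm$ never meets both axes, so $\bigcup_n R_n^\pm$ is only the open box, and (2) really needs (4) as well. Read (4) as the nullity of the faces lying inside the half-open boxes, since the closed boxes contain lifts of $[a]$ and $[b]$ as corners. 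This looseness is already present in the statement.
\end{itemize}
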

\begin{proof}
Item (1) is by construction of $R_n^{\pm}$.
Item (2) follows by Item (1) and lower semi-continuity of measures.
Item (3) follows by Lemma~\ref{lem:otalclaim} and Proposition~\ref{prop:keygeometry}. Finally, Item (4) follows by Lemma~\ref{prop:spikes}, since the vertices of the boxes $(B_{a,b})^{\pm}$ and $(B_{a,b}^{\perp})^{\pm}$ are the endpoints of the elements $a,b$, and the geodesics $\ora{a},\ora{b}$ are not contained in either of these boxes.
\end{proof}

Recall that a geodesic current is a measured lamination if and only if for every open box of geodesics $B$, we have $\min \{\mu(B),\mu(B^{\perp})\}=0$ (see, e.g.,~\cite[Proposition~2.1]{BIPP24:PositiveCrossratios}).
We now prove  Theorem~\ref{mainthm:char_hyp}, whose statement we recall.

\begin{theorem}
Let $f \colon \pi_1(S) \to \mathbb{R}$ be a function and set $\overline{f}(g) \coloneqq \frac{1}{2}\bigl(f(g)+f(g^{-1})\bigr).$
Then there exists a measured lamination
$\mu_{\overline f}$ on $S$ such that
\[
i(\mu_{\overline f},[g])=\overline f(g)
\quad\text{for every } g\in \pi_1(S),
\]
if and only if $f$ satisfies the conditions of
Theorem~\ref{thm:intersections_group}
and disconnected max-smoothing:
 \begin{align}
    \label{eq:disconn-max-smoothing-a} f(a) + f(b) &= \max \{ f(ab),f(aB) \}  & \mbox{ if }  (a,b)\mbox{ are } \mbox{crossing}.
  \end{align}
\end{theorem}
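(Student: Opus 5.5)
By Theorem~\ref{thm:intersections_group}, under its hypotheses there is a unique geodesic current $\mu=\mu_{\overline f}$ with $i(\mu,[g])=\overline f(g)$. So the statement reduces to showing that, for this $\mu$, being a measured lamination is equivalent to disconnected max-smoothing~\eqref{eq:disconn-max-smoothing-a}. The tool for this is Lemma~\ref{lem:measures_parallelogram}. Throughout we first work with $\overline f$ and then translate back to $f$.

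\textbf{From max-smoothing to a lamination.} Assume~\eqref{eq:disconn-max-smoothing-a}. We use the criterion that $\mu$ is a measured lamination iff $\min\{\mu(B),\mu(B^\perp)\}=0$ for every open box $B$.
\begin{enumerate}
\item Symmetrize the hypothesis. Apply~\eqref{eq:disconn-max-smoothing-a} to the pairs $(a,b)$ and $(A,B)$ (which also cross) and add. Using conjugation invariance, $f(AB)=f((ba)^{-1})$ and $f(Ab)=f((Ba)^{-1})$, we aim to get $\overline f(a)+\overline f(b)=\max\{\overline f(ab),\overline f(aB)\}$. This step needs care, since a sum of maxima is not the max of sums. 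We will argue that the maximizing choice is the same for $(a,b)$ and $(A,B)$; failing that, the inequalities from smoothing may already force it.
\item Apply the symmetrized identity to the crossing pair $(a^n,b^n)$, with $c_n=a^nb^{-n}$ and $d_n=a^nb^n$. By~\eqref{eq:Rn-meas} and~\eqref{eq:Rn-perp-meas}, this says that for every $n$ one of $\mu(R_n^+)+\mu(R_n^-)$ and $\mu((R_n^\perp)^+)+\mu((R_n^\perp)^-)$ vanishes.
\item Both sequences increase in $n$. If one vanishes for infinitely many $n$, it vanishes for all $n$. Letting $n\to\infty$ and using items (2) and (4) of the lemma gives $\min\{\mu(B^\pm_{a,b}),\mu((B^\perp_{a,b})^\pm)\}=0$.
\item Every box with endpoints in $\pi_1(S)$-fixed points arises as some $B_{a,b}$ for a crossing pair, by the density argument of Proposition~\ref{prop:density}. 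An arbitrary open box can be exhausted by such boxes together with their complementary box, which gives the criterion.
\end{enumerate}
The main obstacle is Step 4: we must check that the exhausting boxes and their perps are nested coherently, so that the zero persists in the limit.

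\textbf{From a lamination to max-smoothing.} Assume $\mu$ is a measured lamination. Then $\mu(B_{a,b})$ or $\mu(B_{a,b}^\perp)$ vanishes. Since $R_n\subset B$, the corresponding expression in~\eqref{eq:Rn-meas} or~\eqref{eq:Rn-perp-meas} is $0$ for all $n$, so by stability $\overline f$ attains equality in disconnected smoothing. Combined with the smoothing inequality from Theorem~\ref{thm:intersections_group}, this yields the max identity for $\overline f$ at $(a,b)$ with $n=1$. Alternatively, we can cite~\cite{Thu00:GeometricIntersection}: laminations satisfy max-smoothing.

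\textbf{Returning to $f$.} The statement is phrased for $f$, not $\overline f$. We will interpret~\eqref{eq:disconn-max-smoothing-a} for $\overline f$; we expect this is the intended reading, since $\mu$ only sees $\overline f$.
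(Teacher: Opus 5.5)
Your overall route is the paper's. Theorem~\ref{thm:intersections_group} produces $\mu$, and then you use the rectangles $R_n$ of Lemma~\ref{lem:measures_parallelogram}, their monotonicity in $n$, passage to the boxes $B_{a,b}$, and exhaustion of an arbitrary open box. The nesting you flag as the main obstacle is routine: $B_n\uparrow B$, $B_n^\perp\downarrow\overline{B^\perp}$, and pigeonhole on which of $\mu(B_n)$, $\mu(B_n^\perp)$ vanishes. For the converse you use the box criterion plus the smoothing inequality. The paper instead argues contrapositively: failure of max-smoothing puts mass on both $R_{a,b}$ and $R^\perp_{a,b}$, whose geodesics pairwise cross, so $i(\mu,\mu)>0$. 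The two arguments are equivalent.

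The genuine gap is Step~1, which you leave open. Adding the identities for $(a,b)$ and $(A,B)$ only yields $\overline f(a)+\overline f(b)\ge\max\{\overline f(ab),\overline f(aB)\}$, which smoothing already gives. You give no argument that the two maximizers agree, and the smoothing inequalities point the wrong way to force it. The missing observation is that $(a,B)$ also cross. Max-smoothing at that pair reads $f(a)+f(B)=\max\{f(aB),f(ab)\}=f(a)+f(b)$. Since every non-trivial $b$ crosses some $a$, this gives $f(B)=f(b)$ for all $b$, so $f=\overline f$ and Step~1 becomes trivial.

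The same computation shows that your reinterpretation at the end is forced, not merely convenient. Let $\lambda$ be a measured lamination and $\phi\colon\pi_1(S)\to\mathbb{R}$ a nonzero homomorphism, and set $f=i(\lambda,\cdot)+\phi$. Then $f$ satisfies all hypotheses of Theorem~\ref{thm:intersections_group}, because $\phi$ is additive, class invariant and stable, so the inequalities reduce to those for $i(\lambda,\cdot)$. Also $\overline f=i(\lambda,\cdot)$. Yet $f(b)\ne f(B)$ whenever $\phi(b)\ne 0$, so $f$ violates~\eqref{eq:disconn-max-smoothing-a}. Hence the ``only if'' direction can only hold for $\overline f$, which is exactly what your converse (and the paper's) proves.

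Two smaller corrections:
\begin{itemize}
\item In Step~4, not every box with corners at fixed points is some $B_{a,b}$: two opposite corners must be the endpoint pairs of single axes. What you need is density of the pairs $(g^+,g^-)$ in $G(S)$, which is how the paper approximates. Proposition~\ref{prop:density} concerns RH boxes built from lifts of $x$ and is not the right tool.
\item In the converse, $n=1$ already suffices because $R_1^{\pm}\subset B^{\pm}_{a,b}$; stability plays no role.
\end{itemize}
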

\begin{proof}
Suppose first $f$ satisfies the conditions in the hypotheses.
By Theorem~\ref{thm:intersections_group}, there exists a geodesic current $\mu_f$ so that $f(C)=i(\mu_f,C)$ for all closed curves $C$.
It remains to show that $\mu_f$ is a measured lamination.
Given elements $a, b \in \pi_1(S)$ which R-cross,
consider the hyperbolic parallelogram~$H$ at the center of
Figure~\ref{fig:smoothing_1}, and the notation and setup of Lemma~\ref{lem:measures_parallelogram}.
By the hypothesis of max-smoothing, for every $n$, we have $\mu((R_n^\perp)^+)=\mu((R_n^\perp)^-)=0$ or $\mu(R_n^{+})=\mu(R_n^{-})=0$, by Eq.~\eqref{eq:Rn-meas} and Eq.~\eqref{eq:Rn-perp-meas}.
Since each sequence of sets $(R_n^{\pm})$ and $(R_n^{\perp})^{\pm}$ is nested, if $\mu(R_1^{\pm}) \neq 0$, then $\mu(R_n^{\pm}) \neq 0$ for all $n \geq 1$ and thus $\mu((R_n^{\perp})^{\pm})=0$ for all $n \geq 1$.
Hence, $\mu(B_{a,b}^{\pm})=0$ or $\mu((B_{a,b}^{\perp})^{\pm})=0$.
By density of endpoints of closed geodesics, these boxes have vertices on a dense set of points of $\partial_{\infty} S$.
The pairs of the diagonal endpoints of the family of boxes $B_{a,b}^{\pm}$ (as $a,b$ runs over crossing pairs in $\pi_1(S)$) are dense in the space of geodesics. Hence, we can find sequences of crossing pairs $a_n, b_n \in \pi_1(S)$ so that if $B_n^{\pm} \coloneqq B_{a_n, b_n}^{\pm}$, we have $B_n^{\pm} \subset B_{n+1}^{\pm}$, $(B_{n+1}^{\perp})^{\pm} \subset (B_{n}^{\perp})^{\pm}$, $\bigcup_n B_n^{\pm} = B$, and $\bigcap_n (B_n^{\perp})^{\pm} = B$.
Then, by lower and upper continuity of measures, respectively, we get $\lim_n \mu(B_n^{\pm}) =\mu(B)$ and $\lim_n \mu((B_n^{\perp})^{\pm}) = \mu(B^{\perp})$. Hence, $\mu(B)=0$ or $\mu(B^{\perp})=0$, and thus $\mu$ is a measured lamination.
Now we prove the other implication, by proving the contra-positive.
If disconnected max-smoothing is \emph{not} satisfied for the elements
$a,b$, then by Equations~\eqref{eq:Rn-meas}
and~\eqref{eq:Rn-perp-meas}, both:

\begin{itemize}
    \item $\mu(R_{a,b}^{+})>0$ or $\mu(R_{a,b}^{-})>0$;
    \item and
$\mu((R_{a,b}^{\perp})^{+})>0$ or $\mu((R_{a,b}^{\perp})^{-})>0$
\end{itemize} 
For every $\gamma \in R_{a,b}^+ \cup R_{a,b}^-$ and every $\gamma' \in (R_{a,b}^\perp)^+\cup (R_{a,b}^\perp)^-$, $\gamma$ and $\gamma'$ cross. Consequently, we have $i(\mu, \mu) >0$,
and hence $\mu$ is not a measured lamination.
\end{proof}

The above result shows that if one knows $f$ is dual to a geodesic
current, then it is only necessary to check disconnected max-smoothing
to conclude $f$ is a measured lamination, i.e., one does not have to
check essential \emph{self}-crossings of connected components of multi-curves. It is however true that intersection
number with a measured lamination also satisfies connected
max-smoothing.
We state this result along the lines of Theorem~\ref{thm:intersect}.

\begin{theorem}
A curve functional $f$ is dual to a measured lamination if and only if it satisfies additivity and the \emph{max-smoothing property}:
    For any essential crossing of a multi-curve $C$, we have
    \begin{equation}\label{eq:MLsmoothing}
      f\left(\mfig{curves-0}\right) = \max \left\{ f\left(\mfig{curves-1}\right),f\left(\mfig{curves-2}\right) \right\}.
    \end{equation}
    \label{prop:char_laminations_plus}
\end{theorem}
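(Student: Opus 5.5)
We prove the two directions separately. For \emph{sufficiency}, suppose $f$ is additive and satisfies max-smoothing~\eqref{eq:MLsmoothing}. Max-smoothing implies ordinary smoothing (Definition~\ref{def:smoothing_and_max}), so Theorem~\ref{thm:intersect} produces a unique geodesic current $\mu_f$ with $i(\mu_f,C)=f(C)$. Passing to the induced symmetric oriented functional $f^+$, Lemma~\ref{lem:smoothing_lemma_top_alg} shows that $f^+$ satisfies disconnected max-smoothing in the algebraic form
\[
f([a][b])=\max\{f([ab]),f([aB])\}
\]
for crossing pairs. It also satisfies the conditions of Theorem~\ref{thm:intersections_group}: class invariance is automatic, stability comes from Proposition~\ref{prop:power_disconnected_imply_stable}, and oriented smoothing follows from smoothing. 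Taking $f$ itself as the function on $\pi_1(S)$, we have $\overline{f}=f$ by symmetry. The measured lamination characterization just proved then implies that $\mu_f$ is a measured lamination. Notice that this direction only uses the disconnected half of the hypothesis.

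For \emph{necessity}, let $f=i(\lambda,\cdot)$ with $\lambda$ a measured lamination. Additivity is immediate. Smoothing holds for any current: see Appendix~\ref{app:necessary} or \cite[Section~4.1]{MGT21:Smoothings}. So it remains to prove the reverse inequality $f(C)\le\max\{f(C_1),f(C_2)\}$ at every essential crossing.

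At a disconnected crossing, this inequality is exactly the converse direction of the previous theorem. Let $a,b$ R-cross and use the parallelogram setup of Lemma~\ref{lem:measures_parallelogram}. Equations~\eqref{eq:Rn-meas} and~\eqref{eq:Rn-perp-meas} with $n=1$ express $f(a)+f(b)-f(aB)$ and $f(a)+f(b)-f(ab)$ as $\lambda$-masses of the rectangles $R^\pm_{a,b}$ and $(R^\perp_{a,b})^\pm$ (here we use symmetry to replace $f(Ab)$ by $f(aB)$). Every geodesic in $R_{a,b}^\pm$ crosses every geodesic in $(R^\perp_{a,b})^\pm$. Since $i(\lambda,\lambda)=0$, one of the two families must therefore be $\lambda$-null, so one of the two differences vanishes. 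Both differences are already $\ge0$ by smoothing, which gives disconnected max-smoothing.

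At a connected crossing, i.e.\ $[ab]$ with $(\ora{a},\ora{b})$ parallel, I would avoid redoing the geometry. The plan is to rerun the argument of Proposition~\ref{prop:cont_conn_disc}, Direction~1, with inequalities replaced by equalities. The function $f$ extends continuously to currents, is homogeneous, and satisfies disconnected max-smoothing; however, that argument only yields the inequalities in the direction smoothing already gives. So instead I would argue directly with the lamination. Realize $[ab]$ by a minimal representative with its self-crossing $p$, and put $\lambda$ in minimal position with respect to all three curves. A leaf of $\lambda$ crossing the two loops near $p$ can be tracked through the smoothing: in the smoothing where it is \emph{not} forced to double back, it contributes the same number of intersections. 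The point is that the leaves of $\lambda$ are pairwise disjoint, so all leaves passing through the small bigon-free neighbourhood of $p$ have the same ``type'', namely which of the two quadrant pairs they connect. Hence one single smoothing preserves all intersections with $\lambda$.

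I expect this to be the main obstacle. Leaves may also be far from $p$, and the argument must be made rigorous via lifts to $\wt S$. I would try to handle it by lifting. Take the two lifts of $[ab]$ through a lift of $p$, namely $\ora{ab}$ and $\ora{ba}$, which cross by Lemma~\ref{lem:righthandedfoundation}. Apply the same parallelogram identity (Lemma~\ref{lem:otalclaim}) to the quadrilateral they cut out with $\ora{a}$ and $\ora{b}$. This gives the decomposition
\[
2f(ab)-f(a)-f(b)-f(aB)=\text{(mass of rectangle)}+\text{(mass of perpendicular rectangle)},
\]
and $i(\lambda,\lambda)=0$ forces one of the two rectangles to have zero mass, which yields the required equality.
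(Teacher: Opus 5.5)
Your sufficiency direction and the disconnected half of necessity are the paper's argument: both reduce to the lamination part of Theorem~\ref{thm:hyp-lam-char}, i.e.\ the rectangles of Lemma~\ref{lem:measures_parallelogram} plus $i(\lambda,\lambda)=0$. The gap is connected max-smoothing in the necessity direction. The paper handles it by rerunning Direction~1 of Proposition~\ref{prop:cont_conn_disc} ``with equalities''. Your suspicion that this is not automatic is fair: disconnected max-smoothing at the base step only gives $2f([ab])=\max\{f([a^2b^2]),f([abAB])\}$, and the crossing subcase passes through $f([a^2])\ge 0$. But neither of your replacements closes the gap. The leaf-tracking argument proves the wrong inequality. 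A concrete representative of one smoothing $C'$ meeting $\lambda$ in $i(\lambda,C)$ points only shows $i(\lambda,C')\le i(\lambda,C)$, which is ordinary smoothing. What you need is $i(\lambda,C')\ge i(\lambda,C)$, i.e.\ that the surgered representative is still in minimal position with $\lambda$ (no bigons with leaves are created). The disjointness of leaves near $p$ says nothing about that.

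The lifted version already fails geometrically. By Lemma~\ref{lem:righthandedfoundation}\ref{item:ab-parallel}, $\ora{ab}$ and $\ora{ba}$ lie strictly between the parallel axes $\ora{a}$ and $\ora{b}$ and meet neither, so these four lines cut out no quadrilateral. In general no lift of $[a]$, $[b]$ or $[aB]$ meets any lift of $[ab]$ at all: take $a,b$ boundary curves of an embedded pair of pants, so that $[ab]$ is the figure eight and $[aB]$ the third boundary. So there is no geodesic quadrilateral with diagonals on lifts of $ab$ and sides on lifts of $a,b,aB$ to which Lemma~\ref{lem:otalclaim} could apply.

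Even a summed identity $2f(ab)-f(a)-f(b)-f(aB)=m_1+m_2$, with $m_1,m_2$ masses of mutually crossing families, would not suffice. Vanishing of one $m_i$ gives an equality only if each is separately identified with $f(ab)-f(a)-f(b)$ and $f(ab)-f(aB)$. That separate identification is what Lemma~\ref{lem:otalclaim} supplies in the crossing case, where each double transversal is the diagonals minus \emph{one} pair of opposite sides.

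A clean fix is the dual tree. For $\lambda\neq 0$, $i(\lambda,\cdot)$ is the translation length function of $T_\lambda$. Lemmas~\ref{lem:foundation_trees}, \ref{lem:pau_ell_hyp} and~\ref{lem:pau_two_ell} (the easy direction of Proposition~\ref{prop:max_twice}) say the maximum of $\|a\|+\|b\|$, $\|ab\|$, $\|aB\|$ is attained at least twice. For $(\ora{a},\ora{b})$ parallel, smoothing makes $\|ab\|$ that maximum, so $\|ab\|=\max\{\|a\|+\|b\|,\|aB\|\}$. Crossing pairs work the same way, and power crossings follow from stability.
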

\begin{proof}
Suppose $f$ is a curve functional satisfying max-smoothing and additivity. Max-smoothing clearly implies smoothing, as well as disconnected max-smoothing. Thus, by Theorem~\ref{thm:hyp-lam-char}, $f$ is dual to a measured lamination. On the other hand, if $f$ is dual to a measured lamination,  Theorem~\ref{thm:hyp-lam-char} implies disconnected max-smoothing. Connected max-smoothing follows from tracing through the proof of Proposition~\ref{prop:cont_conn_disc}, using the fact that $i(\mu, \cdot)$ is continuous, symmetric, additive and stable, and noting that if $f$ satisfies disconnected max-smoothing, all the inequalities become equalities in the proof of disconnected smoothing implies connected smoothing.
\end{proof}

The fact that measured laminations satisfy max-smoothing was proven by
the second author in~\cite{Thu00:GeometricIntersection} using
different techniques. 

\subsection{Liouville currents}
\label{subsec:liouville}
Bonahon gives an algebraic characterization for a geodesic current $\mu$ to be a Liouville current $\mathcal{L}_\Sigma$ for a marked hyperbolic structure $\Sigma \in \Teich(S)$, the \emph{Teichm\"uller space} of $S$.

\begin{theorem}[{\cite[Theorem~13]{Bonahon88:GeodesicCurrent}}]   \label{thm:bonahonliouville}
   A geodesic current $\mu \in \Curr(S)$ is $\mu=\mathcal{L}_{\Sigma}$ for some $\Sigma$ if and only if, for any four points $x,y,z,w \in \partial_{\infty} S$, we have
   \[
      e^{-\mu([x,y]\times[z,w])} + e^{-\mu([y,z]\times [w,x])}=1.
   \]
\end{theorem}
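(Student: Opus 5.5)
The plan is to go through cross-ratios on $\partial_\infty S$, using the fact that the Liouville measure of a box is the logarithm of a cross-ratio.

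\emph{Forward direction.} Identify $\wt{\Sigma}$ with the upper half-plane, so that $\partial_\infty S$ is identified with $\mathbb{R}\cup\{\infty\}$ through the boundary homeomorphism. In this model the Liouville current of $\Sigma$ is the measure $\frac{dp\,dq}{(p-q)^2}$ on pairs of distinct boundary points. Integrating it over a box gives, for $x,y,z,w$ in cyclic order,
\[
\mathcal{L}_\Sigma([x,y]\times[z,w])=\log\frac{(x-z)(y-w)}{(x-w)(y-z)},
\]
the logarithm of a cross-ratio; I will normalize signs so the argument is greater than $1$. Write $t$ for the reciprocal cross-ratio $\frac{(x-w)(y-z)}{(x-z)(y-w)}\in(0,1)$. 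The rotated box $[y,z]\times[w,x]$ then has exponentiated negative measure equal to the complementary cross-ratio. The classical identity $t+(1-t)=1$, equivalently
\[
(x-w)(y-z)+(x-y)(z-w)=(x-z)(y-w),
\]
then gives the claimed relation. The formula is Möbius-invariant, so the relation does not depend on the choice of model.

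\emph{Converse, positivity and continuity.} Let $\mu$ satisfy the identity and set $\beta(x,y,z,w)\coloneqq e^{-\mu([x,y]\times[z,w])}\in(0,1]$. First I show that every nondegenerate box has positive and finite measure. If $\mu([x,y]\times[z,w])=0$, the identity forces the rotated box to have infinite measure. But that box is relatively compact in $G(S)$ when the four points are distinct, so this contradicts local finiteness. Next, shrinking a box to a degenerate one sends its measure to $0$, and the identity then forces the rotated box's measure to $+\infty$. Combined with Proposition~\ref{prop:spikes}, this shows $\mu$ has no atoms and gives no mass to pencils. Hence $\beta$ is continuous, strictly monotone in each variable, and takes values in $(0,1)$ on nondegenerate quadruples. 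Additivity of $\mu$ on side-by-side boxes translates into multiplicativity of $\beta$, and the given identity is the additive relation; together these are exactly the axioms of the classical cross-ratio.

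\emph{Converse, the Möbius conjugacy (main obstacle).} Fix three points $x_0,y_0,z_0\in\partial_\infty S$ and define $\varphi\colon\partial_\infty S\to\mathbb{R}\cup\{\infty\}$ by sending $x_0,y_0,z_0$ to $0,1,\infty$ and $w$ to the unique real number whose standard cross-ratio with $0,1,\infty$ equals $\beta(x_0,y_0,z_0,w)$. By monotonicity and continuity, $\varphi$ is a homeomorphism. The key step, and the one I expect to be hardest, is showing that $\varphi$ carries $\beta$ to the standard cross-ratio on \emph{all} quadruples, not just those containing the three base points. For this I would combine the multiplicative relation (subdividing boxes) with the identity to express $\beta$ of an arbitrary quadruple through quadruples involving the base points, mirroring the algebraic derivation of cross-ratio cocycle identities. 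Once this holds, $\varphi_*\mu$ agrees with the hyperbolic Liouville measure on all boxes, hence everywhere. Moreover, $\pi_1(S)$-invariance of $\mu$ means each $\varphi g\varphi^{-1}$ preserves all standard cross-ratios, so it is a Möbius map. This gives a representation $\rho\colon\pi_1(S)\to\PSL(2,\mathbb{R})$ that is topologically conjugate to the boundary action of a Fuchsian group. It is therefore discrete and faithful with convex-cocompact (here cocompact) image, so it defines a marked structure $\Sigma\in\Teich(S)$ whose boundary identification is $\varphi$. It follows that $\mu=\mathcal{L}_\Sigma$.
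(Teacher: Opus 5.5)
The paper does not prove this statement; it quotes it from Bonahon. Your overall route is the standard one: normalize $\beta=e^{-\mu}$ to the $\mathbb{RP}^1$ cross-ratio by a boundary homeomorphism $\varphi$, then let $\pi_1(S)$-invariance make each $\varphi g\varphi^{-1}$ M\"obius. The forward direction, the positivity of nondegenerate boxes, and the final passage from a M\"obius conjugacy to a point of $\Teich(S)$ are fine. However, one step is circular, and the central step is only announced.

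\textbf{No atoms and null pencils.} You claim that shrinking a box to a degenerate one sends its measure to $0$. By continuity from above, this is exactly the statement $\mu(\{x\}\times[z,w])=0$, which is what you then conclude; nothing you have established at that point gives it. Here is one way to supply it using invariance.
\begin{itemize}
\item Atoms can only sit at lifts of closed geodesics, and by Proposition~\ref{prop:spikes} pencil mass produces such an atom. So it suffices to exclude an atom of mass $m\in(0,\infty)$ at some $\ora{\gamma}$.
\item Take boxes $[x,y]\times[z,w]$ with $x,\gamma^-,y,z,\gamma^+,w$ in counterclockwise order, shrinking to the atom. Their measures decrease to $m$.
\item By the identity, the rotated boxes $[y,z]\times[w,x]$ then have measures increasing to $-\log(1-e^{-m})\in(0,\infty)$. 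These rotated boxes increase to the set $U=(\gamma^-,\gamma^+)\times(\gamma^+,\gamma^-)$ of geodesics crossing $\ora{\gamma}$, so $\mu(U)=-\log(1-e^{-m})$.
\item But $U$ is $\gamma$-invariant and is the disjoint union of the $\gamma^k$-translates of a Borel fundamental domain, so $\mu(U)\in\{0,\infty\}$. This is a contradiction.
\end{itemize}
Once pencils are null, the same continuity argument applied to the identity shows that boxes whose two sides share an endpoint have infinite measure. You need this for $\varphi$ to be continuous and onto.

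\textbf{The conjugacy.} As written, $\beta(x_0,y_0,z_0,w)$ only makes sense for $w$ in the arc $(z_0,x_0)$. So $\varphi$ must also be defined on the other two arcs, e.g.\ via $[y_0,z_0]\times[x_0,w]$ for $w\in(x_0,y_0)$ and via $[z_0,x_0]\times[y_0,w]$ for $w\in(y_0,z_0)$.

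Your strategy for the key step works, and it can be made precise without explicit cross-ratio algebra. Let $\nu$ be the pullback by $\varphi$ of the Liouville measure. By construction, $\nu$ agrees with $\mu$ on the three one-parameter families of boxes having three corners at base points. Additivity, flip-invariance and the identity express every box measure through these families. For instance, for $b,c\in(y_0,z_0)$ in that order,
\[
e^{-\mu([x_0,b]\times[c,z_0])}=1-e^{\mu([y_0,b]\times[z_0,x_0])-\mu([y_0,c]\times[z_0,x_0])},
\]
and general boxes reduce to ones with corners at base points by subtracting boxes. Since $\nu$ satisfies the same relations, $\mu=\nu$ on all boxes, and hence everywhere. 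With these two repairs, the rest of your argument goes through.
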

Whether the endpoints of the intervals are included or excluded in the
above statement is immaterial, since it follows from the hypothesis that the current is non-atomic. 

We use this to prove the hyperbolic part of Theorem~\ref{thm:hyp-lam-char}.

\begin{theorem}
Let $f \colon \pi_1(S) \to \mathbb{R}$ be a function and set $\overline{f}(g) \coloneqq \frac{1}{2}\bigl(f(g)+f(g^{-1})\bigr).$
Then there exists a hyperbolic Liouville current
$\mu_{\overline f}$ on $S$ such that
\[
i(\mu_{\overline f},[g])=\overline f(g)
\quad\text{for every } g\in \pi_1(S),
\]
if and only if $f$ satisfies the conditions of
Theorem~\ref{thm:intersections_group}
and whenever $(\ora{a},\ora{b})$ cross, we have
 \begin{align}
    \label{eq:lambdasmoothing}  \lambda(a)\lambda(b)
= \lambda(ab)+\lambda(aB), \text{ where }
\lambda(g)
\coloneqq 2\cosh\!\bigl(f(g)/2\bigr).
  \end{align}
\end{theorem}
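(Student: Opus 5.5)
We follow the same pattern as the measured-lamination case: reduce via Theorem~\ref{thm:intersections_group} to a geodesic current $\mu=\mu_{\overline f}$, then use Bonahon's criterion, Theorem~\ref{thm:bonahonliouville}, with boxes approximated by the parallelogram rectangles of Lemma~\ref{lem:measures_parallelogram}.

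\emph{Sufficiency.} Assume the hypotheses, so $\mu$ exists with $i(\mu,[g])=\overline f(g)$. The condition \eqref{eq:lambdasmoothing} involves $f$, but replacing $(a,b)$ by $(A,B)$ (which also cross) and using conjugation invariance, it also holds for $\overline f$ in the form needed. Fix R-crossing $a,b$ and apply the condition to $(a^n,b^n)$. Writing $x_n=\mu(R_n^+)+\mu(R_n^-)$ and $y_n=\mu((R_n^\perp)^+)+\mu((R_n^\perp)^-)$, equations \eqref{eq:Rn-meas} and \eqref{eq:Rn-perp-meas} give
\[
f(c_n)=f(a^n)+f(b^n)-x_n,\qquad f(d_n)=f(a^n)+f(b^n)-y_n,
\]
and stability gives $f(a^n)=nf(a)$. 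Set $L_n=f(a^n)+f(b^n)\to\infty$. Dividing the identity $4\cosh(f(a^n)/2)\cosh(f(b^n)/2)=2\cosh(f(d_n)/2)+2\cosh(f(c_n)/2)$ by $e^{L_n/2}$, we get
\[
1+o(1)=e^{-y_n/2}+e^{-x_n/2}+o(1).
\]
Here we use that $f(c_n),f(d_n)\to\infty$, or at least that the $e^{-f/2}$ terms are negligible. Since $R_n^\pm$ increase to $B^\pm_{a,b}$, their boundaries are null, and the $\pm$ boxes have equal measure by flip invariance, we have $x_n\to 2\mu(B^+_{a,b})$. Letting $n\to\infty$ yields $e^{-\mu(B)}+e^{-\mu(B^\perp)}=1$ for every box with vertices at endpoints of crossing axes. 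By density of such vertices and monotone approximation, exactly as in the lamination proof, we extend this to all four-tuples. This also shows that $\mu$ has no atoms, since otherwise a limit along shrinking boxes would violate the identity. Theorem~\ref{thm:bonahonliouville} then gives $\mu=\mathcal L_\Sigma$.

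\emph{Necessity.} If $\overline f=\ell_\Sigma$ is a hyperbolic length, then $f$ satisfies the conditions of Theorem~\ref{thm:intersections_group}, since $\ell_\Sigma$ is dual to a current. The $\lambda$-identity is then the trace identity $\operatorname{tr}(A)\operatorname{tr}(B)=\operatorname{tr}(AB)+\operatorname{tr}(AB^{-1})$ in $\SL(2,\bbR)$, with $|\operatorname{tr}|=2\cosh(\ell/2)$. The only delicate point is the signs: for crossing axes we need lifts with all four traces positive, and we check this via the parallelogram of Proposition~\ref{prop:keygeometry}, or Appendix~\ref{sec:hyp-par}.

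\emph{Main obstacle.} The hard step is the asymptotic normalization in the sufficiency direction: justifying that the subleading terms vanish, with the factor $1/2$ matching the factor $2$ between $x_n$ and $\mu(B^+)$. We would handle this by arguing that $f(c_n)$ and $f(d_n)$ grow linearly, via stability and positivity of the translation lengths, and by tracking the constants carefully.
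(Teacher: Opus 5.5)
Your sufficiency argument follows the paper's. There is a genuine gap, though: the step you flag as the main obstacle is misdiagnosed, and the fix you propose ("positivity of the translation lengths") is not available. Dividing \eqref{eq:lambdasmoothing} for $(a^n,b^n)$ by $e^{L_n/2}$ gives exactly
\[
\bigl(1+e^{-nf(a)}\bigr)\bigl(1+e^{-nf(b)}\bigr)=e^{-x_n/2}+e^{-y_n/2}+e^{-(L_n+f(c_n))/2}+e^{-(L_n+f(d_n))/2}.
\]
Since $f=i(\mu,\cdot)\ge 0$, the last two terms are at most $e^{-L_n/2}$, so no growth of $f(c_n)$ or $f(d_n)$ is needed. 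What is needed is that the left side tends to $1$, i.e.\ $f(a)>0$ and $f(b)>0$, whereas the hypotheses only give $f\ge 0$.

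This is not a technicality. The function $f\equiv 0$ satisfies all conditions of Theorem~\ref{thm:intersections_group} and also \eqref{eq:lambdasmoothing}, since $2\cdot 2=2+2$. Its dual current is $0$, which is not a Liouville current. So you must assume $f\not\equiv 0$ and run the limit only over crossing pairs with $f(a),f(b)>0$. You then have to check that such pairs still give a dense family of boxes. The key observation: if $f(h)=0$ and $(\ora{h},\ora{c})$ cross, then \eqref{eq:lambdasmoothing} for $(h^m,c)$, together with $f(h^mc^{\pm1})\le f(c)$ from disconnected smoothing, forces $f(h^mc)=f(c)$ for all $m\ge1$. Hence $f(h^mch^m)=f(c)$, while $\ora{h^mch^m}\to\ora{h}$. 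The paper's proof glosses over the same point when it asserts $e^{A_n/2}\sim\lambda(a_n)$.

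Second, your reduction from $f$ to $\overline f$ via the pair $(A,B)$ does not work. It yields two identities in $f$, and averaging the exponents only gives $e^{-\mu(B)}+e^{-\mu(B^\perp)}\le 1$ by Cauchy--Schwarz, not equality. In fact the necessity direction fails for asymmetric $f$. Take $f=\ell_\Sigma+h$ with $h$ a small nonzero homomorphism $\pi_1(S)\to\bbR$. It satisfies the conditions of Theorem~\ref{thm:intersections_group} and has $\overline f=\ell_\Sigma$. Yet it violates \eqref{eq:lambdasmoothing} for $(a^n,b^n)$, $n\gg0$, whenever $h(b)\ne0$. So $f$ must be taken symmetric, as the paper tacitly does: it writes \eqref{eq:Rn-meas} with $f$ in place of $\overline f$.

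Two smaller points. First, $\mu(B^+_{a,b})=\mu(B^-_{a,b})$ does not follow from flip invariance; the flip sends $B^+_{a,b}$ to a box of reversed geodesics. It follows from Proposition~\ref{prop:spikes}, because $B^+_{a,b}\,\triangle\,B^-_{a,b}$ lies in the pencils of $a^\pm,b^\pm$ and misses the lifts of $[a]$ and $[b]$. Second, your trace-identity proof of necessity is a valid alternative to the paper's argument, which applies Lemma~\ref{lem:hyp_parallelogram} to the parallelogram $H$. Lifting the half-turn factorizations $a=r_Pr_O$ and $b=r_Or_Q=r_{Q'}r_O$ of Proposition~\ref{prop:keygeometry} to $\SL(2,\bbR)$ does give lifts with all four traces positive; the paper's route avoids lifts altogether.
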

\begin{proof}
The proof is similar to the lamination part.
If $\mu$ is hyperbolic Liouville current then smoothing, additivity, stability follow from the fact these are necessary conditions for a geodesic current, as shown in Appendix~\ref{prop:necessary}. The relation in Equation~\eqref{eq:lambdasmoothing} follows from the relation between the hyperbolic lengths of the sides of a hyperbolic parallelogram (see Lemma~\ref{lem:hyp_parallelogram}) applied to the hyperbolic parallelogram~$H$ at the center of
Figure~\ref{fig:smoothing_1}, and the fact that the hyperbolic Liouville current is the dual current for the hyperbolic length, by~\cite[Proposition~14]{Bonahon88:GeodesicCurrent}.

For the other direction, by the assumptions on $f$, there exists a
geodesic current $\mu_f$ so that $f(C)=i(\mu_f,C)$ for every closed
curve $C$.
It remains to show that $\mu_f$ is a Liouville current, which is where Equation~\eqref{eq:lambdasmoothing} will play a role.
Given elements $a, b \in \pi_1(S)$ which R-cross, consider the hyperbolic parallelogram~$H$ at the center of
Figure~\ref{fig:smoothing_1}, and the notation and setup of Lemma~\ref{lem:measures_parallelogram}.

Now, we compute
\[
e^{\frac{-\mu(R_n^+)-\mu(R_n^-)}{2}} + e^{\frac{-\mu((R_n^{\perp})^+)-\mu((R_n^{\perp})^-)}{2}} = e^{\frac{-(A_n+B_n)+C_n}{2}} + e^{\frac{-(A_n+B_n)+D_n}{2}} = e^{\frac{-(A_n+B_n)}{2}} (e^{\frac{C_n}{2}} + e^{\frac{D_n}{2}}).
\]
On the other hand, when $n$ goes to infinity, we have $e^{\frac{A_n}{2}} \sim \lambda(a_n)$, $e^{\frac{B_n}{2}} \sim \lambda(b_n)$, $e^{\frac{C_n}{2}} \sim \lambda(c_n)$ and $e^{\frac{D_n}{2}} \sim \lambda(d_n)$, where $\sim$ means that the ratio of the left hand side to the right hand side tends to 1 as $n$ tends to infinity. Applying Equation~\eqref{eq:lambdasmoothing} for $a_n, b_n$ we get
\[
\lambda(a_n)\lambda(b_n) = \lambda(c_n) + \lambda(d_n).
\]
A computation shows $\lambda(c_n) + \lambda(d_n) \sim e^{\frac{C_n}{2}}+e^{\frac{D_n}{2}}$, and hence we get
\[
e^{\frac{A_n}{2}}e^{\frac{B_n}{2}} \sim e^{\frac{C_n}{2}}+e^{\frac{D_n}{2}}.
\]
Therefore, we have
\[
\lim_{n\to\infty} e^{\frac{-\mu(R_n^+)-\mu(R_n^-)}{2}} + e^{\frac{-\mu((R_n^{\perp})^+)-\mu((R_n^{\perp})^-)}{2}}=1.
\]
By Lemma~\ref{lem:measures_parallelogram}(2) we also get $\lim_n \mu(R_n^{\pm})=\mu(B_{a,b}^{\pm})$ and $\lim_n \mu((R_n^{\perp})^{\pm})=\mu((B_{a,b}^{\perp})^{\pm})$.
 Letting $B_{a,b}$ and $B_{a,b}^\perp$ denote the corresponding open versions of these boxes, we obtain the relation
\[
e^{-\mu(B_{a,b})} + e^{-\mu(B_{a,b}^{\perp})} =1
\]
for any intersecting pair $a,b \in \pi_1(S)$.
As in the proof of the lamination part of Theorem~\ref{thm:hyp-lam-char}, we use the density of $B_{a,b}$ in the space of boxes of geodesics as $a,b$ run over intersecting pairs in $\pi_1(S)$. Hence, given any open box $B$, we can find, for every $n \in \mathbb{N}$ a sequence of intersecting pairs $a_n, b_n \in \pi_1(S)$ so that if we let $B_n \coloneqq B_{a_n, b_n}$, we have $\lim_n \mu(B_n) =\mu(B)$ and $\lim_n \mu(B_n^{\perp}) = \mu(B^\perp)$. Specifically,
\[
e^{-\mu(B)} + e^{-\mu(B^{\perp})}=1.
\]
Therefore, Theorem~\ref{thm:bonahonliouville} shows $\mu$ is a Liouville current.
\end{proof}

%%% Local Variables:
%%% mode: LaTeX
%%% TeX-master: "Intersections"
%%% End:

\section{Skora's theorem: \texorpdfstring{$\protect\mathbb{R}$-trees}{Rtrees} dual to measured laminations}
\label{sec:skora}

As an application of Theorem~\ref{thm:intersect}, we prove
Theorem~\ref{mainthm:skora},
giving new
characterizations of small actions of surface groups on
$\mathbb{R}$-trees and reproving a classical theorem of
 Skora~\cite{Sko90:GeometricAction}.

\subsection{Finitely generated groups acting on real trees}
\ \relax

\textbf{Real trees.}
A \emph{real tree} or \emph{$\mathbb{R}$-tree} is a non-empty metric space $T$ such that for
all $x,y \in T$, the intersection of all connected subsets containing
both $x$ and $y$ is isometric to the interval $[0, d(x, y)]$.  This
intersection is denoted
$[x, y]$, and called the \emph{arc} connecting $x$ and $y$.
When an arc is a singleton, we say it is \emph{degenerate}, otherwise we say it is \emph{non-degenerate}.
An equivalent way to characterize real trees is as geodesic metric
space $X$ satisfying the \emph{$0$-hyperbolic Gromov point condition}:
for any four-tuple of points $x,y,z,w \in X$,
the maximum among the following three quantities appears (at least) twice:
\[
d(x,z) + d(y,w), \quad d(x,y) + d(z,w), \quad d(x,z) + d(y,w).
\]
A \emph{ray} in a real tree is the image of an isometric embedding of
the ray $[0,\infty)$. If $p$ is the image of $0$ under this embedding,
the ray \emph{emanates} from $p$. The \emph{space of ends} of a metric
space $T$ is the limit of the inverse system $\pi_0(T-B)$ where $B$
ranges over the closed and bounded subsets of $T$. Fix a point $p$ in
an $\bbR$-tree $T$. For every end of~$T$, there is a unique
ray
emanating from $p$ that meets every component of the inverse system
corresponding to the end.
Hence, there is a one-to-one correspondence between
rays emanating from $p$ and ends of $T$. 

Given two distinct
ends of $T$, the corresponding rays meet in an interval with $p$ as
endpoint. The union of these two rays minus the interior of their
intersection is a \emph{line} in $T$. Conversely, given any line in $T$, the
two oppositely oriented rays emanating from any point in the line
determine two distinct ends of the tree, and these ends do not depend
on the choice of point. We will say this line \emph{joins} the two
ends. It follows from the definition of real trees that there is exactly
one line connecting
any two distinct ends. We can alternately define a line directly as the image of an isometric map from $\bbR$.

\textbf{Actions on real trees.} If $\Gamma$ is a finitely generated
group, a
\emph{$\Gamma$-tree} is an $\mathbb{R}$-tree~$T$ together with a group
action $\Gamma \acts T$ by isometries.
There are two types of isometries of real trees: those with fixed
points, called \emph{elliptic}, and those without fixed points, called
\emph{hyperbolic}.
Given $g\in \Gamma$, let $\| g\|\coloneqq\inf_{x \in T} d(x,gx)$, and
let $T_g \coloneqq \{ x\in T : d(x,gx)=\| g\| \}$ be the
\emph{characteristic set} of~$g$; this is a closed, non-empty subtree
of $T$ invariant under the action of $g$.
If $g$ is elliptic, then $\| g \| =0$, and $T_g$ is the
set of fixed points of~$g$. The converse is also true: if $\norm{g} =
0$, then $g$ has fixed points \cite[Section~1.3]{CM87:GroupActions}.

If $g$ is hyperbolic, then $\| g \| >0$, $T_g$ is isometric to the
real line, and $g$ translates along $T_g$ by distance $\| g\|$; in this
case we call $T_g$ the \emph{axis} of $g$.

The translation lengths of any $\Gamma$-tree can be characterized by the following relation, reminiscent of the $0$-hyperbolic condition.
Even though we will not use it in the proofs, it is helpful to keep it in mind;
it can be viewed as a simpler repackaging of axioms due to Culler and
Morgan~\cite[Section~1.11]{CM87:GroupActions}, as proved by Parry (\cite[Equation~(0.2)]{Par91:Translation} and~\cite[Main Theorem]{Par91:Translation}).

\begin{proposition}
\label{prop:max_twice}
Let $\Gamma$ be a finitely generated group and $f \colon \Gamma \to \mathbb{R}$ be a conjugacy class invariant function.
Then $f$ is the translation length function of
a $\Gamma$-tree if and only if for any pair $g,h \in \Gamma$, the maximum among the following three quantities appears (at least) twice:
\begin{equation}
\|g\|+\|h\|, \quad \|gh\|, \quad \|gH\|.
\label{eq:maxcondition_tree}
\end{equation}
\end{proposition}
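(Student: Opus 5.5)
The paper presents this as a repackaging of Culler--Morgan's axioms proved by Parry, so the plan is to reduce to Parry's Main Theorem and verify the equivalence of axiom systems directly.

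\emph{Necessity.} Assume $f=\|\cdot\|$ comes from a $\Gamma$-tree $T$, and fix $g,h\in\Gamma$. We split into cases according to the characteristic sets $T_g,T_h$.
\begin{itemize}
\item If $T_g\cap T_h=\emptyset$, let $\Delta>0$ be the distance between them. The standard Culler--Morgan formula gives $\|gh\|=\|gH\|=\|g\|+\|h\|+2\Delta$. The maximum is then attained by the two products, which is twice.
\item If $T_g\cap T_h\neq\emptyset$ and one of $g,h$ is elliptic, the Culler--Morgan computations give $\max(\|gh\|,\|gH\|)\le\|g\|+\|h\|$. Since $\|h\|=\|H\|$ and the product formulas are symmetric, in fact $\|gh\|=\|gH\|$. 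So either that common value is the maximum, attained twice, or $\|g\|+\|h\|$ is the maximum.
\item If both $g,h$ are hyperbolic with axes meeting in an arc of length $\delta\ge0$, translating coherently along $gh$ and incoherently along $gH$, then $\|gh\|=\|g\|+\|h\|$ and $\|gH\|\le\|g\|+\|h\|$. Equality in the second holds when $\delta=0$, and otherwise it holds with the obvious max-type formula. In either sub-case the maximum $\|g\|+\|h\|$ is attained at least twice.
\end{itemize}
So necessity is a finite case check using the formulas in \cite[Section~1.3]{CM87:GroupActions}.

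\emph{Sufficiency.} Here I would invoke Parry's theorem \cite[Main Theorem]{Par91:Translation}. It states that a function satisfying the Culler--Morgan axioms, equivalently Parry's condition \cite[Equation~(0.2)]{Par91:Translation}, is the translation length function of a $\Gamma$-tree. The work is to show that the "maximum attained twice" condition \eqref{eq:maxcondition_tree} implies those axioms. I would derive them in order:
\begin{itemize}
\item \emph{Nonnegativity.} Take $h=g$. The three quantities are $2f(g)$, $f(g^2)$ and $f(1)$. Taking $g=h=1$ forces $f(1)\ge 2f(1)$, hence $f(1)\le0$, and further choices pin this down.
\item \emph{Inverse invariance.} Taking $h=g^{-1}$ compares $f(g)+f(G)$, $f(1)$ and $f(g^2)$. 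Pairing this with the swap $g\leftrightarrow h$ and with conjugacy invariance gives $f(g)=f(G)$.
\item \emph{The remaining axioms.} Use the dichotomy "either $f(gh)=f(gH)$, or $\max\{f(gh),f(gH)\}\le f(g)+f(h)$" to recover the remaining Culler--Morgan axioms on $f(gh)$ versus $f(gH)$.
\end{itemize}

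\emph{Main obstacle.} The last step is where I expect the difficulty. Culler--Morgan's axioms also include a condition on triples ensuring consistency of axis overlaps, whereas \eqref{eq:maxcondition_tree} only involves pairs. I would rely on Parry's own formulation (0.2), which I believe is exactly the pairwise condition \eqref{eq:maxcondition_tree}. The obstacle would then reduce to matching notation with Parry's statement. If (0.2) is instead stated on triples, I would try to recover it from the pairwise condition applied to products such as $(gh,k)$.
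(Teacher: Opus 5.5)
Your disjoint-axes and hyperbolic--hyperbolic cases are fine and match the paper's use of Lemma~\ref{lem:foundation_trees}. The elliptic--hyperbolic case, however, has a genuine gap. Suppose $g$ is elliptic, $h$ is hyperbolic and $T_g\cap T_h\neq\emptyset$. The easy bound is $\max\{\|gh\|,\|gH\|\}\le\|h\|=\|g\|+\|h\|$, so the condition demands that one of $\|gh\|,\|gH\|$ equal $\|h\|$. You instead claim $\|gh\|=\|gH\|$, which is unjustified: Lemma~\ref{lem:pau_ell_hyp} itself contains a case with $\|gH\|=\|h\|>\|gh\|$. You then accept the alternative ``$\|g\|+\|h\|$ is the maximum'', but that is exactly the failure mode when this maximum is strict.

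The paper covers this case by quoting Lemma~\ref{lem:pau_ell_hyp}(1)--(2), which asserts that one product has length $\|h\|$. That assertion is false in general. Let $\Gamma=D_\infty$ act on $T=\mathbb{R}$ by $g(x)=-x$ and $h(x)=x+1$. Then $gh$ and $gH$ are the reflections $x\mapsto -x-1$ and $x\mapsto 1-x$. Hence $\|g\|+\|h\|=1$ while $\|gh\|=\|gH\|=0$, so the maximum is attained only once. Here $ghG=H$, so $T_{ghG}=T_h$ with incoherent overlap, and the conclusion $\|gH\|=\|h\|$ of Lemma~\ref{lem:pau_ell_hyp}(2) fails as well. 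In general such pairs satisfy only the weaker Culler--Morgan alternative ``$\|gh\|=\|gH\|$ or $\max\{\|gh\|,\|gH\|\}\le\|g\|+\|h\|$''. So the ``only if'' direction cannot be proved as stated; it needs an extra hypothesis excluding this reflection configuration.

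For sufficiency, your nonnegativity step also fails. No choice of pairs can force $f(1)=0$ or $f\ge0$. Every constant function $f\equiv c<0$ is conjugacy invariant and satisfies the condition, since the three values are $2c,c,c$. The function on $\Gamma=\mathbb{Z}/2$ with $f(1)=0$ and $f(g)=-1$ satisfies it too. So nonnegativity must be added as a hypothesis; the paper's ``straightforward'' reduction tacitly needs it as well. Once $f\ge0$ is assumed, the reduction is immediate and no condition on triples has to be manufactured:
\begin{itemize}
\item The pair $(1,1)$ gives $f(1)=0$.
\item The pair $(1,g)$, with values $f(g),f(g),f(G)$, forces $f(G)\le f(g)$, hence $f(G)=f(g)$. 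Your pair $(g,G)$ cannot do this, because only the sum $f(g)+f(G)$ enters.
\item The condition for $(g,h)$ immediately yields the alternative ``$f(gh)=f(gH)$ or $\max\{f(gh),f(gH)\}\le f(g)+f(h)$''. When $f(g),f(h)>0$ it also yields ``$f(gh)=f(gH)>f(g)+f(h)$ or $\max\{f(gh),f(gH)\}=f(g)+f(h)$''. These are the Culler--Morgan alternatives you planned to recover.
\end{itemize}
The final appeal to Parry is then the same as in the paper.
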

\begin{proof}
The fact that the translation length of a $\Gamma$-tree
satisfies the condition in \eqref{eq:maxcondition_tree} follows from
more precise relations developed
in~\cite{CM87:GroupActions,Pa89:Paulin1989TheGT}, which we will write
as a sequence of lemmas below. Specifically, for hyperbolic pairs
$(g,h)$, it follows from~Lemma~\ref{lem:foundation_trees}; if $g$ is
elliptic and $h$ hyperbolic, by Lemma~\ref{lem:pau_ell_hyp}; and, if
both are elliptic, by~Lemma~\ref{lem:pau_two_ell}.

The other implication follows by a deep result of Parry~\cite{Par91:Translation}. It is straightforward to show that the condition in Equation~\eqref{eq:maxcondition_tree} implies the Culler-Morgan axioms assumed in the main characterization theorem of Parry, and, hence, it also
characterizes translation lengths of actions on real trees among all conjugacy class invariant functionals.
\end{proof}

A $\Gamma$-tree with  translation length function
$\| \cdot \|$ is of one of four
types~\cite[Page~583]{CM87:GroupActions}:
\begin{itemize}
\item \textit{Global fixed point}: $\| \cdot \|$ is zero on all of $\Gamma$; equivalently,  $\Gamma$ fixes a global point in $T$.
\item \textit{Fixed end:} $\| \cdot \|$ is non-trivial but zero on $[\Gamma,\Gamma]$;
  equivalently $\Gamma$ fixes an end of $T$ ~\cite[Theorem~2.2,
  Corollary~2.3]{CM87:GroupActions}.
\item \textit{Dihedral:} $\| \cdot \|$ is non-trivial on $[\Gamma,\Gamma]$, but
  $\| [g,h] \|=0$ for all $g,h$ hyperbolic in~$T$; equivalently
  $\Gamma$ fixes a line of $T$ inverting
  orientation~\cite[Theorem~2.4]{CM87:GroupActions}.
\item \textit{Irreducible:} $\| [g,h] \| \neq 0$ for some $g,h$ hyperbolic in $T$.
\end{itemize}
A \emph{reducible} action of $\Gamma$ is one that is not irreducible,
i.e., it is fixed end, dihedral, or has a global fixed point. 

We now recall and recast some classical results on $\mathbb{R}$-actions, drawing on~\cite{CM87:GroupActions, Pa89:Paulin1989TheGT}.

Consider two hyperbolic elements $g, h \in \Gamma$ with axes in the $\mathbb{R}$-tree given respectively by $T_g,T_h$. Let $\Delta \coloneqq T_g \cap T_h$. If $\Delta$ is a non-degenerate interval (i.e., non-empty and not a singleton) we distinguish two types of overlap:
\begin{itemize}
\item if $g$ and $h$ translate in the same direction along $\Delta$, we say
that they \emph{overlap coherently};
\item if $g$ and $h$ translate in opposite
directions along $\Delta$, we say that they \emph{overlap incoherently}.
\end{itemize}
Following~\cite[Section~3.2]{CM87:GroupActions}, we say that $g,h$ form a \emph{good pair} if 
\begin{itemize} 
\item they are both hyperbolic;
\item their axes in the tree intersect and their overlap is a non-degenerate arc;
\item they overlap coherently; and
\item the length of $T_g \cap T_h$ is strictly less than $\min \{ \| g \|, \| h \| \}$.
\end{itemize}

\begin{lemma}[{\cite[Remark~3.3]{CM87:GroupActions}}]
If $g,h$ are hyperbolic and $T_g,T_h$ meet in an arc of positive but
finite length, then there exist integers $m>0$ and $n$ so that $g^m$
and~$h^n$
form a good pair of isometries.
\label{lem:powers_goodpair}
\end{lemma}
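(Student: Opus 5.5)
We need to prove Lemma (Culler–Morgan Remark 3.3): if $g,h$ are hyperbolic and $T_g\cap T_h$ is an arc of positive finite length $L$, then there are $m>0$ and $n$ such that $g^m,h^n$ form a good pair.

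The plan is to reduce to the two remaining conditions in the definition of a good pair: coherent overlap, and overlap length strictly less than both translation lengths. Powers do not move axes: for $k\neq 0$, $T_{g^k}=T_g$ and $\|g^k\|=|k|\,\|g\|$. Indeed $g^k$ preserves the line $T_g$ and translates it by $|k|\|g\|>0$. The standard fact that a hyperbolic isometry translating a line by a positive amount has that line as its axis then gives $T_{g^k}=T_g$. Hence for any $m>0$ and $n\neq 0$, the elements $g^m,h^n$ are hyperbolic, and their axes meet in the same arc $\Delta=T_g\cap T_h$, which is non-degenerate of length $L\in(0,\infty)$.

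First, coherence. Along $\Delta$ the element $h^n$ translates in the direction of $h$ if $n>0$ and in the opposite direction if $n<0$, while $g^m$ (with $m>0$) translates in the direction of $g$. So we choose the sign of $n$: take $n>0$ if $g,h$ overlap coherently, and $n<0$ if they overlap incoherently. Then $g^m$ and $h^n$ overlap coherently.

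Second, the length bound. Choose $m>0$ with $m\|g\|>L$, and $|n|\ge 1$ with $|n|\,\|h\|>L$. This is possible because $\|g\|,\|h\|>0$ and $L<\infty$. Then
\[
\operatorname{length}(T_{g^m}\cap T_{h^n})=L<\min\{\|g^m\|,\|h^n\|\},
\]
so all four conditions of a good pair hold.

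The only step that needs care is the identity $T_{g^k}=T_g$ for $k\neq 0$, since coherence and the overlap are measured on the axes. I would cite Culler–Morgan §1.3, or argue directly: for $x\in T$, the distance $d(x,g^kx)$ equals $|k|\|g\|+2d(x,T_g)$, so its minimum is attained exactly on $T_g$. Everything else is bookkeeping.
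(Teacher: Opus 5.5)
Your proof is correct and follows the same route as the paper's sketch: choose the sign of $n$ so that the overlap becomes coherent, then take $m$ and $|n|$ large enough that $m\|g\|$ and $|n|\|h\|$ exceed the length of $T_g\cap T_h$. Your extra justification that $T_{g^k}=T_g$, via $d(x,g^kx)=|k|\|g\|+2d(x,T_g)$, fills in the one detail the paper leaves implicit.
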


\begin{proof}[Proof sketch]
   The sign of $n$ is chosen so that the overlap is coherent, and the magnitudes of $m,n$ are chosen
to ensure $m \| g \|$ and $n \|h\|$ are both greater than the length  of $T_g \cap T_h$.
\end{proof}

We next recall a useful criterion that implies reducibility.
\begin{lemma}[{\cite[Lemma~2.1]{CM87:GroupActions}}]
    If there exists a hyperbolic element $g \in \Gamma$ so that for all hyperbolic $h \in \Gamma$ we have  $T_g \cap T_h \neq \emptyset$, then the action is reducible.
    \label{lem:CM_red_criterion}
\end{lemma}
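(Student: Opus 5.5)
The statement to prove is Culler--Morgan's Lemma~2.1: if some hyperbolic $g$ has $T_g\cap T_h\neq\emptyset$ for every hyperbolic $h$, then the action is reducible. I would argue by contradiction. Assume the action is irreducible and construct a hyperbolic element whose axis misses $T_g$.

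Under irreducibility there is a pair $h,k$ of hyperbolic elements with $\|[h,k]\|\neq 0$. The first step is to use this pair to produce a hyperbolic $h$ whose axis $T_h$ is not equal to $T_g$. I would also like $T_h$ to meet $T_g$ in a compact arc (possibly a single point), so that its ends differ from both ends of $T_g$.

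This is where irreducibility enters. Suppose every hyperbolic axis shared an end with $T_g$. Then either all of them share one common end of $T_g$, giving a fixed end, or all of them equal $T_g$. In the second case $\Gamma$ preserves the line $T_g$ and the action is dihedral or has a fixed end, again reducible. To make this rigorous I would consider the set of ends of $T_g$ that are fixed by all hyperbolic elements. I would then use that $\Gamma$ is generated by hyperbolic elements in the non-global-fixed-point case: if $e$ is elliptic and $h$ hyperbolic, either $eh$ or $h$ is hyperbolic with the relevant properties. This step is the main obstacle. The case analysis showing that ``every axis shares an end with $T_g$'' forces one of the reducible types needs care, particularly when distinct axes share different ends of $T_g$. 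In that situation I would take two such hyperbolics $h_1,h_2$ and apply Lemma~\ref{lem:powers_goodpair} to obtain a good pair. A product of high powers then has an axis meeting $T_g$ in only a compact arc.

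Once $h$ is in hand with $T_h\cap T_g$ a compact (possibly degenerate) arc $\Delta$, I would conjugate: $h^n g h^{-n}$ is hyperbolic with axis $h^n T_g$. For $n$ large, translation along $T_h$ carries $h^n\Delta$ far away from $\Delta$. Because $T_g$ leaves $T_h$ at the endpoints of $\Delta$, the translate $h^nT_g$ meets $T_h$ exactly in $h^n\Delta$. Since $T_g\cap T_h=\Delta$ is disjoint from $h^n\Delta$ when $n\|h\|$ exceeds the length of $\Delta$, the subtree structure of $T$ gives $h^nT_g\cap T_g=\emptyset$. Concretely, the path between the two axes runs through $T_h$ from $\Delta$ to $h^n\Delta$. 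The element $h^ngh^{-n}$ is therefore a hyperbolic element whose axis is disjoint from $T_g$, contradicting the hypothesis. So the action must be reducible.
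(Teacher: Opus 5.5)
The paper gives no proof of this lemma; it only cites Culler--Morgan. Judged on its own, your outer frame is sound. A hyperbolic $h$ with $T_h\cap T_g$ compact makes $h^ngh^{-n}$ have axis $h^nT_g$ disjoint from $T_g$, since the projections of $T_g$ and $h^nT_g$ to $T_h$ are $\Delta$ and $h^n\Delta$. And if every axis contains the same end $\epsilon_+$ of $T_g$, then every hyperbolic element fixes $\epsilon_+$, hence so does $\Gamma$, because $\Gamma$ is generated by hyperbolic elements. Your justification of that last fact is garbled. The correct version is that for $e$ elliptic and $h$ hyperbolic, at least one of $\|eh\|,\|eH\|$ is $\ge\|h\|>0$ by Lemma~\ref{lem:pau_ell_hyp}, so $e$ is a product of two hyperbolic elements.

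The genuine gap is the mixed case, which is exactly the step you flag. There you have $h_1$ whose axis contains $\epsilon_+$ but not $\epsilon_-$, and $h_2$ with the reverse. Write $T_{h_1}\cap T_g=[p_1,\epsilon_+)$ and $T_{h_2}\cap T_g=(\epsilon_-,p_2]$. Lemma~\ref{lem:powers_goodpair} needs $T_{h_1}\cap T_{h_2}$ to be an arc of positive finite length, and this happens only when $p_1$ precedes $p_2$ on $T_g$. The other configurations are:
\begin{itemize}
\item if $p_2$ precedes $p_1$, the two axes are disjoint;
\item if $p_1=p_2$, they may meet in a single point;
\item if $p_1=p_2$ and their rays off $T_g$ coincide, $T_{h_1}\cap T_{h_2}$ is a ray to a common third end $\eta$. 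This is a tripod with $T_g=(\epsilon_-,\epsilon_+)$, $T_{h_1}=(\eta,\epsilon_+)$, $T_{h_2}=(\epsilon_-,\eta)$.
\end{itemize}
In none of these cases does the lemma produce a good pair. Even in the favorable case, you assert without proof that the axis of the product of powers meets $T_g$ in a compact set.

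A uniform fix is to track ends. Let $\eta_1\neq\epsilon_\pm$ be the other end of $T_{h_1}$, and consider $h_2^nh_1h_2^{-n}$, whose axis $h_2^nT_{h_1}$ has ends $h_2^n\epsilon_+$ and $h_2^n\eta_1$. A hyperbolic isometry fixes no end other than the two ends of its axis, and $h_2^n$ ($n\neq0$) has the same axis as $h_2$. Hence $h_2^n\epsilon_+\notin\{\epsilon_+,\epsilon_-\}$, using that $h_2$ fixes $\epsilon_-$ but not $\epsilon_+$. Also $h_2^n\eta_1=\epsilon_-$ is impossible, and $h_2^n\eta_1=\epsilon_+$ forces $\eta_1=h_2^{-n}\epsilon_+$. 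This holds for at most one $n$, since $n\mapsto h_2^{-n}\epsilon_+$ is injective by the same fixed-end fact. So for a suitable $n$ this hyperbolic element has an axis sharing no end with $T_g$. Its intersection with $T_g$ is then a bounded closed subinterval of the line $T_g$, and your conjugation step finishes the proof. With this case split (all axes contain $\epsilon_+$; all contain $\epsilon_-$; or the mixed case), you need neither the separate invariant-line case nor the pair with $\|[h,k]\|\neq0$, which you never actually use.
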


Next, we show that good pairs yield pairs of non-intersecting
hyperbolic elements.

\begin{lemma}
  For any $\Gamma$-tree, if $g,h$ form a good pair, then $gH$ and $Hg$ are hyperbolic isometries of $T$ and $T_{gH}, T_{Hg}$ do not intersect.
    \label{lem:oppgoodpair}
\end{lemma}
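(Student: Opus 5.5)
We work directly in the tree, using the standard description of the axis of a product of hyperbolic isometries with overlapping axes, as in Culler--Morgan \cite[Section~1.3, Lemma~3.1]{CM87:GroupActions}. Set $\Delta = T_g \cap T_h$, a non-degenerate arc $[p,q]$ of length $\delta < \min\{\|g\|,\|h\|\}$. Orient it so that $g$ and $h$ both translate from $p$ towards $q$; this is possible because the overlap is coherent. Since $H$ translates along $T_h$ in the opposite direction, $g$ and $H$ overlap \emph{incoherently} along $\Delta$. The plan is to compute $T_{gH}$ and $T_{Hg}$ explicitly as unions of segments, and then see that they lie in disjoint regions of $T$.

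\textbf{Step 1: $gH$ is hyperbolic and its axis.} Since $\delta < \|h\|$, the point $H\cdot q$ lies on $T_h$ beyond $p$, off $\Delta$. Since $\delta < \|g\|$, the point $g\cdot p$ lies on $T_g$ beyond $q$, off $\Delta$. We compute $d(x, gH x)$ for a suitable point, for instance the midpoint-type point of $\Delta$, or use the standard formula $\|gH\| = \|g\| + \|h\| - 2\delta > 0$ for incoherent overlap with $\delta<\min$ (Culler--Morgan / Paulin). This gives hyperbolicity. We then identify a fundamental segment of $T_{gH}$: the path $H\cdot q \to p \to g\cdot p$ is a geodesic. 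It runs along $T_h$ from $H q$ to $p$, then along $\Delta$ reversed, then along $T_g$ to $gp$; it is a geodesic because the branching at $p$ and $q$ separates $T_g\setminus\Delta$ from $T_h\setminus\Delta$. Concatenating its $gH$-translates gives a line, namely $T_{gH}$. This line meets $\Delta$ only at $p$, or misses its interior entirely.

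\textbf{Step 2: the same for $Hg$, and disjointness.} We have $Hg = H(gH)h$, i.e. $T_{Hg} = H\cdot T_{gH}$. Alternatively, by the symmetric computation, $T_{Hg}$ is built from the segment $g^{-1}\cdot$(stuff) passing through $q$. Concretely, we expect $T_{gH}$ to lie in the component of $T \setminus (p,q)$ containing $p$ together with branches hanging off $p$, and $T_{Hg}$ to be the analogous line through $q$ in the other component. If both lines miss the open arc $(p,q)$ and lie on opposite sides, they are disjoint, since a path between them must cross $\Delta$. This proof of disjointness is the part we expect to be the main obstacle: one has to check carefully, via the branch-point structure at $p$ and $q$, that neither axis re-enters $\Delta$.

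A fallback exists if the direct picture is messy. For two hyperbolic isometries $u,v$ whose axes meet, $\|uv^{-1}\|$ and $\|uv\|$ obey the max-twice rule of Proposition~\ref{prop:max_twice}, with equality patterns determined by whether the overlap is coherent. If $T_{gH}$ and $T_{Hg}$ met, then, since $Hg$ is conjugate to $gH$ by $H$, we could apply the intersection criterion (Lemma~\ref{lem:foundation_trees}-type relations) to the pair $(gH, Hg)$. Comparing with $\|(gH)(Hg)^{-1}\| = \|g h g^{-1}\cdot\ldots\|$ would yield an inequality contradicting $\delta < \min\{\|g\|,\|h\|\}$. We would first try the explicit geodesic-segment argument and use this translation-length contradiction only if needed.
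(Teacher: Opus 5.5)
Your overall strategy is the right one, and it is what the paper does (via the proof of Culler--Morgan's Lemma~3.4, using $gH\cdot(h\cdot x)=g\cdot x$ and $Hg\cdot(G\cdot y)=H\cdot y$): compute fundamental segments of the two axes explicitly, then observe that the overlap arc separates them. However, your Step~1 computation is wrong, and the disjointness, which is the whole content of the lemma, is never proved.

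\textbf{The error in Step~1.} The path $H\cdot q\to p\to g\cdot p$ is not a fundamental segment of $T_{gH}$:
\begin{itemize}
\item $gH$ sends $Hq$ to $gH^2q$, not to $gp$;
\item its length is $\|g\|+\|h\|-\delta$, which exceeds $\|gH\|=\|g\|+\|h\|-2\delta$;
\item since $gp$ lies on $T_g$ beyond $q$, it contains all of $\Delta$, contradicting your own claim that the axis meets $\Delta$ only at $p$.
\end{itemize}
In fact $[Hq,p]\subset T_{Hg}$ and $[q,gp]\subset T_{gH}$. So your segment is exactly a path \emph{joining} the two axes across the bridge $\Delta$. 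Your sides are also swapped: $T_{gH}\cap\Delta=\{q\}$ and $T_{Hg}\cap\Delta=\{p\}$.

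\textbf{The correct computation.} Put $u=gH$. The geodesic $[u^{-1}q,q]=[hGq,q]$ passes through $hp$ and reaches $q$ along the forward ray of $T_h$. The geodesic $[q,uq]=[q,gHq]$ passes through $gp$ and leaves $q$ along the forward ray of $T_g$. These directions at $q$ differ, since $q$ is where $T_g$ and $T_h$ diverge. Hence $d(q,u^2q)=2d(q,uq)>0$, so $u$ is hyperbolic, $q\in T_u$, and $\|u\|=\|g\|+\|h\|-2\delta$.

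A line through $q$ whose two directions at $q$ both avoid the direction toward $p$ lies in $\{q\}$ together with the components of $T\setminus\{q\}$ not containing $p$. So there is no ``re-entry'' issue at all.

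Symmetrically, $Hg$ sends $Gq$ to $Hq$, $p\in T_{Hg}$, and the directions of $T_{Hg}$ at $p$ are the backward rays of $T_g$ and $T_h$. Thus $T_{gH}$ lies in the component of $T\setminus\{p\}$ containing $q$, while $T_{Hg}$ avoids that component. The axes are disjoint, at distance $\delta$.

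\textbf{Your fallback.} It can be made rigorous, but with $(gH)(Hg)$ rather than $(gH)(Hg)^{-1}$. The latter is the commutator $gHGh$ (not ``$ghg^{-1}\cdots$''), and you have no independent handle on its length. By contrast, $(gH)(Hg)=gH^2g$ is conjugate to $g^2H^2$. The pair $(g^2,h^2)$ is again a good pair with the same overlap, so
\[
\|(gH)(Hg)\|=2\|g\|+2\|h\|-2\delta>\|gH\|+\|Hg\|.
\]
By Lemma~\ref{lem:foundation_trees}(1), the axes of $gH$ and $Hg$ therefore cannot meet. Note that this contradiction comes from $\delta>0$, not from $\delta<\min\{\|g\|,\|h\|\}$; the latter is only needed to obtain the translation-length formula.
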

\begin{proof}
   This is contained in the proof
   of~\cite[Lemma~3.4]{CM87:GroupActions}. See Figure~\ref{fig:trees_goodpair}, where the distance $\Delta$ is strictly positive and the position of the relevant points follow from the identities $Hg \cdot (G \cdot y) = H \cdot y$ and $gH \cdot (h \cdot x) = g \cdot x$.
\end{proof}

\begin{figure}[ht]
     \centering
\resizebox{80mm}{!}{\fontsize{9pt}{9pt}\selectfont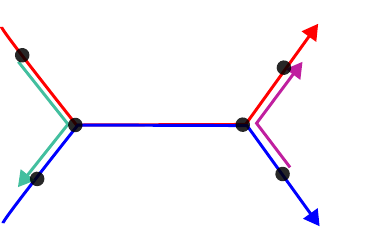}
    \caption{Axes of $gH, Hg$ in $T$ are disjoint when $g,h$ form a good pair}\label{fig:trees_goodpair}
\end{figure}

We now look at the relative positions of the characteristic sets of a pair of elements $g,h \in \Gamma$, distinguishing the cases that arise in Proposition~\ref{prop:max_twice}.

\begin{lemma}
Let $g,h$ be hyperbolic isometries with axes $T_g$ and $T_h$.
We have the following cases, depending on the relative position of $T_g,T_h$.
\begin{enumerate}[(\arabic*)]
\item \label{item:tree-intersect}(Intersecting axes)
$T_g \cap T_h \neq \emptyset$ if and only if $\| g \| + \| h \| =\max \{ \| gh \| , \|gH\| \}$, as shown in Figure~\ref{fig:trees_intersection}.
Moreover,
\begin{enumerate}
\item $T_g \cap T_h$ is a single point iff $\|gh\| = \| gH\| = \|  g \| + \|h \|$;
\item $g,h$ overlap coherently iff 
$\| g \| + \| h \| = \| gh \| >\|gH\|$. In this case, $T_g \cap
T_h = T_{gh} \cap T_{hg}$; and
\item $g,h$ overlap incoherently iff 
$\| g \| + \| h \| = \| gH \| >\|gh\|$. In this case, $T_g \cap T_h = T_{gH} \cap T_{Hg}$.
\end{enumerate}
\item \label{item:tree-disjoint}(Disjoint axes) $T_g \cap T_h =\emptyset$ iff
$\|gh\| = \| gH\| = \|  g \| + \|h \| + 2d(T_g,T_h),$
as shown in Figure~\ref{fig:trees_disjoint}.
In this case, $T_{gh}, T_g$ overlap coherently
in a segment of positive length and
$T_{gh},T_{hg}$ overlap incoherently on a segment of length
$d(T_g, T_h)$.
\end{enumerate}
\label{lem:foundation_trees}
\end{lemma}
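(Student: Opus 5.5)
The plan is to prove Lemma~\ref{lem:foundation_trees} directly from the geometry of isometries of $\mathbb{R}$-trees, following Culler--Morgan and Paulin. The basic tool is the standard description of a product of two hyperbolic isometries in terms of how their axes sit relative to each other. Since the four situations (disjoint axes, a single common point, coherent overlap, incoherent overlap) exhaust all possibilities, it suffices to compute $\|gh\|$ and $\|gH\|$ in each of them. The ``iff'' statements then follow, because the resulting numerical signatures are mutually exclusive. The only inequality needed for this is $\|g\|+\|h\|+2d(T_g,T_h)>\|g\|+\|h\|$ when $d>0$.

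\textbf{Disjoint axes.} Let $[p,q]$ be the bridge from $T_g$ to $T_h$, with $p\in T_g$ and $q\in T_h$, and let $D=d(T_g,T_h)>0$. Then
\[
hq^{-}\to q \to p \to gp
\]
concatenates into a geodesic arc, and translating it by $gh$ produces a line on which $gh$ acts by translation by $\|g\|+\|h\|+2D$. Here I use the standard criterion that if $d(x,ux)>0$ and $[x,ux]\cap[ux,u^2x]=\{ux\}$, then $u$ is hyperbolic with $\|u\|=d(x,ux)$, applied with $x=H\cdot q$ or a similar point. Since $H$ also has axis $T_h$, the same computation gives $\|gH\|=\|g\|+\|h\|+2D$. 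The constructed axis of $gh$ contains a fundamental segment of $T_g$ traversed in the direction of $g$, so $T_{gh}$ overlaps $T_g$ coherently in a segment of positive length. By symmetry, $T_{hg}$ contains the bridge traversed in the opposite direction. Hence $T_{gh}$ and $T_{hg}$ overlap incoherently, and exactly along the bridge, which has length $D$.

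\textbf{Intersecting axes.} Let $\Delta=T_g\cap T_h$, a point or an arc. If $\Delta$ is a single point $O$, pick $x$ on $T_h$ at distance $\|h\|$ before $O$, so that $hx=O$. Then $[x,O]\cup[O,gO]$ is a geodesic, and the criterion above gives $\|gh\|=\|g\|+\|h\|$; the same argument with $H$ gives $\|gH\|=\|g\|+\|h\|$.

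If $\Delta$ is a non-degenerate arc and $g,h$ translate coherently, then $gh$ translates along a line through $\Delta$ with translation length $\|g\|+\|h\|$. For $gH$, the translations partially cancel along $\Delta$. A point $x\in\Delta$ near the appropriate end is moved by $gH$ a distance $\bigl|\|g\|-\|h\|\bigr|$, or in the bounded-overlap case a distance $\|g\|+\|h\|-2|\Delta|$, which in either case is strictly less than $\|g\|+\|h\|$. To identify $T_g\cap T_h=T_{gh}\cap T_{hg}$, I will track the endpoints of $\Delta$ under $g$ and $h$ as in Figure~\ref{fig:trees_goodpair}. The incoherent case follows by replacing $h$ with $H$.

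I expect the main obstacle to be the overlap case in which $\Delta$ is longer than $\min\{\|g\|,\|h\|\}$, or even infinite, because the cancellation computation for the non-additive product must be done carefully there. My first approach is to pass to powers using Lemma~\ref{lem:powers_goodpair} and reduce to good pairs. However, the stated equalities concern $g$ and $h$ themselves, so I will instead argue directly. I will choose $x\in\Delta$ with $Hx\in\Delta$ and compute $d(x,gHx)$, then invoke Culler--Morgan's Lemma~1.5 (the formula $\|u\|=\max\{0,\,d(x,ux)-2d(x,T_u)\}$) to bound $\|gH\|<\|g\|+\|h\|$. For the precise statements I will cite \cite[Sections~1.3--1.8]{CM87:GroupActions} and \cite{Pa89:Paulin1989TheGT}.
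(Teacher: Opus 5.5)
Your proposal is correct and follows the same route as the paper. The paper's proof of this lemma consists only of citations to Culler--Morgan (Sections~1.5 and~1.8, the proof of Theorem~2.7, and Remark~3.5), and those references carry out exactly the bridge and overlap computations you reconstruct via the criterion $[x,ux]\cap[ux,u^2x]=\{ux\}$.

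There are two small slips to fix.
\begin{itemize}
\item In the disjoint case, the fundamental arc for $gh$ starting at $x=H\cdot q$ is $[H\cdot q,\,g\cdot q]$, which includes the translated bridge $[g\cdot p,\,g\cdot q]$; the arc you wrote, ending at $g\cdot p$, has length only $\|g\|+\|h\|+D$, so its translates do not form a line.
\item In the coherent case, your displacement values should be split according to whether $|\Delta|\ge\min\{\|g\|,\|h\|\}$, not according to whether $\Delta$ is bounded; for example, $\|g\|+\|h\|-2|\Delta|$ can be negative when the overlap is bounded but long. In any case, only the strict bound $\|gH\|\le d(x,gH\cdot x)<\|g\|+\|h\|$ is needed.
\end{itemize}
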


\begin{figure}[ht]
     \centering
\resizebox{80mm}{!}{\fontsize{9pt}{9pt}\selectfont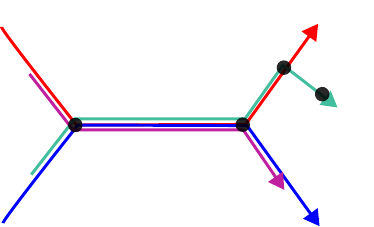}
    \caption{Relative position of intersecting axes in a tree, from
      \cite[Fig.\ 3]{Pa89:Paulin1989TheGT}. See also \cite{CM87:GroupActions}.}
    \label{fig:trees_intersection}
\end{figure}
\begin{figure}[ht]
     \centering
\resizebox{80mm}{!}{\fontsize{9pt}{9pt}\selectfont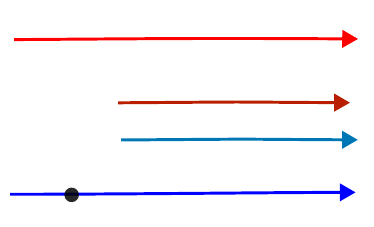}
    \caption{Relative position of disjoint axes in a tree, , from
      \cite[Fig.\ 2]{Pa89:Paulin1989TheGT}. See also \cite{CM87:GroupActions}.}\label{fig:trees_disjoint}
\end{figure}

\begin{proof}
The proof of the translation length equalities and inequalities of (1)
and~(2),
as well as the equality $T_g \cap T_h = T_{gh} \cap T_{hg}$
in case~(1b), is in~\cite[Sections~1.5, 1.8]{CM87:GroupActions};
case~(1c) is parallel.
The result of part (2) about $T_{gh}, T_g$ overlapping in a segment of positive length can be found in the proof of~\cite[Theorem~2.7]{CM87:GroupActions}.
The statement about the axes $T_{gh},T_{hg}$ overlapping incoherently in the parallel axes case is~\cite[Remark~3.5]{CM87:GroupActions}.
\end{proof}

We will also use the following.

\begin{lemma}[{\cite[Lemma~6.2]{CM87:GroupActions}}]\label{lem:powers_intersect}
    Let $T$ be an arbitrary $\Gamma$-tree.
    If $g,h$ are hyperbolic isometries of $T$ and satisfy $\|g\| + \|h\| = \|gh\|$ (or
    equivalently, overlap coherently or in a point), then for all positive integers $m,n$, we have
    \[
    \|g^m h^n\| = m \| g \| + n \|h \|.
    \]
\end{lemma}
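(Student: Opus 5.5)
The plan is to reduce to the coherent-overlap configuration and exhibit an explicit point that every power word translates by exactly the claimed amount. By Lemma~\ref{lem:foundation_trees}\ref{item:tree-intersect}, the hypothesis $\|gh\| = \|g\|+\|h\|$ means that $T_g$ and $T_h$ meet, either in a single point or in a nondegenerate arc $\Delta$ along which $g$ and $h$ translate in the same direction. In both cases I would fix orientations of $T_g$ and $T_h$ so that $g$ and $h$ translate positively. Then I pick the point $p$ to be the \emph{terminal} endpoint of $\Delta$ with respect to this common direction (or the single intersection point in case (1a)). When $\Delta$ is unbounded in the forward direction, I would first pass to the reducible/line case, where everything is additive.

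Next, recall the standard criterion \cite[Sections~1.3,~1.5]{CM87:GroupActions}: if $k$ is an isometry and $p$ is a point such that the segments $[k^{-1}p,p]$ and $[p,kp]$ meet only in $p$, then $k$ is hyperbolic, $p\in T_k$, and $\|k\| = d(p,kp)$. The goal is therefore to show that the path
\[
p \to h^{n}p \to g^m h^n p
\]
is a geodesic of length $m\|g\|+n\|h\|$ that concatenates without backtracking with its translate.

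Concretely, since $p\in T_h$, the point $h^n p$ lies on $T_h$ at distance $n\|h\|$ beyond $p$, and it leaves $T_g$ at $p$ because $p$ is the terminal endpoint of $\Delta$. Symmetrically, $g^{-m}p$ lies on $T_g$ at distance $m\|g\|$ behind $p$. By the choice of $p$, the directions at $p$ towards $h^np$ and towards $g^{-m}p$ differ: one leaves along $T_h\setminus\Delta$ forward, the other goes backward along $T_g$. Hence $d(g^{-m}p, h^n p) = m\|g\|+n\|h\|$, which gives $d(p, g^mh^np) = m\|g\|+n\|h\|$.

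For the no-backtracking condition, I would check that the segments $[(g^mh^n)^{-1}p,\,p] = h^{-n}[g^{-m}p,\,h^np]$-type pieces meet $[p, g^mh^np]$ only at $p$. This again reduces to the direction at $p$: the incoming direction is backward along $T_h$ through $\Delta$, while the outgoing direction is forward along $T_h$. The main obstacle is this germ bookkeeping in the degenerate case where $\Delta$ is a point and the germs of $T_g$ and $T_h$ at $p$ coincide partially. I would handle that case by invoking the symmetric statement of Lemma~\ref{lem:foundation_trees}(1a) and treating $T_g \cap T_h=\{p\}$ as four distinct germs, so that the argument goes through verbatim. Alternatively, one can induct on $m+n$ using Lemma~\ref{lem:foundation_trees}(1b) applied to $g$ and $g^{m-1}h^n$, whose axes overlap coherently and contain $\Delta$.
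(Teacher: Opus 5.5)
Your core idea is sound: pick $p\in T_g\cap T_h$, compute $d(p,g^mh^np)$ through the translate $[g^{-m}p,p]\cup[p,h^np]$, and check that there is no backtracking at $p$. Several steps, however, are misstated.

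\emph{The geodesic is misidentified.} The path $p\to h^np\to g^mh^np$ is not a geodesic when $\Delta$ is a nondegenerate arc and $p$ is its terminal endpoint. Indeed $h^np$ projects to $p$ on $T_g$, so $d(h^np,g^mh^np)=m\|g\|+2n\|h\|$. The actual geodesic is $[p,g^mh^np]=g^m\bigl([g^{-m}p,p]\cup[p,h^np]\bigr)=[p,g^mp]\cup[g^mp,g^mh^np]$, which is what you in fact compute. Its outgoing germ at $p$ is therefore the forward germ of $T_g$, not of $T_h$. The no-backtracking conclusion still holds, but for a different reason: the incoming germ (backward along $T_h$) coincides at that point with the backward germ of $T_g$, and so differs from the forward germ of $T_g$.

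\emph{The unbounded case is not argued.} The sentence ``pass to the reducible/line case, where everything is additive'' is not an argument. The $\Gamma$-action need not be reducible, and if $\Delta$ is only a ray you would have to restrict to $\langle g,h\rangle$, which fixes the common end, and use its Busemann homomorphism.

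This case split is self-inflicted. You need two germ conditions at $p$: the backward germ of $T_g$ differs from the forward germ of $T_h$ (for the distance), and the backward germ of $T_h$ differs from the forward germ of $T_g$ (for no backtracking). Both hold at \emph{every} $p\in T_g\cap T_h$. Equality in either pair would produce a nondegenerate common segment on which $g$ and $h$ translate in opposite directions, contradicting coherent overlap or $\Delta=\{p\}$. In the point case the four germs at $p$ are pairwise distinct; they cannot ``coincide partially''. So no special handling is needed there either, and the terminal-endpoint choice can be dropped.

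\emph{Comparison with the paper.} The paper gives no proof beyond the citation, and the shortest route from what it states differs from yours. The powers $g^m,h^n$ are hyperbolic with $T_{g^m}=T_g$ and $T_{h^n}=T_h$, and they translate in the same directions as $g,h$. Hence $T_{g^m}\cap T_{h^n}=T_g\cap T_h$ is again a point or a coherent overlap. Lemma~\ref{lem:foundation_trees}\ref{item:tree-intersect}(a)/(b), applied to the pair $(g^m,h^n)$, then gives $\|g^mh^n\|=\|g^m\|+\|h^n\|=m\|g\|+n\|h\|$ at once.

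Your argument amounts to reproving by hand, for $(g^m,h^n)$, the implication ``point or coherent overlap $\Rightarrow$ additivity'' of that lemma, via the no-backtracking criterion. That is self-contained, but unnecessary, since you already invoke the same lemma to obtain the configuration in the first place.
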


We will need analogous statements for pairs $g,h \in \Gamma$ where one
or both group elements are allowed to be elliptic. These have been
analyzed in~\cite{Pa89:Paulin1989TheGT}. We state them here in the
form that will be most convenient for us, making
Proposition~\ref{prop:max_twice}
%---that the maximum among
%$\|gh\|,\|gH\|$ and $\|g\|+\|h\|$ appears at least twice---
more
apparent.
\begin{lemma}\label{lem:pau_ell_hyp}
Let $T$ be a $\Gamma$-tree. Suppose $g$ is elliptic and $h$ is
hyperbolic. Then exactly one of the following holds depending on the
relative position of the axes $T_h$ and $T_{ghG}$:
\[ \begin{array}{c@{\qquad}c@{\qquad}c} \text{\emph{(1) coherent overlap or point;}} & \text{\emph{(2) incoherent overlap;}} & \text{\emph{(3) disjoint}} \\[1.2ex] \|gh\|=\|h\|\ge\|gH\|; & \|gH\|=\|h\|>\|gh\|; & \|gh\|=\|gH\|>\|h\|. \end{array} \]
Furthermore, case~(3) happens exactly when $T_g$ and $T_h$ are
disjoint, and implies $T_{gh}$ and $T_{hg}$ overlap incoherently.
\end{lemma}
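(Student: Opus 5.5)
The plan is a direct analysis in the $\mathbb{R}$-tree, organized by how $T_g$ (the fixed subtree of $g$) meets the axis $T_h$. Since $g$ is elliptic, $ghG$ is hyperbolic with axis $T_{ghG}=g\cdot T_h$ and $\|ghG\|=\|h\|$. Also $T_g\cap T_h\neq\emptyset$ iff $T_h$ and $g\cdot T_h$ intersect. If $T_g\cap T_h=\emptyset$, let $[p,q]$ be the bridge from $T_g$ to $T_h$, with $p\in T_g$ and $q\in T_h$. Then $g$ fixes $p$ and maps $[p,q]$ to $[p,gq]$, and this image meets $[p,q]$ only at $p$. Otherwise $g$ would fix a longer initial segment, contradicting that $p$ is the closest point of $T_g$ to $T_h$. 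Hence $T_h$ and $gT_h$ are disjoint, at distance $2d(T_g,T_h)$.

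\textbf{Disjoint case.} When $T_g\cap T_h=\emptyset$, I would compute directly. Write $gh=g\cdot h$ with $g$ an isometry fixing $p$. The standard path argument through $p$ and $q$ (the same one used for Lemma~\ref{lem:foundation_trees}\ref{item:tree-disjoint}) then gives $\|gh\|=\|gH\|=\|h\|+2d(T_g,T_h)>\|h\|$. Concretely, for $x=q$ the arc $[hq,\,ghq]$ passes through $q$ and $p$. The segment $[q,hq]$ followed by its $gh$-translate forms a geodesic fundamental domain of length $\|h\|+2d$, so the translation length equals the displacement $d(y,ghy)$ along it. The same computation with $H$ in place of $h$ gives $\|gH\|$. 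For the incoherence claim, I would apply Lemma~\ref{lem:foundation_trees}\ref{item:tree-disjoint} to the hyperbolic pair $(ghG,h)$, whose axes are disjoint. It says that $T_{(ghG)h}$ and $T_{h(ghG)}$ overlap incoherently. These are conjugates (by $g$ or by $G$) of $T_{gh}$ and $T_{hg}$, so I would need care here. It may be cleaner to argue directly: both $T_{gh}$ and $T_{hg}$ contain the bridge $[p,q]$, or a translate of it, and they traverse it in opposite directions.

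\textbf{Intersecting case.} When $T_g\cap T_h\neq\emptyset$, pick $x\in T_g\cap T_h$. Then $d(x,ghx)=d(x,hx)=\|h\|$, so $\|gh\|\le\|h\|$, and likewise $\|gH\|\le\|h\|$. Now apply Lemma~\ref{lem:foundation_trees}\ref{item:tree-intersect} to the hyperbolic pair $(ghG,h)$, whose axes intersect. This gives $2\|h\|=\max\{\|ghGh\|,\|ghGH\|\}$, where $ghGH=[gh\ldots]$ is a commutator. It also splits into subcases: a point, coherent overlap, or incoherent overlap. To transfer this to $\|gh\|$ and $\|gH\|$, I would use the local picture at the overlap $\Delta=T_h\cap gT_h$, which contains the points of $T_g\cap T_h$. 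In the coherent or point case, take $x$ at the end of $\Delta$ in the direction of translation of $h$, or the crossing point. Then $g$ fixes $x$, and I would check that $[G x, x]$ and $[x,hx]$ concatenate geodesically, giving $d(x,ghx)$ equal to the translation length via the criterion $d(x,g'^2x)=2d(x,g'x)$. This yields $\|gh\|=\|h\|$. Since $\|gH\|\le\|h\|$ always holds in this case, case (1) follows. Case (2) is symmetric, with $H$ replacing $h$. Strictness $\|gh\|<\|h\|$ in case (2) comes from the incoherence: $gh$ folds back along $\Delta$, giving $d(y,(gh)^2y)<2\|h\|$ for suitable $y$.

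\textbf{Main obstacle.} I expect the hardest part to be this strict inequality and the clean identification of the three cases with the relative position of $T_h$ and $T_{ghG}$. My first attempt would be the explicit midpoint/overlap computation in the tree, following Paulin's Figure~3. Failing that, I would reduce to Lemma~\ref{lem:foundation_trees} applied to $h$ and $ghG$, together with Lemma~\ref{lem:powers_intersect}.
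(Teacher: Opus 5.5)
Your direct computation in the tree is a different route from the paper's, which condenses Paulin's Proposition~1.7 and Culler--Morgan by citation. It works for case~(3) and for the coherent/point case, up to small slips.
\begin{itemize}
\item In the concatenation test you want $[Hx,x]\cup[x,ghx]$, since $(gh)^{-1}x=Hx$ when $gx=x$. Your $[Gx,x]$ is degenerate.
\item For the incoherence claim in case~(3), do not pass through the pair $(ghG,h)$: $ghGh$ is not conjugate to $gh$. Use the direct argument instead. One checks $T_{hg}\supset[p,hp]$ and $T_{gh}\supset[Hp,p]$, and both contain the bridge $[p,q]$ with opposite orientations.
\end{itemize}
The genuine gap is case~(2). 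Replacing $h$ by $H$ replaces $ghG$ by $gHG$. This reverses the orientations of \emph{both} $T_h$ and $T_{ghG}=g\cdot T_h$, so it preserves coherence rather than exchanging it with incoherence. Applied to your case~(1) computation, it only yields the (true) fact that the coherent/point case also has $\|gH\|=\|h\|$. It says nothing about the incoherent case. Your ``folding back'' argument for $\|gh\|<\|h\|$ also silently assumes the overlap lies on the backward side of the fixed point.

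This step cannot be repaired, because case~(2) fails as stated. If $T_h\cap T_{ghG}$ is an incoherent nondegenerate arc, then $T_g\cap T_h=\{x\}$ and $g$ maps one germ of $T_h$ at $x$ onto the other. Your own concatenation test shows two things:
\begin{itemize}
\item $\|gh\|=\|h\|$ unless $g$ sends the forward ($h$-translation) germ onto the backward one;
\item $\|gH\|=\|h\|$ unless $g$ sends the backward germ onto the forward one.
\end{itemize}
Now suppose $g(x-t)=x+t$ for $0\le t\le\beta$, with $\beta>0$ maximal, while $g$ sends the forward germ off $T_h$. Then the overlap $[x,x+\beta]$ is incoherent, but $\|gh\|=\|h\|>\|gH\|=\max\{0,\|h\|-2\beta\}$. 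That is the conclusion of case~(1), not case~(2). If both germs are folded, both lengths drop. For $D_\infty$ acting on $\mathbb{R}$ by $h(t)=t+1$ and $g(t)=-t$, the overlap is the whole line and is incoherent, yet $\|gh\|=\|gH\|=0<\|h\|$. So none of (1)--(3) holds.

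What your method correctly yields in the intersecting case is the following. Coherent or point overlap gives $\|gh\|=\|gH\|=\|h\|$. Incoherent overlap gives $\max\{\|gh\|,\|gH\|\}\le\|h\|$ and $\min\{\|gh\|,\|gH\|\}<\|h\|$, with the strict one determined by which germ $g$ folds. The paper's proof only gestures at Paulin for this case, so it is the statement of case~(2) that needs correcting, not just your argument.
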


\begin{proof}
The statement is a condensed version of \cite[Proposition~1.7]{Pa89:Paulin1989TheGT}, where we have omitted most specifics about relative positions of axes. The statement there is divided into two cases depending on the relative position of $T_g, T_h$.  
If $T_g \cap T_h \neq \emptyset$, then $I\coloneqq T_{h} \cap T_{ghG}$ is a non-empty interval and contains $T_g \cap T_h$.
Then, the inequalities follow depending on the overlap between $T_h$ and $T_{ghG}$.
If $T_g \cap T_h = \emptyset$, then the equality of lengths in
case~(3) is stated explicitly
in~\cite[Section~1.5]{CM87:GroupActions}. The fact that
$T_{ghG} \cap T_h = \emptyset$ in this same case follows
from~\cite[Fact~1.7]{CM87:GroupActions}.
The last claim about case (3) follows from the analysis in~\cite[Proposition~1.7(2)]{Pa89:Paulin1989TheGT}, in particular, Figure~8 therein. There it is shown that if $z$ denotes the point in $T_g$ closest to $T_h$, then $T_{hg}$ contains the segment $[z,h(z)]$. Since $T_{g}=T_{G}$ for any $g \in \Gamma$, a symmetric argument applied to $G$ and $H$ shows that $T_{HG}$ contains the segment $[z,H(z)]$. In particular, if $y$ denotes the closest point of $T_h$ to $z$, both $T_{hg}$ and $T_{HG}$ contain the segment $[z,y]$, and overlap in that segment coherently. Hence, $hg$ and $gh$ overlap in that segment incoherently.

Since each case is exclusive, the triality depending on the relative position of $T_h$ and $T_{ghG}$ follows. 
\end{proof}

\begin{lemma}
  Let $T$ be an arbitrary $\Gamma$-tree. If $g,h$ are both elliptic
  isometries, we have $\norm{gh} = \norm{gH}$ and $\|gh\|>0$ if and only if
  $T_g \cap T_h = \emptyset$. Moreover, if $T_g \cap T_h = \emptyset$,
 then $T_{gh}, T_{hg}$ overlap incoherently  along the shortest connecting arc between $T_g$ and $T_h$.
\label{lem:pau_two_ell}
\end{lemma}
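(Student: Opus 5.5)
We work directly in the tree, using only the elementary geometry of elliptic isometries. Write $F_g = T_g$ and $F_h = T_h$ for the fixed-point sets; these are closed, non-empty subtrees.

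\textbf{The case $F_g \cap F_h \neq \emptyset$.} A common fixed point $p$ is fixed by $gh$ and by $gH$. Hence $\norm{gh} = \norm{gH} = 0$.

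\textbf{The case $F_g \cap F_h = \emptyset$.} Let $[u,v]$ be the bridge between the two subtrees, with $u \in F_g$, $v \in F_h$, and $D = d(u,v) > 0$. The bridge exists and is unique because fixed sets are closed subtrees.

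The key local fact is that $g$ moves the germ at $u$ pointing toward $v$. If $g$ fixed a non-degenerate initial segment of $[u,v]$, that segment would lie in $F_g$, contradicting the choice of $u$ as the closest point. Similarly, $h$ moves the germ at $v$ pointing toward $u$, and the same holds for $G$ and $H$.

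From this, $[h^{-1}\!\cdot\! \text{stuff}]$ concatenates without backtracking. Concretely, consider the path
\[
H u \to v \to u \to g v.
\]
It is the union of $H[u,v]$, $[v,u]$, and $g[u,v]$. At $v$ it turns because $h$ moves the germ at $v$. At $u$ it turns because $g$ moves the germ at $u$. So $[Hu, gv]$ is a geodesic of length $3D$ passing through $[v,u]$.

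This gives the standard criterion. The point $x = u$ satisfies $d(x, ghx) \cdots$; more cleanly, we apply the Culler–Morgan criterion that if $d(x, kx) > 0$ and $[x,kx] \cap [kx, k^2x]$ is just the point $kx$, then $k$ is hyperbolic with $\norm{k} = d(x,kx)$.

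Take $k = gh$ and $x = v$. Then
\[
kx = g h v = g v,
\]
and the segment $[v, gv]$ passes through $u$ and has length $2D$, since the two halves $[v,u]$ and $[u,gv]$ meet at $u$ with distinct germs. The next segment $[gv, (gh)^2 v] = g[v, ghv] = g[v, gv]$ begins at $gv$ along $g[v,u]$. The previous segment ends at $gv$ along $g[u,v]$ reversed. These are opposite directions of the same arc only if $g$ applied to the germ at $v$ toward $u$ is compared with its reverse, i.e. the germs at $gv$ are $g(\text{germ}_v \to u)$ on both sides.

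To see they do not cancel we use $h$ instead. Rewrite the orbit through $u$:
\[
[u, ghu] \supset [u, g v] \text{ segments}.
\]
Checking no backtracking reduces to the two germ facts above.

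Therefore $\norm{gh} = 2D$, and by the identical argument with $H$ (which has the same fixed set and also moves the germ at $v$), $\norm{gH} = 2D$. This proves both the equality and the positivity criterion.

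\textbf{Incoherent overlap.} The axis $T_{gh}$ contains the arc $[v,u]$ traversed from $v$ to $u$ by the construction above. Applying the same construction to $hg$ with base point $u$, the axis $T_{hg}$ contains $[u,v]$, translated from $u$ toward $v$. Alternatively, note that $hg = H(gh)h$, and $H$ fixes $v$ and flips germs. So on the common arc $[u,v]$ the two translations go in opposite directions, which is exactly incoherent overlap along the bridge.

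\textbf{Main obstacle.} The main obstacle is the bookkeeping of germs. One must verify carefully that concatenated segments meet at distinct directions, so that the Culler–Morgan hyperbolicity criterion applies. Alternatively, one can simply cite \cite[Proposition~1.6]{Pa89:Paulin1989TheGT} and \cite[Section~1.5]{CM87:GroupActions}, where this configuration is worked out.
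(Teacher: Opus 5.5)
Your strategy is sound. It is essentially a self-contained version of what the paper outsources. The paper quotes Paulin's Proposition~1.8 for the fact that the bridge $S=[x,y]$ (your $[u,v]$) lies in $T_{gh}$. It then gets $\norm{gh}=d(y,ghy)=d(y,gy)=d(y,gHy)=\norm{gH}$ from $T_H=T_h$, and reads incoherence off Paulin's Figure~9: in effect, $[y,gy]\subset T_{gh}$ and $[x,hx]\subset T_{hg}$ run through $S$ in opposite directions. You instead derive all of this from the two germ facts and the Culler--Morgan no-backtracking criterion. Done correctly, this removes the dependence on the citation.

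However, the one step that carries the argument is wrong as written. You claim $[gv,(gh)^2v]=g[v,ghv]=g[v,gv]$. In fact $g\cdot ghv=g^2v\neq ghgv=(gh)^2v$, and the correct identity is $[gv,(gh)^2v]=gh\,[v,gv]$. With your version the outgoing germ at $gv$ coincides with the incoming one, so you would have backtracking, as you yourself notice. The lines that follow (``use $h$ instead'', ``$[u,ghu]\supset[u,gv]$ segments'') assert rather than prove that there is none. So as it stands, the hyperbolicity of $gh$, and hence $\norm{gh}=2D$, is not established.

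The repair is one line. Let $\eta$ be the germ at $v$ of $[v,u]$. The segment $[v,gv]=[v,u]\cup g[u,v]$ has no backtracking at $u$, since $g$ moves the germ of $[u,v]$. It arrives at $gv$ along $g\eta$, while $gh[v,gv]$ leaves $gv=ghv$ along $gh\eta$; these differ exactly because $h\eta\neq\eta$. The criterion then gives:
\begin{itemize}
\item $\norm{gh}=2D$ and $v\in T_{gh}$;
\item $[v,u]\subset[v,gv]\subset T_{gh}$, traversed from $v$ to $u$.
\end{itemize}
Replacing $h$ by $H$ (same fixed set, $H\eta\neq\eta$) gives $\norm{gH}=2D$. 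Swapping the roles of $(g,u)$ and $(h,v)$, with base point $u$ and $hgu=hu$, gives $[u,v]\subset T_{hg}$ traversed from $u$ to $v$, which is the incoherence.

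Also discard your ``alternatively'': $hg=h(gh)h^{-1}$, not $H(gh)h$. With the correct conjugation, $T_{hg}=h\cdot T_{gh}$ carries the piece $[Hu,gv]$ of $T_{gh}$, traversed from $Hu$ to $gv$, to $[u,hgv]\supset[u,v]$, traversed from $u$ toward $hgv$. This gives the same conclusion. Finally, clean up the unfinished sentences (``$[h^{-1}\cdot\text{stuff}]$'', ``$d(x,ghx)\cdots$'').
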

\begin{proof}
The statement is contained
in~\cite[Proposition~1.8]{Pa89:Paulin1989TheGT}, except the equality
$\|gh\|=\|gH\|$, and the statement about incoherent overlap, which we outline here. If $T_g \cap T_h \neq
\emptyset$, then $\|gh\|=\|gH\|=0$. Otherwise, let $S=[x,y]$ denote
the minimal segment connecting $T_g$ and $T_h$; we already know $S
\subset T_{gh}$ by Paulin's result. Since $T_h = T_H$, we also have $S
\subset T_{gH}$. We therefore have
\[
  \norm{gh} = d(y,gh(y)) = d(y, g(y)) = d(y, gH(y)) = \norm{gH}.
\]
Finally, we prove the statement about incoherent overlap. Let $g(S) \cap S$ be the subsegment $[x,z'] \subset S$ fixed by $g$, and $h(S) \cap S$ the subsegment $[z,y]\subset S$ fixed by $h$. By~\cite[Figure~9]{Pa89:Paulin1989TheGT}, the subarc $[z,g(z)]=[z,gh(z)] \subset T_{gh}$ and the subarc $[z',h(z')]=[z',hg(z')] \subset T_{hg}$ overlap incoherently along the non-degenerate subsegment $[z,z']$.
\end{proof}
Finally, the following result allows us to convert incoherent overlaps into coherent ones.

\begin{lemma}[{\cite[Fait, page 854]{Gui05:Coeur}}]
Let $T$ be an arbitrary $\Gamma$-tree.
Let $g,h$ be two hyperbolic elements of $\Gamma$ so that
$T_{ghG}$ and $T_h$ do not share a ray, and let $I$ be a non-degenerate arc contained
in $T_h$. Then for large enough $k \geq 0$, $h^k g h^k$ is hyperbolic
and its axis contains $I$, translating along the same direction as $h$.
\label{lem:make_coherent}
\end{lemma}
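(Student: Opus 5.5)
This is Guirardel's lemma, and I would prove it directly from the tree geometry rather than cite it.

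\emph{Step 1: where the axis of $gh^kG$ sits.} Write $h' \coloneqq ghG$. This element is hyperbolic with axis $T_{h'} = g\cdot T_h$. Conjugating by $h^k$ shows that $h^k g h^k$ is conjugate to $g h^{2k}$. It is therefore cleaner to study $h^k g h^k = h^k (g h^k G)\, g$. Instead, I would use the standard axis criterion: an isometry $\phi$ is hyperbolic, and a given oriented arc $J$ lies on its axis with $\phi$ translating along $J$ in the direction of $J$, as soon as $\phi(J)$ meets $J$ only in its terminal endpoint in the right way. More precisely, if $J=[p,q]$ and $[p,q]\cap[q,\phi(q)] = \{q\}$ with $\phi(p)=q$, then $\phi$ is hyperbolic and $[p,q]\subset T_\phi$. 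This is the usual Culler--Morgan lemma.

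\emph{Step 2: use the no-common-ray hypothesis.} Because $T_{ghG}=g T_h$ and $T_h$ share no ray, their intersection is bounded (possibly empty). Hence there is a compact arc $K\subset T_h$ such that $gT_h\cap T_h\subset K$, and we may also arrange $I\subset K$. Pick a point $p\in T_h$ far behind $K$ in the direction opposite to $h$'s translation.

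Consider the candidate fundamental segment $[p, h^kgh^k(p)]$. We have $h^k(p)$ far ahead, beyond $K$. Applying $g$ sends the ray of $T_h$ starting at $h^k(p)$ (which lies outside $K$) to a ray of $gT_h$ that leaves $T_h$, except possibly through the bounded set $K$. Applying $h^k$ again then pushes everything forward along $T_h$ by $k\|h\|$.

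Concretely, I would show the following for $k$ large. The geodesic from $p$ to $\phi(p)$, with $\phi=h^kgh^k$, runs along $T_h$ from $p$ through $K$ (hence through $I$ in $h$'s direction) to the projection point $h^k(\pi)$. Here $\pi$ is the projection of $g h^k p$ onto $T_h$, and it stays in a bounded set independent of $k$, by the boundedness of $gT_h\cap T_h$. The geodesic then leaves $T_h$.

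\emph{Step 3: the same picture one step further.} Similarly, $[\phi(p),\phi^2(p)]$ enters $T_h$ near $h^k(\pi)$ shifted. The Gromov product $(p\cdot\phi^2 p)_{\phi p}$ is bounded independently of $k$, while $d(p,\phi p)$ grows like $2k\|h\|$. The criterion $d(p,\phi^2p) > d(p,\phi p)$ then gives that $\phi$ is hyperbolic with $\|\phi\| = d(p,\phi p) - 2(p\cdot \phi^2p)_{\phi p}$. It also gives that $T_\phi$ contains $[p,\phi p]$ minus initial and terminal pieces of bounded length. Choosing $p$ far enough behind $K$ (depending only on those bounds), that trimmed segment still contains $I$, traversed in $h$'s direction.

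\emph{Main obstacle.} The delicate point is the uniform bound on the projection of $g h^k p$ onto $T_h$. This bound must hold independently of $k$, and that independence is exactly where ``no shared ray'' is used. If $gT_h$ and $T_h$ are disjoint, the projection is a single point $\pi_0$. Otherwise they meet in a compact arc $K_0$, and for $h^kp$ beyond $K_0$ the point $gh^kp$ lies on the ray of $gT_h$ that has exited $T_h$ past one endpoint of $K_0$. So its projection is that endpoint.

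I would handle the two cases separately. In each case I would check that the orientation comes out along $h$: the path goes $p \to$ (ahead through $I$) $\to h^k(\text{endpoint})$, which lies ahead of $I$ once $k\|h\|$ exceeds the diameter of $K\cup I$.
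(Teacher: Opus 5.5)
The paper gives no argument here; it imports the statement from Guirardel, so your self-contained proof is a genuinely different and more informative route. Its core is right. The no-shared-ray hypothesis is used exactly to make the projection $e$ of $gh^kp$ onto $T_h$ independent of $k$ for large $k$. Then $[p,\phi p]$, with $\phi=h^kgh^k$, runs along $T_h$ from $p$ to $h^ke$ in the direction of $h$ and leaves $T_h$. As a bonus, nothing in your argument uses that $g$ is hyperbolic on $T$, so you prove slightly more than the cited statement.

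Two repairs are needed.

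First, the bounded set governing the stabilization is $T_h\cap g^{-1}T_h=g^{-1}(gT_h\cap T_h)$, not $gT_h\cap T_h$. Indeed $gh^kp\notin T_h$ as soon as $h^kp\notin g^{-1}K_0$. In the disjoint case you must also put $\pi_0$ into $K$.

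Second, and more importantly, Step~3 is not justified as written. The segment $[\phi p,\phi^2p]=\phi[p,\phi p]$ runs along $\phi T_h=h^kgT_h$; it does not ``enter $T_h$'', and you give no actual reason for the bound on $(p\cdot\phi^2p)_{\phi p}$. You then choose $p$ ``depending on those bounds'', which requires the bound to be independent of $p$, and you never check this.

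The clean fix is as follows.
\begin{itemize}
\item By equivariance, $(p\cdot\phi^2p)_{\phi p}=(\phi^{-1}p\cdot\phi p)_p$.
\item The element $\phi^{-1}=H^kGH^k$ satisfies the same hypothesis, because $GT_h\cap T_h=G(T_h\cap gT_h)$.
\item Apply your Step~2 to $\phi^{-1}$. For large $k$, the segment $[p,\phi^{-1}p]=h^{-k}[h^kp,Gh^{-k}p]$ leaves $p$ along $T_h$ in the direction of $H$, reaching $h^{-k}e^*$, where $e^*$ is the eventual projection of $Gh^{-k}p$ onto $T_h$.
\item Meanwhile $[p,\phi p]$ leaves $p$ in the direction of $h$.
\end{itemize}
Hence the Gromov product is exactly $0$, for every $p\in T_h$ once $k$ is large (depending on $p$).

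Your Step~1 criterion then finishes the proof directly. The element $\phi$ is hyperbolic and $p\in T_\phi$. Moreover $T_\phi\supset[\phi^{-1}p,\phi p]\supset[h^{-k}e^*,h^ke]\supset I$, and $\phi$ translates along it in the direction of $h$. No trimming and no special choice of $p$ far behind $K$ is needed.
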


We say that a $\Gamma$-tree $(\Gamma,T)$ is \emph{minimal} if $T$ is
non-trivial and there is no proper subtree of $T$ that is $\Gamma$-invariant.
If $\norm{\cdot}$ is non-zero then by
\cite[Proposition~3.1]{CM87:GroupActions} there is a
unique minimal invariant subtree.
(If $\norm{\cdot}$ is zero then there
is a global fixed point and so the action is never minimal.)
The length function for the
restriction to this minimal invariant subtree is equal to the length
function of the original tree. Hence, without loss of generality, one
can always restrict to the action on the minimal invariant subtree
without changing the length functional. Note, however, that if the translation lengths of two $\Gamma$-trees are
equal, that does not necessarily imply the trees are equivariantly
isometric, but the exceptions all have fixed ends \cite[Theorem~3.7]{CM87:GroupActions}. 

\subsection{Surface group actions on real trees}

In this section, we assume $\Gamma = \pi_1(S)$. An \emph{$S$-tree} is
a $\bbR$-tree $T$ with a minimal action of $\pi_1(S)$.
It is standard that a non-zero measured lamination $\lambda$ defines a
(dual) irreducible $S$-tree (see~\cite{MO84:Valuations}).
It follows from the construction that the translation length of
$g \in\pi_1(S)$  on
$T_\lambda$ is equal to
$i(\lambda,[g])$, where $[g]$ denotes the free homotopy class of a
closed curve associated to $g$.
Given a $S$-tree $T$, if there is a measured lamination $\lambda$ and $\pi_1(S)$-equivariant isometry $\phi \colon T_\lambda \to T$, we say that $T$ is \emph{dual to}~$\lambda$.

Recall that for $\Sigma$ a fixed hyperbolic structure on $S$, the
corresponding hyperbolic axis in $\wt{\Sigma}$ is denoted $\ora{g}$;
be careful to distinguish this from the characteristic set $T_g$ in
the action on~$T$.

In checking whether an $S$-tree is dual to a measured
lamination, there are several relevant conditions on the action:
\begin{itemize}
\item The action is irreducible, not dihedral or fixed end.
\item The action is \emph{small}: the stabilizer of any
  (non-degenerate) arc is virtually cyclic.
\item The action  \emph{preserves axis intersection}: for any pair
  $g,h \in \pi_1(S)$ acting hyperbolically on $T$, if
  $\ora{g},\ora{h}$ cross then $T_g \cap T_h \neq \emptyset$.
\item The action  \emph{preserves characteristic intersection}: for any pair
  $g,h \in \pi_1(S)$, if
  $\ora{g},\ora{h}$ cross then $T_g \cap T_h \neq \emptyset$. (Note
  that $g$ and/or $h$ may be elliptic in action on $T$.)
\end{itemize}
Morgan-Otal~\cite{MO93:RelativeGrowths} showed that $S$-trees dual to measured laminations
satisfy the first three. They also showed that if an $S$-tree
is small and preserves axis intersection, it is dual to a measured
lamination, and asked if the ``small'' condition could be dropped.

In a followup, Skora noted that dihedral
$S$-trees give counter examples (preserving axis intersection while not
being dual to a measured lamination), but showed that irreducible
$S$-trees that preserve axis intersection are dual to measured
laminations~\cite{Sko90:GeometricAction}.

In this section, we prove Theorem~\ref{mainthm:skora}, in particular showing
that Skora's theorem~\cite{Sko90:GeometricAction} follows readily from
our Theorems~\ref{thm:intersect} and the lamination part of
\ref{thm:hyp-lam-char} and classical considerations
from actions on real trees.

We note that in a later followup,  Skora showed that an $S$-tree with
a small action is also
dual to a measured lamination~\cite{Sk96:Splittings}.
It follows that the action being small is equivalent to it being irreducible and preserving axis intersection.

In turn, we also give three new characterizations of $S$-trees
dual to measured laminations (from Theorem~\ref{mainthm:skora}):
\begin{itemize}
\item Irreducible $S$-trees that preserve characteristic intersection.
\item $S$-trees satisfying R-oriented smoothing:
 \begin{align}
\|ab\| &\geq  \|a \| + \|b\| && \mbox{ if } (a,b)\mbox{ are } \mbox{R-parallel;} \\
\|a\| + \|b\| &\geq \|ab\|  && \mbox{ if }  (a,b)\mbox{ are } \mbox{R-crossing}.
  \end{align}
\item $S$-trees where no parallel pair $(\ora{a},\ora{b})$ has incoherent overlap in the tree.
\end{itemize}

We start by proving a geometric criterion of reducibility for axis preserving actions.

\begin{lemma}
Suppose that an $S$-tree preserves axis intersection. If the $S$-tree
is irreducible, then for all hyperbolic elements $g \in \pi_1(S)$,
there exist a hyperbolic element $h \in \pi_1(S)$ so that $\ora{h}$
R-crosses $\ora{g}$ and $T_g \cap T_h$ is bounded.
\label{lem:if_unbounded_then_reducible}
\end{lemma}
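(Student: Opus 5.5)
The plan is to argue by contradiction using Lemma~\ref{lem:CM_red_criterion}. Fix a hyperbolic $g$ (hyperbolic on $T$) and suppose that every hyperbolic $h$ with $\ora{h}$ R-crossing $\ora{g}$ has $T_g \cap T_h$ unbounded. The goal is to show that then $T_g$ meets $T_h$ for \emph{every} hyperbolic $h\in\pi_1(S)$. Lemma~\ref{lem:CM_red_criterion} then makes the action reducible, contradicting irreducibility.

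First I would check that there are many R-crossing elements that are hyperbolic on $T$. Take any $h$ with $\ora{h}$ R-crossing $\ora{g}$. By Lemma~\ref{lem:rhs_prod}, the elements $hg^n$ also R-cross $\ora{g}$ (after possibly swapping roles via the lemma's statement with $x=g$). If $h$ is elliptic on $T$, then Lemma~\ref{lem:pau_ell_hyp} shows that $g^n h$ is hyperbolic for suitable $n$, since $\|g^nh\|$ or $\|g^nH\|$ equals $n\|g\|>0$ in all three cases. So the family $\mathcal{H}$ of elements hyperbolic on $T$ whose axes R-cross $\ora{g}$ is nonempty and stable under such products. By the contradiction hypothesis, each $h\in\mathcal{H}$ has $T_h$ sharing a ray with $T_g$. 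Because $T_g$ is a line and $T_h$ is a line, this means $T_h$ contains one of the two ends of $T_g$.

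Next, let $k$ be an arbitrary element hyperbolic on $T$. I need $T_k\cap T_g\neq\emptyset$. If $\ora{k}$ crosses $\ora{g}$, this is immediate from axis-intersection preservation, after replacing $k$ by $K$ to make the crossing right-handed. Otherwise, I choose $h$ with $\ora{h}$ R-crossing both $\ora{g}$ and $\ora{k}$. Such an $h$ exists by density of axes (Lemma~\ref{lem:diagonaldense}), and I can make it hyperbolic on $T$ by the modification above, multiplying by powers of $g$ while keeping the crossing with $\ora{k}$ for large powers via Lemma~\ref{lem:convergence_action}. Preservation of axis intersection gives $T_h\cap T_k\neq\emptyset$. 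To upgrade this to $T_g\cap T_k\neq\emptyset$, I use products $h_n=g^n h$ (or $h^n g$) lying in $\mathcal{H}$ and still crossing $\ora{k}$. Since $T_{h_n}$ meets $T_k$ and shares an end-ray with $T_g$, with the overlap with $T_g$ growing as $n\to\infty$ in the tree (Lemma~\ref{lem:make_coherent}), the arcs from $T_k$ to $T_g$ are forced to shrink to zero.

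The main obstacle is the last step: controlling where $T_{h_n}$ meets $T_k$ relative to $T_g$ as $n$ grows. An alternative I would try first is to show that all $T_h$, $h\in\mathcal{H}$, contain a common end $\xi$ of $T_g$. Combining two such $h,h'$ with different ends via Lemma~\ref{lem:foundation_trees} should yield an element of $\mathcal{H}$ with bounded overlap, which is a contradiction. With a common end in hand, I would then show that $\xi$ is fixed by a generating set, because conjugates $aha^{-1}$ for $a$ ranging over generators can be rearranged into $\mathcal{H}$. This gives a fixed end, so the action is reducible, which is again the desired contradiction.
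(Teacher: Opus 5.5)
Your overall frame (contrapositive via Lemma~\ref{lem:CM_red_criterion}, with crossing $\ora{k}$ handled by axis preservation) is the paper's. However, the step that is supposed to produce the contradiction does not work. Let $\ora{k}$ be parallel to $\ora{g}$ with $T_g\cap T_k=\emptyset$, and let $[p,q]$ be the bridge, $p\in T_g$, $q\in T_k$. Any line that meets $T_k$ and contains a ray of $T_g$ must contain all of $[p,q]$ and must meet $T_g$ in a ray issuing from $p$. So the properties you establish for $T_{h_n}$ (it meets $T_k$, it shares an end-ray with $T_g$) are exactly realized by a line running from an end of $T_g$ to $p$, across $[p,q]$, and on into $T_k$. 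The bridge has the same length $d(p,q)>0$ for every $n$, and the overlap with $T_g$ is already a full ray, so it cannot grow. Nothing makes the distance from $T_k$ to $T_g$ shrink, and these facts are consistent with $T_g\cap T_k=\emptyset$, so they cannot give a contradiction. What is missing is a specific element whose axis in $\wt{S}$ still R-crosses $\ora{g}$ but whose axis in $T$ is forced, by its position relative to the bridge, to meet $T_g$ in a bounded set. The paper produces it as a product of high powers of the parallel element and the crossing element ($h^nk^n$ in its notation, with $\ora{h}$ parallel to $\ora{g}$ and $\ora{k}$ crossing both), which R-crosses $\ora{g}$ for large $n$. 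Separately, $hg^{\pm n}$ need not keep crossing $\ora{k}$: one endpoint tends to an endpoint of $\ora{g}$, on the same side of $\ora{k}$ as $\ora{g}$, and the other tends to $h\cdot g^{\pm}$, which need not lie beyond $\ora{k}$.

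Your alternative route is sound in outline, and once completed it would not even use preservation of axis intersection. But both of its steps are asserted rather than proved.

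For the common end, suppose $h,h'\in\mathcal{H}$ meet $T_g$ in rays toward opposite ends. First replace $h'$ by $g^nh'g^{-n}\in\mathcal{H}$ (tree axis $g^nT_{h'}$) so that the two rays become disjoint. Then $T_h\cap T_{h'}=\emptyset$, with bridge a segment of $T_g$. So $hh'$ is hyperbolic by Lemma~\ref{lem:foundation_trees}(2), and its axis (a fundamental domain of $T_{h'}$, the bridge, a fundamental domain of $T_h$, and their translates) leaves $T_g$ at both ends. Meanwhile $\ora{hh'}$ R-crosses $\ora{g}$ by Lemma~\ref{lem:rhs_prod}. Without that conjugation, an incoherent overlap of $T_h$ and $T_{h'}$ along $T_g$ is left unhandled.

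For the fixed end, conjugation is the wrong mechanism: $aha^{-1}\in\mathcal{H}$ only gives $\xi\in\{a\xi,a\eta\}$, where $\eta$ is the other end of $T_h$. Instead, every $h\in\mathcal{H}$ fixes $\xi$ because $\xi$ is an end of its axis, and you must show that $\mathcal{H}\cup\{g\}$ generates $\pi_1(S)$. Each $r$ with $\ora{r}$ R-crossing $\ora{g}$ either lies in $\mathcal{H}$ or, if elliptic, equals $(rg^{\pm n})g^{\mp n}$ with $rg^{\pm n}\in\mathcal{H}$ by Lemma~\ref{lem:pau_ell_hyp}. The relevant pair is $rg^{n},rg^{-n}$; your $g^nH$ has axis L-crossing $\ora{g}$. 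Such $r$ generate $\pi_1(S)$ by an NS-dynamics argument: for any $a$, choose R-crossing $r_1,r$ with $r^Mar_1^m$ R-crossing. Then all of $\pi_1(S)$ fixes $\xi$, contradicting irreducibility.
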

\begin{proof}
    We will prove the contra-positive. Namely, if there exists a hyperbolic
    element $g$ so that for every hyperbolic element $h$ with $\ora{h}$ R-crossing
    $\ora{g}$, the intersection $T_g \cap T_h$ is unbounded, we will deduce
    that the action must be reducible. (Note $T_g \cap T_h$ cannot be
    empty since the action on $T$ is axis-preserving.)
    
    By Lemma~\ref{lem:CM_red_criterion}, it suffices to show that for any hyperbolic element $h$ so that $(\ora{g},\ora{h})$ are parallel (or anti-parallel), $T_g \cap T_h \neq \emptyset$.

    For the sake of contradiction, suppose $(\ora{g},\ora{h})$ are parallel and $T_g \cap T_h = \emptyset$. By density of axes of hyperbolic elements, let $k \in \pi_1(S)$ so that $\ora{k}$ R-crosses $\ora{g}$ and $\ora{h}$, as in Figure~\ref{fig:k_crossing}. Since the $S$-tree preserves axis intersection, $T_k$ intersects both $T_g$ and $T_h$. Thus, by hypothesis, $T_g \cap T_k$ is unbounded.
      
    By Lemma~\ref{lem:measures_parallelogram}, for $n$ large enough,
    $\ora{h^nk^n}$ R-crosses $\ora{g}$ (in addition to crossing $\ora{h}$). See
    Figure~\ref{fig:tree_k_crossing} for an illustration of the
    purported situation in~$T$. In
    particular, since the action preserves axis intersection, $T_{h^nk^n}$ and
    $T_g$ must intersect. By assumption on~$g$, this intersection is unbounded.
      We claim this forces $T_g$ and $T_h$ to intersect. Indeed,  the length of the overlap of $T_{h^nk^n}$ cannot be strictly larger or smaller than that of the overlap of $T_k$ and $T_h$ (see Figure~\ref{fig:tree_k_crossing} for an illustration of a ``strictly larger overlap''), since otherwise, from the fact that $T_{h^nk^n}$ and $T_g$ intersect, this would create a loop. This implies that as soon as $T_k$ and $T_{h^nk^n}$ intersect, they do so in a ray. If $T_k$ and $T_h$ also overlap in a ray, then $T_g, T_h$ must overlap in the same ray, and hence $T_g \cap T_h \neq \emptyset$, a contradiction. Otherwise, the intersection of $T_k$ and $T_h$ is bounded. But then this forces the intersection of $T_{h^nk^n}$ and $T_g$ to be bounded, which is a contradiction.
    
    If $(\ora{g},\ora{h})$ are anti-parallel, proceed similarly with the parallel pair $(\ora{g},\ora{H})$.
\end{proof}

We are now ready for the main component of Theorem~\ref{mainthm:skora}.

\begin{theorem}
Let $T$ be an irreducible $S$-tree that preserves axis
intersection. Then:
\begin{itemize}
\item $T$ preserves characteristic intersection; and
\item $\| \cdot \|$ satisfies max-smoothing. 
\end{itemize}
\label{thm:small}
\end{theorem}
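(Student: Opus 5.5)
We prove the two bullets in order: characteristic intersection first, then max-smoothing.

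\emph{Characteristic intersection.} Let $(\ora{g},\ora{h})$ cross. The case where both are hyperbolic on $T$ is exactly axis intersection, so the work lies in the elliptic cases.

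Suppose first that $g$ is elliptic and $h$ is hyperbolic, and assume $T_g \cap T_h = \emptyset$. By Lemma~\ref{lem:pau_ell_hyp}(3), $\|gh\|=\|gH\|>\|h\|$, so $gh$ and $hg$ are hyperbolic and their axes overlap incoherently. On the surface, choose a hyperbolic $k$ whose axis crosses $\ora{g}$ and avoids $T_g$ in the tree; irreducibility together with Lemma~\ref{lem:if_unbounded_then_reducible} should provide such elements. Then replace $g$ by the products $h^n g h^n$ or $k^n g k^n$. By Lemma~\ref{lem:rhs_prod}, these elements still cross $\ora{h}$ in $\wt S$. By Lemma~\ref{lem:make_coherent}, they are hyperbolic on $T$ with axes containing a prescribed arc of $T_h$ (respectively $T_k$). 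Axis intersection then forces these axes to meet $T_h$. We would then push the resulting intersections back to $T_g$, using that the distance from $T_g$ to $T_h$ is positive, and obtain a contradiction.

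If $g$ and $h$ are both elliptic, the argument is the same. Use Lemma~\ref{lem:pau_two_ell}: if $T_g \cap T_h=\emptyset$, then $gh$ and $hg$ are hyperbolic with incoherent overlap along the bridge between $T_g$ and $T_h$. Conjugating by powers of an auxiliary hyperbolic element crossing both should again contradict axis intersection.

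\emph{Max-smoothing.} By Theorem~\ref{prop:char_laminations_plus} together with Lemmas~\ref{lem:oriented_smoothing_lemma_top_alg} and \ref{lem:smoothing_lemma_top_alg}, it suffices to verify two equalities and stability.
\begin{itemize}
\item Disconnected: for crossing $(a,b)$ we need $\|a\|+\|b\|=\max\{\|ab\|,\|aB\|\}$. This follows directly from the characteristic intersection just proved, combined with Lemma~\ref{lem:foundation_trees}(1) in the hyperbolic case, Lemma~\ref{lem:pau_ell_hyp}(1,2) in the mixed case, and Lemma~\ref{lem:pau_two_ell} in the elliptic case (there all terms vanish).
\item Connected: for parallel $(a,b)$ we need $\|ab\|=\max\{\|a\|+\|b\|,\|aB\|\}$. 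Since $(\ora{ab},\ora{ba})$ cross, characteristic intersection applied to the crossing pair gives $T_{ab}\cap T_{ba}\neq\emptyset$. By Proposition~\ref{prop:max_twice}, the maximum of $\|a\|+\|b\|$, $\|ab\|$, $\|aB\|$ is attained twice, so the only configuration to rule out is $\|ab\|<\|a\|+\|b\|=\|aB\|$, i.e.\ $a$ and $b$ overlapping incoherently. To exclude it, apply Lemma~\ref{lem:foundation_trees}(1c): then $T_a\cap T_b=T_{aB}\cap T_{Ba}$. Relate this to the crossing pair $(\ora{ab},\ora{ba})$ via Lemma~\ref{lem:righthandedfoundation}, and use Lemma~\ref{lem:a_parallel_Ba} to show that such an incoherent overlap contradicts $T_{ab}$ and $T_{ba}$ meeting with the coherent structure forced by crossing.
\end{itemize}
Power smoothing and stability are automatic for translation lengths, since $\|a^n\|=n\|a\|$.

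The main obstacle is excluding incoherent overlap for parallel pairs. If the direct argument in the connected case fails, I would instead pass to large powers. Proposition~\ref{prop:eventual_splitting} lets us assume $\ora{a^nB^m}$ is splitting; Lemma~\ref{lem:powers_goodpair} then makes $(a^n,B^m)$ a good pair, and Lemma~\ref{lem:oppgoodpair} gives disjoint axes in $T$ for $a^nb^m$ and $b^ma^n$. But these elements cross in $\wt S$ by Lemma~\ref{lem:righthandedfoundation}, contradicting axis intersection.
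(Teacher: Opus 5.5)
Your proposal is missing its central ingredient. You never establish that a pair $(a,b)$ with $(\ora a,\ora b)$ parallel in $\wt S$ cannot overlap incoherently in $T$; this is Proposition~\ref{prop:parallel-incoherent}, on which the paper's proof rests.

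Your fallback via Lemmas~\ref{lem:powers_goodpair} and~\ref{lem:oppgoodpair} is correct, and it is exactly the paper's argument, but only when $T_a\cap T_b$ is a \emph{bounded} arc. Lemma~\ref{lem:powers_goodpair} requires an overlap of finite length, and no choice of powers makes an infinite overlap shorter than $\min\{\|a^n\|,\|b^m\|\}$. (The appeal to Proposition~\ref{prop:eventual_splitting} plays no role.)

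The unbounded case is exactly where irreducibility must enter, and your max-smoothing argument never uses irreducibility. It cannot succeed without it. A dihedral $S$-tree preserves axis intersection, yet every axis is the invariant line, parallel pairs overlap incoherently along all of it, and connected smoothing fails (Lemma~\ref{lem:red_not_smoothing}). Your ``direct argument'' therefore cannot work: in that example $T_{ab}\cap T_{ba}\neq\emptyset$, and in any case Lemma~\ref{lem:a_parallel_Ba} only applies when $(\ora{aB},\ora{Ba})$ cross.

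The paper handles unbounded incoherent overlap in three steps:
\begin{itemize}
\item take $c_0$ from Lemma~\ref{lem:if_unbounded_then_reducible}, crossing $\ora a$ with $T_{c_0}\cap T_a$ empty or bounded;
\item make that overlap coherent with $c_1=a^kc_0a^k$ (Lemma~\ref{lem:make_coherent});
\item conjugate to $c_2=a^nc_1a^{-n}$, which is parallel to $\ora b$ in $\wt S$ but has bounded incoherent overlap with $T_b$.
\end{itemize}
This reduces to the bounded case.

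The same proposition is what your characteristic-intersection step needs, and the mechanism you sketch there yields no contradiction. Lemma~\ref{lem:make_coherent} is stated for two hyperbolic elements. Moreover, if $g$ is elliptic with $T_g\cap T_h=\emptyset$, the axis of $h^ngh^n$ (a conjugate of $gh^{2n}$) already meets $T_h$, so axis intersection is not violated. ``Pushing the intersections back to $T_g$'' is not an argument.

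With the proposition, the step takes one line. You already noted that $T_{gh}$ and $T_{hg}$ overlap incoherently (Lemma~\ref{lem:pau_ell_hyp}(3), resp.\ Lemma~\ref{lem:pau_two_ell}). Since $(\ora g,\ora h)$ cross, $(\ora{gh},\ora{hg})$ are parallel by Lemma~\ref{lem:righthandedfoundation}, which is the contradiction. Your deduction of disconnected max-smoothing from characteristic intersection is then fine.

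Finally, in the connected case you equate the bad configuration $\|ab\|<\|a\|+\|b\|=\|aB\|$ with incoherent overlap of $a$ and $b$. That presupposes both act hyperbolically on $T$. If $a$ is elliptic and $b$ hyperbolic, the bad configuration is that $T_{abA}$ and $T_b$ overlap incoherently (Lemma~\ref{lem:pau_ell_hyp}(2)). Since $(\ora a,\ora b)$ do not cross, $a\cdot\ora b$ and $\ora b$ are crossing or anti-parallel (Lemma~\ref{lem:rhs}), so Proposition~\ref{prop:parallel-incoherent} does not apply directly. The paper needs a separate argument here (the parallel subcase of Case~2), feeding the pairs $(ab,AB)$ and $(a,baB)$ into the elliptic--elliptic cases. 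Your proposal does not treat this case at all.
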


\begin{figure}
  \begin{subfigure}[b]{0.45\textwidth}
    \centering
    \fontsize{9pt}{9pt}\selectfont%% Creator: Inkscape 1.3 (0e150ed6c4, 2023-07-21), www.inkscape.org
%% PDF/EPS/PS + LaTeX output extension by Johan Engelen, 2010
%% Accompanies image file 'k_crossing.pdf' (pdf, eps, ps)
%%
%% To include the image in your LaTeX document, write
%%   \input{<filename>.pdf_tex}
%%  instead of
%%   \includegraphics{<filename>.pdf}
%% To scale the image, write
%%   \def\svgwidth{<desired width>}
%%   \input{<filename>.pdf_tex}
%%  instead of
%%   \includegraphics[width=<desired width>]{<filename>.pdf}
%%
%% Images with a different path to the parent latex file can
%% be accessed with the `import' package (which may need to be
%% installed) using
%%   \usepackage{import}
%% in the preamble, and then including the image with
%%   \import{<path to file>}{<filename>.pdf_tex}
%% Alternatively, one can specify
%%   \graphicspath{{<path to file>/}}
%% 
%% For more information, please see info/svg-inkscape on CTAN:
%%   http://tug.ctan.org/tex-archive/info/svg-inkscape
%%
\begingroup%
  \makeatletter%
  \providecommand\color[2][]{%
    \errmessage{(Inkscape) Color is used for the text in Inkscape, but the package 'color.sty' is not loaded}%
    \renewcommand\color[2][]{}%
  }%
  \providecommand\transparent[1]{%
    \errmessage{(Inkscape) Transparency is used (non-zero) for the text in Inkscape, but the package 'transparent.sty' is not loaded}%
    \renewcommand\transparent[1]{}%
  }%
  \providecommand\rotatebox[2]{#2}%
  \newcommand*\fsize{\dimexpr\f@size pt\relax}%
  \newcommand*\lineheight[1]{\fontsize{\fsize}{#1\fsize}\selectfont}%
  \ifx\svgwidth\undefined%
    \setlength{\unitlength}{144bp}%
    \ifx\svgscale\undefined%
      \relax%
    \else%
      \setlength{\unitlength}{\unitlength * \real{\svgscale}}%
    \fi%
  \else%
    \setlength{\unitlength}{\svgwidth}%
  \fi%
  \global\let\svgwidth\undefined%
  \global\let\svgscale\undefined%
  \makeatother%
  \begin{picture}(1,1)%
    \lineheight{1}%
    \setlength\tabcolsep{0pt}%
    \put(0,0){\includegraphics[width=\unitlength,page=1]{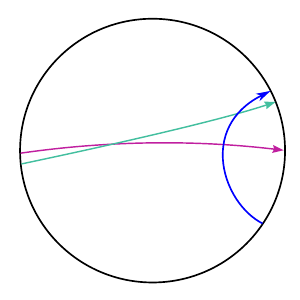}}%
    \put(0.93589672,0.70405374){\color[rgb]{0,0,1}\makebox(0,0)[t]{\lineheight{1.25}\smash{\begin{tabular}[t]{c}$h$\end{tabular}}}}%
    \put(0.84958184,0.44221044){\color[rgb]{0.75294118,0.12156863,0.62352941}\makebox(0,0)[t]{\lineheight{1.25}\smash{\begin{tabular}[t]{c}$k$\end{tabular}}}}%
    \put(0.50197206,0.59773768){\color[rgb]{0.2627451,0.74901961,0.62352941}\makebox(0,0)[t]{\lineheight{1.25}\smash{\begin{tabular}[t]{c}$h^nk^n$\end{tabular}}}}%
    \put(0,0){\includegraphics[width=\unitlength,page=2]{k_crossing.pdf}}%
    \put(0.07669038,0.67827698){\color[rgb]{1,0,0}\makebox(0,0)[t]{\lineheight{1.25}\smash{\begin{tabular}[t]{c}$g$\end{tabular}}}}%
  \end{picture}%
\endgroup%

    \caption{Configuration of axes on $\wt{S}$}
    \label{fig:k_crossing}
  \end{subfigure}
  \hfill
  \begin{subfigure}[b]{0.45\textwidth}
    \centering
    \fontsize{9pt}{9pt}\selectfont%% Creator: Inkscape 1.3 (0e150ed6c4, 2023-07-21), www.inkscape.org
%% PDF/EPS/PS + LaTeX output extension by Johan Engelen, 2010
%% Accompanies image file 'tree_k_crossing.pdf' (pdf, eps, ps)
%%
%% To include the image in your LaTeX document, write
%%   \input{<filename>.pdf_tex}
%%  instead of
%%   \includegraphics{<filename>.pdf}
%% To scale the image, write
%%   \def\svgwidth{<desired width>}
%%   \input{<filename>.pdf_tex}
%%  instead of
%%   \includegraphics[width=<desired width>]{<filename>.pdf}
%%
%% Images with a different path to the parent latex file can
%% be accessed with the `import' package (which may need to be
%% installed) using
%%   \usepackage{import}
%% in the preamble, and then including the image with
%%   \import{<path to file>}{<filename>.pdf_tex}
%% Alternatively, one can specify
%%   \graphicspath{{<path to file>/}}
%% 
%% For more information, please see info/svg-inkscape on CTAN:
%%   http://tug.ctan.org/tex-archive/info/svg-inkscape
%%
\begingroup%
  \makeatletter%
  \providecommand\color[2][]{%
    \errmessage{(Inkscape) Color is used for the text in Inkscape, but the package 'color.sty' is not loaded}%
    \renewcommand\color[2][]{}%
  }%
  \providecommand\transparent[1]{%
    \errmessage{(Inkscape) Transparency is used (non-zero) for the text in Inkscape, but the package 'transparent.sty' is not loaded}%
    \renewcommand\transparent[1]{}%
  }%
  \providecommand\rotatebox[2]{#2}%
  \newcommand*\fsize{\dimexpr\f@size pt\relax}%
  \newcommand*\lineheight[1]{\fontsize{\fsize}{#1\fsize}\selectfont}%
  \ifx\svgwidth\undefined%
    \setlength{\unitlength}{179.42400742bp}%
    \ifx\svgscale\undefined%
      \relax%
    \else%
      \setlength{\unitlength}{\unitlength * \real{\svgscale}}%
    \fi%
  \else%
    \setlength{\unitlength}{\svgwidth}%
  \fi%
  \global\let\svgwidth\undefined%
  \global\let\svgscale\undefined%
  \makeatother%
  \begin{picture}(1,0.52487958)%
    \lineheight{1}%
    \setlength\tabcolsep{0pt}%
    \put(0.88096779,0.43853534){\color[rgb]{1,0,0}\makebox(0,0)[t]{\lineheight{1.25}\smash{\begin{tabular}[t]{c}$T_g$\end{tabular}}}}%
    \put(0.64215352,0.19221149){\color[rgb]{0.2627451,0.74901961,0.62352941}\makebox(0,0)[t]{\lineheight{1.25}\smash{\begin{tabular}[t]{c}$T_{h^nk^n}$\end{tabular}}}}%
    \put(0.42439853,0.31014829){\color[rgb]{0.75294118,0.12156863,0.62352941}\makebox(0,0)[t]{\lineheight{1.25}\smash{\begin{tabular}[t]{c}$T_{k}$\end{tabular}}}}%
    \put(0.89892768,0.29760821){\color[rgb]{0,0,1}\makebox(0,0)[t]{\lineheight{1.25}\smash{\begin{tabular}[t]{c}$T_h$\end{tabular}}}}%
    \put(0,0){\includegraphics[width=\unitlength,page=1]{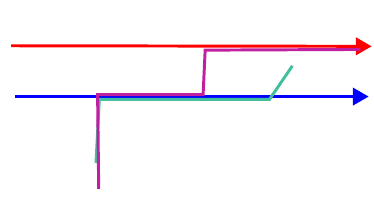}}%
  \end{picture}%
\endgroup%

    \caption{Configuration of axes on $T$}
    \label{fig:tree_k_crossing}
  \end{subfigure}
  \caption{The proof of Lemma~\ref{lem:if_unbounded_then_reducible} }
\label{fig:main_k_crossing}
\end{figure}

Note that, in light of Proposition~\ref{prop:max_twice}, to prove
Theorem~\ref{thm:small} it suffices
to prove smoothing, i.e., that the multicurve with the crossing to be
smoothed---whichever of $[gh]$, $[gH]$, or $[g][h]$ it arises
in---realizes one of
the translation lengths appearing in the maximum of the three.
In practice, in the proof below we will verify max-smoothing directly.

There is one particular fact that we will use multiple times in
different cases, so we pull it out as a lemma in advance. This lemma
is the key place where we invoke irreducibility.

\begin{proposition}\label{prop:parallel-incoherent}
  Let $T$ be an irreducible $S$-tree that preserves axis
  intersection. Then, for $g,h \in \pi_1(S)$, it cannot happen that
  $(\ora{g},\ora{h})$ are parallel while $T_g,T_h$ intersect incoherently.
\end{proposition}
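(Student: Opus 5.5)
I would argue by contradiction. Suppose $(\ora{g},\ora{h})$ are parallel in $\wt S$, and suppose further, replacing $h$ by a suitable power and using symmetry, that they are R-parallel. Suppose also that $T_g$ and $T_h$ overlap incoherently in a non-degenerate arc $\Delta$.

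The idea is to manufacture a pair of elements that \emph{cross} in $\wt S$ but whose characteristic sets in $T$ are disjoint. That contradicts preservation of axis intersection.

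\textbf{Step 1: pass to a good pair.} If $\Delta$ is bounded, Lemma~\ref{lem:powers_goodpair} gives $m>0$ and $n$ such that $g^m$ and $h^n$ form a good pair. Incoherence forces $n<0$, so $g^m$ and $H^{|n|}$ overlap coherently. Write $g'=g^m$ and $h'=h^{|n|}$. Then $(\ora{g'},\ora{h'})$ are still parallel, while $g'$ and $H'$ form a good pair in $T$. By Lemma~\ref{lem:oppgoodpair} applied to the good pair $(g',H')$, the elements $g'h'$ and $h'g'$ are hyperbolic on $T$ with disjoint axes. On the other hand, Lemma~\ref{lem:righthandedfoundation}\ref{item:ab-parallel} says that $\ora{g'h'}$ and $\ora{h'g'}$ cross in $\wt S$. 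This contradicts preservation of axis intersection, so the bounded case is done.

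\textbf{Step 2: the unbounded case, which is the main obstacle.} Here $\Delta$ contains a ray, so $T_g$ and $T_h$ share an end, and we must invoke irreducibility. I would first use Lemma~\ref{lem:if_unbounded_then_reducible}: there is a hyperbolic $k$ with $\ora{k}$ R-crossing $\ora{g}$ and $T_g\cap T_k$ bounded. I would then replace $h$ by a conjugate-type element such as $h_1=k^j h k^{-j}$, or use Lemma~\ref{lem:make_coherent} to build $h^N k h^N$, aiming for three properties:
\begin{itemize}
\item $h_1$ is still parallel to $g$ in $\wt S$;
\item $T_{h_1}$ still overlaps $T_g$ incoherently;
\item that overlap is now bounded.
\end{itemize}
The first property uses Lemma~\ref{lem:convergence_action} and the NS dynamics to control endpoints. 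The third uses the fact that $T_g\cap T_k$ is bounded, so conjugating by large powers of $k$ moves the axis off the common end of $T_g$.

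I expect the delicate point to be keeping the overlap non-degenerate and incoherent while breaking the shared ray. Should a direct conjugation fail, the fallback is:
\begin{itemize}
\item take $k$ crossing both $\ora{g}$ and $\ora{h}$;
\item use axis-preservation to get $T_k\cap T_g\ne\emptyset$ and $T_k\cap T_h\ne\emptyset$;
\item argue, as in the proof of Lemma~\ref{lem:if_unbounded_then_reducible}, that a shared ray of $T_g$ and $T_h$ together with bounded intersections along $T_k$ forces a loop in the tree.
\end{itemize}

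\textbf{Step 3: conclude.} Once a bounded, non-degenerate, incoherent overlap between parallel elements is obtained, Step 1 applies and gives the contradiction.
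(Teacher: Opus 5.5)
Your Step~1 is correct and is exactly the paper's argument for the bounded case, and Step~3 is the same reduction the paper uses. The gap is in Step~2, which carries all the content of the unbounded case. None of the constructions you list is shown to produce a parallel pair with bounded, non-degenerate, incoherent overlap, and the main one fails.

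\begin{itemize}
\item \textbf{Conjugating by $k^j$.} This slides $T_h$ along $T_k$. Since $T_k\cap T_g$ is bounded, once the projection of $k^jT_h$ to $T_k$ misses $T_k\cap T_g$, you get $T_{k^jhk^{-j}}\cap T_g=\emptyset$. This happens for large $j$ whenever $T_h\cap T_k$ is bounded. So the overlap disappears instead of becoming bounded, and for small $j$ you have no control over the direction of translation.
\item \textbf{The element $h^Nkh^N$.} Lemma~\ref{lem:make_coherent} does put an arc $I\subset T_g\cap T_h$ in its axis, traversed in $h$'s direction. But you give no reason why $T_{h^Nkh^N}\cap T_g$ is bounded. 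The natural argument, that the element fixes no end of $T_g$, breaks down: when $T_g\neq T_h$, $h$ fixes only the shared end of $T_g$. So the axis could still run along a ray to the other end, and you would be back in the unbounded case.
\item \textbf{The fallback.} It never uses incoherence. A line $T_k$ meeting the shared ray of $T_g$ and $T_h$ in a bounded arc creates no loop. The loop argument in Lemma~\ref{lem:if_unbounded_then_reducible} relied on the standing hypothesis there that every axis crossing $\ora{g}$ has unbounded overlap with $T_g$, which is exactly what fails here.
\end{itemize}

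The missing idea is to replace $g$ rather than $h$, and to move things with powers of $g$, which fixes both ends of $T_g$. Assume $(\ora{g},\ora{h})$ are R-parallel and that $T_g\cap T_h$ contains the attracting end $\omega$ of $T_g$.

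\begin{itemize}
\item Take $c_0$ from Lemma~\ref{lem:if_unbounded_then_reducible}, so $\ora{c_0}$ R-crosses $\ora{g}$ and $T_{c_0}\cap T_g$ is bounded.
\item Set $c_1=g^kc_0g^k$ for large $k$. By Lemma~\ref{lem:make_coherent}, $T_{c_1}$ overlaps $T_g$ coherently in a non-degenerate arc. By Lemma~\ref{lem:rhs_prod}, $\ora{c_1}$ still R-crosses $\ora{g}$. The overlap is still bounded, because $c_1$ fixes a given end of $T_g$ iff $c_0$ does.
\item Set $c_2=g^nc_1g^{-n}$ for large $n$; this does both jobs at once. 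In the disk, $\ora{c_2}=g^n\cdot\ora{c_1}$ has both endpoints close to $g^+$, so $(\ora{c_2},\ora{h})$ are parallel. In the tree, $T_{c_2}\cap T_g=g^n(T_{c_1}\cap T_g)$ is a bounded arc, still traversed in $g$'s direction, pushed into the interior of the shared ray, where $h$ translates the other way.
\end{itemize}

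Hence $T_{c_2}$ and $T_h$ overlap boundedly and incoherently, and Step~1 applied to the pair $(c_2,h)$ gives the contradiction.
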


\begin{proof}
    If the overlap is bounded, according to
    Lemma~\ref{lem:powers_goodpair} there exists $m>0,n>0$ so that
    $g^m$ and $h^{-n}$ form a good pair, and hence, by
    Lemma~\ref{lem:oppgoodpair}, $T_{g^mh^n}$ and $T_{h^ng^m}$
    do not intersect. On the other hand, since $\ora{g},\ora{h}$ are
    parallel, it follows by~Lemma~\ref{lem:righthandedfoundation} that
    $\ora{g^mh^n}$ and $\ora{h^ng^m}$ cross, which by preservation of
    axis intersection implies $T_{g^mh^n}$ and $T_{h^ng^m}$  intersect, a
    contradiction.

    Suppose now $T_g,T_h$ have unbounded and incoherent overlap. We will use
    the assumption of irreducibility to construct another pair with
    bounded overlap. Suppose, without essential loss of generality,
    that $(\ora{g},\ora{h})$ are R-parallel,
    and that $T_g \cap T_h$ includes the attracting end of $T_g$. By
    Lemma~\ref{lem:if_unbounded_then_reducible}, there exists
    $c_0 \in \pi_1(S)$ acting hyperbolically on~$T$ so that
    $T_{c_0} \cap T_g$ is empty or bounded, and so that
    $(\ora{c_0},\ora{g})$ R-cross.
    % (For other configurations of
    % $(\ora{g},\ora{h})$ and $T_g,T_h$, we may want
    % $(\ora{c_0},\ora{g})$ to L-cross instead.)
    By
    Lemma~\ref{lem:make_coherent}, if we set $c_1=g^kc_0 g^k$ for
    $k \geq 0$ large enough, the overlap between $T_{c_1}$ and $T_g$
    is coherent and, by Lemma~\ref{lem:rhs_prod}, $\ora{c_1}$ still
    R-crosses $\ora{g}$. Letting $c_2=g^n c_1 g^{-n}$ for $n$ large
    enough, we can ensure two things. On the one hand, the endpoints of
    $\ora{c_2}$ are near $g^+$ on the boundary of the disk and thus
    $(\ora{c_2},\ora{h})$ are
    parallel. On the other hand, the same conjugation also ensures
    that the (still bounded and coherent) intersection between
    $T_{c_2}$ and $T_g$ is near the attracting end of $T_g$ and thus
    $T_{c_2}$ has bounded, incoherent intersection with $T_h$. Thus
    $c_2$ and $h$ form a pair that is parallel in the disk and
    intersect incoherently on the tree, a contradiction.
   See Figure~\ref{fig:tree_bounded_incoherent} for a diagram of the
   configuration.
   \end{proof}

\begin{figure}
  \begin{subfigure}[c]{0.45\textwidth}
    \centering
    \fontsize{9pt}{9pt}\selectfont%% Creator: Inkscape 1.3 (0e150ed6c4, 2023-07-21), www.inkscape.org
%% PDF/EPS/PS + LaTeX output extension by Johan Engelen, 2010
%% Accompanies image file 'k_crossing_14.pdf' (pdf, eps, ps)
%%
%% To include the image in your LaTeX document, write
%%   \input{<filename>.pdf_tex}
%%  instead of
%%   \includegraphics{<filename>.pdf}
%% To scale the image, write
%%   \def\svgwidth{<desired width>}
%%   \input{<filename>.pdf_tex}
%%  instead of
%%   \includegraphics[width=<desired width>]{<filename>.pdf}
%%
%% Images with a different path to the parent latex file can
%% be accessed with the `import' package (which may need to be
%% installed) using
%%   \usepackage{import}
%% in the preamble, and then including the image with
%%   \import{<path to file>}{<filename>.pdf_tex}
%% Alternatively, one can specify
%%   \graphicspath{{<path to file>/}}
%% 
%% For more information, please see info/svg-inkscape on CTAN:
%%   http://tug.ctan.org/tex-archive/info/svg-inkscape
%%
\begingroup%
  \makeatletter%
  \providecommand\color[2][]{%
    \errmessage{(Inkscape) Color is used for the text in Inkscape, but the package 'color.sty' is not loaded}%
    \renewcommand\color[2][]{}%
  }%
  \providecommand\transparent[1]{%
    \errmessage{(Inkscape) Transparency is used (non-zero) for the text in Inkscape, but the package 'transparent.sty' is not loaded}%
    \renewcommand\transparent[1]{}%
  }%
  \providecommand\rotatebox[2]{#2}%
  \newcommand*\fsize{\dimexpr\f@size pt\relax}%
  \newcommand*\lineheight[1]{\fontsize{\fsize}{#1\fsize}\selectfont}%
  \ifx\svgwidth\undefined%
    \setlength{\unitlength}{144bp}%
    \ifx\svgscale\undefined%
      \relax%
    \else%
      \setlength{\unitlength}{\unitlength * \real{\svgscale}}%
    \fi%
  \else%
    \setlength{\unitlength}{\svgwidth}%
  \fi%
  \global\let\svgwidth\undefined%
  \global\let\svgscale\undefined%
  \makeatother%
  \begin{picture}(1,1)%
    \lineheight{1}%
    \setlength\tabcolsep{0pt}%
    \put(0,0){\includegraphics[width=\unitlength,page=1]{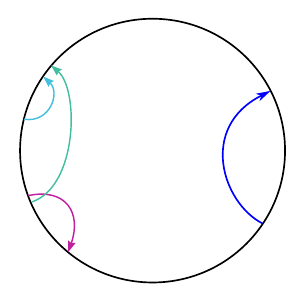}}%
    \put(0.93589672,0.70405374){\color[rgb]{0,0,1}\makebox(0,0)[t]{\lineheight{1.25}\smash{\begin{tabular}[t]{c}$h$\end{tabular}}}}%
    \put(0.30549705,0.28503035){\color[rgb]{0.75294118,0.12156863,0.62352941}\makebox(0,0)[t]{\lineheight{1.25}\smash{\begin{tabular}[t]{c}$c_0$\end{tabular}}}}%
    \put(0.32247058,0.69539393){\color[rgb]{0.2627451,0.74901961,0.62352941}\makebox(0,0)[t]{\lineheight{1.25}\smash{\begin{tabular}[t]{c}$c_1$\end{tabular}}}}%
    \put(0.17397996,0.55179265){\color[rgb]{0.2627451,0.74901961,0.87843137}\makebox(0,0)[t]{\lineheight{1.25}\smash{\begin{tabular}[t]{c}$c_2$\end{tabular}}}}%
    \put(0,0){\includegraphics[width=\unitlength,page=2]{k_crossing_14.pdf}}%
    \put(0.07669038,0.67827698){\color[rgb]{1,0,0}\makebox(0,0)[t]{\lineheight{1.25}\smash{\begin{tabular}[t]{c}$g$\end{tabular}}}}%
  \end{picture}%
\endgroup%

    \caption{Configuration of axes on $\wt{S}$. There are other
      possibilities for the position of
      $\ora{c_0}$ and $\ora{c_1}$ with respect to $\ora{h}$; only
      $\ora{c_2}$ must be as shown.}
    \label{fig:k_crossing_14}
  \end{subfigure}
  \hfill
  \begin{subfigure}[c]{0.45\textwidth}
    \centering
    \fontsize{9pt}{9pt}\selectfont%% Creator: Inkscape 1.3 (0e150ed6c4, 2023-07-21), www.inkscape.org
%% PDF/EPS/PS + LaTeX output extension by Johan Engelen, 2010
%% Accompanies image file 'tree_bounded_incoherent.pdf' (pdf, eps, ps)
%%
%% To include the image in your LaTeX document, write
%%   \input{<filename>.pdf_tex}
%%  instead of
%%   \includegraphics{<filename>.pdf}
%% To scale the image, write
%%   \def\svgwidth{<desired width>}
%%   \input{<filename>.pdf_tex}
%%  instead of
%%   \includegraphics[width=<desired width>]{<filename>.pdf}
%%
%% Images with a different path to the parent latex file can
%% be accessed with the `import' package (which may need to be
%% installed) using
%%   \usepackage{import}
%% in the preamble, and then including the image with
%%   \import{<path to file>}{<filename>.pdf_tex}
%% Alternatively, one can specify
%%   \graphicspath{{<path to file>/}}
%% 
%% For more information, please see info/svg-inkscape on CTAN:
%%   http://tug.ctan.org/tex-archive/info/svg-inkscape
%%
\begingroup%
  \makeatletter%
  \providecommand\color[2][]{%
    \errmessage{(Inkscape) Color is used for the text in Inkscape, but the package 'color.sty' is not loaded}%
    \renewcommand\color[2][]{}%
  }%
  \providecommand\transparent[1]{%
    \errmessage{(Inkscape) Transparency is used (non-zero) for the text in Inkscape, but the package 'transparent.sty' is not loaded}%
    \renewcommand\transparent[1]{}%
  }%
  \providecommand\rotatebox[2]{#2}%
  \newcommand*\fsize{\dimexpr\f@size pt\relax}%
  \newcommand*\lineheight[1]{\fontsize{\fsize}{#1\fsize}\selectfont}%
  \ifx\svgwidth\undefined%
    \setlength{\unitlength}{179.42400742bp}%
    \ifx\svgscale\undefined%
      \relax%
    \else%
      \setlength{\unitlength}{\unitlength * \real{\svgscale}}%
    \fi%
  \else%
    \setlength{\unitlength}{\svgwidth}%
  \fi%
  \global\let\svgwidth\undefined%
  \global\let\svgscale\undefined%
  \makeatother%
  \begin{picture}(1,0.52487958)%
    \lineheight{1}%
    \setlength\tabcolsep{0pt}%
    \put(0.28232595,0.39374918){\color[rgb]{1,0,0}\makebox(0,0)[t]{\lineheight{1.25}\smash{\begin{tabular}[t]{c}$T_g$\end{tabular}}}}%
    \put(0.73471149,0.26237643){\color[rgb]{0.2627451,0.74901961,0.62352941}\makebox(0,0)[t]{\lineheight{1.25}\smash{\begin{tabular}[t]{c}$T_{c_2}$\end{tabular}}}}%
    \put(0.16443441,0.24386482){\color[rgb]{0,0,1}\makebox(0,0)[t]{\lineheight{1.25}\smash{\begin{tabular}[t]{c}$T_h$\end{tabular}}}}%
    \put(0,0){\includegraphics[width=\unitlength,page=1]{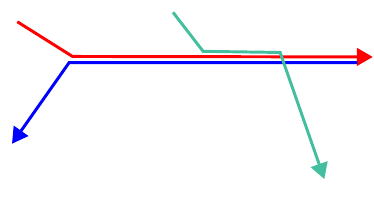}}%
  \end{picture}%
\endgroup%

    \caption{Configuration of axes on $T$}
    \label{fig:tree_k_crossing_14}
  \end{subfigure}
  \caption{The construction of a parallel pair in the disk with bounded incoherent overlap in $T$.}
  \label{fig:tree_bounded_incoherent}
\end{figure}

\begin{proof}[Proof of Theorem~\ref{thm:small}]
  We consider several cases of the max-smoothing equality, depending
  on whether $g,h$ act hyperbolically or elliptically on the tree,
  and also on the relative position of $\ora{g},\ora{h}$ in
  the disk. We first deal with the case where $g,h$ are both
  hyperbolic, and then reduce the other cases to this one.
  The cases where $\ora{g},\ora{h}$ are anti-parallel immediately
  reduce to the case where they are parallel so we only deal with the
  latter.
  
\begin{case}
\label{case:hyperbolic}
In the action on~$T$, $g,h$ are both hyperbolic.
\end{case}
\begin{subcase}
$\ora{g}, \ora{h}$ cross.
\end{subcase}
Then $T_g \cap T_h \neq \emptyset$ by axis intersection preservation, and disconnected max-smoothing follows from~Lemma~\ref{lem:foundation_trees}\ref{item:tree-intersect}.
\begin{subcase}
$\ora{g},\ora{h}$ are parallel and $T_g \cap T_h$ is empty or a single point.
\end{subcase}
Then $\|gh\|=\|gh^{-1}\| \geq \|g\|+\|h\|$
by~Lemma~\ref{lem:foundation_trees}\ref{item:tree-disjoint}.
\begin{subcase}
$\ora{g},\ora{h}$ are parallel and $T_g,T_h$ overlap coherently.
\end{subcase}
    Then $\|gh\|=\|g\|+\|h\|>\|gh^{-1}\|$, by
    Lemma~\ref{lem:foundation_trees}\ref{item:tree-intersect}.
    \begin{subcase}
      $\ora{g},\ora{h}$ are parallel and $T_g,T_h$ overlap incoherently.
    \end{subcase}
    This cannot happen by Proposition~\ref{prop:parallel-incoherent}.

\begin{case}
  In the action on~$T$, $g$ is elliptic and $h$ is hyperbolic.
  In particular, $ghG$ and $h$ act hyperbolically on $T$.
\end{case}
\begin{subcase}
$\ora{g},\ora{h}$ cross.
\end{subcase}
Then $\ora{ghG}$ and $\ora{h}$ are parallel.
By Proposition~\ref{prop:parallel-incoherent}, $T_{ghG}$ and $T_{h}$
cannot intersect incoherently. If they intersect coherently or in a
point then, by Lemma~\ref{lem:pau_ell_hyp}(2), $\| gh \| = \|h\|\geq \|gH\|$, and we
are done. The only remaining option to rule out is that $T_{ghG}$ and
$T_{h}$ do not intersect. In that case, by Lemma~\ref{lem:pau_ell_hyp}(3), $gh$ and $hg$ overlap incoherently. But, since $(\ora{gh},\ora{hg})$ are parallel, this is a contradiction by Proposition~\ref{prop:parallel-incoherent}.
\begin{subcase} $\ora{g},\ora{h}$ are parallel. \end{subcase}
If $T_{ghG}$ and $T_{h}$ intersect coherently or in a
point then, by Lemma~\ref{lem:pau_ell_hyp}(1), $\| gh \| = \|h\| \geq \|gH\|$, and max-smoothing follows.
If $T_{ghG}$ and $T_{h}$ do not intersect, then $\|gh\|=\|gH\| > \|h\|$, and we are done.

Finally, if $T_{ghG}$ and $T_{h}$ overlap incoherently, by
Lemma~\ref{lem:pau_ell_hyp}(2), $\|gH\| = \|h\| > \|gh\|$ and we must
derive a contradiction. From
Lemma~\ref{lem:foundation_trees}(1c),
\[
\|(ghG)H\|=2\|h\| > \|(ghG)h\|.
\]
We can see now that $gh$ is hyperbolic: if it were elliptic, then so are $hg$
and $GH$. But since $\ora{gh}$ and $\ora{GH}$ cross, we can apply
Subcase 3.1 below to the pair $gh, GH$ to conclude $\|ghGH\| = 0$, a
contradiction.

Note that $g$ and $hgH$ are elliptic, and $\ora{g}$ and $\ora{hgH} = h
\cdot \ora{g}$ are
crossing or anti-parallel. If they are crossing, Subcase 3.1
again gives us
$\|ghGH\|=0$, a contradiction.
If they are anti-parallel, then Subcase 3.2 gives us
\[
\|g(hGH)\| = \|g(hgH)\|.
\]
The left hand side is $2\|h\|>0$, by above, so $\|ghgH\|>0$.
Now by
Lemma~\ref{lem:foundation_trees}, $T_{gh}, T_{gH}$ overlap in more than a point.
 If they overlap coherently,
\[
\|ghgH\| =\| gh\| + \|gH\| < 2\|h\|,
\]
a contradiction.
If $T_{gh},T_{gH}$ overlap incoherently then $T_{gh},T_{hG}$ are coherent
so $\|gh\|+\|hG\|=\|ghhG\|=2\|h\|=0$, a contradiction.

\begin{case}
  In the action on~$T$, $g,h$ are both elliptic.
\end{case}
\begin{subcase}
$\ora{g},\ora{h}$ cross.
\end{subcase}
Then we claim that $\|gh\|=0$, which, by~Lemma~\ref{lem:pau_two_ell} is equivalent to $T_g \cap T_h \neq \emptyset$.  Suppose $\|gh\|>0$ or, equivalently by Lemma~\ref{lem:pau_ell_hyp}, $T_g \cap T_h = \emptyset$.
Since $\ora{g},\ora{h}$ cross, $\ora{gh},\ora{hg}$ are parallel by Lemma~\ref{lem:righthandedfoundation}.
On the other hand, by Lemma~\ref{lem:pau_two_ell}, $T_{gh}$ and
$T_{hg}$ overlap on the shortest arc connecting
$T_g$ to $T_h$, and are incoherent on that arc.
This is a contradiction by Proposition~\ref{prop:parallel-incoherent}. Hence, $\norm{gh}=0$ and, equivalently by Lemma~\ref{lem:pau_two_ell}, $T_g \cap T_h \neq \emptyset$.

\begin{subcase}
  $\ora{g}, \ora{h}$ are parallel. \end{subcase}
By Lemma~\ref{lem:pau_two_ell}, $\| gh\| = \| gH\|$ and
thus we have max\hyp smoothing.

Finally, note that in all the subcases where
$(\ora{g},\ora{h})$ cross,
we concluded that $T_g \cap T_h \neq \emptyset$.
Hence, we have also shown that if an irreducible $S$-tree preserves
axis intersection, it also preserves characteristic intersection.
\end{proof}

\begin{remark}
One could hope to bypass Cases 2 and 3 by approximating action on $T$ by actions consisting solely of hyperbolic isometries (case 1), i.e., free actions, but this is would require some extra work.
Indeed, it is not true for a general finitely generated group $\Gamma$
that free $(\Gamma,T)$ actions are dense in the space of small minimal
$(\Gamma,T)$ actions. For free groups, by a result of
Bestvina-Feighn~\cite{BF11:OuterLimits} it is known that the closure
of the space of free actions, i.e.,
$\operatorname{Cl}(\operatorname{Free}(\Gamma, T))$ is equal to
$\operatorname{VS}(\Gamma,T)$, the space of ``very small'' actions, a
strictly proper subset of the subset of small actions. For surface
groups, one can see that free actions are dense in the space of small
actions \emph{a posteriori} once one knows all small actions are dual
to measured laminations: filling measured laminations (those
intersecting all simple closed curves) are dense in the space of
measured laminations, and their dual tree actions are free.
\end{remark}

We follow up with some consequences of this proof.

\begin{proposition}
  For an $S$-tree~$T$,
  let $\mathcal{I}$ denote the set of $(g,h)$ so that
  $(\ora{g},\ora{h})$ are parallel and $T_g,T_h$ overlap incoherently.
 Then the $S$-tree preserves axis intersection if and only if either $\mathcal{I}$ is empty or all $(g,h) \in \mathcal{I}$ have unbounded overlap.
 \label{prop:preserves_axis}
\end{proposition}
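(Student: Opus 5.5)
We prove the two directions separately. Both reuse the argument of the first paragraph of the proof of Proposition~\ref{prop:parallel-incoherent}, which used only preservation of axis intersection and not irreducibility.

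\emph{($\Rightarrow$)} Suppose the $S$-tree preserves axis intersection, and let $(g,h)\in\mathcal{I}$. We show the overlap $T_g\cap T_h$ is unbounded by contradiction. If the overlap were a bounded non-degenerate arc, Lemma~\ref{lem:powers_goodpair} gives $m>0$ and $n>0$ such that $g^m$ and $h^{-n}$ form a good pair. Then Lemma~\ref{lem:oppgoodpair} shows that $T_{g^mh^n}$ and $T_{h^ng^m}$ are disjoint. On the other hand, $(\ora{g^m},\ora{h^n})$ are parallel, having the same axes as $(\ora{g},\ora{h})$. So by Lemma~\ref{lem:righthandedfoundation}\ref{item:ab-parallel}, $\ora{g^mh^n}$ and $\ora{h^ng^m}$ cross. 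Preservation of axis intersection then forces $T_{g^mh^n}\cap T_{h^ng^m}\neq\emptyset$, a contradiction. Hence every pair in $\mathcal{I}$ has unbounded overlap, and in particular the condition holds whenever $\mathcal{I}$ is empty.

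\emph{($\Leftarrow$)} Suppose every $(g,h)\in\mathcal{I}$ has unbounded overlap. We must show that if $g,h$ act hyperbolically and $\ora{g},\ora{h}$ cross, then $T_g\cap T_h\neq\emptyset$. Suppose instead $T_g\cap T_h=\emptyset$. By Lemma~\ref{lem:foundation_trees}\ref{item:tree-disjoint}, $T_{gh}$ and $T_{hg}$ overlap incoherently on a segment of length $d(T_g,T_h)$. That statement is proved via \cite[Remark~3.5]{CM87:GroupActions}, whose setting is exactly disjoint axes, so it applies here. The overlap $T_{gh}\cap T_{hg}$ has length $d(T_g,T_h)>0$, which is finite. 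Meanwhile, since $\ora{g},\ora{h}$ cross, Lemma~\ref{lem:righthandedfoundation}\ref{item:ab-cross} shows $(\ora{gh},\ora{hg})$ are parallel. So $(gh,hg)\in\mathcal{I}$ with bounded overlap, contradicting the hypothesis.

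The main obstacle is this last step: we must confirm that the incoherent overlap of $T_{gh}$ and $T_{hg}$ is exactly the bridge between $T_g$ and $T_h$, and hence bounded. If Lemma~\ref{lem:foundation_trees}\ref{item:tree-disjoint} gives only ``a segment of length $d(T_g,T_h)$'', I would argue directly from the picture of Figure~\ref{fig:trees_disjoint}. The axes of $gh$ and $hg$ pass through translates of the bridge and diverge along $T_g$ and $T_h$ in opposite directions, so their intersection is precisely the bridge, which is compact. If the overlap were instead unbounded, the fallback is to pass to powers $g^m,h^n$ and use Lemma~\ref{lem:powers_goodpair} as in the forward direction.
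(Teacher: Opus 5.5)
Your proof is correct, and the backward direction is the same as the paper's. In the forward direction you take a different and shorter route.

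\textbf{Backward direction.} Your boundedness worry is settled by the divergence argument you sketch. At the two ends of the bridge, the axes of $gh$ and $hg$ leave along $T_g$ (resp.\ $T_h$) in opposite directions. Since the intersection of two lines in an $\mathbb{R}$-tree is connected, $T_{gh}\cap T_{hg}$ is exactly the bridge. This is how the paper reads Lemma~\ref{lem:foundation_trees}(2), so the fallback to powers is unnecessary.

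\textbf{Forward direction.} The paper splits into cases:
\begin{itemize}
\item If the action is irreducible, it invokes Proposition~\ref{prop:parallel-incoherent} to conclude $\mathcal{I}=\emptyset$.
\item If the action is reducible (global fixed point, fixed end, or dihedral), it observes that any two hyperbolic axes share a ray or an invariant line. Hence all overlaps are unbounded.
\end{itemize}
You instead note that the good-pair argument in the first paragraph of Proposition~\ref{prop:parallel-incoherent} uses only preservation of axis intersection. So it rules out bounded incoherent overlap of parallel pairs uniformly. The conclusion ``every element of $\mathcal{I}$ has unbounded overlap'' then covers $\mathcal{I}=\emptyset$ vacuously.

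\textbf{Comparison.} Your version is self-contained. It needs neither the reducibility trichotomy nor the unbounded-overlap case of Proposition~\ref{prop:parallel-incoherent}, which is where irreducibility and Lemma~\ref{lem:if_unbounded_then_reducible} enter. The paper's route yields more: in the irreducible case $\mathcal{I}$ is actually empty. That stronger fact is what Theorem~\ref{thm:axis_incoherent_characterization} later uses, but this proposition does not require it.
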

\begin{proof}
If $T$ preserves axis intersection, either
the action is irreducible and by Proposition~\ref{prop:parallel-incoherent}
$\mathcal{I}$ is
empty, or it is reducible, i.e., trivial, dihedral or fixed end, and hence all intersections, including those in
$\mathcal{I}$, have unbounded overlap.

Conversely, if $T$ does not preserve axis intersection, by assumption
there exists a pair of hyperbolic elements
$(g,h)$ so that $(\ora{g},\ora{h})$ cross and $T_g \cap T_h =
\emptyset$. Then by Lemma~\ref{lem:foundation_trees}(2)
$T_{gh},T_{hg}$ overlap incoherently with overlap of length
$d(T_g,T_h)$. Since $(gh,hg)$ are parallel and hyperbolic, we
found a pair of parallel hyperbolic isometries with bounded incoherent
overlap.
\end{proof}

We now show that reducible actions satisfy disconnected smoothing but never satisfy connected smoothing, and that any reducible action has a parallel hyperbolic pair with incoherent overlap.

\begin{lemma}
Let $T$ be an $S$-tree.
If  $(\Gamma, T)$ is reducible, then:

\begin{enumerate}
    \item $\| \cdot \| \colon \Curves^+(S) \to \mathbb{R}$ does not satisfy connected smoothing but does satisfy disconnected smoothing.
    \item $-\| \cdot \| \colon \Curves^+(S) \to \mathbb{R}$ does not
      satisfy disconnected smoothing but does satisfy connected smoothing.
    \item There exist a pair of hyperbolic elements $(g,h)$ with
      incoherent overlap so that $(\ora{g},\ora{h})$ are parallel.
\end{enumerate}
\label{lem:red_not_smoothing}
\end{lemma}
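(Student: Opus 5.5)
The plan is to treat the three items together, using one structural fact about reducible trees and two results already proved: Theorem~\ref{thm:intersect} and the lamination part of Theorem~\ref{thm:hyp-lam-char}. First I reduce to the nontrivial reducible types. An $S$-tree is minimal, so $\norm{\cdot}\not\equiv 0$ and there is no global fixed point. Hence the action is either fixed end or dihedral.

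In the fixed-end case, \cite[Theorem~2.2]{CM87:GroupActions} gives a nonzero homomorphism $\chi\colon\pi_1(S)\to\bbR$ (the Busemann character of the fixed end) with $\norm{g}=\abs{\chi(g)}$. In the dihedral case, minimality forces $T$ to be the invariant line. So $\norm{g}=\abs{\tau(g)}$ for $g$ in the index-two orientation-preserving subgroup, where $\tau$ is the translation homomorphism, and $\norm{g}=0$ for the reflections.

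\textbf{Disconnected smoothing in (1).} For any $a,b$ the triangle inequality for $\abs{\chi}$ gives
\[
\norm{ab},\ \norm{aB}\le \norm{a}+\norm{b},
\]
and the same argument handles the dihedral case whenever at most one of $a,b$ is a reflection.

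The remaining dihedral subcase, where $a$ and $b$ are both reflections with $(\ora{a},\ora{b})$ crossing, is the step I expect to be the main obstacle. Here $\norm{a}+\norm{b}=0$, so I must show that $ab$ is elliptic. My first attempt is to adapt the argument of Case~3.1 in the proof of Theorem~\ref{thm:small}. Suppose $ab$ translates by $2d>0$. Then $(\ora{ab},\ora{ba})$ are parallel (Lemma~\ref{lem:righthandedfoundation}), and $ab$ and $ba$ translate the line in opposite directions. I would then exploit the relation of these elements to the commutators that vanish for dihedral actions, namely $\norm{[g,h]}=0$ for hyperbolic $g,h$, to produce a contradiction. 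If this fails, I would restrict item~(1) to what Proposition~\ref{prop:max_twice} actually gives and check whether the stated claim needs weakening here.

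\textbf{Failure of connected smoothing in (1).} Suppose $\norm{\cdot}$ also satisfied connected smoothing. It is symmetric and additive by extension. By Proposition~\ref{prop:power_disconnected_imply_stable} it is stable, so Theorem~\ref{thm:intersect} gives a dual current $\mu$. Proposition~\ref{prop:max_twice} upgrades smoothing to max-smoothing, since the smoothed quantity is one of the three terms and it is maximal. Then Theorem~\ref{thm:hyp-lam-char} makes $\mu$ a measured lamination $\lambda$, and $\lambda\ne 0$ because $\norm{\cdot}\not\equiv0$. The dual tree $T_\lambda$ is irreducible, so $\norm{[g,h]}_{T_\lambda}\neq0$ for some $g,h$. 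Since $T$ and $T_\lambda$ have the same length function, this contradicts the reducibility criterion, which is phrased purely in terms of lengths of commutators.

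\textbf{Item (2).} For $-\norm{\cdot}$, connected smoothing at a parallel pair reads $\norm{ab}\le\norm{a}+\norm{b}$ and $\norm{ab}\le\norm{aB}$. The first is the triangle inequality from above. The second comes from Proposition~\ref{prop:max_twice} combined with the first: if $\norm{aB}<\norm{ab}$, the maximum of the three quantities would be attained only once. Next, suppose $-\norm{\cdot}$ also satisfied disconnected smoothing. It satisfies convex union (by additivity), so Proposition~\ref{prop:curve-positive} would give $-\norm{g}\ge0$ for all $g$. That forces $\norm{\cdot}\equiv0$, contradicting minimality.

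\textbf{Item (3).} By (1) there is a parallel pair $(\ora{g},\ora{h})$ with $\norm{gh}<\max\{\norm{g}+\norm{h},\norm{gH}\}$. I run through the case analysis of Lemma~\ref{lem:foundation_trees}, Lemma~\ref{lem:pau_ell_hyp} and Lemma~\ref{lem:pau_two_ell}.
\begin{itemize}
\item Disjoint axes, coherent overlap, or a single point would give connected max-smoothing for this pair, so when both $g,h$ are hyperbolic they overlap incoherently, as required.
\item If one or both are elliptic, those lemmas exhibit a derived parallel hyperbolic pair with incoherent overlap ($gh,hg$, or $ghG,h$), exactly as in Cases~2 and~3 of the proof of Theorem~\ref{thm:small}.
\end{itemize}
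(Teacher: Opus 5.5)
The step you flagged cannot be completed, because the claim is false there. Take genus $2$, $\pi_1(S)=\langle a_1,b_1,a_2,b_2\mid [a_1,b_1][a_2,b_2]\rangle$, let $r_c(t)=2c-t$ act on $T=\bbR$, and send $a_1,a_2\mapsto r_0$, $b_1\mapsto r_1$, $b_2\mapsto r_{-1}$.
\begin{itemize}
\item Since $[r_0,r_1]=(r_0r_1)^2$ is $t\mapsto t-4$ and $[r_0,r_{-1}]$ is $t\mapsto t+4$, this is a homomorphism with image $\{t\mapsto \pm t+2m\}$. The action is minimal (it contains translation by $4$) and dihedral.
\item Since $i([a_1],[b_1])=1$, some conjugate $b'$ of $b_1$ has $\ora{b'}$ crossing $\ora{a_1}$. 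It acts as the reflection in an odd integer $c$, so $\norm{a_1}+\norm{b'}=0<2\abs{c}=\norm{a_1b'}$.
\item Likewise $b''=b'a_1(b')^{-1}$ acts as $r_{2c}$ and has axis parallel to $\ora{a_1}$ (Lemma~\ref{lem:rhs}). Then $\norm{a_1b''}=4\abs{c}>\norm{a_1}+\norm{b''}$, so $-\norm{\cdot}$ fails even oriented connected smoothing.
\end{itemize}
So the positive halves of (1) and (2) are false as stated for some dihedral $S$-trees, and no argument via commutators will rescue them. The paper's proof derives them only from Paulin's coherent/incoherent formulas, i.e.\ for pairs of \emph{hyperbolic} elements. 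That is the only scope in which they hold, except in the fixed-end case, where your triangle inequality covers all pairs.

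Even for hyperbolic pairs, your derivation of $\norm{ab}\le\norm{aB}$ from Proposition~\ref{prop:max_twice} is wrong: it overlooks that $\norm{ab}=\norm{a}+\norm{b}$ may tie for the maximum. The inequality is in fact false. Let $\Gamma'$ be the orientation-preserving subgroup in the dihedral case, and $\pi_1(S)$ in the fixed-end case. Take $g\in\Gamma'$ hyperbolic, $k\in\Gamma'$ with $\ora{k}$ crossing $\ora{g}$, and $h=kgK$. Then $(\ora g,\ora h)$ are parallel and overlap coherently, with $\norm{gh}=2\norm{g}>0=\norm{gH}$. So for $-\norm{\cdot}$ only \emph{oriented} connected smoothing survives, which is exactly what the paper's formulas give.

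These problems propagate in two ways.
\begin{itemize}
\item Your proof that $\norm{\cdot}$ fails connected smoothing feeds disconnected smoothing into Theorem~\ref{thm:intersect}. It is valid, if heavy, in the fixed-end case, but says nothing about trees like the one above. Your (3) is extracted from (1), so it inherits this.
\item The mixed case of (3) is unsupported. If $(\ora g,\ora h)$ are parallel, then $(\ora h, g\cdot\ora h)$ cross or are anti-parallel (Lemma~\ref{lem:rhs}). Cases~2--3 of Theorem~\ref{thm:small} rest on Proposition~\ref{prop:parallel-incoherent}, i.e.\ on irreducibility. For instance, a reflection $g$ and a translation $h$ with parallel axes violate connected smoothing, yet $(ghG,h)$ is not a parallel pair.
\end{itemize}

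The paper runs the argument in the other order. It proves (3) directly: it passes to the finite-index subgroup generated by hyperbolic elements and takes a parallel pair there. If that pair overlaps coherently, it replaces it by $a=hghgh$, $b=hgh$ (Example~\ref{ex:ab_parallel_aB_crossing}), then applies Lemma~\ref{lem:a_parallel_Ba} and a claim turning coherent overlap into incoherent overlap. The failure of connected smoothing for $\norm{\cdot}$ is then read off from $\norm{gh}<\norm{gH}=\norm{g}+\norm{h}$.

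With your explicit description of reducible trees, (3) is even quicker. For $x$ hyperbolic on $T$, Lemma~\ref{lem:diagonaldense} applied to the closed surface group $\Gamma'$ gives $g\in\Gamma'$ with $g\cdot\ora{X}$ parallel to $\ora{x}$. Then $x$ and $gXG$ translate in opposite directions along a common ray to the fixed end, resp.\ along the line, so they overlap incoherently.

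Finally, your proof that $-\norm{\cdot}$ fails disconnected smoothing is correct. Symmetry upgrades oriented to unoriented disconnected smoothing, and Proposition~\ref{prop:curve-positive} would then force $\norm{\cdot}\equiv 0$. This is cleaner than the paper, whose proof leaves that half essentially implicit.
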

\begin{proof}
By~\cite[Proposition~1.6~(3)]{Pa89:Paulin1989TheGT}, either $g,h$ overlap coherently, in which case
\begin{equation}
\|gh \|=\| g \| + \| h \| \mbox{ and } \|gH \| = |\,\|g \|- \| h \|\, |,
\label{eq:reducible1}
\end{equation}
or $g,h$ overlap incoherently, in which case
\begin{equation}
\|gH \| =\| g\| + \| h \| \mbox{ and } \| gh \| = |\, \| g \| - \| h \|\,|.
\label{eq:reducible2}
\end{equation}
Note that Equations~\eqref{eq:reducible1}~and~\eqref{eq:reducible2} hold for any pair of hyperbolic isometries $(g,h)$. In particular, they hold if $(\ora{g},\ora{h})$ cross.
Hence, $\| \cdot \|$ always satisfies disconnected smoothing. Similarly, they also hold if $(\ora{g},\ora{h})$ are parallel. Thus, $-\| \cdot \|$ always satisfies connected smoothing.

\begin{claim} Suppose $a,x \in \Gamma$ are hyperbolic elements of a
  reducible action so that $\| a \|>\| x \|$ and $(T_a,T_x)$ overlap
  coherently. Then the pair $(T_{ax},T_{xA})$ has incoherent overlap.
\end{claim}
\begin{proof}
On one hand, by Eqs.~\eqref{eq:reducible1} and \eqref{eq:reducible2},
\begin{align*}
\|ax\|&=\|a\|+\|x\| \\
\|xA\| &= \|a\|-\|x\|
\end{align*}
so adding these equations, we get
\[
\|ax\| + \|xA\| = 2\|a\|.
\]
On the other hand,
\[
\|(xA)(ax)\| = 2\|x\|.
\]
Thus
\[
\|(xA)(ax)\| <  \|xA\| + \| ax\|
\]
which means that $(ax,xA)$ overlap incoherently, by Eq.~\eqref{eq:reducible2}.
\end{proof}
We return to the proof of Lemma~\ref{lem:red_not_smoothing}.
Let $\Gamma'$ denote the subgroup of $\Gamma$ generated by elements
acting hyperbolically on~$T$; this is a normal subgroup since
$\norm{\cdot}$ is conjugation-invariant.
Then $\Gamma'$ is a finite-index subgroup of $\Gamma$
(by~\cite[Lemma~6.11(ii)]{CM87:GroupActions} in this reducible case),
and so $\Gamma'$ is itself a surface group like~$\Gamma$.
We claim we
can
find a hyperbolic pair $(g,h) \in \Gamma'$ of elements so $(\ora{g},\ora{h})$ are
R-parallel. Start with any two hyperbolic elements of $\Gamma'$ that
are not common powers.
If the pair is R-parallel, we are
done. If it is R-crossing, consider $(\ora{gh},\ora{hg})$, which is R-parallel by
Lemma~\ref{lem:righthandedfoundation}. If it is R-anti-parallel,
consider the pair $(\ora{g},\ora{H})$ instead.

Now we consider the overlaps in the tree of $g,h$. If $(T_g,T_h)$
overlap
incoherently, we have proved item~(3). Otherwise, use
Example~\ref{ex:ab_parallel_aB_crossing} to obtain a pair $a \coloneqq hghgh$
and $b \coloneqq hgh$, still R-parallel with coherent overlap, so that
$\|a\|>\|b\|$ and
$(\ora{aB},\ora{Ba})$ is R-crossing. By Lemma~\ref{lem:a_parallel_Ba},
it follows that either $(\ora{ab},\ora{bA})$ is parallel or $(\ora{ba},\ora{aB})$ is parallel.
By the Claim above, we have found a pair $(\ora{g},\ora{h})$, with
$g = ab, h = Ba$ or  $g = ba, h = aB$, of parallel hyperbolic elements with
incoherent overlap. This proves item~(3). 

To finish the proof of items (1) and~(2), note
this tells us $\| gh \| < \|gH\|=\|g\| + \|h\|$, which violates both oriented and unoriented connected smoothing.
\end{proof}

Hence a reducible action cannot be dual to a geodesic current, since
smoothing (both connected and disconnected) is a necessary condition.
Another example of curve functionals satisfying disconnected smoothing
but not connected smoothing can be seen in Example~\ref{ex:linEL} in
the setting of the linear extension of extremal length.

Putting this all together we can give another characterization of small surface actions on real trees.

\begin{theorem}
An $S$-tree is irreducible and preserves axis intersection if and only
if no pair hyperbolic elements $(g,h)$ so that $(\ora{g},\ora{h})$ are
parallel have incoherent
overlap.
\label{thm:axis_incoherent_characterization}
\end{theorem}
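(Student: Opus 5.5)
The plan is to prove the two directions separately, each by quoting results already established in this section.

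\emph{Forward direction.} Suppose the $S$-tree is irreducible and preserves axis intersection. Then Proposition~\ref{prop:parallel-incoherent} applies directly. It states that no pair $g,h$ with $(\ora{g},\ora{h})$ parallel can have $T_g,T_h$ intersecting incoherently. Incoherent overlap requires both elements to be hyperbolic with axes meeting in a non-degenerate arc, so this is exactly the desired conclusion, and nothing further needs to be checked.

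\emph{Converse.} Assume no parallel hyperbolic pair overlaps incoherently. In the notation of Proposition~\ref{prop:preserves_axis}, this says the set $\mathcal{I}$ is empty.
\begin{itemize}
\item \textbf{Axis intersection.} Proposition~\ref{prop:preserves_axis} says the tree preserves axis intersection if and only if $\mathcal{I}$ is empty or every pair in $\mathcal{I}$ has unbounded overlap. Since $\mathcal{I}$ is empty, the tree preserves axis intersection.
\item \textbf{Irreducibility.} If the action were reducible, Lemma~\ref{lem:red_not_smoothing}(3) would produce a pair of hyperbolic elements $(g,h)$ with $(\ora{g},\ora{h})$ parallel and incoherent overlap, that is, an element of $\mathcal{I}$. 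This contradicts the hypothesis, so the action is irreducible.
\end{itemize}

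The only real content is Lemma~\ref{lem:red_not_smoothing}(3) and Proposition~\ref{prop:parallel-incoherent}, both already proved. The one point that needs care is that the parallel pair produced in Lemma~\ref{lem:red_not_smoothing}(3) consists of elements of $\pi_1(S)$ that act hyperbolically on $T$, so that it genuinely lies in $\mathcal{I}$. The lemma constructs the pair inside the subgroup $\Gamma'$ generated by elements acting hyperbolically, so this holds.

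A second possible gap is the degenerate case where the action has a global fixed point. This case does not arise, because an $S$-tree is minimal and hence has non-zero length function. I therefore do not expect any genuine obstacle; the proof amounts to assembling these earlier results.
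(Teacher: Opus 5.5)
Your proposal is correct and follows the paper's proof exactly: the forward direction is Proposition~\ref{prop:parallel-incoherent}, and the converse combines Proposition~\ref{prop:preserves_axis} (with $\mathcal{I}=\emptyset$) to get axis intersection and Lemma~\ref{lem:red_not_smoothing}(3) to rule out reducibility. Your two extra checks, that the pair produced by Lemma~\ref{lem:red_not_smoothing}(3) consists of elements acting hyperbolically and that minimality excludes a global fixed point, are sensible points that the paper leaves implicit.
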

\begin{proof}
One implication is Proposition~\ref{prop:parallel-incoherent}.
In the other direction, it follows by
Proposition~\ref{prop:preserves_axis} that the $S$-tree
preserves axis intersection. Finally, it cannot be reducible since
otherwise it would have hyperbolic pairs with (unbounded) incoherent
overlap, by Lemma~\ref{lem:red_not_smoothing}.
\end{proof}

Now we prove Theorem~\ref{mainthm:skora}, whose statement we recall
for convenience.

\begin{taggedthm}{\ref{mainthm:skora}}
Let $(S,T)$ be an $S$-tree. The following are equivalent:
\begin{enumerate}[(A)]
\item There exists a measured lamination $\lambda$ on $S$ such that
$T$ is $\pi_1(S)$-equivariantly isometric to $T_\lambda$;

\item The $S$-tree is irreducible and preserves axis intersection;

\item The $S$-tree is irreducible and preserves characteristic intersection;

\item The translation length function
$\|\cdot\|\colon \pi_1(S)\to\mathbb{R}$ satisfies:
\begin{enumerate}
\item For every R-crossing pair $(a,b)$,
\begin{equation*}
\|a\|+\|b\|
=
\max\{\|ab\|,\|aB\|\};
\end{equation*}

\item For every R-parallel pair $(a,b)$,
\begin{equation*}
\|ab\|
=
\max\{\|a\|+\|b\|,\|aB\|\}.
\end{equation*}
\end{enumerate}

\item The $S$-tree has no pair of hyperbolic isometries that are
parallel in the disk and overlap incoherently in $T$.
\end{enumerate}
\end{taggedthm}

\begin{proof}[Proof of Theorem~\ref{mainthm:skora}]
The equivalence of (A) and~(D) is the lamination part of
Theorem~\ref{thm:hyp-lam-char}.
(A) implies (B) follows work of Morgan-Otal~\cite{MO93:RelativeGrowths}.
(B) implies (D) follows by Theorem~\ref{thm:small}.
Thus, we have (B) is equivalent to (D).
Theorem~\ref{thm:axis_incoherent_characterization} shows (C) is equivalent to (E). The implication (C) $\Rightarrow$ (B) is immediate and (B) $\Rightarrow$ (C) is part
of Theorem~\ref{thm:small}.
All the equivalences follow.
\end{proof}

%%% Local Variables:
%%% mode: latex
%%% TeX-master: "Intersections"
%%% End:

\section{Cross-ratios}
\label{sec:crossratios}

In this section we will see that every generalized cross-ratio induces a geodesic current. Precisely, we will prove that generalized cross-ratios induce functions on $\pi_1(S)$ (periods) satisfying the hypotheses of Theorem~\ref{thm:intersections_group}, and therefore induce a geodesic current $\mu$ dual to the period. This unifies the work of Martone--Zhang~\cite{MZ19:PositivelyRatioed} and Burger--Iozzi--Parreau--Pozzetti~\cite{BIPP24:PositiveCrossratios} under the framework of Theorem~\ref{thm:intersect}. Note that in~\cite{BIPP24:PositiveCrossratios}, geodesic currents are also constructed from generalized cross-ratios; in particular, this application recovers their result.

\begin{definition}
A \emph{generalized cross-ratio} on $S$ is a function of type
\[
[\cdot, \cdot, \cdot, \cdot] \colon (\partial_{\infty} S)^{(4)} \to \mathbb{R}
\]
where $(\partial_{\infty} S)^{(4)}$ denotes the space of ccw oriented distinct 4-tuples in $(\partial_{\infty} S)^{4}$, satisfying the following four properties:
\begin{enumerate}
\item \emph{flip-invariance:} $[a,b,c,d]=[c,d,a, b].$
\item \emph{additivity:} For any ccw oriented and distinct $a,b,c,d,e$, we have
$[a,b, d, e]=[a,b,c,e] + [a,c,d, e]$.
\item \emph{$\pi_1(S)$-invariance:} For any tuple $(a,b,c,d) \in (\partial_{\infty} S)^{4}$ and every $g \in \pi_1(S)$, $[g \cdot a, g \cdot b, g \cdot c, g \cdot d] = [a,b,c,d]$.
\item \emph{positivity:}  For any tuple $(a,b,c,d) \in (\partial_{\infty} S)^{4}$, we have
\[
[a,b,c,d] \geq 0.
\]
\end{enumerate}
\label{def:general_crossratio}
\end{definition}

There are many definitions of cross ratios in the literature,
including by Ledrappier~\cite{L95:BordDesVar},
Hamenst\"adt~\cite{Ham99:Cocycles}, Otal~\cite{Ot92:Symplectique},
Martone--Zhang~\cite{MZ19:PositivelyRatioed}, and
Burger--Iozzi--Parreau--Pozzetti \cite{BIPP24:PositiveCrossratios}.
In all of these references (except the last one) the authors consider the cross-ratios to be continuous or even H\"older continuous, but we will not assume this here. Furthermore, in all of the above definitions, flip-invariance is assumed.
Labourie~\cite[Definition~3.1]{L07:Crossratio} has considered a more general definition of cross-ratios where flip-invariance is not assumed.  We will always assume our cross-ratios are flip-invariant.

As alluded above, generalized cross-ratios as in our definition above have been recently considered in Burger-Iozzi-Parreau-Pozzetti~\cite{BIPP24:PositiveCrossratios}.
In this setting, there are (at least) two natural generalized cross-ratios associated to a geodesic current (see Question~\ref{que:howmany_crossratios}).
\[
[a,b,c,d]^+ \coloneqq \mu([d,a) \times [b,c))
\quad\text{and}\quad
[a,b,c,d]^- \coloneqq \mu((d,a] \times (b,c]).
\]
It is easily checked that these satisfy the properties of generalized cross-ratios,
and these cross-ratios are different in general if $\mu$ has atoms (see~\cite[Example~3.4]{BIPP24:PositiveCrossratios} when $\mu$ is a closed curve).
If $\mu$ has no atoms, the two generalized cross-ratios $[\cdot,\cdot,\cdot,\cdot]^{\pm}$ are equal, since the measure of the boundary of any box of geodesics is zero by Proposition~  \ref{prop:spikes}

Given a generalized cross-ratio $[ \cdot, \cdot, \cdot, \cdot ]$, let
the \emph{period} function of a curve $C=[g]$ be, for $x$ an arbitrary
point in $(g^-,g^+)$,
\[
  \ell_{[]}(g) = \ell(g) \coloneqq [g^-, x , g \cdot x, g^+].
\]
The $\pi_1(S)$-invariance and additivity of the cross-ratio show this is independent of~$x$.

Given a geodesic current $\mu$, the two (possibly distinct)
generalized cross-ratios $[ \cdot, \cdot, \cdot, \cdot ]^{\pm}$ induce
the same period. Thus, in general, generalized cross-ratios are not
uniquely determined by their periods. However, if the cross-ratios are
assumed to be H\"older continuous, cross-ratios are indeed determined
by their periods \cite[Theorem 1.f]{L95:BordDesVar} (c.f.\
\cite{Ot92:Symplectique}).

Going the other direction, it is immediate that a cross-ratio gives a
finitely additive function $\bigl[[d,a) \times [b,c)\bigr]\coloneqq [a,b,c,d]$, on the semiring of boxes $[d,a) \times [b,c)$, where $a,b,c,d$ appear in ccw
order on $\partial_\infty S$; however, this function need not be countably additive and will
not in general agree with the geodesic current~$\mu$ we construct. For
instance, if we start with a geodesic current $\mu$ with atoms,
construct the cross-ratio $[a,b,c,d]^-$ as above, and look at the
associated function on boxes $\bigl[ \bigr]^-$, we get a function on boxes that
does not agree with $\mu$ and so cannot be countably additive. Indeed,
 $\bigl[[g^+,g^-)\times [x,g \cdot x)\bigr]=\ell(g)$, which is equal to $i(\mu,g)$ and geodesic
currents are determined by their length functions~\cite{Otal90:SpectreMarqueNegative}. Nevertheless, for
convenience in the proofs, we will use the function on boxes and its finite additivity.

\begin{question}
  For a given geodesic current $\mu$, what are all the cross-ratios
  $[a,b,c,d]$ so that the period function with respect to the
  cross-ratio agrees with the length function from~$\mu$?
  \label{que:howmany_crossratios}
\end{question}

\begin{lemma}
\label{lem:crossratio_stability}
For every $g \in \pi_1(S)$, and every $n \in \mathbb{N}$ we have
\[
\ell(g^n) = n\ell(g).
\]
\end{lemma}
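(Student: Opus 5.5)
The plan is to unwind the definition of the period and telescope using additivity of the cross-ratio. Note first that $g$ and $g^n$ have the same axis, so $(g^n)^\pm = g^\pm$. Fix $x \in (g^-,g^+)$. By definition,
\[
\ell(g^n) = [g^-, x, g^n\cdot x, g^+].
\]
The points $x, g\cdot x, g^2\cdot x, \dots, g^n\cdot x$ all lie in the interval $(g^-,g^+)$. By the north-south dynamics of $g$ (Lemma~\ref{lem:NS-dynamics}), they appear there monotonically, moving toward $g^+$. Hence for each $k$ the 5-tuple $(g^-, x, g^{k}\cdot x, g^{k+1}\cdot x, g^+)$ is ccw ordered and distinct. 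This is the first thing I would check, since additivity is only stated for ccw ordered tuples.

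Next I apply additivity repeatedly. Additivity in the form $[a,b,d,e]=[a,b,c,e]+[a,c,d,e]$, applied with $a=g^-$, $b=x$, $c=g^{n-1}\cdot x$, $d=g^n\cdot x$, $e=g^+$, gives
\[
[g^-,x,g^n\cdot x,g^+] = [g^-,x,g^{n-1}\cdot x,g^+] + [g^-,g^{n-1}\cdot x,g^n\cdot x,g^+].
\]
By induction on $n$ this yields
\[
\ell(g^n) = \sum_{k=0}^{n-1}[g^-, g^k\cdot x, g^{k+1}\cdot x, g^+].
\]

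Finally I use $\pi_1(S)$-invariance with the element $g^k$. Since $g^k$ fixes $g^\pm$, each summand equals $[g^-,x,g\cdot x,g^+]=\ell(g)$, so the sum is $n\ell(g)$. The case $n=1$ is trivial, and the statement for $n=0$, if intended, holds vacuously with the convention $\ell(1)=0$.

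The only mild obstacle is verifying the cyclic-order hypotheses needed for additivity. This follows from $g$ acting as an order-preserving map of the interval $(g^-,g^+)$ with no fixed points inside it, so the orbit of $x$ is strictly monotone. The independence of the period from the choice of $x$, already noted in the paper, is what allows using the same $x$ for both $g$ and $g^n$.
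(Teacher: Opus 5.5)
Your proof is correct and follows the paper's argument: you telescope $[g^-,x,g^n\cdot x,g^+]$ via additivity into the $n$ pieces $[g^-,g^k\cdot x,g^{k+1}\cdot x,g^+]$, then identify each piece with $\ell(g)$ by $\pi_1(S)$-invariance under $g^k$. Your explicit check of the ccw ordering is a detail the paper leaves implicit; it is needed only for $k\ge 1$, since for $k=0$ the tuple repeats $x$, but you only use those cases.
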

\begin{proof}

This follows readily by additivity and $\pi_1(S)$-invariance of the
cross-ratio, as follows (see also Equation~\eqref{eq:intersection-stability}).
\begin{equation}\label{eq:crossratio-stability}
 \ell(g^n) =[g^-, x , g^n \cdot x, g^+]= \sum_{i=1}^{n} [g^-, g^{i-1}x , g^i \cdot x, g^+] = n[g^-, x , g^n \cdot x, g^+] = n \ell(g).
\end{equation}
\end{proof}

The fact that the period is a symmetric curve functional will use flip-invariance. 

\begin{lemma}    \label{lem:crinv}
    For every $g \in \pi_1(S)$, we have
    \[
    \ell(g)=\ell(g^{-1}).
    \]
\end{lemma}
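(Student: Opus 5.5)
We compute both periods directly from the definition and compare them using flip-invariance. Write $G = g^{-1}$. Then $G^+ = g^-$ and $G^- = g^+$, and $\ora{G}$ is $\ora{g}$ with the opposite orientation.

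By definition,
\[
\ell(g) = [g^-, x, g\cdot x, g^+] \qquad\text{for any } x \in (g^-,g^+).
\]
Similarly, for any $y \in (G^-,G^+) = (g^+,g^-)$, the interval on the other side of the axis,
\[
\ell(G) = [g^+, y, G\cdot y, g^-].
\]
Applying flip-invariance $[a,b,c,d]=[c,d,a,b]$ to this expression gives
\[
\ell(G) = [G\cdot y, g^-, g^+, y].
\]
This moves the two endpoints $g^\pm$ into the middle slots, with the $y$-points in the outer slots.

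The plan is to recognize the flipped expression as the cross-ratio of a box that, after a translation by a power of $g$, coincides with the box defining $\ell(g)$. Throughout, we think of cross-ratios as finitely additive functions of boxes, $\bigl[[d,a)\times[b,c)\bigr]=[a,b,c,d]$, as discussed just before Question~\ref{que:howmany_crossratios}. In this language:
\begin{itemize}
\item $\ell(g)$ is the box $[g^+,g^-)\times[x,g\cdot x)$;
\item the flipped form of $\ell(G)$ is the box $[y,G\cdot y)\times[g^-,g^+)$.
\end{itemize}
These two boxes are exchanged by the coordinate swap $(p,q)\mapsto(q,p)$, up to replacing the fundamental interval $[G\cdot y,y)$ by $[y,G\cdot y)$ and $x$ by $y$. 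The first replacement is a translation by $g$, and $\pi_1(S)$-invariance removes it. The second is a change of basepoint, and independence of the period from the basepoint (already established via additivity and invariance) removes it. The swapped box should be exactly what flip-invariance $[a,b,c,d]=[c,d,a,b]$ expresses, since in box language flip-invariance says that the value of a box is unchanged under this swap.

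Concretely, the steps are as follows. First, rewrite $\ell(G)$ via flip-invariance as above. Second, apply $\pi_1(S)$-invariance with $g$ to obtain
\[
\ell(G) = [y, g^-, g^+, g\cdot y].
\]
Third, check that the four points $y, g^-, g^+, g\cdot y$ are in ccw order. Fourth, use additivity in the two outer slots to show that $[y, g^-, g^+, g\cdot y]$ does not depend on $y$. Finally, match this with $[g^-,x,g\cdot x,g^+]$ using flip-invariance once more.

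The main obstacle is the cyclic-order bookkeeping. Each application of flip-invariance must keep the tuple in $(\partial_\infty S)^{(4)}$ with the ccw convention, and the direction of translation must be checked: $g\cdot y$ versus $G\cdot y$ depends on which side of the axis $y$ lies and on the north-south dynamics (Lemma~\ref{lem:NS-dynamics}). We will handle this by drawing the four points on the circle and verifying that each intermediate tuple is ccw. If the direct match fails because the two sides of the axis are exchanged, we fall back on a telescoping argument. Using additivity and invariance as in Lemma~\ref{lem:crossratio_stability}, we would show that $[g^-,x,g\cdot x,g^+]$ equals the analogous quantity computed on the opposite arc $(g^+,g^-)$. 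Flip-invariance is the only axiom that relates the two arcs, and it converts one into the other.
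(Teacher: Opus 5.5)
There is a genuine gap: the final ``match'' in your main line cannot work, and the fallback leaves out the ingredient that makes the lemma true.

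\textbf{Why the direct match fails.} Flip-invariance $[a,b,c,d]=[c,d,a,b]$ is a cyclic rotation by two slots. Translation by powers of $g$ fixes $g^\pm$ and maps each arc of $\partial_\infty S\setminus\{g^\pm\}$ to itself. So neither operation ever moves the two non-fixed entries from one arc to the other.
\begin{itemize}
\item Starting from $\ell(g^{-1})=[g^+,y,g^{-1}y,g^-]$ with $y\in(g^+,g^-)$, every expression you reach by flips and translations has its non-fixed entries in $(g^+,g^-)$. By contrast, $\ell(g)=[g^-,x,gx,g^+]$ has them in $(g^-,g^+)$.
\item Concretely, flipping your expression $[y,g^-,g^+,gy]$ once more gives $[g^+,gy,y,g^-]$. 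That is just the definition of $\ell(g^{-1})$ with basepoint $gy$, so the argument goes in a circle.
\item The ``change of basepoint $x\mapsto y$'' you appeal to is not available. Basepoint independence holds only for $x$ moving inside $(g^-,g^+)$; moving it across the axis is exactly what the lemma asserts.
\end{itemize}
In box language, after your first two steps you have $\ell(g^{-1})=\bigl[[gy,y)\times[g^-,g^+)\bigr]$ and $\ell(g)=\bigl[[g^+,g^-)\times[x,gx)\bigr]$. These are essentially two \emph{different} fundamental domains for the diagonal action of $\langle g\rangle$ on geodesics joining the two arcs: one is cut along the first coordinate, the other along the second. For a countably additive invariant measure they would have equal mass. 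A generalized cross-ratio, however, is only finitely additive, and your proposal gives no mechanism for this equality. Flip-invariance does enter, to make both periods count geodesics crossing $\ora{g}$ in the same direction, but it cannot carry the fundamental interval across the axis. The ccw bookkeeping you flagged is not the real obstacle.

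\textbf{What the fallback is missing.} Your fallback points the right way but omits the limiting argument. Using Lemma~\ref{lem:crossratio_stability} (and flip-invariance for the second identity), write
\[
2n\,\ell(g)=[g^-,g^{-n}x,g^nx,g^+],\qquad 2n\,\ell(g^{-1})=[g^{-n}y,g^-,g^+,g^ny].
\]
\begin{itemize}
\item By additivity, each side differs from the common term $[g^{-n}y,g^{-n}x,g^nx,g^ny]$ by two corner terms.
\item Translating a corner by $g^{\pm n}$ turns it into, for example, $[y,g^-,x,g^{2n}y]$. By additivity and \emph{positivity} this is at most $[y,g^-,x,g^+]$.
\item The other corners are bounded similarly, by $[y,g^-,x,g^+]$ or $[g^-,x,g^+,y]$, independently of $n$.
\end{itemize}
Dividing by $2n$ and letting $n\to\infty$ gives $\ell(g)=\ell(g^{-1})$; this is the paper's argument. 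Positivity is what kills the corner terms, and your proposal never invokes it. The exact telescoping of Lemma~\ref{lem:crossratio_stability} stays on a single arc, so it cannot by itself relate the two sides.
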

\begin{proof}
The geodesic $\ora{g}$ with endpoints $g^-, g^+$ divides $\partial_{\infty}S$ into two open intervals:
one on the left in the forward direction of $\gamma$ (i.e.,
$(g^+,g^-)$), the other on the right.

Let $y \in  \partial_{\infty}S$ be on the left interval, and $x \in  \partial_{\infty}S$ on the right interval.
We then have
\begin{align*}
\ell(g^{-1}) &= \lim_{n \to \infty} \frac{[g^{-n}y, g^-, g^+, g^{n} y]}{2n} \\
  &= \lim_{n \to \infty}
     \frac{[g^{-n}y, g^-, g^{-n}x, g^{n} y]}{2n} +
     \frac{[g^{-n}y, g^{-n}x, g^{n}x, g^{n} y]}{2n} +
     \frac{[g^{-n}y, g^{n}x, g^+, g^{n} y]}{2n}\\
  &= \ell(g).
\end{align*}
For the first equality, we used Lemma~\ref{lem:crossratio_stability}.
To see the last equality, note that by $\pi_1(S)$-invariance, the
numerator of the first term is $[y,g^-,x,g^{2n}y]$ which is bounded by
$[y,g^-,x,g^+]$, and thus the entire first term has limit zero
as $n \to \infty$. Similarly for the last term, and the middle term is
symmetric under switching $g$ and $g^{-1}$.
\end{proof}

We will need another lemma on equality of cross-ratios.
\begin{lemma}\label{lem:cr-switcheroo}
  Fix $g \in \pi_1(S)$ and $x,y \in \bdy_\infty S$.
  
  \begin{enumerate} 
      \item If $g^-,y,g\cdot y, G \cdot x, x,g^+$
  appear in ccw order as in Fig.~\ref{fig:cr-geo1}, then
  \[
    \bigl[[x,g^+)\times[y,g \cdot y)\bigr]
      = \bigl[[G \cdot x,x)\times[g^-,y)\bigr].
  \]
  \item If $g^-,y,g\cdot y, g^+, x, G \cdot x$
  appear in ccw order as in Fig.~\ref{fig:cr-geo2}, then
  \[
    \bigl[[g^+,x)\times[y,g \cdot y)\bigr]
      = \bigl[[x,G \cdot x)\times[g^-,y)\bigr].
  \]
  \end{enumerate}
\end{lemma}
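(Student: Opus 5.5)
The plan is to reduce both identities to finite additivity of the box function $\bigl[[d,a)\times[b,c)\bigr]\coloneqq[a,b,c,d]$ together with $\pi_1(S)$-invariance, via a single ``translate and cancel'' step. The relevant points include the fixed points $g^\pm$, and these are preserved by $g$. So translating a box that has a corner on $g^\pm$ by $g$ moves only its other two sides. We then compare the two decompositions of the translated box and cancel a common finite term.

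First I record additivity of the box function in each coordinate separately. Additivity of the cross-ratio gives splitting in the second coordinate directly:
\[
\bigl[[d,a)\times[b,e)\bigr]=\bigl[[d,a)\times[b,c)\bigr]+\bigl[[d,a)\times[c,e)\bigr].
\]
Splitting in the first coordinate follows from this by flip-invariance $[a,b,c,d]=[c,d,a,b]$, since flip-invariance swaps the roles of the two intervals. I will only use boxes whose four defining points are distinct and in the required ccw order, so every term is a finite nonnegative real and cancellation is legitimate.

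For case (1), the points are in ccw order $g^-,y,g\cdot y,G\cdot x,x,g^+$. Let $A\coloneqq[G\cdot x,g^+)\times[g^-,y)$; its value is the cross-ratio $[g^+,g^-,y,G\cdot x]$, which has a valid ccw order. By $\pi_1(S)$-invariance, $A$ and $g\cdot A=[x,g^+)\times[g^-,g\cdot y)$ have the same value. Now decompose each box:
\begin{align*}
[A]&=\bigl[[G\cdot x,x)\times[g^-,y)\bigr]+\bigl[[x,g^+)\times[g^-,y)\bigr],\\
[g\cdot A]&=\bigl[[x,g^+)\times[g^-,y)\bigr]+\bigl[[x,g^+)\times[y,g\cdot y)\bigr].
\end{align*}
The common term $\bigl[[x,g^+)\times[g^-,y)\bigr]=[g^+,g^-,y,x]$ is finite, since $g^+,g^-,y,x$ are distinct and in ccw order. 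Cancelling it gives (1).

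Case (2) is identical in structure, with ccw order $g^-,y,g\cdot y,g^+,x,G\cdot x$. Take $A\coloneqq[g^+,G\cdot x)\times[g^-,y)$, so that $g\cdot A=[g^+,x)\times[g^-,g\cdot y)$. Split $A$ at $x$ in the first coordinate and split $g\cdot A$ at $y$ in the second. Both decompositions contain the common term $\bigl[[g^+,x)\times[g^-,y)\bigr]$, and cancelling it yields (2).

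The only real obstacle is bookkeeping. In each decomposition, every box that appears must have its four defining points in valid ccw order, so that it corresponds to a genuine cross-ratio value. The splitting points also have to sit inside the intervals being split, so that the additivity axiom actually applies. I would check these orderings case by case against the two given cyclic orders, and do nothing beyond that.
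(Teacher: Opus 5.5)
Your proof is correct, and it is genuinely shorter than the paper's. The paper proves (1) by iterated excision. It translates $R=[G\cdot x,x)\times[g^-,y)$ by $g$, cuts off the strip $V_1=[x,g\cdot x)\times[y,g\cdot y)\subset L$, and repeats. This gives $[R]=[R_n]-[L_n]+[L]$, with $R_n=[g^{n-1}\cdot x,g^n\cdot x)\times[g^-,y)$ and $L_n=[g^n\cdot x,g^+)\times[y,g\cdot y)$. It then shows $[R_n],[L_n]\to 0$: the $R_i$, and the translates $G^i\cdot L_i$, are pairwise disjoint inside a fixed box of finite value, and this step uses positivity of the cross-ratio.

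In effect the paper tiles your enlarged box $A=[G\cdot x,g^+)\times[g^-,y)=\bigsqcup_{i\ge 0}R_i$ by infinitely many pieces, so it must pass to a limit. You instead split $A$ and $g\cdot A$ once each and cancel the finite common term $\bigl[[x,g^+)\times[g^-,y)\bigr]$. The ordering bookkeeping you deferred does check out in both cases. Every box you use has four distinct corners in valid ccw order, and each splitting point lies in the interval being split.

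Your version uses only finite additivity, flip-invariance and $\pi_1(S)$-invariance, with no positivity and no limits. It therefore holds verbatim for signed finitely additive cross-ratios. The paper's dissection adds an explicit picture of how $R$ is reassembled into $L$. Its remainder estimate already relies on the finiteness of (a flip of) your box $A$.
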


\begin{figure}
\centering

\begin{subfigure}{0.48\textwidth}
\centering
\resizebox{\linewidth}{!}{\fontsize{9pt}{9pt}\selectfont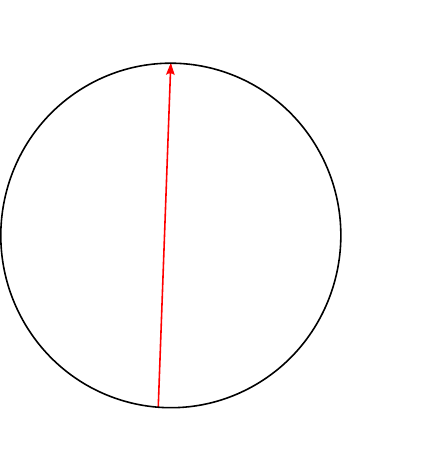}
\caption{Lemma~\ref{lem:cr-switcheroo}(1)}
\label{fig:cr-geo1}
\end{subfigure}
\hfill
\begin{subfigure}{0.48\textwidth}
\centering
\resizebox{\linewidth}{!}{\fontsize{9pt}{9pt}\selectfont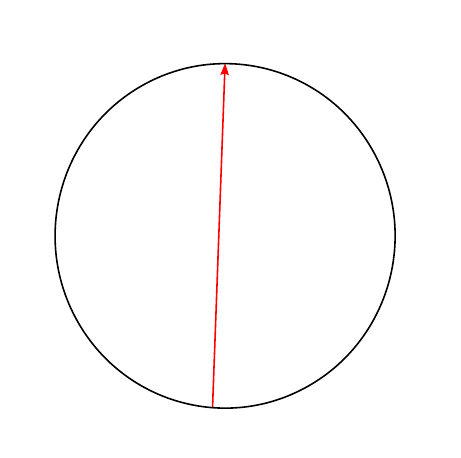}
\caption{Lemma~\ref{lem:cr-switcheroo}(2)}
\label{fig:cr-geo2}
\end{subfigure}

\caption{The statement and proof of Lemma~\ref{lem:cr-switcheroo}.}
\label{fig:cr-geo}
\end{figure}

Other variations of this lemma, with different position of $x,y$, hold
too. The geometric significance of
the common value is unclear to us.

\begin{proof}
  We will prove the first part.
  In what follows, compare Figure~\ref{fig:cr-geo1}.
  Set $L = [x,g^+)\times[y,g \cdot y)$ and
  $R = [G \cdot x,x)\times[g^-,y)$. To show $[L] = [R]$, we will
  translate $R$ by~$g$ and cut off a piece that is a subset of $L$.
  The remaining piece after this excision is of the same general form
  as $R$, so we can repeat.

  To that end, define, for $i \ge 0$,
  \begin{align*}
    R_i &\coloneqq [g^{i-1}\cdot x, g^i \cdot x) \times [g^-, y)\\
    L_i &\coloneqq [g^i \cdot x,g^+) \times [y, g \cdot y)\\
    \intertext{and for $i \ge 1$,}
    V_i &\coloneqq [g^{i-1} \cdot x, g^i \cdot x) \times [y, g \cdot y).
  \end{align*}
  Then $L_0 = L$, $R_0 = R$, and
  $g \cdot R_i = [g^i \cdot x, g^{i+1} \cdot x) \times [g^-, g \cdot
  y) = R_{i+1} \sqcup V_{i+1}$.
  Furthermore, the $V_i$ are all contained in~$L$. More precisely, for
  $n\ge 0$, we have $L = L_n \sqcup \bigsqcup_{i=1}^n V_i$.
  We thus have
  \[
    [R] = [R_n] + \sum_{i=1}^n [V_n] = [R_n] - [L_n] + [L].
  \]

  To complete the proof, we must show
  $\lim_{n^+} [R_n] = \lim_{n^+}[L_n] = 0$. To that end, note that the
  $R_i$ are disjoint, with bounded union:
  \[
    \bigsqcup_{i=0}^n R_i \subset [g^-,y) \times [G \cdot x, g^+).
  \]
  It follows that $\sum_{i=0}^n [R_i]$ is uniformly bounded and thus
  that $\lim_{i^+} [R_i] = 0$.

  The $L_i$ are not disjoint, but become disjoint after translation by
  $G^i$:
  \[
    L_i' \coloneqq G^i \cdot L_i = [x, g^+) \times [G^i \cdot y, G^{i-1} \cdot y).
  \]
  The $L_i'$ are disjoint, with union contained in the same set
  $[g^-,y) \times [G \cdot x, g^+)$, so as before $\lim_{i^+} [L_i'] = 0$.
  The proof of the second part is similar, although the setup is slightly different (see~Figure~\ref{fig:cr-geo2}).
\end{proof}

Now we will prove that $\ell$ satisfies the smoothing hypotheses in Theorem~\ref{thm:intersections_group}.

We divide the proof into two cases, depending on the type of smoothing (connected vs disconnected).

\subsection{Disconnected smoothing}

We will verify Equation~\eqref{eq:disconn_oriented} in Theorem~\ref{thm:intersections_group}.

\begin{lemma}\label{lem:crossratio_disconnected_smoothing}
Let $a, b \in \pi_1(S)$ so that $(\ora{b},\ora{a})$ are R-crossing.
Then
\[
\ell(a) + \ell(b) \geq \ell(ab).
\]
\end{lemma}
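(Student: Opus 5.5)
The plan is to express all three periods as cross-ratio values of boxes, using the common intersection point structure. Since $(\ora{b},\ora{a})$ R-cross, Lemma~\ref{lem:righthandedfoundation}\ref{item:ab-cross} (applied to the pair $(\ora{b},\ora{a})$) fixes the cyclic order of the endpoints $a^\pm,b^\pm,(ab)^\pm,(ba)^\pm$ on $\partial_\infty S$. By the definition of the period, $\ell(a)=\bigl[[a^+,a^-)\times[y,a\cdot y)\bigr]$ for any $y$ on the appropriate side of $\ora{a}$, with the analogous expressions for $\ell(b)$ and $\ell(ab)$. The freedom of basepoint will be used to choose a single point $z$ that works for all three: take $z$ on the correct side of both $\ora{a}$ and $\ora{b}$, for instance $z=b^-$ or a point near it, so that $z$, $b\cdot z$, $ab\cdot z$ occur in a controlled order.

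Concretely, write $\ell(ab)=[(ab)^-,z,ab\cdot z,(ab)^+]$ and split the interval $[z,ab\cdot z)$ at $b\cdot z$ using additivity of the cross-ratio:
\[
\ell(ab)=\bigl[[\,\cdot\,)\times[z,b\cdot z)\bigr]+\bigl[[\,\cdot\,)\times[b\cdot z,ab\cdot z)\bigr],
\]
where the first factor is the interval $[(ab)^+,(ab)^-)$. The second term is translated by $B$ using $\pi_1(S)$-invariance, which turns it into a box with second factor $[z,Ba b\cdot z)$; the first term should be compared with $\ell(b)=\bigl[[b^+,b^-)\times[z,b\cdot z)\bigr]$. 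The goal is containment: since $(ab)^\pm$ lie on the arcs predicted by Lemma~\ref{lem:righthandedfoundation}, each box appearing in $\ell(ab)$ should be contained in, or obtainable by subtraction from, the boxes defining $\ell(a)$ and $\ell(b)$. Positivity then gives $\ell(ab)\le\ell(a)+\ell(b)$, with the difference equal to a sum of cross-ratios of leftover boxes, which is nonnegative.

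The main obstacle is that the boxes do not nest on the nose. The intervals $[(ab)^+,(ab)^-)$ and $[b^+,b^-)$ differ by pieces near the endpoints, and the translated second term lives over a shifted interval. To handle this I would invoke Lemma~\ref{lem:cr-switcheroo}, which trades a box $[x,g^+)\times[y,g\cdot y)$ for $[G\cdot x,x)\times[g^-,y)$. This lets me move the excess piece near $(ab)^+$ (respectively $a^+$) across the axis so that it matches a region belonging to the other box, up to nonnegative remainders. I would first try $g=ab$ with $x=a^+$ or $b^+$, and check in the figure of Lemma~\ref{lem:righthandedfoundation} which of the two cases of Lemma~\ref{lem:cr-switcheroo} applies. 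If the bookkeeping becomes unwieldy, the fallback is an argument via large powers $a^n,b^n$ combined with Lemma~\ref{lem:crossratio_stability}, where the error boxes become negligible after dividing by $n$.
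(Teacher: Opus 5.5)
Your outline has the same architecture as the paper's proof: one basepoint for all three periods, split the fundamental domain of $\langle ab\rangle$ at $b\cdot z$, translate one piece back, compare first factors, and repair the non-nesting with Lemma~\ref{lem:cr-switcheroo}. But it stops exactly where the argument happens, and the concrete choices you propose do not work.

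\emph{The basepoint and the translation.} The point $z=b^-$ is degenerate, since it is fixed by $b$. Points near $b^-$ lie on the wrong side of $\ora{a}$ for writing $\ell(a)=[a^-,z,a\cdot z,a^+]$: with ccw order $b^+,a^+,b^-,a^-$, a common basepoint must lie in $(a^-,b^+)$. The missing idea is to take $z=(Ab)^+$. This point lies in $(a^-,b^+)$ and satisfies $Ab\cdot z=z$, hence $a\cdot z=b\cdot z=(bA)^+$. So $[z,b\cdot z)$ is a fundamental interval for $a$ and for $b$ simultaneously.

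Moreover, the piece $[(ab)^+,(ab)^-)\times[b\cdot z,ab\cdot z)$ must be translated by $A$, not $B$. Translating by $A$ gives $[(ba)^+,(ba)^-)\times[z,b\cdot z)$. Translating by $B$ instead puts you on the axis of $(BaB^{-1})b$ over $[z,BaB^{-1}\cdot z)$, which does not line up with the other boxes.

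Once all four boxes sit over $[z,b\cdot z)$, the cyclic order $b^+,(ba)^+,(ab)^+,a^+,b^-,(ab)^-,(ba)^-,a^-$ gives
\[
\ell(a)+\ell(b)-\ell(ab)=[R_1]-[R_2]-[R_3]+[R_4].
\]
Here the $R_i$ are the boxes over $[z,b\cdot z)$ with first factors $[(ba)^-,a^-)$, $[b^-,(ab)^-)$, $[(ab)^+,a^+)$, $[b^+,(ba)^+)$ respectively. Two terms are negative, and each must be compared with a box disjoint from it. So containment plus positivity cannot finish; you need an exchange that uses invariance.

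\emph{The exchange step.} The comparison comes from Lemma~\ref{lem:cr-switcheroo} applied along the axes of $a$ and of $b$, not of $ab$. Use part~(1) with $g=a$, $y=z$, $x=(ab)^+$, and part~(2) with $g=b$, $y=z$, $x=(ba)^+$. Together with $A\cdot(ab)^+=(ba)^+$, $B\cdot(ba)^+=(ab)^+$ and $[a^-,z)\subset[b^-,z)$, this gives
\[
[R_3]=\bigl[[(ba)^+,(ab)^+)\times[a^-,z)\bigr]\le\bigl[[(ba)^+,(ab)^+)\times[b^-,z)\bigr]=[R_4].
\]
Symmetrically, $[R_2]\le[R_1]$. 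This works cleanly because $R_3$ and $R_4$ sit over the same interval $[z,a\cdot z)=[z,b\cdot z)$, which again relies on the choice $z=(Ab)^+$.

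\emph{The fallback.} Your argument via large powers does not replace this step. Bounding error boxes for $a^n,b^n$ controls $\ell(a^nb^n)/n$. Getting back to $\ell(ab)$ needs an inequality such as $\ell(a^{2n}b^{2n})\ge 2n\,\ell(ab)$, which comes from connected smoothing. For periods, the paper proves connected smoothing (Lemma~\ref{lem:crossratio_connected_smoothing}) using the very lemma you are trying to prove, so as stated the fallback is circular.
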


\begin{proof}
See
Figure~\ref{fig:rcross_cr}. We start by giving finding
fundamental domains $F(ab)$, $F(a)$, and $F(b)$ for $\langle ab \rangle$,
$\langle a \rangle$, and~$\langle b \rangle$, respectively, with the property that all
boxes share the same right-hand factor $[Ab^+, bA^+)$.

Fundamental domains $F(a)$, $F(b)$ with this right-hand factor are immediate:
\begin{align*}
  F(a) &= [a^+, a^-) \times [Ab^+, bA^+)\\
  F(b) &= [b^+, b^-) \times [Ab^+, bA^+).
\end{align*}

Consider next the fundamental domain $F_0(ab) = [ab^+,ab^-) \times [Ab^+, a
\cdot bA^+)$ for $\langle ab \rangle$, and write it as the union
$F_0(ab) = F_1 \cup F_2$ where
\begin{align*}
  F_1 &= [ab^+,ab^-) \times [Ab^+, bA^+) \\
  F_2 &= [ab^+,ab^-) \times [bA^+, a \cdot bA^+).
  \\\intertext{Translate $F_2$ by $A$ to get}
  F_2 \equiv F_2' &= [ba^+,ba^-) \times [Ab^+, bA^+)
  \\\intertext{so that}
  F_0(ab) \equiv F_1 + F_2' &= \bigl([ab^+,ab^-) + [ba^+,ba^-)\bigr)
                            \times [Ab^+, bA^+)
\end{align*}
where $F \equiv F'$ means $F, F'$ are equivalent up to translation, and in the last step we switch to multi-set notation.

Then we have
\begin{align*}
  F(a) + F(b) - F_0(ab)
    &\equiv \bigl([a^+,a^-) + [b^+, b^-) - [ab^+,ab^-) - [ba^+,ba^-)\bigr)
      \times [Ab^+, bA^+)\\
    &= \bigl([ba^-,a^-) - [b^-,ab^-) - [ab^+,a^+) + [b^+,ba^+)\bigr)
      \times [Ab^+, bA^+)\\
    &= R_1 - R_2 - R_3 + R_4
\end{align*}
where the $R_i$ are the terms appearing in the previous equation in
order, and we have switched to virtual multi-sets. 
\begin{figure}[ht]
     \centering
\resizebox{80mm}{!}{\fontsize{9pt}{9pt}\selectfont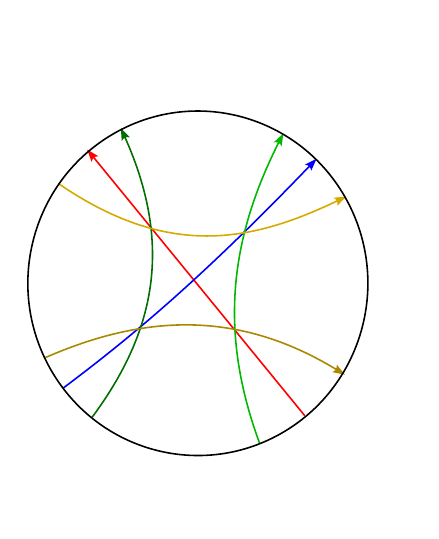}
    \caption{Checking $\ell(a) + \ell(b) \geq \ell(ab)$, part 1.
      The difference $F(a) + F(b) - F_0(ab)$  decomposes into $R_1$ (blue) + $R_2$
      (red) + 
      $R_3$ (purple) + $R_4$ (cyan).}\label{fig:rcross_cr}
\end{figure}

We next show that $[R_3] \le [R_4]$. By Lemma~\ref{lem:cr-switcheroo}(1)
with $g=a$, $y=Ab^+$, and $x=ab^+$, we have
\[
  [R_3] = \bigl[[ba^+,ab^+) \times [a^-,Ab^+)\bigr] = [R_3'].
\]
By the Lemma~\ref{lem:cr-switcheroo}(2), with $g=b$, $y=Ab^+$, and $x=ba^+$, we have
\[
  [R_4] = \bigl[[ba^+,ab^+) \times [b^-,Ab^+)\bigr] = [R_4'].
\]
Since $R_3' \subset R_4'$, we have the desired inequality.
The symmetric argument shows that $[R_2] \le [R_1]$.
\end{proof}

\begin{remark}
  If one traces through the last inequalities, the argument shows that
  \[
    \ell(a) + \ell(b) - \ell(ab) = [R_1] - [R_2] - [R_3] + [R_4]
    = \bigl[[ba^+, ab^+) \times [b^-,a^-)\bigr]
      + \bigl[[ab^-,ba^-) \times [b^+,a^+)\bigr].
  \]
  This is reminiscent of Lemma~\ref{lem:otalclaim}.
\end{remark}

\subsection{Connected smoothing}

Here we verify that Equation~\eqref{eq:conn_oriented} in Theorem~\ref{thm:intersections_group}, oriented connected smoothing, is satisfied.

\begin{lemma}\label{lem:crossratio_connected_smoothing}
Let $a,b \in \pi_1(S)$ with $(a,b)$ R-parallel.
Then, we have
\[
\ell(ab) \geq \ell(a) + \ell(b).
\]
\end{lemma}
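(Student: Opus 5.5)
I will mirror the proof of Lemma~\ref{lem:crossratio_disconnected_smoothing}. The plan is to write fundamental domains for $\langle a\rangle$, $\langle b\rangle$ and $\langle ab\rangle$ as boxes sharing a common factor. I then express $F_0(ab) - F(a) - F(b)$, modulo translation, as a signed combination of boxes, and show it is nonnegative using positivity together with Lemma~\ref{lem:cr-switcheroo}.

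\textbf{Setup.} By Lemma~\ref{lem:righthandedfoundation}\ref{item:ab-parallel}, since $(\ora{a},\ora{b})$ are R-parallel:
\begin{itemize}
\item the triples $(\ora{a},\ora{ab},\ora{b})$ and $(\ora{a},\ora{ba},\ora{b})$ are R-parallel;
\item $(\ora{ba},\ora{ab})$ R-cross;
\item the endpoints satisfy $(ab)^+,(ba)^+\in(b^+,a^+)$ and $(ab)^-,(ba)^-\in(a^-,b^-)$.
\end{itemize}
Choose the common "transverse" factor to be an interval $J$ whose endpoints are a point $y$ and its image under the relevant element. Take $y$ in the region strictly between $\ora{ab}$ and $\ora{ba}$ near the $+$ side, say $y=(ab)^+$ or a point of $(ba)^+$-type.

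Since $(\ora{ba},\ora{ab})$ cross, the periods can be written as follows:
\begin{itemize}
\item $\ell(ab)$ as a box $[(ab)^-\text{-side interval})\times[y,ab\cdot y)$;
\item $\ell(a)$ and $\ell(b)$ as boxes $[a^+,a^-)\times J_a$ and $[b^-,b^+)\times J_b$ (orientations as in the definition of the period).
\end{itemize}
Next, split $J_{ab}=[y,ab\cdot y)$ as $[y,a\cdot y')\cup[a\cdot y',ab\cdot y)$, with $y'=b\cdot y$. Translating the second piece by $A$ gives two boxes over the same interval. This is the exact analogue of the $F_1,F_2'$ decomposition, with $ab$ and $ba$ now \emph{parallel to} rather than crossing $a,b$.

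\textbf{Main step.} After this bookkeeping, and using finite additivity on the semiring of boxes (the induced function on boxes, not a measure), I expect
\[
F_0(ab)-F(a)-F(b)\equiv R_1-R_2-R_3+R_4 ,
\]
where each $R_i$ is a box whose left factor is an interval between consecutive endpoints among $a^\pm,b^\pm,(ab)^\pm,(ba)^\pm$. Pairs of these boxes should be comparable via Lemma~\ref{lem:cr-switcheroo}, applied with $g=a$ or $g=b$ and with $x,y$ chosen among those endpoints. I expect the parallel configuration to correspond to part (2) for one pair and part (1) for the other; the remark after the statement says other variations of that lemma hold and may be needed. Once each negative term is identified with a translate of a box contained in a positive term, positivity of the cross-ratio gives $[R_2]\le[R_1]$ and $[R_3]\le[R_4]$, hence $\ell(ab)\ge\ell(a)+\ell(b)$.

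\textbf{Main obstacle.} I expect the hardest part to be the combinatorics of the endpoint configuration. The needed switcheroo variant requires the correct cyclic order of $g^\pm$, $x$, $g\cdot x$, $y$, $G\cdot y$, and the parallel case places these differently from the crossing case. I would first draw the configuration in the disk to read off the decomposition. If a needed variant of Lemma~\ref{lem:cr-switcheroo} is not literally one of its two cases, I would reprove it by the same telescoping argument: translate by $g$, excise a piece, and show the remainders tend to $0$ because their translates are disjoint inside a fixed box of finite cross-ratio value.

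As a fallback, I might instead expect the identity
\[
\ell(ab)-\ell(a)-\ell(b)=\bigl[[(ab)^+,(ba)^+)\times[(ba)^-,(ab)^-)\bigr]
\]
(up to orientation conventions), paralleling the Remark after Lemma~\ref{lem:crossratio_disconnected_smoothing}. I would try to verify this identity directly, which makes nonnegativity immediate.
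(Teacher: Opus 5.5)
There is a genuine gap. The decomposition that is supposed to carry the proof is never produced, and the pieces you do write down do not work. With $y=(ab)^+$ the interval $[y,ab\cdot y)$ is degenerate, since $ab$ fixes $y$. And for any $y$, setting $y'=b\cdot y$ gives $a\cdot y'=ab\cdot y$, so your splitting $[y,a\cdot y')\cup[a\cdot y',ab\cdot y)$ is vacuous.

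What made Lemma~\ref{lem:crossratio_disconnected_smoothing} work is that $y=(Ab)^+$ satisfies $a\cdot y=b\cdot y$. Hence $J=[y,b\cdot y)$ is at once a transverse fundamental interval for $a$ and for $b$, and $[y,ab\cdot y)=J\cup a\cdot J$. In the parallel case the solutions of $a\cdot y=b\cdot y$ are $(Ba)^\pm$, whose position depends on the configuration of $\ora{aB},\ora{Ba}$ (Lemma~\ref{lem:trichotomy_ab}). In the splitting configuration the arc from $(Ba)^\pm$ to $a\cdot(Ba)^\pm=(aB)^\pm$ contains endpoints of $\ora{a}$ or of $\ora{b}$, so no such common factor exists.

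The interval $J=[(ba)^+,(ab)^+)$ is a common transverse fundamental interval for $a$ and $B$, since $a\cdot(ba)^+=B\cdot(ba)^+=(ab)^+$. This gives $\ell(a)+\ell(b)=\bigl[([a^+,a^-)\sqcup[b^-,b^+))\times J\bigr]$. But $J$ abuts the fixed point $(ab)^+$, so relating it to a fundamental domain of $\langle ab\rangle$ needs infinitely many translates, i.e.\ a new telescoping argument like Lemma~\ref{lem:cr-switcheroo}, which you do not supply.

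The paper sidesteps this configuration analysis. By Lemma~\ref{lem:crossratio_disconnected_smoothing} and Lemma~\ref{lem:anbn_vs_bkak}, $n\mapsto\ell(a^nb^n)$ is subadditive, so $\ell(ab)\ge\lim_n\ell(a^nb^n)/n$ by Lemma~\ref{lem:fekete}. It then suffices to prove $\ell(a^nb^n)\ge n(\ell(a)+\ell(b))$ for $n$ so large that $b^{-n}\cdot\ora{a}$ and $a^n\cdot\ora{b}$ no longer cross $\ora{a}$ and $\ora{b}$; there a single comparison of fundamental domains works.

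If you follow that route, note that the inclusion $h\cdot R\supset F(g)$ displayed there ($g=a^n$, $h=b^n$) fails as written. The map $h$ pulls $g\cdot h^-$ back toward $h^+$, so $h\cdot R$ has second factor $[h^-,hg\cdot h^-)\subsetneq[h^-,g\cdot h^-)$. Here is a correct version. $F(gh)$ contains the disjoint boxes $[g^+,H\cdot g^+)\times[h^-,h^+)$ (the flip of a fundamental domain for $\langle H\rangle$), $[g^+,g^-)\times[h^+,g\cdot h^-)$, and $[H\cdot g^+,H\cdot g^-)\times[h^-,h^+)$. Then $gh$ maps the last box onto $[g^+,g^-)\times[g\cdot h^-,g\cdot h^+)$. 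Together with the second box this forms the fundamental domain $[g^+,g^-)\times[h^+,g\cdot h^+)$ of $\langle g\rangle$.

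Your fallback identity is false, and not merely up to orientation conventions. Test it on the hyperbolic cross-ratio in the upper half-plane, whose period is translation length and where counterclockwise means increasing. Let $a$ have $a^+=-2$, $a^-=-1$ and $b$ have $b^-=1$, $b^+=2$, both of translation length $\log 4$. Then $(ab)^\pm=(-5\mp\sqrt{97})/6$ and $(ba)^\pm=(5\pm\sqrt{97})/6$, and $\ell(ab)-\ell(a)-\ell(b)=4\log\frac{9+\sqrt{97}}{8}\approx 3.43$. The only non-degenerate reading of your box is $[(ba)^+,(ab)^+)\times[(ba)^-,(ab)^-)$, which has value $\log\frac{97}{25}\approx 1.36$. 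Even adding its flip gives only about $2.71$.

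In this example (and in an asymmetric numerical check) the defect instead equals $\bigl[[(ba)^+,(ab)^+)\times[a^-,b^-)\bigr]+\bigl[[(ba)^-,(ab)^-)\times[b^+,a^+)\bigr]$, each term being $2\log\frac{9+\sqrt{97}}{8}$ here. So, exactly as in the Remark after Lemma~\ref{lem:crossratio_disconnected_smoothing}, the boxes join a short arc between the $ab$- and $ba$-endpoints to the whole arc between the $a$- and $b$-endpoints on the opposite side, not to the opposite short arc. Proving that identity for an arbitrary generalized cross-ratio would give the lemma at once. But it requires precisely the decomposition and switcheroo variants that your proposal leaves as expectations.
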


\begin{proof}

    \begin{figure}[ht!]
     \centering
\resizebox{80mm}{!}{\fontsize{9pt}{9pt}\selectfont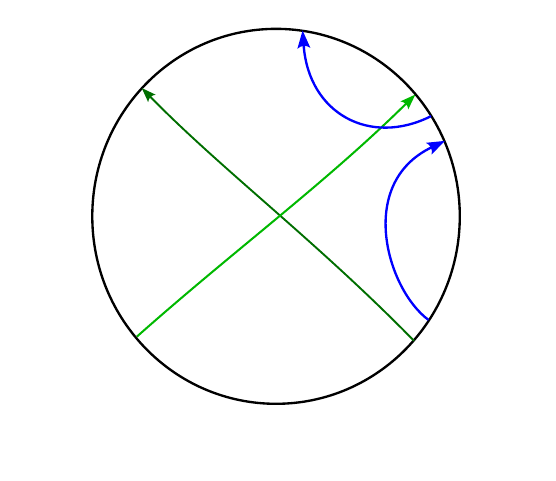}
    \caption{Checking $\ell(ab) \geq \ell(a) + \ell(b)$}\label{fig:c8}
\end{figure} 
Suppose $(a,b)$ are R-parallel. By Lemma~\ref{lem:anbn_vs_bkak}, $(\ora{b^m a^m}, \ora{a^nb^n})$ R-cross for every $m,n \geq 1$.
Since by Lemma~\ref{lem:crossratio_disconnected_smoothing} we already know disconnected smoothing for $\ell$, it follows that for every $n,m \geq 1$, we have
\[
\ell(a^n b^n) + \ell(b^m a^m) \geq \ell((b^m a^m)(a^n b^n)) = \ell(a^{n+m}b^{n+m}),
\]
i.e., the sequence $a_n \coloneqq \ell(a^n b^n)$ is sub-additive. By Fekete's lemma~\ref{lem:fekete}, 
we have
\[
\ell(ab) \geq \lim_n \frac{\ell(a^nb^n)}{n}.
\]
We will now prove that, for sufficiently large $n$, we have
\[
\ell(a^nb^n) \geq \ell(a^n) + \ell(b^n).
\]
Indeed, take $n_0$ large enough so that for all $n \geq n_0$, neither $(\ora{a}, B^n \cdot \ora{a})$ nor $(\ora{b}, a^n \cdot \ora{b})$ cross. 
This can be achieved by NS dynamics of $a$ and $B$. To clarify
notation, we will let $g \coloneqq a^n$ and $h \coloneqq b^n$. Compare
Figure~\ref{fig:c8}, where we consider the fundamental domain $F(gh) =
(gh^+, gh^-) \times [h^-, g\cdot h^-)$ (dark green).
This contains the fundamental domain of $F(H)$ given by $(h^-, h^+) \times [g^+, H \cdot g^+)$ (dark blue). $F(ab)$ also contains the box $R=(H \cdot g^+, gh^-) \times [h^-, g \cdot h^-)$. Note that
\[
h \cdot R = (g^+, hg^-) \times [h^- \cdot hg \cdot h^-)
\]
(cyan), which contains $F(g)$ given by $(g^+, g^-) \times [h^-, g \cdot h^-)$.
Hence, by invariance and finite additivity, we get
\[
[F(g)] + [F(h)] \leq [F(gh)],
\]
which proves the claim.
Therefore, 
\[
\lim_n \frac{\ell(a^nb^n)}{n} \geq \ell(a) + \ell(b),
\]
and thus
\[
\ell(ab) \geq \ell(a) + \ell(b),
\]
as desired.
\end{proof}

\begin{theorem}
There is a one-to-one correspondence between periods of generalized cross-ratios and geodesic currents.
\end{theorem}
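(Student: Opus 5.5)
We must show two things: every period $\ell$ of a generalized cross-ratio equals $i(\mu,\cdot)$ for a unique geodesic current $\mu$, and every geodesic current $\mu$ arises as the period of some generalized cross-ratio. We would prove these in turn, then note that distinct periods give distinct currents (and conversely) by Otal's theorem.

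For the forward direction we verify the hypotheses of Theorem~\ref{thm:intersections_group} for $f=\ell$, regarded as a function on $\pi_1(S)$. Conjugation invariance follows from $\pi_1(S)$-invariance of the cross-ratio. We use the facts that $(hgH)^\pm = h\cdot g^\pm$ and that $[g^-,x,gx,g^+]$ is independent of $x$. Stability is Lemma~\ref{lem:crossratio_stability}. Oriented disconnected smoothing is Lemma~\ref{lem:crossratio_disconnected_smoothing}; that lemma is stated for $(\ora{b},\ora{a})$ R-crossing, and the R-crossing case for $(\ora{a},\ora{b})$ reduces to it by swapping the roles and using $[ab]=[ba]$. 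For L-crossing pairs we invert both elements. This uses Lemma~\ref{lem:crinv}, $\ell(g)=\ell(G)$, and the fact that $(\ora{B},\ora{A})$ has the same crossing type. Oriented connected smoothing is Lemma~\ref{lem:crossratio_connected_smoothing}, with the L-parallel case handled the same way. Since $\ell$ is already symmetric by Lemma~\ref{lem:crinv}, we have $\overline{\ell}=\ell$. Theorem~\ref{thm:intersections_group} then yields a current $\mu_\ell$ with $i(\mu_\ell,[g])=\ell(g)$, and it is unique by Otal.

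For the converse, given $\mu\in\Curr(S)$, we take the cross-ratio $[a,b,c,d]^+\coloneqq\mu([d,a)\times[b,c))$ described earlier. We check the axioms directly:
\begin{itemize}
\item flip-invariance follows from flip-invariance of $\mu$ together with the fact that boundary edges carry no mass off lifts of closed geodesics, using Proposition~\ref{prop:spikes};
\item additivity follows from finite additivity of $\mu$ on adjacent half-open boxes;
\item $\pi_1(S)$-invariance and positivity are immediate.
\end{itemize}
Next we compute the period: $[g^-,x,gx,g^+]^+=\mu([g^+,g^-)\times[x,gx))$. This is the measure of a fundamental domain for $\langle g\rangle$ acting on the geodesics crossing $\ora{g}$ in one orientation. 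Combining the two orientations via flip-invariance, it equals $i(\mu,[g])$, by the lemma computing $i(\mu,C)$ and $\vec i$ from such boxes. So the period of $[\cdot]^+$ is $i(\mu,\cdot)$.

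The main obstacle is the converse's flip-invariance check. When $\mu$ has atoms on lifts of closed geodesics, the half-open conventions may break exact symmetry of $[\cdot]^+$. If that happens, we would instead use the average $\tfrac12([\cdot]^++[\cdot]^-)$. It is still a generalized cross-ratio, since the axioms are preserved under convex combination. It is flip-invariant because $\sigma$ exchanges the two conventions, and it has the same period $i(\mu,\cdot)$. Composing the two directions gives the bijection $\ell\mapsto\mu_\ell$ between periods and $\Curr(S)$.
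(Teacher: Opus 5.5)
Your proposal is correct and follows the paper: the forward direction is exactly its proof (Lemmas~\ref{lem:crossratio_stability}, \ref{lem:crinv}, \ref{lem:crossratio_disconnected_smoothing}, \ref{lem:crossratio_connected_smoothing} verify the hypotheses of Theorem~\ref{thm:intersections_group}, with uniqueness from Otal), and your converse via $[\cdot]^+$ is the construction the paper records just before the theorem and leaves implicit in its proof. Two small corrections: the step you flag as the main obstacle is not one, since $\sigma$ maps $[d,a)\times[b,c)$ onto $[b,c)\times[d,a)$ with the same half-open convention, so $[\cdot]^+$ is exactly flip-invariant for flip-invariant $\mu$ (no appeal to Proposition~\ref{prop:spikes} or averaging is needed, and $\sigma$ preserves rather than exchanges the $[\cdot]^{\pm}$ conventions); and with $i$ normalized as in the lemma you cite, which adds both orientations, flip-invariance gives period $\tfrac12 i(\mu,[g])$, so take $2[\cdot]^+$ instead, which is harmless because the cross-ratio axioms are invariant under positive scaling.
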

\begin{proof}
By the Lemmas~\ref{lem:crossratio_stability},~\ref{lem:crinv},~\ref{lem:crossratio_disconnected_smoothing} and~\ref{lem:crossratio_connected_smoothing} above, every period $\ell$ induced by a generalized cross-ratio satisfies the hypotheses of Theorem~\ref{thm:intersections_group} and, since $\ell$ is symmetric, there exists a unique dual geodesic current $\mu_{\ell}$ such that $\ell(C)=i(\mu_{\ell},C)$ for every closed curve.
\end{proof}

%%% Local Variables:
%%% mode: latex
%%% TeX-master: "Intersections"
%%% End:

\section{Further questions}
\label{sec:questions}

We finish by discussing some natural follow-up questions and conjectures.

\subsection{Open surfaces}
A natural question is whether our results extend to open surfaces, i.e.\ compact surfaces with boundary or finite-type surfaces with punctures.

For such surfaces, geodesic currents admit at least two natural definitions, depending on how one treats the ends. 
If the ends are realized as cusps, one obtains a space $\Curr_{\mathrm{cusp}}(S)$, defined analogously to Definition~\ref{def:currents} as the space of $\pi_1(S)$-invariant 
Radon measures on the space of geodesics in the universal cover (see, e.g.,~\cite[Definition~2.2]{Sas22:Currents}). In this setting, the space of geodesics includes geodesic arcs connecting cusps. 
If instead one restricts to geodesics projecting to the convex core,
one obtains a space denoted $\Curr_{\mathrm{open}}(S)$ (\cite[Section~2.6]{DLR10:DegenerationFlatMetrics}). See
Figure~\ref{fig:open_surfaces}.

\begin{figure}
\centering
\begin{subfigure}{0.48\textwidth}
\centering
\resizebox{60mm}{!}{
\fontsize{14pt}{14pt}\selectfont%% Creator: Inkscape 1.3 (0e150ed6c4, 2023-07-21), www.inkscape.org
%% PDF/EPS/PS + LaTeX output extension by Johan Engelen, 2010
%% Accompanies image file 'cusp.pdf' (pdf, eps, ps)
%%
%% To include the image in your LaTeX document, write
%%   \input{<filename>.pdf_tex}
%%  instead of
%%   \includegraphics{<filename>.pdf}
%% To scale the image, write
%%   \def\svgwidth{<desired width>}
%%   \input{<filename>.pdf_tex}
%%  instead of
%%   \includegraphics[width=<desired width>]{<filename>.pdf}
%%
%% Images with a different path to the parent latex file can
%% be accessed with the `import' package (which may need to be
%% installed) using
%%   \usepackage{import}
%% in the preamble, and then including the image with
%%   \import{<path to file>}{<filename>.pdf_tex}
%% Alternatively, one can specify
%%   \graphicspath{{<path to file>/}}
%% 
%% For more information, please see info/svg-inkscape on CTAN:
%%   http://tug.ctan.org/tex-archive/info/svg-inkscape
%%
\begingroup%
  \makeatletter%
  \providecommand\color[2][]{%
    \errmessage{(Inkscape) Color is used for the text in Inkscape, but the package 'color.sty' is not loaded}%
    \renewcommand\color[2][]{}%
  }%
  \providecommand\transparent[1]{%
    \errmessage{(Inkscape) Transparency is used (non-zero) for the text in Inkscape, but the package 'transparent.sty' is not loaded}%
    \renewcommand\transparent[1]{}%
  }%
  \providecommand\rotatebox[2]{#2}%
  \newcommand*\fsize{\dimexpr\f@size pt\relax}%
  \newcommand*\lineheight[1]{\fontsize{\fsize}{#1\fsize}\selectfont}%
  \ifx\svgwidth\undefined%
    \setlength{\unitlength}{306.13422784bp}%
    \ifx\svgscale\undefined%
      \relax%
    \else%
      \setlength{\unitlength}{\unitlength * \real{\svgscale}}%
    \fi%
  \else%
    \setlength{\unitlength}{\svgwidth}%
  \fi%
  \global\let\svgwidth\undefined%
  \global\let\svgscale\undefined%
  \makeatother%
  \begin{picture}(1,1.16710076)%
    \lineheight{1}%
    \setlength\tabcolsep{0pt}%
    \put(0,0){\includegraphics[width=\unitlength,page=1]{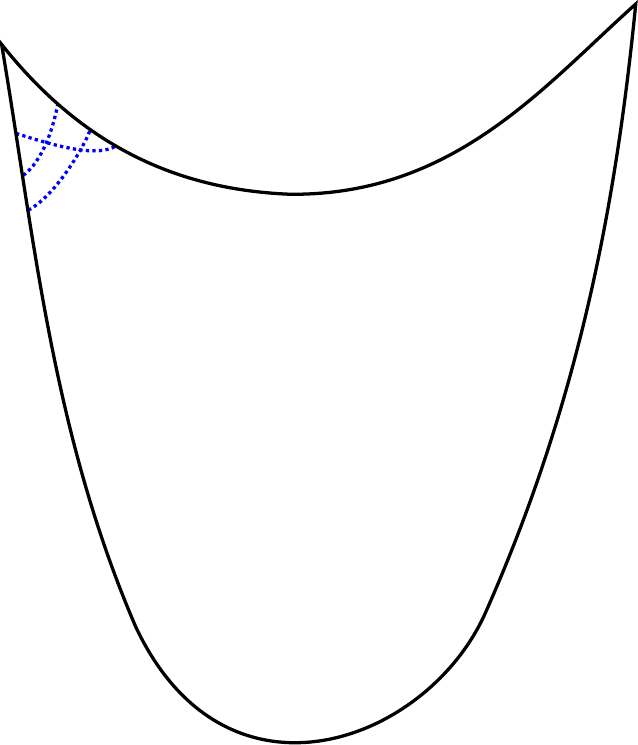}}%
    \put(0.14344737,1.08373156){\color[rgb]{0.6745098,0,0.40784314}\makebox(0,0)[t]{\lineheight{1.25}\smash{\begin{tabular}[t]{c}$A$\end{tabular}}}}%
    \put(0.74139703,0.76309383){\color[rgb]{0,0,1}\makebox(0,0)[t]{\lineheight{1.25}\smash{\begin{tabular}[t]{c}$C_n$\end{tabular}}}}%
    \put(0,0){\includegraphics[width=\unitlength,page=2]{cusp.pdf}}%
  \end{picture}%
\endgroup%
}
\caption{Cusped surface with an arc $A$ and a closed geodesic $C_n$
  wrapping around the arc. We have $C_n \to 2A$.}
\end{subfigure}
\hfill
\begin{subfigure}{0.48\textwidth}
\centering
\resizebox{60mm}{!}{
\fontsize{14pt}{14pt}\selectfont%% Creator: Inkscape 1.3 (0e150ed6c4, 2023-07-21), www.inkscape.org
%% PDF/EPS/PS + LaTeX output extension by Johan Engelen, 2010
%% Accompanies image file 'open.pdf' (pdf, eps, ps)
%%
%% To include the image in your LaTeX document, write
%%   \input{<filename>.pdf_tex}
%%  instead of
%%   \includegraphics{<filename>.pdf}
%% To scale the image, write
%%   \def\svgwidth{<desired width>}
%%   \input{<filename>.pdf_tex}
%%  instead of
%%   \includegraphics[width=<desired width>]{<filename>.pdf}
%%
%% Images with a different path to the parent latex file can
%% be accessed with the `import' package (which may need to be
%% installed) using
%%   \usepackage{import}
%% in the preamble, and then including the image with
%%   \import{<path to file>}{<filename>.pdf_tex}
%% Alternatively, one can specify
%%   \graphicspath{{<path to file>/}}
%% 
%% For more information, please see info/svg-inkscape on CTAN:
%%   http://tug.ctan.org/tex-archive/info/svg-inkscape
%%
\begingroup%
  \makeatletter%
  \providecommand\color[2][]{%
    \errmessage{(Inkscape) Color is used for the text in Inkscape, but the package 'color.sty' is not loaded}%
    \renewcommand\color[2][]{}%
  }%
  \providecommand\transparent[1]{%
    \errmessage{(Inkscape) Transparency is used (non-zero) for the text in Inkscape, but the package 'transparent.sty' is not loaded}%
    \renewcommand\transparent[1]{}%
  }%
  \providecommand\rotatebox[2]{#2}%
  \newcommand*\fsize{\dimexpr\f@size pt\relax}%
  \newcommand*\lineheight[1]{\fontsize{\fsize}{#1\fsize}\selectfont}%
  \ifx\svgwidth\undefined%
    \setlength{\unitlength}{303.16938986bp}%
    \ifx\svgscale\undefined%
      \relax%
    \else%
      \setlength{\unitlength}{\unitlength * \real{\svgscale}}%
    \fi%
  \else%
    \setlength{\unitlength}{\svgwidth}%
  \fi%
  \global\let\svgwidth\undefined%
  \global\let\svgscale\undefined%
  \makeatother%
  \begin{picture}(1,1.10516433)%
    \lineheight{1}%
    \setlength\tabcolsep{0pt}%
    \put(0,0){\includegraphics[width=\unitlength,page=1]{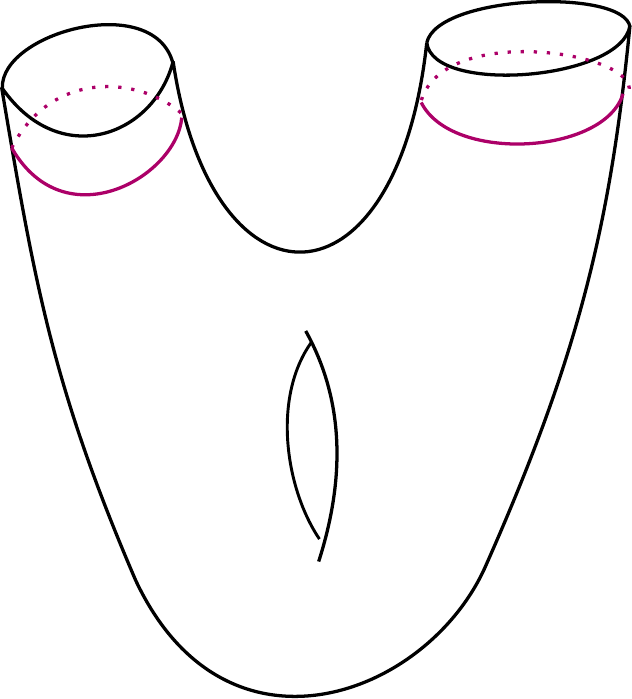}}%
    \put(0.23130501,0.74545599){\color[rgb]{0.6745098,0,0.40784314}\makebox(0,0)[t]{\lineheight{1.25}\smash{\begin{tabular}[t]{c}$B_1$\end{tabular}}}}%
    \put(0.84610005,0.8175049){\color[rgb]{0.6745098,0,0.40784314}\makebox(0,0)[t]{\lineheight{1.25}\smash{\begin{tabular}[t]{c}$B_2$\end{tabular}}}}%
    \put(0,0){\includegraphics[width=\unitlength,page=2]{open.pdf}}%
  \end{picture}%
\endgroup%
}
\caption{Open surface with two boundary parallel curves $B_1$ and $B_2$. The sequence of curves $C_n$ also sits in this surface, and we have $\frac{1}{n}C_n \to B_1 + B_2$.}
\end{subfigure}
\caption{Examples of cusped and open surfaces, and currents on them.}
\label{fig:open_surfaces}
\end{figure}

In $\Curr_{\mathrm{cusp}}(S)$, the intersection pairing fails to be
continuous; see~\cite[Remark~2.26]{MGT21:Smoothings} and
also~\cite{Sas22:Currents, T22:ThurstonCompactification}. It is thus
hard to get started working entirely in this setting.

The case of $\Curr_{\mathrm{open}}(S)$ is different. Even though intersection number is continuous in this case, it is not true that our defining properties of $\AConvex(S)$ guarantee a dual geodesic current.
The key issue is that the intersection number is degenerate: any
geodesic current $\mu$ supported on boundary curves will have zero
intersection number with all geodesic currents $\nu \in \Curr(S)$.

To better understand the relation between the cusped and open
situation, consider Figure~\ref{fig:open_surfaces} in a surface with
two ends for simplicity.
In the cusped surface, let $A$ be the simple arc going from cusp to cusp, which is in $\Curr_{\mathrm{cusp}}(S)$.
Let $C_n$ be a sequence of closed geodesics obtained by following $A$
from near one cusp to the other, wrapping around the second cusp $n$ times,
going back along $A$, and wrapping around the first cusp $n$ times
before closing up. In the weak$^*$-topology of
$\Curr_{\mathrm{cusp}}(S)$, we have $\lim_{n\to\infty}C_n = 2A$. (See, for example,
Sasaki~\cite[Section~6]{Sas22:Currents}.)

In the $\Curr_{\mathrm{open}}$ case, let $B_1$ and $B_2$ be the
boundary parallel curves homotopic to each respective boundary
component. The sequence of curves $C_n$ can also be considered in
$\Curr_{\mathrm{open}}(S)$. Without scaling, $C_n$ does not converge
to any current, but it does projectively. In fact, $\lim_{n \to
  \infty} \frac{1}{n} C_n = B_1 + B_2$.

Consider the geometric intersection number between arcs and curves in the cusped surface and curves in the open surface, i.e., $i \colon \Curves_{\mathrm{cusp}}(S) \times \Curves_{\mathrm{open}}(S)$.
We claim
\[
i(C_n, \tfrac{1}{n} C_n) \to i(2A, B_1 + B_2).
\]
Indeed, $i(C_n, C_n) = 4n$, and $i(2A, B_1+B_2)=4$, from which the result follows.
Following this example, we conjecture there is a well-defined and
continuous geometric intersection number between
$\Curr_{\mathrm{open}}(S)$ and $\Curr_{\mathrm{cusp}}(S)$.

\begin{conjecture}
There exist a continuous function 
\[
I \colon \Curr_{\mathrm{cusp}}(S) \times \Curr_{\mathrm{open}}(S) \to \mathbb{R}
\]
extending the natural geometric intersection pairing between $\Curves_{\mathrm{cusp}}(S)$ and $\Curves_{\mathrm{open}}(S)$.
\end{conjecture}

It is known that if one restricts the right-hand-side
factor to currents supported on a preimage of a compact subset on the
surface, the intersection number is continuous
(See \cite[Theorem~2.4]{BIPP21:Systoles}
and~\cite[Section~6]{Sas22:Currents}).

We can use this intersection number to propose a duality. Say that a
function $f \colon \Curves_{\mathrm{cusp}}(S) \to \mathbb{R}$
satisfies \emph{smoothing} if it satisfies smoothing between any
closed multi-curves, as well as smoothing between arcs and curves and
between arcs, in the sense that after resolving any essential crossing
(any intersection that cannot be homotoped away), the function does
not increase.

Similarly, say that a function
$f \colon \Curves_{\mathrm{open}}(S) \to \mathbb{R}$ satisfies
smoothing if it does not increase after resolving any essential
crossing.

For examples of such functionals, consider an embedded graph $\mathcal{G} \subset S'$.
Equipping $\mathcal{G}$ with a metric, we get a notion of length of
$\ell_\Sigma$ curves in $\Curves_{\mathrm{open}}(S)$ with respect to
this embedded metric graph. (See also
Section~\ref{subsec:embedded_graphs}.) When $\mathcal{G}$ is a rose-graph
(embedded and
with one vertex), Erlandsson~\cite{Erlandsson:WordLength} has
constructed examples of geodesic currents $\mu_{\mathcal{G}}$ in
$\Curr_{\mathrm{open}}(S)$ (in fact, half-integer weighted arcs and
multi-curves) so that
\[
\ell_{\Sigma}(C)=I(\mu_{\mathcal{G}}, C).
\]
This motivates the following question.

\begin{question}\label{quest:open-to-cusp}
Given a function $f \colon \Curves_{\mathrm{open}}(S) \to \mathbb{R}$
which is additive and satisfies smoothing, are there additional
assumption guaranteeing the existence of a unique geodesic current $\mu \in f \colon \Curr_{\mathrm{cusp}}(S)$ so that
\[
f(C)=i(C, \mu)
\]
for every $C \in \Curves_{\mathrm{open}}$?
\end{question}

Unfortunately, additivity and smoothing alone are not enough. For
example, take the function
$f \colon \Curves_{\mathrm{open}}(S) \to \mathbb{R}$ assigning
hyperbolic length to non-boundary parallel curves and $0$ to
boundary-parallel curves. This functional still satisfies smoothing
(since no other curve can cross a boundary-parallel curve essentially)
but does not have a dual geodesic current in
$\Curr_{\mathrm{cusp}}(S)$, since by continuity of $I$ it would have
to assign positive length to limits of curves wrapping around the
boundary. See~\cite[Remark~2.26]{MGT21:Smoothings} for details.

In the other direction, we have a parallel conjecture.

\begin{conjecture}\label{conj:cusp-to-open}
Given a function $f \colon \Curves_{\mathrm{cusp}}(S) \to \mathbb{R}$ which is additive and satisfies smoothing, there exists a unique geodesic current $\mu \in f \colon \Curr_{\mathrm{open}}(S)$ so that
\[
f(C)=i(C, \mu)
\]
for every $C \in \Curves_{\mathrm{cusp}}$(S).
\end{conjecture}

We have two motivating examples of functionals for
Conjecture~\ref{conj:cusp-to-open}.

The first one is related to the example on graphs above. Take again an
embedded metric graph $\mathcal{G} \subset S$, and define the
intersection $i(\mathcal{G},C)$ to be the minimal transversal
intersection between homotopy classes of embedded representatives of
$\mathcal{G}$ into $S$ and a homotopic representative of $C$, for
every closed curve in $\Curves_{\mathrm{cusp}}(S)$. This defines a
function $f \colon \Curves_{\mathrm{cusp}}(S) \to \mathbb{R}$. Then
our conjecture would assert the existence of a geodesic current
$\mu \in \Curr_{\mathrm{open}}(S)$. Presumably, as in Erlandsson's
result, $\mu$ in this case consists of half-integral combination of
curves.

As a second example, consider a flat metric on the completion of $S'$ so that the missing points become conical singular points. Now the length of an arc between cusps is the length of the arc from cone point to cone point, which is a finite number.
This gives a well-defined notion of length $\ell \colon \Curves_{\mathrm{cusp}}(S) \to \mathbb{R}$.
On the other hand,  $\Curr_{\mathrm{open}}(S)$ are geodesic currents
whose support is contained in the closure of the subspace of geodesics
that do not go through cone points. The natural Liouville metric for
singular flat metrics is an example of such a
current~\cite[Section~3.2]{BL17:RigidityFlat},
and seems to be dual
to~$\ell$.

\subsection{Generalizations to hyperbolic groups}
In recent work of Cantrell and Reyes and, independently, Kapovich and the first author, the following property has played a key role in extending stable lengths coming from group actions of hyperbolic groups $\Gamma$ acting on uniformly quasi-geodesic hyperbolic spaces $X$~\cite{KMG:BBP,CR25:Manhattan, CMGR26:GreenMetrics}.
Roughly, a Gromov hyperbolic space is a metric space whose triangles are thin. A group $\Gamma$ is said to be Gromov hyperbolic if its Cayley graph is.

A metric space $X$ is uniformy quasi-geodesic if there exists
constants $\lambda >0$ and $\epsilon \geq 0$, so that
every two points $x, y \in X$ are connected by a
$(\lambda,\epsilon)$-quasi-geodesic. We say that an action
$\Gamma \acts X$ of a non-elementary hyperbolic group~$\Gamma$ on a uniformly
quasi-geodesic hyperbolic metric space~$(X,d)$ has the \emph{bounded
  backtracking property} (see~\cite[Section~6.1]{CR25:Manhattan}) if given an orbit map
$\iota \colon \Cay(\Gamma) \to X$, there exists a constant $C>0$ so that
for every two points $x,y \in \Cay(\Gamma)$, and geodesic $[x,y]$,
there exists some quasi-geodesic $[\iota(x),\iota(y)]$ in $X$ so that
$\iota([x,y])$ and $[\iota(x),\iota(y)])$ are $C$-Hausdorff close in $d$.

There are examples of stable lengths satisfying bounded backtracking
which do not satisfy quasi-smoothing
(see~\cite[Section~9]{CMGR26:GreenMetrics}). On the other hand, one
can ask if all functions on $\pi_1(S)$ satisfying quasi-smoothing
arise as the stable length of some action of $\pi_1(S)$ satisfying
bounded backtracking. 

\begin{question}
Let $\Gamma$ be a surface group, and let $f \colon \Gamma \to \mathbb{R}$ be a symmetric and stable functional satisfying quasi-smoothing (and hence, extending continuously to geodesic currents).
Does there exists a uniformly quasi-geodesic Gromov hyperbolic space $X$ on which $\Gamma$ acts with the bounded backtracking property and so that $\ell_X = f$, where $\ell_X$ denotes the stable length of the action $\Gamma \acts X$?
\end{question}

Motivated by this, it would be interesting to generalize the quasi-smoothing and smoothing condition to the general setting of hyperbolic groups.

\begin{question}
What is a generalization of the properties of smoothing and
quasi-smoothing for functionals on non-elementary hyperbolic groups
that makes the answer to the
above question work in that setting?
\end{question}

Related to Theorems~\ref{thm:intersections_group}
and~\ref{mainthm:skora}, it would be interesting to give a list of
axioms, in the spirit of oriented smoothing, characterizing
small actions of a non-elementary hyperbolic group $\Gamma$.

\begin{question}
\label{qu:generalize_intersections}
Let $\Gamma$ be a non-elementary Gromov hyperbolic group action on an
$\mathbb{R}$-tree $T$. Give axioms  characterizing when the action is small, i.e., has
virtually cyclic edge stabilizers.
\end{question}

\subsection{Quasi-morphisms} 
\label{sub:quasimorphism} 

The hypotheses of \emph{oriented quasi-smoothing} are reminiscent of, but weaker than, the conditions for a map 
$f \colon \pi_1(S) \to \mathbb{R}$ to be a \emph{quasi-morphism}, that is, there exists $D > 0$ such that 
\[
|f(ab) - f(a) - f(b)| < D \quad \text{for all } a,b \in \pi_1(S).
\]
A quasi-morphism is called \emph{homogeneous} if $f(a^n) = n f(a)$ for every $a \in \pi_1(S)$ and $n \ge 1$.

Quasi-morphisms have played a prominent role in the study of stable commutator length and bounded cohomology of groups~\cite{Cal09:scl,Pic97:BoundedCohomology_currents}.

It would be interesting to understand whether one can obtain homogeneous quasi-morphisms from curve functionals satisfying oriented quasi-smoothing and stability.

\begin{question}
Let $f \colon \pi_1(S) \to \mathbb{R}$ satisfy oriented quasi-smoothing and stability.  
Are there additional conditions on $f$ ensuring that the function
\[
g(a) \coloneqq f(a) - f(a^{-1})
\]
is a homogeneous quasi-morphism?
\end{question} 

To get a non-trivial function~$g$, the functional~$f$ must be
asymmetric. See Appendix~\ref{sec:asymmetric} for examples.

% \subsection{Functionals valued on ordered abelian groups}

% We expect most of the results in this paper to extend, with only minor modifications, to the setting of $\Lambda$-valued functionals and $\Lambda$-valued geodesic currents, where $(\Lambda, <)$ denotes an ordered abelian group---that is, an abelian group equipped with a total order satisfying $a < b$ and $c \le d$ imply $a + c < b + d$.
% This generalization is natural in the context of the theory of $\Lambda$-trees~\cite{Sko90:GeometricAction} and has already proved fruitful in the study of compactifications of character varieties of surface-group representations into higher-rank Lie groups~\cite{BIPP24:PositiveCrossratios}.
% We plan to explore these extensions and their applications in future work.

\subsection{Green metrics}
\label{subsec:green}
Let $\Gamma=\pi_1(\Sigma) \subset \Isom^+(\mathbb{H})=\PSL_2(\mathbb{R})$ be a countable, discrete cocompact subgroup, for some hyperbolic metric $\Sigma$ on $S$.
Let $\mu$ be a finitely supported probability measure on $\Gamma$ whose support contains a symmetric generating set of $\Gamma$.
Let $w_n \coloneqq g_1 \cdots g_n$ be the random walk with respect to $\mu$, where each $(g_i)$ is identically and independently distributed with distribution $\mu$. Let $(\Gamma^{\mathbb{N}}, \mu^{\mathbb{N}})$ be the step space, and the map $\pi \colon \Gamma^{\mathbb{N}} \to \Gamma^{\mathbb{N}}$ as $\pi((g_n)_{n \in \mathbb{N}}) \coloneqq (w_n)_{n \in \mathbb{N}}$. The image of $\pi$ is equipped with the measure $(\mathbb{P}_{\mu})_*(\mu^{\mathbb{N}})$.
Define the metric \[ d_{\mu}(x,y) \coloneqq - \log \mathbb{P}_{\mu}(\exists n : w_n x =y).\]
In words, we are considering the probability that a random walk departing from $x$ will reach $y$ after some number of steps.
This metric has an associated stable length function $\ell_{\mu}(g) \coloneqq \lim_n d_{\mu}(x, g^n x)/n$, for any hyperbolic element $g \in \Gamma$, which can then be thought as a curve functional $\ell_{\mu} \colon \Curves^+(S) \to \mathbb{R}$. The following question emerged from conversations of the first author with Eduardo Reyes.

\begin{question}
Does $\ell_{\mu}$ satisfy the hypotheses of
Theorem~\ref{thm:intersect} for some admissible probability measure $\mu$ (and thus come from a geodesic current)?
\label{q:green-smoothing}
\end{question}

\begin{question}
Does $\ell_\mu$ further satisfy the
hyperbolic metric hypotheses in Theorem~\ref{thm:hyp-lam-char} for some admissible
probability measure~$\mu$ (and thus come
from a hyperbolic metric)?
\label{q:Green}
\end{question}

If answers to either of these questions is negative for every $\mu$,
they would imply a negative answer to the following well-known question.

\begin{question}
Given $\Sigma$, is there a finitely-supported probability measure
$\mu$ on $\Gamma=\pi_1(\Sigma)$ so that the induced translation length
$\ell_{\mu}$ is dual to the Liouville current $\mathcal{L}_{\Sigma}$?
\label{q:singularity}
\end{question}

By work of Cantrell-Tanaka~\cite[Theorem~1.2]{CT25} and
Gouëzel-Mathéus-Maucourant \cite[Theorem~1.2]{GMM18:Entropy}, a
negative answer to this question would settle the so-called
\emph{singularity conjecture} for $\SL_2(\mathbb{R})$
(see~\cite[Conjecture, page~~259]{KLP11:Singularity}). In short, by
these results Question~\ref{q:singularity} is equivalent to asking if
there is a probability measure $\mu$ on $\Gamma$ so that the induced
`hitting measure' $\nu_{\mu}$ on $\partial \Gamma$ is absolutely
continuous with respect to the Lebesgue measure (coming from the
hyperbolic metric $\Sigma$) on $\partial \Gamma$.
Kosenko-Tiozzo~\cite{KT22:Inequality} have studied the metric $d_{\mu}$ in some cases of Fuchsian groups $\Gamma$ with centrally symmetric fundamental domains.
Even a positive answer to Question~\ref{q:Green} would be interesting, since then
Theorem~\ref{thm:intersect} would give a dual geodesic current for the
Green metric, and then one could study what types of geodesic currents
can arise.

\subsection{Smoothing vs multi-smoothing}
\label{subsec:multismoothing}
The hypotheses of smoothing in Equation~\eqref{eq:smoothing} of Theorem~\ref{thm:intersect} apply to one essential crossing at a time. There is a stronger condition than smoothing, where multiple smoothings at essential crossings are performed at once, and the function is required to be non-increasing under such operation.
We will refer to this condition as \emph{multi-smoothing}.
It is clear multi-smoothing implies smoothing.
It is easily checked that intersection numbers satisfy the multi-smoothing property (see, for example, \cite{AH21:EffectiveCounting3}).
What is not so clear is that functionals of $\AConvex(S)$ (that is, satisfying stability, homogeneity, smoothing and additivity) must satisfy multi-smoothing. This is, of course, a consequence Theorem~\ref{thm:intersect}.
Other curve functionals that are not in $\AConvex(S)$ also satisfy
multi-smoothing. For example, Arana-Herrera also shows
that extremal length satisfies
multi-smoothing \cite{AH21:EffectiveCounting3}.
This prompts the following question.

\begin{question}\label{quest:multi-smoothing}
If a curve functional satisfies homogeneity, stability, convex union,
and smoothing, does it necessarily satisfy multi-smoothing?
\end{question}

In this context, Arenas--Neumann-Coto \cite{ANC24:TautSmoothings}
recently studied the related concept of \emph{taut smoothing}.

\subsection{Convexity on train-track charts and local compactness}
\label{sec:convexity_charts}

One of the reasons multi-smoothing is a useful property is that,
coupled with convex union, homogeneity, and stability, it
yields convex functions when restricted to train-track coordinates.
Precisely, let $f$ be a curve functional satisfying the aforementioned
properties, including multi-smoothing. Let $\tau$ be a maximal train-track (i.e., a
train-track all of whose complementary components are trigons), let
$V(\tau)$ be the set of admissible weights on~$\tau$, $\ML(\tau)$ be the subspace of
measured laminations carried by $\tau$, and
$\phi_{\tau} \colon V(\tau) \to \ML(\tau)$ be the induced
chart on measured laminations.
Since $f$ extends continuously to geodesic currents \cite[Theorem~A]{MGT21:Smoothings}, we can consider the composition $f_{\tau} \coloneqq f \circ \phi_{\tau} \colon V(\tau) \to \mathbb{R}$. Under these assumptions, the same proof as in~~\cite[Appendix~A]{Mir04:SimpleGeodesics}, \cite[Theorem~2]{Thu16:Research}, \cite[Theorem~4.7]{AH21:EffectiveCounting3}, shows $f_{\tau}$ is convex and homogeneous. 

These curve functionals form a subset of the functionals that extend
continuously to currents, by~\cite[Theorem~A]{MGT21:Smoothings}.
Moreover, the map $\Psi \colon \Curr(S) \to \mathbb{R}^{\Curves(S)}$
defined by $\Psi(\mu) = i(\mu, \cdot)$ is a homeomorphism onto its
image when $\mathbb{R}^{\Curves(S)}$ is equipped with the product
topology,
by~\cite[Theorem~11]{DLR10:DegenerationFlatMetrics}.
Thus, a restatement of Theorem~\ref{thm:intersect} is as follows.

\begin{theorem}
   The image of~$\Psi$ is homeomorphically $\AConvex(S) \subset \bbR^{\Curves(S)}$.
\end{theorem}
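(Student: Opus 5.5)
The statement identifies the image of $\Psi(\mu)=i(\mu,\cdot)$ with $\AConvex(S)$, so the plan is two inclusions plus a topological remark. The homeomorphism part needs no new work: by \cite[Theorem~11]{DLR10:DegenerationFlatMetrics}, $\Psi$ is already a homeomorphism onto its image for the product topology on $\bbR^{\Curves(S)}$. What remains is the set-theoretic equality $\Psi(\Curr(S))=\AConvex(S)$, where $\AConvex(S)$ is viewed inside $\bbR^{\Curves(S)}$ via the restriction of a symmetric functional to unoriented multi-curves, as in Lemma~\ref{lem:smoothing_lemma_top_alg}.

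For $\Psi(\Curr(S))\subseteq\AConvex(S)$, I will show that $i(\mu,\cdot)$ has each defining property of $\AConvex(S)$.
\begin{itemize}
\item It is additive by bilinearity of Bonahon's pairing, so homogeneity and convex union hold in particular.
\item It is symmetric, because the intersection of oriented currents is unchanged by flipping.
\item It is stable: $\delta_{C^n}=n\delta_C$ as currents, which I can also verify through \eqref{eq:intersection_curve} with a fundamental domain for $\langle y^n\rangle$.
\item It satisfies smoothing, oriented and unoriented, as recorded in Section~\ref{subsec:examples_and_non} via \cite[Section~4]{MGT21:Smoothings} and Appendix~\ref{app:necessary}.
\end{itemize}
Hence $i(\mu,\cdot)\in\AConvex(S)=\QConvex_0(S)$ with additivity.

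For the reverse inclusion $\AConvex(S)\subseteq\Psi(\Curr(S))$, take $f\in\AConvex(S)$. Then $f$ is symmetric and satisfies smoothing, additivity and stability, which are exactly the hypotheses of Theorem~\ref{prop:recoverintersection}. That theorem produces a flip-invariant current $\mu_f$ with $i(\mu_f,C)=f(C)$ for every closed curve $C$. Additivity of both sides extends this equality to all multi-curves, so $f=\Psi(\mu_f)$. Injectivity of $\Psi$ follows from Otal's theorem, but the cited homeomorphism statement already covers it.

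I expect the only delicate point to be bookkeeping. I need to confirm that the topology and indexing set ($\Curves(S)$ versus $\Curves^+(S)$) match those in \cite{DLR10:DegenerationFlatMetrics}, and that "smoothing" in the definition of $\Convex(S)$ includes both the oriented and unoriented versions, so that Theorem~\ref{prop:recoverintersection} applies verbatim. If only oriented smoothing were assumed, I would first invoke Corollary~\ref{cor:oriented_smoothing_unoriented} to upgrade it.
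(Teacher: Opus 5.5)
Your proposal is correct and follows the paper. The paper presents this theorem simply as a restatement of Theorem~\ref{thm:intersect}, combined with \cite[Theorem~11]{DLR10:DegenerationFlatMetrics} for the embedding; necessity comes from Section~\ref{subsec:examples_and_non} and Appendix~\ref{app:necessary}, and sufficiency from Theorem~\ref{prop:recoverintersection}.

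One cosmetic caution: the paper's literal formula~\eqref{eq:delta-curve} sums over cosets of the full axis stabilizer, so it would give $\delta_{C^n}=\delta_C$. Your stability check is therefore better phrased, as you also suggest, via~\eqref{eq:intersection_curve} applied to $y^n$, exactly as in~\eqref{eq:intersection-stability}.
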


It is a consequence of this last result and the local compactness of
the space of geodesic currents (recalled as
Proposition~\ref{prop:local_compactness} below) that the space
$\AConvex(S)$ with the product topology is locally compact, which is
not obvious \textit{a priori} by looking directly at the functionals.
Connecting with the discussion in the above paragraph, a shadow of
this fact can be seen as follows.

The sublevel set $\Lambda_f \coloneqq \{ \vec{x} \in V(\tau)
: f_{\tau}(\vec{x}) \leq 1 \}$ is a compact convex subset of
$\mathbb{R}^N$, for some $N>0$ depending on the combinatorics of~$\tau$.
For any sequence of geodesic currents $\mu_i$ limiting to~$\mu$,
let $f_i(\cdot) = i(\mu_i, \cdot)$ for $f(\cdot) = i(\mu, \cdot)$.
Then $\Lambda_{f_i} \to \Lambda_f$ in the Hausdorff topology. With
respect to this topology,
the subset of compact convex subsets of $\mathbb{R}^N$ is locally
compact~\cite[Chapter~6]{Lay92:Convex}. We see this phenomenon as a shadow of the local compactness of $\AConvex(S)$, which suggests that the same local compactness for $\Convex(S)$ should hold.

\begin{conjecture}
The set $\Convex(S)$ with the product topology is locally compact.
\end{conjecture}

Some examples of interest for this conjecture would be extremal
lengths associated to conformal structures or to elastic graphs
\cite[Sections 4.8 and 4.9]{MGT21:Smoothings}.

\subsection{Domination}
\label{subsec:domination}
For another direction from Section~\ref{subsec:multismoothing},
another property that intersection numbers satisfy is \emph{domination}.
We say that a curve $C$ dominates another curve $C'$ if $i(C,D) \geq
i(C',D)$ for every curve $D$. We say $f$ satisfies domination if for
every pair of curves $(C,C')$ with $C$ dominating $C'$, we have
$f(C) \geq f(C')$.
It is easy to see that if $C'$ is obtained from $C$ by
multi-smoothing, then $C$ dominates $C'$. There are however other
pairs $(C,C')$ with $C$ dominating $C'$ but $C'$ not obtainable from
$C$ via multi-smoothing
(see~\cite[Example]{NC01:ShortestGeodesics}).
It is clear, by definition, that intersection numbers satisfy domination.
All in all, we have the following implications on a functional~$f$:
\[
  \text{Domination} \Rightarrow \text{Multi-smoothing} \Rightarrow \text{Smoothing}.
\]

From Theorem~\ref{thm:intersect} we obtain the following weaker
theorem which is nevertheless conceptually interesting.

\begin{theorem}
A curve functional satisfies domination and additivity if and only if it is dual to a geodesic current.
\end{theorem}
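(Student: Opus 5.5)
The plan is to prove the two implications separately, with Theorem~\ref{thm:intersect} doing the main work in the backward direction.

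\emph{Dual current implies domination and additivity.} Suppose $f(C)=i(\mu,C)$ for some geodesic current $\mu$. Additivity follows at once from linearity of $\delta_C$ under disjoint union and bilinearity of the intersection form. For domination, let $C$ dominate $C'$, so that $i(C,D)\ge i(C',D)$ for every curve $D$. Weighted multi-curves are dense in $\Curr(S)$, and $i$ is bilinear; hence $i(C,\nu)\ge i(C',\nu)$ for all weighted multi-curves $\nu$. Continuity of $i$ on currents then extends the inequality to $\nu=\mu$. By symmetry of $i$ this gives $f(C)=i(\mu,C)\ge i(\mu,C')=f(C')$.

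\emph{Domination and additivity imply a dual current.} This is the substantive direction. By Theorem~\ref{thm:intersect}, it suffices to show that domination implies smoothing. Let $C$ have an essential crossing and let $C'$ be either smoothing of $C$. I will show that $C$ dominates $C'$.

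Fix any curve $D$ and choose minimal representatives of $C$ and $D$ in general position, with the crossing realized. Cutting and regluing at the crossing point of $C$ does not change the set $C\cap D$ in $S$. Therefore the representative of $C'$ obtained this way meets $D$ in exactly $i(C,D)$ points, although it may fail to be minimal. Minimality can only lower the count, so
\[
  i(C',D)\le i(C,D).
\]
This holds for every $D$, so $C$ dominates $C'$, and domination of $f$ yields $f(C)\ge f(C')$. That is exactly the smoothing property of Theorem~\ref{thm:intersect}, and the theorem now produces the unique dual current.

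Two points need care. First, the paper defines domination on curves, and I apply it to multi-curves $C,C'$. If domination is meant only for connected curves, I would handle this using additivity, reducing via Lemma~\ref{lem:smoothing_lemma_top_alg} to statements about components. Second, I must check that a representative of $C$ minimal with respect to its own self-crossings can be taken transverse to $D$ without creating a tangency at the smoothing point. This is a small perturbation argument.

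I expect no serious obstacle in either direction. The whole difficulty lies in Theorem~\ref{thm:intersect}, which is assumed.
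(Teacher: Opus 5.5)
Your proposal is correct and follows the paper's route. Intersection functionals satisfy domination (your density-plus-continuity argument is what a general current needs, as in Corollary~\ref{cor:domination}), and smoothing an essential crossing yields a multi-curve dominated by the original, so domination implies smoothing and Theorem~\ref{thm:intersect} finishes. Your first caveat is moot: the paper intends domination for multi-curves, since otherwise ``Domination $\Rightarrow$ Multi-smoothing'' would not make sense, and a component-wise reduction via additivity would not obviously handle smoothings that change the number of components.
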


This can be interpreted as saying that linear maps that preserve the poset structure of the space of curves correspond to intersection numbers.

For similar reasons as for the smoothing (or multi-smoothing) condition, the additivity hypothesis is necessary, since extremal length can be easily seen to satisfy domination, homogeneity and stability, but not additivity, and it is not dual to an intersection number.

The above prompts the following strengthening of
Question~\ref{quest:multi-smoothing}.

\begin{question}
If a curve functional satisfies homogeneity, stability,
convex union, and smoothing, does it necessarily satisfy domination?
\end{question}

\subsection{Analogy to Poincar\'e duality}
\label{subsec:poincare_duality}
Theorem~\ref{thm:intersect} gives a new definition of geodesic currents in terms of functionals on curves, by giving a bijection between geodesic currents and a subset of the space of curve functionals.
As mentioned in the introduction, one can view this theorem in analogy to Poincaré duality for closed surfaces.  Given a 1-cohomology class $\alpha^*$ of an $n$-dimensional manifold (represented by homomorphisms from $\pi_1(S)$ to $\mathbb{R}$), Poincaré duality associates an $(n-1)$-dimensional homology class $\alpha$ (represented by real linear combinations of $(n-1)$-cycles). Since $n=2$, these cycles can be expressed as linear combinations of simple closed curves.  

In our setting, the role of a 1-cohomology class is played by a curve
functional that satisfies smoothing, additivity, and
stability. Although it is not a homomorphism from
$\pi_1(S)$ to anything, it preserves a certain combinatorial structure
(see Section~\ref{subsec:domination}). Likewise, the role of the homology
class is taken by a geodesic current.

Our theorem can be thought of in analogy to $\alpha \cap \beta =
  \alpha^*(\beta)$, where $\cap$ denotes the cap product in homology (algebraic intersection), and thus relating cap product to the homology-cohomology pairing.

  This aligns with our theorem, which establishes a duality via the \emph{geometric} intersection number $i(\mu_f, C) = f(C)$, suggesting that the geometric intersection number plays the role of the cap product.

In Poincaré duality, one also has the standard relation $\alpha^* \cup \beta^* = \alpha \cap \beta$, where $\cup$ denotes the cup product in cohomology.  
A natural question that arises then is the following.

\begin{question} What is the appropriate analog of the cup product?
  More precisely, can one define the intersection of two geodesic
  currents directly via their dual curve functionals?
\end{question}

%%% Local Variables:
%%% mode: latex
%%% TeX-master: "Intersections"
%%% End:

% \nocite{*}
\appendix

\section{Hyperbolic parallelograms}
\label{sec:hyp-par}

\begin{definition}
  A \emph{hyperbolic parallelogram} is a hyperbolic quadrilateral
  where opposite sides have equal length, as shown in
  Figure~\ref{fig:hyp-par}. Equivalently, it is a quadrilateral where
  the diagonals bisect each other.
\end{definition}

\begin{figure}
  \[
    \includegraphics{hyp-quad-0}
  \]
  \caption{A hyperbolic parallelogram}
  \label{fig:hyp-par}
\end{figure}

The following geometric lemma is presumably known, though we did not
find a reference. It is the
hyperbolic version of the parallelogram law that, in a parallelogram
with sides $c,d$ and diagonals $2a, 2b$, we have $2(a^2+b^2) =
 c^2+d^2$. It can also be thought of as a formula giving the length of
 the median of
 a hyperbolic triangle in terms of its side lengths.
 
\begin{lemma}\label{lem:hyp_parallelogram}[Hyperbolic parallelogram law]
  Given a hyperbolic parallelogram $XYZW$ with center $O$, let
  $c = \abs{X-Y} = \abs{Z - W}$ and $d = \abs{Y - Z} = \abs{X - W}$ be
  the lengths of the sides, and let $a = \abs{X - O} = \abs{Z
    - O}$ and $b = \abs{Y - O} = \abs{W - O}$ be the half-lengths of
  the diagonals. Then
  \[
    2 \cosh a \cosh b = \cosh c + \cosh d.
  \]
\end{lemma}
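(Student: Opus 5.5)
Since the paper gives no further setup for this lemma, I will prove it directly with hyperbolic trigonometry. The approach is the hyperbolic law of cosines applied in the four triangles that the diagonals cut out around the center $O$. The diagonals $XZ$ and $YW$ meet at $O$ and bisect each other, so the two diagonals through $O$ make an angle $\theta$ at $O$.

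The side $XY$ lies opposite the angle $\angle XOY=\theta$ in triangle $XOY$, which has sides $|X-O|=a$ and $|Y-O|=b$. Likewise $YZ$ lies opposite the supplementary angle $\angle YOZ=\pi-\theta$ in triangle $YOZ$, which has sides $b$ and $a$. The hyperbolic law of cosines then gives
\[
\cosh c=\cosh a\cosh b-\sinh a\sinh b\cos\theta,
\]
\[
\cosh d=\cosh a\cosh b-\sinh a\sinh b\cos(\pi-\theta)=\cosh a\cosh b+\sinh a\sinh b\cos\theta .
\]
Adding the two equations, the $\sinh a\sinh b\cos\theta$ terms cancel, and we get $\cosh c+\cosh d=2\cosh a\cosh b$. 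This is the claimed identity.

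The one point that needs care is the setup: I must verify that, with the labeling in the definition, $c$ and $d$ really are opposite supplementary angles at $O$. I would argue this by convexity. A quadrilateral whose diagonals bisect each other is convex, because its diagonals intersect, and the diagonals meet at their common midpoint $O$. Consecutive vertices $X,Y$ then lie on different diagonals, so the angles at $O$ subtended by $XY$ and by $YZ$ are adjacent along the line $XZ$ and sum to $\pi$. For the equivalence stated in the definition, I will use the converse direction: if opposite sides are equal, the half-turn about the midpoint of one diagonal exchanges the two triangles on either side of it. That half-turn fixes $O$, so the diagonals bisect each other. The only direction the lemma actually needs, however, is the midpoint form, which I take as given. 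I expect no further obstacles; the degenerate case $\theta=\pi/2$ is covered by the same formula.
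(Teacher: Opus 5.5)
Your proof is correct and is the paper's argument: apply the hyperbolic law of cosines in triangles $XOY$ and $YOZ$, whose angles at $O$ are supplementary since $X,O,Z$ are collinear, then add so the $\sinh a\sinh b\cos\theta$ terms cancel. Your extra justification of the supplementary-angle setup is not needed; the paper simply takes $\angle YOZ=\pi-\angle XOY$ as given.
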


\begin{proof}
  Let $\gamma$ be the measure of $\angle XOY$ and $\delta=\pi-\gamma$ be the
  measure of $\angle YOZ$. The hyperbolic law of cosines gives
  \begin{align*}
    \cosh c &= \cosh a \cosh b - \sinh a \sinh b \cos \gamma\\
    \cosh d &= \cosh a \cosh b - \sinh a \sinh b \cos \delta.
  \end{align*}
  Adding these gives the desired result.
\end{proof}

%%% Local Variables:
%%% mode: latex
%%% TeX-master: "Intersections"
%%% End:

\section{Asymmetric intersection number and non-examples}
\label{sec:asymmetric}
One might wonder whether Theorem~\ref{thm:intersect} applies in the more general setting of
not-necessarily-symmetric curve functionals~$f$. One could hope that
asymmetric intersections with $\mu_f$ (Definition~\ref{def:asymmetric}) recover $f$ in the more
general context. In this section, we dash this hope: there are
asymmetric curve functionals satisfying oriented smoothing and additivity that
are not realized as asymmetric intersection numbers with any oriented
geodesic currents.

Analogously as in Definition~\ref{def:asymmetric} an asymmetric intersection number between curves only considering right-handed intersections (Definition~\ref{def:crossing}).

\begin{definition} Given two oriented curves $\vec C$, $\vec D$, the
  asymmetric intersection number $\vec i(\vec C, \vec D)$ is the
  minimal number of right-handed crossings between any two homotopy
  class representatives of $\vec C$ and~$\vec D$.
  It is straightforward to see that the asymmetric intersection number between the oriented geodesic currents $\vec{i} (\delta_{\vec C}, \delta_{\vec D})$ recovers the asymmetric intersection between the oriented curves $\vec C$ and $\vec D$.

  Define $h(\vec C)\in H_1(\Sigma)$ to be the homology class
  associated to~$\vec C$, and let 
  $\langle h(\vec C), h(\vec D)\rangle$ be the algebraic intersection
  number, the number of right-handed crossings minus the number of
  left-handed crossings in any generic representatives of $\vec C$
  and~$\vec D$.
\end{definition}

\begin{lemma}\label{lem:asym-intersect}
  If $\langle \vec C, \vec D\rangle$ is the algebraic intersection
  number between the (homology classes of) $\vec C$ and $\vec D$, then
  \[
    \vec i(\vec C, \vec D) = {\textstyle\frac12}\ i(C, D) + {\textstyle\frac12}\langle \vec C,\vec D\rangle.
  \]
\end{lemma}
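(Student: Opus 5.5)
The identity is local: both sides add up contributions from the crossings of a fixed pair of representatives. So I will pick representatives that are simultaneously minimal for the symmetric and the asymmetric count, and then compare crossing by crossing.

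Concretely, take $\vec C$ and $\vec D$ to be the geodesic representatives for some hyperbolic structure $\Sigma$, perturbed slightly if they share a component or have triple points, so that all crossings are transverse and essential. For these representatives the total number of crossings equals $i(C,D)$. This holds by the standard fact that geodesics minimize intersection; equivalently, in the universal cover $i(C,D)$ counts the $\pi_1(S)$-orbits of crossing pairs of lifts, as in Definition~\ref{def:intersection_currents}. Write $R$ and $L$ for the numbers of right-handed and left-handed crossings in this representative, using the R/L-cross terminology of Definition~\ref{def:crossing} applied to the pairs of lifts.

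The key step is to show that this geodesic representative also realizes $\vec i(\vec C,\vec D)$, that is, $\vec i(\vec C,\vec D)=R$.

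\emph{Lower bound.} Each orbit of R-crossing pairs of lifts $(g\cdot\ora{c},h\cdot\ora{d})$ is determined by the endpoints on $\partial_\infty S$. An R-crossing means the endpoints are linked in the cyclic order $c^+,d^+,c^-,d^-$, and linking of endpoints is a homotopy invariant. Any representative lifts to quasi-geodesics with the same endpoints, so linked endpoints force at least one transverse crossing of the lifts. That crossing has the same handedness, since the handedness is read off from the linking order. Hence any representative has at least $R$ right-handed crossings.

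\emph{Upper bound.} The geodesic representative has exactly $R$ right-handed crossings.

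Together these give $\vec i(\vec C,\vec D)=R$. This agrees with the paper's remark that $\vec i(\delta_{\vec C},\delta_{\vec D})$ recovers the asymmetric count, which I would take from Definition~\ref{def:asymmetric} together with \eqref{eq:relation_intersections}.

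It remains to do the algebra. The algebraic intersection number is computed from any generic representative, so in particular $\langle\vec C,\vec D\rangle=R-L$. Combining this with $i(C,D)=R+L$ gives
\[
  R=\tfrac12\bigl(i(C,D)+\langle\vec C,\vec D\rangle\bigr),
\]
which is the statement.

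I expect the main obstacle to be the lower-bound step. It requires care when the two curves share components or are powers of a common element. In that case coincident lifts contribute no transverse crossings, and the crossing count for the self-overlapping part has to be handled by a small perturbation. I would treat this by noting that such pairs contribute zero to every term on both sides.
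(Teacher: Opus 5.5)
Your proof is correct, but the step you single out as the main obstacle is not needed. That is why the paper can call the lemma immediate from the definitions.

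Take \emph{any} pair of transverse representatives, and write $R'$ and $L'$ for their numbers of right- and left-handed crossings.
\begin{itemize}
\item Since $i(C,D)$ is defined as a minimum over transverse representatives, $R'+L'\ge i(C,D)$.
\item Since the algebraic intersection number can be computed from any generic representatives, $R'-L'=\langle\vec C,\vec D\rangle$.
\end{itemize}
Adding these gives $2R'\ge i(C,D)+\langle\vec C,\vec D\rangle$ for every pair of representatives. Equality holds exactly when the pair is minimal, and a minimal pair exists. Taking the minimum of $R'$ over all representatives yields the formula. So the lower bound $\vec i(\vec C,\vec D)\ge R$ comes for free from the definition of $i$. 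You need no geodesic representatives, no universal cover, and no special treatment of shared components or common powers.

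What your route buys is extra information. It identifies the right-handed crossings of a geodesic representative with the $\pi_1(S)$-orbits of R-crossing pairs of lifts. This is the link to the current-theoretic $\vec i(\delta_{\vec C},\delta_{\vec D})$ of Definition~\ref{def:asymmetric}, which the paper only asserts is straightforward.

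If you keep your argument, fix one point. Two lifts with linked endpoints can cross several times, with both handednesses, so there is no single ``that crossing'' to speak of. What linking forces is that the signed count of crossings between the two lifts is $\pm1$, with the sign given by the cyclic order of the endpoints. Hence there is at least one crossing of the matching handedness. You also need that lifts with distinct axes have trivial common stabilizer. This ensures that different orbits of pairs project to different crossings on $S$.
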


\begin{proof}
  Immediate from the definitions.
\end{proof}

As a result, we have strong constraints on which asymmetric curve
functionals can come from an asymmetric intersection number.

\begin{proposition}\label{prop:asym-constraint}
  For any collection of oriented curves $\{\vec C_i\}_{i=1}^k$ so that
  $\sum_i h(\vec C_i) = 0$, let $\vec{\mathbf{C}} = \bigcup_i (\vec C_i
  \cup -\vec C_i)$. Then for any oriented curve~$\vec D$, we have
  \[
    \vec i(\vec{\mathbf C}, D) = 0.
  \]
\end{proposition}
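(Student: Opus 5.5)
The plan is to reduce everything to Lemma~\ref{lem:asym-intersect} together with additivity of $\vec i$ in its first argument. First I fix how the statement is to be read. The union $\vec{\mathbf C}=\bigcup_i(\vec C_i\cup-\vec C_i)$ is a virtual (signed) multi-curve, in the same sense as the virtual multi-sets used in Section~\ref{sec:crossratios}. Here $-\vec C_i$ is the curve $\vec C_i$ with reversed orientation, entered with coefficient $-1$. Accordingly, $\vec i(\cdot,\vec D)$ is extended to such signed combinations by linearity, which is the only extension compatible with additivity of $\vec i(\delta_{\cdot},\delta_{\vec D})$ under disjoint union. Under this reading the claimed identity is
\[
  \sum_i \Bigl(\vec i(\vec C_i,\vec D)-\vec i(\overline{C}_i,\vec D)\Bigr)=0,
\]
where $\overline C_i$ is $\vec C_i$ with its orientation reversed. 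This is the expression I will prove vanishes.

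The key step is to apply Lemma~\ref{lem:asym-intersect} to each term. For $\vec C_i$ it gives $\tfrac12 i(C_i,D)+\tfrac12\langle \vec C_i,\vec D\rangle$. For the reversed curve, the symmetric intersection $i$ is unchanged because it forgets orientation. Reversing orientation negates the homology class, so $h(\overline C_i)=-h(\vec C_i)$, and bilinearity of the algebraic intersection pairing then gives $\langle \overline C_i,\vec D\rangle=-\langle\vec C_i,\vec D\rangle$. Subtracting, the symmetric halves cancel and each summand equals $\langle h(\vec C_i),h(\vec D)\rangle$.

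Summing over $i$ and using bilinearity of $\langle\cdot,\cdot\rangle$ on $H_1(\Sigma)$ gives $\bigl\langle \sum_i h(\vec C_i),h(\vec D)\bigr\rangle$. This is $\langle 0,h(\vec D)\rangle=0$ by the hypothesis $\sum_i h(\vec C_i)=0$, which finishes the argument.

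The only delicate point, and the one I would be careful to justify, is the reading of $-\vec C_i$ as a reversed curve carrying a negative sign. If $-\vec C_i$ were instead an honest positively weighted reversed component, then Lemma~\ref{lem:asym-intersect} would give $\vec i(\vec{\mathbf C},\vec D)=\sum_i i(C_i,D)$, which is generally nonzero. So the linear (virtual) extension must be stated explicitly, with the remark that it is forced by additivity. This signed reading is exactly what makes the proposition a constraint on asymmetric functionals: any $f$ equal to $\vec i(\mu,\cdot)$ must have this homologically trivial signed combination of its values vanish.
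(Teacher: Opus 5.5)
Your proof is correct and is the paper's argument, which simply says the proposition is immediate from Lemma~\ref{lem:asym-intersect}: the symmetric halves cancel between each $\vec C_i$ and its reversal, and the algebraic terms sum to $\bigl\langle \sum_i h(\vec C_i), h(\vec D)\bigr\rangle = 0$. You are also right to read $-\vec C_i$ as the reversed curve entered with coefficient $-1$, since that is the reading the paper needs in the example that follows, where the contradiction is $\sum_i \ell(\vec C_i)=3 \neq 6=\sum_i \ell(-\vec C_i)$.
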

\begin{proof}
  Immediate from Lemma~\ref{lem:asym-intersect}.
\end{proof}

This shows that an oriented geodesic current is not determined by its asymmetric intersection numbers with all oriented curves, and hence cannot, in general, be realized as an asymmetric curve functional.
It is also easy to construct asymmetric functionals satisfying smoothing, stability and additivity that cannot come
from asymmetric intersection numbers. First consider the flat unpunctured torus,
and consider the functional $f$ given by word-length with respect
to a monoid generating set corresponding to an oriented triangle. With
respect to standard generators $a,b$ with $[a,b] = e$, we consider the
three (monoid) generators $a$, $b$, and $c = a^{-1}b^{-1}$. Word length with
respect to this generating set behaves like this:
\[
  \resizebox{50mm}{!}{\Huge{%% Creator: Inkscape 1.3 (0e150ed6c4, 2023-07-21), www.inkscape.org
%% PDF/EPS/PS + LaTeX output extension by Johan Engelen, 2010
%% Accompanies image file 'length_asymmetric.pdf' (pdf, eps, ps)
%%
%% To include the image in your LaTeX document, write
%%   \input{<filename>.pdf_tex}
%%  instead of
%%   \includegraphics{<filename>.pdf}
%% To scale the image, write
%%   \def\svgwidth{<desired width>}
%%   \input{<filename>.pdf_tex}
%%  instead of
%%   \includegraphics[width=<desired width>]{<filename>.pdf}
%%
%% Images with a different path to the parent latex file can
%% be accessed with the `import' package (which may need to be
%% installed) using
%%   \usepackage{import}
%% in the preamble, and then including the image with
%%   \import{<path to file>}{<filename>.pdf_tex}
%% Alternatively, one can specify
%%   \graphicspath{{<path to file>/}}
%% 
%% For more information, please see info/svg-inkscape on CTAN:
%%   http://tug.ctan.org/tex-archive/info/svg-inkscape
%%
\begingroup%
  \makeatletter%
  \providecommand\color[2][]{%
    \errmessage{(Inkscape) Color is used for the text in Inkscape, but the package 'color.sty' is not loaded}%
    \renewcommand\color[2][]{}%
  }%
  \providecommand\transparent[1]{%
    \errmessage{(Inkscape) Transparency is used (non-zero) for the text in Inkscape, but the package 'transparent.sty' is not loaded}%
    \renewcommand\transparent[1]{}%
  }%
  \providecommand\rotatebox[2]{#2}%
  \newcommand*\fsize{\dimexpr\f@size pt\relax}%
  \newcommand*\lineheight[1]{\fontsize{\fsize}{#1\fsize}\selectfont}%
  \ifx\svgwidth\undefined%
    \setlength{\unitlength}{294.8140278bp}%
    \ifx\svgscale\undefined%
      \relax%
    \else%
      \setlength{\unitlength}{\unitlength * \real{\svgscale}}%
    \fi%
  \else%
    \setlength{\unitlength}{\svgwidth}%
  \fi%
  \global\let\svgwidth\undefined%
  \global\let\svgscale\undefined%
  \makeatother%
  \begin{picture}(1,0.81757052)%
    \lineheight{1}%
    \setlength\tabcolsep{0pt}%
    \put(0,0){\includegraphics[width=\unitlength,page=1]{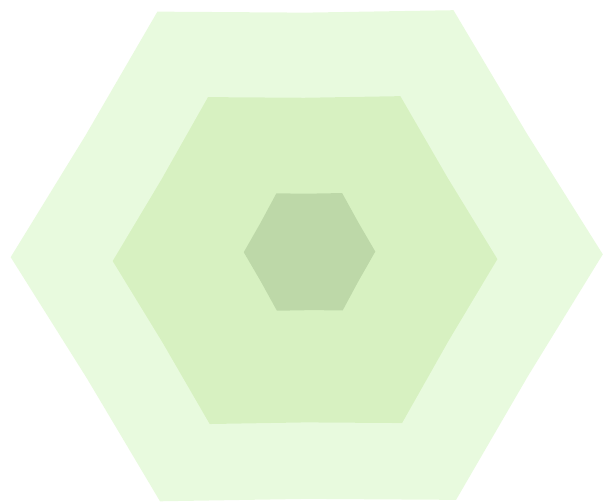}}%
    \put(0.26010049,0.73910485){\makebox(0,0)[t]{\lineheight{1.25}\smash{\begin{tabular}[t]{c}2\end{tabular}}}}%
    \put(0.50113726,0.74204646){\makebox(0,0)[t]{\lineheight{1.25}\smash{\begin{tabular}[t]{c}3\end{tabular}}}}%
    \put(0.71843376,0.73695857){\makebox(0,0)[t]{\lineheight{1.25}\smash{\begin{tabular}[t]{c}4\end{tabular}}}}%
    \put(0.15006858,0.57414406){\makebox(0,0)[t]{\lineheight{1.25}\smash{\begin{tabular}[t]{c}3\end{tabular}}}}%
    \put(0.61711019,0.57869629){\makebox(0,0)[t]{\lineheight{1.25}\smash{\begin{tabular}[t]{c}2\end{tabular}}}}%
    \put(0.85649885,0.56461397){\makebox(0,0)[t]{\lineheight{1.25}\smash{\begin{tabular}[t]{c}3\end{tabular}}}}%
    \put(0.36972197,0.5783876){\makebox(0,0)[t]{\lineheight{1.25}\smash{\begin{tabular}[t]{c}1\end{tabular}}}}%
    \put(0.0241478,0.37858084){\makebox(0,0)[t]{\lineheight{1.25}\smash{\begin{tabular}[t]{c}4\end{tabular}}}}%
    \put(0.50640164,0.38008473){\makebox(0,0)[t]{\lineheight{1.25}\smash{\begin{tabular}[t]{c}0\end{tabular}}}}%
    \put(0.73548365,0.38012236){\makebox(0,0)[t]{\lineheight{1.25}\smash{\begin{tabular}[t]{c}1\end{tabular}}}}%
    \put(0.24552617,0.37546347){\makebox(0,0)[t]{\lineheight{1.25}\smash{\begin{tabular}[t]{c}2\end{tabular}}}}%
    \put(0.96695813,0.38366873){\makebox(0,0)[t]{\lineheight{1.25}\smash{\begin{tabular}[t]{c}2\end{tabular}}}}%
    \put(0.85526777,0.1801859){\makebox(0,0)[t]{\lineheight{1.25}\smash{\begin{tabular}[t]{c}3\end{tabular}}}}%
    \put(0.62798138,0.18005826){\makebox(0,0)[t]{\lineheight{1.25}\smash{\begin{tabular}[t]{c}2\end{tabular}}}}%
    \put(0.3644036,0.17795087){\makebox(0,0)[t]{\lineheight{1.25}\smash{\begin{tabular}[t]{c}1\end{tabular}}}}%
    \put(0.12553042,0.17689688){\makebox(0,0)[t]{\lineheight{1.25}\smash{\begin{tabular}[t]{c}3\end{tabular}}}}%
    \put(0.27349633,-0.00000003){\makebox(0,0)[t]{\lineheight{1.25}\smash{\begin{tabular}[t]{c}2\end{tabular}}}}%
    \put(0.71907959,0.005088){\makebox(0,0)[t]{\lineheight{1.25}\smash{\begin{tabular}[t]{c}4\end{tabular}}}}%
    \put(0.4966952,0.005088){\makebox(0,0)[t]{\lineheight{1.25}\smash{\begin{tabular}[t]{c}3\end{tabular}}}}%
    \put(0.32643079,0.51905027){\color[rgb]{1,0,1}\makebox(0,0)[t]{\lineheight{1.25}\smash{\begin{tabular}[t]{c}$b$\end{tabular}}}}%
    \put(0.28583077,0.30977974){\color[rgb]{1,0,1}\makebox(0,0)[t]{\lineheight{1.25}\smash{\begin{tabular}[t]{c}$bc$\end{tabular}}}}%
    \put(0.70361993,0.31670837){\color[rgb]{1,0,1}\makebox(0,0)[t]{\lineheight{1.25}\smash{\begin{tabular}[t]{c}$a$\end{tabular}}}}%
    \put(0.59851922,0.12481498){\color[rgb]{1,0,1}\makebox(0,0)[t]{\lineheight{1.25}\smash{\begin{tabular}[t]{c}$ca$\end{tabular}}}}%
    \put(0.93528191,0.32014286){\color[rgb]{1,0,1}\makebox(0,0)[t]{\lineheight{1.25}\smash{\begin{tabular}[t]{c}$aa$\end{tabular}}}}%
    \put(0.77382567,0.66713128){\color[rgb]{1,0,1}\makebox(0,0)[t]{\lineheight{1.25}\smash{\begin{tabular}[t]{c}$a^2b^2$\end{tabular}}}}%
    \put(0.3640319,0.12782539){\color[rgb]{1,0,1}\makebox(0,0)[t]{\lineheight{1.25}\smash{\begin{tabular}[t]{c}$c$\end{tabular}}}}%
    \put(0.59193939,0.51584032){\color[rgb]{1,0,1}\makebox(0,0)[t]{\lineheight{1.25}\smash{\begin{tabular}[t]{c}$ab$\end{tabular}}}}%
    \put(0.8516055,0.51000676){\color[rgb]{1,0,1}\makebox(0,0)[t]{\lineheight{1.25}\smash{\begin{tabular}[t]{c}$aab$\end{tabular}}}}%
  \end{picture}%
\endgroup%
}}
\]
This word length satisfies oriented smoothing: we are looking at
length with respect to an oriented graph with edges $\vec a, \vec b, \vec c$, which is still embedded. As
such, given a concrete representative for an oriented multi-curve in
a neighborhood of this directed graph, we can smooth any essential
crossing and get a new oriented multi-curve still in the neighborhood by the same smoothing argument as in Corollary~\ref{cor:domination}.

Now consider the three curves
\begin{align*}
  \vec C_1 &= \overrightarrow{[a]} & 
  \vec C_2 &= \overrightarrow{[b]} & 
  \vec C_3 &= \overrightarrow{[c]}.
\end{align*}
These curves satisfy the conditions of
Proposition~\ref{prop:asym-constraint}, but have lengths
$\ell(\vec C_i) = 1$ and $\ell(-\vec C_i) = 2$. Concretely,
Proposition~\ref{prop:asym-constraint} implies that, for any $\vec D$, $i(\vec{\mathbf C_i}, \vec D)=0$ for $i=1,2,3$, which is not satisfied by these lengths, as one
can see by example in Figure~\ref{fig:shortcurves}.

\begin{figure}
     \centering
     \begin{subfigure}[b]{0.5\textwidth}
\resizebox{70mm}{!} {\fontsize{9pt}{9pt}\selectfont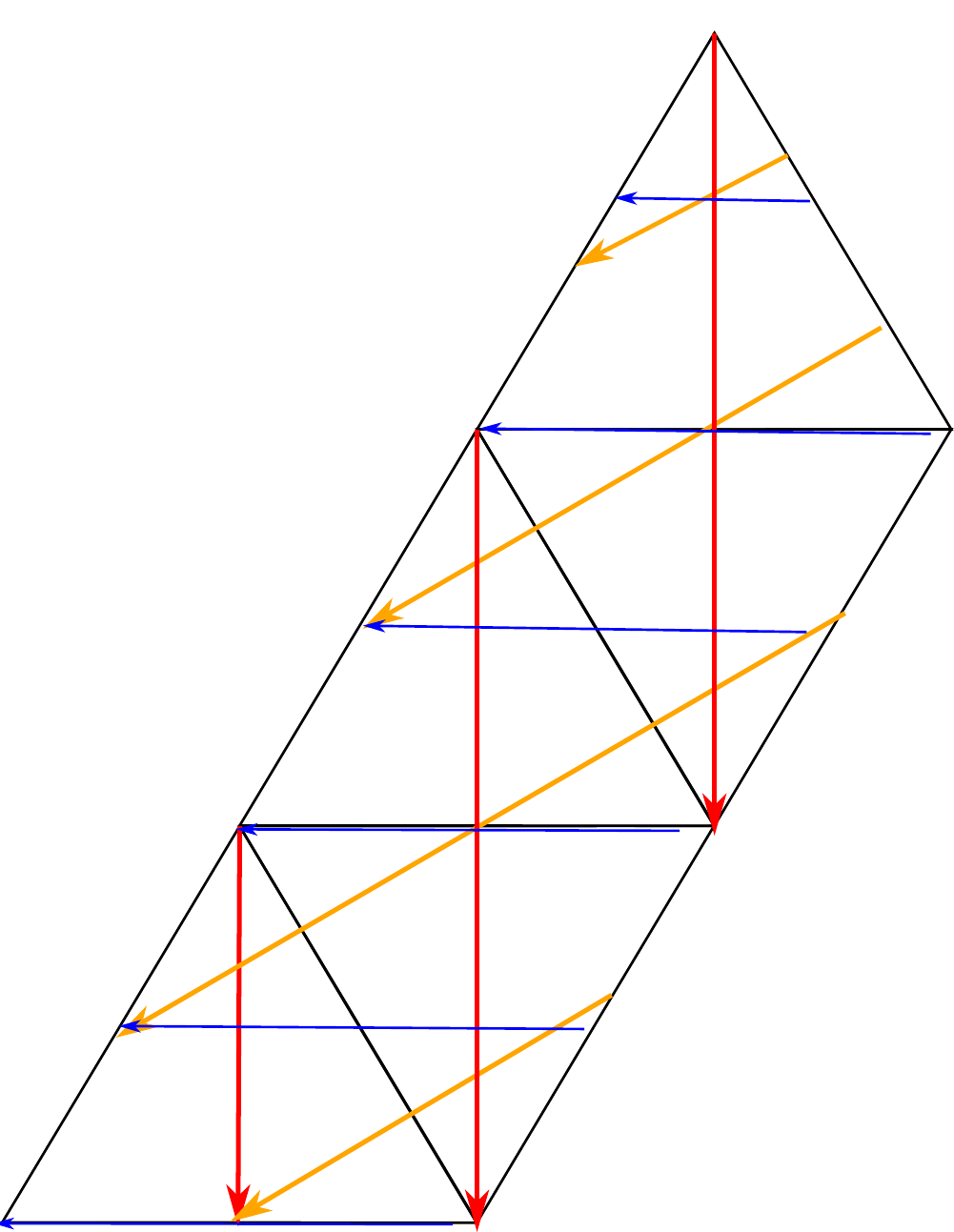 }
     \end{subfigure}
     \hfill
     \begin{subfigure}[b]{0.4\textwidth}
\resizebox{40mm}{!}{\fontsize{9pt}{9pt}\selectfont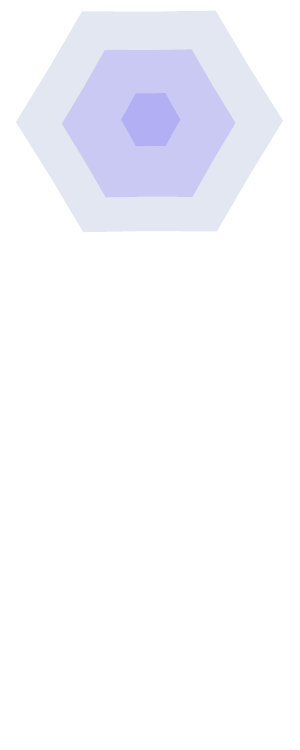 }
\end{subfigure}
\caption{Left: the three shortest curves in the flat triangular unpunctured
  torus (up to symmetry, shown in an unfolding in the universal
  cover). Right: their corresponding asymmetric intersection functions.}
  \label{fig:shortcurves}
\end{figure}

 Develop these simple curves in the universal cover of the triangular punctured torus, and consider the same triangulation as in Figure~\ref{fig:shortcurves} lifted to the universal cover.
 
 We can upgrade this example to one living on a genus~$2$
surface (so hyperbolic and in the context of this paper.) For
instance, one can divide the genus~$2$ surface into two once-punctured tori, consider monoid generators $a,b,c$ on the right-hand torus as
above, considered as edges with very long length in an embedded
directed graph, and supplement them with short monoid generators on
the left-hand torus so lengths are very close to the ones in the unpunctured torus above. We can do this while keeping the generating graph
embedded, so the argument above still applies to show that oriented
smoothing holds for the genus 2 surface.

%%% Local Variables:
%%% mode: latex
%%% TeX-master: "Intersections"
%%% End:

\section{Intersection numbers satisfy smoothing}
\label{app:necessary}

In this appendix we give a detailed proof that intersection numbers
satisfy the hypotheses of our theorem. The only non-trivial condition
to check is smoothing.
In a previous paper, we gave one proof, first proving it
topologically for closed curves and then invoking density of weighted
curves and continuity \cite[Example~4.2]{MGT21:Smoothings}. Yet another proof of the results below could be given considering any of the generalized cross-ratios associated to $\mu$, such as $[a,b,c,d]^+ \coloneqq \mu([d,a) \times [b,c))$, and applying Lemmas~\ref{lem:crossratio_stability},~\ref{lem:crinv},~\ref{lem:crossratio_disconnected_smoothing} and~\ref{lem:crossratio_connected_smoothing}. Here we
give third  different proof, instead relying
on the hyperbolic geometry of the universal cover and the results of Martone and Zhang
\cite{MZ19:PositivelyRatioed}.

\begin{proposition}
Given a geodesic current $\mu$, the curve functional $i(\mu, \cdot)$ satisfies the hypotheses of Theorem~\ref{thm:intersect}.
\label{prop:necessary}
\end{proposition}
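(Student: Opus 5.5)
We verify the hypotheses of Theorem~\ref{thm:intersect} for $f=i(\mu,\cdot)$: symmetry, additivity, stability, and smoothing. The first three are routine. Symmetry holds because the intersection form of Definition~\ref{def:intersection_currents} is unchanged when a factor is flipped. Additivity follows from linearity of $C\mapsto\delta_C$ and bilinearity of $i$. Stability follows from Eq.~\eqref{eq:intersection_curve}: for $y$ representing $C$ and $q\in\ora{y}$, the segment $[q,y^n q)$ is the disjoint union of the $y$-translates $y^k[q,yq)$, $0\le k<n$. By $\pi_1(S)$-invariance, $\mu(G[q,y^nq))=n\,\mu(G[q,yq))$. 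Power smoothing (sub-stability) then follows immediately.

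For smoothing, we use the algebraic reformulation. By Lemma~\ref{lem:smoothing_lemma_top_alg} and Proposition~\ref{prop:crossings-triality}, it suffices to prove disconnected and connected smoothing for pairs $a,b\in\pi_1(S)$, together with power smoothing. We will not prove connected smoothing directly. Since $i(\mu,\cdot)$ is symmetric, additive, homogeneous and extends continuously to currents (Bonahon), Proposition~\ref{prop:cont_conn_disc} upgrades disconnected smoothing to connected smoothing. The core task is therefore: for $(\ora{a},\ora{b})$ crossing,
\[
i(\mu,[a])+i(\mu,[b])\ \ge\ \max\{i(\mu,[ab]),\,i(\mu,[aB])\}.
\]

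We prove this with the hyperbolic parallelogram of Proposition~\ref{prop:keygeometry}. Put $O=\ora{a}\cap\ora{b}$ and let $P,P',Q,Q'$ be the points at half-translation distance from $O$. Then $i(\mu,[a])=\mu(G[P',P))$ and $i(\mu,[b])=\mu(G[Q,Q'))$. By the fundamental-domain statement of Proposition~\ref{prop:keygeometry}, $i(\mu,[ab])$ is the total measure of geodesics crossing the sides $(Q,P]$ and $(P',Q']$. Every geodesic crossing a side of the quadrilateral $QPQ'P'$ must exit through another side or through a vertex. Counting crossings of the two diagonals against crossings of the two sides, as in Lemma~\ref{lem:otalclaim}, gives
\[
i(\mu,[a])+i(\mu,[b])-i(\mu,[ab]) = \mu(R^+_{a,b})+\mu(R^-_{a,b}) \ \ge 0,
\]
which is Eq.~\eqref{eq:Rn-meas} with $n=1$. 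Here the left-hand terms are the double transversals joining the two sides lying on $\ora{Ab}$ and $\ora{bA}$. The identity for $aB$ is the same computation using the other pair of opposite sides (Eq.~\eqref{eq:Rn-perp-meas}).

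The main obstacle is boundary bookkeeping. Geodesics through the vertices $P,Q,\dots$ or along the sides carry atoms when $\mu$ has closed-curve components, and they must be assigned consistently. The half-open conventions in Proposition~\ref{prop:keygeometry} are chosen for this purpose. We would verify them carefully, or sidestep them by using Proposition~\ref{prop:spikes} to move $O$ slightly along the axes (translating the base points) so that no atom passes through a vertex. As a fallback, the argument can be run on weighted multi-curves, where smoothing is topological, and then passed to general $\mu$ by density and continuity, as in \cite[Example~4.2]{MGT21:Smoothings}.
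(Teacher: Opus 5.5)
Your argument is correct. For stability and for connected smoothing it is the paper's argument: the same decomposition of a fundamental segment of $y^n$ into $n$ translates, and the same appeal to Bonahon's continuity together with Proposition~\ref{prop:cont_conn_disc}.

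You differ in the core step, disconnected smoothing, where the paper uses a triangle instead of a parallelogram. With $p=\ora{a}\cap\ora{b}$ and $x=B\cdot p$, it invokes the Martone--Zhang inequality $i(\mu,g)\le\mu(G(x,g\cdot x])$, valid for an arbitrary base point $x$. It then uses the triangle inclusion $G(x,ab\cdot x]\subset G(x,b\cdot x]\cup G(b\cdot x,ab\cdot x]$, whose two pieces are $G(B\cdot p,p]$ and $G(p,a\cdot p]$, fundamental segments on the axes of $b$ and $a$. That is shorter and needs only an inequality, but it imports the fact that the axis minimizes the transverse mass of a translation segment.

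Your route through Lemma~\ref{lem:otalclaim} and the fundamental-domain part of Proposition~\ref{prop:keygeometry} uses only segments on axes and gives an exact formula. The defect $i(\mu,a)+i(\mu,b)-i(\mu,ab)$ equals the $\mu$-mass of the double transversal of the remaining pair of opposite sides. This is item (3) of Lemma~\ref{lem:measures_parallelogram}, which the paper needs anyway for the lamination and Liouville characterizations. That lemma does not depend on the present proposition, so there is no circularity, and your approach makes necessity and those characterizations a single computation.

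Two small corrections.
\begin{itemize}
\item Your labels are swapped relative to the paper. The set you describe in words, the double transversal of the sides on $\ora{Ab}$ and $\ora{bA}$, is $(R^{\perp}_{a,b})^{\pm}$. So the $ab$-identity is Eq.~\eqref{eq:Rn-perp-meas} (with $d_1=ab$), while Eq.~\eqref{eq:Rn-meas} (with $c_1=aB$) is the $aB$-identity.
\item The boundary bookkeeping you flag as the main obstacle resolves directly, so you need neither a perturbation nor the density fallback. In Lemma~\ref{lem:otalclaim} the subtracted sides for the $ab$-identity are $[Q,P)$ and $[P',Q')$. Since $a\cdot P'=P$ and $a\cdot Q'=ab\cdot Q$, we get $[Q,P)\sqcup a\cdot[P',Q')=[Q,ab\cdot Q)$, a fundamental segment for $ab$. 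Likewise $[P,Q')\sqcup b\cdot[Q,P')=[P,bA\cdot P)$ handles the $aB$-identity.
\end{itemize}
Your suggested fix of moving $O$ would not be available anyway. The sides lie on the axes of $ab$, $ba$, $Ab$, $bA$ only when $O$ is the intersection point of $\ora{a}$ and $\ora{b}$. Also, Proposition~\ref{prop:spikes} concerns pencils at points of $\partial_\infty S$, not geodesics through interior vertices.
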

\begin{proof}
We will check each hypothesis:
 By definition (see \cite{Bonahon86:EndsHyperbolicManifolds}), intersection number is bilinear in both factors. In particular, additivity holds.

 Observe that given a curve $C$ with hyperbolic lift
 $\gamma$, and $x \in \gamma$, the set of geodesics $G(x, \gamma^n x]$
 can be partitioned into $\bigcup_{i=1}^{n-1} G(x, \gamma^{i} \cdot
 x]$.  Thus, by Equation~\eqref{eq:intersection_curve},
\begin{equation}\label{eq:intersection-stability}
i(\mu, \gamma^n) = \mu(G(x, \gamma^n x]) = \sum_{i=1}^{n-1} \mu(G(x, \gamma^{i} \cdot x]) = n\mu(G(x, \gamma x]) = n i(\mu, \gamma),
\end{equation}
where in the second equality we used finite additivity of $\mu$, and
in the third one, $\Gamma$-invariance. This proves stability and, in
particular, power smoothing.

 The other cases of smoothing are covered by
 Proposition~\ref{prop:smoothing_nec} below.
\end{proof}

The following result can be proven applying 

\begin{proposition}\label{prop:smoothing_nec}
Suppose $a,b \in \pi_1(S)$ are non-trivial elements with distinct fixed points on 
$\partial_{\infty} S$, and let $\mu$ be any geodesic current. Then:
\begin{enumerate}
\item If $\ora{a}$ and $\ora{b}$ cross, then
\[
\begin{aligned}
i(\mu,a)+i(\mu,b) &\ge i(\mu,ab),\\
i(\mu,a)+i(\mu,b) &\ge i(\mu,aB).
\end{aligned}
\]
\item If $\ora{a}$ and $\ora{b}$ are parallel, then
\[
\begin{aligned}
i(\mu,ab) &\ge i(\mu,a)+i(\mu,b),\\
i(\mu,ab) &\ge i(\mu,aB).
\end{aligned}
\]
\end{enumerate}
\end{proposition}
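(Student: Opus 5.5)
The plan is to reduce everything to the generalized cross-ratio machinery of Section~\ref{sec:crossratios}, applied to the cross-ratio $[a,b,c,d]^+ \coloneqq \mu([d,a)\times[b,c))$ attached to $\mu$. The first step is to check that this is a generalized cross-ratio in the sense of Definition~\ref{def:general_crossratio}:
\begin{itemize}
\item flip-invariance follows from flip-invariance of $\mu$;
\item additivity follows from finite additivity of $\mu$ on adjacent half-open intervals;
\item $\pi_1(S)$-invariance follows from invariance of $\mu$;
\item positivity is immediate.
\end{itemize}
The second step is to identify its period with $i(\mu,\cdot)$. By the lemma computing intersections with oriented curves, $i(\mu,[g]) = \mu([z,g\cdot z)\times(g^+,g^-)) + \mu(\sigma(\cdots))$. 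Using flip-invariance, this equals the period $[g^-,x,g\cdot x,g^+]^+$, up to the normalization convention relating $i$ to the box measure. Any mismatch of boundary conventions (half-open versus open ends at $g^\pm$) is harmless. By Proposition~\ref{prop:spikes}, the only possible atom on the pencils of $g^\pm$ is $\ora{g}$ itself, and it is not contained in the box.

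With $\ell = i(\mu,\cdot)$ identified as a period, the oriented inequalities follow directly. Lemma~\ref{lem:crossratio_disconnected_smoothing} gives $i(\mu,a)+i(\mu,b)\ge i(\mu,ab)$ for R-crossing pairs, and swapping $a,b$ handles L-crossing pairs. Lemma~\ref{lem:crossratio_connected_smoothing} gives $i(\mu,ab)\ge i(\mu,a)+i(\mu,b)$ in the parallel case, using conjugation invariance $[ab]=[ba]$ to move between R- and L-parallel.

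The unoriented inequalities follow by symmetry. Lemma~\ref{lem:crinv} (or directly $i(\mu,g)=i(\mu,g^{-1})$) shows $i(\mu,\cdot)$ is symmetric. For crossing $(\ora a,\ora b)$, the pair $(\ora{B},\ora a)$ also crosses, so $i(\mu,b)+i(\mu,a)=i(\mu,B)+i(\mu,a)\ge i(\mu,Ba)=i(\mu,aB)$.

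The remaining inequality, $i(\mu,ab)\ge i(\mu,aB)$ for parallel pairs, is the main obstacle, since no cross-ratio lemma addresses it directly. Two routes are available:
\begin{itemize}
\item Invoke Proposition~\ref{prop:cont_conn_disc}. Its hypotheses hold: $i(\mu,\cdot)$ is continuous on currents, symmetric, homogeneous and satisfies convex union, and disconnected smoothing is already established. Its Direction~1 yields unoriented connected smoothing.
\item Use its proof verbatim, splitting by Lemma~\ref{lem:aB_bA_antiparallel_cross}. In the anti-parallel case, use the induction $2n\,i(\mu,ab)\ge i(\mu,(aB)^n(Ab)^n)$ together with Lemma~\ref{lem:gnhn_to_gsumh}. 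In the crossing case, use Lemma~\ref{lem:a_parallel_Ba} plus nonnegativity, e.g.\ $i(\mu,ab)=i(\mu,(aB)b^2)\ge i(\mu,aB)$.
\end{itemize}
I would take the first route for brevity. The one point to check is that Proposition~\ref{prop:cont_conn_disc} does not itself rely on Appendix~\ref{app:necessary}; it relies only on \cite{MGT21:Smoothings} and Bonahon continuity, so there is no circularity.
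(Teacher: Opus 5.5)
Your proof is correct, and for part (1) it takes a different route from the paper's. The paper argues directly in the universal cover. With $p=\ora a\cap\ora b$ and $x=B\cdot p\in\ora b$, it bounds $i(\mu,ab)\le\mu(G(x,abx])$ by \cite[Lemma~4.4]{MZ19:PositivelyRatioed}, splits the transversal at $bx=p$, and reads off $\mu(G(Bp,p])+\mu(G(p,ap])=i(\mu,b)+i(\mu,a)$; the case of $aB$ is similar. For part (2) it invokes Proposition~\ref{prop:cont_conn_disc} for both connected inequalities, which is your first route.

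You instead go through the cross-ratio $[\cdot,\cdot,\cdot,\cdot]^+$ and Lemmas~\ref{lem:crossratio_disconnected_smoothing} and~\ref{lem:crossratio_connected_smoothing}. The paper mentions this alternative at the start of Appendix~\ref{app:necessary} but does not carry it out. The cost is the identification step. There you need flip-invariance of $\mu$ and the vanishing of $\mu$ on $P(g^{\pm})\setminus\{\ora g,\sigma(\ora g)\}$ to reconcile the half-open boxes. With the formula you cite, this gives $i(\mu,[g])=2\,\ell^+(g)$. That is a universal factor, harmless because every inequality is homogeneous, and it is cleaner to state it than to appeal to ``the normalization convention''. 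The gain is that disconnected smoothing becomes a purely boundary-combinatorial statement, valid for every generalized cross-ratio, with no hyperbolic geometry or Martone--Zhang input.

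Two smaller points. Proposition~\ref{prop:cont_conn_disc} already delivers oriented as well as unoriented connected smoothing, so your appeal to Lemma~\ref{lem:crossratio_connected_smoothing} is redundant, and neither route avoids Bonahon's continuity in part (2). Also, Proposition~\ref{prop:cont_conn_disc} takes continuity as a hypothesis and does not use \cite{MGT21:Smoothings}; for $i(\mu,\cdot)$ the continuity comes from Bonahon, so there is indeed no circularity.
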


\begin{proof}
\begin{enumerate}
\item Let $p$ be the intersection point between the hyperbolic axes $\ora{a}$ and~$\ora{b}$.
Since $p \in \ora{a}$, we have $x=B \cdot p \in \ora{b}$.
We have $p$ and $y= a \cdot p$ are in $\ora{a}$. Thus
\begin{align*}
  i(\mu, ab) &\leq \mu(G(x, ab x]) &&\text{by \cite[Lemma 4.4(2)]{MZ19:PositivelyRatioed}}\\
  &\leq \mu(G(x, b x]) + \mu(G(bx, ab x]) &&\text{since $G(x, b x] \subset G(x, b x] \cup G(bx, ab x]$}\\
             &= \mu(G(B p, p]) + \mu(G(p, ap])\\
               &= i(\mu, b) + i(\mu, a).
\end{align*}
In the last equality we use the fact that $p$ is in the
axis of $a$ and of $b$, and~\cite[Lemma 4.4(1)]{MZ19:PositivelyRatioed}.
A similar argument with $aB$ gives the other inequality.
\item
  The inequalities can be checked directly using a similar analysis as in the previous part (in fact, these have already been proven in~\cite[Proposition~4.5]{MZ19:PositivelyRatioed}).
However, we can give a much more direct proof relying on the continuity of intersection number: since $i(\mu, \cdot)$ is continuous by~\cite[Proposition~4.5]{Bonahon86:EndsHyperbolicManifolds} and homogeneous, it follows from Proposition~\ref{prop:cont_conn_disc} that $i(\mu, \cdot)$ satisfies connected smoothing, as we wanted to show.
\end{enumerate}
\end{proof}

%%% Local Variables:
%%% mode: latex
%%% TeX-master: "Intersections"
%%% End:

%\input{maxsmoothing}

\bibliographystyle{hamsalpha}
\bibliography{Intersections}

\end{document}